\let\realItem\item % save a copy of the original item
\NewDocumentCommand\myItem{ o }{%
   \IfNoValueTF{#1}%
      {\realItem}% add an item
      {\realItem[#1]\def\@currentlabel{#1}}% add an item and update label
}
\setlist[enumerate]{
    before=\let\item\myItem,       % use \myItem in enumerate
    label=\textnormal{(\arabic*)}, % format the label
    widest=(2')                    % set the widest label
}
\theoremstyle{plain}
\newtheorem{theorem}{Theorem}[section]
\newtheorem{proposition}[theorem]{Proposition}
\newtheorem{lemma}[theorem]{Lemma}
\newtheorem{corollary}[theorem]{Corollary}
\theoremstyle{definition}
\newtheorem{definition}[theorem]{Definition}
\newtheorem{notation}[theorem]{Notation}
\newtheorem{assumption}[theorem]{Assumption}
\newtheorem{example}[theorem]{Example}
\newtheorem{remark}[theorem]{Remark}
\newtheorem*{ack}{Acknowledgments}
\newenvironment{myproof}[2] {\paragraph{{\it Proof of} {#1} {#2}:}}{\hfill$\square$}
\newcommand{\equaldist}{\stackrel{\lower0.5pt\hbox{$\scriptstyle\mathrm d$}}=}
\newcommand{\N}{\mathbb{N}}
\newcommand{\cA}{\mathscr{A}}
\newcommand{\DeclareAutoPairedDelimiter}[3]{%
  \expandafter\DeclarePairedDelimiter\csname Auto\string#1\endcsname{#2}{#3}%
  \begingroup\edef\x{\endgroup
    \noexpand\DeclareRobustCommand{\noexpand#1}{%
      \expandafter\noexpand\csname Auto\string#1\endcsname*}}%
  \x}
\DeclareAutoPairedDelimiter{\norm}{\lVert}{\rVert}
\DeclareAutoPairedDelimiter{\ip}{\langle}{\rangle}
\newcommand{\vertiii}[1]{{\left\vert\kern-0.25ex\left\vert\kern-0.25ex\left\vert #1 
    \right\vert\kern-0.25ex\right\vert\kern-0.25ex\right\vert}}
\newcommand{\numberthis}{\addtocounter{equation}{1}\tag{\theequation}}
\newcommand{\ds}{d}
\newcommand{\E}{\mathbb{E}}
\newcommand{\e}{\varepsilon}
\newcommand{\C}{\mathbb{C}}
\newcommand{\R}{\mathbb{R}}
\newcommand{\MNC}{\mathrm{M}_N(\C)}
\newcommand{\MkC}{\mathrm{M}_k(\C)}
\newcommand{\GL}{\mathrm{GL}}
\newcommand{\into}{\int_0^{\boldsymbol{\cdot}}}
\DeclareMathOperator{\tr}{tr}
\numberwithin{equation}{section}
\newcommand{\ignore}[1]{}
\renewcommand*{\backref}[1]{\ifx#1\relax \else Page #1 \fi}
\renewcommand*{\backrefalt}[4]{
  \ifcase #1 \footnotesize{(Not cited.)}
  \or        \footnotesize{(Cited on page~#2.)}
  \else      \footnotesize{(Cited on pages~#2.)}
  \fi
}
\begin{document}

\title{Matrix Random Walks and the Lima Bean Law}

\author{Bruce K. Driver\textsuperscript{1}}
\address{\textsuperscript{1}Department of Mathematics, UC San Diego La Jolla, CA 92093-0112}
\email{bdriver@ucsd.edu}

\author{Brian C. Hall\textsuperscript{2}}
\address{\textsuperscript{2}Department of Mathematics, University of Notre Dame, Notre Dame, IN 46556 USA}
\email{bhall@nd.edu}

\author{Ching-Wei Ho\textsuperscript{3}}
\address{\textsuperscript{3}Institute of Mathematics, Academia Sinica, Taipei 10617, Taiwan}
\email{chwho@gate.sinica.edu.tw}
\thanks{\textsuperscript{3}Supported in part by NSTC grant 114-2115-M-001-005-MY3}

\author{Todd Kemp\textsuperscript{4}}
\address{\textsuperscript{4}Department of Mathematics, UC San Diego, La Jolla, CA 92093-0112}
\email{tkemp@ucsd.edu}
\thanks{\textsuperscript{4}Supported in part by NSF Grants DMS-2055340 and DMS-2400246}

\author{Yuriy Nemish\textsuperscript{5}}
\address{\textsuperscript{5}Chicago, IL}
\email{yuriy.nemish@gmail.com}

\author{Evangelos A. Nikitopoulos\textsuperscript{6}}
\address{\textsuperscript{6}Department of Mathematics, University of Michigan,  530 Church Street, Ann Arbor, MI 48109-1043}
\email{enikitop@umich.edu}

\author{F\'elix Parraud\textsuperscript{7}}
\address{\textsuperscript{7}Department of Mathematics, Queen's university, 48 University Ave, Kingston, 
ON, Canada,  K7L 3N8}
\email{felix.parraud@gmail.com}

\begin{abstract} A matrix random walk is a stochastic process of the form  $B_k = (I+A_1)\cdots(I+A_k)$ where $A_j$ are independent ``step'' matrices in $\MNC$.  With the right entry-covariance, a rescaled matrix random walk converges to Brownian motion $B(t)$ on a matrix Lie group.  In this paper, we study the eigenvalues of such rescaled matrix random walks, as $N\to\infty$ and $k\to\infty$.

The standard Brownian motion $W(t)$ on $\MNC$ has independent Gaussian entries at each $t$. It is bi-invariant: mutiplying on the left or right by a unitary does not change the distribution.  We prove that the empirical eigenvalue distribution of any matrix random walk $B_k$ with bi-invariant steps $A_j$ and initial distribution converges (for fixed $k$ as $N\to\infty$) to a probability measure on $\C$: the Brown measure of the free probability $\ast$-distribution limit $b_k$ of the random walk.  If the steps $A_j$ are identically distributed with normalized Hilbert--Schmidt norm $\|A_j\|_2 = t$, the limit law of eigenvalues is supported on a compact ``lima bean'' shaped region.  We explicitly compute the limit measure and region, and characterize their phase transitions as $t$ evolves.

We prove that the Brown measure of $b_k$ converges as $k\to\infty$, to the Brown measure of the free multiplicative Brownian motion, assuming only that the steps are bi-invariant and normalized in Hilbert--Schmidt norm.  Thus the Brownian motion is the universal limit of rescaled matrix random walks, under very general assumptions on the distribution of steps.\end{abstract}

\maketitle

\tableofcontents

\section{Introduction}

This paper principally concerns random matrices of the form
\begin{equation} \label{eq.Bk} B_k(t) = (I+(\textstyle{\frac{t}{k}})^{1/2}A_1)(I+(\textstyle{\frac{t}{k}})^{1/2}A_2)\cdots(I+(\textstyle{\frac{t}{k}})^{1/2}A_k) \end{equation}
where $(A_k)_{k\in\mathbb{N}}$ are independent matrices in $\MNC$ whose distribution is $\mathrm{U}(N)$ bi-invariant:
\begin{equation} \label{eq.bi-inv.intro} UA_jV^\ast \equaldist A_j \qquad \text{for any }U,V\in\mathrm{U}(N). \end{equation}
Equivalently, in the singular value decomposition $A_j = U_j T_j V_j^\ast$, the left and right singular vector matrices $U_j$ and $V_j$ each have the Haar distribution on $\mathrm{U}(N)$, and all three matrices $T_j,U_j,V_j$ are independent.  (While the $t\ge 0$ parameter could be absorbed into $A_j$, it will be convenient to normalize $A_j$ in Hilbert--Schmidt norm and evolve the scaling separately.)

Under some assumptions on $A_1 = A_1^N$ as $N\to\infty$, Guionnet, Krishnapur, and Zeitouni proved in \cite{GuionnetKZ-single-ring} that the empirical law of eigenvalues of $A_1^N$ converges to a smooth rotationally-invariant distribution supported on an annulus in $\mathbb{C}$; see also \cite{HoZhong2025} for local limits and deformations of this ``Single Ring Theorem'' result.  %More recently, \cite{Cook} have studied product ensembles $(A_1)^k$ and $A_1\cdots A_k$ ({\color{blue} check for context}) ...

Provided that the empirical singular value distribution, i.e.\ the empirical law of the diagonal entries of $T_j = T_j^N$, converges as $N\to\infty$, the multi-matrix ensemble $(A_1,\ldots,A_k)$ converges as $N\to\infty$ in $\ast$-distribution to $(a_1,\ldots,a_k)$ in a tracial noncommutative probability space, where the $a_j$ are freely independent $\mathscr{R}$-diagonal random variables (cf.\ Propositions \ref{prop.asymp.free.Haar.GUE} and \ref{prop.bi-inv}); hence $B_k(t)$ converges in $\ast$-distribution as $N\to\infty$ to 
\begin{equation} \label{def.bk} b_k(t) = (1+(\textstyle{\frac{t}{k}})^{1/2}a_1)(1+(\textstyle{\frac{t}{k}})^{1/2}a_2)\cdots(1+(\textstyle{\frac{t}{k}})^{1/2}a_k). \end{equation}
Convergence in $\ast$-distribution does not, however, imply convergence of the empirical eigenvalue distribution, as all the necessary powerful literature on the Circular Law \cite{Bordenave-Chafai-circular} and the Single Ring Theorem attest.

Our first main Theorem \ref{thm.conv.ESD} show that, under mild assumptions on the bi-invariant step distributions $A_j$, the empirical eigenvalue distribution of the random walk $B_k(t)$ converges as $N\to\infty$ to to the Brown measure (Section \ref{sect.Brown.measure}) of the operator $b_k(t)$.  What's more, when that steps $A_j$ are identically distributed, we show in main Theorems \ref{thm.Brown.Measure.1}, \ref{thm.Brown.Measure.2}, and \ref{thm.Brown.Measure.3} that the limit Brown measure is supported on a compact region $\overline{\Sigma_k(a,t)}$ which is well-understood (and is often shaped like a lima bean). Aside from some possible structural point masses and potential mass on the boundary that dissipate with large $t$ and $k$, the Brown measure has a real analytic density.

When the step distribution $A_1$ has the same covariance as an i.i.d.\ Gaussian matrix (a Ginibre ensemble) in $\MNC$, for fixed $N$, $B_k(t)$ converges in distribution as $k\to\infty$ to the law of the standard Brownian motion $B(t)$ on the Lie group $\GL(N,\mathbb{C})$ at time $t$.  This was proved in \cite{Berger}; we prove strong $L^p$ bounds and almost sure convergence below in Theorem \ref{main.thm.1.MoBettaWZ}, along with the corresponding free probability limit in Theorem \ref{thm.free.MoBettaWZ}.

Finally, in main Theorem \ref{thm.lima.bean}, we prove that the Brown measure of $b_k(t)$ converges to the Brown measure of the free multiplicative Brownian motion $b(t)$ introduced in \cite{Kemp2016} as $k\to\infty$, in a strong sense: local uniform convergence of densities (sometimes called {\em superconvergence}, cf.\ \cite{BercVoic1995}).  We refer to this limit distribution as {\bf the Lima Bean Law}.  The limit is universal over all bi-invariant step distributions with fixed Hilbert--Shmidt norm; in particular, even the covariance need not match the Brownian motion for the eigenvalues to converge to the Lima Bean Law.

\bigskip

\begin{figure}[h!]
  \includegraphics[scale=0.62]{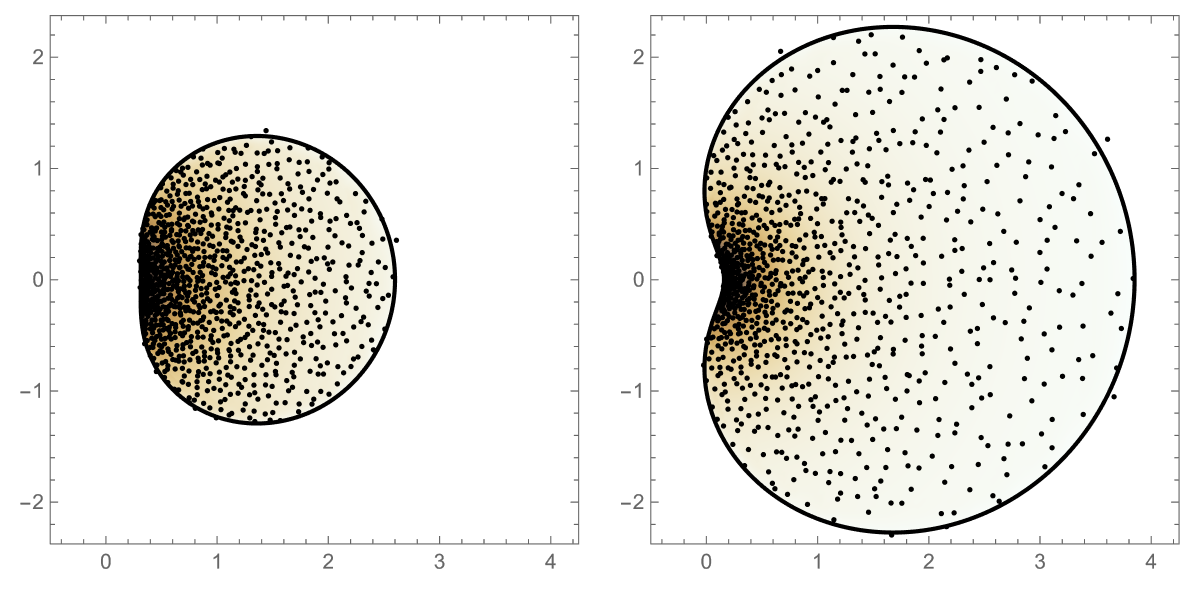}
  \caption{The Brown measure of the free random walk $b_k(t)$ with $k=6$ and $t=1$ (left) and $t=2$ (right).  Also shown are $1000$ eigenvalues of the associated $B^N_k(t)$.  Here the step distribution is circular / Ginibre, corresponding directly to the Brownian motion.
  \label{fig.1}}
\end{figure}

\subsection{Motivation} \hfill

\medskip

Brownian motion is ubiquitous throughout probability theory and its applications.  Indeed, it is the central limit object in the class of stochastic processes on Euclidean spaces.  This fact is made precise by Donsker's theorem, sometimes called Donsker's invariance principle, which states the following.

\begin{theorem}[Donsker, \cite{Donsker}] \label{thm.Donsker}
Let $(X_n)_{n=1}^\infty$ be i.i.d.\ $L^2$ random vectors in $\mathbb{R}^d$ with mean $\mathbb{E}[X_n]=\mathbf{0}$ and covariance $\mathbb{E}[X_nX_n^\top]=I$.  Let $(S(t))_{t\ge 0}$ be the piecewise affine process defined by $S(0)=\mathbf{0}$ and $S(n) = X_1+\cdots+X_n$ for $n\in\mathbb{N}$:
\[ S(t) = X_1+\cdots + X_{\lfloor t\rfloor} + (t-\lfloor t\rfloor)X_{\lfloor t\rfloor +1}. \]
Then the diffusion rescaled sequence $W_k(t) = S(kt)/\sqrt{k}$ converges pathwise in distribution (i.e.\ weakly on $C([0,T],\mathbb{R}^d)$) to the standard Brownian motion on $\mathbb{R}^d$ as $k\to\infty$.
\end{theorem}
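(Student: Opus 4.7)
The plan is to combine convergence of finite-dimensional distributions with tightness of the laws of $W_k$ on $C([0,T], \mathbb{R}^d)$, and then invoke Prokhorov's theorem to obtain weak convergence to Brownian motion. This is the classical two-pronged approach pioneered by Donsker and now standard in the invariance-principle literature.

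For the finite-dimensional distributions, fix $0 = t_0 < t_1 < \cdots < t_m \le T$ and consider the increments $W_k(t_i) - W_k(t_{i-1})$. Up to a boundary error of order $k^{-1/2}$ arising from the piecewise linear interpolation at the endpoints of each sub-interval, each increment equals $k^{-1/2}$ times a sum of roughly $k(t_i - t_{i-1})$ i.i.d.\ copies of $X_1$, and the blocks are independent across $i$. Applying the multivariate central limit theorem blockwise yields joint convergence of the increments to independent Gaussian vectors with covariances $(t_i - t_{i-1}) I$, which are precisely the increments of a standard Brownian motion $W$ on $\mathbb{R}^d$. The continuous mapping theorem promotes this to joint convergence of $(W_k(t_1), \ldots, W_k(t_m))$ to $(W(t_1),\ldots,W(t_m))$.

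For tightness in $C([0,T], \mathbb{R}^d)$, I would first reduce to $d = 1$: if each coordinate process of $W_k$ is tight in $C([0,T], \mathbb{R})$, then a union-bound argument (choosing per-coordinate compacts of error $\varepsilon/d$) yields joint tightness on $C([0,T], \mathbb{R}^d)$. For the one-dimensional problem, the cleanest route under the minimal $L^2$ hypothesis is Skorokhod embedding: on an enlarged probability space there exist a standard Brownian motion $W$ and an i.i.d.\ sequence of non-negative stopping-time increments $(\tau_n - \tau_{n-1})_{n\ge 1}$ with mean $1$ such that $(W(\tau_n))_{n\ge 0} \equaldist (S(n))_{n\ge 0}$ jointly. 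Coupling $W_k$ to the Brownian motion $t \mapsto W(kt)/\sqrt{k}$ via this embedding, the strong law of large numbers gives $\tau_{\lfloor kt \rfloor}/k \to t$ uniformly on $[0,T]$, and the uniform continuity of Brownian paths on compacts transfers this into uniform convergence of $W_k$ to a Brownian motion. This simultaneously produces tightness and identifies the weak limit as $W$.

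The main obstacle is the tightness step under only an $L^2$ moment hypothesis. With a higher moment assumption $\mathbb{E}[|X_1|^{2+\delta}] < \infty$ one could instead verify Kolmogorov's tightness criterion directly, bounding $\mathbb{E}\|W_k(t) - W_k(s)\|^{2+\delta}$ via Marcinkiewicz--Zygmund or Rosenthal-type inequalities for sums of independent variables. Without the extra moments, one must either run the Skorokhod embedding argument above, or combine Ottaviani's maximal inequality with a blocking argument to control the modulus of continuity of the polygonal path in probability; both avenues are genuinely more delicate than the finite-dimensional CLT step. Once tightness and finite-dimensional convergence are both in hand, Prokhorov's theorem and the fact that finite-dimensional distributions determine a Borel measure on $C([0,T], \mathbb{R}^d)$ complete the proof.
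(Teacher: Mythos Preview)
The paper does not prove this theorem; it is stated with a citation to Donsker's original work as motivational background, so there is no proof in the paper to compare against. Your outline is the standard modern proof of the invariance principle---finite-dimensional convergence via the CLT, tightness via Skorokhod embedding (or maximal inequalities) under the bare $L^2$ hypothesis, then Prokhorov---and is correct as a sketch.
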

\noindent Donsker's theorem and its quantitative variants are the main tools used to approximate and simulate Brownian paths as diffusion rescaled random walks.

Beyond Euclidean space, Brownian motion and its heat kernel marginal distribution remains a central object in many geometric contexts.  On any Riemannian manifold, Brownian motion is the Markov process whose generator is the Laplacian.  It can also be constructed from the Euclidean Brownian motion in the tangent space at the starting point, ``rolled'' onto the manifold via a (Stratonovich) SDE that generalizes the ODE describing rolling without slipping in classical mechanics.  

Specializing to the context of a Lie group $G$, fixing an inner product on the Lie algebra $\mathfrak{g}$ (the tangent space at the identity in the group) yields a left-invariant Riemannian metric on the group, translating vectors at any point back to the identity by the differential of the group multiplication map.  The Brownian motion on $G$ is the Riemannian Brownian motion with respect to this metric.  The rolling map takes a particularly simple form for a matrix Lie group $G\subset \MNC$.  Fix an orthonormal basis $\beta$ for the Lie algebra $\mathfrak{g}\subset \mathfrak{gl}_N = \MNC$, and let $\{W_\xi\}_{\xi\in\beta}$ be a family of i.i.d.\ standard Brownian motions on $\mathbb{R}$.
Set
\begin{equation} \label{eq:Lie.BM} W(t) = \sum_{\xi\in\beta} W_\xi(t)\, \xi, \qquad t \geq 0,
\end{equation}
which is a version of the Brownian motion on the Hilbert space $\mathfrak{g}$.

\begin{definition} \label{def.BM.on.G} The {\bf Brownian motion $(B(t))_{t\ge 0}$ on the group $G$} (for the chosen left-invariant metric) is the strong solution of the Stratonovich SDE
\begin{equation} \label{eq.BMonG} \ds B(t) = B(t)\circ \ds W(t), \qquad B(0) = I. \end{equation}
In terms of It\^o calculus, \eqref{eq.BMonG} may be written as
\[ \ds B(t) = B(t)\,\ds W(t) + \frac12B(t)\Xi\,\ds t \]
where $\Xi = \sum_{\xi\in\beta} \xi^2$ (cf.\ \cite[p.\ 116]{McKean}).
\end{definition}

The principal Donsker-like approximation theorem for Brownian motion on Riemannian manifolds is the Wong--Zakai theorem \cite{WongZakai1,WongZakai2}.  In the context of a matrix Lie group, it takes an elegant form:
Simply replace the Brownian motion $W_t$ in $\mathfrak{g}$ with its interpolated random walk approximation, converting \eqref{eq.BMonG} into an ODE.
To make the statement precise, let $T > 0$ and $\Pi = \{0 = t_0 < \cdots < t_k = T\}$ be a partition of $[0,T]$.

\begin{notation}\label{nota.part}
Fix $t \in \Pi$.
\begin{enumerate}[font=\normalfont,label=(\roman*)]
    \item $t_-$ is the member of $\Pi$ immediately to the left of $t$;
    that is, $0_- = (t_0)_- \coloneqq t_0 = 0$, and $(t_j)_- \coloneqq t_{j-1}$ for all $j=1,\ldots,k$.
    \item $\Delta t \coloneqq t-t_-$, and $|\Pi| \coloneqq \max\left\{ \Delta s : s \in \Pi\right\}$ is the mesh of $\Pi$.
    \item If $f$ is a function from $[0,T]$ to some vector space, then $\Delta_t f \coloneqq f(t) - f(t_-)$.
    \item $\displaystyle \ell_{\Pi}(s) \coloneqq \sum_{r \in \Pi} r_-\,1_{(r_-,r]}(s)$ and $\displaystyle  r_{\Pi}(s) \coloneqq \sum_{r \in \Pi} r\,1_{(r_-,r]}(s)$ for all $s \geq 0$.
\end{enumerate}
\end{notation}

Define the piecewise affine approximation $W_{\Pi}$ of the process $\left.W\right|_{[0,T]}$ by
\begin{equation} \label{eq.Donsker.t-} W_\Pi(t) = W(\ell_{\Pi}(t)) + (t-\ell_{\Pi}(t))\frac{W(r_{\Pi}(t))-W(\ell_{\Pi}(t))}{r_{\Pi}(t)-\ell_{\Pi}(t)}, \qquad 0 \leq t \leq T. \end{equation}
This process is differentiable at $t$ for all $t\notin\Pi$, with derivatives
\[ \frac{\ds W_\Pi(t)}{\ds t} = \frac{W(t_j)-W(t_{j-1})}{t_j-t_{j-1}}, \qquad t_{j-1}<t<t_j. \]
The Wong--Zakai approximation $B_\Pi$ to $\left.B\right|_{[0,T]}$ relative to the partition $\Pi$ is the solution to the ODE
\begin{equation} \label{eq.WZ.intro} \frac{\ds B_\Pi(t)}{\ds t} = B_\Pi(t)\frac{\ds W_\Pi(t)}{\ds t} \; \text{ a.e.}, \qquad B_\Pi(0) = I. \end{equation}
Equation \eqref{eq.WZ.intro} is easily solved explicitly:
\[ B_\Pi(t) = \prod_{s \in \Pi} \exp\left( \frac{s \wedge t -s_- \wedge t}{\Delta s}\Delta_sW\right) , \qquad 0 \leq t \leq T. \]
More transparently, if $t_{j-1} \leq t \leq t_j$, then
\[
B_{\Pi}(t) = \exp\left(\Delta_{t_1}W\right) \exp\left(\Delta_{t_2}W\right)\cdots \exp\left(\Delta_{t_{j-1}}W\right) \exp\left(\frac{t-t_{j-1}}{\Delta t_j}\Delta_{t_j} W\right).
\]
\begin{theorem}[Wong--Zakai, \cite{WongZakai1,WongZakai2}] If $(\Pi_k)_{k\in\mathbb{N}}$ is a sequence of partitions of $[0,T]$ such that $|\Pi_k| \to 0$ as $k \to \infty$, then the sequence $(B_{\Pi_k})_{k \in \N}$ of processes converges weakly in $C([0,T],G)$ to the Brownian motion $B$ on $G$. \end{theorem}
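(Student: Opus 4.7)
The plan is to recast the ODE \eqref{eq.WZ.intro} for $B_\Pi$ as a discrete approximation to the It\^o form of the SDE \eqref{eq.BMonG}, from which convergence will follow by a standard SDE stability estimate. Since on each partition interval one has $B_\Pi(t) = B_\Pi(t_-)\exp(\Delta_t W)$, I would Taylor expand $\exp(X) = I + X + \tfrac{1}{2} X^2 + R(X)$ with $\|R(X)\| \lesssim \|X\|^3 e^{\|X\|}$ and telescope to obtain, for each $t \in \Pi$,
\begin{equation*}
B_\Pi(t) = I + \sum_{\substack{s \in \Pi \\ s \le t}} B_\Pi(s_-)\,\Delta_s W + \tfrac{1}{2}\sum_{\substack{s \in \Pi \\ s \le t}} B_\Pi(s_-)\,(\Delta_s W)^2 + E_\Pi(t),
\end{equation*}
where $E_\Pi(t)$ collects the cubic remainders.

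Next I would identify the limit of each term as $|\Pi|\to 0$. The first sum is a left-endpoint, $\mathcal{F}_{s_-}$-adapted Riemann sum against Brownian increments; by a standard martingale argument, coupled with an iterated $L^2$ closeness of $B_\Pi$ to $B$, it converges to $\int_0^t B(s)\,\ds W(s)$. The second sum is the heart of the Wong--Zakai phenomenon: writing
\begin{equation*}
(\Delta_s W)^2 = \sum_{\xi,\eta \in \beta} \Delta_s W_\xi\,\Delta_s W_\eta\,\xi\eta
\end{equation*}
and using $\E[\Delta_s W_\xi\,\Delta_s W_\eta \mid \mathcal{F}_{s_-}] = \delta_{\xi\eta}\,\Delta s$ gives $\E[(\Delta_s W)^2 \mid \mathcal{F}_{s_-}] = \Xi\,\Delta s$, with a martingale fluctuation of $L^2$-norm $O(\sqrt{|\Pi|})$; hence the second sum converges in $L^2$ to $\tfrac{1}{2}\int_0^t B(s)\,\Xi\,\ds s$. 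Finally, the cubic bound $\E\|\Delta_s W\|^3 = O((\Delta s)^{3/2})$ forces $\|E_\Pi(t)\|_{L^2} = O(|\Pi|^{1/2}) \to 0$.

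Combining these three limits, $B_\Pi$ satisfies a vanishing perturbation of the It\^o SDE $\ds B = B\,\ds W + \tfrac{1}{2}B\Xi\,\ds t$, whose unique strong solution is $B$. A Burkholder--Davis--Gundy estimate followed by Gronwall's inequality applied at partition times yields $\E\bigl[\sup_{t \in \Pi_k \cap [0,T]} \|B_{\Pi_k}(t) - B(t)\|^2\bigr] \to 0$, first after localizing by the hitting time $\tau_M = \inf\{t : \|B_{\Pi_k}(t)\| \vee \|B(t)\| \ge M\}$ to control the nonlinear prefactor $B_\Pi(s_-)$ in the expansion (in case $G \subseteq \MNC$ is noncompact), and then releasing $M \to \infty$. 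To extend from partition nodes to all of $[0,T]$, note that for $t \in \Pi_k$ and $u \in (t_-, t)$,
\begin{equation*}
B_{\Pi_k}(u) - B_{\Pi_k}(t_-) = B_{\Pi_k}(t_-)\Bigl[\exp\bigl(\tfrac{u - t_-}{\Delta t}\,\Delta_t W\bigr) - I\Bigr]
\end{equation*}
vanishes in $L^2$ uniformly in $u$ by the Brownian modulus of continuity, giving uniform convergence in $C([0,T],G)$ and in particular the required weak convergence. The principal technical obstacle is the quadratic-variation identification $\tfrac{1}{2}(\Delta_s W)^2 \approx \tfrac{1}{2}\Xi\,\Delta s$ in the second sum, which is exactly the mechanism by which the smooth interpolation $W_\Pi$ generates the It\^o--Stratonovich correction term in \eqref{eq.BMonG}, and is the heart of every Wong--Zakai theorem.
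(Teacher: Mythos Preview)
The paper does not prove this statement: the Wong--Zakai theorem is quoted as a classical background result with citation to \cite{WongZakai1,WongZakai2}, and no argument is given. In fact the paper is careful to distinguish the Wong--Zakai approximant $B_\Pi$ of \eqref{eq.WZ.intro} from the different process $B_\Pi$ of Lemma~\ref{lem.simple.SDE} (see the remark immediately following that lemma), and it is only the latter for which the paper establishes quantitative $L^p$ convergence in Theorem~\ref{main.thm.1.MoBettaWZ}. So there is no ``paper's own proof'' to compare against.

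Your sketch is a reasonable outline of one standard route to Wong--Zakai. A couple of points to tighten if you pursue it: the phrase ``coupled with an iterated $L^2$ closeness of $B_\Pi$ to $B$'' in the first-sum analysis is close to assuming what you want to prove; the cleaner way is to first show $B_\Pi$ satisfies a perturbed SDE with error terms bounded a priori (using only moment bounds on $B_\Pi$ itself, obtained by Gronwall from the recursion), and only then compare to $B$ via a stability estimate. Also, the conclusion you reach is $L^2$ convergence uniformly on $[0,T]$, which is stronger than the weak convergence asserted in the theorem statement---that is fine, but worth noting. The paper's own Theorem~\ref{main.thm.1.MoBettaWZ} (for the other $B_\Pi$) follows a somewhat different strategy: it writes $B_\Pi B^{-1} - I$ as a stochastic integral and bounds that directly, avoiding the Taylor expansion of the exponential entirely.
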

In particular, if $\Pi_k$ is the uniform partition with $\Delta t_j = T/k$ for all $j=1,\ldots,k$, then the increments can be realized as $\Delta_{t_j} W = (T/k)^{1/2}X_j$ where $X_1,\ldots,X_k$ are independent and distributed as $W(1)$.
Thus,
\begin{equation} \label{eq.WZ.exp} B_{\Pi_k}(T) \equaldist \exp\left(\sqrt{\frac{T}{k}}X_1\right)\exp\left(\sqrt{\frac{T}{k}}X_2\right)\cdots \exp\left(\sqrt{\frac{T}{k}}X_k\right). \end{equation}
Equation \eqref{eq.WZ.exp} is an appealing formulation of a random walk on $G$:
Each term $(T/k)^{1/2}X_j$ is in the Lie algebra, so its exponential is in the group.  Setting this aside for computational purposes, we can simplify:
\[ \exp\left(\sqrt{\frac{T}{k}}X_j\right) = I + \sqrt{\frac{T}{k}}X_j + \frac{T}{2k}X_j^2 + o\left(\frac{1}{k}\right), \]
and taking a product of $k$ such exponentials, the terms of order $o(1/k)$ are negligible to the limit as $k\to\infty$.

\begin{definition} Let $\{A_j,C_j\}_{j\in\mathbb{N}}$ be i.i.d.\ random matrices in $\MNC$.  Let $t\ge 0$.  The {\bf matrix random walk} with step distribution $(A_1,C_1)$ is the sequence of random matrices
\begin{equation} \label{eq:random-walk-Berger} B_k(t) = \prod_{j=1}^k \left(I+\sqrt{\frac{t}{k}}A_j+\frac{t}{k}C_j\right), \qquad k \in \N. \end{equation}
In the special case $C_1=0$, we refer to $A_1$ as the step distribution.
\end{definition}

\begin{remark}
We abuse notation slightly here.  Really, the ``random walk'' should be the process evolving with $k$ but without the $O(k^{-1/2})$ rescaling.
More precisely, $k\mapsto B_k(t)$ is a {\em rescaled} matrix random walk.
\end{remark}

\begin{theorem}[Berger, \cite{Berger}] \label{thm.Berger} Let $\mathbb{E}[A_1] = 0$, and suppose $A_1$ has the same covariance as the ``flat'' Brownian motion $W$, from \eqref{eq:Lie.BM}, at time $1$:
\[ \mathbb{E}\left[[A_1]_{ij}[A_1]_{mn}\right] = \mathbb{E}\left[[W(1)]_{ij}[W(1)]_{mn}\right] \qquad 1\le i,j,m,n\le N. \]
If $t \geq 0$, then the matrix random walk $(B_k(t))_{k \in \N}$ from \eqref{eq:random-walk-Berger} converges in distribution to $\Phi(t)$, where $\Phi$ is the strong solution of the It\^o SDE
\[ \ds \Phi(t) = \Phi(t)\,\ds W(t) + \Phi(t)\mathbb{E}[C_1]\,\ds t, \qquad \Phi(0) = I. \]
In particular, $B_k(t)$ converges in distribution to the Brownian motion $B(t)$ on $G$ if and only if $\mathbb{E}[A_1]=0$, $A_1$ has the same covariance as $W(1)$, and $\mathbb{E}[C_1] = \frac12\Xi = \frac12\sum_{\xi\in\beta} \xi^2$.
\end{theorem}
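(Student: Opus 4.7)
The plan is to recognize $B_k(t)$ as a discrete Euler--Maruyama-type approximation to the It\^o SDE in the statement, with the non-Gaussian random matrices $\sqrt{t/k}\,A_j$ playing the role of Brownian increments. Writing $B_k^{(n)} := \prod_{j=1}^n(I+\sqrt{t/k}\,A_j+(t/k)C_j)$ and $\tau_j := jt/k$, interpolate in continuous time by $\widetilde{B}_k(s) := B_k^{(\lfloor ks/t\rfloor)}$. Introducing the c\`adl\`ag martingale $M_k(s) := \sum_{\tau_j \le s}\sqrt{t/k}\,A_j$ and the c\`adl\`ag finite-variation process $D_k(s):=\sum_{\tau_j \le s}(t/k)C_j$, the defining recursion for $B_k^{(n)}$ becomes the matrix-valued stochastic integral equation
\[
\widetilde{B}_k(s)=I+\int_0^s \widetilde{B}_k(u^-)\,dM_k(u)+\int_0^s \widetilde{B}_k(u^-)\,dD_k(u).
\]

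The next step is to identify the joint limit of the drivers $(M_k,D_k)$ on the Skorohod space $D([0,T],\MNC\times\MNC)$. The matrix-valued Lindeberg--Feller CLT, applied entrywise (which suffices at fixed $N$), shows that $M_k\Rightarrow W$, since each entry of $M_k(s)$ is a sum of $\lfloor ks/t\rfloor$ i.i.d.\ centered $L^2$ variables whose total variance matches the corresponding entry of $W(s)$ by the covariance hypothesis. The classical law of large numbers gives $D_k(s)\to s\,\E[C_1]$ uniformly on $[0,T]$ in probability. Joint convergence of the drivers, together with the uniform-tightness condition for stochastic integrators of Kurtz and Protter---automatic here because the predictable quadratic variation $\langle M_k,M_k\rangle_s$ is bounded in $k$ by the covariance of $W$ up to time $s$, and the jumps have uniform size $O(k^{-1/2})$---lifts via the Kurtz--Protter SDE stability theorem to joint convergence of the integral equations. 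One therefore obtains $\widetilde{B}_k\Rightarrow\Phi$ in $D([0,T],\MNC)$, where $\Phi$ is the unique strong solution of $d\Phi=\Phi\,dW+\Phi\,\E[C_1]\,dt$ with $\Phi(0)=I$; in particular $B_k(t)\Rightarrow \Phi(t)$.

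For the ``in particular'' claim, recall that the Brownian motion $B$ on $G$ satisfies the Stratonovich SDE $dB=B\circ dW$, whose It\^o form is $dB=B\,dW+\tfrac12 B\Xi\,dt$. Uniqueness of strong solutions then gives $\Phi\equiv B$ exactly when the drifts agree, i.e.\ $\E[C_1]=\tfrac12\Xi$, combined with the already-assumed mean and covariance constraints on $A_1$. The principal obstacle is verifying the uniform-tightness hypothesis for $(M_k)$, which is the technical heart of the Kurtz--Protter machinery; at fixed $N$ this reduces to a standard second-moment computation, but it must be carried out with care because nothing beyond $L^2$ is assumed on the step distributions. A more self-contained alternative is to expand $B_k(t)-\Phi(t)$ term by term, couple $M_k$ to $W$ on a common space via a Skorohod-type embedding, and close a Gronwall inequality on the strong $L^2$ error, paralleling the classical error analysis of the Euler--Maruyama scheme driven by non-Gaussian but covariance-matched increments.
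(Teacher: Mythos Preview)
The paper does not prove this theorem: it is quoted as background from Berger's original paper and is not given a proof anywhere in the text. The only comment the paper makes about its proof is the remark (just before Lemma~\ref{lem.simple.SDE}) that ``Berger's Theorem~\ref{thm.Berger} and related results in \cite{Watkins} \ldots\ are generally proved using Markov semigroup methods.'' So there is no paper proof to compare your proposal against.

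That said, your approach is a legitimate alternative to the semigroup route. Recasting the product recursion as a discrete stochastic integral equation driven by the jump martingale $M_k$ and the finite-variation process $D_k$, invoking the martingale functional CLT for $M_k\Rightarrow W$ and the law of large numbers for $D_k$, and then passing the convergence through the SDE via the Kurtz--Protter stability theorem is a standard and correct strategy for this type of result. The UT condition you need is indeed automatic here from the uniform bound on the predictable quadratic variation and the $O(k^{-1/2})$ jump size, exactly as you note. One small caveat: for the functional CLT on $M_k$ you need a Lindeberg-type condition, which with only $L^2$ increments comes from the i.i.d.\ structure and the fact that the individual step variance is $t/k\to 0$; you should make that explicit rather than leave it implicit in ``Lindeberg--Feller, applied entrywise.'' Your final ``alternative'' paragraph about a Skorohod embedding and Gronwall argument would require strictly more than $L^2$ on the steps to get strong $L^2$ error bounds, so it is not really an alternative under the stated hypotheses; the Kurtz--Protter weak-convergence route is the right one.
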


\begin{remark} Since the inner product on $\mathfrak{g}$ can be chosen with impunity, the only real condition on the collection $\beta\subset \MNC$ is that it is linearly independent.  In fact, linear independence is not required for Berger's result, only that $\beta$ contains at most $N^2$ matrices. \end{remark}

\subsection{Main results} 

\subsubsection{Random walk approximations to Brownian motion} \hfill

\medskip

Berger's Theorem \ref{thm.Berger} and related results in \cite{Watkins} for matrix random walk approximations of Brownian motion on Lie groups are generally proved using Markov semigroup methods.  Though powerful, they are less adept at producing quantitative bounds for rates of convergence.  Our first results concern the special case of Theorem \ref{thm.Berger} in which the step distribution comes from the ``flat'' Brownian motion itself (not just matching covariance).  In this case, the matrix random walk can be realized as a solution of a kind of SDE.

\begin{lemma} \label{lem.simple.SDE} Let $\Pi$ be a partition of $[0,T]$ (and cf.\ Notation \ref{nota.part}).  Let $(W(t))_{t\ge 0}$ and $\Xi$ be as in Definition \ref{def.BM.on.G} and \eqref{eq:Lie.BM}.
The stochastic integral equation
\begin{equation} \label{eq.simple.SDE}
X(t) = I + \int_0^t X(\ell_{\Pi}(s))\, \ds W(s)+ \frac12\int_0^t X(\ell_{\Pi}(s))\Xi\,\ds s, \qquad 0 \leq t \leq T.
\end{equation}
has unique(-up-to-indstinguishability) solution
\begin{equation} \label{eq.Bpi.soln}
X(t) = B_\Pi(t) \coloneqq \prod_{s \in \Pi} \left(I+ W(s \wedge t)-W(s_- \wedge t) + \frac12\Xi (s \wedge t -s_- \wedge t)\right).
\end{equation}
\end{lemma}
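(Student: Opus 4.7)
The plan is to exploit the defining feature of $\ell_{\Pi}$: on each partition cell $(t_{j-1}, t_j]$, one has $\ell_{\Pi}(s) = t_{j-1}$, so $X(\ell_{\Pi}(s))$ is frozen on the cell. This collapses \eqref{eq.simple.SDE} into an explicit algebraic recursion across cells, which both exhibits $B_\Pi$ as a solution and forces uniqueness.

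I would first establish necessity (and hence uniqueness) by induction on $j = 0, 1, \ldots, k$, with inductive hypothesis that any solution $X$ of \eqref{eq.simple.SDE} satisfies $X|_{[0,t_j]} = B_{\Pi}|_{[0,t_j]}$. The base case $j=0$ is the initial condition $X(0) = I = B_\Pi(0)$. For the inductive step, fix $t \in (t_{j-1}, t_j]$ and subtract \eqref{eq.simple.SDE} at time $t_{j-1}$ from \eqref{eq.simple.SDE} at time $t$. Since $X(\ell_{\Pi}(s)) = X(t_{j-1})$ for $s \in (t_{j-1}, t]$, the increments pull out of both integrals, giving
\[ X(t) - X(t_{j-1}) = X(t_{j-1})\bigl(W(t) - W(t_{j-1})\bigr) + \tfrac{1}{2}X(t_{j-1})\Xi(t - t_{j-1}). \]
Invoking the inductive hypothesis $X(t_{j-1}) = B_\Pi(t_{j-1})$ and rearranging yields
\[ X(t) = B_\Pi(t_{j-1})\Bigl(I + W(t) - W(t_{j-1}) + \tfrac{1}{2}\Xi(t - t_{j-1})\Bigr), \]
which, as noted below, matches \eqref{eq.Bpi.soln} for $t \in (t_{j-1}, t_j]$. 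This closes the induction.

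Existence is verified by reading the same computation forward: plugging $B_\Pi$ from \eqref{eq.Bpi.soln} into the right-hand side of \eqref{eq.simple.SDE} and splitting the integrals cell-by-cell (where the integrand is the constant $B_\Pi(t_{j-1})$) reproduces $B_\Pi$ on each cell by telescoping.

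The proof is essentially bookkeeping; the main (mild) obstacle is aligning the product formula \eqref{eq.Bpi.soln} with the inductive recursion. For $t \in (t_{j-1}, t_j]$, the factors in \eqref{eq.Bpi.soln} indexed by $s \geq t_j$ with $s_- \geq t$ collapse to $I$ (since $s \wedge t = s_- \wedge t = t$), while the factor indexed by $s = t_j$ carries the partial increment from $t_{j-1}$ to $t$, and the earlier factors assemble into $B_\Pi(t_{j-1})$. Once this telescoping is observed, the product factors as $B_\Pi(t_{j-1})$ times the final partial factor, matching the displayed recursion exactly.
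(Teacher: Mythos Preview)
Your proposal is correct and is precisely the ``direct computation'' the paper alludes to; indeed, the paper later carries out exactly your subtraction argument (for $t\in[s_-,s]$, $B_{\Pi}(t)-B_{\Pi}(s_-)=B_{\Pi}(s_-)(W(t)-W(s_-)+\tfrac12\Xi(t-s_-))$) in the proof of Proposition~\ref{prop.Bpartinv(B)bound}. Your handling of the product telescoping is also correct.
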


\begin{remark}
Note well that $B_{\Pi}$ in Lemma \ref{lem.simple.SDE} is \emph{not} the same as in \eqref{eq.WZ.intro} ff.
\end{remark}
Lemma \ref{lem.simple.SDE} is easily verified by direct computation.

Note that the increments $\{\Delta_s W = W(s) - W(s_-) \colon s \in \Pi\}$ are independent, and
\[ \Delta_s W  \equaldist W\big(\sqrt{\Delta s}\big) \equaldist \sqrt{\Delta s}\,W(1). \]
Hence, if $\Pi = \Pi_k$ is the uniform partition of $[0,T]$ with $k$ steps of length $T/k$, then \eqref{eq.simple.SDE} shows that $B_{\Pi_k}(T)$ is distributed as a matrix random walk $B_k(T)$, as in \eqref{eq:random-walk-Berger}, with step distribution $A_1 = W(1)$.

Lemma \ref{lem.simple.SDE} allows the use of tools from stochastic calculus to analyze the matrix random walk.  We use this to prove $L^p$ convergence for all finite $p$ and a.s.\ convergence for fine enough partitions.

In the following, for $A\in \MNC$, $|A|^2 = \frac1N\mathrm{Tr}(A^\ast A)$ denotes the (squared) normalized Hilbert--Schmidt norm.

\begin{theorem} \label{main.thm.1.MoBettaWZ} For any partition $\Pi$ of $[0,T]$, let $B_\Pi$ denote the process in \eqref{eq.Bpi.soln}, and let $B$ be the Brownian motion in \eqref{eq.BMonG} associated to the ``flat'' Brownian motion $W$ in \eqref{eq:Lie.BM} determined by the set $\beta\subset \MNC$. 
If $2 \leq p<\infty$, there is a constant $K = K(N,\beta,T,p)<\infty$ such that
\[ \mathbb{E}\left[\sup_{0\le t\le T} |B_\Pi(t)-B(t)|^p\right] \le K |\Pi|^{\frac{p}{2}}. \]
\end{theorem}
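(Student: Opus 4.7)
The plan is to subtract the integral equation of Lemma \ref{lem.simple.SDE} from the It\^o form of the SDE \eqref{eq.BMonG}, apply BDG together with Jensen's inequality to reduce to a Gr\"onwall-type bound, and extract the $|\Pi|^{p/2}$ rate from the H\"older-$\tfrac{1}{2}$ regularity of $B$ in $L^p$. Concretely, I would set $D(t) := B_\Pi(t) - B(t)$, which satisfies
\[
D(t) = \int_0^t \bigl[B_\Pi(\ell_\Pi(s)) - B(s)\bigr]\,\ds W(s) + \tfrac12\int_0^t \bigl[B_\Pi(\ell_\Pi(s)) - B(s)\bigr]\Xi\,\ds s,
\]
and split the integrand as $B_\Pi(\ell_\Pi(s)) - B(s) = D(\ell_\Pi(s)) + [B(\ell_\Pi(s)) - B(s)]$. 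Writing $W = \sum_{\xi \in \beta} W_\xi\,\xi$ as a sum of scalar Brownian motions and applying BDG to each coordinate stochastic integral, combined with the mixed bound $|Y\xi| \le \|\xi\|_{\mathrm{op}}|Y|$ and the Jensen reduction $(\int_0^t |Y|^2)^{p/2} \le t^{p/2-1}\int_0^t |Y|^p$, yields an inequality of the form
\[
\mathbb{E}\!\left[\sup_{s\le t} |D(s)|^p\right] \le C_1 \int_0^t \mathbb{E}[|D(\ell_\Pi(s))|^p]\,\ds s + C_1 \int_0^t \mathbb{E}[|B(\ell_\Pi(s)) - B(s)|^p]\,\ds s,
\]
where $C_1 = C_1(N,\beta,T,p)$ collects the BDG constant and $\sum_{\xi\in\beta}\|\xi\|_{\mathrm{op}}^2$.

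Next I would handle the two terms separately. The second integrand is the Wong--Zakai discretization error, which I would control by first applying the same BDG/Gr\"onwall machinery directly to the SDE for $B$ to obtain $\sup_{t \le T} \mathbb{E}[|B(t)|^q] < \infty$ for every $q<\infty$. Then, since
\[
B(s) - B(\ell_\Pi(s)) = \int_{\ell_\Pi(s)}^s B(u)\,\ds W(u) + \tfrac12\int_{\ell_\Pi(s)}^s B(u)\Xi\,\ds u
\]
and the interval has length at most $|\Pi|$, another BDG invocation gives $\mathbb{E}[|B(s) - B(\ell_\Pi(s))|^p] \le C_2 |\Pi|^{p/2}$. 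Setting $f(t) := \mathbb{E}[\sup_{s\le t}|D(s)|^p]$ and noting $\mathbb{E}[|D(\ell_\Pi(s))|^p] \le f(s)$, the inequality becomes
\[
f(t) \le C_1 \int_0^t f(s)\,\ds s + C_1 C_2 T |\Pi|^{p/2},
\]
and Gr\"onwall's inequality then closes the estimate with $K = C_1 C_2 T\, e^{C_1 T}$.

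The principal obstacle is the matrix BDG bookkeeping: because $|\cdot|$ is the normalized Hilbert--Schmidt norm, products $Y\xi$ cannot be controlled by a (false) submultiplicative $|Y|\,|\xi|$ bound but only by the mixed $\|\xi\|_{\mathrm{op}}|Y|$ estimate, so each BDG step introduces a basis-dependent constant that must be tracked through the recursion. Beyond that, the argument is a standard frozen-coefficient Picard scheme; the only genuinely quantitative input is the H\"older-$\tfrac{1}{2}$ $L^p$ regularity of $B$, which is exactly what produces the advertised $|\Pi|^{p/2}$ rate.
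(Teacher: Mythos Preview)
Your argument is correct. The additive decomposition $D=B_\Pi-B$, the split $B_\Pi(\ell_\Pi(s))-B(s)=D(\ell_\Pi(s))+[B(\ell_\Pi(s))-B(s)]$, the BDG/Jensen reduction, and the Gr\"onwall closure all go through as stated, and the H\"older-$\tfrac12$ $L^p$ regularity of $B$ supplies the $|\Pi|^{p/2}$ rate.

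The paper, however, takes a genuinely different route: it works multiplicatively, studying $B_\Pi B^{-1}-I$ rather than $B_\Pi-B$. Using the SDE $\ds B^{-1}=-\ds W\,B^{-1}+\tfrac12\Xi B^{-1}\,\ds t$ and the It\^o product rule, one finds
\[
\ds(B_\Pi B^{-1})(t)=\bigl[B_\Pi(\ell_\Pi(t))-B_\Pi(t)\bigr]\,\ds W(t)\,B^{-1}(t)-\tfrac12\bigl[B_\Pi(\ell_\Pi(t))-B_\Pi(t)\bigr]\Xi B^{-1}(t)\,\ds t,
\]
so the driving error is the $B_\Pi$-increment $B_\Pi(t)-B_\Pi(\ell_\Pi(t))=B_\Pi(\ell_\Pi(t))\bigl(W(t)-W(\ell_\Pi(t))+\tfrac12\Xi(t-\ell_\Pi(t))\bigr)$, which exhibits the $|\Pi|^{1/2}$ scaling \emph{explicitly} (as a Brownian increment) without a second BDG step. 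The price is that one must first establish $L^p$ bounds for $B$, $B^{-1}$, and $B_\Pi$ separately, and then recover $B_\Pi-B=(B_\Pi B^{-1}-I)B$ via H\"older at exponent $2p$. Your direct approach is more elementary and avoids $B^{-1}$ entirely; the paper's multiplicative approach is more natural for group-valued diffusions and makes the cancellation structure (and hence the source of the rate) more transparent.
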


Theorem \ref{main.thm.1.MoBettaWZ} is proved in Section \ref{sec.main.thm.1.MoBettaWZ}.  An immediate corollary is that the matrix random walk $B_k(t)$ with step distribution $W(1)$ converges to the Brownian motion $B(t)$ in $L^p$ for every $p<\infty$.  What's more:

\begin{corollary} \label{cor.BC} If $(\Pi_k)_{k\in\mathbb{N}}$ is a sequence of partitions of $[0,T]$ such that $\sum_k |\Pi_k|^\alpha <\infty$ for some $\alpha>0$ (in particular, if $|\Pi_k|$ decays polynomially), then the sequence $(B_{\Pi_k})_{k \in \N}$ of processes converges to the Brownian motion $B$ pathwise on $[0,T]$ a.s.\ as $k\to\infty$. \end{corollary}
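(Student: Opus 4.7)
The plan is to deduce Corollary \ref{cor.BC} directly from the $L^p$ bound in Theorem \ref{main.thm.1.MoBettaWZ} via a standard Chebyshev/Borel--Cantelli argument; the real content lives in the quantitative $L^p$ estimate, and the corollary is essentially bookkeeping.

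First I would fix the given $\alpha>0$ satisfying $\sum_k |\Pi_k|^\alpha < \infty$ and choose any $p \in [2,\infty)$ with $p \ge 2\alpha$. Theorem \ref{main.thm.1.MoBettaWZ} then supplies a constant $K=K(N,\beta,T,p)<\infty$, independent of $k$, with
\[
\mathbb{E}\left[\sup_{0 \le t \le T}|B_{\Pi_k}(t) - B(t)|^p\right] \le K\,|\Pi_k|^{p/2}.
\]
By Chebyshev's inequality, for each $\varepsilon>0$,
\[
\mathbb{P}\left(\sup_{0 \le t \le T}|B_{\Pi_k}(t)-B(t)| > \varepsilon\right) \le \frac{K}{\varepsilon^{p}}\,|\Pi_k|^{p/2}.
\]
Since $\sum_k |\Pi_k|^\alpha < \infty$ forces $|\Pi_k|\to 0$, we eventually have $|\Pi_k|\le 1$, and then $|\Pi_k|^{p/2}\le |\Pi_k|^\alpha$ because $p/2\ge \alpha$. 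Thus the tail probabilities above are summable in $k$ for every fixed $\varepsilon>0$.

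Applying the first Borel--Cantelli lemma, almost surely only finitely many $k$ satisfy $\sup_{[0,T]}|B_{\Pi_k}-B|>\varepsilon$. Taking $\varepsilon = 1/m$ and intersecting the resulting full-probability events over $m \in \mathbb{N}$ yields a single almost sure event on which $\sup_{0 \le t \le T}|B_{\Pi_k}(t)-B(t)|\to 0$ as $k\to\infty$. This is uniform convergence of paths on $[0,T]$, which is stronger than the claimed pathwise (pointwise) convergence.

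The only real ``obstacle'' is cosmetic: one must choose $p$ large enough (relative to $\alpha$) that the mesh exponent $p/2$ in Theorem \ref{main.thm.1.MoBettaWZ} dominates $\alpha$. This is always possible precisely because the $L^p$ bound holds for every $p \in [2,\infty)$, which is why the polynomial decay hypothesis on $|\Pi_k|$ (equivalently, summability of some positive power of $|\Pi_k|$) is enough.
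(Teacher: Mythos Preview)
Your proof is correct and follows essentially the same Chebyshev/Borel--Cantelli argument as the paper, including the key observation that $|\Pi_k|^{p/2}\le |\Pi_k|^\alpha$ once $|\Pi_k|\le 1$. You are slightly more careful than the paper in explicitly intersecting over $\varepsilon=1/m$ to obtain the single almost-sure event.
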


\begin{proof} Fix $\epsilon>0$, and let
\[ E_k(\epsilon) \coloneqq \left\{ \sup_{0\le t\le T} |B_{\Pi_k}(t)-B(t)| \ge \epsilon \right\}. \]
Let $p>\min\{2\alpha,2\}$.  By Markov's inequality and Theorem \ref{main.thm.1.MoBettaWZ},
\[\mathbb{P}(E_k(\epsilon)) \le \frac{1}{\epsilon^p}\mathbb{E}\left[\sup_{0\le t\le T}|B_{\Pi_k}(t)-B(t)|^p\right] \le \frac{K}{\epsilon^p}|\Pi_k|^{\frac{p}{2}}.\]
Since $\sum_k |\Pi_k|^\alpha <\infty$, it follows that $|\Pi_k|\to 0$; hence, for large $k$, $|\Pi_k|^{p/2} \le |\Pi_k|^\alpha$ by the choice $p>2\alpha$.  Therefore, $\sum_k \mathbb{P}(E_k(\epsilon)) \le K\epsilon^{-p} \sum_k |\Pi_k|^{p/2} < \infty$.

Hence, by the Borel--Cantelli lemma, $\mathbb{P}(E_k(\epsilon) \text{ for infinitely many }k)=0$; i.e.\ there is some random $N$ so that, with probability $1$, $|B_{\Pi_k}(t)-B(t)|<\epsilon$ for all $t\in[0,T]$ and all $k\ge N$.  This concludes the proof.
\end{proof}

Let us specialize Theorem \ref{main.thm.1.MoBettaWZ} to the case of the standard Brownian motion on $\GL(N,\mathbb{C})$ --- meaning that $\beta=\beta_N$ is an orthonormal bases for the whole matrix space $\MNC$.  In service of the large-$N$ limit that principally concerns us in this paper, the inner product used to define $\beta_N$ is chosen to be scaled up:
\[
\langle A,B\rangle = N\operatorname{Re}\operatorname{Tr}(B^\ast A), \qquad A,B \in M_{N \times N}(\mathbb{C}).
\]
(Hence, the associated norm is actually $N|\cdot|$; this is of no concern, as the inner product used to define $\beta_N$ need not relate to the one used to measure the convergence in Theorem \ref{main.thm.1.MoBettaWZ}.)  Of note is the fact that, for this inner product,
\begin{equation} \label{eq.Xi=0}
\Xi = \sum_{\xi\in\beta_N} \xi^2 = 0
\end{equation}
for any orthonormal basis $\beta_N$; see \cite{Kemp2016}.

The best estimates we prove for the constant $K(N,\beta_N,T,p)$ in Theorem \ref{main.thm.1.MoBettaWZ} diverge with $N$ for each $p$ and $T$.  Nevertheless, both $B_\Pi$ and $B$ have large-$N$ limits in the free probability sense: The ``flat'' Brownian motion $W = W^N$ (which for this $\beta_N$ is a scaled Ginibre ensemble for each time) converges in finite dimensional $\ast$-distributions (cf.\ Definition \ref{def.conv.fd*d}) to a {\bf circular Brownian motion} $w$, and the $\GL(N,\mathbb{C})$ Brownian motion $B = B^N$ converges in finite dimensional $\ast$-distributions to the {\bf free multiplicative Brownian motion} $b$ in a $W^\ast$-probability space, as proved in \cite{Kemp2016}.  (See Section \ref{sect.fbm} for a discussion of the relevant free probability constructions.)

Following the same basic outline as the proofs of Lemma \ref{lem.simple.SDE} and Theorem \ref{main.thm.1.MoBettaWZ}, we have the following mirror results in the free world, which strengthen $L^p$ convergence for $p < \infty$ to $L^\infty$ convergence.

\begin{lemma}\label{lem.free.simple.SDE}
Fix a $W^*$-probability space $(\mathscr{A},\varphi)$ and a circular Brownian motion motion $w = (w(t))_{t \geq 0}$ in $\cA$.
Let $\Pi$ be a partition of $[0,T]$.
The free stochastic integral equation
\[
x(t) = 1 + \int_0^t x(\ell_{\Pi}(s))\,\ds w(s), \qquad 0 \leq t \leq T,
\]
has unique solution
\[
x(t) = b_{\Pi}(t) \coloneqq \prod_{s \in \Pi}\left(1 + w(s \wedge t) - w(s_- \wedge t)\right).
\]
In particular, if $\Pi_k$ is the uniform partition on $[0,T]$ with $k$ steps, then
\[ b_{\Pi_k}(T) \equaldist \left(1+\sqrt{\frac{T}{k}}a_1\right)\left(1+\sqrt{\frac{T}{k}}a_2\right)\cdots\left(1+\sqrt{\frac{T}{k}}a_k\right), \]
where $a_1,\ldots,a_k$ are freely independent from each other and all have the same $\ast$-distribution as $w(1)$.
\end{lemma}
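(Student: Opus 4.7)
The plan is to mirror the proof of Lemma \ref{lem.simple.SDE} in the free probability setting, exploiting the structural fact that $\ell_{\Pi}(\cdot)$ is constant on each subinterval $(s_-,s]$ for $s\in\Pi$. This makes $x(\ell_\Pi(\cdot))$ a left-adapted step process, so its free It\^o integral against $w$ reduces to a finite sum of elementary products and no delicate analysis is required. Compared with \eqref{eq.simple.SDE}, the free equation has no drift, which reflects the identity $\Xi = 0$ from \eqref{eq.Xi=0} surviving in the $N\to\infty$ limit.

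For existence, I would substitute $x=b_\Pi$ and verify the equation directly. On any interval $(t_{j-1},t_j]$, the integrand $b_\Pi(\ell_\Pi(s)) = b_\Pi(t_{j-1})$ is constant in $s$, so by the Biane--Speicher definition of the free It\^o integral on simple left-adapted integrands, $\int_{t_{j-1}}^t b_\Pi(t_{j-1})\, \ds w(s) = b_\Pi(t_{j-1})(w(t)-w(t_{j-1}))$. Combining with the value at time $t_{j-1}$ and factoring yields $b_\Pi(t_{j-1})(1 + w(t) - w(t_{j-1}))$, which coincides with $\prod_{s\in\Pi}(1+w(s\wedge t)-w(s_-\wedge t))$ once one notes that every factor with $s_- \ge t$ collapses to $1$.

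For uniqueness, I would induct on the partition intervals: on $(t_{j-1},t_j]$ the integrand $x(\ell_\Pi(s)) = x(t_{j-1})$ is already determined, so the equation reduces to $x(t) = x(t_{j-1})(1 + w(t)-w(t_{j-1}))$, leaving no freedom. For the uniform-partition claim, I would invoke the defining stationary free-independent-increments property of the circular Brownian motion: the $k$ increments $\Delta_{t_j} w$ are freely independent, and each satisfies $\Delta_{t_j} w \equaldist \sqrt{T/k}\, w(1)$. Setting $a_j := \sqrt{k/T}\,\Delta_{t_j} w$ then produces the displayed product with freely independent $a_j$'s each $\ast$-distributed as $w(1)$.

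The only (mild) obstacle I foresee is bookkeeping: one must confirm that the Biane--Speicher free It\^o calculus is set up to cover the class of left-adapted step processes of the form $x(\ell_\Pi(\cdot))$. Once that framework is invoked, the entire argument is algebraic and sidesteps the fine $L^p$ estimates that made Theorem \ref{main.thm.1.MoBettaWZ} nontrivial.
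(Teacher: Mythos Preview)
Your proposal is correct and matches the paper's approach exactly: the paper dismisses this lemma with the single line ``As before, Lemma \ref{lem.free.simple.SDE} is a simple calculation,'' referring back to Lemma \ref{lem.simple.SDE} which it likewise calls ``easily verified by direct computation.'' Your interval-by-interval verification (the integrand is constant on each $(s_-,s]$, so the free It\^o integral reduces to $b_\Pi(s_-)(w(t)-w(s_-))$) is precisely the computation the paper has in mind, and indeed this exact identity $b_\Pi(t) = b_\Pi(s_-)(1+w(t)-w(s_-))$ reappears verbatim at the start of the proof of Proposition \ref{prop.bpartinv(b)bound}.
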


\begin{theorem} \label{thm.free.MoBettaWZ}
Retain the setting and notation from Lemma \ref{lem.free.simple.SDE}.
Let $b$ be the free multiplicative Brownian motion \eqref{eq.free.mult.BM}, defined as the unique solution to the free stochastic differential equation
\[
\ds b(t) = b(t)\,\ds w(t), \qquad b(0) = 1.
\]
There is a constant $\overline{K} = \overline{K}(T)<\infty$ such that
\[ \sup_{0 \leq t \leq T}\|b_\Pi(t) - b(t)\|_{\infty} \le \overline{K} |\Pi|^{\frac12}. \]
\end{theorem}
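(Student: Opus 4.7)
The plan is to mirror the proof strategy of Theorem~\ref{main.thm.1.MoBettaWZ}, replacing Ito calculus with free stochastic calculus and the $L^p$ Burkholder--Davis--Gundy inequality with its operator-norm free analogue (Biane--Speicher). Subtracting the free SDEs for $b_\Pi$ (from Lemma~\ref{lem.free.simple.SDE}) and for $b$ produces the closed integral equation
\[ e(t) \coloneqq b_\Pi(t) - b(t) = \int_0^t \bigl( b_\Pi(\ell_\Pi(s)) - b(s) \bigr)\,\ds w(s), \qquad 0 \le t \le T, \]
whose integrand splits as $b_\Pi(\ell_\Pi(s)) - b(s) = e(\ell_\Pi(s)) + (b(\ell_\Pi(s)) - b(s))$.

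Before attacking this, I would establish two preliminary operator-norm estimates for $b$ alone. The free BDG inequality, in the form
\[ \Bigl\| \int_0^t U(s)\,\ds w(s) \Bigr\|_\infty \le C \Bigl( \int_0^t \|U(s)\|_\infty^2\,\ds s \Bigr)^{1/2} \]
for $U$ suitably bi-adapted, applied to $U(s) = b(s)$ together with a Gronwall bootstrap in the operator norm, yields a uniform constant $M_T \coloneqq \sup_{0 \le t \le T}\|b(t)\|_\infty < \infty$. The same inequality applied on the short window $[\ell_\Pi(s), s]$ gives the size estimate $\|b(s) - b(\ell_\Pi(s))\|_\infty \le C M_T |\Pi|^{1/2}$ uniformly in $s \in [0,T]$.

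Applying the free BDG to the equation for $e(t)$, inserting the decomposition of the integrand, and using $(x+y)^2 \le 2x^2 + 2y^2$ then produces
\[ \|e(t)\|_\infty^2 \le 2C^2 \int_0^t \|e(\ell_\Pi(s))\|_\infty^2\,\ds s + 2 C^4 M_T^2 T |\Pi|. \]
Setting $\tilde E(t) \coloneqq \sup_{0 \le s \le t}\|e(s)\|_\infty$, which dominates $\|e(\ell_\Pi(s))\|_\infty$ for $s \le t$, and applying Gronwall's lemma to $\tilde E^2$ gives $\tilde E(T)^2 \le 2 C^4 M_T^2 T |\Pi| \, e^{2 C^2 T}$. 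Taking square roots furnishes the desired bound with an explicit $\overline K = C^2 M_T \sqrt{2T} \, e^{C^2 T}$ depending only on $T$.

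The chief obstacle will be formulating and justifying the operator-norm free BDG inequality in exactly the form needed: the classical Biane--Speicher framework is typically stated in terms of simple bi-adapted biprocesses, and one must verify that the integrands $b(\cdot)$ and $b_\Pi(\ell_\Pi(\cdot)) - b(\cdot)$ fall within its scope with a constant $C$ independent of $|\Pi|$. A secondary but routine point is the uniform operator-norm bound on the free multiplicative Brownian motion over $[0,T]$; this is well-known, but isolating it as a lemma is essential, as it is the only ingredient preventing the Gronwall step from closing trivially.
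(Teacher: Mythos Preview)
Your approach is correct and in fact more direct than the paper's. The paper does not subtract the two SDEs; instead it follows the multiplicative route inherited from the matrix proof of Theorem~\ref{main.thm.1.MoBettaWZ}: it first shows $b$ is invertible by exhibiting an SDE for $h=b^{-1}$, bounds $\sup_t\|b^{\pm1}(t)\|_\infty$ and $\sup_t\|b_\Pi(t)\|_\infty$ via Biane--Speicher plus Gr\"onwall, then applies the free It\^o product rule to $b_\Pi b^{-1}$ to obtain
\[
\ds(b_\Pi b^{-1})(t)=\bigl(b_\Pi(\ell_\Pi(t))-b_\Pi(t)\bigr)\,\ds w(t)\,b^{-1}(t),
\]
and feeds in the explicit increment $b_\Pi(t)-b_\Pi(\ell_\Pi(t))=b_\Pi(\ell_\Pi(t))(w(t)-w(\ell_\Pi(t)))$ together with $\|w(t)-w(s)\|_\infty^2\le 8|t-s|$ to get $\sup_t\|b_\Pi(t)b^{-1}(t)-1\|_\infty\le C(T)|\Pi|^{1/2}$; the result follows from $b_\Pi-b=(b_\Pi b^{-1}-1)b$.

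Your additive decomposition avoids the invertibility lemma and the It\^o product rule entirely, and obtains the small-increment input from $b$ rather than $b_\Pi$ (via Biane--Speicher on $[\ell_\Pi(s),s]$), which is slightly cleaner. The paper's route has the mild advantage that the increment $b_\Pi-b_\Pi\circ\ell_\Pi$ is given by an exact algebraic formula rather than an integral estimate, but in the operator-norm setting this buys nothing extra. Your worry about the scope of the Biane--Speicher bound is legitimate bookkeeping but not a real obstacle: the integrands here are left-adapted (biprocesses of the form $U\otimes 1$), squarely within \cite[Thm.~3.2.1]{BS1998}, and the paper invokes the bound in exactly the same way.
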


\begin{remark}
Since $\|\cdot\|_p\le\|\cdot\|_\infty$, taking $p$th powers in the preceding equation yields the same $L^p$ estimates as in Theorem \ref{main.thm.1.MoBettaWZ}, with $K$ replaced by $\overline{K}^p$.
\end{remark}

As before, Lemma \ref{lem.free.simple.SDE} is a simple calculation.
Theorem \ref{thm.free.MoBettaWZ} is proved in Section \ref{sec.thm.free.MoBettaWZ}.  Convergence in $L^\infty$, or just convergence in $L^p$ for all finite $p$, with a bound that is locally uniform in time implies convergence in finite-dimensional $\ast$-distributions, cf.\ Proposition \ref{prop.Lp>D*}.  Hence, we have proved the following more natural conclusion.

\begin{corollary} \label{cor.NCFDD} If $(\Pi_k)_{k\in\mathbb{N}}$ is a sequence of partitions of $[0,T]$ such that $|\Pi_k|\to 0$ as $k\to\infty$, then $(b_{\Pi_k})_{k \in \N}$ converges in finite-dimensional $\ast$-distributions to $b|_{[0,T]}$. \end{corollary}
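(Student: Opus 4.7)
The plan is to deduce Corollary \ref{cor.NCFDD} directly from the operator-norm estimate in Theorem \ref{thm.free.MoBettaWZ} via a standard telescoping argument. Recall that convergence of $(b_{\Pi_k}|_{[0,T]})_{k \in \N}$ to $b|_{[0,T]}$ in finite-dimensional $\ast$-distributions amounts to the following: for every $n \in \N$, every choice of times $t_1, \dots, t_n \in [0,T]$, and every $\ast$-monomial $P(X_1, \dots, X_n) = X_{i_1}^{\varepsilon_1} \cdots X_{i_m}^{\varepsilon_m}$ with $i_j \in \{1, \dots, n\}$ and $\varepsilon_j \in \{1, \ast\}$,
\[
\varphi\bigl(P(b_{\Pi_k}(t_1),\dots,b_{\Pi_k}(t_n))\bigr) \longrightarrow \varphi\bigl(P(b(t_1),\dots,b(t_n))\bigr) \quad \text{as } k \to \infty.
\]
Fix such data.

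The first step is a uniform operator-norm bound: there exists $C < \infty$ such that $\|b(t)\|_\infty \le C$ and $\|b_{\Pi_k}(t)\|_\infty \le C$ for all $k \in \N$ and all $t \in [0,T]$. Since $t \mapsto b(t)$ is operator-norm bounded on $[0,T]$ (cf.\ \cite{Kemp2016}), Theorem \ref{thm.free.MoBettaWZ} immediately gives
\[
\|b_{\Pi_k}(t)\|_\infty \le \|b(t)\|_\infty + \overline{K}|\Pi_k|^{1/2} \le \sup_{s \in [0,T]}\|b(s)\|_\infty + \overline{K}\,T^{1/2},
\]
and one takes $C$ to be this last quantity.

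The second step is the standard telescoping inequality: for operators $U_1,\dots,U_m, V_1,\dots,V_m$ of operator norm at most $C$,
\[
\|U_1 \cdots U_m - V_1 \cdots V_m\|_\infty \le C^{m-1} \sum_{j=1}^m \|U_j - V_j\|_\infty.
\]
Applying this with $U_j = b_{\Pi_k}(t_{i_j})^{\varepsilon_j}$ and $V_j = b(t_{i_j})^{\varepsilon_j}$, and using $\|X^\ast - Y^\ast\|_\infty = \|X - Y\|_\infty$, Theorem \ref{thm.free.MoBettaWZ} yields
\[
\bigl\|P(b_{\Pi_k}(t_1),\dots,b_{\Pi_k}(t_n)) - P(b(t_1),\dots,b(t_n))\bigr\|_\infty \le m\,C^{m-1}\,\overline{K}\,|\Pi_k|^{1/2} \longrightarrow 0.
\]
Since $\varphi$ is a state, $|\varphi(Z)| \le \|Z\|_\infty$, which converts operator-norm convergence into the required convergence of the $\ast$-moment. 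No substantive obstacle arises: the whole argument reduces to the uniform boundedness of $\|b(t)\|_\infty$ on $[0,T]$ together with the elementary telescoping inequality, and the heavy lifting has already been done in Theorem \ref{thm.free.MoBettaWZ}. (Alternatively, one can simply cite Proposition \ref{prop.Lp>D*}, noting that the $L^\infty$ bound of Theorem \ref{thm.free.MoBettaWZ} subsumes locally-uniform-in-time $L^p$ bounds for every finite $p$.)
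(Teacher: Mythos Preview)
Your proof is correct and follows essentially the same route as the paper: the paper simply remarks that the $L^\infty$ convergence of Theorem \ref{thm.free.MoBettaWZ} implies convergence in finite-dimensional $\ast$-distributions by Proposition \ref{prop.Lp>D*}, whose proof is exactly the telescoping-plus-H\"older argument you spell out directly in operator norm. Your explicit version is slightly cleaner (working in $\|\cdot\|_\infty$ avoids the H\"older step), and you even note the alternative of citing Proposition \ref{prop.Lp>D*}, which is precisely what the paper does.
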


\subsubsection{Convergence of Eigenvalues\label{sect.intro.eigenvalues}} \hfill

\medskip

For any matrix $A\in \MNC$, denote by $\mu_A$ the {\bf empirical spectral distribution} (ESD)
\begin{equation} \label{eq.def.ESD} \mu_A = \frac1N\sum_{j=1}^N \delta_{\lambda_j(A)} \end{equation}
where $\{\lambda_j(A)\}_{j=1}^N$ are the eigenvalues of $A$ counted with multiplicity.  If $A$ is a random matrix, $\mu_A$ is a random probability measure on $\mathbb{C}$.

For sequences of random measures $(\mu_n)_{n\in\mathbb{N}}$, the standard notions of convergence are {\em weak convergence almost surely or in probability}.  That is: $\mu_n\rightharpoonup\mu$ if
\[ \int_{\mathbb{C}} f\,d\mu_n\to \int_{\mathbb{C}} f\,d\mu \qquad \forall\,f\in C_b(\mathbb{C}) \]
a.s. or in probability.

Note that the eigenvalues of a matrix $A$ are the zeroes of the characteristic polynomial $\chi_A(\lambda) = \det(A-\lambda I)$, whose coefficients are polynomials in the entries of $A$.  The zeroes of any polynomial are continuous functions of its coefficients; hence, there is a continuous map $A\mapsto (\lambda_1(A),\ldots,\lambda_N(A))$ defining the eigenvalues, cf.\ \cite[Theorem 5.2]{Kato1982}.  It therefore follows from  Corollary \ref{cor.BC} that the ESD of the matrix random walk $B_k(t)$ converges weakly a.s.\ to the ESD of the Brownian motion $B(t)$.  Indeed, appealing to \eqref{eq.def.ESD}, for $f\in C_b(\mathbb{C})$
\[ \int_{\mathbb{C}} f\,d\mu_{B_k(t)} = \frac1N\sum_{j=1}^N f(\lambda_j(B_k(t))) \to \frac1N\sum_{j=1}^N f(\lambda_j(B(t))) = \int_{\mathbb{C}} f\,d\mu_{B(t)} \]
a.s.\ as $k\to\infty$, where the convergence follows from the facts that $B_k(t)\to B(t)$ a.s. and that the functions  $f\circ\lambda_j\colon \MNC\to\mathbb{C}$ are continuous.

The preceding discussion may be misleading if it gives the impression that convergence of eigenvalues is generally easy.  While this may be true for convergence of matrices {\em in a fixed dimension $N$}, our principal interest is in limits of the measures $\mu_{B_k(t)}$ for matrix random walks $B_k(t) = B_k^N(t)$ {\em as $N\to\infty$}.  Let us introduce and re-emphasize notation for the main theorem.

\begin{notation} \label{notation.Bk->bk}
We fix the following notation to be used throughout what follows.
\begin{itemize}
    \item Let $A^N_j = U^N_j T^N_j V^{N\ast}_j$, $1\le j\le k$ be unitarily bi-invariant random matrices (see \eqref{eq.bi-inv.intro}), all independent.  Denote by $\nu^N_j$ the $\mathrm{ESD}$ of $T^N_j$.
    \item Let $U^N_0$ be a unitary random matrix independent from $A^N_1,\ldots,A^N_k$, and let $T^N_0$ be a positive random matrix for which $U^N_0 = \exp(iT^N_0)$. Denote by  $\nu^N_0$ the $\mathrm{ESD}$ of $T^N_0$.
    \item As in \eqref{eq.Bk}, for $t>0$ denote by $B_k^N(t)$ the random matrix
    \[ B_k^N(t) = \prod_{j=1}^k \left(I+\sqrt{\frac{t}{k}}A_j^N\right). \]
\end{itemize}
\end{notation}

\begin{remark} \begin{enumerate}
    \item The characterization of unitarily bi-invariant random matrices in terms of their singular value decomposition can be found below in Proposition \ref{prop.bi-inv}.
    \item There is no loss in generality in choosing a positive matrix $T^N_0$ for the unitary $U_0^N$; the eigenvalues can always be chosen in $[0,2\pi)$ or any other interval of length $2\pi$.
    \item Note that the time parameter $t>0$ could be absorbed into the step matrices $A_j^N$; we will find it useful later to normalize the $A_j^N$ in Hilbert--Schmidt norm, and study the overall evolution in time.
\end{enumerate}   
\end{remark}

We are primarily interested in this paper in the normalized matrix random walk with steps $A_j^N$ and initial distribution $U_0^N$: I.e. $U_0^N B_k^N(t)$.  Studying the eigenvalues of this highly non-normal random matrix ensemble has been challenging; on the other hand, its $\ast$-distribution (cf.\ Section \ref{sec.*-distribution}, i.e.\ traces of polynomials in the matrix and its adjoint, is well-understood.

\begin{lemma} \label{lem.Bk->bk} Suppose for each $j\in\{0,1,\ldots, k\}$ the empirical measure $\nu_j^N$ (cf.\ Notation \ref{notation.Bk->bk}) converges weakly a.s.\ to a probability measure $\nu_j$.  There is a $W^\ast$-probability space (cf.\ Section \ref{sect.*-prob}) containing operators $a_1,\ldots,a_k$ and $u_0$, all freely independent, so that, for all $t>0$, $U^N_0 B^N_k(t)$ converges in $\ast$-distribution to
\begin{equation} \label{eq.ubk} u_0 b_k(t) = u_0\prod_{j=1}^k \left(1+\sqrt{\frac{t}{k}}a_j\right). \end{equation}
The operators $a_j$ are $\mathscr{R}$-diagonal (cf.\ Section \ref{sec.R-diag}) determined by the conditions that $|a_j| = \sqrt{a_j^\ast a_j}$ has distribution $\nu_j$; the unitary operator $u_0$ has the form $u_0=\exp(i\vartheta_0)$ for some selfadjoint operator $\vartheta_0$ with distribution $\nu_0$.
\end{lemma}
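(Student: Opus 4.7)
The plan is to reduce the statement to standard asymptotic freeness results for bi-invariant ensembles --- Propositions \ref{prop.asymp.free.Haar.GUE} and \ref{prop.bi-inv} --- combined with the observation that the random walk is a noncommutative $\ast$-polynomial in its steps. First, by the SVD characterization of unitary bi-invariance, each step factors as $A_j^N = U_j^N T_j^N V_j^{N\ast}$ with $U_j^N, V_j^N$ independent Haar distributed on $\mathrm{U}(N)$ and $T_j^N$ the positive singular-value matrix, all three jointly independent; the triples are mutually independent across $j=1,\ldots,k$ and jointly independent of the pair $(T_0^N, U_0^N)$ with $U_0^N = \exp(iT_0^N)$. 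The hypothesis that each $\nu_j^N \to \nu_j$ weakly a.s., together with the norm control needed to ensure a bounded $W^\ast$ limit, promotes to a.s.\ $\ast$-distribution convergence of each self-adjoint $T_j^N$ to a self-adjoint $t_j$ (positive for $j \geq 1$) with spectral distribution $\nu_j$.

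Next, invoke Propositions \ref{prop.asymp.free.Haar.GUE} and \ref{prop.bi-inv}, which assert that independent unitarily bi-invariant matrices with a.s.-convergent singular-value distributions, joined with an independent Haar unitary, converge a.s.\ in $\ast$-distribution to a freely independent family in a tracial $W^\ast$-probability space. Explicitly, each $A_j^N$ converges to $a_j \coloneqq u_j t_j v_j^\ast$ where $u_j, v_j$ are Haar unitaries free from each other and from $t_j$; then $a_j^\ast a_j = v_j t_j^2 v_j^\ast$, so $|a_j| = v_j t_j v_j^\ast$ is unitarily conjugate to $t_j$ and hence has distribution $\nu_j$. By the definition recalled in Section \ref{sec.R-diag}, each $a_j$ is therefore $\mathscr{R}$-diagonal. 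Similarly, since bounded continuous functional calculus preserves $\ast$-distribution convergence, $U_0^N = \exp(i T_0^N)$ converges to $u_0 \coloneqq \exp(i \vartheta_0)$ with $\vartheta_0$ self-adjoint of spectral distribution $\nu_0$, free from $(a_1, \ldots, a_k)$.

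Finally, $U_0^N B_k^N(t) = U_0^N \prod_{j=1}^k \bigl(I + \sqrt{t/k}\,A_j^N\bigr)$ is a $\ast$-polynomial with scalar coefficients (depending on $t$ and $k$) in $U_0^N$ and $A_1^N, \ldots, A_k^N$; since joint $\ast$-distribution convergence of the inputs is preserved under $\ast$-polynomial operations, one concludes $U_0^N B_k^N(t) \to u_0 b_k(t)$ in $\ast$-distribution, as desired. The main obstacle is organizational rather than analytic: one must marshal Voiculescu-type asymptotic freeness for the full collection --- the $k$ bi-invariant steps together with the independent initial unitary --- into a single joint statement from the cited propositions, and verify that the polar form $a_j = u_j t_j v_j^\ast$ of the limits matches the paper's formal definition of $\mathscr{R}$-diagonality with $|a_j| \sim \nu_j$.
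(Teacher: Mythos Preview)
Your proposal is correct and follows essentially the same approach as the paper: decompose each $A_j^N$ via its SVD, apply Proposition~\ref{prop.asymp.free.Haar.GUE} to the full independent collection $\{U_j^N,V_j^N,T_j^N\}_{j=1}^k \cup \{T_0^N\}$ to obtain joint $\ast$-convergence to freely independent limits $\{u_j,v_j,\vartheta_j\}_{j=1}^k \cup \{\vartheta_0\}$, set $a_j = u_j\vartheta_j v_j^\ast$ and $u_0 = \exp(i\vartheta_0)$, and then observe that $U_0^N B_k^N(t)$ is a fixed noncommutative $\ast$-polynomial in $\exp(iT_0^N)$ and the $U_j^N,V_j^N,T_j^N$. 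The paper invokes Corollary~\ref{cor.R-diag.SVD} rather than Proposition~\ref{prop.bi-inv} for the $\mathscr{R}$-diagonality of $a_j$, but this is the same content.
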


Lemma \ref{lem.Bk->bk} is proved in Section \ref{sec.R-diag}, following a general discussion of $\mathscr{R}$-diagonal elements and limits of bi-invariant ensembles.  Convergence in $\ast$-distribution does not generally imply convergence of the $\mathrm{ESD}$, as Example \ref{ex.Jordan.block} shows; but it does point to what the limit distribution of eigenvalues {\em ought} to be: the {\bf Brown measure} of the $\ast$-distribution limit, cf.\ Definition \ref{def.Brown.measure}.

As detailed in Sections \ref{sect.Brown.measure} and \ref{sect.SRT}, proving convergence to the Brown measure requires fine control of small singular values of of $B_k^N(t)$ as $N$ grows.  To guarantee this fine control, stronger assumptions than just weak convergence of singular values are required.

\begin{assumption} \label{assump.sing.conv} Let $\nu^N$ denote a sequence of empirical measures of (random) points $0\le \sigma_{\min}^N=:\sigma_1^N\le\cdots\le\sigma_N^N:=\sigma_{\max}^N$.
%Let $T=T^N$ be a selfadjoint random matrix ensemble with singular values $\sigma_{\min}(T) = \sigma_1(T)\le \cdots\le\sigma_N(T) = \sigma_{\max}(T)$. (Note that $\sigma_{\max}(T) = \|T\|$ while $\sigma_{\min}(T) = \|T^{-1}\|^{-1}$ or $=0$ if $T$ is not invertible.) Let $\nu^N$ denote the empirical measure of singular values
\[ \nu^N = \frac{1}{N}\sum_{j=1}^N \delta_{\sigma_j^N} \]
Let $\nu$ be a deterministic measure on $[0,\infty)$.  Our assumptions are:
\begin{enumerate}
    \item[(RW1)] \label{assump.SV1} There is some $M<\infty$ so that $\mathbb{P}\{\sigma_{\max}^N\le M\}=1$ for all large $N$.
    \item[(RW2)] \label{assump.SV2} There are $\alpha,\beta,\gamma>0$ so that, for all $\epsilon>0$,
    \[ \mathbb{P}\{\sigma_{\min}^N\le\epsilon\} \le \gamma \epsilon^{\alpha} N^{\beta}. \]
    \item[(RW3)] \label{assump.SV3} There are $\Upsilon,\delta>0$ so that the Wasserstein distance $\mathcal{W}_1$ between $\nu^N$ and $\nu$ satisfies $\mathbb{P}\{\mathcal{W}_1(\nu^N,\nu)\le \Upsilon N^{-\delta}\}=1$ for all large $N$.
   \item[(RW4)] \label{assump.SV4} The measure $\nu$ is absolutely continuous with a density $\rho$ which satisfies
   \[ \int_0^\infty t\rho(t)^3\,dt < \infty. \]
\end{enumerate}
\end{assumption}

We will apply Assumption \ref{assump.sing.conv} to the singular values $\sigma_j^N = \sigma_j(A^N_j)$ of the step matrices in the random walk \eqref{eq.Bk}.  Assumptions \ref{assump.SV1} matches Assumption \ref{SRT.1} from the Single Ring Theorem, and holds true for the classic random matrix ensembles such as the Ginibre Ensemble (cf.\ Definition \ref{def.classic.ensembles}). Assumption \ref{assump.SV2} is equivalent to the condition that $\sigma_{\min}(T^N) \gtrsim N^{-\alpha'}$ with probability $\gtrsim 1-N^{-\beta'}$ for some $\alpha',\beta'>0$, while Assumption \ref{assump.SV3} gives a natural quantitvative speed of convergence of the singular value distribution, measured in terms of the ubiquitous Wasserstein distance (see Section \ref{section.Wegner} for definition and details).  Finally, Assumption \ref{assump.SV4} is a mild regularity condition.  The case $\nu=\delta_1$ is to allow for the case that the steps $A_j^N$ themselves are (asymptotically) Haar Unitary.  The alternative density requirements allows tools from free entropy theory to come into play.

Under these conditions, we can upgrading of Lemma \ref{lem.Bk->bk} from convergence in $\ast$-distribution to the much more difficult convergence of $\mathrm{ESD}$.

\begin{theorem} \label{thm.conv.ESD} For $t>0$, let %$U_0^N$,
$A_j^N$, $B_k^N(t)$, and $\nu_j^N$ for $0\le j\le k$ be as in Notation \ref{notation.Bk->bk}.  Suppose that $\nu_j^N$ converges weakly a.s.\ to a probability measure $\nu_j$.  Suppose further that the measures $\nu_j^N$ and $\nu_j$ each satisfy \ref{assump.SV1}, \ref{assump.SV2}, and \ref{assump.SV3} of Assumption \ref{assump.sing.conv}.  Finally, suppose that {\bf one} of the following holds:
\begin{itemize}
    \item \ref{assump.SV4} of Assumption \ref{assump.sing.conv} holds; or
    \item Each $A^N_j$ is a polynomial in independent $\mathrm{GUE}$s; or
    \item Each $A^N_j$ is a polynomial in Haar Unitary Ensembles.
\end{itemize}

Let %$u_0$ and
$b_k(t)$ denote the $\ast$-distribution limits from \eqref{eq.ubk}.  Then the $\mathrm{ESD}$ of %$U_0^N 
$B_k^N(t)$ converges weakly in probability to the Brown measure of %$u_0 
$b_k(t)$.
\end{theorem}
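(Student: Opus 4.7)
The strategy is the standard Girko Hermitization approach, but with the technical heavy lifting concentrated on small-singular-value estimates for the perturbed random walk. The master reduction is the Bordenave--Chafaï lemma: to prove that $\mu_{B_k^N(t)}$ converges weakly in probability to the Brown measure of $b_k(t)$, it suffices to prove that, for Lebesgue-almost every $\lambda\in\mathbb{C}$,
\[
\int_0^\infty \log s \, d\mu_{|B_k^N(t)-\lambda I|}(s) \;\longrightarrow\; \int_0^\infty \log s\, d\mu_{|b_k(t)-\lambda|}(s)
\]
in probability as $N\to\infty$, where $\mu_{|M|}$ denotes the empirical singular value distribution of $M$. This replaces the delicate eigenvalue question with a family of Hermitian questions, one for each $\lambda$.

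The first half of the reduction is essentially free. From Lemma~\ref{lem.Bk->bk} (more precisely, the unital version with $u_0=1$), we already have $B_k^N(t)\to b_k(t)$ in $\ast$-distribution, hence $|B_k^N(t)-\lambda I|^2 \to |b_k(t)-\lambda|^2$ in moments. Since these are selfadjoint, this upgrades to weak convergence in probability of $\mu_{|B_k^N(t)-\lambda I|}$ to $\mu_{|b_k(t)-\lambda|}$. Weak convergence of measures would give convergence of $\int f \, d\mu^N \to \int f \, d\mu$ for bounded continuous $f$; the issue is that $\log s$ is unbounded at both ends, so I need uniform integrability of $\log s$ under the measures $\mu_{|B_k^N(t)-\lambda I|}$. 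I would split this into a large-$s$ tail and a small-$s$ tail.

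The large-$s$ tail is easy: Assumption (RW1) gives a deterministic bound $\|A_j^N\|_{\mathrm{op}}\le M$ with high probability, so
\[
\|B_k^N(t)-\lambda I\|_{\mathrm{op}} \le |\lambda| + \prod_{j=1}^k \Bigl(1+\sqrt{t/k}\,M\Bigr) \le |\lambda| + e^{M\sqrt{tk}},
\]
uniformly in $N$, killing the upper tail of $\log s$. The small-$s$ tail is the core obstacle: I must show that for Lebesgue-a.e.\ $\lambda\in\mathbb{C}$,
\[
\lim_{\e\downarrow 0}\;\limsup_{N\to\infty}\; \mathbb{E}\!\left[\int_0^\e |\log s|\, d\mu_{|B_k^N(t)-\lambda I|}(s)\right] = 0.
\]
This is a Wegner-type statement requiring (i) a polynomial lower bound $\sigma_{\min}(B_k^N(t)-\lambda I)\gtrsim N^{-C}$ with overwhelming probability, and (ii) control of the number of near-zero singular values at scale $\e$. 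This is precisely where the trichotomy in the hypotheses is used: under (RW4) the limit $\nu_j$ is absolutely continuous with the integrability $\int t\rho(t)^3\,dt<\infty$, which (combined with bi-invariance and (RW3)) is the classical free-entropy input entering the Guionnet--Krishnapur--Zeitouni Single Ring program, so the Wegner estimate can be obtained following the strategy of Section~\ref{section.Wegner}. In the polynomial-in-GUE or polynomial-in-Haar cases, strong convergence (Haagerup--Thorbjørnsen, Collins--Male, and the quantitative refinements used elsewhere in this paper) directly gives polynomial-rate operator-norm estimates on the Hermitization $\begin{pmatrix}0 & B_k^N(t)-\lambda I \\ (B_k^N(t)-\lambda I)^\ast & 0\end{pmatrix}$, from which the needed lower bound on $\sigma_{\min}$ and the density control at scale $\e$ can be extracted in a standard way.

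With the two tails controlled, uniform integrability of $\log s$ transfers weak convergence of the symmetrized singular-value measures into convergence of logarithmic potentials at almost every $\lambda$, and the Hermitization reduction closes the argument. The genuinely hard step is the small-singular-value estimate (Wegner estimate) for the product-plus-shift matrix $B_k^N(t)-\lambda I$; everything else is either the $\ast$-distribution input from Lemma~\ref{lem.Bk->bk} or bookkeeping. I would expect the proof to be organized as: (1) the Hermitization reduction (a short deterministic lemma); (2) the large-$s$ tail and $\ast$-distribution transfer (short, from (RW1) and Lemma~\ref{lem.Bk->bk}); (3) the Wegner estimate in each of the three cases (the bulk of the work, case-by-case, and the content of the later technical sections that (RW2)--(RW4) are tailored to).
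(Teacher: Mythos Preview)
Your overall architecture is correct and matches the paper's: Hermitization reduces the problem to convergence of log-potentials, the large-$s$ tail is killed by (RW1), and the small-$s$ tail splits into (i) a polynomial lower bound on $\sigma_{\min}(B_k^N(t)-\lambda I)$ and (ii) a Wegner-type anti-concentration estimate for moderately small singular values. You also correctly identify that the trichotomy in the hypotheses enters only in the Wegner step, and you defer that to Section~\ref{section.Wegner}.

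The genuine gap is in part (i), the $\sigma_{\min}$ bound. Strong convergence (Haagerup--Thorbj\o rnsen, Collins--Male) does \emph{not} give what you need here: it controls $\|(B_k^N(t)-\lambda I)^{-1}\|$ only for $\lambda$ outside the spectrum of the limit, whereas you need a quantitative $\sigma_{\min}\gtrsim N^{-C}$ uniformly over $\lambda$ in the bulk, where the limit operator is not invertible. Moreover, you never say how to get the $\sigma_{\min}$ bound in the (RW4) case at all, and you never invoke (RW2). The paper handles $\sigma_{\min}$ \emph{uniformly across all three cases} by a factorization trick exploiting bi-invariance (Theorem~\ref{thm.pseudospectrum}): write the last step as $I+\sqrt{t/k}\,U_kT_kV_k^\ast$ and peel off the Haar unitary $V_k^\ast$ to express $B_k^N(t)-\lambda I = M_k(D_k+V_k^\ast)$ with $V_k$ independent of $D_k$; Rudelson--Vershynin (Theorem~\ref{thm.RV}) then gives $\sigma_{\min}(D_k+V_k^\ast)\gtrsim N^{-C}$, and (RW2) controls $\sigma_{\min}(M_k)$ via the singular values of the step matrices. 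An induction on $k$ finishes the job. This is the missing idea in your outline.

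A secondary point: for the Wegner estimate in the GUE/Haar cases, the paper does not use strong convergence either. It first transfers to the free limit via Parraud's quantitative $(\log N)^2/N^2$ expansion and the Wasserstein bound (RW3) (Lemmas~\ref{lem.Felix.1}--\ref{lem.Felix.2}), and then bounds the limit Cauchy transform $|\varphi[(P(\mathbf{a})-i\eta)^{-1}]|$ using algebraicity of $G_{P(\mathbf{a})}$ (Shlyakhtenko--Skoufranis) in the GUE/Haar cases and finite free Fisher information plus H\"older continuity (Banna--Mai) in the (RW4) case. Since you gesture at Section~\ref{section.Wegner} for (RW4), you are close here; but your proposed mechanism for GUE/Haar is not the one that actually works.
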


Our proof of Theorem \ref{thm.conv.ESD} has two parts:
\begin{itemize}
    \item Polynomial lower bounds on the decay rate of the smallest singular value of $B_k^N(t)-zI$ for almost every $z\in\mathbb{C}$.
    \item Wegner estimates on anti-concentration of moderately small singular values of $B_k^N(t)$.
\end{itemize}
In broad terms, this follows the method of proof of the Circular Law that is nicely summarized in \cite{Bordenave-Chafai-circular} and used in \cite{GuionnetKZ-single-ring,GuionnetZeitouni}, and more recently in \cite{Cook} where it was used to prove convergence of the $\mathrm{ESD}$ for quadratic polynomials in independent $\mathrm{GUE}$ ensembles.

Control of the smallest shifted singular value, i.e.\ control of the psuedospectrum (cf.\ Section \ref{sect.Brown.measure}), is achieved using Rudelson--Vershynin's very powerful Theorem \ref{thm.RV} for arbitrary perturbations of Haar Unitary Ensembles.  Wegner estimates in this case (for more general selfadjoint polynomials in bi-invariant matrices) are proved using results from \cite{Parraud2022}, coupled with methods from the theory of free entropy and free information.  The proof can be found in Section \ref{sect.proof.conv.eig}.

\subsubsection{Linearization and Freeness} \hfill

\medskip

Theorem \ref{thm.conv.ESD} identifies, in principle, the large-$N$ limit of the empirical distribution of eigenvalues of the matrix random walk $U_0^N B_k^N(t)$: it is the Brown measure of the free random walk $u_0b_k(t)$ in \eqref{eq.ubk}.  Actually computing the Brown measure is another difficult task, which forms the third leg of our main results.

Dispensing with the initial condition $u_0$ for the moment, a useful technique here is {\em Linearization}.  Given random variables $x_1,\ldots,x_k$ in a $W^\ast$-probability space $(\mathscr{A},\varphi)$ We relate the product $x_1\cdots x_k$ to the matrix-valued random variable
\[ X = \begin{bmatrix}
0 & x_1 & 0 & \cdots & 0 & 0 \\
0 & 0 & x_2 & \cdots & 0 & 0 \\
\vdots & \vdots & \vdots & \ddots & \vdots & \vdots \\
0 & 0 & 0 & \cdots & x_{k-2} & 0 \\
0 & 0 & 0 & \cdots & 0 & x_{k-1} \\
x_k & 0 & 0 & \cdots & 0 & 0
\end{bmatrix}.
\]
The reason is that $X^k$ is diagonal, with all diagonal entries given as cyclic permutations of the product $x_1\cdots x_k$.  Provided $x_1,\ldots,x_k$ are freely independent, it then follows that the Brown measure $\mu_{x_1\cdots x_k}$, computed with respect to the state $\varphi$ on $\mathscr{A}$, is the push-forward of the Brown measure $\mu_X$ by the map $z\mapsto z^k$; here $\mu_X$ is computed with respect to the state $X\mapsto \varphi[\mathrm{tr}(X)]$ where $\mathrm{tr} = \frac{1}{k}\mathrm{Tr}$ is the normalized trace.  This is proved in Proposition \ref{prop.linearization}.

\begin{remark} If $x_1,\ldots,x_k$ are in $\MNC$, random or not, then the eigenvalues of $x_1\cdots x_k$ are the same as the eigenvalues of $X^k$, each with multiplicity amplified by $k$; in the finite-dimensional matrix case, the freeness requirement in \ref{prop.linearization} is not necessary. \end{remark}

We may view the above matrix $X$ as an {\em operator-valued} random variable, in $\mathscr{A}\otimes\MkC$.  Using the matrix unit basis $E_{i,j}\in\MkC$ (where the $(\ell,m)$-entry of $E_{i,j}$ is $\delta_{i\ell}\delta_{jm}$), we can express $B$ as
\[ X = \sum_{j=1}^k x_j\otimes E_{j,j+1} \quad \text{where }k+1\equiv 1. \]
(Here we are using the standard identification of the Kronecker product of matrices with the tensor product of endomorphisms on vector spaces.)

For our setup, $x_j = 1+(\frac{t}{k})^{1/2} a_j$, which means the associated linearized matrix $X$ decomposes as a sum.  Using the unitary bi-invariance of the $\mathscr{R}$-diagonal steps $a_j$, we may incorporate the unitary initial condition into the linearization by noting that
\begin{equation} \label{eq.ubk.linearized} u_0 b_k(t) = u_0\prod_{j=1}^k \left(1+\sqrt{\frac{t}{k}}a_j\right) \equaldist \prod_{j=1}^k \left(u_0^{1/k}+\sqrt{\frac{t}{k}}a_j\right) \end{equation}
where $u_0^{1/k}$ is any fixed unitary whose $k$th power is $u_0$.  This is proved in Theorem \ref{thm.unitary.distribute}.  As such, we define
\begin{equation}
     \label{eq.Z.A.def.intro}
     Z = \sum_{j=1}^k u_0^{1/k}\otimes E_{j,j+1} \quad\textrm{and}\quad A = \sum_{j=1}^k a_j\otimes E_{j,j+1}.
\end{equation}
Then the linearization of \eqref{eq.ubk.linearized} yields the following.

\begin{proposition} \label{prop.AZ} The Brown measure of $u_0b_k(t)$ is the push-forward of the Brown measure of $Z+\sqrt{t/k}\,A$ under $z\mapsto z^k$. \end{proposition}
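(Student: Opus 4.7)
My plan is to apply the linearization machinery of Proposition \ref{prop.linearization} to the distributional identity in \eqref{eq.ubk.linearized}. By Theorem \ref{thm.unitary.distribute}, $u_0 b_k(t) \equaldist y \coloneqq \prod_{j=1}^k x_j$, where $x_j \coloneqq u_0^{1/k} + \sqrt{t/k}\, a_j$. Since the Brown measure depends only on the $\ast$-distribution, $\mu_{u_0 b_k(t)} = \mu_y$. The natural linearization $X \coloneqq \sum_{j=1}^k x_j \otimes E_{j,j+1}$ unpacks via \eqref{eq.Z.A.def.intro} as $X = Z + \sqrt{t/k}\, A$, so it suffices to show that $\mu_y = (z \mapsto z^k)_{\ast}\, \mu_X$.

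A subtle point is that the $x_j$ are \emph{not} freely independent in $\mathscr{A}$---they all share the summand $u_0^{1/k}$---so Proposition \ref{prop.linearization} cannot be invoked verbatim. However, its pushforward conclusion is a consequence of three general facts about the Brown measure in a tracial $W^\ast$-probability space that do not require freeness: (i) polynomial functional calculus, $\mu_{p(Y)} = p_{\ast}\, \mu_Y$, via $\Delta(p(Y) - \lambda I) = \prod_\alpha \Delta(Y - \alpha I)$ over the roots $\alpha$ of $p(w) = \lambda$; (ii) the Brown measure of a block-diagonal element $Y = \mathrm{diag}(y_1, \ldots, y_k) \in \mathscr{A} \otimes \MkC$ with respect to $\varphi \otimes \mathrm{tr}$ is $\tfrac{1}{k}\sum_j \mu_{y_j}$; and (iii) cyclic permutations of a product have identical Brown measure, from $\Delta(\lambda - ab) = \Delta(\lambda - ba)$ (a consequence of traciality) extended analytically in $\lambda$.

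Assembling: $X^k = \mathrm{diag}(y_1, \ldots, y_k)$, where $y_j \coloneqq x_j x_{j+1} \cdots x_{j+k-1}$ (indices mod $k$) are the cyclic permutations of $y = y_1$. By (iii), $\mu_{y_j} = \mu_y$ for every $j$; by (ii), $\mu_{X^k} = \mu_y$; by (i) with $p(z) = z^k$, $\mu_{X^k} = (z \mapsto z^k)_{\ast}\, \mu_X$. Chaining these equalities yields $\mu_{u_0 b_k(t)} = \mu_y = (z \mapsto z^k)_{\ast}\, \mu_{Z + \sqrt{t/k}\, A}$, as required. The principal obstacle is the freeness gap vis-\`a-vis Proposition \ref{prop.linearization}: one must either extract a freeness-free version from its proof, or verify (i)--(iii) directly, which rests only on the tracial structure and Fuglede--Kadison determinant arithmetic.
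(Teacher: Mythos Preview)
Your proof is correct and follows the same outline as the paper's argument (Corollary~\ref{cor.unitary.linearized}: combine Theorem~\ref{thm.unitary.distribute} with Proposition~\ref{prop.linearization}). You are, however, more careful on one point. The paper invokes Proposition~\ref{prop.linearization} with $b_j = u_0^{1/k}+\sqrt{t/k}\,a_j$, but as you note, these $b_j$ share the summand $u_0^{1/k}$ and are not $\ast$-free, so the stated hypothesis is not literally satisfied. The paper's proof of Proposition~\ref{prop.linearization} uses freeness only to assert that every diagonal entry of $M^k$ has the same $\ast$-distribution as $b_1\cdots b_k$; your route through the Fuglede--Kadison identity $\Delta(\lambda-ab)=\Delta(\lambda-ba)$ (equivalently $\mu_{ab}=\mu_{ba}$, cf.\ \cite{HaagerupSchultz2007}) yields the same conclusion at the level of Brown measures with no freeness needed, cleanly closing the gap. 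When the $a_j$ are identically distributed (the standing assumption from Section~\ref{sec.linearization} onward), an alternative patch is to observe that the cyclic products $x_j x_{j+1}\cdots x_{j-1}$ genuinely share the $\ast$-distribution of $x_1\cdots x_k$ by exchangeability of the $a_j$, so the paper's proof of Proposition~\ref{prop.linearization} goes through verbatim.
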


Provided the separate $\ast$-distributions of $Z$ and $A$ in $(\mathscr{A}\otimes\MkC,\varphi\otimes\mathrm{tr})$ can be computed, the distribution of their sum may yet remain mysterious.  Although the $\mathscr{A}$-components $u_0^{1/k}$ and $a_1,\ldots,a_k$ forming $Z$ and $A$ are freely independent with respect to $\varphi$, that typically does not translate to (scalar) freeness of the operator-valued random variables.  (In general it leads to operator-valued freeness, a.k.a.\ freeness with amalgamation, cf.\ \cite[Chapter 9]{MingoSpeicherBook}, which is weaker.)  Fortunately, there is enough invariant structure in our setting to give both true scalar freeness and the explicit distribution of $A$.  The following theorem sets us on a path to use the analytic tools of free probability to compute this Brown measure.

\begin{theorem} \label{thm.linearized.freeness}
Let $k\in\mathbb{N}$, and let $u_0,a_1,\ldots,a_k$ be freely independent operators in a $W^\ast$-probability space $(\mathscr{A},\varphi)$, where $u_0$ is unitary and $a_1,\ldots,a_k$ are $\mathscr{R}$-diagonal (cf.\ Section \ref{sec.R-diag}).  Let $Z,A\in\mathscr{A}\otimes\MkC$ be defined as in \eqref{eq.Z.A.def.intro}.

Then $Z$ is unitary, and $A$ is $\mathscr{R}$-diagonal in $(\mathscr{A}\otimes\MkC,\varphi\otimes\mathrm{tr})$.  Moreover: if $a_1,\ldots,a_k$ all have the same distribution, then $A$ has the same distribution as $a_1$, and $Z$ and $A$ are freely independent in $(\mathscr{A}\otimes\MkC,\varphi\otimes\mathrm{tr})$.
\end{theorem}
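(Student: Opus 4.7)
The plan treats the four assertions in sequence. The first, that $Z$ is unitary, is immediate: the matrix-unit identity $E_{i,i+1}E_{j+1,j}=\delta_{ij}E_{ii}$ (indices mod $k$) combined with unitarity of $u_0^{1/k}$ collapses both $ZZ^\ast$ and $Z^\ast Z$ to $1\otimes I_k$.

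For the $\mathscr{R}$-diagonality of $A$, I plan to construct its polar decomposition explicitly. After enlarging $\mathscr{A}$ if necessary so that each $a_j=u_jh_j$ has Haar unitary $u_j$ freely independent of $h_j$, with the $u_j$'s mutually free and jointly free from $u_0$ and $\{h_l\}_l$, I set
\[
U'=\sum_{j=1}^k u_j\otimes E_{j,j+1}, \qquad H=\sum_{j=1}^k h_{j-1}\otimes E_{jj}.
\]
A direct calculation shows $A=U'H$ and $H^2=A^\ast A$, so $H=|A|$. Computing $(U')^n=\sum_j u_j u_{j+1}\cdots u_{j+n-1}\otimes E_{j,j+n}$, the $\MkC$-trace forces $k\mid n$; the resulting $\varphi$-trace is a cyclic product of free Haar unitaries and hence itself Haar, so $U'$ is Haar unitary in $(\mathscr{A}\otimes\MkC,\varphi\otimes\mathrm{tr})$. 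Freeness of $U'$ and $H$ would then be verified by expanding alternating centered moments via closed walks on $\mathbb{Z}/k\mathbb{Z}$ and using freeness of $\{u_j\}_j$ from $\{h_j\}_j$ in $(\mathscr{A},\varphi)$; combined with $A=U'H$ this gives $\mathscr{R}$-diagonality. Under the same-distribution hypothesis, $(\varphi\otimes\mathrm{tr})(H^n)=\frac{1}{k}\sum_j\varphi(h_{j-1}^n)=\varphi(|a_1|^n)$ yields $H\equaldist|a_1|$ and hence $A=U'H\equaldist u_1|a_1|=a_1$ by uniqueness of the $\mathscr{R}$-diagonal $\ast$-distribution.

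The main obstacle is the freeness of $Z$ and $A$. My plan is to exploit the factorization $Z=(u_0^{1/k}\otimes 1)S$ with cyclic shift $S=\sum_j 1\otimes E_{j,j+1}$, which places $u_0^{1/k}\otimes 1$ in the commutant of $1\otimes\MkC$. An alternating centered mixed moment
\[
(\varphi\otimes\mathrm{tr})\!\left(P_1(Z,Z^\ast)Q_1(A,A^\ast)\cdots P_r(Z,Z^\ast)Q_r(A,A^\ast)\right)
\]
then expands as a sum, indexed by closed walks on $\mathbb{Z}/k\mathbb{Z}$, of products $\varphi(\text{word in }u_0^{1/k}\text{ and }a_j\text{'s})\cdot\mathrm{tr}(\text{matrix-unit word})$. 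The trace factor forces the net matrix shift to be $0\bmod k$; on such walks, freeness of $u_0$ from $\{a_j\}_{j=1}^k$ in $(\mathscr{A},\varphi)$ combined with the centering conditions on the $P_i$ and $Q_i$ (which, via the same-distribution hypothesis, translate into scalar centering of the individual free factors) forces the $\varphi$-trace to vanish. The combinatorial bookkeeping of walks and centering conditions is the principal technical hurdle; a cleaner execution likely works operator-valued over $1\otimes D_k\cong\mathbb{C}^k$, identifying $Z$ as a $D_k$-unitary and $A$ as an $\mathscr{R}$-cyclic element over $D_k$ in the sense of Nica--Shlyakhtenko--Speicher, and then projecting the resulting $D_k$-freeness to scalar freeness via tracial averaging.
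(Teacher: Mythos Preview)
Your handling of $Z$ unitary and of the distribution $A\equaldist a_1$ is fine and parallels the paper; your factorization $A=U'H$ with $U'=\sum_j u_j\otimes E_{j,j+1}$ is essentially the paper's $U\Sigma$. The gap is precisely where you flag it: the freeness of $Z$ from $A$. In your walk expansion of $(\varphi\otimes\mathrm{tr})(P_1Q_1\cdots P_nQ_n)$, the individual terms $\varphi([P_1]_{i_0i_1}[Q_1]_{i_1i_2}\cdots)$ do \emph{not} vanish one by one: the off-diagonal entries $[P_l]_{i,i'}$ (polynomials in $u_0^{1/k}$) are not $\varphi$-centered, so freeness of $u_0$ from $\{a_j\}$ does not directly apply. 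Your parenthetical ``translate into scalar centering of the individual free factors'' is correct only for the $Q$-factors and for diagonal $P$-entries; the mechanism by which the single matrix-level constraint $(\varphi\otimes\mathrm{tr})(P_l)=0$ forces cancellation across the full walk sum is never supplied.

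The paper resolves this not by a direct walk argument but by an indirection. It isolates an abstract lemma (Theorem~\ref{thm.general.freeness}): any $\sigma$-circulant-invariant family $\mathbf{B}$ is $(\varphi\otimes\mathrm{tr})$-free from any family $\mathbf{W}$ of \emph{diagonal} matrices with free i.i.d.\ entries. The proof exploits that circulant invariance forces every diagonal entry of $B\in\mathbb{C}^\ast\langle\mathbf{B}\rangle$ to share the same $\varphi$-value (so matrix centering $\Rightarrow$ entry centering on the diagonal), while i.i.d.\ diagonality does the same for $\mathbf{W}$; a moment--cumulant argument on $\mathrm{NC}(2n)$ then finishes, since diagonality of $\mathbf{W}$ pins the walk at each $W$-step, and any interval block in the even half either links free diagonal entries at distinct positions or traps a centered singleton. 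The diagonality of $\mathbf{W}$ is essential here and is absent in your direct $P$-versus-$Q$ setup.

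With this lemma, the paper writes $A\equaldist(U\Sigma)XV$ where $U,V$ are diagonal free-Haar and $X=x\otimes I$ has a \emph{single} selfadjoint $x$ on every diagonal slot (with $a_1\equaldist w_1xv_1$; this is where identical distribution is used). Since $\{Z,X\}$ has constant entries along each orbit while $U,V$ have free i.i.d.\ entries, the lemma and its upgrade (Theorem~\ref{thm.freeness.R.diagonal}) give $\{U\Sigma\},\{V\},\{Z,X\}$ mutually free. Two Haar-absorption steps (Lemma~\ref{lem.free.Haar} and Proposition~\ref{prop.conj.free}) then show $Z$, $(U\Sigma)X(U\Sigma)^\ast$, $(U\Sigma)V$ are mutually free, and $A$ lies in the algebra generated by the last two. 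Your two-factor $A=U'H$ would also succeed once the circulant-invariance lemma is available, so the missing ingredient is that lemma rather than the choice of decomposition. Your $\mathscr{R}$-cyclic/$D_k$-valued alternative is a genuinely different route the paper does not pursue.
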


Theorem \ref{thm.linearized.freeness} is proved in Section \ref{sect.freeness.ZA}, following a much more general development in Section \ref{sec.linearized.freeness} which is of independent interest.  Indeed, similar freeness and distributional properties hold for more general {\em circulant invariant block matrices}, whose $\ast$-distribution is invariant under the conjugation action of a fixed full-cycle permutation of $k$.  (For example: a matrix with all $k^2$ entries freely independent, and identically distributed along each circulant subdiagonal.) See Theorem \ref{thm.general.freeness} for a general theorem about freeness in this setting.

\subsubsection{Brown Measures of Free Random Walks} \hfill

\medskip

Using complex analytic tools from free probability theory, we analyze the Brown measure of the free random walk in Section \ref{sect.BM.computation.1}.  We do so using the characterization in Proposition \ref{prop.AZ} relating the free random walk to the operator-valued matrices $A,Z$ in \eqref{eq.Z.A.def.intro}, and their free probability properties as expressed in Theorem \ref{thm.linearized.freeness}, together with tools from \cite{BercoviciZhong2022Rdiag}.

\begin{theorem} \label{thm.Brown.Measure.1} Let $k\in\mathbb{N}$, $t>0$, and let $a,a_1,\ldots,a_k$ be freely independent, identically distributed $\mathscr{R}$-diagonal operators in a $W^\ast$-probability space $(\mathscr{A},\varphi)$, normalized so that $\|a\|_2=1$.  Let $u_0\in\mathscr{A}$ be a unitary operator freely independent from $\{a_1,\ldots,a_k\}$.  Let $Z,A\in\mathscr{A}\otimes\MkC$ be defined as in \eqref{eq.Z.A.def.intro}.  Define the open set
\begin{equation} \label{eq.Omega.t.def.intro} \Omega_k(t) := \left\{\lambda\in\mathbb{C}\colon \|(Z-\lambda I)^{-1}\|_2^2>\frac{k}{t}\;\;\&\;\; \lambda\in\mathbb{C}\colon |\lambda|^2 > \frac{t}{k\|a^{-1}\|_2^2}-1 \right\}\end{equation}
where $\|a^{-1}\|_2:=\infty$ if $a$ does not have an $L^2$ inverse.  Also define $S_k\subseteq\mathrm{spec}(Z)$ by
\begin{equation} \label{eq.Sk.def.intro} S_k := \{\lambda\in\mathbb{C}\colon \varphi(\ker(Z-\lambda I))+\varphi(\ker(a))\ge 1\} \end{equation}
where $\mathrm{ker}(X)$ denotes the orthogonal projection onto the kernel of the operator $X$.  Then the following hold true.
\begin{enumerate}
    \item The support of the Brown measure of $Z+\sqrt{t/k}\,A$ is equal to $\overline{\Omega_k(t)}$.
    \item $S_k\subset \Omega_k(t)$ for all $t>0$, is a finite set, and $S_k=\emptyset$ for all large $k$.
    \item In the decomposition $\mu_{\mathrm{c}} + \mu_{\mathrm{sc}}+\mu_{\mathrm{p}}$ of the Brown measure of $Z+\sqrt{t/k}\,A$,
    \begin{itemize}
    \item $\mu_{\mathrm{c}}$ has a real-analytic density on $\Omega_k(t)$.
    \item $\mu_{\mathrm{p}}$ is an atomic measure supported on $S_k$.
    \item $\mu_{\mathrm{sc}}$ is a singular continuous measure supported on $\partial\Omega_k(t)$.
\end{itemize}
\end{enumerate}
\end{theorem}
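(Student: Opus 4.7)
The plan is to recognize Theorem \ref{thm.Brown.Measure.1} as a concrete instance of the Brown-measure analysis for free sums of unitary and $\mathscr{R}$-diagonal operators developed in \cite{BercoviciZhong2022Rdiag}, with the structural input supplied by Theorem \ref{thm.linearized.freeness}. That theorem places us in exactly the right setting: in $(\mathscr{A}\otimes\MkC,\varphi\otimes\tr)$ the operator $Z$ is unitary, $A$ is $\mathscr{R}$-diagonal with the same $\ast$-distribution as $a_1$, and $Z$ is free from $A$. In particular $\|A\|_2 = \|a\|_2 = 1$ and $(\varphi\otimes\tr)(\ker A) = \varphi(\ker a)$, so $\sqrt{t/k}\,A$ is a free $\mathscr{R}$-diagonal perturbation of $L^2$-size $\sqrt{t/k}$ with $\|(\sqrt{t/k}\,A)^{-1}\|_2 = \sqrt{k/t}\,\|a^{-1}\|_2$ (interpreted as $\infty$ when $a$ is not $L^2$-invertible).

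For item (1) and the absolutely continuous part of (3), I would invoke the subordination machinery of \cite{BercoviciZhong2022Rdiag}: the support of $\mu_{Z+\sqrt{t/k}\,A}$ is characterized by the condition that a certain subordination equation admits a nontrivial solution, which unpacks into the two inequalities defining $\Omega_k(t)$. The first, $\|(Z-\lambda)^{-1}\|_2^2 > k/t$, is the standard Haagerup--Larsen-type outer condition: the free $\mathscr{R}$-diagonal perturbation of $L^2$-size $\sqrt{t/k}$ reaches $\lambda$ precisely when the $L^2$-resolvent of $Z$ at $\lambda$ is large enough. The second, $|\lambda|^2 > t/(k\|a^{-1}\|_2^2) - 1$, is the analogous inner condition coming from the potential $L^2$-invertibility of $a$ (vacuous when $a$ is not $L^2$-invertible), paralleling the inner boundary of the annulus in the classical Haagerup--Larsen theorem for pure $\mathscr{R}$-diagonals. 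Real-analyticity of the density of $\mu_{\mathrm{c}}$ on $\Omega_k(t)$ then follows from the implicit-function-theorem dependence of the subordination functions on $\lambda$.

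For the atomic and singular continuous parts in (2) and (3), I would use the standard free-probability identification of atoms of a Brown measure with nontrivial kernel intersections. By the freeness of $Z$ and $A$ supplied by Theorem \ref{thm.linearized.freeness}, a point $\lambda$ carries Brown-measure mass equal to $\bigl((\varphi\otimes\tr)(\ker(Z-\lambda)) + \varphi(\ker a) - 1\bigr)_+$, so the support of $\mu_{\mathrm{p}}$ is contained in $S_k$. Finiteness of $S_k$ follows from orthogonality of eigenprojections of $Z$: $\sum_{\lambda\in S_k}(\varphi\otimes\tr)(\ker(Z-\lambda)) \le 1$, and each summand is at least $1-\varphi(\ker a) > 0$ (using $\|a\|_2 = 1$). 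A short eigenvector calculation using $Z^k = u_0\otimes I$ shows $(\varphi\otimes\tr)(\ker(Z-\lambda)) = \varphi(\ker(u_0-\lambda^k))/k \le 1/k$; hence $S_k = \emptyset$ whenever $k > 1/(1-\varphi(\ker a))$. The containment $S_k\subset\Omega_k(t)$ holds because atoms force $\|(Z-\lambda)^{-1}\|_2 = \infty$, and the second $\Omega_k(t)$ condition is then verified by a direct case analysis on whether $\varphi(\ker a) > 0$. Finally, $\mu_{\mathrm{sc}}$ must be supported on $\partial\Omega_k(t)$ by process of elimination: (1) confines the full Brown measure to $\overline{\Omega_k(t)}$, the interior is accounted for by $\mu_{\mathrm{c}}$, and $\mu_{\mathrm{p}}$ is supported on a finite set, so what remains lives on the boundary.

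The main obstacle will be the careful amalgamated-free-probability subordination bookkeeping needed to translate the abstract Bercovici--Zhong statements into the explicit, computable form of $\Omega_k(t)$ displayed in \eqref{eq.Omega.t.def.intro}; once this translation is set up, the atomic analysis and the density structure follow from standard Brown-measure and free-probability arguments.
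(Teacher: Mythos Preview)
Your proposal is correct and follows essentially the same route as the paper: invoke Theorem \ref{thm.linearized.freeness} (the paper uses its Corollary \ref{cor.Z.A.freeness}) to place $Z$ and $\sqrt{t/k}\,A$ in the unitary-plus-free-$\mathscr{R}$-diagonal framework of \cite{BercoviciZhong2022Rdiag}, then read off the support, the absolutely continuous density, and the atom locations from that reference. The only cosmetic difference is your argument for $S_k=\emptyset$ for large $k$: you compute $(\varphi\otimes\tr)(\ker(Z-\lambda))=\varphi(\ker(u_0-\lambda^k))/k$ via $Z^k=u_0\otimes I$, whereas the paper deduces the same $1/k$ bound from the invariance of the $\ast$-distribution of $Z$ under multiplication by $k$-th roots of unity; the two arguments are equivalent.
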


\begin{remark} The density of $\mu_c$ is never $0$ for $t>0$.  We do not know if the Brown measure can actually have mass on the boundary, i.e.\ it is possible that $\mu_{\mathrm{sc}}$ is always $0$; it is difficult to tell whether concentrated mass in the initial condition could result in mass on the boundary at positive times.
\end{remark}

From Proposition \ref{prop.AZ} and Theorem \ref{thm.Brown.Measure.1}, we derive the following characterization of the Brown measure of $u_0 b_k(t)$.

\begin{theorem} \label{thm.Brown.Measure.2} Let $k,t,a,a_1,\ldots,a_k,u_0$ be as in the statement of Theorem \ref{thm.Brown.Measure.1}.
Define the disk $D_k(a,t)$, the ``survival time'' $T_k(u_0,\,\cdot\,)\colon\mathbb{C}\to \mathbb{R}_+$, and the ``Lima Bean'' $\Sigma(k,u_0,t)$ as follows:
\begin{equation} \label{eq.lifetime.k} T_k(u_0,z) := \frac{k(|z|^{2/k}-1)}{|z|^2-1}\left(\int_{\mathbb{T}} \frac{\mu_{u_0}(d\xi)}{|\xi-z|^2}\right)^{-1} \end{equation}

\begin{equation} \label{eq.Dkat} D_k(a,t):= \left\{z\in\mathbb{C}\colon \vert z\vert^{2/k} < \frac{t}{k\|a^{-1}\|_2^2}-1\right\} \end{equation}

\begin{equation} \label{eq.Sigma.intro} \Sigma_k(u_0,t) := \left\{z\in\mathbb{C}\colon T_k(u_0,z)<t\right\}.
\end{equation}

Then the Brown measure of the free random walk $u_0b_k(t)$ has support equal to the closure $\overline{\Sigma_k(u_0,t)}\setminus D_k(a,t)$.  (Note that $\overline{D}_k(a,t)$ is empty if $t<k\|a^{-1}\|_2^2$.)  The Brown measure may have atoms in the finite set $S_k^\ast\subset \mathrm{spec}(u_0)$, the push-forward of $S_k$ \eqref{eq.Sk.def.intro} under $z\mapsto z^k$, which is contained in $\Sigma_k(u_0,t)\setminus D_k(a,t)$. On $\Sigma_k,u_0,t)\setminus (\overline{D_k(a,t)}\sqcup S_k^\ast)$, the Brown measure has a density $\rho_k(t,\cdot)$ which is real analytic except for a pole at $0$.  \end{theorem}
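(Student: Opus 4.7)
The plan is to combine Proposition \ref{prop.AZ} with Theorem \ref{thm.Brown.Measure.1}: the Brown measure of $u_0 b_k(t)$ is the push-forward under $\phi(\lambda) = \lambda^k$ of the Brown measure of $Z + \sqrt{t/k}\,A$, whose structure is given by Theorem \ref{thm.Brown.Measure.1} in terms of $\Omega_k(t)$ and $S_k$. It therefore suffices to show (i) $\phi(\Omega_k(t)) = \Sigma_k(u_0,t)\setminus D_k(a,t)$, (ii) $\phi(S_k) = S_k^\ast$, and then to transfer the density and singular structure across the $k$-to-$1$ branched cover $\phi$.

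The main computation is recasting the condition $\|(Z-\lambda I)^{-1}\|_2^2 > k/t$ in terms of $\mu_{u_0}$. Write $S = \sum_{j=1}^k 1 \otimes E_{j,j+1}$ for the cyclic shift on $\mathbb{C}^k$, so that $Z = u_0^{1/k}\otimes S$; in particular $Z$ is unitary with $Z^k = u_0\otimes I_k$. Diagonalizing $S = \sum_{j=0}^{k-1}\omega^j P_j$ with $\omega = e^{2\pi i/k}$ and $\operatorname{tr}(P_j) = 1/k$, functional calculus yields
\[
\|(Z-\lambda I)^{-1}\|_2^2 \;=\; \frac{1}{k}\sum_{j=0}^{k-1}\varphi\bigl(|\omega^j u_0^{1/k} - \lambda|^{-2}\bigr).
\]
The key algebraic identity, valid for any $\zeta\in\mathbb{T}$ and $\lambda$ outside the roots $\{\omega^j\zeta\}$, is
\[
\sum_{j=0}^{k-1}\frac{1}{|\omega^j\zeta - \lambda|^2} \;=\; \frac{k(|\lambda|^{2k}-1)}{(|\lambda|^2 - 1)\,|\lambda^k - \zeta^k|^2},
\]
which can be obtained via the factorization $\prod_j(\lambda - \omega^j\zeta) = \lambda^k - \zeta^k$ and the distributional identity $\sum_j|\omega^j\zeta - \lambda|^{-2} = -\partial_\lambda\partial_{\bar\lambda}\log|\lambda^k-\zeta^k|^2$, or (more concretely) by partial fractions applied to both factors. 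Integrating against the spectral measure of $u_0^{1/k}$ and substituting $\xi = \zeta^k$ (a bijection of the relevant spectra onto $\operatorname{spec}(u_0)$) gives
\[
\|(Z-\lambda I)^{-1}\|_2^2 \;=\; \frac{|\lambda|^{2k}-1}{|\lambda|^2-1}\int_{\mathbb{T}}\frac{\mu_{u_0}(d\xi)}{|\lambda^k - \xi|^2}.
\]

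Now set $z = \lambda^k$, so that $|\lambda|^{2k} = |z|^2$ and $|\lambda|^2 = |z|^{2/k}$. The inequality $\|(Z-\lambda I)^{-1}\|_2^2 > k/t$ becomes exactly $T_k(u_0,z) < t$, i.e.\ $z \in \Sigma_k(u_0,t)$; the second condition $|\lambda|^2 > t/(k\|a^{-1}\|_2^2) - 1$ becomes $z \notin D_k(a,t)$. Thus $\phi(\Omega_k(t)) = \Sigma_k(u_0,t)\setminus D_k(a,t)$, and since $\phi$ is continuous and proper on bounded sets, closures behave well, yielding the claimed support. The finite set $S_k \subset \Omega_k(t)$ pushes forward to the finite set $S_k^\ast = \phi(S_k) \subset \Sigma_k(u_0,t)\setminus D_k(a,t)$, collecting any $\omega$-orbit atoms. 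Finally, the real-analytic density on $\Omega_k(t)\setminus S_k$ pushes forward via the Jacobian $|\phi'(\lambda)|^2 = k^2|\lambda|^{2(k-1)}$; since $\phi$ is a local biholomorphism away from $0$, the push-forward density is real-analytic on $\Sigma_k(u_0,t)\setminus(\overline{D_k(a,t)}\sqcup S_k^\ast)$ except at $z = 0$, where the factor $|\lambda|^{-2(k-1)} = |z|^{-2(k-1)/k}$ produces the claimed pole.

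The main obstacle is the explicit evaluation of $\|(Z-\lambda I)^{-1}\|_2^2$, and in particular, the roots-of-unity identity and its integration against $\mu_{u_0}$. Once that identity is established, the change of variables $z = \lambda^k$ makes the sets $\Sigma_k(u_0,t)$ and $D_k(a,t)$ appear essentially algebraically, and the remaining work is standard bookkeeping of the push-forward of atoms, smooth densities, and any singular continuous boundary mass under the degree-$k$ branched cover.
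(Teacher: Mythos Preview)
Your proposal is correct and follows essentially the same route as the paper: push forward the Brown measure of $Z+\sqrt{t/k}\,A$ under $\lambda\mapsto\lambda^k$, with the crux being the partial-fraction evaluation of $\sum_j|\omega^j\zeta-\lambda|^{-2}$ (the paper's Lemma \ref{lem.domain.by.Tk}) to obtain $\|(Z-\lambda)^{-1}\|_2^2 = k/T_k(u_0,\lambda^k)$, after which the identification of $\Omega_k(t)$ with $\Sigma_k(u_0,t)\setminus D_k(a,t)$ and the Jacobian bookkeeping are exactly as you describe. One small correction: the parenthetical ``distributional identity'' $\sum_j|\omega^j\zeta-\lambda|^{-2}=-\partial_\lambda\partial_{\bar\lambda}\log|\lambda^k-\zeta^k|^2$ is not right (that mixed derivative vanishes off the roots and gives delta masses, not $|\cdot|^{-2}$), so rely on the partial-fraction argument you already offer.
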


\begin{figure}[b!]  \includegraphics[scale=0.5]{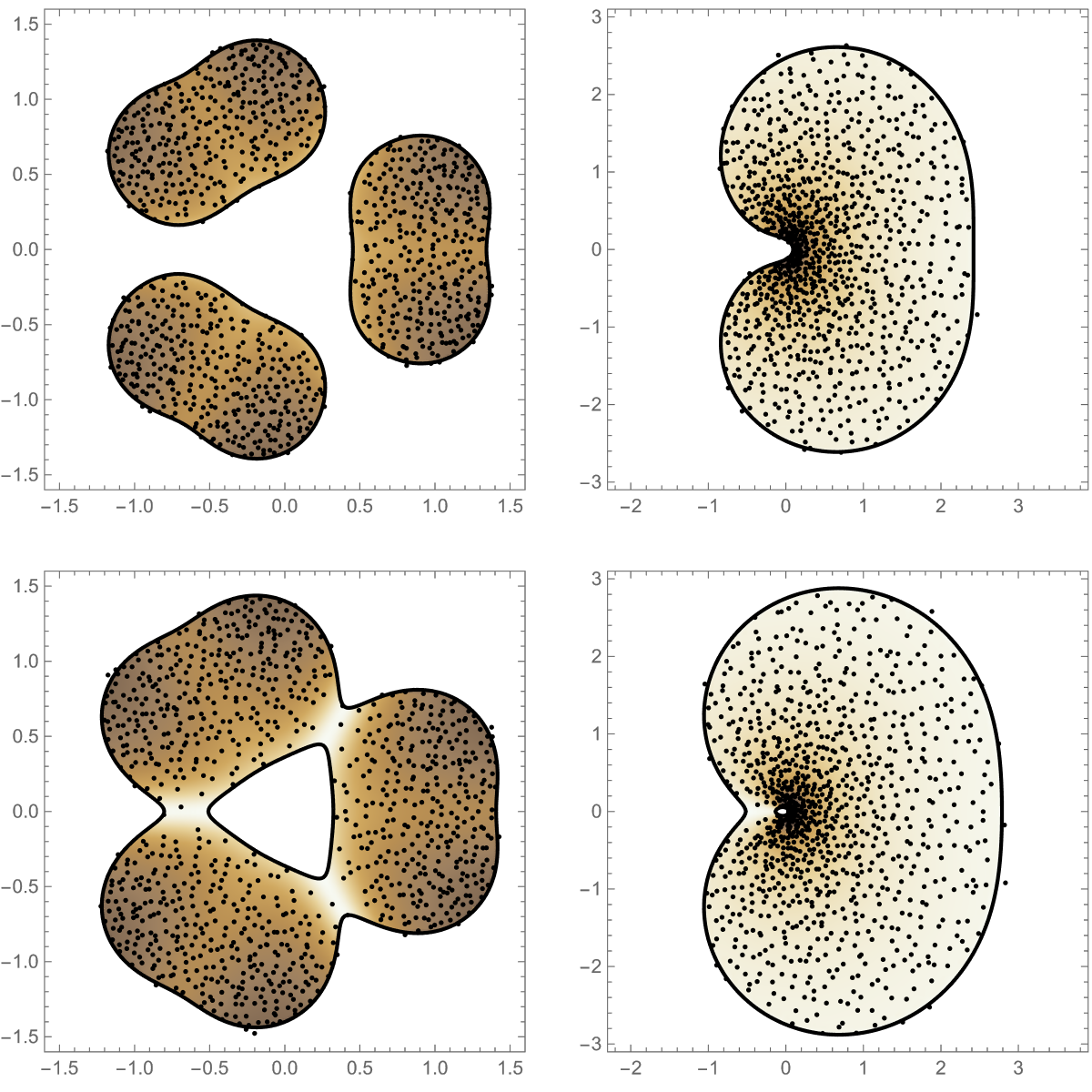}
  \caption{Two instances of the Brown measure of $Z+\sqrt{t/k}\,A$, alongside the Brown measure of the free random walk $u_0 b_k(t)$, with $k=3$ and $t=2.2$ (top) and $t=2.6$ (right).  Here the initial condition $u_0$ in $Z$ has atoms at $e^{\pm i\pi/3}$ with equal mass $\frac12$, and the step distribution is circular / Ginibre.  Each image also shows $1000$ eigenvalues of the assocciated matrix model.}
  \label{fig.6.1}
\end{figure}

The region $\Sigma_k(u_0,t)$ can be quite complicated, depending on the initial condition $u_0$.  In the special case $u_0=1$, in which case we denote $\Sigma_k(1,t):=\Sigma_k(t)$, we provide a precise geometric description of the region in polar coordinate, and characterize how it evolves over time.

\begin{theorem} \label{thm.Brown.Measure.3} For $k\in\mathbb{N}$, $r>0$, and $\theta\in(-\pi,\pi]$, let $T_k(r,\theta) = T_k(1;re^{i\theta})$, i.e.
\begin{align*} T_k(r,\theta) :=& \frac{k(r^{2/k}-1)}{r^2-1}(r^2-2r\cos\theta+1), \quad r\ne 1 \end{align*}
and $T_k(1,\theta):= 2(1-\cos\theta)$ so that $T_k$ is continuous.

For each $\theta$ there is a unique $r_k^{\min}(\theta)$ that minimizes the function $r\mapsto T_k(r,\theta)$, which is decreasing for $0<r<t_k^{\min}(\theta)$ and increasing for $r>r_k^{\min}(\theta)$.  For each $t>0$, there is a unique $r_k^+(t,\theta)>r_k^{\min}(\theta)$ so that $T_k(r_k^+(t,\theta),\theta) = t$.  There is at most one $r_k^-(t,\theta) < r_k^{\min}(\theta)$ such that $T_k(r_k^-(t,\theta),\theta)=t$; otherwise define $r_k^-(t,\theta)=0$.  Define
\begin{equation} \label{eq.tk*.def}
t_k^\ast(\theta) := T_k(r_k^{\min}(\theta),\theta), \qquad t_k^{\mathrm{c}}:= t_k^\ast(\pi).
\end{equation}
The function $\theta\mapsto t_k^\ast(\theta)$ is symmetric (depends only on $|\theta|$) and increasing for $0<\theta<\pi$.  The preimage $(t_k^\ast)^{-1}([0,t)):= I_k(t)$ is a symmetric interval in $(-\pi,\pi)$ for $t<t_k^{\mathrm{c}}$, and $t_k^{\mathrm{c}} <\min\{k,4\}$.

The domain $\Sigma_k(t)$ is characterized as follows:
\begin{equation} \label{eq.Sigma.kt.polar.def}
\Sigma_k(t) = \begin{cases} \left\{re^{i\theta}\colon \theta\in I_k(t) \; \& \; r_k^-(t,\theta)<r<r_k^+(t,\theta)\right\} & \text{if} \; t\le k, \\
\left\{re^{i\theta}\colon \theta\in(-\pi,\pi] \;\&\; r<r_k^+(t,\theta)\right\} & \text{if} \; t>k. \end{cases}
\end{equation}
In the interval $0<t<k$, $\Sigma_k(t)$ and its closure are topological disks when $t<t_k^\ast$.  At the ``collision time'' $t=t_k^\ast$, The two ``lobes'' of $\overline{\Sigma_k(t)}$ collide, so that the closure is an annulus; For $t_k^\ast<t<k$, $\Sigma_k(t)$ is also an open annulus, with center hole shrinking to $0$ at $t=k$ and then disappearing.

The following table summarizes the topological phase transitions that $\Sigma_k(t)$ and its closure undergo is $t$ increases.

\begin{table}[h!]
\centering
\caption{Topological phase transitions of the Brown measure support.}
\label{table.1}
    \begin{tabular}{ |c|c|c| }
    $t$ & $\Sigma_k(t)$ & $\overline{\Sigma_k(t)}$  \\     \hline
    $0<t<t_k^\ast$ & open disk & closed disk \\
    $t=t_k^\ast$ & open disk & closed annulus \\
    $t_k^\ast<t<k$ & open annulus & closed annulus \\
    $t=k$ & punctured disk & closed disk \\
    $t>k$ & open disk & closed disk
%    $t=k\|a^{-1}\|_2^2$ & punctured disk & closed disk \\
%    $t>k\|a^{-1}\|_2^2$ & open annulus & closed annulus
    \end{tabular}
\end{table}
The support of the Brown measure of $u_0 b_k(t)$ is equal to $\overline{\Sigma_k(t)}$ for $t<k\|a^{-1}\|_2^2$, and equals $\overline{\Sigma_k(t)}\setminus D_k(a,t)$ \eqref{eq.Dkat} for $t\ge k\|a^{-1}\|_2^2$.  Hence there is a further topological phase transition at $t=k\|a^{-1}\|_2^2\ge k$ back to an annulus in the case that the step distribution $a$ has an $L^2$ inverse.
\end{theorem}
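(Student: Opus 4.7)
The entire theorem reduces to a careful real-variable analysis of the two-variable function $T_k(r,\theta)$. I would start by rewriting it in the factored form
\begin{equation*}
T_k(r,\theta) = f_k(r)\,g(r,\theta), \qquad f_k(r):=\frac{k}{1+r^{2/k}+r^{4/k}+\cdots+r^{2(k-1)/k}}, \qquad g(r,\theta):=|re^{i\theta}-1|^2,
\end{equation*}
which follows from the identity $(s-1)/(s^k-1)=1/(1+s+\cdots+s^{k-1})$ at $s=r^{2/k}$. This makes the continuity of $T_k$ at $r=1$ transparent, and yields the concrete values $f_k(0^+)=k$, $f_k(1)=1$, $f_k'(1)=-(k-1)/k$, together with the fact that $f_k$ is smooth and strictly decreasing on $(0,\infty)$ with $f_k(r)\to 0$ as $r\to\infty$. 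Simultaneously, $g(r,\theta)$ is the squared distance from $re^{i\theta}$ to $1$: convex in $r$, and strictly increasing in $|\theta|$ on $[0,\pi]$ for each fixed $r>0$.

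\emph{Unimodality and roots $r_k^\pm$.} The next step is to prove that for each $\theta\in(-\pi,\pi]$ the map $r\mapsto T_k(r,\theta)$ has a unique interior minimum $r_k^{\min}(\theta)$. The cleanest approach is to show that
\begin{equation*}
r\,\partial_r\log T_k(r,\theta)\;=\;r\,\partial_r\log f_k(r)\;+\;r\,\partial_r\log g(r,\theta)
\end{equation*}
is strictly increasing in $r$. The second summand is $(2r^2-2r\cos\theta)/g(r,\theta)$, which is straightforwardly monotone; the first summand reduces (after the substitution $s=r^{2/k}$) to a function of the integer-power polynomial $\sum_{j=0}^{k-1}s^j$, whose strict log-convexity in $s$ can be extracted from the fact that the coefficients are a constant sequence. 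Combined with the boundary behavior $T_k(0^+,\theta)=k$ and $T_k(r,\theta)\to\infty$, unimodality gives the monotonicity before and after $r_k^{\min}(\theta)$. Existence and uniqueness of $r_k^+(t,\theta)$ (for each $\theta\in I_k(t)$) and of $r_k^-(t,\theta)\in(0,r_k^{\min}(\theta))$ (for $t_k^*(\theta)<t<k$) then follow by the intermediate value theorem; for $t\ge k$ no $r_k^-$ exists since $T_k\le k$ on $(0,r_k^{\min})$, justifying the convention $r_k^-:=0$.

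\emph{Symmetry, monotonicity, and $t_k^c<\min\{k,4\}$.} Symmetry of $t_k^*$ in $\theta$ is immediate from the $\cos\theta$-dependence. Strict monotonicity on $(0,\pi)$ comes from the envelope inequality
\begin{equation*}
t_k^*(\theta_1)=T_k(r_k^{\min}(\theta_1),\theta_1)<T_k(r_k^{\min}(\theta_1),\theta_2)\le t_k^*(\theta_2), \qquad 0<\theta_1<\theta_2\le\pi,
\end{equation*}
using strict monotonicity of $g$ in $|\theta|$. Continuity of $t_k^*$ then makes $I_k(t)=(t_k^*)^{-1}([0,t))$ a symmetric interval for each $t<t_k^c$. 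For the bound $t_k^c<\min\{k,4\}$, note first that $t_k^c\le T_k(1,\pi)=4$. Strictness on the $4$ side follows from $\partial_rT_k(1,\pi)=f_k'(1)g(1,\pi)+f_k(1)g'(1,\pi)=-(k-1)/k\cdot 4+1\cdot 4=4/k>0$, placing the minimizer strictly to the left of $r=1$. For the $k$ side (when $k\ge 3$), the expansion $f_k'(r)\sim-2r^{2/k-1}/k$ near $r=0$ gives $\partial_r T_k(0^+,\pi)=-\infty$, forcing $T_k$ to dip strictly below $k$ immediately after $r=0$.

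\emph{Phase transitions and Brown support.} The polar-coordinate description \eqref{eq.Sigma.kt.polar.def} is read off from the above, by splitting on whether $t<k$ (inner boundary present) or $t\ge k$ (no inner boundary). The topological transitions in Table~\ref{table.1} follow: (i) at $t=t_k^c$, $I_k(t)$ grows to the full circle and the inner radius $r_k^-$ appears, turning a disk into an annulus; (ii) as $t\uparrow k$, $r_k^-\to 0$, closing the hole; (iii) for $t>k$, $r_k^-$ no longer exists and $\Sigma_k(t)$ is again a disk. Combining with Theorem~\ref{thm.Brown.Measure.2} gives the support as $\overline{\Sigma_k(t)}\setminus D_k(a,t)$; Cauchy--Schwarz gives $\|a^{-1}\|_2\|a\|_2\ge|\varphi(a^{-1}a)|=1$, hence $\|a^{-1}\|_2\ge 1$ and $k\|a^{-1}\|_2^2\ge k$, so $D_k(a,t)$ only enters at $t\ge k\|a^{-1}\|_2^2$, producing the final annular phase transition when $a$ admits an $L^2$ inverse. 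The main obstacle is the unimodality claim: $T_k$ mixes incommensurate fractional powers $r^{2j/k}$ and has no a priori convexity, so the log-derivative argument will likely require an auxiliary lemma (perhaps via $s=r^{2/k}$) on the log-concavity of $1+s+\cdots+s^{k-1}$ to push through cleanly.
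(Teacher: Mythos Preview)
Your factorization $T_k=f_k\cdot g$ is clean and your arguments for the bound $t_k^{\mathrm c}<\min\{k,4\}$ (via $\partial_rT_k(1,\pi)=4/k>0$ and $\partial_rT_k(0^+,\pi)=-\infty$ for $k\ge3$) are correct and more transparent than the paper's. However, the core step---unimodality of $r\mapsto T_k(r,\theta)$ via strict monotonicity of $r\,\partial_r\log T_k$---fails. Take $k=2$, $\theta=\pi/2$: then $T_2(r,\pi/2)=2(r^2+1)/(r+1)$ and
\[
r\,\partial_r\log T_2(r,\pi/2)=\frac{r(r^2+2r-1)}{(r+1)(r^2+1)},
\]
which equals $1/2$ at $r=1$, rises above $1$ (e.g.\ $\approx 1.09$ at $r=5$), and then tends to $1$ as $r\to\infty$; so it is not monotone. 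More generally, $r\partial_r\log f_k=-(2s/k)P'(s)/P(s)$ with $P(s)=\sum_{j<k}s^j$ and $s=r^{2/k}$ is strictly \emph{decreasing} (the ratio $sP'/P$ is a weighted mean of the exponents, increasing in $s$), and $r\partial_r\log g$ is not globally increasing either (at $\theta=0$ it equals $2r/(r-1)$). So neither summand has the sign you need, and the strategy as stated cannot close. Your envelope inequality for the monotonicity of $t_k^\ast$ also has the wrong direction in the second step: $t_k^\ast(\theta_2)=\min_r T_k(r,\theta_2)\le T_k(r_k^{\min}(\theta_1),\theta_2)$, not $\ge$; the fix is to pivot on $r_k^{\min}(\theta_2)$ instead.

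The paper's route to unimodality is less elegant but works: it splits at $r=1$. For $r>1$ it uses the operator identity $T_k(r,\theta)^{-1}=k^{-2}\sum_{j=1}^k|e^{2\pi ij/k}-r^{1/k}e^{i\theta/k}|^{-2}$ (coming from $\|(Z-\lambda)^{-1}\|_2^2$), in which each summand is visibly decreasing in $r$, so $T_k$ is increasing there. For $r\in(0,1)$ it observes that $T_k(r,\theta)$ is an affine combination in $\cos\theta$ of $T_k(r,0)$ and $T_k(r,\pi)$, and proves \emph{convexity} (not log-convexity) of each of these by explicit second-derivative computations, handled separately for $k=2$, $k=3$ (where convexity fails but a weaker monotonicity suffices), and $k\ge4$. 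If you want to salvage a unified argument, the right target is not monotonicity of $r\partial_r\log T_k$ but rather that it has a single sign change; this is equivalent to unimodality and may admit a cleaner proof through your factorization, but it is not the statement you wrote.
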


\begin{remark} Due to our normalization $\|a\|_2=1$, if $a^{-1}\in L^2$ then by H\"older's inequality $1=\|1\|_1 = \|a\cdot a^{-1}\|_1 \le \|a\|_2\|a^{-1}\|_2 = 1\cdot\|a^{-1}\|_2$; hence $\|a^{-1}\|_2^2\ge 1$, consistent with the time ordering of the phase transitions.  If $a=u$ is Haar unitary then $\|a^{-1}\|_2 = \|a\|_2=1$ and the support of the Brown measure transitions immediately at $t=k$ from one annulus to another (becoming a punctured disk at $t=k$).  Otherwise, if $a$ is a non-unitary $\mathscr{R}$-diagonal operator satisfying $\|a\|_2=1$, then $\|a^{-1}\|_2>1$ (cf.\ \cite{HaagerupLarsen2000}), and  so $k<k\|a^{-1}\|_2^2$ and there are separate phase transitions at $t=k$ and later at $t=k\|a^{-1}\|_2^2$ \end{remark}

\begin{figure}[h!]
  \includegraphics[scale=0.6]{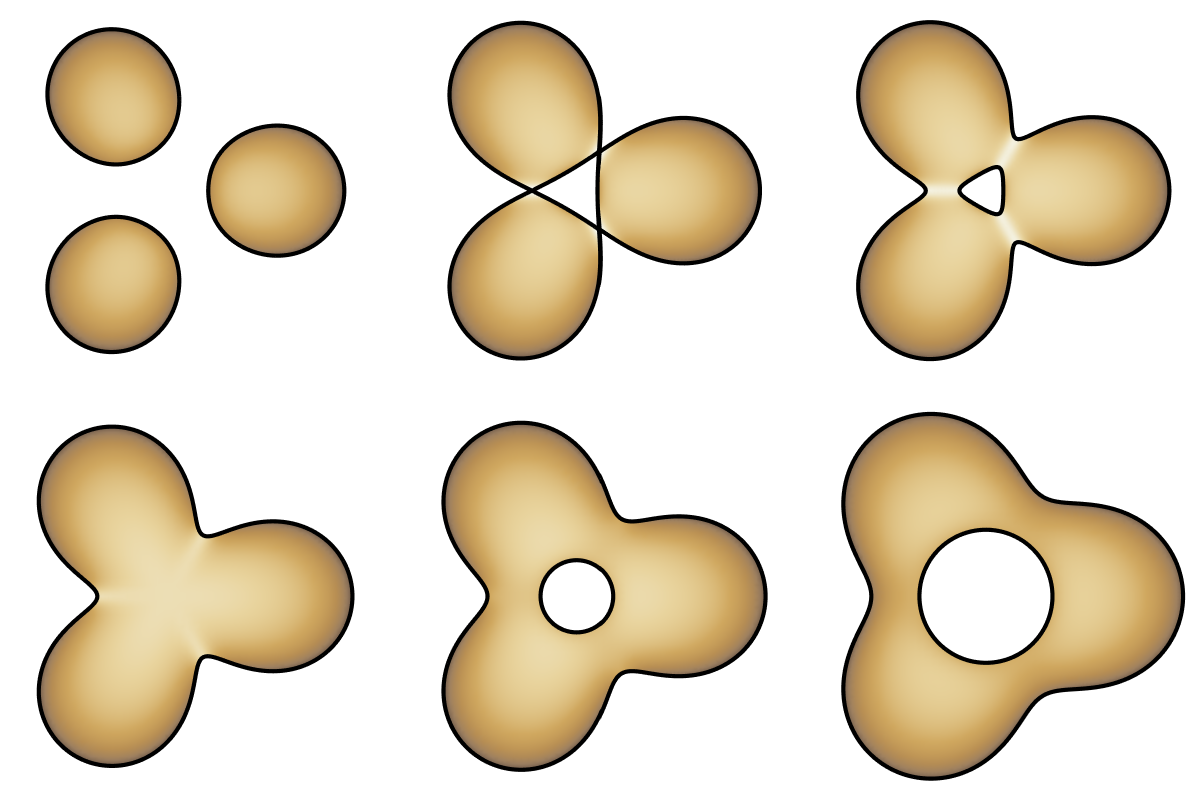}
  \caption{The Brown measure linearized model $Z+\sqrt{t/k}A$ at $t$ evolves through the phase transitions of its topology; here $k=3$, the initial condition is $u_0=1$, and the step distribution is Haar unitary.
  \label{fig.6.5}}
\end{figure}

Theorems \ref{thm.Brown.Measure.1}, \ref{thm.Brown.Measure.2}, and \ref{thm.Brown.Measure.3}, along with finer results on the density of the measure and evolution of its support domain, are proved in Section \ref{sect.BM.computation.1}.

\begin{figure}[h!]
  \includegraphics[scale=0.6]{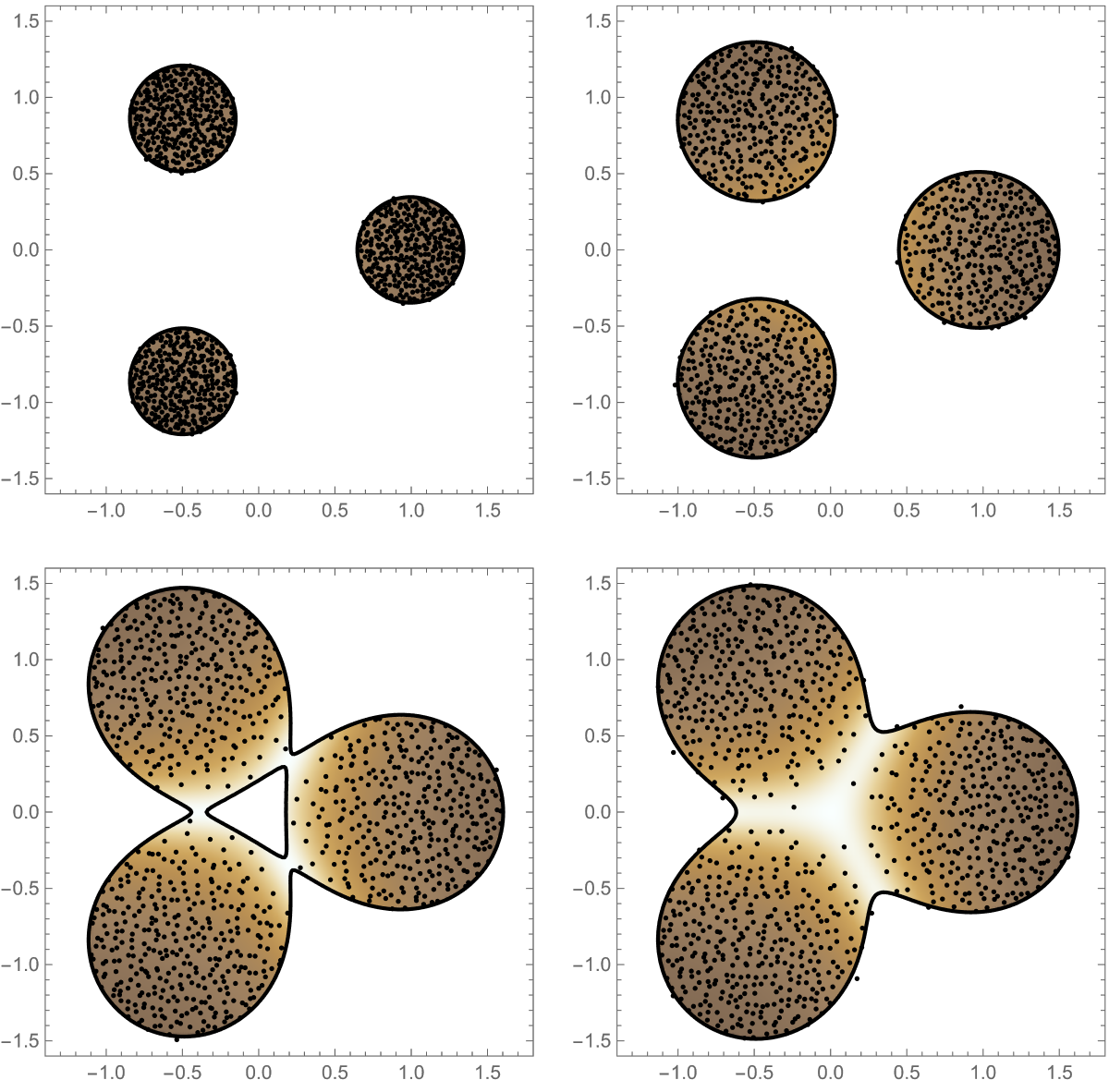}
  \caption{The evolution of the region $\Omega_k(t)$ for the linearized model $Z+\sqrt{t/k}A$ with $k=3$, initial condition $u_0=1$, circular / Ginibre steps.
  \label{fig.6.3}}
\end{figure}

\begin{figure}[h!]
  \includegraphics[scale=0.6]{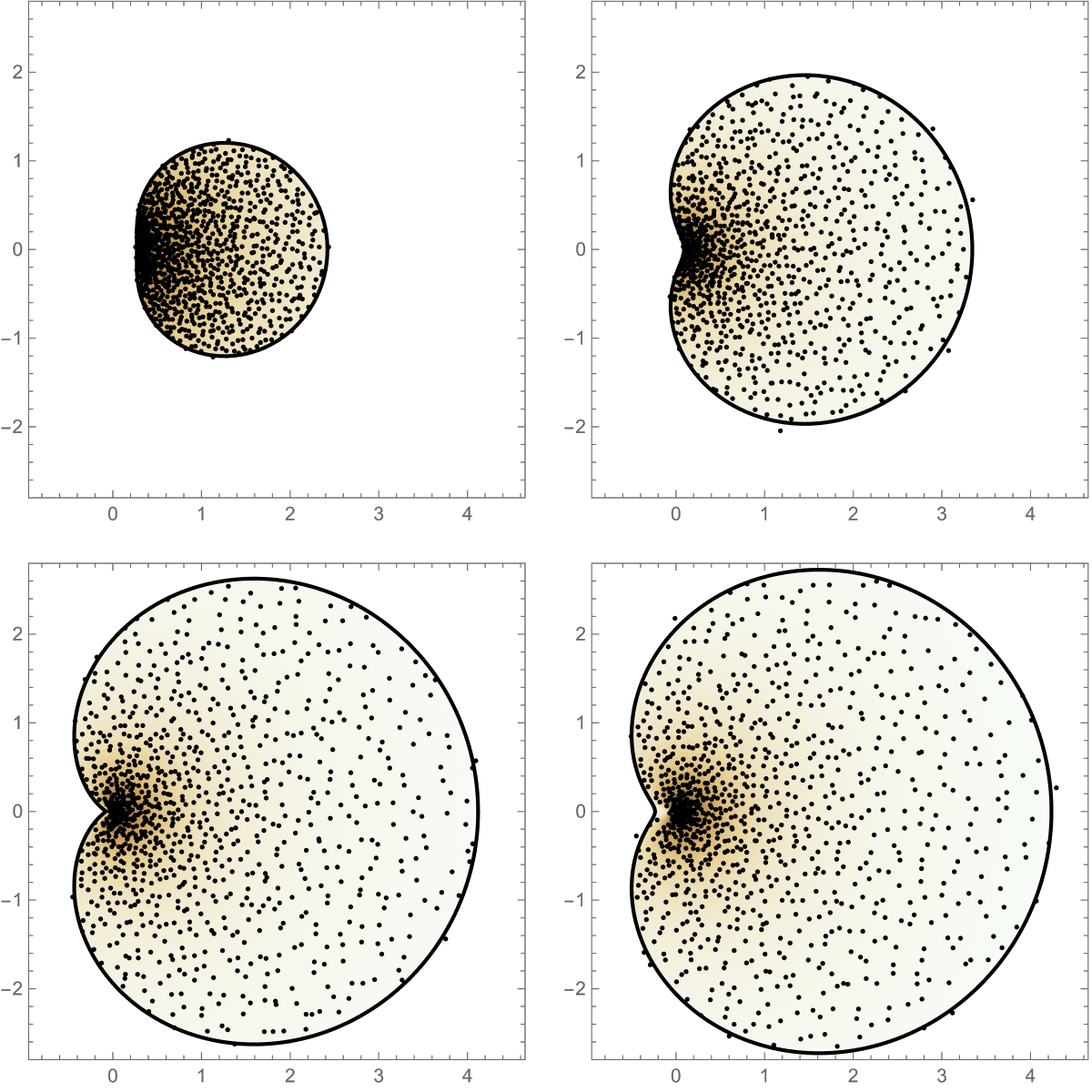}
  \caption{The evolution of the region $\Sigma_k(t)$ for the same parameter choices as in Figure \ref{fig.6.3}.
  \label{fig.6.4}}
\end{figure}

\subsubsection{The Lima Bean Law} \hfill

\medskip

The support domain $\Sigma_k(u_0,t)\setminus D_k(a,t)$ of the Brown measure of $u_0 b_k(t)$ is quite tractable, especially in the case that $u_0=1$, cf.\ Theorem \ref{thm.Brown.Measure.2} and \ref{thm.Brown.Measure.3}.  On the other hand, the density is rather more difficult to compute.  %In Proposition \ref{prop.add.Cauchy} we give an expression for the Cauchy transform of the Brown measure of $Z+\sqrt{t/k}\,A$, from which the density may be recovered from Stieltjes inversion; but that formula involves an implicit positive function $\eta_k(t,z)$, the square of the subordinator (cf.\ Theorem \ref{thm.subordination.general}) between the symmetrized laws of $|Z-z^{1/k}|$ and $\sqrt{t/k}\,|A|$, and this function is highly implicit in general.
In the special case $k=2$, with step distribution either circular or Haar unitary, we are able to compute explicit formulas for the densities; see Section \ref{section.explicit.formulas}.  For large $k$, explicit formulas are an insurmountable challenge.

Now, for fixed $t>0$, for all large $k$ the disk $D_k(a,t)$ is empty, and the support set of the Brown measure is $\overline{\Sigma_k(u_0,t)}$.  From \eqref{eq.lifetime.k} and \eqref{eq.Sigma.intro},  $\Sigma_k(u_0,t)$  is the set of $z\in\mathbb{C}$ where $T_k(u_0,z)<t$.  Note that
\[ \lim_{k\to\infty} k(|z|^{2/k}-1) = \log(|z|^2) \]
and hence
\[\lim_{k\to\infty}T_k(u_0,z) = \frac{\log(|z|^2)}{|z|^2-1}\left(\int_{\mathbb{T}} \frac{\mu_{u_0}(d\xi)}{|\xi-z|^2}\right)^{-1} = T_\infty(u_0,z) \]
where $T_\infty(u_0,\cdot)$ is defined in \eqref{eq.lifetime.k=infty}.  In particular, if $b(t)$ is the free multiplicative Brownian motion \eqref{eq.free.mult.BM}, Theorem \ref{thm.fmbm} shows that the support of the Brown measure of $u_0b(t)$ is $\overline{\Sigma_\infty(u_0,t)}$ where $\Sigma_\infty(u_0,t) = \{z\in\mathbb{C}\colon T(u_0,z)<t\}$.  This is strongly suggestive that at least the support of the Brown measure of $u_0b_k(t)$ converges to the support of the Brown measure of $u_0b(t)$ as $k\to\infty$.  Our final main theorem is that, in fact, the Brown measures converge, in a strong sense.

\begin{theorem}[Lima Bean Law] \label{thm.lima.bean} Let $k\in\mathbb{N}$ and $t>0$.  Let $u_0 b_k(t)$ be any free random walk \eqref{eq.ubk} with steps $a_j$ normalized with $\|a_j\|_2=1$, and let $u_0b(t)$ be a free multiplicative Brownian motion \eqref{eq.free.mult.BM}.  Then
\begin{enumerate}
    \item The domain $\overline{\Sigma_k(u_0,t)}$ converges to the domain $\overline{\Sigma_\infty(u_0,t)}$ in Hausdorff distance as $k\to\infty$
    \item The density $\rho_k(t,\cdot)$ of the Brown measure of $u_0b_k(t)$ converges to the density $\rho_\infty(t,\cdot)$ of the Brown measure of $u_0b(t)$ as $k\to\infty$, uniformly on compact subsets of $\Sigma_\infty(u_0,t)$.
    \item The Brown measure of $u_0b_k(t)$ converges weakly to the Brown measure of $u_0b(t)$ as $k\to\infty$.
\end{enumerate}
\end{theorem}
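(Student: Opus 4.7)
The plan is to prove the three conclusions in sequence, with each stage building on the previous. The central analytic input is the pointwise limit $T_k(u_0,z)\to T_\infty(u_0,z)$ as $k\to\infty$ for each fixed $z\in\mathbb{C}\setminus\mathrm{spec}(u_0)$, which follows from $k(|z|^{2/k}-1)\to\log|z|^2$ together with continuity of the Poisson-integral factor $\int_{\mathbb{T}}|\xi-z|^{-2}\,\mu_{u_0}(d\xi)$ in $z$.

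For \textbf{(1)}, I would first promote the pointwise limit to locally uniform convergence of $T_k$ on compact subsets of $\mathbb{C}\setminus\mathrm{spec}(u_0)$; this is elementary, since $k(r^{2/k}-1) = \log(r^2) + O(k^{-1})$ uniformly on any compact bounded away from $r=0$, and the Poisson factor is independent of $k$. Hausdorff convergence of the closed sublevel sets $\overline{\Sigma_k(u_0,t)}$ to $\overline{\Sigma_\infty(u_0,t)}$ then follows from a standard sublevel-set stability argument, provided $T_\infty$ has no plateau at level $t$. The plateau-free property follows from the real analyticity of $T_\infty$ on its domain together with the minimum/maximum structure established in Theorem \ref{thm.Brown.Measure.3} (which carries over to the limit via the same monotonicity analysis applied to $T_\infty$).

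For \textbf{(2)}, I would use the characterization of $\rho_k(t,\cdot)$ derived in the proof of Theorem \ref{thm.Brown.Measure.2}. In that framework, $\rho_k$ is expressible (via $\tfrac{1}{\pi}\partial_{\bar z}$) in terms of a subordination function $S_k(t,z)$ associated to the linearized $\mathscr{R}$-diagonal operator $Z+\sqrt{t/k}\,A$, and $S_k$ is determined by a fixed-point equation whose coefficients depend on $k$ only through $T_k$ and the distribution of $a$. The plan is: (a) show that the fixed-point equation for $S_k$ converges locally uniformly on $\Sigma_\infty(u_0,t)$ to the corresponding equation for $S_\infty$ governing the free multiplicative Brownian motion; (b) verify uniform nondegeneracy of the derivative of the fixed-point map on compact subsets; (c) conclude uniform convergence $S_k\to S_\infty$ by a quantitative implicit function argument; and (d) upgrade to uniform convergence of $\partial_{\bar z}S_k$ using Cauchy estimates, since the defining equations are holomorphic in the appropriate variables. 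The possible atoms on $S_k^\ast$ and any singular-continuous boundary mass do not interfere, since $S_k^\ast=\emptyset$ for large $k$ by Theorem \ref{thm.Brown.Measure.1}(2) and the boundary pieces do not touch the interior.

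For \textbf{(3)}, given (1) and (2), weak convergence follows by a standard decomposition. For any $f\in C_b(\mathbb{C})$ and any compact $K\subset\Sigma_\infty(u_0,t)$, step (2) yields $\int_K f\,\rho_k\,dm\to\int_K f\,\rho_\infty\,dm$. By (1) the support $\overline{\Sigma_k(u_0,t)}$ lies in any fixed neighborhood of $\overline{\Sigma_\infty(u_0,t)}$ for all large $k$, so the Brown-measure mass on the "collar" $\mathbb{C}\setminus K$ is bounded by $1-\int_K\rho_k\,dm$, which tends to $1-\int_K\rho_\infty\,dm$ and can be made arbitrarily small by exhausting $\Sigma_\infty(u_0,t)$ with $K$. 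The hardest part will be \textbf{(2)}: obtaining genuine uniform convergence of $\rho_k\to\rho_\infty$. The subtlety is that the fixed-point problem defining $\rho_k$ lives over a matrix algebra $\MkC$ of growing size; reducing it to a stable scalar equation requires exploiting the circulant/cyclic freeness structure from Theorem \ref{thm.linearized.freeness} quantitatively, and ensuring that the implicit-function constants in (b)-(c) can be chosen uniformly in $k$ on compacts of $\Sigma_\infty(u_0,t)$.
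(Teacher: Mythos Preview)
Your plan for parts (1) and (3) is sound and matches the paper's approach. The substantive gap is in part (2), and it is twofold. First, you are missing a crucial \emph{rescaling}: the subordination function $\psi_k(t,z)$ (your $S_k$) tends to $0$ as $k\to\infty$, since the $\mathscr{R}$-diagonal perturbation $\sqrt{t/k}\,A$ has vanishing variance. It is $k\psi_k$ (equivalently $k^2\eta_k$) that has a nontrivial limit. The paper makes this precise by rewriting the Denjoy--Wolff equation in the rescaled variable $\zeta=k\psi_k$, producing a family of self-maps $\Psi_{z,k,t}$ on $\mathbb{C}^+$ that converge pointwise to an explicit limit map $-t\hat G_z$; convergence of the fixed points then follows from Heins' theorem on stability of Denjoy--Wolff points, not from an implicit function argument. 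Without identifying this rescaling, your step (a) degenerates. Relatedly, your claim that the fixed-point equation ``depends on $k$ only through $T_k$'' is not accurate: $T_k(u_0,z)$ is the value of $k/\|(Z-z^{1/k})^{-1}\|_2^2$, but the full Cauchy transform of $\tilde\mu_{|Z-z^{1/k}|}$---a sum over $k$ terms---is what enters.

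Second, your step (d) fails as stated: $\eta_k(t,z)$ is \emph{not} holomorphic in $z$ (it depends on $|z^{1/k}-e^{i(2\pi j+\alpha)/k}|^2$, hence on $z$ and $\bar z$), so Cauchy estimates in the $z$-variable do not give $\partial_{\bar z}$-convergence. The paper instead differentiates the rescaled fixed-point relation $L_k(z,\bar z,k\psi_k)=0$ by the chain rule to express $\bar z\,\partial_{\bar z}(k\psi_k)$ as a ratio of explicit series over $j$, shows each series has summands of order $O(j^{-2})$ or $O(j^{-3})$ uniformly on compacta, and passes to the limit by dominated convergence. Finally, your concern about the growing matrix algebra is a red herring: Theorem \ref{thm.linearized.freeness} already establishes that $Z$ and $A$ are \emph{scalar}-free in $(\mathscr{A}\otimes\MkC,\varphi\otimes\mathrm{tr})$ with $A\equaldist a$, so the Bercovici--Zhong subordination is scalar from the outset; the $k$-dependence enters only through the spectral measure of the unitary $Z$.
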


This theorem mirrors the finite-$N$ limit of Theorem \ref{main.thm.1.MoBettaWZ} and subsequent discussion at the beginning of Section \ref{sect.intro.eigenvalues}, which shows the distribution of eigenvalues of $B^N_k(t)$ converges to the distribution of eigenvalues of $B^N(t)$ as $k\to\infty$.  Theorem \ref{thm.lima.bean} shows that the large-$N$ limit eigenvalue distribution of $B^N_k(t)$ converges to the large-$N$ limit eigenvalue distribution of $B^N(t)$ as $k\to\infty$: and it is {\em far} more general.  In particular, the only assumption on the limit step distribution $a_j$ is bi-invariance (i.e.\ $\mathscr{R}$-diagonal) together with normalization $\|a_j\|_2=1$.  Whereas the steps $A_j^N$ of $B_k^N(t)$ must actually be Ginibre Ensembles in Theorem \ref{main.thm.1.MoBettaWZ}, or minimally must have the same entry-covariance as Ginibre for Berger's Theorem \ref{thm.Berger}, our main Theorem \ref{thm.lima.bean} shows that the Lima Bean distribution of the free multiplicative Brownian motion is the universal limit for {\em all} random walks with bi-invariant steps.

\section{Background}

In this extensive background section we outline most of the concepts and tools we will use throughout this paper, including the basic constructs of noncommutative probability, free probability theory, Brown measure, and stochastic calculus on Lie groups and more general noncommutative probability settings.

\subsection{Noncommutative Probability}

\subsubsection{$\ast$- and $W^\ast$-Probability Spaces\label{sect.*-prob}} \hfill

\medskip

A {\bf $\ast$-probability space} (sometimes called a noncommutative probability space) is a pair $(\mathscr{A},\varphi)$, where $\mathscr{A}$ is a unital $\ast$-algebra, i.e.\ possessing a multiplicative unit $1$ and an involution $a\mapsto a^\ast$ that is sesquilinear and satisfies $(a^\ast)^\ast = a$, and $\varphi\colon\mathscr{A}\to\mathbb{C}$ is a {\bf state}, i.e.\ a linear functional $\varphi\colon\mathscr{A}\to\mathbb{C}$ that is unital ($\varphi(1)=1$) and positive ($\varphi(a^\ast a)\ge 0$ for all $a\in\mathscr{A}$).  The state is {\bf faithful} if $\varphi(a^\ast a)=0$ implies $a=0$, in which case $\langle a,b\rangle := \varphi(b^\ast a)$ defines an inner-product on $\mathscr{A}$.  The state is {\bf tracial} if $\varphi(ab)=\varphi(ba)$ for all $a,b\in\mathscr{A}$ (which need not be true in general, as the algebra need not be commutative).

\begin{example} \label{ex.*-prob.spaces} Following are some typical examples of $\ast$-probability spaces.
\begin{enumerate}
    \item If $(\Omega,\mathscr{F},\mathbb{P})$ is a probabiliy space, then the pair $(L^\infty(\Omega,\mathscr{F},\mathbb{P}),\mathbb{E})$ is a $\ast$-probability space (that happens to be commutative).  Here the adjoint is complex conjugation, and so $f^\ast f = |f|^2$ represent all non-negative functions.  The state $\mathbb{E}$ is automatically tracial, and is faithful.  A variant of this space expands the algebra to $L^{\infty-}$, containing all random variables that have finite moments of all orders (but are not necessarily bounded).
    \item $(\MNC,\frac{1}{N}\mathrm{Tr})$ is a $\ast$-probability space, where the adjoint operation is conjugate transpose.  The normalized trace state is faithful and tracial.
    \item The previous two examples can be combined to realize random matrices (with bounded or $L^{\infty-}$ entries) in a $\ast$-probability space, where the combined state is $\varphi(A) = \frac{1}{N}\mathbb{E}\mathrm{Tr}[A]$.  Alternatively, just the normalized trace may be used, in which case the state $\varphi(A)$ is a random variable that satisfies all the desired properties a.s.
    \item If $H$ is a separable Hilbert space, the {\bf free} (or {\bf Boltzmann}) {\bf Fock space} $\mathscr{F}_0(H)$ is the completion of the tensor algebra $\bigoplus_{n\ge 0} H^{\otimes n}$ in the boosted inner product determined by
    \[ \langle h_1\otimes\cdots\otimes h_n,g_1\otimes\cdots\otimes g_m\rangle := \delta_{nm} \langle h_1,g_1\rangle_H\cdots\langle h_n,g_n\rangle_H. \]
    The ``$0$-particle space'' $H^{\otimes 0}$ is equal to the span $\mathbb{C}\Omega$ for some unit vector $\Omega$ called {\em vacuum}.  It defines the {\bf vacuum expectation state} $\tau_0$ on the algebra $\mathscr{B}(\mathscr{F}(H))$ of bounded operators on the Fock space:
    \[ \tau_0(x) = \langle x\Omega,\Omega\rangle_{\mathscr{F}_0(H)}. \]
    This state is neither faithful nor tracial in general.
    \item \label{ex.*-prob.spaces.Fock} On the free Fock space $\mathscr{F}_0(H)$, consider the {\bf left creation operators} $l(h)\colon\mathscr{F}_0(H)\to\mathscr{F}_0(H)$ given by $l(h)\psi = h\otimes\psi$ for $h\in H$.  The left creation operators are bounded, with norm $\|l(h)\|_{\mathrm{op}} = \|h\|_H$.  The adjoint $l(h)^\ast$ is easily computable: $l(h)^\ast\Omega = 0$ and on the ``$n$-particle space'' $H^{\otimes n}$ the action is $l(h)^\ast(g_1\otimes g_2\otimes\cdots\otimes g_n) = \langle g_1,h\rangle g_2\otimes\cdots\otimes g_n$, realizing the {\bf left annihilation operators}.  Ergo $\omega(h) = \ell(h)+\ell^\ast(h)$ is a selfadjoint bounded operator.  Fix an orthonormal basis $\{e_j\}_{j\in J}$ for $H$; the von Neumann subalgebra of $\mathscr{B}(\mathscr{F}_0(H))$ generated by $\{\omega(e_j)\}_{j\in J}$ is denoted $\Gamma_0(H)$, the {\bf free Gaussian factor}.  The vacuum expectation state $\tau_0$ is both faithful and tracial when restricted to $\Gamma_0(H)$, cf. \cite{BKS}.
\end{enumerate}
\end{example}
Each of the preceding examples is in fact a {\bf $W^\ast$-probability space}: a pair $(\mathscr{A},\varphi)$ where $\mathscr{A}\subseteq\mathscr{B}(H)$ is an algebra of operators on a Hilbert space closed in the $\sigma$-weak operator topology ($\sigma$-WOT), and the state is faithful, tracial, and also ``normal'', meaning $\sigma$-WOT continuous.  (Essentially, this continuity says that an appropriate version of the Dominated Convergence Theorem holds for the state.)

\subsubsection{Noncommutative $L^p$ Spaces}

\begin{notation} In a $W^\ast$-probability space $(\mathscr{A},\varphi)$, for $1\le p<\infty$, the {\bf $L^p$-norm} of an element $a\in\mathscr{A}$ is
\[ \|a\|_p = \|a\|_{L^p(\mathscr{A},\varphi)} := \varphi[|a|^p] = \varphi[(a^\ast a)^{p/2}]. \]
Although $a^\ast a \ne aa^\ast$ in general, since $\varphi$ is tracial it follows that $\|a^\ast\|_p = \varphi[(aa^\ast)^{p/2}] =\|a\|_p$.

The space $L^p(\mathscr{A},\varphi)$ is defined to be the $\|\cdot\|_p$-completion of $\mathscr{A}$.  (If $\mathscr{A}\subseteq\mathscr{B}(H)$, $L^p(\mathscr{A},\varphi)$ can be realized as a space of unbounded linear operators on $H$ affiliated with $\mathscr{A}$.)  Also, denote $L^\infty(\mathscr{A},\varphi):=\mathscr{A}$; this is sensible due to the fact (that holds true in the general setting of a faithful state) that $\|a\|_\infty:=\lim_{p\to\infty}\|a\|_p =\|a\|_{\mathscr{A}}=\|a\|_{\mathrm{op}}$, the operator norm of $a$ acting on $H$.
\end{notation}

The preceding notation is fully consistent with the classical setting: the abstract $L^p$-completion of $L^\infty$ coincides with $L^p$ over any measure space.  Moreover, as in the classical case, the trace $\varphi\colon\mathscr{A}\to\mathbb{C}$ extends uniquely to a bounded linear functional $L^1(\mathscr{A},\varphi)\to\mathbb{C}$ with operator norm $1$ (and so we also refer to this extension as $\varphi$).  In general, {\bf H\"older's Inequality} holds: if $1\le p\le \infty$ and $1/p+1/p'=1$,
\begin{equation} \label{eq.Holder} |\varphi[ab])| \le \|ab\|_1 \le \|a\|_p\|b\|_{p'} \quad a\in L^p(\mathscr{A},\varphi), b\in L^{p'}(\mathscr{A},\varphi). \end{equation}
(Here the first step is the triangle inequality $|\varphi[a]| \le \varphi(|a|) = \varphi[(a^\ast a)^{1/2}]$.)  The usual induction argument shows that, more generally,
\begin{equation} \label{eq.Holder.n} |\varphi[a_1\cdots a_n]| \le \|a_1\cdots a_n\|_p \le \|a_1\|_{p_1}\cdots \|a_n\|_{p_n} \end{equation}
whenever $p,p_1,\ldots,p_n\in[1,\infty]$ with $1/p_1+\cdots+1/p_n= 1/p$ and $a_j\in L^{p_j}(\mathscr{A},\varphi)$ for $1\le j\le n$.  By taking $a_2=\cdots=a_n=1$ in \eqref{eq.Holder.n} we see as usual that $\|a\|_p\le\|a\|_q$ whenever $1\le p\le q\le\infty$; hence we have the usual reverse inclusion of $L^p$ spaces $L^p(\mathscr{A},\varphi)\supseteq L^q(\mathscr{A},\varphi)$.  This also implies that the restriction of $\varphi$ from $L^1$ to $L^p$ is a continuous linear functional.

Note that the $L^p$ norm on $(\mathscr{A},\varphi)$ has a dual characterization: if $1\le p\le\infty$ and $p'$ is the conjugate exponent with $1/p+1/p'=1$ then for any $a\in\mathscr{A}$
\[ \|a\|_p = \sup\{|\varphi[ab]|\colon b\in\mathscr{A}, \|b\|_{p'}\le 1\}. \]
It follows that the map $a\mapsto \varphi[\,\cdot\, a]$ extends to an isomorphism from $L^p(\mathscr{A},\varphi)$ onto the dual space $L^{p'}(\mathscr{A},\varphi)^\ast$ when $1<p\le\infty$.  Of particular note is the case $p=p'=2$, where the self-duality is tied to the fact that $\langle a,b\rangle_{L^2}:=\varphi[b^\ast a]$ defines an inner product with respect to which $L^2(\mathscr{A},\varphi)$ is a Hilbert space (also known as the Gelfand--Naimark--Segal (GNS) space, where the original algebra $\mathscr{A}$ can be viewed as acting via left multiplication).  However, unlike the classical case, typically $L^\infty(A,\varphi)^\ast = \mathscr{A}^\ast \supsetneq L^1(\mathscr{A},\varphi)$.

\subsubsection{$\ast$-Distributions\label{sec.*-distribution}} \hfill

\medskip

Let $(\mathscr{A},\varphi)$ be a $\ast$-probability space.  The {\bf $\ast$-distribution} of a collection $\mathbf{a} = (a_i)_{i\in I}\subset\mathscr{A}$ is the set of all $\ast$-moments
\begin{equation} \label{eq.*-distribution} \varphi(a_{i_1}^{\varepsilon_1} a_{i_2}^{\varepsilon_2} \cdots a_{i_r}^{\varepsilon_r}), \qquad r\in\mathbb{N}, \quad i_1,\ldots,i_r\in I, \quad \varepsilon_1,\ldots,\varepsilon_r\in\{1,\ast\}. \end{equation}
It is sometimes construed as a linear functional $\varphi_{\mathbf{a}}\colon\mathscr{P}_I\to\mathbb{C}$, where $\mathscr{P}_I$ is the algebra of abstract $\ast$-polynomials, i.e.\ the span of all $\ast$-monomials (aka $\ast$-words) $X_{i_1}^{\varepsilon_1}\cdots X_{i_r}^{\varepsilon_r}$ in noncommuting variables $(X_i)_{i\in I}$, where the value of $\varphi_{\mathbf{a}}$ on such a $\ast$-monomial is precisely the scalar in \eqref{eq.*-distribution}.  This is to mirror the classical case where the distribution of a collection of random variables is identified as a probability measure on the state space which, via the Riesz Representation Theorem, is a collection of linear functionals on a function space indexed by the original random variables.  The $\ast$-distribution $\varphi_{\mathbf{a}}$ is sometimes called the (noncommutative) {\bf Law} of the (noncommutative) random vector $\mathbf{a}$.

The analogy between $\ast$-distribution and  classical probability distribution is more direct for a single normal operator $a$ (i.e.\ satisfying $aa^\ast = a^\ast a$) in a $W^\ast$-probability space.  In that setting, all $\ast$-monomials reduce to those of the form $a^n (a^\ast)^m$ for $m,n\in\mathbb{N}$, and the Spectral Theorem yields a projection valued measure $E^a$ on $\mathbb{C}$ for which
\[ a^n (a^\ast)^m = \int_{\mathbb{C}} \zeta^n\bar\zeta^m\,E^a(d\zeta). \]
As such, the (ordinary $[0,1]$-valued) probability measure $\mu_a = \varphi\circ E^a$ captures the entire $\ast$-distribution through the moment formula
\begin{equation} \label{eq.*-moments.measure} \varphi[a^n (a^\ast)^m] = \int_{\mathbb{C}} \zeta^n\bar\zeta^m\,\mu_a(d\zeta). \end{equation}
Since $a$ is a bounded operator, its spectrum $\mathrm{spec}(a)$ is a compact subset of $\mathbb{C}$.  The measure $\mu_a$, whose support equals $\mathrm{spec}(a)$, is called the {\bf spectral measure} of $a$.  As it is compactly-supported, $\mu_a$ is completely determined by \eqref{eq.*-moments.measure}.

Two classes of normal operators are of special note: {\bf selfadjoint} ($a=a^\ast$) and {\bf unitary} ($a^\ast a = aa^\ast = 1$).  In each case, the $\ast$-distribution reduces to moments of the form $\varphi(a^n)$ for $n\in\mathbb{N}$.  For unitary $u$, the spectral measure $\mu_u$ is a probability measure on the unit circle $\mathbb{T}\subset\mathbb{C}$.  For selfadjoint $x$, the spectral measure $\mu_x$ is a probability measure on $\mathbb{R}$; an important subcase is {\bf positive} (or more precisely non-negative) operators of the form $r=a^\ast a$ for some $a\in\mathscr{A}$.  Here $\mathrm{spec}(r)\subset [0,\infty)$.

\begin{remark} \label{rk.unbounded.spec.meas} If $(\mathscr{A},\varphi)$ is a $W^\ast$-probability space and $a\in L^1(\mathscr{A},\varphi)$ is a(n unbounded) normal operator, it still possesses a spectral measure.  The Spectral Theorem still assigned a projection-valued measure $E^a$ supported on the (non-compact) spectrum of $a$.  The operators in $L^1(\mathscr{A},\varphi)$ are affiliated with $\mathscr{A}$, and so all the spectral projections are in fact in $\mathscr{A}$; hence $\mu_a = \varphi\circ E^a$ still makes sense.  In this setting, \eqref{eq.*-moments.measure} holds true in the sense that both sides are equal when $a^n(a^\ast)^m\in L^1(\mathscr{A},\varphi)$ (i.e.\ if $a$ is actually in $L^{n+m}(\mathscr{A},\varphi)$) and both do not exist otherwise.  In general, what holds true and uniquely defines $\mu_a$ for normal $a\in L^1(\mathscr{A},\varphi)$ is
\[ \varphi[f(a)] = \int_{\mathrm{spec}(a)}f(\zeta)\,\mu_a(d\zeta) \quad \forall\, f\in C_c(\mathbb{C}). \]
\end{remark}

If $A\in \MNC$ is a normal (random) matrix, its spectral measure with respect to the normalized trace is its empirical spectral distribution ($\mathrm{ESD}$) defined in \eqref{eq.def.ESD}.

The $\ast$-distribution of a non-normal operator cannot be fully encoded in natural way as a measure on $\mathbb{C}$; however, there is an extension called {\em Brown measure} (also denoted $\mu_a$) which is central to this paper, and is discussed in detail below in Section \ref{sect.Brown.measure}.  In general, for a set $(a_i)_{i\in I}$ of size $\ge 2$, there is no reasonable notion of spectral measure if the $a_i$ do not commute.

\begin{definition} \label{def.conv.*-dist} Given $\ast$-probability spaces $(\mathscr{A},\varphi)$ and $(\mathscr{A}_N,\varphi_N)$ and collections $\mathbf{a}=(a_i)_{i\in I}\subset\mathscr{A}$ and $\mathbf{a}_N = (a_{N,i})_{i\in I}\subset\mathscr{A}_N$, we say {\bf $\mathbf{a}_N$ converges in $\ast$-distribution to $\mathbf{a}$}, $\mathbf{a}_N\rightharpoonup \mathbf{a}$, if the linear functionals $\varphi_{\mathbf{a}_N}$ converge pointwise to $\varphi_{\mathbf{a}}$ on $\mathscr{P}_I$.  That is to say: each $\ast$-moment \eqref{eq.*-distribution} of $\mathbf{a}_N$ converges to that same $\ast$-moment of $\mathbf{a}$.

In the case that each $\mathbf{a}_N=a_N$ is a single normal operator, $a_N\rightharpoonup a$ means that the moments of $\mu_{a_N}$ converge to the moments of $\mu_a$.  If the sequence $\mu_{a_N}$ is also known to be tight, then this is equivalent to weak convergence.  In particular, if there is a fixed compact set $K\subset\mathbb{C}$ that contains all the spectra $\mathrm{spec}(a_N)$, then convergence in $\ast$-distribution is equivalent to weak convergence, justifying the use of the same symbol $\rightharpoonup$.
\end{definition}

In (classical) probability theory, $L^p$ convergence is stronger than weak convergence of random variables.  The same holds true for $L^p$ convergence compared to $\ast$-convergence in a single $W^\ast$-probability space.

\begin{proposition} \label{prop.Lp>D*} Let $\mathbf{a}_k = (a_{i,k})_{i\in I}$ and $\mathbf{a} = (a_i)_{i\in I}$ be in a $W^\ast$-probability space $(\mathscr{A},\varphi)$.  If $a_{i,k}\to a_i$ in $L^p(\mathscr{A},\varphi)$ for all $p\in[1,\infty)$ and all $i\in I$
\[ \|a_{i,k}-a_i\|_p \to 0 \quad \text{as} \; k\to\infty \]
Then $\mathbf{a}_k$ converges to $\mathbf{a}$ in $\ast$-distribution.
\end{proposition}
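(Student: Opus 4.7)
The plan is to reduce convergence of a general $\ast$-moment to $L^1$-convergence of the corresponding product, via a standard telescoping argument and the noncommutative H\"older inequality \eqref{eq.Holder.n} that the paper already records.

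First I would fix a $\ast$-monomial $X_{i_1}^{\varepsilon_1}\cdots X_{i_r}^{\varepsilon_r}\in\mathscr{P}_I$ of total degree $r\ge 1$, and note that, by the triangle inequality contained in \eqref{eq.Holder}, the state $\varphi$ is $L^1$-contractive; thus it suffices to show that the product $a_{i_1,k}^{\varepsilon_1}\cdots a_{i_r,k}^{\varepsilon_r}$ converges in $L^1(\mathscr{A},\varphi)$ to $a_{i_1}^{\varepsilon_1}\cdots a_{i_r}^{\varepsilon_r}$ as $k\to\infty$.

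Next I would telescope the difference in the standard way:
\begin{equation*}
a_{i_1,k}^{\varepsilon_1}\cdots a_{i_r,k}^{\varepsilon_r} - a_{i_1}^{\varepsilon_1}\cdots a_{i_r}^{\varepsilon_r}
= \sum_{j=1}^{r} a_{i_1,k}^{\varepsilon_1}\cdots a_{i_{j-1},k}^{\varepsilon_{j-1}}\bigl(a_{i_j,k}^{\varepsilon_j}-a_{i_j}^{\varepsilon_j}\bigr)a_{i_{j+1}}^{\varepsilon_{j+1}}\cdots a_{i_r}^{\varepsilon_r},
\end{equation*}
and apply the noncommutative H\"older inequality \eqref{eq.Holder.n} with $p_1=\cdots=p_r=r$ (so that $\sum 1/p_\ell = 1$) to bound the $L^1$-norm of the $j$-th telescoping term by
\begin{equation*}
\Bigl(\prod_{\ell<j}\|a_{i_\ell,k}^{\varepsilon_\ell}\|_r\Bigr)\,\bigl\|a_{i_j,k}^{\varepsilon_j}-a_{i_j}^{\varepsilon_j}\bigr\|_r\,\Bigl(\prod_{\ell>j}\|a_{i_\ell}^{\varepsilon_\ell}\|_r\Bigr).
\end{equation*}
Since $\|x^\ast\|_r=\|x\|_r$ (a fact the paper records just after defining $L^p$-norms), the adjoints are harmless; in particular $\|a_{i_j,k}^{\varepsilon_j}-a_{i_j}^{\varepsilon_j}\|_r=\|a_{i_j,k}-a_{i_j}\|_r$, which tends to $0$ as $k\to\infty$ by the hypothesis applied at $p=r<\infty$.

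Finally I would observe that the prefactors stay uniformly bounded: since $a_{i_\ell,k}\to a_{i_\ell}$ in $L^r$, the triangle inequality gives $\sup_k\|a_{i_\ell,k}\|_r<\infty$ for each $\ell$, and the trailing factors $\|a_{i_\ell}\|_r$ are constant in $k$. Hence each of the $r$ telescoping terms has $L^1$-norm tending to $0$, so the whole difference does; in particular $\varphi$ applied to the product converges to $\varphi$ applied to the limit product, proving $\mathbf{a}_k\rightharpoonup\mathbf{a}$. There is no serious obstacle here; the one point to handle cleanly is the interplay between the adjoint operation and the $L^r$ norm, which is immediate from the traciality of $\varphi$.
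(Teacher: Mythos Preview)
Your proposal is correct and follows essentially the same approach as the paper: both fix a $\ast$-monomial, telescope the difference of products, apply the noncommutative H\"older inequality \eqref{eq.Holder.n} with all exponents equal to the degree, and use $L^r$-convergence to control the middle factor while the flanking factors stay bounded (via $\sup_k\|a_{i_\ell,k}\|_r<\infty$ from the triangle inequality). The only cosmetic difference is that you first reduce to $L^1$-convergence of the product via $|\varphi[\cdot]|\le\|\cdot\|_1$, whereas the paper bounds $|\varphi[\cdot]|$ directly by the same H\"older step.
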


\begin{remark} In the special case of $L^\infty$ of a classical probability space, the statement is that if the components of a bounded random vector each converge to limits in $L^p$ for every $p$ then the {\em joint} moments, and ergo joint distribution, of the random vector converges to the limit.  This is not as surprising as it appears at first blush: convergence of components separately in all $L^p$ spaces implies that any finite linear combination of components converges to the relevant linear combination of the limits, which is enough information to specify the limit joint distribution by the Portmanteau theorem. (More technical conditions are required to move from convergence of moments to convergence in distribution beyond the bounded setting.)
\end{remark}

\begin{proof} For a given $\ast$-monomial $F((X_i)_{i\in I}) = X_{i_1}^{\varepsilon_1}\cdots X_{i_n}^{\varepsilon_n}$,
\begin{align*} \delta_k(F):= \varphi[F(\mathbf{a}_k)]-\varphi[F(\mathbf{a})] =& \varphi[a_{i_1,k}^{\varepsilon_1}\cdots a_{i_n,k}^{\varepsilon_n} - a_{i_1}^{\varepsilon_1}\cdots a_{i_n}^{\varepsilon_n}] \\
=& \varphi[u_1u_2\cdots u_n - v_1v_2\cdots v_n]
\end{align*}
where, for convenience, we have denoted the terms $a_{i_j,k}^{\varepsilon_j} = u_j$ and $a_{i_j}^{\varepsilon_j} = v_j$ for $1\le j\le n$.  We expand this difference as a telescoping sum
\[ u_1u_2\cdots u_n - v_1v_2\cdots v_n = \sum_{j=1}^n u_1\cdots u_{j-1}(u_j-v_j)v_{j+1}\cdots v_n. \]
Hence, by H\"older's inequality,
\begin{align*} |\delta_k(F)| &= \left|\sum_{j=1}^n \varphi[u_1\cdots u_{j-1}(u_j-v_j)v_{j+1}\cdots v_n]\right| \\
&\le \sum_{j=1}^n \|u_1\|_n\cdots \|u_{j-1}\|_n \|u_j-v_j\|_n \|v_{j+1}\|_n\cdots \|v_n\|_n.
\end{align*}
By assumption $\|a_{i_j,k}-a_{i_j}\|_n\to 0$ as $k\to\infty$; hence, for some $k_j\in\mathbb{N}$ it follows that, for $k\ge k_j$, $\|u_j\|_\ell = \|a_{i_j,k}^{\varepsilon_j}\|_n = \|a_{i_j,k}\|_n \le \|a_{i_j}\|_n+1 = \|a_{i_j}^{\varepsilon_j}\|_n+1 = \|v_j\|_n+1$.  Taking $\rho_n = \max\{\|v_j\|_n\colon 1\le j\le n\}$, we therefore have for all $k\ge\max\{k_1,\ldots,k_n\}$
\[ |\delta_k(F)| \le (\rho_n+1)^{n-1}\sum_{j=1}^n \|u_j-v_j\|_{n}. \]
But, again, by assumption $\|u_j-v_j\|_n = \|(a_{i_j,k}-a_{i_j})^{\varepsilon_j}\|_n= \|a_{i_j,k}-a_{i_j}\|_n\to 0$ as $k\to\infty$ for each $j$, and hence $\delta_k(F)\to 0$.  Thus $\varphi_{\mathbf{a}_k}(F)\to \varphi_{\mathbf{a}}(F)$ for all $F\in\mathscr{P}_I$, as desired.
\end{proof}

In the context of (random) matrices, the $\ast$-distribution records {\em bulk} information; convergence in $\ast$-distribution is robust to many perturbations, in particular those of ``low'' rank.  The following lemma states and proves a precise result which is folklore for general matrices, a version of which is proved in \cite{Bai-Silverstein} in the case of Hermitian random matrices.  We give it here for an single matrix ensemble, but the same proof yields a comparable result for a multi-matrix ensemble.

\begin{lemma} \label{lem.perturb.*-dist} Let $(A_N)_{N\in\mathbb{N}}$ be matrices in $(\MNC,\frac1N\mathrm{Tr})$ that converges in $\ast$-distribution to an element $a$ in a $\ast$-probability space.  Let $E_N\in \MNC$ satisfy $\max\{\|A_N\|,\|E_N\|\}\le n(N)$ and $\mathrm{rank}(E_N)\le r(N)$, where $\|\cdot\|$ denotes the operator norm on $\MNC$ and $r(N)\cdot n(N)^\ell = o(N)$ for all $\ell\in\mathbb{N}$.  Then $A_N' = A_N+E_N$ also converges in $\ast$-distribution to $a$.  \end{lemma}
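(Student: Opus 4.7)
The plan is to show directly that for every $\ast$-monomial $F(X) = X^{\varepsilon_1}X^{\varepsilon_2}\cdots X^{\varepsilon_\ell}$, the difference $\frac{1}{N}\mathrm{Tr}\,F(A_N') - \frac{1}{N}\mathrm{Tr}\,F(A_N)$ vanishes as $N\to\infty$. Combined with the hypothesis that $\frac{1}{N}\mathrm{Tr}\,F(A_N) \to \varphi_{\mathbf a}(F)$, this suffices to conclude that $A_N'\rightharpoonup a$ in $\ast$-distribution.

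The central observation is a standard low-rank trace estimate: for any $M \in \MNC$ of rank at most $r$, one has $|\mathrm{Tr}(M)| \le r\|M\|$, since the trace of $M$ equals the sum of its (at most $r$) nonzero eigenvalues, each bounded in modulus by $\|M\|$. Combined with cyclicity of the trace and submultiplicativity of the operator norm, this gives: if $Y_1,\ldots,Y_\ell\in\MNC$ and some $Y_{j_0}$ satisfies $\mathrm{rank}(Y_{j_0})\le r$, then
\[
|\mathrm{Tr}(Y_1\cdots Y_\ell)| \;=\; |\mathrm{Tr}(Y_{j_0}Y_{j_0+1}\cdots Y_{j_0-1})| \;\le\; r\,\|Y_1\|\cdots\|Y_\ell\|,
\]
since the cyclically rearranged product has rank at most $\mathrm{rank}(Y_{j_0})\le r$.

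To apply this, I would expand
\[
F(A_N') \;=\; F(A_N+E_N) \;=\; \sum_{\boldsymbol{\delta}\in\{A,E\}^\ell} Y_1^{(\boldsymbol{\delta})}\cdots Y_\ell^{(\boldsymbol{\delta})},
\]
where $Y_j^{(\boldsymbol{\delta})} = A_N^{\varepsilon_j}$ if $\delta_j = A$ and $Y_j^{(\boldsymbol{\delta})} = E_N^{\varepsilon_j}$ if $\delta_j = E$. The single term with $\boldsymbol{\delta} \equiv A$ is $F(A_N)$, whose normalized trace converges to $\varphi_{\mathbf a}(F)$ by hypothesis. Each of the remaining $2^\ell - 1$ terms contains at least one factor of $E_N^{\varepsilon_{j_0}}$; since $\mathrm{rank}(E_N^{\varepsilon_{j_0}}) = \mathrm{rank}(E_N) \le r(N)$, the low-rank estimate gives
\[
\Bigl|\tfrac{1}{N}\mathrm{Tr}\bigl(Y_1^{(\boldsymbol{\delta})}\cdots Y_\ell^{(\boldsymbol{\delta})}\bigr)\Bigr| \;\le\; \frac{r(N)\,n(N)^\ell}{N} \;=\; o(1),
\]
by the hypothesis that $r(N)\cdot n(N)^\ell = o(N)$ for every fixed $\ell$. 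Summing the (fixed, finite) number $2^\ell-1$ of such terms yields
\[
\tfrac{1}{N}\mathrm{Tr}\,F(A_N') \;=\; \tfrac{1}{N}\mathrm{Tr}\,F(A_N) + o(1) \;\longrightarrow\; \varphi_{\mathbf a}(F),
\]
which is the desired conclusion. There is no serious obstacle here: the only ingredients are the rank-trace inequality, cyclicity of the trace, and the decay hypothesis $r(N)n(N)^\ell = o(N)$, which is precisely tuned so that the sum of ``perturbation'' terms remains negligible for monomials of any fixed length $\ell$. The extension to a multi-matrix ensemble is identical, simply with the $A_N^{\varepsilon_j}$ replaced by the appropriate components of the ensemble and $E_N$ replaced by the corresponding perturbation matrix at each position.
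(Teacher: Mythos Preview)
Your proof is correct and follows essentially the same approach as the paper: expand $F(A_N+E_N)$ over $\{A,E\}^\ell$, isolate the $2^\ell-1$ terms containing at least one $E_N^{\varepsilon_j}$, and use the rank--norm trace bound to show each contributes $o(1)$. The only cosmetic difference is that the paper first aggregates the $2^\ell-1$ terms into a single matrix $D_N$ and bounds its rank and norm by subadditivity (yielding an extra harmless factor of $2^\ell-1$), whereas you apply the trace bound term-by-term via cyclicity; both routes are equivalent.
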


\begin{proof} Fix a $\ast$-monomial $F(X) = X^{\varepsilon_1} X^{\varepsilon_2}\cdots X^{\varepsilon_\ell}$ for $\varepsilon_1,\varepsilon_2,\ldots,\varepsilon_\ell\in\{1,\ast\}$.  For notation's sake, relabel $A_N = E_N^0$ and $E_N = E_N^1$.  Then
\[ F(A_N') = p(E_N^0+E_N^1) = \sum_{\eta\in \{0,1\}^\ell} (E_N^{\eta_1})^{\varepsilon_1} (E_N^{\eta_2})^{\varepsilon_2} \cdots (E_N^{\eta_\ell})^{\varepsilon_\ell}. \]
The term in the sum with $\eta=(0,0,\ldots,0)$ is $F(A_N)$.  Each of the other $2^{\ell}-1$ terms is a product with at least one $E_N$ or $E_N^\ast$, a matrix of rank $\le r(N)$.  Since the rank of a product of matrices is bounded above by the minim rank of any term, it follows that every remaining term has rank $\le r(N)$.  Morevover, the norm of each term is $\|(E_N^{\eta_1})^{\varepsilon_1} (E_N^{\eta_2})^{\varepsilon_2} \cdots (E_N^{\eta_\ell})^{\varepsilon_\ell}\| \le \|(E_N^{\eta_1})^{\varepsilon_1} \|\|(E_N^{\eta_2})^{\varepsilon_2}\| \cdots \|(E_N^{\eta_\ell})^{\varepsilon_\ell}\| \le n(N)^\ell$.

Thus, $F(A_N')-F(A_N)$ is a sum of $2^{\ell}-1$ matrices each of rank $\le r(N)$ and norm $\le n(N)^\ell$; by subadditivity of norm and rank, we see that $F(A_N')-F(A_N) = D_N$ with $\mathrm{rank}(D_N)\le r(N)\cdot(2^\ell-1)$ and $\|D_N\|\le n(N)^\ell\cdot(2^{\ell}-1)$.  Now, the rank is the number of nonzero eigenvalues, and the norm is the largest magnitude eigenvalue; hence the trace, which is the sum of the eigenvalues, satisfies
\[ |\mathrm{Tr}(D_N)| \le r(N)\cdot n(N)^\ell\cdot (2^\ell-1)^2. \]
Hence, by the assumption of the lemma,
\[ \left|\frac{1}{N}\mathrm{Tr}(p(A_N'))-\frac{1}{N}\mathrm{Tr}(p(A_N))\right| = \left|\frac{1}{N}\mathrm{Tr}(D_N)\right| = o(1). \]
Ergo, $A_N'$ and $A_N$ have the same asymptotic $\ast$-moments.
\end{proof}

\subsection{Free Probability and Asymptotic Random Matrix Theory} In this section we introduce the notion of {\em free independence} analogous to statistical independence in the classical setting, introduce tools and consequences of freeness, and discuss connections to random matrix theory.

\subsubsection{Free Independence and Free Cumulants} \hfill

\medskip

In a $\ast$-probability space $(\mathscr{A},\varphi)$, unital $\ast$-subalgebras $(\mathscr{A}_i)_{i\in I}$ of $\mathscr{A}$ (not necessarily closed under $\ast$)  are {\bf freely independent} or {\bf free} if products of alternating centered moments are centered: $\varphi[a_1\cdots a_n]=0$ whenever $\varphi[a_1]=\cdots=\varphi[a_n]=0$ and $a_1\in\mathscr{A}_{i_1},\ldots,a_n\in\mathscr{A}_{i_n}$ with $i_1\ne i_2,i_2\ne i_3,\ldots,i_{n-1}\ne i_n$.  Collections $\mathbf{a}_1,\mathbf{a}_2,\ldots$ are called freely independent if the unital subalgebras they generate are freely independent.

\begin{remark} \begin{enumerate}
\item Freeness is hereditary: if $(\mathscr{A}_i)_{i\in I}$ are freely independent and $\mathscr{B}_i\subseteq\mathscr{A}_i$ then $(\mathscr{B}_i)_{i\in I}$ are freely independent.  In a $W^\ast$-probability space, it would be natural to define freeness of random variables as freeness of the $W^\ast$-algebras they generate; by heredity and continuity of the state, this is equivalent to the above definition.
\item Typically the definition of freeness is state for subalgebras that are not necessarily closed under $\ast$.  In that case, what we call free independence above is a stronger notion called {\bf $\ast$-freeness}.  We will have no use for the weaker non-$\ast$ version of freeness in this paper, so we simply use freeness to encompass $\ast$-freeness.
\end{enumerate} \end{remark}

Freeness is a moment factorization property.  For example: if $a,b\in(\mathscr{A},\varphi)$ are freely independent, then $a^\circ:=a-\varphi[a]$ and $b^\circ:=b-\varphi[b]$ are in the unital algebras generated by $a$ and $b$ respectively and are centered; hence, freeness implies that $a^\circ b^\circ$ is centered, and so
\begin{align*} 0 = \varphi[a^\circ b^\circ] = \varphi[(a-\varphi[a])(b-\varphi[b])] &= \varphi\left[ab - \varphi[a]b-\varphi[b]a +\varphi[a]\varphi[b]\right] \\
&= \varphi[ab]-\varphi[a]\varphi[b]-\varphi[a]\varphi[b]+\varphi[a]\varphi[b] \\
&= \varphi[ab]-\varphi[a]\varphi[b].
\end{align*}
Thus, we have the familiar covariance condition: if $a,b$ are free then $\varphi[ab]=\varphi[a]\varphi[b]$.  Similar calculations show that $\varphi[aba] = \varphi[a^2]\varphi[b]$, matching classical moment factorization for independent random variables; but with products of order 4 or higher, the noncommutativity shows up: if $\{a_1,a_2\}$ is free from $\{b_1,b_2\}$, then
\[ \varphi[a_1b_1a_2b_2] = \varphi[a_1a_2]\varphi[b_1]\varphi[b_2] + \varphi[a_1]\varphi[a_2]\varphi[b_1b_2] - \varphi[a_1]\varphi[a_2]\varphi[b_1]\varphi[b_2]. \]

To organize and compute with freeness and moments, Speicher introduced the theory of {\bf free cumulants} that echoes the classical cumulants from probability and statistics.

\begin{definition} Let $\mathrm{P}(I)$ denote the collection of all {\bf set partitions} of a finite set $I$: i.e.\ $\pi\in\mathrm{P}(I)$ is a collection of disjoint nonempty subsets (called {\bf blocks}) of $I$ whose union is $I$.  If $I = \{1,2,\ldots,n\}$ we denote $\mathrm{P}(I) = \mathrm{P}(n)$. \end{definition}

Given a $\ast$-probability space $(\mathscr{A},\varphi)$, any collection $\{\xi_n\}_{n\in\mathbb{N}}$ of multi-linear functionals $\xi_n\colon \mathscr{A}^n\to\mathbb{C}$ extends to a family $\{\xi_{\pi}\colon\pi\in\mathrm{P}(n)\}$ for each $n\in\mathbb{N}$, defined as follows: If $\pi = \{B_1,\ldots,B_r\}$ then $\xi_\pi := \varphi_{B_1}\cdots \varphi_{B_r}$, where if $B = \{i_1<i_2<\cdots<i_\ell\}$ then $\xi_B[a_1,\ldots,a_n]:= \xi_{\ell}[a_{i_1},\ldots,a_{i_\ell}]$.  For example: if $\pi = \{\{1,3,4\},\{2,5\},\{6\}\}\in\mathrm{P}(6)$ then $\xi_\pi[a_1,\ldots,a_6] = \xi_3[a_1,a_3,a_4]\xi_2[a_2,a_5]\xi_1[a_6]$.  Note then that $\xi_n = \xi_{\pi_n}$ where $\pi_n$ is the single block partition $\{1,\ldots,n\}$.

The state $\varphi$ induces multi-linear functionals $\varphi_n$ by multiplication inside:
\[ \varphi_n[a_1,\ldots,a_n] = \varphi[a_1\cdots a_n]. \]
We define the {\bf classical cumulants} $\{c_n\colon n\in\mathbb{N}\}$ and their multiplicative extension $\{c_\pi\colon\pi\in\mathrm{P}(n)\}$ inductively by the moment-cumulant formula
\begin{equation} \label{e.moment.cumulant.c} \varphi[a_1\cdots a_n] = \sum_{\pi\in\mathrm{P}(n)} c_\pi[a_1,\ldots,a_n]. \end{equation}
For example:\ $\mathrm{P}(1)$ has a single partition $\vert=\{1\}$, while $\mathrm{P}(2) = \{\left\lvert\;\right\rvert,\sqcup\}$ where $\left\lvert\;\right\rvert = \{\{1\},\{2\}\}$ while $\sqcup = \{1,2\}$.
Hence \eqref{e.moment.cumulant.c} in the case $n=1$ says that $\varphi[a] = c_1[a]$ (the first cumulant is the mean), and \eqref{e.moment.cumulant.c} i nthe case $n=2$ says
\[ \varphi[ab] = c_{\left\lvert\;\right\rvert}[a,b] + c_\sqcup[a,b] = c_1[a]c_1[b] + c_2[a,b] = \varphi[a]\varphi[b] + c_2[a,b] \]
which implies that $c_2[a,b] = \varphi[ab]-\varphi[a]\varphi[b]$ is the covariance.  Higher cumulants have names and important roles in statistics: $c_3$ is called skewness, $c_4$ is kurtosis.

Classical cumulants connect with classical probability (through the method of moments) in two ways.  For a bounded random variable $X$, the exponential generating function $C_X(z) = \sum_{n=0}^\infty c_n[X,\ldots,X] z^n/n!$ satisfies $\exp(C_X(z)) = \mathbb{E}[\exp(zX)]$ -- i.e.\ $C_X$ is the log of the moment generating function (sometimes called the cumulant generating function).  Second, and more important to us: bounded random variables $(X_i)_{i\in I}$ are independent if and only if {\em all their mixed classical cumulants vanish}; that is, for any $n\in\mathbb{N}$ and indices $i_1,\ldots,i_n\in\mathbb{N}$, $c_n[X_{i_1},\ldots,X_{i_n}]=0$ whenever the indices $i_1,\ldots,i_n$ are not all equal.

All of these concepts can be ported to the free world, by restricting all set partitions to be {\bf noncrossing}.

\begin{definition} Let $I$ be a finite ordered set.  A {\bf crossing} in a partition $\pi\in\mathrm{P}(n)$ is a pair distinct blocks $B_1,B_2$ in $\pi$ with indices $i_1<i_2<j_1<j_2$ in $I$ satisfying $i_1,j_1\in B_1$ and $i_2,j_2\in B_2$.  If a partition is {\bf noncrossing} if it has no crossings. The set of noncrossing partitions is denoted $\mathrm{NC}(I)\subset\mathrm{P}(I)$.  Note that a crossing requires to blocks of size $\ge 2$; it follows that $\mathrm{NC}(I)=\mathrm{P}(I)$ whenever $|I|<4$.

In a $\ast$-probability space, the {\bf free cumulants} $\kappa_n$ are the multi-linear functionals defined (by their multiplicative extension $\kappa_\pi$) inductively by the free moment-cumulant formula
\begin{equation} \label{e.moment.cumulant.f} \varphi[a_1\cdots a_n] = \sum_{\pi\in\mathrm{NC}(n)} \kappa_\pi[a_1,\ldots,a_n]. \end{equation}
Since $\mathrm{NC}(n) = \mathrm{P}(n)$ for $n<4$, the free cumulants $\kappa_1,\kappa_2,\kappa_3$ agree with the classical cumulants $c_1,c_2,c_3$: they are mean, covariance, and skewness.  But since $\mathrm{P}(4)$ possesses the partition $\{\{1,3\},\{2,4\}\}$ which has a crossing, $\mathrm{NC}(4)\subsetneq\mathrm{P}(4)$, and $\kappa_4\ne c_4$.
\end{definition}
The free cumulants of a collection of random variables (and their adjoints) in a $\ast$-probability space contain the same information as the $\ast$-distribution, as \eqref{e.moment.cumulant.f} makes clear.  In the case of a single selfadjoint random variable $a$, where the $\ast$-distribution is encapsulated by the moments $\{\varphi(a^n)\}_{n\in\mathbb{N}}$, the (ordinary) generating function $M_a(z) = \sum_{n=0}^\infty \varphi(a^n)z^n$ and the free cumulant generating function $R_a(z)= \sum_{n=0}^\infty \kappa_n[a,\ldots,a] z^n$ are connected by a functional equation discussed below.  For the general non-selfadjoint case, the connection is harder to describe.  But the relationship to freeness is entirely analogous to the classical cumulants' relationship to independence.

\begin{proposition}[\cite{SpeicherNicaBook}, Theorem 11.20] \label{prop.mixed.fcumulants} A collection $(a_i)_{i\in I}$ of random variables in a $\ast$-probability space $(\mathscr{A},\varphi)$ is freely independent if and only if their mixed free cumulants vanish; i.e.\ for any $n\in\mathbb{N}$ and $i_1,\ldots,i_n\in I$ and $\varepsilon_1,\ldots,\varepsilon_n\in\{1,\ast\}$, $\kappa_n[a_{i_1}^{\varepsilon_1},\ldots,a_{i_n}^{\varepsilon_n}]=0$ if the indices $i_1,\ldots,i_n$ are not all equal. \end{proposition}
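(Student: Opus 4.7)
The plan is to prove both directions using the free moment–cumulant formula \eqref{e.moment.cumulant.f} and the combinatorics of noncrossing partitions. Throughout, call a partition $\pi \in \mathrm{NC}(n)$ \emph{admissible} for a tuple $(a_{i_1}^{\varepsilon_1},\ldots,a_{i_n}^{\varepsilon_n})$ if every block of $\pi$ consists of positions $j$ sharing a common index $i_j$; i.e.\ each block ``lives in'' a single subalgebra generated by one of the $a_i$'s.

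For the sufficiency direction, assume mixed free cumulants vanish. I need to show that, whenever $a_j \in \mathscr{A}_{i_j}$ with $\varphi[a_j] = 0$ and $i_1 \neq i_2,\ldots,i_{n-1} \neq i_n$, one has $\varphi[a_1 \cdots a_n] = 0$. By \eqref{e.moment.cumulant.f},
\[
\varphi[a_1 \cdots a_n] \;=\; \sum_{\pi \in \mathrm{NC}(n)} \kappa_\pi[a_1,\ldots,a_n],
\]
and because the singleton cumulant $\kappa_1[a_j] = \varphi[a_j] = 0$ and all mixed cumulants vanish, the only surviving terms correspond to admissible partitions with no singleton blocks. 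The key combinatorial lemma is that every noncrossing partition contains at least one \emph{interval block}, i.e.\ a block whose elements are consecutive integers in $\{1,\ldots,n\}$. If such a block has size $\geq 2$, it contains some pair $j,j+1$, but then admissibility would force $i_j = i_{j+1}$, violating the alternation hypothesis. Hence every admissible partition must have at least one singleton block, and each such contribution vanishes, proving freeness.

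For the necessity direction, I proceed by induction on $n$. The base case $n=1$ is vacuous (there are no mixed tuples). For the induction step, fix a tuple $(a_{i_1}^{\varepsilon_1},\ldots,a_{i_n}^{\varepsilon_n})$ whose indices are not all equal, and apply M\"obius inversion to \eqref{e.moment.cumulant.f}:
\[
\kappa_n[a_{i_1}^{\varepsilon_1},\ldots,a_{i_n}^{\varepsilon_n}] \;=\; \sum_{\pi \in \mathrm{NC}(n)} \mu(\pi,\hat 1_n)\, \varphi_\pi[a_{i_1}^{\varepsilon_1},\ldots,a_{i_n}^{\varepsilon_n}],
\]
where $\mu$ is the M\"obius function on the noncrossing partition lattice and $\hat 1_n$ is the single-block partition. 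The strategy is to unfold each $\varphi_\pi$ into moments via freeness, recentre each $a_{i_j}^{\varepsilon_j}$ around its mean, and re-expand using \eqref{e.moment.cumulant.f} in the other direction; after the smoke clears, only the pure (non-mixed) cumulants contribute, whose coefficients sum to zero by a direct combinatorial identity on $\mathrm{NC}(n)$.

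The main obstacle is the bookkeeping in the second direction: one must carefully track how reducing an alternating-centred product via the freeness axioms reintroduces lower-order moments, and then verify that the M\"obius-weighted sum of all the resulting contributions cancels. The cleanest way to organise this is Speicher's formalism of \emph{vanishing of mixed cumulants} stated for general multilinear functionals: one shows that the multilinear functionals $(\widetilde\kappa_n)$ defined to equal $\kappa_n$ on each single $\mathscr{A}_i$ and to be $0$ on any mixed input satisfy the same moment–cumulant identity as $(\kappa_n)$, and then invokes the uniqueness of the cumulants defined by \eqref{e.moment.cumulant.f}. The necessity then follows from the sufficiency direction already proved, since $(\widetilde\kappa_n)$ has no mixed cumulants by construction, hence yields the same moments on $(\mathscr{A}_i)_{i \in I}$ as the true cumulants do — forcing $\widetilde\kappa_n = \kappa_n$ and completing the proof.
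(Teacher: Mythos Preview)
The paper does not prove this proposition; it is stated as background and attributed to \cite{SpeicherNicaBook}, Theorem 11.20. So there is no ``paper's own proof'' to compare against. That said, your sketch has a real gap in the sufficiency direction and is too vague in the necessity direction.

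For sufficiency, you write ``whenever $a_j \in \mathscr{A}_{i_j}$ with $\varphi[a_j]=0$'' and then invoke the vanishing of mixed cumulants in the moment--cumulant expansion of $\varphi[a_1\cdots a_n]$. But the hypothesis only gives you vanishing of mixed cumulants among the \emph{generators} $a_i^{\varepsilon}$, whereas the freeness definition requires you to handle arbitrary centered elements of the unital $\ast$-subalgebras $\mathscr{A}_i = \mathbb{C}^\ast\langle a_i\rangle$, which are polynomials in $a_i, a_i^\ast$. To bridge this you need the formula for free cumulants with products as arguments (Theorem 11.12 in \cite{SpeicherNicaBook}): it expresses $\kappa_n[p_1(a_{i_1}),\ldots,p_n(a_{i_n})]$ as a sum over noncrossing partitions refining the block structure of the products, each term being a product of cumulants in the original variables. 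From this one reads off that if mixed cumulants of the $a_i^{\varepsilon}$ vanish, so do mixed cumulants of polynomials in them. Without this step your interval-block argument, which is otherwise correct, does not apply.

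For necessity, your final paragraph gestures at the right idea---define $\widetilde\kappa_n$ to agree with $\kappa_n$ on pure tuples and to vanish on mixed ones, then show $\widetilde\kappa_n$ satisfies the moment--cumulant relation and invoke uniqueness---but the claim ``one shows that $(\widetilde\kappa_n)$ satisfy the same moment--cumulant identity'' is exactly where the freeness hypothesis must be used, and you do not say how. Concretely, one must verify that for any tuple $(a_{i_1}^{\varepsilon_1},\ldots,a_{i_n}^{\varepsilon_n})$ the moment $\varphi[a_{i_1}^{\varepsilon_1}\cdots a_{i_n}^{\varepsilon_n}]$ equals $\sum_{\pi\in\mathrm{NC}(n)}\widetilde\kappa_\pi$, which by construction is a sum over only the admissible $\pi$. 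This amounts to showing that freeness forces the moment to factor exactly along admissible noncrossing partitions, and is proved in \cite{SpeicherNicaBook} by an induction that repeatedly uses the centering trick together with the recursive structure of $\mathrm{NC}(n)$. Your M\"obius-inversion paragraph preceding this is a red herring; the uniqueness argument you end with is the correct route, but it needs the verification just described to be complete.
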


\begin{example} \label{ex.freeness} Following are some relevant examples of freeness and free cumulants of some random variables.
\begin{enumerate}
    \item The {\bf semicircle distribution} $\mu_{\mathrm{sc}}$ of variance $t$ is the probability distribution with density $\frac{1}{\pi t}\sqrt{(4t-x^2)_+}$ supported on the interval $|x|\le 2\sqrt{t}$.  A selfadjoint random variable $x$ in a $\ast$-probability space whose $\ast$-distribution is given by the law $\mu_{\mathrm{sc}}$ has very simple free cumulants: $\kappa_n[x,\ldots,x] = t\delta_{n2}$, i.e.\ all cumulants of order higher than $2$ are $0$, while the mean is $0$ and the variance is $t$.  Hence for any noncrossing partition $\pi\in\mathrm{NC}(n)$, $\kappa_\pi[x,\ldots,x]\ne 0$ only if all the blocks of $\pi$ have size $2$; we denote this set of {\bf noncrossing pairings} by $\mathrm{NC}_2(n)$ (which is nonempty only if $n$ is even).  From the moment-cumulant formula \eqref{e.moment.cumulant.f}, it follows that the even moment $\varphi(x^{2n})$ is equal to $t^n |\mathrm{NC}_2(2n)|$; this classic enumeration problem is one of the main examples that lead to the Catalan numbers $|\mathrm{NC}_2(2n)| = C_n = \frac{1}{n+1}\binom{2n}{n}$, which are indeed the even moments of the semicircular distribution.
    \item \label{ex.fields.ops.semicirc.free} In the free Gaussian $W^\ast$-probability space $(\Gamma_0(H),\tau_0)$ of Example \ref{ex.*-prob.spaces}(\ref{ex.*-prob.spaces.Fock}), the field operators $\omega(h) = \ell(h)+\ell^\ast(h)$ are semicricular with variance $\|h\|^2$.  Moreover: if $\{h_j\}_{j\in J}$ are orthogonal vectors in $H$, then $\{\omega(h_j)\}_{j\in J}$ are freely independent.
    \item \label{ex.circular.cumulants} Let $x,y$ be two freely independent semicircular random variables each of variance $t$; for example take $x=\omega(h)$ and $y=\omega(g)$ for $h\perp g$ with $\|h\|^2 = \|g\|^2=t$ as in (\ref{ex.fields.ops.semicirc.free}).  The random variable $c = \frac{1}{\sqrt{2}}(x+iy)$ is called Voiculescu's {\bf circular} element.  It is not a normal operator: $[c,c^\ast] = i[x,y]$, and since $x$ and $y$ are freely independent and nonconstant they cannot commute.  The free cumulants of $c,c^\ast$ are very easy to describe: $\kappa_n[c^{\varepsilon_1},\ldots,c^{\varepsilon_n}] = 0$ if $n\ne 2$, and the second-order cumulants are
    \[ \kappa_2[c,c] = \kappa_2[c^\ast,c^\ast]=0, \qquad \kappa_2[c,c^\ast] = \kappa_2[c^\ast,c] = t. \]
    Thus, from the moment-cumulant formula \eqref{e.moment.cumulant.f}, the $\ast$-distribution of $c$ is as follows: letting $\varepsilon = (\varepsilon_1,\ldots,\varepsilon_n) \in \{1,\ast\}^n$,
    \[ \varphi(c^{\varepsilon_1}\cdots c^{\varepsilon_n}) = t^n |\mathrm{NC}_2(\epsilon)| \]
    where $NC_2(\epsilon)$ is the set of noncrossing pairings where each block pairs a $1$ with a $\ast$ in the string $\varepsilon$.  See \cite{Kemp-JCTA} for detailed combinatorial analysis of this enumeration problem.
    \item \label{ex.Haar.cumulants} A random variable $u$ in a $\ast$-probability space $(\mathscr{A},\varphi)$ is {\bf Haar unitary} if it is unitary $uu^\ast = u^\ast u = 1$ and $\varphi(u^n)= \delta_{n0}$.  This $\ast$-distribution is encoded by the uniform probability distribution on the unit circle.  Haar unitaries can be constructed, for example, from semicircular random variables: if $x$ is semicircular of any variance then in the polar decomposition $x = ur$, $u$ is Haar unitary; i.e.\ $(xx^\ast)^{-1/2}x = |x|^{-1}x$ is Haar unitary when $x$ is semicircular.
    
    The free cumulants $\kappa_n[u,\ldots,u]$ and $\kappa_n[u^\ast,\ldots,u^\ast]$ all $=0$ for $n>0$, but this does not capture the free cumulants of mixtures of $u$ and $u^\ast$.  In \cite[Proposition 15.1]{SpeicherNicaBook}, it is shown that only the {\em alternating} free cumulants $\kappa_n[u,u^\ast,\ldots,u,u^\ast]$ and $\kappa_n[u^\ast,u,\ldots,u^\ast,u]$ are nonzero, taking value $(-1)^nC_{n-1}$ (signed Catalan numbers).
    \item \label{ex.p-Haar.unitary} Let $p\in\mathbb{N}$. A random variable $u$ in a $\ast$-probability space $(\mathscr{A},\varphi)$ is {\bf $p$-Haar unitary} if it is unitary $u^\ast u=uu^\ast=1$ and $\varphi(u^n)=1$ if $n$ is a multiple of $p$, and $=0$ otherwise.  The $\ast$-distribution corresponds to the discrete measure on the unit circle which has a mass of size $\frac{1}{p}$ at each of the $p$th roots of unity.
\end{enumerate}
\end{example}

Much like classical independence, free independence abounds in the sense that one can always extend the $\ast$-probability space to add free random variables with any individual distributions.  There is a free product construction, akin to Kolmogorov's extension theorem, that yields the following.

\begin{proposition}[Voiculescu \cite{Voiculescu1991}] \label{prop.Voiculescu.ext} Let $(\mathscr{A},\varphi)$ be a $W^\ast$-probability space.  Let $(\tau_i)_{i\in I}$ be any collection of $\ast$-distributions.  There is a $W^\ast$-probability space $(\mathscr{B},\eta)$ with $\mathscr{A}\subseteq\mathscr{B}$ and $\left.\eta\right|_{\mathscr{A}} = \varphi$ which contains $(b_i)_{i\in I}$ so that $\{\mathscr{A},(b_i)_{i\in I}\}$ are all freely independent and the $\ast$-distribution of $b_i$ is $\tau_i$ for each $i\in I$.
\end{proposition}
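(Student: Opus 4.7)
The plan is to follow Voiculescu's free product construction: first realize each target $\ast$-distribution $\tau_i$ as the distribution of a concrete operator in its own $W^\ast$-probability space, then amalgamate all such spaces with $(\mathscr{A},\varphi)$ via a Hilbert-space free product, producing a single ambient $W^\ast$-probability space in which the required freeness is built in.

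First I would construct, for each $i\in I$, an auxiliary $W^\ast$-probability space $(\mathscr{A}_i,\varphi_i)$ containing a generator $b_i$ whose $\ast$-distribution equals $\tau_i$. This is a standard GNS-style construction: view $\tau_i$ as a positive unital tracial linear functional on the abstract $\ast$-polynomial algebra $\mathscr{P}_{\{i\}}$; quotient by the left kernel of the sesquilinear form $\langle p,q\rangle := \tau_i(q^\ast p)$, complete to obtain a Hilbert space $H_i$ with cyclic unit vector $\xi_i$, and let $b_i$ be left multiplication by $X_i$. Under the implicit boundedness/normality hypothesis on $\tau_i$, $b_i$ extends to a bounded operator, and the $\sigma$-WOT closure $\mathscr{A}_i$ of the $\ast$-algebra it generates together with the vector state $\varphi_i = \langle\,\cdot\,\xi_i,\xi_i\rangle$ forms a $W^\ast$-probability space realizing $\tau_i$.

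Second, I would form the free product Hilbert space of the GNS spaces of $(\mathscr{A},\varphi)$ (relabeled $(\mathscr{A}_0,\varphi_0)$) and of all the $(\mathscr{A}_i,\varphi_i)$. Writing $\mathring{H}_i := H_i\ominus\mathbb{C}\xi_i$, the free product Hilbert space is
\[ H = \mathbb{C}\xi \;\oplus\; \bigoplus_{n\geq 1}\;\bigoplus_{\substack{i_1\neq i_2\neq\cdots\neq i_n}} \mathring{H}_{i_1}\otimes\cdots\otimes\mathring{H}_{i_n}. \]
For each $i$, there is a canonical unitary $V_i\colon H_i\otimes H(i)\to H$, where $H(i)$ is the free product of all the remaining $H_j$. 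Using $V_i$ to transport the left-regular representation of $\mathscr{A}_i$ on $H_i$ over to $H$ gives a normal $\ast$-embedding $\lambda_i\colon\mathscr{A}_i\hookrightarrow\mathscr{B}(H)$. Let $\mathscr{B}$ be the von Neumann algebra generated by all $\lambda_i(\mathscr{A}_i)$, and set $\eta(b) := \langle b\xi,\xi\rangle$. By construction $\eta\circ\lambda_i = \varphi_i$ for every $i$, so in particular identifying $\mathscr{A}$ with $\lambda_0(\mathscr{A}_0)$ yields $\left.\eta\right|_{\mathscr{A}} = \varphi$.

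Third, I would verify free independence and then the normality/faithfulness/traciality of $\eta$. Freeness is the easier half: for $a_j\in\mathscr{A}_{i_j}$ with $\varphi_{i_j}(a_j)=0$ and $i_1\neq i_2\neq\cdots\neq i_n$, unraveling the maps $V_{i_j}$ shows that $\lambda_{i_1}(a_1)\cdots\lambda_{i_n}(a_n)\xi$ lies in the summand $\mathring{H}_{i_1}\otimes\cdots\otimes\mathring{H}_{i_n}$ and is therefore orthogonal to $\xi$, giving $\eta(\lambda_{i_1}(a_1)\cdots\lambda_{i_n}(a_n))=0$. This delivers the desired freeness of $\{\mathscr{A},(b_i)_{i\in I}\}$ with prescribed marginals. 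The remaining properties --- normality of $\eta$ and the fact that $\eta$ is faithful and tracial on $\mathscr{B}$ --- constitute the main technical obstacle: traciality must be checked from the explicit moment formula for alternating products of centered elements across the subalgebras $\lambda_i(\mathscr{A}_i)$ and then propagated to $\mathscr{B}$ by normality and density, while faithfulness follows by showing that $\xi$ is cyclic and separating for $\mathscr{B}$ (equivalently, that the Tomita--Takesaki modular operator is trivial, which reduces to each $\varphi_i$ being a trace). These technical points were the core of Voiculescu's original free product construction \cite{Voiculescu1991}, which I would invoke to finish.
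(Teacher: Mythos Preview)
The paper does not prove this proposition at all; it is stated with attribution to Voiculescu \cite{Voiculescu1991} and used as a black box. Your outline is essentially the standard free product construction from that reference, and as a sketch it is correct: realize each $\tau_i$ via GNS, form the reduced free product Hilbert space, and verify freeness from the orthogonality of alternating tensors. The technical points you flag (normality, faithfulness, traciality of the free product state) are exactly where the work lies, and you correctly defer to the cited source for those.
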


\subsubsection{Free Convolution and Analytic Tools of Free Probability} \hfill

\medskip

Free independence is a rule to determine joint $\ast$-distributions from individual $\ast$-distributions.  As a result, if $x,y$ are freely independent then the $\ast$-distributions of nice functions of the two, such as $x+y$ and $xy$, are directly computable in terms of the $\ast$-distributions of $x$ and $y$.

If $x$ and $y$ are selfadjoint random variables in a $\ast$-probability space $(\mathscr{A},\varphi)$, then $x+y$ is also selfadjoint, and the $\ast$-distributions of these three random variables are encapsulated in their spectral measures $\mu_x$, $\mu_y$, and $\mu_{x+y}$, all probability distributions on $\mathbb{R}$. Freeness of $x,y$ then implies that $\mu_{x+y}$ is totally determined by $\mu_x$ and $\mu_y$ separately.  If $x,y$ were classically independent random variables, we would have $\mu_{x+y} = \mu_x\ast\mu_y$ given by convolution.  In the free setting, we denote the relevant operation by $\mu_{x+y} = \mu_x\boxplus\mu_y$, {\bf free (additive) convolution}.

For bounded random variables in $\mathscr{A}$ whose distributions are determined by their moments, we can appeal to the method of moments to understand the operation $\boxplus$.  In a sense, Proposition \ref{prop.mixed.fcumulants} describes free convolution by allowing the computation of joint free cumulants from individual ones.  The same could be said of the vanishing of mixed classical cumulants describing classical convolution: it is technically true but not computationally meaningful.  In that setting, one effectively computes convolutions by using the relation between the cumulant generating function and the moment generating function.  Voiculescu introduced an analogous analytic theory of freeness \cite{Voiculescu1986}, where the key analytic tool is the Cauchy transform (also known as the Stieltjes transform).

\begin{definition} \label{def.Cauchy-transform}  For any Borel probability measure $\mu$ on $\mathbb{R}$, the {\bf Cauchy transform} of $\mu$ is the holomorphic function defined by
\[G_\mu(z) = \int_{\mathbb{R}}\frac{\mu(dx)}{z-x},\quad z\in\mathbb{C}\setminus\mathbb{R}.\]
Since $G_\mu(\bar z) = \overline{G_\mu(z)}$ it is customary to consider it as a function on the upper half-plane $\mathbb{C}^+ = \{z\in\mathbb{C}\colon \mathrm{Im}\,z >0\}$.  Its range is then contained in $\mathbb{C}^- = -\mathbb{C}^+$, the lower half-plane.  Note also that $\lim_{y\to\infty} iy G_\mu(iy) = 1$; indeed $zG_\mu(z)\to 1$ as $|z|\to\infty$ in any cone $\mathrm{Im}\,z \ge \epsilon |\mathrm{Re}\,z|$ for $\epsilon>0$.
%In the case that $\mu$ is compactly-supported on $\mathbb{R}$, then $G_\mu$ has a power series expansion at $\infty$ given by $G_\mu(z) = \frac{1}{z}\sum_{n\ge 0} m_n(\mu) \frac{1}{z^n}$ where $m_n(\mu) = \int t^n\,\mu(dt)$ are the moments of $\mu$.
\end{definition}

Much like the characteristic function in classical probability, the Cauchy transform $G_\mu$ encodes the probability distribution $\mu$ recoverably and robustly.

\begin{proposition} \label{prop.Cauchy.robust} Let $\mu$ be a probability measure on $\mathbb{R}$, with Cauchy transform $G_\mu$.  The {\bf Stieltjes inversion formula} recovers $\mu$ from $G_\mu$ as the weak limit $\mu_\epsilon \rightharpoonup \mu$ as $\epsilon\downarrow 0$, where $\mu_\epsilon$ has real analytic density $\varrho_\epsilon$
\[ \varrho_\epsilon(x) =  -\frac1\pi\mathrm{Im}\, G_{\mu}(x+i\epsilon). \]

Let $\{\mu_n\}_{n=1}^\infty$ be a sequence of probability measures on $\mathbb{R}$.
\begin{enumerate}
\item \label{robust.1} If $\mu_n\rightharpoonup \mu$ for some probability measure $\mu$, then $G_{\mu_n}$ converges to $G_\mu$ uniformly on compact subsets of $\mathbb{C}^+$.
\item \label{robust.2} Conversely, if $G_{\mu_n}$ converges pointwise to a function $G$ that is analytic on $\mathbb{C}^+$, then $G=G_\mu$ for some finite measure $\mu$ with $\mu(\mathbb{R})\le 1$, and $\{\mu_n\}_{n=1}^\infty$ converges vaguely to $\mu$.
\end{enumerate}
\end{proposition}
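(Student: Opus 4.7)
The plan is to handle the three claims separately, using that the Cauchy transform is a holomorphic Poisson-type transform of the measure.

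For the Stieltjes inversion formula, I would first compute
\[ -\frac{1}{\pi}\mathrm{Im}\,G_\mu(x+i\epsilon) = \frac{1}{\pi}\int_{\mathbb{R}}\frac{\epsilon}{(x-y)^2+\epsilon^2}\,\mu(dy) = (P_\epsilon \ast \mu)(x), \]
recognizing this as the convolution of $\mu$ with the Poisson kernel $P_\epsilon$ on the real line. Since $(P_\epsilon)_{\epsilon>0}$ is an approximate identity, the measures $\mu_\epsilon$ with density $(P_\epsilon \ast \mu)$ converge weakly to $\mu$ as $\epsilon\downarrow 0$ by a standard Fej\'er-type argument (test against $f\in C_b(\mathbb{R})$ and use a Fubini/change-of-variables to reduce to $P_\epsilon \ast f \to f$ uniformly on compacts for $f\in C_c$, plus a tightness argument to handle tails).

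For part (\ref{robust.1}), the map $y\mapsto 1/(z-y)$ is bounded and continuous on $\mathbb{R}$ for each fixed $z\in\mathbb{C}^+$, so $\mu_n\rightharpoonup \mu$ gives pointwise convergence $G_{\mu_n}(z)\to G_\mu(z)$. To upgrade this to local uniform convergence, I would use the a priori estimate $|G_{\mu_n}(z)|\le 1/\mathrm{Im}\,z$ (triangle inequality under the integral), which makes $\{G_{\mu_n}\}$ a normal family on $\mathbb{C}^+$; Vitali's theorem then promotes pointwise convergence to locally uniform convergence.

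For part (\ref{robust.2}), the same bound $|G_{\mu_n}(z)|\le 1/\mathrm{Im}\,z$ shows that any pointwise analytic limit $G$ is actually a locally uniform limit on $\mathbb{C}^+$, and one checks $\lim_{y\to\infty} iyG(iy) \le 1$. Apply Helly's selection theorem to extract a vaguely convergent subsequence $\mu_{n_k}\to\mu$ to a sub-probability measure on $\mathbb{R}$ with $\mu(\mathbb{R})\le 1$; testing against $y\mapsto 1/(z-y)\in C_0(\mathbb{R})$ (which \emph{does} vanish at infinity, so vague convergence suffices) gives $G_{\mu_{n_k}}(z)\to G_\mu(z)$, hence $G = G_\mu$. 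Uniqueness of the full-sequence limit follows from a standard subsequence argument combined with the Stieltjes inversion formula of part (1): any further vague limit point must have the same Cauchy transform $G$, hence must equal $\mu$.

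The main obstacle is the converse direction, specifically controlling what happens at infinity: mass of $\mu_n$ can escape to infinity without being detected by $G_{\mu_n}$ (since the kernel $1/(z-y)$ vanishes as $|y|\to\infty$), which is exactly why one can only conclude $\mu(\mathbb{R})\le 1$ rather than $\mu(\mathbb{R})=1$, and why only \emph{vague} rather than weak convergence is asserted. Making this precise requires the careful interplay between Helly's theorem (which produces vague limits that are sub-probability measures) and the fact that $1/(z-\cdot)\in C_0(\mathbb{R})$, for which vague convergence is exactly what is needed to pass to the limit in the integral defining $G_{\mu_n}$.
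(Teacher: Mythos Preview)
The paper does not actually prove this proposition; it simply states ``A detailed proof can be found in \cite[Section 8]{Kemp-RMT}.'' Your outline is the standard argument (Poisson kernel/approximate identity for Stieltjes inversion, normal families for part~(\ref{robust.1}), Helly selection plus $1/(z-\cdot)\in C_0(\mathbb{R})$ for part~(\ref{robust.2})) and is correct; it is almost certainly what the cited reference contains.
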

\noindent A detailed proof can be found in \cite[Section 8]{Kemp-RMT}.

For a selfadjoint random variable $x$ in a $\ast$-probability space, the free cumulants $\kappa_n^x:=\kappa_n[x,\ldots,x]$ grows at most exponentially in $\|x\|$.  As such, the formal power series $R_x(z) = \sum_{n\ge 1} \kappa_n^x z^n$ converges on a neighborhood of $0$ in $\mathbb{C}$.  For historical reasons, the shifted power series $\mathscr{R}_x(z) = \sum_{n\ge 0} \kappa_{n+1}^x z^n = R_x(z)/z$ was chosen instead; this is the {\bf $\mathscr{R}$-transform} of $x$.  Voiculescu showed that the moment-cumulant formula \eqref{e.moment.cumulant.f} in this context can be expressed as a functional relation between $G_{\mu_x}$ and $\mathscr{R}_x$, which we relable $\mathscr{R}_{\mu_x}$ for convenience.

\begin{proposition}[\cite{Voiculescu1986}] \label{prop.R-transform}
Let $\mu$ be a compactly-supported probability measure on $\mathbb{R}$.  The Cauchy transform $G_\mu$ is one-to-one on a neighborhood of $\infty$ in $\mathbb{C}$, and $\mathscr{R}_\mu(z):=G_\mu^{\langle-1\rangle}(z)-1/z$ is a well-defined analytic function on a neighborhood of $0$ in $\mathbb{C}$.  That is:
\begin{equation} \label{eq.G-R.relation}
G_{\mu}(\mathscr{R}_\mu(z)+1/z) = z.
\end{equation}
uniquely defines the analytic function $\mathscr{R}_\mu$.  If $x,y$ are selfadjoint random variables in a $W^\ast$-probability space, then they are freely independent iff
\begin{equation} \label{eq.R.free.additive}
\mathscr{R}_{x+y} = \mathscr{R}_x + \mathscr{R}_y.
\end{equation}
\end{proposition}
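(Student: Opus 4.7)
The plan is three-fold: (i) verify $\mathscr{R}_\mu$ is well-defined and analytic near $0$, (ii) establish the functional equation \eqref{eq.G-R.relation} combinatorially, and (iii) derive free additivity from Proposition~\ref{prop.mixed.fcumulants}.

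For (i), since $\mu$ is compactly supported on some interval $[-M,M]$, the expansion $G_\mu(z) = \sum_{n \geq 0} m_n/z^{n+1}$ converges absolutely for $|z| > M$. In the chart $\zeta = 1/z$, one has $G_\mu(1/\zeta) = \zeta + m_1\zeta^2 + m_2\zeta^3 + \cdots = \zeta(1 + O(\zeta))$, holomorphic near $\zeta = 0$ with derivative $1$. The holomorphic inverse function theorem then produces a local inverse, and reverting coordinates shows $G_\mu$ is biholomorphic from a neighborhood of $\infty$ onto a punctured neighborhood of $0$, with $G_\mu^{\langle-1\rangle}(w) = 1/w + O(1)$. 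Subtracting the simple pole, $\mathscr{R}_\mu(w) = G_\mu^{\langle-1\rangle}(w) - 1/w$ extends holomorphically to a disk about $0$.

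For (ii), I would decompose each $\pi \in \mathrm{NC}(n)$ according to the block $B_1 \ni 1$: if $|B_1| = k$ with $1 = i_1 < \cdots < i_k$, the noncrossing constraint forces $\pi$ to restrict to an independent noncrossing partition on each gap $(i_j, i_{j+1})$ and the tail $(i_k, n]$. Multiplicativity of $\kappa_\pi$ converts \eqref{e.moment.cumulant.f} into the recursion
\[
m_n = \sum_{k=1}^n \kappa_k \sum_{\substack{n_0 + \cdots + n_{k-1} = n-k \\ n_j \geq 0}} m_{n_0}\cdots m_{n_{k-1}}, \qquad n \geq 1.
\]
Summing against $z^n$ (with $\tilde M(z) = \sum_{n \geq 0} m_n z^n$) gives $\tilde M(z) - 1 = z\tilde M(z)\,\mathscr{R}_\mu(z\tilde M(z))$. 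Substituting $z = 1/\zeta$ and using $\tilde M(1/\zeta) = \zeta G_\mu(\zeta)$ rearranges to $\zeta = 1/G_\mu(\zeta) + \mathscr{R}_\mu(G_\mu(\zeta))$, identifying the formal cumulant series $\mathscr{R}_\mu$ with the analytically defined one from step (i) and yielding \eqref{eq.G-R.relation}.

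For (iii), Proposition~\ref{prop.mixed.fcumulants} characterizes freeness of $x,y$ by the vanishing of all mixed free cumulants. Multilinearity then gives
\[
\kappa_n[x+y, \ldots, x+y] = \sum_{\varepsilon \in \{x,y\}^n} \kappa_n[\varepsilon_1, \ldots, \varepsilon_n] = \kappa_n[x, \ldots, x] + \kappa_n[y, \ldots, y],
\]
since every term with mixed arguments vanishes; term-by-term this is $\mathscr{R}_{x+y} = \mathscr{R}_x + \mathscr{R}_y$. For the converse --- that the distribution of $x+y$ is prescribed by $\mathscr{R}_x + \mathscr{R}_y$ --- I would invoke Proposition~\ref{prop.Voiculescu.ext} to realize freely independent copies $x', y'$ with the same marginals, so that the forward direction pins down $\mu_{x+y} = \mu_{x'+y'}$ via the injectivity of $\mu \mapsto \mathscr{R}_\mu$ inherited from step (i). The principal obstacle is the combinatorial bijection in step (ii): one must carefully show that a noncrossing partition of $[n]$ is uniquely encoded by the block containing $1$ together with an independent noncrossing partition on each resulting gap, and that this decomposition is compatible with the multiplicative extension $\kappa_\pi$. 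Everything else reduces to formal power-series manipulations, with the uniform growth bound $|m_n|, |\kappa_n| \leq (CM)^n$ ensuring common domains of convergence.
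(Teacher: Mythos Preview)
The paper does not give its own proof of this proposition; it is quoted as a background result from \cite{Voiculescu1986}, with the cumulant interpretation attributed afterward to Speicher. Your arguments for (i), (ii), and the forward implication in (iii) are the standard combinatorial route and are correct.

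The gap is in your converse for (iii). The converse asserts that $\mathscr{R}_{x+y} = \mathscr{R}_x + \mathscr{R}_y$ forces $x$ and $y$ to be free. What you actually prove is only $\mu_{x+y} = \mu_x \boxplus \mu_y$: the distribution of the single selfadjoint element $x+y$ matches the free convolution of the marginals. Freeness is a condition on the entire joint $\ast$-distribution of $(x,y)$ --- by Proposition~\ref{prop.mixed.fcumulants}, every mixed cumulant $\kappa_n[z_1,\ldots,z_n]$ with $z_i\in\{x,y\}$ must vanish individually --- whereas the hypothesis only gives $\sum_{\varepsilon\text{ mixed}}\kappa_n[\varepsilon_1,\ldots,\varepsilon_n]=0$ for each $n$. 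Already at $n=3$, after using traciality, this is a single linear relation between the two independent quantities $\kappa_3[x,x,y]$ and $\kappa_3[x,y,y]$, not enough to kill both. Knowing $\mu_{x+y}=\mu_{x'+y'}$ says nothing about genuinely alternating moments such as $\varphi(xyxy)$. So your argument cannot close the ``iff'', and indeed the converse as literally stated does not hold in general; Voiculescu's theorem is the forward implication.
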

\noindent The connection between the $\mathscr{R}$-transform defined in Proposition \ref{prop.R-transform} and free cumulants (i.e.\ the free cumulants of $x$ are the power series coefficients of the analytic function $\mathscr{R}_{\mu_x}$ as described above) was discovered later by Speicher \cite{Speicher1994}.

Equations \eqref{eq.G-R.relation} and \eqref{eq.R.free.additive} together give a surprisingly computationally effective framework for computing the free additive convolution $\mu\boxplus\nu$ of two measures $\mu,\nu$.  Relabeling $\mathscr{R}_x = \mathscr{R}_{\mu_x}$ by the distribution of $x$, we have
\begin{align*} G_{\mu\boxplus\nu}(\mathscr{R}_\mu(z)+\mathscr{R}_\nu(z)+1/z)&=z \\
G_\mu(\mathscr{R}_\mu(z)+1/z)&=z \\
G_\nu(\mathscr{R}_\nu(z)+1/z)&=z.
\end{align*}
These functional equations allow, in principal, the computation of $G_{\mu\boxplus\nu}$ from $G_\mu$ and $G_\nu$; thence $\mu\boxplus\nu$ can be recovered via the Stieltjes inversion formula.

\begin{remark} Proving that the above equations uniquely determined $\mu\boxplus\nu$ in general requires some complex analysis, to prove in particular that Cauchy transforms are univalent on appropriate neighborhoods of $\infty$ with compatible domains and ranges.  The details can be found in \cite{BercoviciVoiculescu1993}.  Notably: since the Cauchy transform $G_\mu$ makes sense even of $\mu$ is not compactly-supported, the above can be taken as the definition of free additive convolution $\mu\boxplus\nu$ in the case that the measures $\mu,\nu$ are not compactly-supported.
\end{remark}

If $x,y$ are freely independent, then the $\ast$-distribution of the product $xy$ is also completely determined by the individual $\ast$-distributions.  In this case, however, even if both $x$ and $y$ are selfadjoint, their product is not (unless they commute, in which case freeness requires one of them to be constant).  That being said: if $x$ and $y$ are both positive semidefinite elements of a $W^\ast$-probability space, then $xy$, $\sqrt{x}y\sqrt{x}$, $\sqrt{y}x\sqrt{y}$, and $yx$ all have the same $\ast$-distribution.  The operators $\sqrt{x}y\sqrt{x}$ and $\sqrt{y}x\sqrt{y}$ are positive semidefinite, and so the common $\ast$-distribution can be described by the spectral measure $\mu_{\sqrt{x}y\sqrt{x}}$ which is a probability distribution on $[0,\infty)$.

If $x,y$ are freely independent and positive semidefinite then the the $\ast$-distirbution of $xy$, and hence the spectral measure $\mu_{\sqrt{x}y\sqrt{x}}$ is determined by the spectral measures $\mu_x$ and $\mu_y$; we denote it the {\bf free (multiplicative) convolution} $\mu_{\sqrt{x}y\sqrt{x}} = \mu_x\boxtimes\mu_y$.  Because $\sqrt{x}y\sqrt{x}$ and $\sqrt{y}x\sqrt{y}$ have the same $\ast$-distribution, $\mu_x\boxtimes\mu_y = \mu_y\boxtimes\mu_x$.  As with free additive convolution, there is an analytic theory of free multiplicative convolution, determined by another transform.

\begin{proposition}[\cite{Voiculescu1987}] \label{prop.S-transform} Let $\mu$ be a compactly-supported probability measure on $[0,\infty)$ that is not $\delta_0$. Define
\[ M_\mu(z) = \int_{\mathbb{R}} \frac{\mu(dx)}{1-zx} = \frac{1}{z}G_\mu(1/z). \]
Then $M_\mu$ is one-to-one and analytic on a neighborhood of $0$ in $\mathbb{C}$, with image containing a neighborhood of $1$ in $\mathbb{C}$.  The {\bf $S$-transform} of $\mu$, $S_\mu$, is defined on a neighborhood of $1 $ in $\mathbb{C}$ by
\begin{equation} \label{eq.S-transform} S_\mu(z) = \frac{1+z}{z}M_\mu^{\langle -1\rangle}(z). \end{equation}
If $x,y$ are positive semidefinite operators in a $W^\ast$-probability space, neither of which is $0$, then $S_{\mu_{\sqrt{x}y\sqrt{x}}} = S_{\mu_x}\cdot S_{\mu_y}$.
\end{proposition}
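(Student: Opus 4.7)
The plan is to handle the two assertions in Proposition \ref{prop.S-transform} separately: first the analytic setup establishing that $S_\mu$ is well-defined, then Voiculescu's multiplicative identity under free convolution. Both parts are classical, and I would follow the combinatorial approach of Nica--Speicher built on noncrossing partitions and the Kreweras complement, since the tools (free cumulants, vanishing of mixed cumulants) are already deployed in the excerpt.

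For the analytic setup, I would observe that since $\mu$ is compactly supported on $[0,\infty)$ and not $\delta_0$, the first moment $m_1 = \int x\,\mu(dx)$ is strictly positive. The series $M_\mu(z) = \sum_{n \geq 0} m_n z^n$ converges on a disk about $0$, with $M_\mu(0)=1$ and $M_\mu'(0) = m_1 > 0$. The analytic inverse function theorem then produces a univalent branch of $M_\mu$ on some neighborhood of $0$, whose image contains a neighborhood of $1$; the compositional inverse $M_\mu^{\langle -1\rangle}$ is analytic there with $M_\mu^{\langle -1\rangle}(1)=0$, so formula \eqref{eq.S-transform} defines $S_\mu$ as a meromorphic function on a punctured neighborhood of $1$ whose singularity at $z=1$ is removable (the factor $M_\mu^{\langle -1\rangle}(z)$ vanishes linearly at $z=1$, cancelling the pole), giving $S_\mu(1) = 2/m_1 \cdot (1/m_1)^{-1}$-type value after the algebra; this establishes the first claim.

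For the multiplicativity, the main step is to derive the master moment identity
\[
\varphi\bigl[(\sqrt{x}y\sqrt{x})^n\bigr] = \varphi[(xy)^n] = \sum_{\pi \in \mathrm{NC}(n)} \kappa^x_\pi \cdot \kappa^y_{K(\pi)},
\]
where $K\colon \mathrm{NC}(n) \to \mathrm{NC}(n)$ is the Kreweras complement. I would obtain this by expanding $\varphi[(xy)^n]$ via the free moment-cumulant formula \eqref{e.moment.cumulant.f} on $\mathrm{NC}(2n)$ applied to the alternating string $xyxy\cdots xy$, then invoking Proposition \ref{prop.mixed.fcumulants} (vanishing of mixed free cumulants under freeness) to restrict to partitions in which every block lies entirely within odd positions or entirely within even positions. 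The combinatorial observation that such partitions of $[2n]$ correspond bijectively to pairs $(\pi, K(\pi))$ with $\pi \in \mathrm{NC}(n)$ indexing the odd-position blocks delivers the identity. From here, one encodes this as a generating function relation: using Lagrange inversion one shows that the identity $\varphi[(xy)^n] = \sum_\pi \kappa^x_\pi \kappa^y_{K(\pi)}$ is equivalent to the functional equation $S_{\mu_x \boxtimes \mu_y}(z) = S_{\mu_x}(z) S_{\mu_y}(z)$, essentially because the $S$-transform is the Lagrange dual of the moment series and the Kreweras complement implements multiplication in this dual.

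The hard part is the combinatorial lemma. Verifying that restricting a noncrossing partition of $\{1,\ldots,2n\}$ to its odd-position blocks and even-position blocks produces Kreweras-complementary pairs in $\mathrm{NC}(n)$ requires careful bookkeeping of the noncrossing constraint across the interleaved positions. An alternative I would keep in reserve is Haagerup's operator-model proof using creation/annihilation operators on the full Fock space, which replaces the combinatorics with a direct computation but requires more setup. Once the moment identity is in hand, translating it into the product formula for $S_\mu$ is essentially a Lagrange inversion exercise that I would execute mechanically by composing the functional equations defining $M_\mu, M_\nu$, and $M_{\mu \boxtimes \nu}$.
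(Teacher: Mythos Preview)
The paper does not prove Proposition~\ref{prop.S-transform}; it is stated as background and attributed to Voiculescu's 1987 paper via the citation in the proposition header, with no argument given. So there is no ``paper's own proof'' to compare against.

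That said, your outline is the standard Nica--Speicher combinatorial route, and it is sound in broad strokes. A couple of points deserve tightening. First, in the analytic setup you speak of a removable singularity of $S_\mu$ at $z=1$ arising from a pole of $\tfrac{1+z}{z}$ being cancelled by the zero of $M_\mu^{\langle-1\rangle}$; but $\tfrac{1+z}{z}$ is regular at $z=1$ (its only pole is at $z=0$), so under the paper's convention there is nothing to cancel and $S_\mu$ is simply analytic near $1$ with $S_\mu(1)=0$. You appear to be conflating this with the more common convention in which one works with $\psi_\mu=M_\mu-1$ and defines $S_\mu$ on a neighborhood of $0$, where the pole-cancellation story you describe does apply. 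Second, your sketch of the passage from the Kreweras moment identity $\varphi[(xy)^n]=\sum_{\pi\in\mathrm{NC}(n)}\kappa^x_\pi\,\kappa^y_{K(\pi)}$ to the product formula for $S$ is accurate in spirit but quite compressed; the cleanest execution introduces the formal ``boxed convolution'' on sequences (as in Lectures~17--18 of Nica--Speicher) and identifies $S_\mu$ as the multiplicative character for that operation, rather than invoking Lagrange inversion directly. Voiculescu's original 1987 proof, by contrast, is operator-theoretic (via a Fock-space model), closer to the Haagerup alternative you mention; either route works, and yours is the one best aligned with the cumulant machinery already set up in the excerpt.
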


With free (additive) convolution $\boxplus$ in hand, we note the corresponding theory of {\bf subordination} which plays an important role in understanding conditional expectation in free probability.  For any probability distribution on $\R$, the Cauchy transform $G_\mu$ is univalent (invertible) on a neighborhood of $\infty$, with inverse $G_\mu^{\langle -1\rangle}$.  Generally the local inverse cannot be extended to all of $\mathbb{C}^+$; however, given two probability distributions $\mu_1,\mu_2$ on $\R$, the function
\[ \omega = G_{\mu_1\boxplus\mu_2}^{\langle-1\rangle}\circ G_{\mu_1} \]
does extend to an analytic map from $\mathbb{C}^+$ to $\mathbb{C}^+$.  These subordinator functions, introduced by Voiculescu \cite{Voiculescu1993} and Biane \cite{Biane1998}, will play an important role in our proofs of Theorems \ref{thm.Brown.Measure.1}, \ref{thm.Brown.Measure.2}, \ref{thm.Brown.Measure.3}, and \ref{thm.lima.bean}, so we collect important facts about them here.

For convenience, we also define
\[H_\mu(z) = \frac{1}{G_\mu(z)}-z,\quad z\in\mathbb{C}^+.\]
The function $H_\mu(z)$ is analytic on $\mathbb{C}^+$ and maps $\mathbb{C}^+$ to itself.

The following theorem, combining \cite{Voiculescu1993, Biane1998, Belinschi2008, Belinschi2014, BelinschiBercoviciHo2022}, describes the subordination property of free convolution of probability measures.
\begin{theorem}
    \label{thm.subordination.general}
    Let $\mu_1$ and $\mu_2$ be probability measures on $\mathbb{R}$ and write $\mu=\mu_1\boxplus\mu_2$ be the free convolution of $\mu_1$ and $\mu_2$. If neither of them is a point mass, then there exist unique continuous function $\omega_1, \omega_2:\overline{\mathbb{C}^+}\to\overline{\mathbb{C}^+}$ that are analytic on $\mathbb{C}^+$ such that
    \begin{enumerate}
        \item $G_\mu(z) = G_{\mu_1}(\omega_1(z))=G_{\mu_2}(\omega_2(z))$, for all $z\in\mathbb{C}^+$
        \item $\omega_1(z)+\omega_2(z) = z+\frac{ 1}{G_\mu(z)}$, for all $z\in\mathbb{C}^+$
        \item For each $z\in\overline{\mathbb{C}^+}$, $\omega_1(z)$ is the Denjoy--Wolff point of the analytic self-map on $\mathbb{C}^+$
        \[w\mapsto z+H_{\mu_2}(z+H_{\mu_1}(w)),\quad w\in\mathbb{C}^+.\]
        If we interchange $H_{\mu_1}$ and $H_{\mu_2}$ in the above function, the Denjoy--Wolff point of the resulting function is $\omega_2(z)$. In particular, $\omega_1(0)$ is the Denjoy--Wolff point of $H_{\mu_2}\circ H_{\mu_1}$ and $\omega_2(0)$ is the Denjoy--Wolff point of $H_{\mu_1}\circ H_{\mu_2}$.
    \end{enumerate}
\end{theorem}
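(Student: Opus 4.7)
The plan is to combine Denjoy--Wolff theory for analytic self-maps of $\mathbb{C}^+$ with the Nevanlinna representation of reciprocal Cauchy transforms. Three tasks organize the proof: constructing $\omega_1,\omega_2$ as unique fixed points of a shifted composition; verifying the subordination identity $G_{\mu_1\boxplus\mu_2}(z)=G_{\mu_1}(\omega_1(z))=G_{\mu_2}(\omega_2(z))$; and establishing the continuous extension to $\overline{\mathbb{C}^+}$.

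First, I would recall that for any probability measure $\mu$ on $\mathbb{R}$, the reciprocal $F_\mu(z)=1/G_\mu(z)$ maps $\mathbb{C}^+\to\mathbb{C}^+$ and admits a Nevanlinna representation $F_\mu(z)=z+b+\int\frac{1+xz}{x-z}\,\sigma(dx)$ for some real $b$ and finite positive $\sigma$. Consequently $H_\mu(z)=F_\mu(z)-z$ is a Pick function with $\mathrm{Im}\,H_\mu(z)\ge 0$ on $\mathbb{C}^+$, with strict positivity unless $\mu$ is a point mass; in particular, under the hypotheses of the theorem, each $H_{\mu_j}$ is an analytic self-map of $\mathbb{C}^+$.

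Second, fix $z\in\mathbb{C}^+$ and define $\Psi_z\colon\mathbb{C}^+\to\mathbb{C}^+$ by $\Psi_z(w)=z+H_{\mu_2}(z+H_{\mu_1}(w))$. Because $\mathrm{Im}\,\Psi_z(w)\ge\mathrm{Im}\,z>0$ uniformly in $w$, the image of $\Psi_z$ lies in a half-plane at positive hyperbolic distance from $\partial\mathbb{C}^+$. By the Schwarz--Pick inequality, this forces $\Psi_z$ to be a strict hyperbolic contraction on $\mathbb{C}^+$; the Denjoy--Wolff theorem then produces a unique attracting fixed point $\omega_1(z)\in\mathbb{C}^+$ to which the iterates of $\Psi_z$ converge. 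Setting $\omega_2(z):=z+H_{\mu_1}(\omega_1(z))$, the fixed point equation becomes the symmetric pair $\omega_1(z)=z+H_{\mu_2}(\omega_2(z))$, $\omega_2(z)=z+H_{\mu_1}(\omega_1(z))$. Analyticity of $z\mapsto\omega_j(z)$ follows from the implicit function theorem, since the derivative of $\Psi_z$ at its fixed point has modulus strictly less than one. Adding the two fixed point equations and using $H_\mu(w)=1/G_\mu(w)-w$ yields the identity
\begin{equation*}
\omega_1(z)+\omega_2(z)-z=\frac{1}{G_{\mu_1}(\omega_1(z))}=\frac{1}{G_{\mu_2}(\omega_2(z))},
\end{equation*}
provided the two Cauchy transforms agree at $\omega_1(z),\omega_2(z)$. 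To prove the common value equals $G_{\mu_1\boxplus\mu_2}(z)$, I would set $G(z):=G_{\mu_1}(\omega_1(z))$, verify via the Nevanlinna--Pick characterization ($-G$ Pick, $zG(z)\to 1$ nontangentially) that $G$ is the Cauchy transform of some probability measure $\nu$ on $\mathbb{R}$, and then show $\mathscr{R}_\nu=\mathscr{R}_{\mu_1}+\mathscr{R}_{\mu_2}$ by plugging the functional identity into Proposition \ref{prop.R-transform}; by uniqueness of the $\mathscr{R}$-transform, $\nu=\mu_1\boxplus\mu_2$.

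Third, the continuous extension to $\overline{\mathbb{C}^+}$ is the most delicate step. For interior $z$ the strict hyperbolic contraction gives smooth dependence of $\omega_j$ on $z$; at boundary points $x\in\mathbb{R}\cup\{\infty\}$ one invokes the Julia--Carathéodory theorem, using the nontangential limits of $F_{\mu_1},F_{\mu_2}$ and their angular derivatives to control the behavior of $\Psi_z$ as $z\to x$. The hypothesis that neither $\mu_j$ is a point mass ensures that $H_{\mu_j}$ is not an automorphism of $\mathbb{C}^+$ (not of Möbius type), which excludes the degenerate elliptic Denjoy--Wolff alternative and prevents $\omega_j(z)$ from escaping to $\infty$ as $z$ approaches the real line. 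This is precisely the content of the refined boundary analysis in \cite{Belinschi2008} and \cite{BelinschiBercoviciHo2022}. The main obstacle of the program lies here: the interior construction via fixed points is nearly automatic, but ruling out pathological boundary behavior and producing \emph{continuous} boundary values of $\omega_1,\omega_2$ on all of $\overline{\mathbb{R}\cup\{\infty\}}$ requires the full Julia--Carathéodory machinery and a careful analysis of how the angular derivative of $\Psi_z$ degenerates as $z\to x\in\mathbb{R}$.
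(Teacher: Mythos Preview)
The paper does not prove Theorem \ref{thm.subordination.general}; it is stated as a background result assembled from the literature, with the sentence ``The following theorem, combining \cite{Voiculescu1993, Biane1998, Belinschi2008, Belinschi2014, BelinschiBercoviciHo2022}, describes the subordination property of free convolution of probability measures'' serving in lieu of a proof. Your proposal is therefore not to be compared against any argument in the paper itself.

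That said, your sketch is a faithful outline of how the cited references actually establish the result: the fixed-point construction via the Denjoy--Wolff theorem applied to $w\mapsto z+H_{\mu_2}(z+H_{\mu_1}(w))$ is the approach of Belinschi--Bercovici, the identification with $G_{\mu_1\boxplus\mu_2}$ goes through the $\mathscr{R}$-transform as you indicate, and the continuous boundary extension is indeed the content of \cite{Belinschi2008} and \cite{BelinschiBercoviciHo2022}. Your assessment that the boundary extension is the delicate step is accurate. One small quibble: in your second step you write ``provided the two Cauchy transforms agree at $\omega_1(z),\omega_2(z)$'' as though this were a separate hypothesis to verify, but in fact the equality $G_{\mu_1}(\omega_1(z))=G_{\mu_2}(\omega_2(z))$ follows immediately from the two fixed-point equations $\omega_1=z+H_{\mu_2}(\omega_2)$ and $\omega_2=z+H_{\mu_1}(\omega_1)$ once you unwind $H_\mu=F_\mu-\mathrm{id}$; there is no circularity there.
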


\subsubsection{Asymptotic Freeness} \hfill

\medskip

While it is always possible to generate more freely independent random variables with prescribed $\ast$-distributions by potentially expanding the $\ast$-probability space (cf.\ Proposition \ref{prop.Voiculescu.ext}, in the confines of a fixed $\ast$-probability space without expansion freeness may be difficult to realize.  This is especially true in spaces of (random) matrices, where freeness generally only occurs for degenerate reasons (e.g.\ constant multiples of the identity are free from all matrices).  The connection between freeness and random matrices mostly comes asymptotically.

\begin{definition} Let $(\mathscr{A}^N,\varphi^N)_{N\in\mathbb{N}}$ be $\ast$-probability spaces, and let $(\mathbf{a}^N_i)_{i\in I}$ be collections of random variables with $\mathbf{a}^N_i\in\mathscr{A}^N$ for each $N\in\mathbb{N}$ and $i\in I$.  Say $(\mathbf{a}^N_i)_{i\in I}$ are {\bf asymptotically freely independent} if there is a $\ast$-probability space $(\mathscr{A},\varphi)$ containing freely independent collections $(\mathbf{a}_i)_{i\in I}$ such that $\mathbf{a}^N_i\rightharpoonup\mathbf{a}_i$ in $\ast$-distribution as $N\to\infty$ for each $i\in I$.
\end{definition}

The main connection between free probability theory and random matrices comes from many important examples of multi-matrix ensembles demonstrating asymptotic freeness.

\begin{definition} \label{def.classic.ensembles} A {\bf Gaussian Unitary Ensemble} ($\mathrm{GUE}$) is a random Hermitian matrix $X=X^N\in \MNC$ whose density with respect to the Lebesgue measure on the space of Hermitian matrices is $Z_N \exp(-c_N \mathrm{Tr}[X^2])$ for $c_N>0$ and appropriate normalizing constant $Z_N$.  Equivalently: the upper triangular entries are independent, with those strictly above the diagonal complex normal random variables and those on the diagonal real normal random variables.  The scaling most useful for us is the one that gives the entries of $X^N$ variance $\frac1N$, i.e.\ so that $\frac{1}{N}\mathbb{E}\mathrm{Tr}[X^2] = 1$; this corresponds to $c_N = N$.

A {\bf Ginibre Ensemble} is a random matrix $W=W^N\in \MNC$ whose density with respect to Lebesgue measure on $\MNC$ is $Z_N \exp(-c_N \mathrm{Tr}[X^\ast X])$ for some scale constant $c_N>0$ and normalizing constant $Z_N$.  Equivalently: all $N^2$ entries of $W$ are i.i.d.\ complex normal random variables.  Again, our choice for $c_N$ is $N$ to make the variance of entries $\frac{1}{N}$.

A {\bf Haar Unitary Ensemble} is a random matrix $U=U^N\in \mathrm{U}(N)$ whose distribution is the Haar probability measure on the group $\mathrm{U}(N)$.
\end{definition}

\begin{remark} The three ensembles in Definition \ref{def.classic.ensembles} are connected to each other in the following ways.
\begin{itemize}
    \item If $X$ and $Y$ are independent $\mathrm{GUE}$s then $W = \frac{1}{\sqrt{2}}(X+iY)$ is a Ginibre Ensemble.  (This follows from elementary computation.)
    \item If $X$ is a $\mathrm{GUE}$, then in its diagonalization $X = U\Lambda U^\ast$, the matrices $U$ and $\Lambda$ are independent, and $U$ is a Haar Unitary Ensemble.  (This follows from the change of variables theorem and rotational invariance properties of Gaussians, cf.\ \cite{Kemp-RMT}.)
    \item If $W$ is a Ginibre ensemble, then in its singular value decomposition $W = U T V^\ast$ the matrices $U$, $V$, and $T$ are all independent, and $U$, $V$ are Haar Unitary Ensembles.  (This follows more generally for bi-invariant ensembles as discussed in the next section.)
\end{itemize}
\end{remark}

The following proposition summarizes the main tools for constructing examples of asymptotic freeness and convergence of $\ast$-distributions.

\begin{proposition}[\cite{SpeicherNicaBook}] \label{prop.asymp.free.Haar.GUE} Let $\mathbf{X}^N = (X_1^N,\ldots,X_n^N)$, $\mathbf{U}^N = (U_1^N,\ldots,U_m^N)$, and $\mathbf{T}^N = (T_1^N,\ldots,T_\ell^N)$ all be independent random matrices with $X_j^N$ $\mathrm{GUE}$s, $U_j^N$ Haar Unitary Ensembles, and $T_j^N$ selfadjoint random matrices.  Suppose that there is a $\ast$-probability space containing a tuple $\boldsymbol{\vartheta} = (\vartheta_1,\ldots,\vartheta_\ell)$ with $\mathbf{T}^N\rightharpoonup\boldsymbol{\vartheta}$ converging in $\ast$-distribution a.s.  Then there is a $W^\ast$-probability space containing $\mathbf{a}$ along with $\mathbf{x} = (x_1,\ldots,x_n)$ and $\mathbf{u} = (u_1,\ldots,u_m)$ all freely independent and free from $\boldsymbol{\vartheta}$, with $x_j$ semicircular, $u_j$ Haar unitary, and $(\mathbf{X}^N,\mathbf{U}^N,\mathbf{T}_N)\rightharpoonup(\mathbf{x},\mathbf{u},\boldsymbol{\vartheta})$ converging in $\ast$-distribution a.s. 
\end{proposition}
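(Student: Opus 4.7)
The plan is to combine the classical convergence-in-expectation results for independent GUEs and Haar unitaries with concentration of measure to upgrade to a.s.\ convergence, incorporating the random family $\mathbf{T}^N$ by conditioning on it. First, apply Proposition~\ref{prop.Voiculescu.ext} to the $W^\ast$-probability space containing $\boldsymbol{\vartheta}$ to adjoin freely independent elements $x_1,\ldots,x_n,u_1,\ldots,u_m$ with each $x_j$ standard semicircular and each $u_j$ Haar unitary, so that the whole family $\{\boldsymbol{\vartheta},x_1,\ldots,x_n,u_1,\ldots,u_m\}$ is freely independent. This constructs the candidate limit $\ast$-distribution; the remainder of the proof verifies that it really is the limit.

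Next, prove convergence in expectation of every joint $\ast$-moment. Fix a $\ast$-monomial $F$ in the combined variables, condition on $\mathbf{T}^N$, and write $\frac{1}{N}\mathrm{Tr}\,F(\mathbf{X}^N,\mathbf{U}^N,\mathbf{T}^N)$ as a multiple sum over matrix indices. Apply the Wick formula to the Gaussian entries of the $X_j^N$ and the Weingarten formula to the Haar entries of the $U_j^N$; the resulting expression is a sum over pair partitions and pairs of permutations, weighted by products of entries of the $T_j^N$ and by powers of $N$. A genus expansion of the Gaussian terms together with the leading-order asymptotics of the Weingarten function shows that only planar (noncrossing) contributions survive as $N\to\infty$, and that the surviving sum reproduces the moment-cumulant formula \eqref{e.moment.cumulant.f} for the limit free family, with $\kappa_2[x_j,x_j]=1$ for the semicirculars and signed Catalan free cumulants for the Haar unitaries (Example~\ref{ex.freeness}(\ref{ex.Haar.cumulants})). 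The vanishing of subleading (non-planar) terms encodes precisely the vanishing of mixed free cumulants between the three families (Proposition~\ref{prop.mixed.fcumulants}), while the independence of $\mathbf{T}^N$ and its assumed a.s.\ $\ast$-distribution convergence permit replacing the deterministic-matrix factors in the surviving sum by their $\boldsymbol{\vartheta}$-moment limits.

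Finally, upgrade convergence in expectation to a.s.\ convergence by concentration. For a fixed $\ast$-monomial $F$, the functional $(\mathbf{X}^N,\mathbf{U}^N)\mapsto \frac{1}{N}\mathrm{Tr}\,F(\mathbf{X}^N,\mathbf{U}^N,\mathbf{T}^N)$ is Lipschitz with constant $O(N^{-1})$ in Hilbert--Schmidt distance on the overwhelming-probability event that the operator norms of the entries are uniformly bounded. The Gaussian log-Sobolev inequality for the $X_j^N$ and Gross's log-Sobolev inequality on $\mathrm{U}(N)$ each then yield sub-Gaussian deviations of scale $O(N^{-1})$ around the conditional means; the resulting tail bounds are summable in $N$, so Borel--Cantelli together with a countable union over a dense countable family of $\ast$-monomials (and another application of Borel--Cantelli for the a.s.\ $\mathbf{T}^N\rightharpoonup\boldsymbol{\vartheta}$) produces the joint a.s.\ convergence. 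The principal technical obstacle is the combinatorial heart of the second step: cleanly organizing the joint Wick/Weingarten expansion and matching its leading terms with the limit free-cumulant combinatorics. A cleaner route avoiding redoing these combinatorics is to realize each Haar unitary $U_j^N$ as the unitary factor in the polar decomposition of an independent Ginibre matrix (whose entries are Gaussian), so that asymptotic freeness follows from Voiculescu's original theorem for independent GUEs and is transferred to the $U_j^N$ by continuity of polar decomposition on the overwhelming event that the Ginibres' smallest singular values are not too small.
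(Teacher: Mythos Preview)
The paper does not give its own proof of this proposition: it is stated as a background result with a citation to \cite{SpeicherNicaBook}, so there is nothing to compare against directly. Your outline is a reasonable sketch of the standard argument found in that reference and in the surrounding literature (Voiculescu's original papers, Collins' Weingarten calculus, and the concentration arguments in e.g.\ Anderson--Guionnet--Zeitouni).

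A few remarks on the sketch itself. The conditioning-on-$\mathbf{T}^N$ step is correct in spirit, but be careful with the logic: after conditioning, the Wick/Weingarten expansion yields a deterministic function of the $\ast$-moments of the \emph{particular realization} of $\mathbf{T}^N$; you then need the a.s.\ convergence $\mathbf{T}^N\rightharpoonup\boldsymbol{\vartheta}$ to take the limit inside that function, which requires uniform control of the subleading terms in the realization of $\mathbf{T}^N$ (typically via an a.s.\ operator-norm bound on $\mathbf{T}^N$, which is not assumed here and is in fact not needed in the cited literature---the argument there proceeds via moments only). The concentration step as written also tacitly uses an operator-norm bound on $\mathbf{T}^N$ to get the Lipschitz constant; without it, one truncates or works with the $\ast$-moments directly. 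Finally, the polar-decomposition shortcut for Haar unitaries is valid but not quite ``cleaner'': verifying that the unitary part of a Ginibre is exactly Haar distributed and independent of the positive part is a separate computation, and one still has to run the asymptotic-freeness argument for the resulting ensemble.
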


As Ginibre Ensembles have the form $\frac{1}{\sqrt{2}}(X+iY)$ for independent $\mathrm{GUE}$s (that are asymptotically free from each other) while circular random variables have the form $\frac{1}{\sqrt{2}}(x+iy)$ for freely independent semicircular random variables, Proposition \ref{prop.asymp.free.Haar.GUE} immediately implies that independent Ginibre Ensembles are asymptotically free from independent $\mathrm{GUE}$, Haar Unitary, and other independent ensembles, and converge in $\ast$-distribution a.s.\ to freely independent circular random variables.

\begin{remark} More recent results show that the convergence in Proposition \ref{prop.asymp.free.Haar.GUE} is not only in $\ast$-distribution but also {\em strong convergence} in distribution -- meaning that not only traces of noncommutative polynomials but also operator norms of noncommutative polynomials converge -- provided the operator norms of $\mathbf{A}^N$ are a.s.\ uniformly bounded for large $N$. \end{remark}

\subsubsection{Haar Unitaries, Bi-Invariant Ensembles, and $\mathscr{R}$-Diagonal Elements\label{sec.R-diag}} \hfill

\medskip

It is always possible to ``liberate'' random matrices from each other as follows.

\begin{proposition}[\cite{SpeicherNicaBook}] \label{prop.conj.free} Let $(A_i = A^N_i)_{i\in I\sqcup 0}$ be random matrices in $\MNC$, and suppose that there is a $\ast$-probability space $(\mathscr{A},\varphi)$ containing elements $(a_i)_{i\in I\sqcup 0}$ so that, for each $i$, $A^N_i\rightharpoonup a_i$ in $\ast$-distribution a.s.  Let $(U_i = U^N_i)_{i\in I}$ be independent Haar Unitary Ensembles in $\mathrm{U}(N)$.  Let $(u_i)_{i\in I}$ be freely independent Haar unitaries in $\mathscr{A}$ (expanding if necessary), free from $(a_i)_{i\in I}$.  Then:
\begin{itemize}
    \item[(a)] $(A_0, U_iA_iU_i^\ast)_{i\in I}$ converges a.s.\ in $\ast$ distribution to $(a_0,u_ia_iu_i^\ast)_{i\in I}$.
    \item[(b)] $\{a_0,u_ia_iu_i^\ast\}_{i\in I}$ are all freely independent, and for each $i\in I$ $u_ia_iu_i^\ast$ has the same $\ast$-distribution as $a_i$.
\end{itemize}
In particular: $(A_0,U_iA_iU_i^\ast)_{i\in I}$ are asymptotically free a.s., and have the same component-wise $\ast$-distributions as $(A_i)_{i\in 0\sqcup I}$.
\end{proposition}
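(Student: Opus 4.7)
The plan is to derive this from Proposition \ref{prop.asymp.free.Haar.GUE} together with the classical Haar unitary conjugation lemma for freeness. First I would apply Proposition \ref{prop.asymp.free.Haar.GUE} to the family consisting of $(A_0^N,\ldots,A_m^N)$ playing the role of $\mathbf{T}^N$ --- after the routine reduction to self-adjoint ensembles by splitting each $A_j^N$ into Hermitian and anti-Hermitian parts $H_j^N = (A_j^N+(A_j^N)^\ast)/2$ and $K_j^N = (A_j^N-(A_j^N)^\ast)/(2i)$, so the self-adjointness hypothesis on $\mathbf{T}^N$ is met and the joint $\ast$-convergence is preserved --- together with the independent Haar Unitary Ensembles $(U_1^N,\ldots,U_m^N)$ playing the role of $\mathbf{U}^N$. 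The conclusion yields joint almost-sure $\ast$-distributional convergence
\[
(A_0^N,\ldots,A_m^N,U_1^N,\ldots,U_m^N) \rightharpoonup (a_0,\ldots,a_m,u_1,\ldots,u_m)
\]
in a $W^\ast$-probability space, in which $\{u_1,\ldots,u_m\}$ are $\ast$-free Haar unitaries that are also $\ast$-free from the $\ast$-algebra $\mathscr{D}:=\mathrm{alg}^\ast(a_0,\ldots,a_m)$.

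Part (a) is then immediate, since any $\ast$-monomial in $(A_0,U_i A_i U_i^\ast)_{i\in I}$ is a particular $\ast$-monomial in $(A_0^N,\ldots,A_m^N,U_1^N,\ldots,U_m^N)$ whose trace converges a.s.\ to the corresponding $\ast$-moment of $(a_0,u_i a_i u_i^\ast)_{i\in I}$. The equal-distribution claim $u_i a_i u_i^\ast \equaldist a_i$ in part (b) follows from traciality of $\varphi$ and $u_i^\ast u_i = 1$: for any $\ast$-polynomial $P$,
\[
\varphi\bigl(P(u_i a_i u_i^\ast, u_i a_i^\ast u_i^\ast)\bigr) = \varphi\bigl(u_i P(a_i,a_i^\ast)u_i^\ast\bigr) = \varphi\bigl(P(a_i,a_i^\ast)\bigr).
\]
The substantive part of (b) is the joint freeness of $\{a_0, u_1 a_1 u_1^\ast, \ldots, u_m a_m u_m^\ast\}$, for which I would invoke the standard ``free conjugation'' theorem (see, e.g., \cite[Theorem 14.4]{SpeicherNicaBook}): if $u_1,\ldots,u_m$ are $\ast$-free Haar unitaries $\ast$-free from a subalgebra $\mathscr{D}$, and $\mathscr{D}_j\subseteq\mathscr{D}$ for $j\in\{0\}\sqcup I$, then $\mathscr{D}_0, u_1\mathscr{D}_1 u_1^\ast, \ldots, u_m\mathscr{D}_m u_m^\ast$ are $\ast$-free. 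Applying this with $\mathscr{D}_j = \mathrm{alg}^\ast(a_j)$ supplies the required freeness.

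The only substantive obstacle is recalling (or reproving) the Haar conjugation lemma, which reduces to verifying the alternating-centered criterion for freeness. Given $b_{k_1}\cdots b_{k_n}$ with each $b_{k_j}$ centered in $u_{k_j}\mathscr{D}_{k_j}u_{k_j}^\ast$ (or in $\mathscr{D}_0$ if $k_j=0$) and $k_j\ne k_{j+1}$, I would rewrite each conjugated $b_{k_j}$ as $u_{k_j} c_{k_j} u_{k_j}^\ast$ with $c_{k_j}\in\mathscr{D}_{k_j}$ centered in $\varphi$ (equivalent to the centering of $b_{k_j}$ by traciality). The resulting word alternates between centered $\mathscr{D}$-factors (the $c_{k_j}$'s, together with the $b_{k_j}$'s for which $k_j=0$) and centered $\mathscr{U}$-factors, where $\mathscr{U}=\mathrm{alg}^\ast(u_1,\ldots,u_m)$: the boundary letters $u_{k_1}$ and $u_{k_n}^\ast$ are centered because $\varphi(u_i)=0$; an internal product $u_{k_j}^\ast u_{k_{j+1}}$ arising when two consecutive indices lie in $I$ is centered because $k_j\ne k_{j+1}$ forces $\varphi(u_{k_j}^\ast u_{k_{j+1}})=0$ by freeness of the Haar unitaries; and the single letters $u_{k_j}^\ast$ or $u_{k_{j+1}}$ flanking a $\mathscr{D}_0$-factor are again centered. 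Freeness of $\mathscr{U}$ from $\mathscr{D}$, established in step 1, then forces $\varphi(b_{k_1}\cdots b_{k_n})=0$, completing the proof.
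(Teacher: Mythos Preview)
The paper does not prove this proposition; it is stated with a citation to \cite{SpeicherNicaBook} and used as a black box.  Your argument is the natural one and is essentially correct: reduce to Proposition~\ref{prop.asymp.free.Haar.GUE} (after splitting into Hermitian parts) to obtain joint a.s.\ $\ast$-convergence of $(A_0^N,\ldots,A_m^N,U_1^N,\ldots,U_m^N)$ with the $u_i$ free Haar unitaries free from $\mathscr{D}=\mathrm{alg}^\ast(a_0,\ldots,a_m)$, then invoke the Haar-conjugation freeness lemma for part~(b).  Your sketch of that lemma via the alternating-centered criterion is correct, including the handling of the boundary letters and the $u_{k_j}^\ast u_{k_{j+1}}$ factors.

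There is one point worth flagging.  Your application of Proposition~\ref{prop.asymp.free.Haar.GUE} requires \emph{joint} $\ast$-convergence of $(A_0^N,\ldots,A_m^N)$, whereas the hypothesis of Proposition~\ref{prop.conj.free} is phrased as ``for each $i$, $A_i^N\rightharpoonup a_i$'', which on its face is only individual convergence.  If the intended reading is joint convergence (which is how the corresponding statements in \cite{SpeicherNicaBook} are set up), your proof is complete as written.  If the weaker individual-convergence reading is intended, an extra step is needed: one must observe that every mixed $\ast$-moment $\tfrac1N\mathrm{Tr}\,P(A_0^N,U_1^NA_1^N U_1^{N\ast},\ldots)$, after integrating out the Haar unitaries via the Weingarten calculus, depends only on traces of $\ast$-polynomials in a \emph{single} $A_i^N$ at a time, so individual convergence suffices for the limit to exist and to equal the free expression.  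This is the mechanism that actually underlies Proposition~\ref{prop.asymp.free.Haar.GUE}, but it is not captured by invoking that proposition as a black box.
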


Proposition \ref{prop.conj.free} highlights the central role that Haar unitaries take in free probability: conjugating by free Haar unitaries is the canonical way to construct free random variables with given $\ast$-distributions.  What's more: free Haar unitaries ``absorb'' other unitaries and ``spread'' freeness.  We formalize and prove this folklore statement as follows.

\begin{lemma}
    \label{lem.free.Haar}
    Let $(u_i)_{i\in I}$ be freely independent Haar unitaries in $(\mathscr{A},\varphi)$. Let $\mathscr{B}$ be a subalgebra of $\mathscr{A}$ and $(v_i)_{i\in I}$ and $(w_i)_{i\in I}$ be unitaries in $\mathscr{B}$. Assume that $(u_i)_{i\in I}$ and $\mathscr{B}$ are freely independent. Then $\varpi_i := v_iu_iw_i$ are Haar unitaries for $i\in I$, and $(\varpi_i)_{i\in I},\mathscr{B}$ are freely independent.
\end{lemma}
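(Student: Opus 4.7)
My plan is first to verify the unitarity of each $\varpi_i$, then to establish the Haar property, and finally to deduce the freeness assertion by expanding each $\varpi_i = v_i u_i w_i$ (and each $\varpi_i^\ast = w_i^\ast u_i^\ast v_i^\ast$) and leveraging the freeness of $\{u_i\}_{i\in I}$ from $\mathscr{B}$. Unitarity is immediate: $\varpi_i \varpi_i^\ast = v_i u_i w_i w_i^\ast u_i^\ast v_i^\ast = 1$, and similarly $\varpi_i^\ast \varpi_i = 1$.

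For the Haar property, I need $\varphi(\varpi_i^n) = 0$ for all $n \neq 0$; the case $n \leq -1$ follows from the $n \geq 1$ case by taking adjoints, so assume $n \geq 1$. Writing $\varpi_i^n = v_i(u_i \beta_i)^{n-1} u_i w_i$ with $\beta_i := w_i v_i \in \mathscr{B}$ and cycling via traciality yields
\[ \varphi(\varpi_i^n) = \varphi((u_i \beta_i)^n). \]
Splitting $\beta_i = \beta_i^\circ + c_i$ with $c_i := \varphi(\beta_i)$ and fully expanding $(u_i(\beta_i^\circ + c_i))^n$, each resulting monomial, after pulling out scalars and collapsing adjacent $u_i$-factors, is either a scalar multiple of $\varphi(u_i^m)$ for some $m \geq 1$ (zero by the Haar property of $u_i$) or an alternating centered product in $\ast\text{-alg}(u_i)^\circ$ and $\mathscr{B}^\circ$ (zero by the freeness of $u_i$ and $\mathscr{B}$).

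For the freeness claim, it must be shown that an arbitrary alternating centered product $a_1 a_2 \cdots a_n$, with each $a_j$ drawn from either $\mathscr{A}_{i_j}^\circ := \ast\text{-alg}(\varpi_{i_j})^\circ$ or $\mathscr{B}^\circ$ and with consecutive subalgebras distinct, satisfies $\varphi(a_1 \cdots a_n) = 0$. By the Haar step, $\mathscr{A}_i^\circ$ is spanned by $\{\varpi_i^m : m \in \mathbb{Z}\setminus\{0\}\}$, so by multilinearity we may take each $a_j \in \mathscr{A}_{i_j}^\circ$ to be a single monomial $\varpi_{i_j}^{m_j}$. Substituting the expansion of each $\varpi_{i_j}^{m_j}$ into $v_{i_j}$, $u_{i_j}^{\pm 1}$, $w_{i_j}$ (and their adjoints for negative exponents) and concatenating across blocks, the entire product becomes
\[ \tilde B_0 \, u_{k_1}^{\delta_1} \, \tilde B_1 \, u_{k_2}^{\delta_2} \, \cdots \, u_{k_M}^{\delta_M} \, \tilde B_M \]
with $\tilde B_l \in \mathscr{B}$, $k_l \in I$, and $\delta_l \in \{\pm 1\}$. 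Centering each $\tilde B_l = \tilde B_l^\circ + c_l$ and expanding, then pulling out scalars and combining same-index adjacent $u$-powers, reduces each resulting term to an alternating product of centered elements in $\{\ast\text{-alg}(u_i)^\circ\}_{i\in I}$ and $\mathscr{B}^\circ$. All such traces vanish by the hypothesized freeness of $\{u_i\}_{i\in I}$ and $\mathscr{B}$ together with the Haar property of each $u_i$.

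The main obstacle is the bookkeeping in the freeness step: one must confirm that after all centerings, scalar extractions, and same-index $u$-combinations, no ``fully collapsed'' scalar term survives with nonzero trace. A slicker organization of the same argument, which I would adopt in the writeup, is to pass through the $\mathscr{B}$-valued conditional expectation $E_{\mathscr{B}}\colon\mathscr{A}\to\mathscr{B}$: by $\mathscr{B}$-bilinearity it absorbs all $v_{i_j}, w_{i_j}$ factors, and by the freeness of $\{u_i\}_{i\in I}$ from $\mathscr{B}$ combined with the Haar property of each $u_i$, it annihilates any nontrivial alternating word in the $u_i$'s and centered $\mathscr{B}$-elements produced by the expansion. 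Applying $\varphi$ on the outside then yields the desired vanishing.
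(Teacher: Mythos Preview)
Your outline is correct, but the freeness step has precisely the gap you flag, and you do not supply the observation that closes it. In the expanded word $\tilde B_0\, u_{k_1}^{\delta_1}\, \tilde B_1 \cdots u_{k_M}^{\delta_M}\, \tilde B_M$, the dangerous case is $c_l = \varphi(\tilde B_l) \ne 0$ while the flanking factors are $u_i$ and $u_i^\ast$ with the same index, producing a genuine collapse $u_i u_i^\ast = 1$. What saves you is structural: within any single $\varpi_i^{m}$ block the $u_i$-exponents all carry the sign of $m$, so an opposite-sign same-index adjacency can only straddle two $\varpi_i$-blocks separated (by the alternation hypothesis) by some centered $a \in \mathscr{B}^\circ$, and then $\tilde B_l$ is $w_i\, a\, w_i^\ast$ or $v_i^\ast\, a\, v_i$, whence $\varphi(\tilde B_l) = \varphi(a) = 0$ by traciality. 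With this observation stated, your recursive centering expansion goes through (same-sign combinations give $u_i^{\pm p}$, still centered by the Haar property); without it the argument is incomplete. The $E_{\mathscr{B}}$ reformulation you suggest is legitimate but would require setting up freeness with amalgamation over $\mathscr{B}$, which is more machinery than the lemma warrants and which you do not carry out.

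The paper isolates exactly this structural observation (``if $j_k = j_{k+1}$ then either $\varepsilon_k = \varepsilon_{k+1}$ or $b_{k+1}$ is centered'') but then proceeds by a genuinely different route: the moment--cumulant formula. Writing the word as $b_1 u_{j_1}^{\varepsilon_1} b_2 \cdots u_{j_l}^{\varepsilon_l} b_{l+1}$, freeness of $(u_i)_{i\in I}$ from $\mathscr{B}$ forces any contributing $\pi \in \mathrm{NC}(2l+1)$ to split as $\pi_o \sqcup \pi_e$ over the $b$-positions and $u$-positions. Since Haar unitaries have only \emph{alternating} nonzero cumulants, any interval block of $\pi_e$ forces a singleton $\{b_g\}$ of $\pi_o$ sandwiched between some $u_j$ and $u_j^\ast$; the structural observation then says this $b_g$ is centered, so $\kappa_1(b_g) = 0$ kills $\kappa_\pi$. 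This replaces your recursive expansion by a single combinatorial pass. For the Haar step, the paper uses the one-line rotation trick $u_j \equaldist e^{i\theta} u_j$, giving $\varphi(\varpi_j^m) = e^{im\theta}\varphi(\varpi_j^m)$ for all $\theta$; your traciality-plus-expansion argument is correct but longer.
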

\begin{proof}
    First, $(\varpi_i)_{i\in I}$ are all unitary (as they are products of unitaries). To show that they are indeed all Haar unitaries, we make the observation that, for each $j\in I$ and $\theta\in\mathbb{R}$, $u_j\equaldist e^{i\theta}u_j$.  Since $u_j$ is freely independent from $\mathscr{B}\ni v_j,w_j$, it follows that $(u_j,v_j,w_j) \equaldist (e^{i\theta} u_j,v_j,w_j)$.  Hence, for any $m\in\mathbb{N}$,
    \[\varphi((v_ju_jw_j)^m) = \varphi((v_j(e^{i\theta}u_j)w_j)^m) = e^{im\theta}\varphi((v_ju_jw_j)^m)\]
    for any $\theta\in\mathbb{R}$. This shows any non-zero moments of $\varpi_j$ must be $0$.

    To show freeness of $(\varpi_i)_{i\in I},\mathscr{B}$, we need to show that any $\ast$-monomial of the form
    \begin{equation}
    \label{eq.free.Haar.moments}
    (v_{i_1}u_{i_1}w_{i_1})^{m_1}a_1(v_{i_2}u_{i_2}w_{i_2})^{m_2}a_2\cdots a_{n-1}(v_{i_n}u_{i_n}w_{i_n})^{m_n}
    \end{equation}
    is centered, where $m_j$ are integers, $m_2,\ldots,m_{n-1}$ are nonzero, and $a_j\in\mathscr{B}$ is either centered or is a scalar but $i_j\neq i_{j+1}$. This is the general form of products of alternating centered words from $v_1u_1w_1,\ldots,v_mu_mw_m$, or $\mathscr{B}$. Since $v_i,w_i\in \mathscr{B}$ for each $i\in I$, we can rewrite \eqref{eq.free.Haar.moments} as
    \[b_1u_{j_1}^{\varepsilon_{1}}b_2u_{j_2}^{\varepsilon_{2}}\ldots b_lu_{j_l}^{\varepsilon_{l}}b_{l+1}\]
    where $j_1,\ldots,j_\ell$ are $i_1,\ldots,i_n$ with (possible) repeats, and $\varepsilon_l\in\{1,\ast\}$. By the structure in \eqref{eq.free.Haar.moments}, if $j_k=j_{k+1}$ then either $\varepsilon_{k} = \varepsilon_{k+1}$ or $b_{k+1}$ is centered (in which case $b_{k+1}$ is a unitary conjugation of one of the $a$'s in \eqref{eq.free.Haar.moments}). Now we calculate, using the moment-cumulant formula,
    \[\varphi(b_1u_{j_1}^{\varepsilon_{1}}b_2u_{j_2}^{\varepsilon_{2}}\ldots b_lu_{j_l}^{\varepsilon_{l}}b_{l+1}) = \sum_{\pi\in \mathrm{NC}(2l+1)}\kappa_\pi[b_1, u_{j_1}^{\varepsilon_1},b_2, u_{j_2}^{\varepsilon_2},\ldots, u_{j_l}^{\varepsilon_l},b_{l+1}].\]
    Since $(u_j)_{i\in I},\mathscr{B}$ are $\ast$-free, the only terms that may be nonzero are those $\pi\in \mathrm{NC}(2l+1)$ that decompose as $\pi = \pi_o\sqcup\pi_e$, where $\pi_o\in \mathrm{NC}(\{1,3,\ldots,2l+1\})$ and $\pi_e\in \mathrm{NC}(\{2,4,\ldots,2l\})$.

    Since $\pi_e$ is non-crossing, at least one block must be an interval (\cite{SpeicherNicaBook}). As $(u_j)_{i\in I}$ are free Haar unitaries, the only possibility that $\kappa_\pi$ is nonzero is when this interval is of even length and $\kappa_\pi$ factors into terms including
    $\kappa_{2p}[u_j, u_j^*,\ldots,u_j,u_j^*]$ or $\kappa_{2p}[u_j^*,u_j\ldots,u_j^*,u_j]$; see Example \ref{ex.freeness}(\ref{ex.Haar.cumulants}). But then since $\pi$ is non-crossing, $\pi_o$ contains a singleton $\{b_g\}$ where $b_g$ is in between $u_j$ and $u_j^*$. Thus, $\kappa_1(b_g)$ is a factor of $\kappa_\pi$. By the preceding paragraph, the structure of \eqref{eq.free.Haar.moments} forces $b_g$ to be centered; hence $\kappa_1(b_g) = 0$, showing $\kappa_\pi = 0$. We have completed the proof that any $\ast$-moment of the form \eqref{eq.free.Haar.moments} is centered. Therefore, our lemma is established.
    \end{proof}

The preceding proof depends fundamentally on the special structure of the free cumulants of a Haar unitary $u$: among all possible cumulants $\kappa_n[u^{\varepsilon_1},\ldots,u^{\varepsilon_n}]$, the only non-zero ones have $n$ even and have the sequence $\varepsilon_j\in\{1,\ast\}$ alternating (cf.\ Example \ref{ex.freeness}(\ref{ex.Haar.cumulants}).  Note from Example \ref{ex.freeness}(\ref{ex.circular.cumulants}) that circular elements have the same property: only alternating adjoints have (potentially) non-zero free cumulants.  Such random variables are called $\mathscr{R}$-diagonal.

\begin{definition} \label{def.R-diag} A random variable $a$ in a $\ast$-probability space is {\bf $\mathscr{R}$-diagonal} if, for all $n\in\mathbb{N}$ and $\varepsilon_1,\ldots,\varepsilon_n\in\{1,\ast\}$,
\[ \kappa_n[a^{\varepsilon_1},\ldots,a^{\varepsilon_n}] = 0 \quad \text{unless} \quad n\text{ is even and}\;\varepsilon_j\ne \varepsilon_{j+1}\text{ for }1\le j<n. \]
\end{definition}

The name $\mathscr{R}$-diagonal comes from a (noncommutative) multivariate generalization of the $\mathscr{R}$-transform: $\mathscr{R}_a$, which is the formal power series of the free cumulants of $a$:
\[ \mathscr{R}_a(z,z^\ast) = \sum_{n=0}^\infty \sum_{\varepsilon\in\{1,\ast\}^n} \kappa_n[a^{\varepsilon_1},\ldots,a^{\varepsilon_n}] z^{\varepsilon_1}\cdots z^{\varepsilon_n} \]
where $z,z^\ast$ are formal non-commuting indeterminates; $a$ is $\mathscr{R}$-diagonal iff $\mathscr{R}_a$ is supported on the ``diagonal'' terms $(z^\ast z)^n$ and $(zz^\ast)^n$.

Circular operators and Haar unitaries are prominent examples of $\mathscr{R}$-diagonal $\ast$-distributions, but the class is much larger.  Indeed, set $\alpha_n(a) = \kappa_{2n}[a,a^\ast,\ldots,a,a^\ast] = \kappa_{2n}[a^\ast,a,\ldots,a^\ast,a]$ (the equality follows from the traciality of the state).  The sequence $(\alpha_n(a))_{n\in\mathbb{N}}$ completely determines all free cumulants of $a$, and hence its $\ast$-distribution.  Letting $\alpha_\pi(a)$ denote the mutiplicative extension of the sequence $\alpha_n(a)$ for $\pi\in\mathrm{P}(n)$, in \cite[Proposition 15.6]{SpeicherNicaBook} it is proved that
\[ \kappa_n[a^\ast a,\ldots,a^\ast a] = \kappa_n[aa^\ast,\ldots,aa^\ast] = \sum_{\pi\in\mathrm{NC}(n)} \alpha_\pi(a). \]
As with the moment cumulant formula, the right-hand sums can be inductively inverted to compute $\alpha_n(a)$; it therefore follows that the $\ast$-distribution of $a$ is completely determined by the free cumulants on the left-hand-side, which encode the $\ast$-distribution of $a^\ast a$ (and $aa^\ast$ which has the same $\ast$-distribution).  We summarize this and related fact in the following.

\begin{proposition}[Lecture 15, \cite{SpeicherNicaBook}] \label{prop.R-diag} Let $(\mathscr{A},\varphi)$ be a $W^\ast$-probability space.
\begin{enumerate}
    \item If $a\in\mathscr{A}$ is $\mathscr{R}$-diagonal, the $\ast$-distribution of $a$ is completely determined by the (common) $\ast$-distribution of $a^\ast a$ or $aa^\ast$.
    \item \label{prop.R-diag.b} A random variable $a\in\mathscr{A}$ is $\mathscr{R}$-diagonal iff for any Haar unitary $u$ freely independent from $a$, $a$ and $ua$ have the same $\ast$-distribution.
    \item \label{prop.R-diag.c} If $a\in\mathscr{A}$ is $\mathscr{R}$-diagonal, then $a$ has the same $\ast$-distribution as $uq$ where $u$ is Haar unitary, $q = \sqrt{a^\ast a}$, and $u,q$ are freely independent.  Also, $a$ has the same $\ast$-distribution as $u\tilde{q}$, where $u$ is Haar unitary free from $\tilde{q}$, a selfadjoint operator whose distribution is the symmetrization of the distribution of $\sqrt{a^\ast a}$.
\end{enumerate}
\end{proposition}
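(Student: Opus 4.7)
The proof plan centers on exploiting the moment-cumulant formula together with the characterization of freeness via vanishing mixed cumulants (Proposition \ref{prop.mixed.fcumulants}), and the explicit form of the free cumulants of a Haar unitary (Example \ref{ex.freeness}(\ref{ex.Haar.cumulants})). All three parts reduce to bookkeeping of noncrossing partitions with an alternating color constraint induced by the $\mathscr{R}$-diagonal hypothesis. The work is combinatorial rather than analytic; the main device is the product-of-free-variables cumulant formula expressed in terms of the Kreweras complement.

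For (1), the key identity to establish is
\[
\kappa_n\bigl[a^\ast a,\ldots,a^\ast a\bigr] \;=\; \sum_{\pi\in\mathrm{NC}(n)} \alpha_\pi(a),
\]
where $\alpha_n(a)=\kappa_{2n}[a,a^\ast,\ldots,a,a^\ast]$. The plan is to expand the left-hand side by the product formula for free cumulants: $\kappa_n[b_1c_1,\ldots,b_nc_n] = \sum_{\sigma\in\mathrm{NC}(2n),\,\sigma\vee\hat 0 = 1_n} \kappa_\sigma[b_1,c_1,\ldots,b_n,c_n]$, where $\hat 0$ pairs the adjacent $(b_j,c_j)$ slots. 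Substituting $b_j=a^\ast$, $c_j=a$ and invoking $\mathscr{R}$-diagonality to kill every block of $\sigma$ that is not of alternating $\{1,\ast\}$-type, only the partitions whose blocks interleave the $a$'s with $a^\ast$'s survive, and these are in natural bijection with $\mathrm{NC}(n)$. Möbius inversion on $\mathrm{NC}(n)$ then inverts the relation, recovering the $\alpha_n(a)$ from the $\ast$-distribution (equivalently the cumulants) of $a^\ast a$. Since $\mathscr{R}$-diagonality forces every free cumulant of $a$ to be either $0$ or some $\alpha_k(a)$, this determines the full $\ast$-distribution of $a$.

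For (2), assume $u$ is a Haar unitary freely independent from $a$ and compute $\kappa_n[(ua)^{\varepsilon_1},\ldots,(ua)^{\varepsilon_n}]$, where each $(ua)^{\varepsilon_j}$ is either $ua$ or $a^\ast u^\ast$. Using the product formula for free cumulants and the vanishing of mixed cumulants between $u$ and $a$, each surviving $\sigma\in\mathrm{NC}(2n)$ must split into a $u$-part and an $a$-part that are each noncrossing on their own slot subsets. The $u$-part is nonzero only on alternating blocks by Example \ref{ex.freeness}(\ref{ex.Haar.cumulants}), and the resulting sign-weighted sum collapses (through the Kreweras complement pairing) to $\kappa_n[a^{\varepsilon_1},\ldots,a^{\varepsilon_n}]$ precisely when every surviving $a$-block is alternating in $\{1,\ast\}$, which is automatic if $a$ is $\mathscr{R}$-diagonal. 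Conversely, if $ua\equaldist a$ for every such free Haar $u$, specialize this equality to moments whose matching forces each non-alternating cumulant $\kappa_n[a^{\varepsilon_1},\ldots,a^{\varepsilon_n}]$ to vanish; this can be done inductively on $n$, much as in the proof of Lemma \ref{lem.free.Haar}, using the $u\mapsto e^{i\theta}u$ invariance to isolate individual cumulants. I expect the combinatorial bookkeeping in the Kreweras-complement step to be the main obstacle, since the sign-cancellation among Haar-unitary cumulants must be shown to produce the claimed identity on the nose.

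For (3), use Proposition \ref{prop.Voiculescu.ext} to enlarge $(\mathscr{A},\varphi)$ so that it contains a Haar unitary $u$ free from $q:=\sqrt{a^\ast a}$. Since the $\ast$-distribution of $uq$ is $\mathscr{R}$-diagonal (free cumulants computed as in (2) automatically alternate because $u$'s cumulants do), and since $(uq)^\ast(uq) = qu^\ast uq = q^2 = a^\ast a$ has the same distribution as $a^\ast a$ by construction, part (1) yields $uq\equaldist a$ in $\ast$-distribution. For the variant with the symmetrization $\tilde q$: note that the map $q\mapsto -q$ can be absorbed into $u$ (since $-u$ is again Haar unitary, free from $q$), so the $\ast$-distribution of $u\tilde q$ depends only on the distribution of $\tilde q^2 = q^2 = a^\ast a$; hence $u\tilde q\equaldist uq\equaldist a$.
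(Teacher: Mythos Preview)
The paper does not prove this proposition; it is stated as a citation to Lecture~15 of \cite{SpeicherNicaBook} and used as a black box. So there is no ``paper's own proof'' to compare against. Your sketch follows the standard combinatorial route from that reference (product formula for cumulants, Kreweras complement, alternating-block constraint from the Haar unitary cumulants), and the overall architecture for all three parts is correct.

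One point worth tightening in part~(2): the ``converse'' direction as you phrased it is slightly off. The hypothesis is that $ua\equaldist a$ for \emph{one} Haar unitary $u$ free from $a$ (equivalently, for any such $u$, since the joint $\ast$-distribution of $(u,a)$ is determined by the marginals and freeness). You cannot vary $u$ to ``isolate individual cumulants''; rather, the $e^{i\theta}$-rotation argument shows directly that $ua$ is $\mathscr{R}$-diagonal (its nonalternating cumulants vanish because $e^{i\theta}ua\equaldist ua$), and then $ua\equaldist a$ transfers this to $a$. So the induction you allude to is unnecessary, and the argument is cleaner than you suggest.
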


It is clear from Definition \ref{def.R-diag} that $a$ is $\mathscr{R}$-diagonal iff $a^\ast$ is.  It therefore follows from Proposition \ref{prop.R-diag}\ref{prop.R-diag.b} that $a$ is $\mathscr{R}$-diagonal iff $a$ and $au$ have the same $\ast$-distribution for any Haar unitary free from $a$ (since $u^\ast$ is also Haar unitary).  Iterating this with \ref{prop.R-diag.b} and \ref{prop.R-diag.c} above yields the following.

\begin{corollary} \label{cor.R-diag.SVD} In a $W^\ast$-probability space, a random variable $a$ is $\mathscr{R}$-diagonal iff it has the same $\ast$-distribution as $uqv^\ast$ where $u,v$ are Haar unitaries, $q$ is a non-negative definite operator (with the same distribution as $\sqrt{a^\ast a}$), and $u,v,q$ are freely independent.
\end{corollary}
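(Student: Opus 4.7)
The plan is to prove both directions separately, combining Proposition \ref{prop.R-diag} with the ``absorption'' property of free Haar unitaries established in Lemma \ref{lem.free.Haar}. By Proposition \ref{prop.Voiculescu.ext}, I may freely enlarge the ambient $W^\ast$-probability space to accommodate any auxiliary free Haar unitaries the argument requires.

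For the forward direction, assume $a$ is $\mathscr{R}$-diagonal. First I would invoke Proposition \ref{prop.R-diag}\ref{prop.R-diag.c} to realize $a$ as $uq$, where $u$ is Haar unitary, $q$ has the same $\ast$-distribution as $\sqrt{a^\ast a}$, and $u,q$ are freely independent. Enlarging the space, introduce a Haar unitary $v$ freely independent from the pair $\{u,q\}$, so that $u,q,v$ are jointly freely independent. Since $uq \equaldist a$, the product $uq$ is itself $\mathscr{R}$-diagonal, and hence so is its adjoint $qu^\ast$. Applying Proposition \ref{prop.R-diag}\ref{prop.R-diag.b} to $qu^\ast$ with the Haar unitary $v$ (which is free from $qu^\ast$ because $v$ is free from $\{u,q\}$) yields $v(qu^\ast) \equaldist qu^\ast$; taking adjoints gives $uqv^\ast \equaldist uq \equaldist a$, which is the desired ``singular value decomposition'' form.

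For the reverse direction, suppose $a \equaldist uqv^\ast$ with $u,v$ Haar unitary, $q \ge 0$, and $u,q,v$ freely independent. By Proposition \ref{prop.R-diag}\ref{prop.R-diag.b}, it suffices to exhibit a Haar unitary $w$ free from $a$ with $wa \equaldist a$. I enlarge the space to include a Haar unitary $w$ free from the $\ast$-algebra $\mathscr{B}$ generated by $\{u,q,v\}$, then apply Lemma \ref{lem.free.Haar} with this $\mathscr{B}$ as the subalgebra, the single free Haar unitary $w$, and the choices $v_1 = 1 \in \mathscr{B}$ and $w_1 = u \in \mathscr{B}$. The lemma concludes that $wu$ is Haar unitary and is freely independent from $\mathscr{B}$, in particular from the subalgebra generated by $\{q,v\}$. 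Consequently, the triples $(u,q,v)$ and $(wu,q,v)$ are both comprised of a Haar unitary, a non-negative element distributed as $q$, and a Haar unitary, all in joint free independence with the same marginal distributions; uniqueness of joint $\ast$-distributions under free independence forces $(wu,q,v) \equaldist (u,q,v)$. Thus $wa \equaldist w(uqv^\ast) = (wu)qv^\ast \equaldist uqv^\ast \equaldist a$. The identity $q \equaldist \sqrt{a^\ast a}$ follows from $a^\ast a \equaldist vqu^\ast u qv^\ast = vq^2v^\ast \equaldist q^2$, since conjugation by the freely independent Haar unitary $v$ preserves the $\ast$-distribution of $q^2$.

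The only subtle step is the invocation of Lemma \ref{lem.free.Haar}: even though $u$ lies in $\mathscr{B}$, the single element $wu$ ends up freely independent from all of $\mathscr{B}$. This absorption phenomenon is exactly what is needed for the $\mathscr{R}$-diagonal property to propagate through multiplication on the left by a generic free Haar unitary, and it is the load-bearing ingredient of the non-trivial direction; the remainder of the proof is bookkeeping of free independence and uniqueness of joint $\ast$-distributions.
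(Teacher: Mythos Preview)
Your proof is correct and follows essentially the paper's route. The paper's argument is a one-sentence sketch (``Iterating this with \ref{prop.R-diag.b} and \ref{prop.R-diag.c} above yields the following''), combining the adjoint symmetry of $\mathscr{R}$-diagonality with Proposition~\ref{prop.R-diag}(b)--(c); your forward direction unpacks exactly that iteration, and your reverse direction makes explicit the absorption step (via Lemma~\ref{lem.free.Haar}) that the paper's sketch leaves implicit.
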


In particular: there is a one-to-one correspondence between (compactly-supported) probability distributions on $[0,\infty)$ (and symmetric compactly supported distributions on $\mathbb{R}$) and $\mathscr{R}$-diagonal $\ast$-distributions.  This also suggests what random matrix ensembles should approximate $\mathscr{R}$-diagonal elements.

\begin{proposition} \label{prop.bi-inv} For a random matrix $A \in \MNC$, the following are equivalent.
\begin{enumerate}
    \item \label{bi-inv} $A$ is unitarily bi-invariant: for any deterministic $Q_1,Q_2\in\mathrm{U}(N)$, the random matrices $A$ and $Q_1AQ_2^\ast$ have the same distribution.
    \item \label{bi-inv.SVD} There are independent random matrices $U,V,T$ with $U,V$ Haar Unitary Ensembles, $T$ diagonal with non-negative entries, and $A$ has the same distribution as $UTV^\ast$.
\end{enumerate}
Moreover, if $A=A^N$ has the same distribution as $UTV^\ast$ as above, and if the $\mathrm{ESD}$ of the diagonal matrix $T=T^N$ has an a.s.\ limit distribution $\nu$, then $A^N$ converges a.s.\ in $\ast$-distribution as $N\to\infty$ to an $\mathscr{R}$-diagonal random variable $a$ for which $\sqrt{a^\ast a}$ has distribution $\nu$.
\end{proposition}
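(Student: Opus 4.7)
The plan is to prove the equivalence \ref{bi-inv}$\Leftrightarrow$\ref{bi-inv.SVD} first, then the ``moreover'' claim. For \ref{bi-inv.SVD}$\Rightarrow$\ref{bi-inv} the argument is essentially immediate: if $A \equaldist UTV^\ast$ with $U,V$ independent Haar unitaries jointly independent of $T$, then for deterministic $Q_1,Q_2\in\mathrm{U}(N)$ the left (resp.\ right) invariance of Haar measure gives $(Q_1U, T, Q_2V)\equaldist (U,T,V)$, whence $Q_1AQ_2^\ast \equaldist (Q_1U)T(Q_2V)^\ast \equaldist UTV^\ast \equaldist A$.

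The main content lies in \ref{bi-inv}$\Rightarrow$\ref{bi-inv.SVD}. I will start with any measurable SVD $A = \tilde U\tilde T\tilde V^\ast$ with $\tilde T$ diagonal, non-negative, with (say) decreasing entries; the factors $\tilde U,\tilde V$ need not be Haar nor independent of $\tilde T$. To fix this, I will introduce two independent Haar Unitary Ensembles $W_1,W_2$ jointly independent of $A$. By bi-invariance and the fact that integrating over Haar$\times$Haar does not change the distribution, $A \equaldist W_1AW_2^\ast = (W_1\tilde U)\,\tilde T\,(W_2\tilde V)^\ast$. Setting $U := W_1\tilde U$, $T := \tilde T$, $V := W_2\tilde V$, conditioning on $(\tilde U, \tilde T, \tilde V, W_2)$ shows that $U$ is Haar-distributed and independent of $(T,V)$ (since left-multiplying Haar by any fixed unitary is Haar); symmetrically $V$ is Haar and independent of $(U,T)$, producing the desired mutually independent triple. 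The main obstacle here is the measurable selection of an SVD, which is delicate at coalescing or vanishing singular values; the Haar-randomization trick cleanly absorbs any such non-uniqueness.

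For the ``moreover'' claim, the established decomposition lets me assume $A^N = U^N T^N V^{N\ast}$ with $U^N,V^N$ independent Haar Unitary Ensembles and $T^N$ diagonal non-negative, independent of $(U^N,V^N)$. The a.s.\ weak convergence of the $\mathrm{ESD}$ of $T^N$ to $\nu$, combined with a uniform operator-norm bound or moment control on $T^N$, upgrades to a.s.\ $\ast$-distribution convergence of the selfadjoint $T^N$ to an element $\tau$ with spectral measure $\nu$. I will then apply Proposition \ref{prop.asymp.free.Haar.GUE} with no GUE factors, two Haar unitaries, and the selfadjoint $T^N$: this delivers a $W^\ast$-probability space in which $(U^N, V^N, T^N)\rightharpoonup (u,v,\tau)$ a.s.\ in $\ast$-distribution, with $u,v$ Haar unitary, $\tau$ of distribution $\nu$, and $u,v,\tau$ mutually freely independent. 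Continuity of polynomial evaluation then yields $A^N = U^NT^NV^{N\ast}\rightharpoonup u\tau v^\ast =: a$. Finally, $a^\ast a = v\tau u^\ast u\tau v^\ast = v\tau^2 v^\ast$, so $\sqrt{a^\ast a} = v\tau v^\ast$, which has the same spectral distribution as $\tau$, namely $\nu$; and since $a = u\tau v^\ast$ with $u,v$ Haar and $u,v,\tau$ freely independent, Corollary \ref{cor.R-diag.SVD} immediately concludes that $a$ is $\mathscr{R}$-diagonal.
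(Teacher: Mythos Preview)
Your proof is correct. The direction \ref{bi-inv.SVD}$\Rightarrow$\ref{bi-inv} and the ``moreover'' clause match the paper's argument essentially line for line (both invoke Proposition~\ref{prop.asymp.free.Haar.GUE} and Corollary~\ref{cor.R-diag.SVD}; you are in fact slightly more careful in flagging the need for moment control on $T^N$ to pass from weak $\mathrm{ESD}$ convergence to $\ast$-distribution convergence).

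For \ref{bi-inv}$\Rightarrow$\ref{bi-inv.SVD} you take a genuinely different route. The paper conditions on the singular values and argues that the conditional law of $A$ must be the unique bi-invariant probability measure on the orbit $\mathrm{U}(N)\times\mathrm{U}(N)/H$, namely the push-forward of Haar; since this conditional law does not depend on the singular values, independence follows. Your approach instead introduces auxiliary Haar unitaries $W_1,W_2$ independent of $A$, uses bi-invariance to write $A\equaldist W_1AW_2^\ast=(W_1\tilde U)\tilde T(W_2\tilde V)^\ast$, and observes that conditioning on $(\tilde U,\tilde T,\tilde V)$ makes $(W_1\tilde U,W_2\tilde V)$ Haar$\times$Haar regardless of the conditioning value---giving mutual independence directly. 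Your randomization argument is more constructive and avoids the quotient-space / uniqueness-of-invariant-measure machinery, at the cost of requiring a Borel-measurable SVD selection (a standard fact, and as you note the Haar smoothing absorbs the non-uniqueness anyway). The paper's orbit argument is perhaps more conceptual but less explicit about exhibiting the triple $(U,V,T)$. Both are clean; yours is arguably more elementary.
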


\begin{proof} First assume \ref{bi-inv.SVD}.  Since $U$ is Haar distribution, if $Q$ is any deterministic unitary matrix, $QU$ is also Haar distributed.  Thus, since $U,V,T$ are independent, for $Q_1,Q_2\in\mathrm{U}(N)$, $(Q_1U,Q_2 V,T)$ has the same distribution as $(U,V,T)$, and hence
\[ A \equaldist UTV^\ast \equaldist (Q_1U)\cdot T\cdot (Q_2 V)^\ast = Q_1\cdot UTV^\ast\cdot Q_2^\ast \equaldist Q_1AQ_2. \]
This establishes the bi-invariance in \ref{bi-inv}.

Conversely, suppose $A$ is bi-invariant, i.e.\ \ref{bi-inv}.  Fix any $\sigma_1\ge\sigma_2\ge\cdots\ge\sigma_N\ge 0$, and consider the conditional distribution of $A$ given its singular values are $\sigma_1,\ldots,\sigma_j$.  This law is supported on the $\mathrm{U}(N)\times\mathrm{U}(N)$-orbit of the matrix $T=\mathrm{diag}(\sigma_1,\ldots,\sigma_N)$ under the map $(U,V)\cdot T = UTV^\ast$.  That orbit is the quotient of the group $\mathrm{U}(N)\times\mathrm{U}(N)$ by the stabilizer $H$ of $T$.  The bi-invariance of $H$ implies that this conditional law is bi-invariant; but there is a unique probability measure on $\mathrm{U}(N)\times\mathrm{U}(N)/H$ that is bi-invariant: the push-forward of the Haar measure on $\mathrm{U}(N)\times\mathrm{U}(N)$ under the quotient map $(U,V)\mapsto (U,V)\cdot T = UTV^\ast$.

Hence, the conditional distribution of of $A$ given $T=\mathrm{diag}(\sigma_1,\ldots,\sigma_N)$ is the push-forward of the Haar measure on $\mathrm{U}(N)\times\mathrm{U}(N)$ under the quotient map -- which does not depend on $\sigma_1,\ldots,\sigma_N$.  That means precisely that the marginal distribution of $(U,V)$ is independent from the distribution of $T$, and that this marginal distribution is the Haar measure on $\mathrm{U}(N)\times\mathrm{U}(N)$, which is the two-fold product of the Haar measure on $\mathrm{U}(N)$.  In other words, the marginal distributions of $U,V,T$ are all independent, and $U,V$ are Haar unitary, establishing \eqref{bi-inv.SVD}.

Finally, if $T=T^N$ is independent from the independent Haar Unitary Ensembles $U=U^N$ and $V=V^N$, and if $T^N\rightharpoonup q$ in probability in $\ast$-distribution, then by Proposition \ref{prop.asymp.free.Haar.GUE} it follows that $(U^N,V^N,T^N)\rightharpoonup(u,v,q)$ in probability in $\ast$-distribution, where $u,v$ are Haar unitaries and $u,v,q$ are freely independent.  Thus $UTV^\ast \rightharpoonup uqv^\ast$, which is $\mathscr{R}$-diagonal with $\sqrt{a^\ast a}$ sharing the same $\ast$-distribution as $q$ by Corollary \ref{cor.R-diag.SVD}.  This concludes the proof.
\end{proof}

\begin{remark} Given Proposition \ref{prop.R-diag}\ref{prop.R-diag.c}, one might expect a random matrix model for $\mathscr{R}$-diagonal random variables to be simply $A = UT$ where $U$ is a Haar Unitary Ensemble independent from the diagonal matrix $T$.  This also works, but is equivalent (in terms of eigenvalues and $\ast$-distribution) to the model in Proposition \ref{prop.bi-inv}.  Indeed, let $V$ be another Haar Unitary Ensemble independent from $U,T$.  Then $T$ has the same eigenvalues as $VTV^\ast$, and $U$ is independent from $VTV^\ast$, so the joint $\ast$-distribution of $(U,T)$ is the same as that of $(U,VTV^\ast)$.  Hence $UT$ has the same $\ast$-distribution as $U\cdot VTV^\ast$.  If $T=T^N$ converges in $\ast$-distribution to $q$, Proposition \ref{prop.conj.free} implies that $U\cdot VTV^\ast$ converges in $\ast$-distribution to $uvqv^\ast$; and Lemma \ref{lem.free.Haar} shows that $\tilde{u} = uv$ is Haar unitary and $\tilde{u},v,q$ are all freely independent; hence $UT$ converge in $\ast$-distribution to $uqv^\ast$.
\end{remark}

\begin{example} \begin{enumerate}
    \item From Proposition \ref{prop.bi-inv}, we see one constructive way to generate unitarily bi-invariant random matrix ensembles is by fixing their singular values: Let $T^N$ be a (deterministic) diagonal matrix with non-negative entries on the diagonal.  If $U^N,V^N$ are two independent Haar Unitary Ensembles, then $A^N:= U^N T^N V^{N\ast}$ is a bi-invariant ensemble.  If the values on the diagonal of $T^N$ are chosen so their empirical law $\nu^N$ converges weakly to some probability distribution $\nu$ on $[0,\infty)$, then $A^N$ converges in $\ast$-distribution to the $\mathscr{R}$-diagonal distribution determined by $\nu$.
    \item A different approach is to mimic the joint density of the Ginibre Ensemble.  Fix some convex function $V\colon[0,\infty)\to[0,\infty)$, and let $A^N$ be a random matrix whose joint density $\rho_{A^N}$ of entries is
    \[ \rho_{A^N}(X) = C_N \cdot \exp\left(-c_N \mathrm{Tr}[V(X^\ast X)]\right) \]
    where $c_N,C_N>0$ are chosen so that $\rho_{A^N}$ is a probability density.  (Here $V(X^\ast X)$ is defined through the spectral theorem; $V$ is applied to the eigenvalues of $X^\ast X$.)  For example, the Ginibre ensemble has this form with $V(x)=x$ and $c_N = N$.  Because the density explicitly depends on $X$ only through its singular values, it is straightforward to check that this matrix is unitarily bi-invariant.  If $C_N \sim N$, $A^N$ will converge to an $\mathscr{R}$-diagonal $a$, determined (in an indirect fashion) by $V$.
\end{enumerate}
\end{example}

\begin{myproof}{\it Lemma}{\ref{lem.Bk->bk}} Since the random matrices $\{A_1^N,\ldots,A_k^N\}$ are independent and unitarily bi-invariant, by Proposition \ref{prop.bi-inv} their singular value decompositions have the form $A_j^N = U_j^N T_j^N V_j^{N\ast}$ where $\{U_1^N,\ldots,U_k^N,V_1^N,\ldots,V_k^N\}$ are all independent Haar Unitary Ensembles and $\{T_1^N,\ldots,T_k^N\}$ are (diagonal) selfadjoint matrices independent from the $U_j^N$ and $V_j^N$.  The $\mathrm{ESD}$ of $T_j^N$ equals $\nu^N_j$ which convergess weakly to $\nu_j$.
By Proposition \ref{prop.asymp.free.Haar.GUE}, $\{U_1^N,\ldots,V_k^N,T^N_0,T_1^N,\ldots,T_k^N\}$ converge jointly in $\ast$-distribution to $\{u_1,\ldots,v_k,\vartheta_0,\vartheta_1,\ldots,\vartheta_k\}$, all freely independent with $u_j,v_j$ Haar unitary and $\vartheta_j$ having distribution $\nu_j$.

Let $a_j = u_j\vartheta_jv_j^\ast$.  By Corollary \ref{cor.R-diag.SVD}, $a_j$ are $\mathscr{R}$-diagonal; they are freely independent because the $u_j,v_j,\vartheta_j$ are all freely independent.  Note that $a_j^\ast a_j = v_j\vartheta_j^2 v_j^\ast$ has the same distribution as $\vartheta_j^2$, and therefore $|a_j|$ has the same distribution as $\vartheta_j$ which is $\nu_j$, as claimed.  Also, $u_0:=\exp(i\vartheta_0)$ is freely independent from $a_1,\ldots,a_k$.

Finally, note that there is a fixed noncommutative polynomial $P$ in $3k+1$ variables so that
\begin{align*} U_0^N B^N_k(t) &= P\left(\exp(iT^N_0),(U_j^N,V_j^N,T_j^N)_{1\le j\le k}\right), \qquad \text{and} \\
u_0 b_k(t) &= P\left(\exp(i\vartheta_0),(u_j,v_j,\vartheta_j)_{1\le j\le k}\right).
\end{align*}
The convergence in $\ast$-distribution of $U_0^N B_k^N(t)$ to $u_0 b_k(t)$ follows immediately (cf.\ Definition \ref{def.conv.*-dist} of convergence in $\ast$-distribution).
\end{myproof}

\subsubsection{Free Brownian motion\label{sect.fbm}} \hfill

\medskip

Let $(\mathscr{A},\varphi)$ be a $W^\ast$-probability space that contains countably many freely independent semicircular random variables.  Then $\mathscr{A}$ also contains {\bf free Brownian motion}: a ``process'' $x(t))_{t\ge0}$ with the properties
\begin{enumerate}
\item $t\mapsto x(t)$ is norm-continuous.
\item \label{fBm.2} For $t>s\ge 0$, the increments $x(t)-x(s)$ is semicircular with variance $t-s$.
\item If $\mathscr{A}_t = W^\ast\{x(s)\colon s\le t\}$, then $x(t)-x(s)$ is freely independent from $\mathscr{A}_s$ for $t>s\ge 0$.
\end{enumerate}

Similarly, one may construct a {\bf circular Brownian motion} $(w(t))_{t\ge 0}$ simply as $w(t) = \frac{1}{\sqrt{2}}(x(t)+iy(t))$ where $x,y$ are two freely independent free Brownian motions; it is equivalently characterized to the above, where \eqref{fBm.2} is replaced with the condition that the increments $w(t)-w(s)$ are circular random variables with variance $t-s$.  Both processes arise naturally as large-$N$ limits of matrix Brownian motions, where the notion of convergence is as follows.

\begin{definition} \label{def.conv.fd*d} Let $a_N = (a_N(t))_{t\ge 0}$ be a process in a $\ast$-probability space $(\mathscr{A}_N,\varphi_N)$ for each $N\in\mathbb{N}$, and let $a = (a(t))_{t\ge 0}$ be a process in another $\ast$-probability space $(\mathscr{A},\varphi)$.  Say that $a_N$ {\bf converges to $a$ in finite dimensional $\ast$-distributions} if, for any $t_1,\ldots,t_k\ge 0$, the tuple $(a_N(t_1),\ldots,a_N(t_k))$ converges in $\ast$-distribution to the tuple $(a(t_1),\ldots,a(t_k))$, cf.\ Definition \ref{def.conv.*-dist}; i.e.\ if, for any $n\in\mathbb{N}$, any $i_1,\ldots,i_n\in\{1,\ldots,k\}$, and any $\varepsilon_1,\ldots,\varepsilon_n\in\{1,\ast\}$,
\begin{equation} \label{eq.fd*d} \lim_{N\to\infty} \varphi_n[a_N(t_{i_1})^{\varepsilon_1}\cdots a_N(t_{i_n})^{\varepsilon_n}] 
= \varphi_n[a(t_{i_1})^{\varepsilon_1}\cdots a(t_{i_n})^{\varepsilon_n}]. \end{equation}
If the process $a_N$ is classically stochastic (i.e.\ each $a_N(t)$ is a $\mathscr{A}$-valued random variable, all on a common classical probability space), we say $a_N$ converges is finite-dimensional $\ast$-distributions a.s. / in probability if each $\ast$-moment in \eqref{eq.fd*d} converges a.s. / in probability accordingly.
\end{definition}

Let $W^N(t)$ denote {\bf matrix Brownian motion}:\ the process of the form \eqref{eq:Lie.BM} on the vector space $\MNC$ equipped with the inner product $\langle A,B\rangle = N\,\mathrm{Re}\,\mathrm{Tr}(B^\ast A)$.  Let $X^N(t)$ denote {\bf Hermitian Brownian motion}, on the subspace $\mathrm{M}_N^{\mathrm{sa}}(\C)$ of Hermitian matrices.  In \cite{Voiculescu1991}, Voiculescu proved that $(X^N(t))_{t\ge 0}$ converges in finite dimensional $\ast$-distributions to $(x(t))_{t\ge0}$ a.s.; it follows that $(W^N(t))_{t\ge0}$ converges in finite dimensional $\ast$-distributions to $(w(t))_{t\ge0}$ a.s.

The Brownian motion $(B^N(t))_{t\ge 0}$ on $\mathrm{GL}(N,\C)$   may be constructed as the strong solution to the matrix SDE
\begin{equation}
dB^N(t) = B^N(t)\,dW^N(t), \qquad B^N(0)=I
\end{equation}
cf.\ Definition \ref{def.BM.on.G} (together with \eqref{eq.Xi=0}).  Given the above statement about the large-$N$ limit of $W^N$, the following definition is sensible.

\begin{definition} \label{def.free.mult.BM} The {\bf free multiplicative Brownian motion} $b(t)$ is the unique solution to the free stochastic differential equation
\begin{equation} \label{eq.free.mult.BM} db(t) = b(t)\,dw(t), \qquad b(0)=1. \end{equation}
\end{definition}
\noindent (There is a fairly well-developed theory of free stochastic differential equations, utilizing the usual local Lipschitz and growth conditions to establish global solutions to free SDEs using Picard iteration similar to the classical case; see, for example, \cite{BianeSpeicher1998}.)

The processes $(B^N(t))_{t\ge 0}$ and $(b(t))_{t\ge 0}$ are (a.s.) non-normal for all $t>0$ and $N\ge 2$.  This makes their analysis, from the perspective of $\ast$-distribution, very tricky.  In \cite{Kemp2016}, the author showed that $(B^N(t))_{t\ge 0}$ converges in finite dimensional $\ast$-distributions to $(b(t))_{t\ge 0}$.

\ignore{
\begin{theorem}[\cite{DHKBrown,HoZhong2020Brown}] \label{thm.fmbm} In  a $W^\ast$-probability space $(\mathscr{A},\varphi)$, let $(b(t))_{t\ge0}$ be the free multiplicative Brownian motion \eqref{eq.free.mult.BM}, and let $u_0\in\mathscr{A}$ be a unitary operator freely independent from $(b(t))_{t\ge0}$.  Denote by $\mu_{u_0}$ the spectral measure of $u_0$, which is a probability distribution supported in the unit circle $\mathbb{T}$.

For any $t>0$, the Brown measure $\mu_{u_0b(t)}$ of the free multiplicative Brownian motion $u_0b(t)$ with starting distribution $u_0$ is supported  on the closure of the region
\[ \Sigma_\infty(u_0,t) = \{z\in\mathbb{C}\colon T_\infty(u_0,z)<t\} \]
where
\begin{equation} \label{eq.lifetime.k=infty} T_\infty(u_0,z) = \frac{\log(|z|^2)}{|z|^2-1}\left(\int_{\mathbb{T}} \frac{\mu_{u_0}(d\xi)}{|\xi-z|^2}\right)^{-1}. \end{equation}

There is a function $r_{u_0}\colon\mathbb{R}_+\times(-\pi,\pi]\to [1,\infty)$ such that
\begin{equation} \label{eq.Sigma.infty.geom} \Sigma_\infty(u_0,t) = \{re^{i\theta}\colon r_{u_0}(t,\theta)^{-1}<r<r_{u_0}(t,\theta)\}. \end{equation}

The Brown measure $\mu_{u_0b(t)}$ is absolutely continuous with respect to Lebesgue measure on $\Sigma_\infty(u_0,t)$, with density
\begin{equation} \label{eq.bt.density}
\rho_\infty(t,re^{i\theta}) = \frac{1}{4\pi r^2}\left(\frac{2}{t}+\frac{\partial}{\partial\theta}\int_{-\pi}^\pi\frac{2r_{u_0}(t,\theta)\sin(\theta-\phi)}{r_{u_0}(t,\theta)^2+1-2r_{u_0}(t,\theta)\cos(\theta-\phi)}\nu_0(d\phi)\right)
\end{equation}
where $\nu_0$ is the law of any $\vartheta_0$ for which $e^{i\vartheta_0}=u_0$.  This density of the form $\rho_\infty(t,re^{i\theta}) = w_t(\theta)/r^2$, with the radial function $w_t$ is bounded $w_t(\theta)<\frac{1}{\pi t}$.
\end{theorem}
}

\subsection{Brown Measure and Non-Normal Random Matrices} \hfill

\medskip

For a normal (random) matrix $A\in \MNC$, the ESD \eqref{eq.def.ESD} coincides with the spectral measure: that is, there is a direct connection between the distribution of eigenvalues and the $\ast$-distribution.  For an ensemble of normal matrices $A^N\in \MNC$, if the ESD sequence $\mu_{A^N}$ is known to be tight (for example if the operator norm $\|A^N\|$ is known to be bounded in $N$) then convergence of the $\ast$-distribution $A^N\rightharpoonup a$ implies convergence of the ESD $\mu_{A^N}\rightharpoonup \mu_a$ (i.e. convergence of moments \eqref{eq.*-moments.measure} and tightness imply weak convergence).

For non-normal (random) matrix ensembles, the link between the $\ast$-distribution and the ESD is less direct.  As a result, convergence properties of the two may fail to coincide.

\begin{example} \label{ex.Jordan.block} In $\MNC$, consider the two matrices
\[ S = \begin{bmatrix}
0 & 1 & 0 & \cdots & 0 & 0 \\
0 & 0 & 1 & \cdots & 0 & 0 \\
\vdots & \vdots & \vdots & \ddots & \vdots & \vdots \\
0 & 0 & 0 & \cdots & 1 & 0 \\
0 & 0 & 0 & \cdots & 0 & 1 \\
0 & 0 & 0 & \cdots & 0 & 0
\end{bmatrix}
\qquad
U = \begin{bmatrix}
0 & 1 & 0 & \cdots & 0 & 0 \\
0 & 0 & 1 & \cdots & 0 & 0 \\
\vdots & \vdots & \vdots & \ddots & \vdots & \vdots \\
0 & 0 & 0 & \cdots & 1 & 0 \\
0 & 0 & 0 & \cdots & 0 & 1 \\
1 & 0 & 0 & \cdots & 0 & 0
\end{bmatrix}
\]
The matrix $S$ is upper triangular with all $0$s on the diagonal, so all its eigenvalues are $0$: $\mathrm{ESD}(S) = \delta_0$.  On the other hand $U$ is the permutation matrix for the single cycle shift $(N\,N-1\,\ldots\,2\,1)$.  Hence, the eigenvalues of $U$ are in the unit circle; indeed, $U$ is a $N$-Haar unitary (of Example \ref{ex.p-Haar.unitary}): $\mathrm{ESD}(U) = \frac1N\sum_{j=1}^N \delta_{\zeta_N^j}$ where $\zeta_N^j$ are the $N$th roots of unity, which converges to the uniform probability measure on the circle (the spectral measure of a Haar unitary).

As a normal matrix, the $\ast$-distribution of $U$ is determined by its ESD; ergo, as $N\to\infty$, $U=U^N$ converges in $\ast$-distribution to a Haar unitary.  But $U = S + E_{N1}$ is a rank-$1$ matrix with norm $1$.  Hence by Lemma \ref{lem.perturb.*-dist} it follows that $S=S^N$ also converges in $\ast$-distribution to a Haar unitary.  Thus, its $\ast$-distribution and its ESD have different limits.
\end{example}

\subsubsection{Pseudospectrum and Brown Measure\label{sect.Brown.measure}} \hfill

\medskip

The disconnect observed in the well-known Example \ref{ex.Jordan.block} has to do with the  pseudospectrum of non-normal matrices.

\begin{definition} Let $A\in \MNC$.  For $\epsilon>0$, the {\bf $\epsilon$-psuedospectrum} $\mathrm{spec}_\epsilon(A)$ is the set of all eigenvalues of perturbations of $A$ with norm $\le\epsilon$:
\begin{equation} \label{eq.def.pseudo.1} \mathrm{spec}_\epsilon(A):= \bigcup_{\|E\|\le\epsilon}\mathrm{spec}(A+E). \end{equation}
\end{definition}

An alternative and equivalent description of the pseudospectrum (cf.\ \cite[Chapter 2]{Trefethen}) is
\begin{equation} \label{eq.def.pseudo.2} \mathrm{spec}_\epsilon(A)=\left\{\lambda\in\mathbb{C}\colon \|(A-\lambda I)^{-1}\|\ge\frac{1}{\epsilon}\right\}. \end{equation}
Both \eqref{eq.def.pseudo.1} and \eqref{eq.def.pseudo.2} show that $\mathrm{spec}_0(A) = \mathrm{spec}(A)$.  The form \eqref{eq.def.pseudo.2} is revealing, as it connects to the {\bf singular values} of $A-\lambda I$: the square roots of the eigenvalues of $(A-\lambda I)^\ast (A-\lambda I)$.  Of particular note: for any matrix $A$, the largest and smallest singular values are
\[ \sigma_{\max}(A) = \sqrt{\max_j \lambda_j(A^\ast A)} =  \|A\|  \qquad \sigma_{\min}(A) = \sqrt{\min_j \lambda_j(A^\ast A)} = \frac{1}{\|A^{-1}\|} \]
where $\|A^{-1}\|:=\infty$ if $A$ is not invertible.  Thus, the pseudospectrum can be identified as
\begin{equation} \label{eq.def.pseudo.3} \mathrm{spec}_\epsilon(A)=\left\{\lambda\in\mathbb{C}\colon \sigma_{\min}(A-\lambda I)\le\epsilon \right\}. \end{equation}

The pseudospectrum of $A$ is a numerical analysis tool to quantify the stability of the spectrum of $A$ under small perturbations; from \eqref{eq.def.pseudo.3} we see that it is quantified in terms of the behavior of the smallest singular values of all the shifted matrices $A-\lambda I$.  The spectral theorem keeps the spectra of normal matrices relatively well-behaved, but it is well-known that there are many examples (such as in Example \ref{ex.Jordan.block}) of non-normal matrices whose pseudospectrum behaves more wildly.  One might expect, however, that a {\em random} non-normal matrix should have better behavior under perturbation.  A key {\em asymptotic} result in that direction was provided by Rudelson and Vershynin in \cite{RudelsonVershynin2014}.

\begin{theorem}[Theorem 1.1, \cite{RudelsonVershynin2014}] \label{thm.RV} Let $U\in\mathbb{U}(N)$ be a Haar Unitary Ensemble, and let $D$ be any (random) matrix independent from $U$.  There are absolute constants $\alpha_0,\beta_0>0$ that are independent of $N$ and $D$ so that, for all $\epsilon>0$,
\[ \mathbb{P}\{\sigma_{\min}(U+D)\le\epsilon\}\le \epsilon^{\alpha_0} N^{\beta_0}. \]
In particular: if $\epsilon = \epsilon(N) < N^{-\gamma}$ for any $\gamma < \beta/\alpha$, then
\[ \sup_{D\in \MNC} \mathbb{P}\left\{\sigma_{\min}(U+D)\le \epsilon(N)\right\} \to 0 \quad \text{as} \quad N\to\infty. \]
\end{theorem}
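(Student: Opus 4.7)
The plan is to sketch the Rudelson--Vershynin argument, whose architecture rests on three pillars: bi-invariance of the Haar measure, the compressible/incompressible vector dichotomy from their earlier work on invertibility, and a conditional small-ball bound on a single column of $U$.

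First I would exploit bi-invariance to reduce to a canonical $D$. Writing the singular value decomposition $D = V\Sigma W^{\ast}$ with $V,W\in\mathrm{U}(N)$ and $\Sigma$ diagonal non-negative (conditioning on $D$ if it is random), we have $U+D = V(V^{\ast}UW + \Sigma)W^{\ast}$. Since $V^{\ast}UW$ is again Haar distributed, and the singular values of $U+D$ are unchanged under unitary conjugation on either side, it suffices to prove the bound in the special case $D = \Sigma$ diagonal with non-negative entries $\sigma_1,\ldots,\sigma_N$. This reduction is what makes a single unified bound plausible in the first place, independent of the geometry of $D$.

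Next I would decompose the unit sphere in $\mathbb{C}^N$ into \emph{compressible} vectors (those within distance $\rho$ of the set of vectors supported on at most $\delta N$ coordinates) and \emph{incompressible} vectors, for carefully chosen small parameters $\rho,\delta>0$. On the compressible part, build an $\epsilon$-net $\mathcal{N}$ with $|\mathcal{N}|\le (C/\delta)^{\delta N}$; for any fixed unit $v$, right-invariance of Haar makes $Uv$ uniform on the complex unit sphere, so $Uv+\Sigma v$ has an explicit density bounded by $C^N$, yielding $\mathbb{P}\{\|(U+\Sigma)v\|\le \epsilon\}\le (C\epsilon)^{2N-1}$. For $\delta$ small the net union bound is exponentially small in $N$ and easily absorbs the target $\epsilon^{\alpha_0}N^{\beta_0}$. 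On the incompressible part, I would invoke the invertibility-via-distance identity: for some index $k$ (dependent on $v$), $\|(U+\Sigma)v\|\gtrsim\sqrt{\delta/N}\cdot\mathrm{dist}(u_k+\sigma_k e_k,H_k)$, where $H_k$ is the span of the other columns of $U+\Sigma$. Conditioning on $\{u_j:j\ne k\}$, the column $u_k$ is uniformly distributed on the unit sphere in the one-dimensional orthogonal complement $L_k$, so writing $L_k=\mathbb{C}\ell$ for a unit $\ell$, the distance equals $|\langle u_k,\ell\rangle+\sigma_k\langle e_k,\ell\rangle|$. A small-ball bound for the one-dimensional projection of a uniform point on the complex sphere gives $\mathbb{P}\{|\langle u_k,\ell\rangle-z|\le t\}\le Ct^2$ uniformly in $z$, producing the $\epsilon^{\alpha_0}$ factor, while the union bound over $k$ contributes the polynomial $N^{\beta_0}$.

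The hard part will be the incompressible analysis: one must verify that the unit direction $\ell$ spanning $L_k$ is not itself pathologically aligned with $e_k$ in a way that could synergize with the adversarial shift $\sigma_k\langle e_k,\ell\rangle$ to shrink the distance below expectation. This forces a second layer of Haar anti-concentration nested inside the outer conditional argument: the magnitude $|\langle e_k,\ell\rangle|^2$ follows a Beta-type law on $[0,1]$ that must be shown to avoid $0$ with sufficient probability. Balancing the parameters $\rho,\delta,\alpha_0,\beta_0$ so that the compressible loss (exponentially small times $(C/\delta)^{\delta N}$) and the incompressible loss ($N\cdot\epsilon^{\alpha_0}$ after accounting for the $\sqrt{N}$ factor in the distance identity) both fit within the target is then a bookkeeping exercise, but selecting the right small-ball exponent and the right compressibility threshold is where most of the technical work lies.
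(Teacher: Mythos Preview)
The paper does not prove this theorem: it is quoted verbatim as Theorem 1.1 of \cite{RudelsonVershynin2014} and used as a black box (notably in the proof of Theorem \ref{thm.pseudospectrum}). There is therefore no ``paper's own proof'' to compare your sketch against.

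That said, since you have outlined an argument, let me flag a genuine gap in it. Your incompressible step conditions on the columns $\{u_j : j\neq k\}$ and asserts that $u_k$ is then uniform on the unit sphere in a one-dimensional complex subspace $L_k$. That is true, but it means $u_k = e^{i\theta}\ell$ for a \emph{deterministic} unit vector $\ell$ and a uniform phase $\theta\in[0,2\pi)$: only one real degree of freedom survives. The small-ball quantity you need is then $\mathbb{P}\{|e^{i\theta}\langle\ell,n\rangle + c|\le t\}$ for a unit normal $n$ to $H_k$ and a fixed shift $c$; this is at best of order $t/|\langle\ell,n\rangle|$, not $t^2$, and degenerates entirely when $|\langle\ell,n\rangle|$ is small. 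Controlling $|\langle\ell,n\rangle|$ from below is itself an anti-concentration problem of the same flavor as the one you started with, so the argument as written is circular. The actual Rudelson--Vershynin proof does not condition column-by-column in this naive way; it works instead through a delicate local parametrization of Haar measure (e.g.\ via Gaussian approximations or Cayley-type transforms) that decouples enough randomness to run a Littlewood--Offord-style argument. The compressible/incompressible dichotomy is present in spirit, but the implementation for Haar unitaries is substantially more involved than the i.i.d.\ template your sketch follows.
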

Theorem \ref{thm.RV} shows (by replacing $D$ with $D-\lambda I$ in the above supremum) that, for a Haar Unitary Ensemble $U=U^N$ and any sequence of matrices $D=D^N$, the $\epsilon(N)$-pseudospectrum $\mathrm{spec}_{\epsilon(N)}(U^N+D^N)$ vanishes in probability as $N$ grows.  Put another way: the smallest shifted singular value may decay as $N$ grows, but it decays at most polynomially with high probability.

\begin{example} \label{ex.Jordan.2} Consider again the Jordan block matrices $S=S^N$ from Example \ref{ex.Jordan.block}.  For $\lambda\in\mathbb{C}$, let $\mathbf{v}_\lambda\in\mathbb{C}^N$ be the vector $[\mathbf{v}_\lambda]_j = \lambda^{j-1}$.  Note that
\[  (S-\lambda I)\mathbf{v}_\lambda = 
\begin{bmatrix}
-\lambda & 1 & 0 & \cdots & 0 & 0 \\
0 & -\lambda & 1 & \cdots & 0 & 0 \\
\vdots & \vdots & \vdots & \ddots & \vdots & \vdots \\
0 & 0 & 0 & \cdots & 1 & 0 \\
0 & 0 & 0 & \cdots & -\lambda & 1 \\
0 & 0 & 0 & \cdots & 0 & -\lambda
\end{bmatrix}
\renewcommand\arraystretch{1.22}
\begin{bmatrix}
1 \\ \lambda \\ \lambda^2 \\ \vdots \\  \lambda^{N-1}
\end{bmatrix}
=
\begin{bmatrix}
0 \\ 0 \\ \vdots \\ 0 \\  \lambda^N
\end{bmatrix} = \lambda^N \mathbf{e}_N
\]
Hence, it follows that if $(S-\lambda I)$ is invertible then $(S-\lambda I)^{-1}\mathbf{e}_N = \lambda^{-N}\mathbf{v}_\lambda$, and so
\[ \frac{1}{\sigma_{\min}(S-\lambda I)} = \|(S-\lambda I)^{-1}\| \ge |\lambda^{-N}\mathbf{v}_\lambda| \ge |\lambda|^{-N} \]
since $|\mathbf{v}_\lambda|^2 = 1+|\lambda|^2+\cdots+|\lambda|^{2(N-1)} \ge 1$.  Thus: $\sigma_{\min}(S-\lambda I) \le |\lambda|^N$, which is {\em exponentially small} in $N$ for any $|\lambda|<1$.
\end{example}

Understanding why the rate of decay of the smallest shifted singular value of a matrix can impact the convergence properties of its eigenvalues requires an understanding of the connection between the {\em eigenvalues} of $A$ and the {\em singular values} of $\{A-\lambda I\colon \lambda\in\mathbb{C}\}$.  This brings us to Girko's Hermitization method.

The starting point is the observation that the Coulomb potential $L$ -- the fundamental solution to the Laplace equation $\nabla^2 L = \delta_0$ -- in the plane is $L(z) = \frac{1}{2\pi}\log|z|$.  Hence, for a matrix $A\in \MNC$ with eigenvalues $\lambda_j(A)$, its ESD \eqref{eq.def.ESD} can be consructed as
\begin{equation} \label{eq.Hermitize.1} \mu_A = \nabla^2\, \frac{1}{2\pi N}\sum_{j=1}^N \log |z-\lambda_j(A)| = \nabla^2_{\!z} \frac{1}{2\pi N}\log|\det(A-zI)| \end{equation}
where the final equality is the statement that the determinant of any matrix is the product of its eigenvalues.  What's more, $|\det(A-zI)|^2 = \det[(A-zI)^\ast(A-zI)]$ and so, taking logs,
\[ \log|\det(A-zI)| = \frac12\log\det[(A-zI)^\ast (A-zI)] = \frac12\mathrm{Tr}\log[(A-zI)^\ast (A-zI)] \]
the latter equality using the Spectral Theorem for Hermitian matrices: the eigenvalues of $\log[(A-zI)^\ast(A-zI)]$ are the logarithms of the eigenvalues of $(A-zI)^\ast(A-zI)$. Combining this with \eqref{eq.Hermitize.1} yields
\begin{equation} \label{eq.Hermitize.2} \mu_A = \frac{1}{4\pi}\nabla^2_{\!z} \frac{1}{N}\mathrm{Tr}[\log[(A-zI)^\ast(A-zI)]]. \end{equation}
Let $\nu_{A-zI}$ denote the spectral measure of $\sqrt{(A-zI)^\ast(A-zI)}$; then \eqref{eq.Hermitize.2} says
\begin{equation} \label{eq.Hermitize.3} \mu_A = \frac{1}{2\pi}\nabla^2_{\!z} \int_0^\infty \log x\,\nu_{A-zI}(dx). \end{equation}
The expression \ref{eq.Hermitize.3} for the ESD suggests how to find the limit as the size of the matrix $A$ grows.  If $A=A^N$ converges in $\ast$-distribution to some $a$, then $(A^N-zI)^\ast(A^N-zI)\rightharpoonup(a-z)^\ast(a-z)$ in $\ast$-distribution.  We then expect the spectral measure $\nu_{A^N-zI}$ should converge to $\nu_{a-z}:=\mu_{((a-z)^\ast(a-z))^{1/2}}$ weakly.  If $\log$ were continuous and bounded on $[0,\infty)$, it would then follow that the integral in \eqref{eq.Hermitize.3} converges to the same expression with $\nu_{a-zI}$ in place of $\nu_{A-zI}$.

The above scheme can fail (as it does in Example \ref{ex.Jordan.block}) because mass can escape to the singularities of $\log$ at $0$ and $\infty$.  In practice, the sequence $A^N$ is typically norm-bounded which avoids any problems at $\infty$; but $\nu_{A-zI}$ definitely has support near $0$ when $z$ is close to a singular value of $A$.  Hence, the failure of expected convergence in Example \ref{ex.Jordan.block} is caused by small singular values of $S^N-\lambda I$ decaying to $0$ quickly as $N$ grows, cf.\ Example \ref{ex.Jordan.2}.

It turns out that failure of the above convergence scheme is a rare pathology, as shown in a string of work starting with \cite{Sniady} and continuing with \cite{GWZ2014,FPZ2015,Wood2016,BPZ2019,BPZ202}.  Actually proving that the desired convergence does occur in any given example is usually very involved, but the above scheme at least points to what that limit {\em ought} to be, at least generically.

\begin{definition} \label{def.Brown.measure} Let $(\mathscr{A},\varphi)$ be a $W^\ast$-probability space, and let $a\in L^2(\mathscr{A},\varphi)$.  By H\"older's inequality $a^\ast a\in L^1(\mathscr{A},\varphi)$, and so $|a|:=\sqrt{a^\ast a}$ has a spectral measure (cf.\ Remark \ref{rk.unbounded.spec.meas}) which we denote $\nu_a$.  Since $|a|\in L^2(\mathscr{A},\varphi)$, $\int_0^\infty x^2\,\nu_a(dx) = \varphi(a^\ast a) = \|a\|_2^2<\infty$.  Hence $\int_{\epsilon}^\infty \log x\,\nu_a(dx)\in\mathbb{R}$ for all $\epsilon>0$.  It follows that the function $L\colon\mathbb{C}\to\mathbb{R}$
\[ L_a(z) := \frac{1}{2\pi} \int_0^\infty \log x\, \nu_{a-z}(dx) \]
takes finite or $-\infty$ values, and is (weakly) subharmonic on $\mathbb{C}$.  Its Laplacian is a probability measure on $\mathbb{C}$: the {\bf Brown measure} of $a$.  It is denoted $\mu_a$:
\[ \mu_a = \nabla^2_{\!z} L_a(z) \]
i.e.\ it is uniquely determined by the condition
\[ \int_{\mathbb{C}} f\,d\mu_a = \int_{\mathbb{C}} \nabla^2 f(z) L_a(z)\,d^2z \qquad \forall f\in C_c^\infty(\mathbb{C}). \]
\end{definition}

\begin{remark} \label{rk.Brown.measure.def} Although the $L^2$-norm $\|a\|_2$ will play a role in the study of Brown measures for the operators of interest in this paper, the only required condition for Definition \ref{def.Brown.measure} to make sense is that $\int_{\epsilon}^\infty \log x\,\nu_a(dx)$ should be finite for some $\epsilon$; canonically $\epsilon$ is taken to $=1$ here, with the condition given by the integrability of $\nu_a$ against $\log^+(x) = \max\{\log x, 0\}$; as $\nu_a$ is the law of the selfadjoint (possibly unbounded) operator $|a|=\sqrt{a^\ast a}$, the equivalent condition is $\log^+(|a|)\in L^1(\mathscr{A},\varphi)$.  See \cite[Definition 2.13]{HaagerupSchultz2007}.
\end{remark}

Brown measure, also known as Brown's spectral measure, was introduced in \cite{Brown1986} in the theory of von Neumann algebras, around the same time Girko \cite{Girko} independently introduced the closely related Hermitization method in random matrix theory.  The next proposition is a summary of important properties of Brown measure; proofs may be found in \cite{Brown1986} or \cite[Section 11]{MingoSpeicherBook}.

\begin{proposition} \label{prop.Brown} Let $a$ satisfy the conditions of Definition \ref{def.Brown.measure}, and let $\mu_a$ denote the Brown measure of $a$.
\begin{enumerate}
    \item If $a$ is normal, the Brown measure $\mu_a$ coincides with the spectral measure of $a$ (hence the notation collision is justified).
    \item \label{prop.Brown.spec} The support of $\mu_a$ is contained in $\mathrm{spec}(a)$.
    \item \label{prop.Brown.holo} If $f$ is holomorphic on a neighborhood of $\mathrm{spec}(a)$, then
    \[ \int f(\zeta)\,\mu_a(d\zeta) = \varphi[f(a)] \]
    where the right-hand-side is defined by holomorphic functional calculus.
    \item \label{prop.Brown.robust} If $A^N$ is a sequence of matrices which converges to $a$  in $\ast$-distribution and for which $(\nu_{A^N})_{N\ge 1}$ is tight (e.g.\ if $\|A^N\|$ is uniformly bounded for large $N$), then $\mathrm{ESD}(A^N)\rightharpoonup \mu_a$ iff $L_{A^N}(z) \to L_a(z)$ for (Lebesgue) almost all $z\in\mathbb{C}$.
\end{enumerate}
\end{proposition}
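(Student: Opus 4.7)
My plan is to prove each of the four properties of Proposition \ref{prop.Brown} by exploiting the definition $\mu_a = \nabla^2_{\!z} L_a(z)$ with $L_a(z) = \frac{1}{4\pi}\varphi(\log[(a-z)^\ast(a-z)])$, together with the identification (valid on the resolvent set $\rho(a) = \C\setminus\mathrm{spec}(a)$)
\[
\partial_z L_a(z) = -\tfrac{1}{4\pi}\varphi((a-z)^{-1}), \qquad \bar\partial_z L_a(z) = -\tfrac{1}{4\pi}\varphi(((a-z)^\ast)^{-1}),
\]
obtained from the trace chain rule $\frac{d}{dt}\varphi(\log X_t) = \varphi(X_t^{-1}\dot X_t)$ applied to the (positive, invertible) operator $X_z = (a-z)^\ast(a-z)$, followed by traciality of $\varphi$.

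For (1) (normal case), the spectral theorem yields a projection-valued measure $E^a$ and scalar spectral measure $\sigma_a = \varphi\circ E^a$. Functional calculus gives $\log|a-z| = \int_{\C}\log|\zeta-z|\,E^a(d\zeta)$, so $L_a$ is the logarithmic potential $\frac{1}{2\pi}\int\log|\zeta - z|\,\sigma_a(d\zeta)$ of $\sigma_a$; since $\frac{1}{2\pi}\log|\cdot|$ is the fundamental solution of the planar Laplacian, $\nabla^2 L_a = \sigma_a$, so $\mu_a = \sigma_a$. For (2), on $\rho(a)$ the resolvent $z\mapsto(a-z)^{-1}$ is holomorphic, so $\bar\partial_z\partial_z L_a = -\frac{1}{4\pi}\bar\partial_z\varphi((a-z)^{-1}) = 0$. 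Hence $\nabla^2 L_a = 4\partial\bar\partial L_a = 0$ on $\rho(a)$, showing $\mathrm{supp}(\mu_a) \subseteq \mathrm{spec}(a)$.

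For (3), fix a smooth contour $\gamma \subset \rho(a)$ enclosing $\mathrm{spec}(a)$ inside the domain of holomorphy of $f$, and let $\Omega$ be its interior. The holomorphic functional calculus definition gives $\varphi(f(a)) = \frac{1}{2\pi i}\oint_\gamma f(z)\varphi((z-a)^{-1})\,dz$. Substituting $\varphi((z-a)^{-1}) = 4\pi\,\partial_z L_a(z)$, the complex Stokes' theorem (i.e., $d(f\,\partial_z L_a\,dz) = f\,\bar\partial\partial L_a\,d\bar z\wedge dz$ using $\bar\partial f = 0$) turns the contour integral into an area integral over $\Omega$, yielding $\varphi(f(a)) = \int_\Omega f\,\nabla^2 L_a\,dA = \int f\,d\mu_a$ by (2). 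Minor technical care is needed because $L_a$ is only subharmonic (not smooth) on $\mathrm{spec}(a)$: one first applies the argument with $L_a$ replaced by its mollification $L_a * \rho_\epsilon$, which is smooth, and then passes to the limit using that $L_a \in L^1_{\mathrm{loc}}(\C)$ and $\mu_a$ is a probability measure.

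For (4), the ESD of $A^N$ is compactly supported (by tightness, say in $\{|\zeta|\le M\}$ for all large $N$ with high probability), and $L_{A^N}(z) = \frac{1}{2\pi}\int_{\C}\log|\zeta - z|\,\mathrm{ESD}(A^N)(d\zeta)$. For $z$ outside $\{|\zeta|\le M+1\}$, $\log|\zeta - z|$ is bounded and continuous on the relevant supports, so $L_{A^N}(z)\to L_a(z)$ would follow automatically from $\mathrm{ESD}(A^N)\rightharpoonup\mu_a$; inside this compact set, dominated convergence together with the integrability of $\log|\zeta-z|$ in $z$ for each $\zeta$ handles all but a Lebesgue-null set of $z$. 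Conversely, if $L_{A^N}\to L_a$ for a.e.\ $z$, then tightness provides a uniform $L^1_{\mathrm{loc}}$ bound on $L_{A^N}$, so $L_{A^N}\to L_a$ in $\mathscr{D}'(\C)$; taking distributional Laplacians gives $\mathrm{ESD}(A^N)\to\mu_a$ in $\mathscr{D}'$, which together with tightness upgrades to weak convergence. The main obstacle I anticipate is the justification in (3) of exchanging Laplacians and contour integration when $L_a$ degenerates on $\mathrm{spec}(a)$; the mollification argument is standard but requires a uniform $L^1$ bound on $L_a$ that must be extracted from the $L^2$ hypothesis on $a$ via the finiteness of $\int_1^\infty \log x\,\nu_a(dx)$ (cf.\ Remark \ref{rk.Brown.measure.def}).
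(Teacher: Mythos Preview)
The paper does not actually prove this proposition: immediately before stating it, the authors write that ``proofs may be found in \cite{Brown1986} or \cite[Section 11]{MingoSpeicherBook}.'' So there is no in-paper argument to compare against; your sketch supplies content the paper deliberately outsources.

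Your arguments for (1)--(3) are the standard ones and are correct in outline. In (2) you correctly identify that on the resolvent set $\partial_z L_a(z) = -\tfrac{1}{4\pi}\varphi((a-z)^{-1})$ is holomorphic, forcing $\bar\partial\partial L_a = 0$ there. In (3), the Cauchy integral plus Stokes' theorem reduction to $\int f\,\nabla^2 L_a$ is right, and you are correct that the only delicate point is regularizing $L_a$ before applying Stokes.

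In (4), however, both directions have gaps. For the ``if'' direction, you claim that a uniform $L^1_{\mathrm{loc}}$ bound on $L_{A^N}$ together with a.e.\ convergence gives $L_{A^N}\to L_a$ in $\mathscr{D}'$. A uniform $L^1$ bound alone does not suffice for this (the typewriter sequence is a counterexample); you need uniform integrability, which comes from the uniform $L^p_{\mathrm{loc}}$ bounds for some $p>1$ --- exactly what the paper establishes later in Corollary~\ref{cor.La.unif.L1.loc}. Once you have those, Vitali's convergence theorem gives $L^1_{\mathrm{loc}}$ convergence and hence convergence of the distributional Laplacians. For the ``only if'' direction, your sentence ``dominated convergence together with the integrability of $\log|\zeta-z|$ in $z$ for each $\zeta$ handles all but a Lebesgue-null set of $z$'' is not an argument: integrability in $z$ says nothing about convergence of $\int\log|\zeta-z|\,d\mu_N(\zeta)$ for a \emph{fixed} $z$ as $\mu_N\rightharpoonup\mu_a$. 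The honest route is to use upper semicontinuity of $\zeta\mapsto\log|\zeta-z|$ to get $\limsup_N L_{A^N}(z)\le L_a(z)$ pointwise, then Fubini (with $\zeta\mapsto\int_K\log|\zeta-z|\,dz$ continuous and bounded) to get $\int_K L_{A^N}\to\int_K L_a$, and combine these to force $L^1_{\mathrm{loc}}$ convergence; passing from that to a.e.\ convergence of the full sequence still needs care.
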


The Brown measure obeys the change of variables formula in Proposition \ref{prop.Brown}\ref{prop.Brown.holo} for holomorphic (or antiholomorphic) functions, but the mixed moment formula \eqref{eq.*-moments.measure} generally does not hold, as is well-known for non-normal matrices: $\mathrm{Tr}(A^n)$ is indeed the Newton sum of the $n$th powers of the eigenvlues of $A$, but $\mathrm{Tr}(A^n A^{\ast m})$ is not expressible simply in terms of the eigenvalues of $A$.  There is one exception, which relates back to the reason the log potential comes into play in defining Brown measure: the $\log$ is the Coublomb potential on the plane.  This yields the following.

\begin{lemma} In a $W^\ast$-probability space $(\mathscr{A},\varphi)$, let $a\in L^2(\mathscr{A},\varphi)$, and let $\mu_a$ be the Brown measure of $a$.  The log potential $L_a$ satisfies
\begin{equation} \label{eq.log.potential.integral.Brown}
L_a(z) = \frac{1}{2\pi}\int_{\mathbb{C}} \log|\zeta-z|\,\mu_a(d\zeta).
\end{equation}
\end{lemma}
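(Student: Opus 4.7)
The plan is to observe that both $L_a$ and the candidate log-potential $P_a(z) := \frac{1}{2\pi}\int_{\mathbb{C}}\log|\zeta - z|\,\mu_a(d\zeta)$ are subharmonic functions on $\mathbb{C}$ whose distributional Laplacians both equal $\mu_a$; their difference $D := L_a - P_a$ is therefore a harmonic function, and I plan to pin it down to zero via asymptotic analysis at infinity together with Liouville's theorem.

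The Laplacian equality is the quick half. By definition, $\mu_a = \nabla^2 L_a$ in the distributional sense. For the log-potential $P_a$, I will apply the fundamental-solution identity $\nabla_z^2 \log|\zeta - z| = 2\pi\,\delta_\zeta(z)$ under the integral sign (justified by Fubini and the local $L^1$-integrability of the log-singularity against the probability measure $\mu_a$), yielding $\nabla^2 P_a = \mu_a$. Consequently $D$ has distributional Laplacian zero on $\mathbb{C}$, so by Weyl's lemma $D$ is a smooth classical harmonic function on all of $\mathbb{C}$.

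The main step is to establish matching asymptotics at infinity. For $P_a$, the identity $\log|\zeta - z| = \log|z| + \log|1 - \zeta/z|$ lets me split
\[ P_a(z) - \tfrac{1}{2\pi}\log|z| = \tfrac{1}{2\pi}\int_{\mathbb{C}} \log|1 - \zeta/z|\,\mu_a(d\zeta) \to 0 \]
as $|z| \to \infty$ by dominated convergence, using the Brown-measure moment inequality $\int|\zeta|^2\,\mu_a(d\zeta) \le \|a\|_2^2$ to supply an integrable majorant. For $L_a$, Jensen's inequality applied to the concave function $\log$ and the probability measure $\nu_{a-z}$ produces the upper bound
\[ L_a(z) = \tfrac{1}{4\pi}\varphi[\log((a-z)^\ast(a-z))] \le \tfrac{1}{4\pi}\log\varphi[(a-z)^\ast(a-z)] = \tfrac{1}{2\pi}\log|z| + o(1). \]

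The hard part will be the matching lower bound for $L_a$, since $\log$ is unbounded below and the spectral measure $\nu_{a-z}$ could in principle concentrate mass near $0$. I plan to sidestep this by introducing the regularized potential $L_a^\epsilon(z) := \frac{1}{4\pi}\varphi[\log((a-z)^\ast(a-z) + \epsilon)]$, whose associated measure $\mu_a^\epsilon := \nabla^2 L_a^\epsilon$ is a smooth probability density with controlled second moment. At each fixed $\epsilon > 0$ the integrand is bounded and smooth, so the identity $L_a^\epsilon = P_a^\epsilon$ reduces to a direct Green's-theorem computation on large disks (the boundary term vanishes thanks to the regularity of $L_a^\epsilon$). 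Then $L_a^\epsilon \searrow L_a$ by monotone functional calculus, and $\mu_a^\epsilon \to \mu_a$ weakly as $\epsilon \to 0$; passing to the limit on both sides, with the moment control furnishing uniform integrability, yields $L_a = P_a$, forcing $D \equiv 0$ and completing the proof.
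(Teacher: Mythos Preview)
The paper does not provide its own proof of this lemma; it simply cites Brown (1986, Eq.~(4)) and Haagerup--Schultz (2007, Eq.~(2.17)). Your outline---both sides have distributional Laplacian $\mu_a$, so their difference $D$ is harmonic, then pin $D$ to zero via asymptotics---is exactly the standard argument in those references, and for \emph{bounded} $a$ the ``hard'' lower bound you worry about is in fact immediate: once $|z| > \|a\|$ the spectrum of $|a-z|$ lies in $[|z|-\|a\|,\,|z|+\|a\|]$, giving $L_a(z) \ge \frac{1}{2\pi}\log(|z|-\|a\|) = \frac{1}{2\pi}\log|z| + o(1)$ directly. Combined with your Jensen upper bound and the compact support of $\mu_a$ (so the $P_a$ asymptotic is trivial and you need no moment inequality), Liouville finishes with no regularization needed.

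For unbounded $a \in L^2$, however, your regularization sketch has genuine gaps. The assertion that ``the boundary term vanishes thanks to the regularity of $L_a^\epsilon$'' is not correct: smoothness says nothing about the size of the Green's-formula boundary integrals on $\{|w|=R\}$ as $R\to\infty$; what is needed is precisely the two-sided asymptotic $L_a^\epsilon(z) = \frac{1}{2\pi}\log|z| + o(1)$, which is the very obstacle you are trying to circumvent. Separately, you invoke $\int|\zeta|^2\,\mu_a(d\zeta) \le \|a\|_2^2$ to control the $P_a$ asymptotic, but this Weyl-type bound for Brown measure is itself proved in Haagerup--Schultz \emph{using} the identity $L_a = P_a$, so the argument is circular as written. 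The proofs in the cited references fill these gaps by computing the density of $\mu_a^\epsilon$ explicitly and integrating to verify directly that it is a probability measure with finite $\log^+$ moment; that explicit computation is the missing ingredient in your sketch.
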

See \cite[Eq.\ (4), p.\ 18]{Brown1986}.  For a more accessible source, see \cite[Eq.\ (2.17), p.\ 217]{HaagerupSchultz2007} where this is proved even for unbounded operators $a$.  One useful corollary is that $L_a$ is $L^p_{\mathrm{loc}}$ for any $p\ge 1$, with bounds depending on $a$ only through $\|a\|$.

\begin{corollary} \label{cor.La.unif.L1.loc} For $p\ge 1$ there is an increasing function $\Lambda_p\colon[1,\infty)\to(0,\infty)$ so that, for any $R\ge 1$ and any bounded $a\in(\mathscr{A},\varphi)$, the log potential $L_a$ satisfies
\begin{equation} \label{eq.L1.loc.bound} \int_{|z|\le R} |L_a(z)|^p\,d^2z \le \Lambda_p(R+\|a\|). \end{equation}
\end{corollary}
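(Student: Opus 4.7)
The plan is to estimate $L_a$ directly from the integral representation \eqref{eq.log.potential.integral.Brown}, then reduce the $L^p_{\mathrm{loc}}$ estimate to a single scalar integral of powers of $\log$ over a disk, which is a standard calculation.

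First, I would use that $\mu_a$ is a probability measure supported in $\mathrm{spec}(a) \subseteq \{\zeta\in\mathbb{C} : |\zeta|\le \|a\|\}$ by Proposition~\ref{prop.Brown}\ref{prop.Brown.spec}. Applying Jensen's inequality with the convex function $t\mapsto |t|^p$ (valid since $\mu_a$ is a probability measure) to \eqref{eq.log.potential.integral.Brown} gives
\[
|L_a(z)|^p \le \left(\frac{1}{2\pi}\right)^p \int_{\mathbb{C}} \bigl|\log|\zeta-z|\bigr|^p\,\mu_a(d\zeta).
\]
Integrating over $\{|z|\le R\}$ and applying Fubini yields
\[
\int_{|z|\le R} |L_a(z)|^p\,d^2z \le \left(\frac{1}{2\pi}\right)^p \int_{\mathbb{C}} \biggl(\int_{|z|\le R} \bigl|\log|\zeta-z|\bigr|^p\,d^2z\biggr)\mu_a(d\zeta).
\]

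Next, for each $\zeta$ with $|\zeta|\le \|a\|$, the substitution $w = z-\zeta$ and the inclusion $\{|z|\le R\} \subseteq \{|w|\le R+\|a\|\}$ bound the inner integral by
\[
\int_{|w|\le R+\|a\|} \bigl|\log|w|\bigr|^p\,d^2w = 2\pi \int_0^{R+\|a\|} |\log r|^p\, r\,dr,
\]
a quantity that depends only on $R+\|a\|$ and is finite because $r|\log r|^p$ is integrable near $r=0$ and bounded on any compact subset of $(0,\infty)$. Since $\mu_a$ has total mass one, I would then define
\[
\Lambda_p(S) := \left(\frac{1}{2\pi}\right)^{p-1} \int_0^{S} |\log r|^p\, r\,dr, \qquad S\ge 1,
\]
which is manifestly increasing in $S$ and finite, and conclude with $\int_{|z|\le R}|L_a(z)|^p\,d^2z \le \Lambda_p(R+\|a\|)$.

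There is no substantial obstacle here; the argument is an application of Jensen, Fubini, and translation invariance of Lebesgue measure, leveraging compact support of $\mu_a$ and the local integrability of $|\log|\cdot||^p$ on $\mathbb{C}$. The only mild point to watch is that Jensen's inequality applies to $|\,\cdot\,|^p$ for $p\ge 1$ because this function is convex; no separate argument is needed for $1\le p<\infty$.
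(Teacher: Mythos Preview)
Your proof is correct and follows essentially the same approach as the paper: Jensen's inequality applied to the integral representation \eqref{eq.log.potential.integral.Brown}, Fubini/Tonelli, the translation $w=z-\zeta$, and the inclusion of disks using $|\zeta|\le\|a\|$. The only cosmetic difference is that the paper further evaluates the radial integral (splitting at $r=1$) to give an explicit formula for $\Lambda_p$, whereas you leave it as $\Lambda_p(S)=(2\pi)^{1-p}\int_0^S |\log r|^p\,r\,dr$; either suffices for the statement.
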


\begin{proof} Applying Tonnelli's theorem and Jensen's inequality, followed by the change of variables $\tilde{z}=z-\zeta$, we have
\begin{align*} \int_{|z|\le R} |L_a(z)|^p\,d^2z &= \frac{1}{2\pi}\int_{|z|\le R} \left|\int_{\mathbb{C}} \log|\zeta-z|\,\mu_a(d\zeta)\right|^p\,d^2z \\ 
&\le \frac{1}{2\pi}\int_{\mathbb{C}} \mu_a(d\zeta) \int_{|z|\le R} |\log|\zeta-z||^p\,d^2z \\
&= \frac{1}{2\pi}\int_{\mathbb{C}} \mu_a(d\zeta) \int_{|\tilde{z}+\zeta|\le R}|\log |\tilde{z}||^p\,d^2\tilde{z}.
\end{align*}
The inside integral of the non-negative function $\tilde{z}\mapsto \vert \log|\tilde{z}|\vert^p$ is over the disk or radius $R$ centered at $-\zeta$.  This disk is contained in the disk of radius $R+|\zeta|$ centered at $0$, and so
\begin{equation} \label{eq.Lp.bound.1}  \int_{|z|\le R} |L_a(z)|^p\,d^2z \le \frac{1}{2\pi}\int_{\mathbb{C}} \mu_a(d\zeta) \int_{|\tilde{z}|\le R+|\zeta|} |\log|\tilde{z}||^p\,d^2\tilde{z}.
\end{equation}
The inside integral can be computed in polar coordinates as follows: for $\tilde{R}>1$,
\begin{align*} \frac{1}{2\pi}\int_{|\tilde{z}|\le \tilde{R}} |\log|\tilde{z}||^p\,d^2 z &= \int_0^{\tilde{R}} r|\log r|^p\,dr \\
&= \int_0^1 r|\log r|^p\,dr + \int_1^{\tilde{R}}r(\log r)^p\,dr \\
&\le  2^{-p-1}\Gamma(p+1) + \frac12\tilde{R}^2(\log\tilde{R})^p \end{align*}
where the first term is the exact value of the integral over $[0,1]$, and the second is estimated using $(\log r)^p \le (\log\tilde{R})^p$ for $r\in[1,\tilde{R}]$ (and further estimating $\tilde{R}^2-1<\tilde{R}$ for good measure).  Combining with \eqref{eq.Lp.bound.1}, we have
\[ \int_{|z|\le R} |L_a(z)|^p\,d^2z \le \int_{\mathbb{C}}  \left(2^{-p-1}\Gamma(p+1) + \frac12(R+|\zeta|)^2(\log(R+|\zeta|))^p\right)\,\mu_a(d\zeta). \]
The Brown measure $\mu_a$ is supported on the spectrum of $a$, cf.\ Proposition \ref{prop.Brown}\ref{prop.Brown.spec}.  Since the spectrum of $a$ is contained in $\{\zeta\in\mathbb{C}\colon |\zeta|\le \|a\|\}$, and since the function $\tilde{R}\mapsto\tilde{R}^2(\log\tilde{R})^p$ is increasing for $\tilde{R}>1$ (easily verified by calculus), we see that \eqref{eq.L1.loc.bound} holds true with 
\begin{equation} \label{eq.Lambda.p}
\Lambda_p(\tilde{R}) = 2^{-p-1}\Gamma(p+1) + \frac{1}{2}\tilde{R}^2(\log\tilde{R})^p.
\end{equation}
\end{proof}

Using either \eqref{eq.log.potential.integral.Brown} or Definition \ref{def.Brown.measure}, there are a few classes of operators whose log potentials and thence whose Brown measures can be calculated exactly.  For example, in 2000 Haagerup and Larsen gave a fairly explicit description of the Brown measure of any $\mathscr{R}$-diagonal operator.

\begin{theorem}[\cite{HaagerupLarsen2000}] \label{thm.Haag.Lars} Let $(\mathscr{A},\varphi)$ be a $W^\ast$-probability space, and let $a\in\mathscr{A}$ be $\mathscr{R}$-diagonal.  Assume that $a$ is injective (i.e.\ that $a^{-1}$ exists as a possibly unbounded operator), and that $a^\ast$ is not a constant multiple of the identity.  Then the Brown measure $\mu_a$ is rotationally invariant on $\mathbb{C}$, and its support is the annulus
\[ \mathrm{supp}\,\mu_a = \{z\in\mathbb{C}\colon \|a^{-1}\|_2^{-1}\le|z|\le\|a\|_2\} \]
where $\|a^{-1}\|_2^{-1}:=0$ if $a^{-1}\notin L^2(\mathscr{A},\varphi)$.  Moreover, $\mu_a$ is determined by
\[ S_{a^\ast a}(\mu_a(r\mathbb{D})-1) = \frac{1}{r^2}, \qquad r>0 \]
where $S_{a^\ast a}$ is the $S$-transform of the positive operator $a^\ast a$ (cf.\ Proposition \ref{prop.S-transform}) and $r\mathbb{D}$ is the disc of radius $r$ centered at $0$ in $\mathbb{D}$.  In particular: $\mu_a$ has a real analytic, rotationally invariant density on the interior of its support annulus or disk.
\end{theorem}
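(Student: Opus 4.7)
The plan is to prove Theorem~\ref{thm.Haag.Lars} in three stages: rotational invariance, the Haagerup--Larsen $S$-transform identity via Hermitization, and reading off support and regularity from that identity.

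\emph{Step 1 (Rotational invariance).} By Corollary~\ref{cor.R-diag.SVD} I may represent $a \equaldist uq$, where $u$ is Haar unitary freely independent from $q := \sqrt{a^{\ast}a}$; the second Haar unitary from the full SVD can be absorbed by Lemma~\ref{lem.free.Haar} without changing the $\ast$-distribution of $a$. For any $\theta \in \mathbb{R}$, $e^{i\theta}u$ is again Haar unitary free from $q$, so $e^{i\theta}a \equaldist a$. Since Brown measures depend only on the $\ast$-distribution, $\mu_a = \mu_{e^{i\theta}a}$; but the latter is the pushforward of $\mu_a$ under $\zeta\mapsto e^{i\theta}\zeta$, so $\mu_a$ is rotationally invariant and thus fully determined by its radial distribution $F(r) := \mu_a(r\mathbb{D})$.

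\emph{Step 2 (The $S$-transform identity).} Rotational invariance reduces computation of $\nu_{a-z}$ to $z=r\ge 0$. Expanding $(uq - r)^{\ast}(uq - r) = q^2 + r^2 - r(qu^{\ast} + uq)$, I pass to the Hermitization $H \in \mathscr{A}\otimes\mathrm{M}_2(\mathbb{C})$ with off-diagonal entries $a,a^{\ast}$ (equipped with the state $\varphi\otimes\tfrac{1}{2}\mathrm{Tr}$). The singular values of $a - r$ are exactly the eigenvalues of $H - rP$, where $P$ is the unitary swap. The alternating-cumulant structure of Haar unitaries (Example~\ref{ex.freeness}(\ref{ex.Haar.cumulants})) is precisely what is needed to reduce the attendant operator-valued free convolution on $\mathscr{A}\otimes\mathrm{M}_2(\mathbb{C})$ to a scalar subordination computation (Theorem~\ref{thm.subordination.general}), yielding the Haagerup--Larsen identity
\[ S_{a^{\ast}a}\bigl(F(r) - 1\bigr) \;=\; \frac{1}{r^2}, \qquad F(r) \in (0,1). \]
An equivalent route proceeds via the Fuglede--Kadison determinant $\Delta(a - z) = \exp(2\pi L_a(z))$, exploiting multiplicativity $\Delta(a - z) = \Delta(a)\,\Delta(1 - z a^{-1})$ on the invertible locus and the $\mathscr{R}$-diagonality of $a^{-1}$ to evaluate the second factor through the $S$-transform of $a^{\ast}a$.

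\emph{Step 3 (Support and density).} By Proposition~\ref{prop.S-transform}, $S_{a^{\ast}a}$ is strictly decreasing and real-analytic on $(-1,0)$, with $S_{a^{\ast}a}(0) = 1/\|a\|_2^2$ and $S_{a^{\ast}a}(-1) = 1/\|a^{-1}\|_2^2$ (read as $+\infty$ if $a^{-1} \notin L^2$). Hence the identity of Step~2 admits a unique solution $F(r) \in (0,1)$ precisely on the annulus $\|a^{-1}\|_2^{-1} < r < \|a\|_2$; injectivity of $a$ forces $F(r) = 0$ for $r \le \|a^{-1}\|_2^{-1}$ and $F(r) = 1$ for $r \ge \|a\|_2$, identifying the support. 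Real analyticity of $F$ on the open annulus then follows from the implicit function theorem applied to the $S$-transform identity, giving the rotationally invariant, real-analytic density of $\mu_a$.

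\textbf{Main obstacle.} The crux is Step~2: freeness of $u$ from $q$ does not descend to scalar freeness of $|a-r|$ from $|a|$, so collapsing the operator-valued convolution on $\mathscr{A}\otimes\mathrm{M}_2(\mathbb{C})$ to a scalar $S$-transform equation requires a careful subordination analysis (or, equivalently, a delicate Fuglede--Kadison determinant computation), both crucially powered by the special alternating structure of Haar unitary free cumulants.
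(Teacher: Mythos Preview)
The paper does not prove Theorem~\ref{thm.Haag.Lars}; it is stated as a background result and attributed to \cite{HaagerupLarsen2000} without argument. There is therefore no proof in the paper to compare your proposal against.

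As for your sketch on its own merits: Step~1 is correct and standard, and Step~3 is essentially the right way to read off support and regularity once the $S$-transform identity is in hand (modulo verifying the boundary values $S_{a^\ast a}(0)=\|a\|_2^{-2}$ and $\lim_{t\to-1^+}S_{a^\ast a}(t)=\|a^{-1}\|_2^{2}$, and noting that strict monotonicity of $S_{a^\ast a}$ uses the non-degeneracy hypothesis). The real content of the theorem is Step~2, and there your proposal is not a proof but a menu of strategies. Saying that the alternating-cumulant structure of Haar unitaries ``is precisely what is needed to reduce the attendant operator-valued free convolution \ldots\ to a scalar subordination computation, yielding the Haagerup--Larsen identity'' is an assertion, not an argument; likewise ``an equivalent route proceeds via the Fuglede--Kadison determinant'' names the original Haagerup--Larsen approach without carrying it out. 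Either route genuinely works, but each requires several pages of computation (in the determinant approach, a careful evaluation of $\Delta(uq-r)$ via the moments of $q$ and the $S$-transform machinery; in the Hermitization approach, an explicit solution of the $2\times 2$ operator-valued fixed-point equation). Your ``Main obstacle'' paragraph correctly identifies where the work lies, but the proposal does not do that work.
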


Using very different methods (based on stochastic and partial differential equations), in 2019 three of the authors of the present paper computed the Brown measure of the free multiplicative Brownian motion (Definition \ref{def.free.mult.BM}); the following year, another present author collaborated on an extension to allow arbitrary unitary initial conditions.

\begin{theorem}[\cite{DHKBrown,HoZhong2020Brown}] \label{thm.fmbm} In  a $W^\ast$-probability space $(\mathscr{A},\varphi)$, let $(b(t))_{t\ge0}$ be the free multiplicative Brownian motion \eqref{eq.free.mult.BM}, and let $u_0\in\mathscr{A}$ be a unitary operator freely independent from $(b(t))_{t\ge0}$.  Denote by $\mu_{u_0}$ the spectral measure of $u_0$, which is a probability distribution supported in the unit circle $\mathbb{T}$.

For any $t>0$, the Brown measure $\mu_{u_0b(t)}$ of the free multiplicative Brownian motion $u_0b(t)$ with starting distribution $u_0$ is supported  on the closure of the region
\[ \Sigma_\infty(u_0,t) = \{z\in\mathbb{C}\colon T_\infty(u_0,z)<t\} \]
where
\begin{equation} \label{eq.lifetime.k=infty} T_\infty(u_0,z) = \frac{\log(|z|^2)}{|z|^2-1}\left(\int_{\mathbb{T}} \frac{\mu_{u_0}(d\xi)}{|\xi-z|^2}\right)^{-1}. \end{equation}

There is a function $r_{u_0}\colon\mathbb{R}_+\times(-\pi,\pi]\to [1,\infty)$ such that
\begin{equation} \label{eq.Sigma.infty.geom} \Sigma_\infty(u_0,t) = \{re^{i\theta}\colon r_{u_0}(t,\theta)^{-1}<r<r_{u_0}(t,\theta)\}. \end{equation}

The Brown measure $\mu_{u_0b(t)}$ is absolutely continuous with respect to Lebesgue measure on $\Sigma_\infty(u_0,t)$, with density
\begin{equation} \label{eq.bt.density}
\rho_\infty(t,re^{i\theta}) = \frac{1}{4\pi r^2}\left(\frac{2}{t}+\frac{\partial}{\partial\theta}\int_{-\pi}^\pi\frac{2r_{u_0}(t,\theta)\sin(\theta-\phi)}{r_{u_0}(t,\theta)^2+1-2r_{u_0}(t,\theta)\cos(\theta-\phi)}\nu_0(d\phi)\right)
\end{equation}
where $\nu_0$ is the law of any $\vartheta_0$ for which $e^{i\vartheta_0}=u_0$.  This density of the form $\rho_\infty(t,re^{i\theta}) = w_t(\theta)/r^2$, with the radial function $w_t$ is bounded $w_t(\theta)<\frac{1}{\pi t}$.
\end{theorem}

\subsubsection{The Circular Law and the Single Ring Theorem\label{sect.SRT}} \hfill

\medskip

If $W=W^N$ is a Ginibre Ensemble (cf.\ Example \ref{def.classic.ensembles}), Proposition \ref{prop.asymp.free.Haar.GUE} shows that $W^N$ converges in $\ast$-distribution to Voiculescu's circular element $c=\frac{1}{2}(x+iy)$ where $x,y$ are freely independent.  The Brown measure of $c$ can be computed directly: it is the uniform probability measure on the unit disk $\mathbb{D}$ in $\mathbb{C}$.  In \cite{Ginibre1965} Ginibre proved by direct computation with Gaussian processes that the $\mathrm{ESD}$ of $W^N$ converges to this law; this is why the ensemble carries his name.  Now, if $A^N\in \MNC$ be a random matrix with all independent $L^{\infty-}$ entries, centered and with variance $1/N$, then direct comparison of moments shows that $A^N$ also converges in $\ast$-distribution to a circular element.  Girko \cite{Girko} conjectured that in general (even if the entries of $A^N$ are $L^2$ and no more) the $\mathrm{ESD}$ of $A^N$ should converge to the uniform probability measure on $\mathbb{D}$.  He invented the Hermitization method described above in order to prove this, although he still left this so-called {\bf circular law} as a conjecture as he did not establish the requisite uniform integrability of the logarithm.  Over the subsequence decades, incremental progress on this conjecture was made, first by Bai under higher moment assumptions.  The final theorem is as follows.

\begin{theorem}[Girko, Bai, Tao-Vu] \label{thm.circular.law} Let $A^N$ be a random $N\times N$ matrix whose entries are all independent $L^2$ random variables, centered and with variance $1/N$.  Then the $\mathrm{ESD}$ converges weakly a.s.\ to the uniform probability measure on $\mathbb{D}$.
\end{theorem}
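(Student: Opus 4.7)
The plan is to implement Girko's Hermitization, as developed in the excerpt: by Proposition \ref{prop.Brown}\ref{prop.Brown.robust}, it suffices to establish three things: (a) $A^N$ converges in $\ast$-distribution to Voiculescu's circular element $c$; (b) the log potential satisfies $L_{A^N}(z) \to L_c(z)$ for Lebesgue-a.e.\ $z \in \mathbb{C}$; and (c) the Brown measure $\mu_c$ is the uniform law on the closed unit disc. Item (c) follows immediately from the Haagerup--Larsen Theorem \ref{thm.Haag.Lars}: the circular element $c$ is $\mathscr{R}$-diagonal with $\|c\|_2 = 1$, and $c^{-1} \notin L^2$ (since $c^\ast c$ is free Poisson of rate $1$, whose density behaves like $x^{-1/2}$ near $0$); together with the explicit $S_{c^\ast c}(z) = 1/(1+z)$, the Haagerup--Larsen formula $S_{c^\ast c}(\mu_c(r\mathbb{D})-1) = r^{-2}$ forces $\mu_c(r\mathbb{D}) = r^2$, i.e.\ $\mu_c = \tfrac{1}{\pi}\mathbf{1}_{|z|\le 1}\, d^2z$.

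For (a), I would first truncate and recenter: set $\tilde A^N_{ij} = A^N_{ij}\mathbf{1}_{|A^N_{ij}|\le \delta_N\sqrt{N}}$ minus its mean, renormalized, with $\delta_N \to 0$ slowly enough that the Hilbert--Schmidt error $\|A^N - \tilde A^N\|_{\mathrm{HS}}/\sqrt{N} \to 0$ in probability. The Hoffman--Wielandt inequality applied to the Hermitizations transfers this to an integrated $\mathcal{W}_1$-closeness of singular value distributions, and an adaptation of Lemma \ref{lem.perturb.*-dist} carries it through to the ESD. The truncated matrix has entries with all moments, and the standard moment-method computation identifies its limit $\ast$-distribution as circular. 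For the first half of (b) --- the weak convergence $\nu_{A^N - zI} \rightharpoonup \nu_{c-z}$ at a fixed $z$ --- I would use the Stieltjes transform of the Hermitization
\[ H^N(z,\eta) = \begin{bmatrix} \eta I & A^N - zI \\ (A^N - zI)^\ast & \eta I \end{bmatrix}, \]
whose normalized trace of resolvent satisfies a $2 \times 2$ self-consistent Dyson equation with unique deterministic solution equal to the Cauchy transform of $\nu_{c-z}$; Proposition \ref{prop.Cauchy.robust} then delivers weak convergence.

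The main obstacle, and the part that historically required the most effort, is the uniform integrability of $\log x$ against $\nu_{A^N - zI}$ needed to upgrade weak convergence to pointwise a.e.\ convergence of log potentials. The right tail is mild: after truncation, $\|\tilde A^N\| = O(1)$ a.s., and $\log x \le x$ handles $x \ge 1$. The hard part is preventing mass-escape to $0$, which demands a quantitative least-singular-value bound of the form
\[ \mathbb{P}\left\{\sigma_{\min}(A^N - zI) \le N^{-B}\right\} \le N^{-c_0}, \qquad B, c_0 > 0, \]
together with a Wegner-type estimate $\sigma_{N-j}(A^N - zI) \gtrsim j/N$ for intermediate $j$ on an event of high probability. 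In the sub-Gaussian regime this comes from Rudelson--Vershynin arguments of the flavor of Theorem \ref{thm.RV}; under only the $L^2$ hypothesis one must invoke the inverse Littlewood--Offord machinery of Tao--Vu, ruling out additive structure in approximate null vectors via a union bound over an $\varepsilon$-net of the sphere. Once these least-singular-value estimates are combined with the a.e.\ weak convergence from (b), a dominated convergence argument produces $L_{A^N}(z) \to L_c(z)$ a.s.\ for a.e.\ $z$, and Proposition \ref{prop.Brown}\ref{prop.Brown.robust} closes the proof.
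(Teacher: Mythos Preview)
The paper does not prove this theorem. It is stated without proof as a known result from the literature, attributed to Girko, Bai, and Tao--Vu, and serves purely as background and motivation for the paper's own results on matrix random walks. There is therefore no ``paper's proof'' to compare your proposal against.

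That said, your outline is a faithful summary of the actual strategy used in the literature: Hermitization reduces the problem to convergence of log potentials; the weak convergence of $\nu_{A^N-zI}$ follows from resolvent/Stieltjes methods; and the crux is uniform integrability of $\log$ near zero, which is handled by least-singular-value bounds. You correctly identify that under only an $L^2$ moment assumption the Rudelson--Vershynin sub-Gaussian technology is insufficient and one needs the Tao--Vu inverse Littlewood--Offord machinery. As a proof \emph{plan} this is sound, but each of the steps you gesture at---particularly the anticoncentration argument and the intermediate singular value Wegner estimate---is itself a substantial result that would need to be either fully cited or reproduced for a complete proof. What you have written is a correct roadmap, not a proof.
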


The Circular Law was the first prominent example of a non-normal random matrix ensemble whose eigenvalue distribution was shown to converge they way it should: to the Brown measure of the $\ast$-distributional limit.  In this case, computing the Brown measure of the limit is fairly straightforward; the methodology to prove uniform integrability for the Hermitization method to work took decades to develop, and still stands as the basic roadmap for proving $\mathrm{ESD}$ convergence in any non-normal ensemble.

By Proposition \ref{prop.bi-inv}, if $A^N = U^N T^N V^{N\ast}$ is a bi-invariant ensemble, and if the singular value measure $\nu_{T^N}$ converges a.s.\ to $\nu$, then $A^N$ converges in $\ast$-distribution to an $\mathscr{R}$-diagonal $a$ for which $\sqrt{a^\ast a}$ has distribution $\nu$.  The question remains whether the eigenvalue distribution of $A^N$ also converges to the Brown measure $\mu_a$ which is identifies in Theorem \ref{thm.Haag.Lars}.  Without further assumptions on $T^N$, this can certainly fail to be true; but in \cite{GuionnetKZ-single-ring} (and follow-up improvements in \cite{RudelsonVershynin2014}), the answer was shown to be affirmative under mild conditions on how $\mu_{T^N}$ converges to $\nu$.

\begin{theorem}[Single Ring Theorem, \cite{GuionnetKZ-single-ring,RudelsonVershynin2014}] \label{thm.SRT} Let $A^N = U^N T^N V^{N\ast}$ be a bi-invariant ensemble, and suppose $\mu_{T^N}$ converges weakly in probability to a distribution $\nu$ that is compactly-supported in $[0,\infty)$.  Assume further:
\begin{enumerate}
    \item[(SRT1)] \label{SRT.1} There exists $M>0$ so that $\mathbb{P}\{\|T^N\|>M\}\to 0$ as $N\to\infty$.
    \item[(SRT2)] \label{SRT.2} There are $\gamma,\delta>0$ so that for all large $N$
    \[ |\mathrm{Im}\, G_{\mu_{T^N}}(z)|\le \gamma \quad  \forall\,z\in\mathbb{C}^+\;\text{ with }\;\mathrm{Im}(z)>N^{-\delta}. \]
\end{enumerate}
Then the $\mathrm{ESD}$ of $A^N$ converges weakly in probability to the Brown measure $\mu_a$.
\end{theorem}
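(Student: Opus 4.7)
The plan is to use Girko's Hermitization (as laid out around \eqref{eq.Hermitize.1}--\eqref{eq.Hermitize.3}) together with Proposition \ref{prop.Brown}\ref{prop.Brown.robust}. Since (SRT1) controls $\|T^N\|$, hence $\|A^N\|$, the sequence $(\mu_{A^N})$ is tight, so it suffices to show that the log potentials $L_{A^N}(z) \to L_a(z)$ for Lebesgue-a.e.\ $z \in \mathbb{C}$, with enough uniform integrability to pass the Laplacian through the limit. By Proposition \ref{prop.bi-inv}, $A^N$ converges in $\ast$-distribution (in probability) to the $\mathscr{R}$-diagonal operator $a$, so for each fixed $z$ the singular value distribution $\nu_{A^N - zI}$ converges weakly in probability to $\nu_{a - z}$. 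The convergence of $L_{A^N}(z) = \frac{1}{2\pi}\int_0^\infty \log x \, \nu_{A^N-zI}(dx)$ to $L_a(z)$ therefore reduces to controlling, uniformly in $N$, the contribution from very large and very small singular values of $A^N - zI$, for a.e.\ $z$.

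The large-$x$ tail is immediate from (SRT1): on the event $\{\|T^N\|\le M\}$ one has $\sigma_{\max}(A^N-zI)\le M+|z|$, so $\log x$ is bounded above on the support of $\nu_{A^N-zI}$ by a deterministic function of $z$. For the small-$x$ tail, the key observation is that bi-invariance lets us convert shifts into Haar-unitary perturbations: since $A^N = U^N T^N V^{N\ast}$, the singular values of $A^N - zI$ coincide with those of $T^N - zV^{N\ast}U^N$, and $W^N := V^{N\ast}U^N$ is a Haar unitary independent of $T^N$ (by independence of $U^N,V^N,T^N$ and Haar-invariance). For $z\ne 0$, factoring out $z$ and applying the Rudelson--Vershynin estimate (Theorem \ref{thm.RV}) to $-z^{-1}T^N + W^N$ yields, for all $\epsilon>0$,
\[
\mathbb{P}\{\sigma_{\min}(A^N - zI)\le \epsilon\}\;\le\; |z|^{-\alpha_0}\epsilon^{\alpha_0} N^{\beta_0}.
\]
In particular, with probability tending to $1$, $\sigma_{\min}(A^N-zI)\ge N^{-\gamma_0}$ for any fixed $\gamma_0 > \beta_0/\alpha_0$, which cuts off the log integral at a polynomial lower bound.

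The main obstacle is the intermediate regime $x \in [N^{-\gamma_0}, \eta]$ with $\eta$ small but fixed: weak convergence of $\nu_{A^N-zI}$ is not enough to control $\int_{N^{-\gamma_0}}^\eta |\log x|\,\nu_{A^N-zI}(dx)$ uniformly in $N$, because a cluster of order $N^{-c}$ singular values near zero would spoil the integral. This is exactly where (SRT2) is used: the non-degeneracy $|\mathrm{Im}\,G_{\mu_{T^N}}(z)|\le \gamma$ for $\mathrm{Im}\,z > N^{-\delta}$ propagates, via the matrix subordination relation between the Stieltjes transform of $\nu_{A^N - zI}^2$ and that of $\nu_{T^N}^2$ (a consequence of bi-invariance and the asymptotic freeness from Proposition \ref{prop.asymp.free.Haar.GUE}), to a Wegner-type estimate on the counting function of $\nu_{A^N-zI}$ near the origin. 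Concretely, this gives $\nu_{A^N-zI}([0,\epsilon])\lesssim \epsilon + N^{-\delta'}$ uniformly on a.e.\ $z$ away from a Lebesgue-null set, which makes the intermediate log-tail vanish as $\eta\downarrow 0$.

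Combining the three regimes yields $L_{A^N}(z)\to L_a(z)$ in probability for a.e.\ $z\in\mathbb{C}$. Corollary \ref{cor.La.unif.L1.loc} provides a uniform $L^1_{\mathrm{loc}}$ bound $\int_{|z|\le R}|L_{A^N}(z)|\,d^2z \le \Lambda_1(R+M)$ on the event $\{\|A^N\|\le M\}$, so dominated convergence lets us integrate against $\nabla^2 f$ for $f\in C_c^\infty(\mathbb{C})$ and conclude $\mu_{A^N}\rightharpoonup\nabla^2 L_a = \mu_a$ weakly in probability, as required. The single genuine technical hurdle is the propagation of (SRT2) from $T^N$ to $A^N - zI$ — i.e.\ establishing the Wegner estimate — which handles the intermediate band of small singular values that Rudelson--Vershynin alone cannot reach.
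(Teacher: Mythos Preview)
The paper does not prove Theorem~\ref{thm.SRT}; it is quoted as a background result from \cite{GuionnetKZ-single-ring,RudelsonVershynin2014}, with only a one-sentence gloss after the statement explaining that (SRT1) and (SRT2) together ``provide enough regularity to prove that the shifted singular value measures $\nu_{A^N-zI}$ are uniformly integrable against the logarithm, completing the Hermitization method.'' So there is no paper proof to compare against.

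Your outline is a faithful high-level summary of how the cited works actually proceed: Hermitization reduces the problem to a.e.\ convergence of log potentials; (SRT1) handles the upper tail; the bi-invariance trick $A^N-zI \sim T^N - zW^N$ with $W^N$ Haar, combined with Theorem~\ref{thm.RV}, controls the smallest singular value; and (SRT2) is used as a Wegner-type input for the intermediate band. Two remarks. First, you correctly flag the propagation of (SRT2) from $\mu_{T^N}$ to $\nu_{A^N-zI}$ as ``the single genuine technical hurdle,'' but your sketch of it (``via the matrix subordination relation \ldots\ a consequence of bi-invariance and asymptotic freeness'') is where essentially all the work in \cite{GuionnetKZ-single-ring} lives, and what you wrote is not a proof---it is a pointer to one. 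In particular, asymptotic freeness alone is too soft: one needs a quantitative comparison, at scale $\mathrm{Im}\,z\sim N^{-\delta'}$, between the Stieltjes transform of the empirical singular-value law of $T^N - zW^N$ and its deterministic (free-probability) equivalent, with the bound on $G_{\mu_{T^N}}$ entering through the subordination equations. Second, the final step---passing from a.e.\ pointwise convergence in probability of $L_{A^N}$ to weak convergence of the Laplacians---requires slightly more than the $L^1_{\mathrm{loc}}$ bound you cite: one needs uniform integrability of $L_{A^N}$ over $\Omega\times K$ (as the paper does carefully in Lemma~\ref{prop.L1loc} and Corollary~\ref{cor.L1loc} for its own setting), not just a pointwise-in-$\omega$ bound. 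Your invocation of ``dominated convergence'' glosses over this.
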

\noindent Condition \ref{SRT.2} is an anti-concentration restriction on the singular values of $A^N$: it says, in strong quantitative terms, that the singular values to not concentrate on short intervals.  Together with \ref{SRT.1} which prevents singular values from escaping to $\infty$, provides enough regularity to prove that the {\em shifted} singular value measures $\nu_{A^N-zI}$ are uniformly integrable against the logarithm, completing the Hermitization method for these ensembles.

There have been a number of variations on the Single Ring Theorem and the Circular Law where convergence to the Brown measure has been established using methods similar to those in the original theorems; for example \cite{BGR2016,BenaychGeorges2017,BGR2017,BaoES2019singlering,HoZhong2025}.  And there have been a number of cases of explicit computation of Brown measures of operators that are not $\mathscr{R}$-diagonal; for example \cite{HaagerupLarsen2000,BianeLehner2001,HaagerupSchultz2007,DHKBrown,HoZhong2020Brown}.  Both approaches are still considered very challenging.

To summarize: in studying the limits of eigenvalues of non-normal random matrices, the techniques fall into two separate yet equally important groups: the computation of the Brown measure of the $\ast$-distribution limit, and the establishment of uniform integrability of the log potential, via decay rates of small shifted singular values.  These are their stories.

\section{Random Walk Approximations to Brownian Motion}

\subsection{Proof of Theorem \ref{main.thm.1.MoBettaWZ}}\label{sec.main.thm.1.MoBettaWZ}

Key to the proof of Theorem \ref{main.thm.1.MoBettaWZ} are bounds on the $L^p$ norms of matrix-valued stochastic integrals coming from the following Hilbert space--valued Burkholder--Davis--Gundy (BDG) inequalities.

\begin{theorem}[BDG inequalities, {\cite[Thm.\ 1.1]{MR2016}}]\label{thm.BDGinH}
There exist increasing families $(\alpha_p)_{1 \leq p < \infty}$ and $(\gamma_p)_{1 \leq p < \infty}$ of strictly positive constants such that the following holds:
If $H$ is a separable Hilbert space, $M$ is an $H$-valued RCLL local martingale such that $M_0 = 0$, $1 < p < \infty$, and $t \geq 0$, then
\[
\alpha_p^{-1}\norm{\int_0^t \ip{\ds M(s), \ds M(s)}}_{L^{\frac{p}{2}}}^{\frac12} \leq \norm{\norm{M}_t^*}_{L^p} \leq \gamma_p\norm{\int_0^t \ip{\ds M(s), \ds M(s)}}_{L^{\frac{p}{2}}}^{\frac12},
\]
where
\[
\norm{M}_t^* \coloneqq \sup_{0 \leq s \leq t}\norm{M(s)}.
\]
To be clear, $\norm{\cdot}$ is the norm induced by $\ip{\cdot,\cdot}$ of $H$, $\norm{\cdot}_{L^q} = \E[|\cdot|^q]^{1/q}$, and
\[
\int_0^t \ip{\ds M(s),\ds M(s)} = L^0\text{-}\lim_{|\Pi| \to 0} \sum_{s \in \Pi} \ip{\Delta_sM,\Delta_sM}
\]
is the scalar quadratic variation of $M$.
(Above, $L^0$\text{-}$\lim$ denotes a limit in probability, and the limit is over partitions $\Pi$ of $[0,t]$.)
\end{theorem}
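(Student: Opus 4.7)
The plan is to reduce the Hilbert-space BDG to the classical scalar BDG inequalities (for real-valued RCLL local martingales) via an It\^o squaring trick, and then upgrade the regime $p \geq 2$ to the full range $1 < p < \infty$ by a good-$\lambda$ argument. First I would apply the semimartingale It\^o formula to the $C^2$ function $x \mapsto \norm{x}^2$ on $H$ composed with $M$. Computing the two Fr\'echet derivatives and collecting the continuous and jump correction terms produces the identity
\[
\norm{M(t)}^2 = 2 N(t) + [M]_t, \qquad N(t) := \int_0^t \ip{M(s-), \ds M(s)}, \qquad [M]_t := \int_0^t \ip{\ds M(s), \ds M(s)},
\]
where $N$ is a scalar RCLL local martingale whose quadratic variation, by Cauchy--Schwarz applied to Riemann-sum approximations of the integral, satisfies the pointwise bound $[N]_t \leq (\norm{M}_t^*)^2 \, [M]_t$.

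Next, for $p \geq 2$ I would use the pointwise inequality $(\norm{M}_t^*)^2 \leq 2 |N|_t^* + [M]_t$, take $L^{p/2}$-norms, and apply the scalar BDG upper bound to $N$. Combined with the pointwise quadratic-variation estimate above and H\"older's inequality with conjugate exponents $(p,p)$ on the product $\norm{M}_t^* \cdot [M]_t^{1/2}$, this yields
\[
\norm{\norm{M}_t^*}_{L^p}^2 \leq 2 C_{p/2}\, \norm{\norm{M}_t^*}_{L^p} \cdot \norm{[M]_t}_{L^{p/2}}^{1/2} + \norm{[M]_t}_{L^{p/2}}.
\]
Setting $X := \norm{\norm{M}_t^*}_{L^p}$ and $Y := \norm{[M]_t}_{L^{p/2}}^{1/2}$, this reads $X^2 \leq 2 C_{p/2} X Y + Y^2$, whose solution gives $X \leq \gamma_p Y$ and hence the upper BDG bound. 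The matching lower bound in the same regime follows symmetrically from the dual pointwise inequality $[M]_t \leq (\norm{M}_t^*)^2 + 2 |N|_t^*$ by the same squaring manipulation, applied this time to the scalar BDG lower bound for $N$.

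The main obstacle, and the technical heart of the argument, is the range $1 < p < 2$, where $L^{p/2}$ is no longer a Banach space and the quadratic squaring step collapses. Here I would prove a Burkholder--Gundy good-$\lambda$ inequality of the form
\[
\mathbb{P}\bigl\{\norm{M}_t^* > \beta \lambda,\ [M]_t^{1/2} \leq \delta \lambda\bigr\} \leq \epsilon(\beta,\delta)\, \mathbb{P}\{\norm{M}_t^* > \lambda\},
\]
with $\epsilon(\beta,\delta) \to 0$ as $\delta \to 0$ for fixed $\beta > 1$, and an analogous inequality with the roles of $\norm{M}_t^*$ and $[M]_t^{1/2}$ swapped. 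These are obtained by stopping $M$ at the first time its norm crosses level $\lambda$ and applying the already-proved $p = 2$ estimate to the stopped increments via Chebyshev, with extra bookkeeping for jump sizes that is needed in the RCLL setting but does not fundamentally change the argument. Integrating the good-$\lambda$ inequalities against $p \lambda^{p-1}\, d\lambda$ and absorbing the leading tail term for sufficiently small $\delta$ produces both the upper and lower BDG inequalities for every $1 < p < \infty$; monotonicity of $(\alpha_p),(\gamma_p)$ in $p$ can then be enforced by enlarging constants if necessary.
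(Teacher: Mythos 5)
The paper does not prove this statement: Theorem \ref{thm.BDGinH} is imported verbatim from the cited reference \cite[Thm.~1.1]{MR2016} and is used as a black box in the proof of Theorem \ref{main.thm.1.MoBettaWZ}. So there is nothing in the paper to compare against; what you have written is a self-contained sketch of the standard proof of Hilbert-space BDG, and it is essentially correct. The skeleton is the classical one: the identity $\|M(t)\|^2 = 2N(t) + [M]_t$ with $N(t)=\int_0^t\langle M(s-),\ds M(s)\rangle$ is the semimartingale integration-by-parts formula in $H$ (obtainable coordinatewise in an orthonormal basis), the Kunita--Watanabe bound $[N]_t \le (\|M\|_t^*)^2[M]_t$ is right, and the quadratic inequality $X^2 \le 2C_{p/2}XY + Y^2$ together with its dual closes the case $p\ge 2$; the good-$\lambda$ machinery with the jump control $\sup_s\|\Delta M_s\| \le [M]_t^{1/2}$ on the good event handles $1<p<2$ in the RCLL setting.

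Two small corrections. First, in the lower-bound direction for $p\ge 2$ you say you apply ``the scalar BDG lower bound for $N$''; in fact both directions use the scalar BDG \emph{upper} bound on $N$, since in each case the term $\||N|_t^*\|_{L^{p/2}}$ appears on the majorizing side and must be dominated by $C_{p/2}\|[N]_t^{1/2}\|_{L^{p/2}}$ before Cauchy--Schwarz converts it into $C_{p/2}XY$. The structure you describe still goes through; only the label is wrong. Second, the step from $X^2 \le 2C_{p/2}XY+Y^2$ to $X \le \gamma_p Y$ is vacuous if $X=\infty$, so you need to first run the argument for $M$ stopped at times $\tau_n$ making $\|M\|^*$ bounded (obtaining constants independent of $n$) and then pass to the limit by monotone convergence; this localization should be stated, though it is routine.
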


To harvest the fruits of these inequalities in our case, and to do some other important calculations, we compute some matrix-valued quadratic covariations.
For $i=1,2$, let $A_i = (A_i(t))_{t \geq 0}$ and $C_i = (C_i(t))_{t \geq 0}$ be adapted continuous $M_{N \times N}(\C)$-valued processes, $\e_i \in \{1,\ast\}$, and $M_i \coloneqq \into A_i(t)\,\ds W^{\e_i}(t)\,C_i(t)$.
Then
\begin{align*}
    \ds M_1(t)\,\ds M_2(t) & = A_1(t)\left(\sum_{\xi \in \beta} \xi^{\e_1}\,\ds W_\xi(t)\right)C_1(t)A_2(t)\left(\sum_{\eta \in \beta} \eta^{\e_2}\,\ds W_\eta(t)\right)C_2(t) \\
    & = \sum_{\xi,\eta \in \beta}A_1(t)\xi^{\e_1} C_1(t)A_2(t)\eta^{\e_2} C_2(t) \,\underbrace{\ds W_\xi(t)\,\ds W_\eta(t)}_{\delta_{\xi\eta}\,\ds t} \\
    & = \sum_{\xi \in \beta}A_1(t)\xi^{\e_1} C_1(t)A_2(t)\xi^{\e_2} C_2(t) \,\ds t \numberthis\label{eq.mat.QC}
\end{align*}
since $\{W_\xi\}_{\xi \in \beta}$ is a collection of independent standard Brownian motions on $\R$. 
Combining this calculation with Theorem \ref{thm.BDGinH}, we obtain the following bounds.

\begin{corollary}\label{cor.matstochintbound}
Suppose $A$ and $C$ are adapted continuous $\MNC$-valued processes.
If $1 < p < \infty$ and $t \geq 0$, then
\begin{align*}
    & \alpha_p^{-1}\norm{\int_0^t \tr\left(\sum_{\xi \in \beta} C^*(s)\xi^* A^*(s)A(s)\xi C(s)\right) \ds s}_{L^{\frac{p}{2}}}^{\frac12} \\
    & \leq \norm{\left|\into A(s)\,\ds W(s)\,C(s)\right|_t^*}_{L^p} \\
    & \leq \gamma_p\norm{\int_0^t\tr\left(\sum_{\xi \in \beta} C^*(s)\xi^* A^*(s)A(s)\xi C(s)\right)\ds s}_{L^{\frac{p}{2}}}^{\frac12},
\end{align*}
where $|\cdot| = \norm{\cdot}_{L^2(\tr)}$.
\end{corollary}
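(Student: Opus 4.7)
The plan is to apply the Hilbert space--valued BDG inequality (Theorem \ref{thm.BDGinH}) directly to the $\MNC$-valued martingale
\[
M(t) \coloneqq \int_0^t A(s)\,\ds W(s)\,C(s),
\]
viewing $\MNC$ as a (finite-dimensional) real Hilbert space under the normalized Hilbert--Schmidt inner product $\langle X,Y\rangle \coloneqq \tr(Y^*X)$, whose induced norm is exactly $|\cdot|$. First I would note that under the stated adaptedness and continuity hypotheses on $A$ and $C$, the process $M$ is a continuous $\MNC$-valued local martingale with $M(0) = 0$ (the stochastic integral against the matrix Brownian motion $W$ may be defined componentwise in any orthonormal basis of $\MNC$, and the resulting $\MNC$-valued process coincides with the $H$-valued stochastic integral in the sense of Theorem \ref{thm.BDGinH}). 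Hence the theorem is directly applicable, and yields
\[
\alpha_p^{-1}\norm{\int_0^t \langle \ds M(s),\ds M(s)\rangle}_{L^{p/2}}^{1/2} \leq \norm{|M|_t^*}_{L^p} \leq \gamma_p\norm{\int_0^t \langle \ds M(s),\ds M(s)\rangle}_{L^{p/2}}^{1/2}.
\]

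The remaining task is to identify the scalar quadratic variation $\int_0^t \langle \ds M(s),\ds M(s)\rangle = \int_0^t \tr\bigl((\ds M(s))^*\,\ds M(s)\bigr)$ as the integrand appearing in the statement. For this I would apply the matrix quadratic covariation formula \eqref{eq.mat.QC} to the pair $M_1 = M^*$ and $M_2 = M$. Writing $M^*(t) = \int_0^t C(s)^*\,\ds W^*(s)\,A(s)^*$, the choices $A_1 = C^*, C_1 = A^*, \e_1 = *$ and $A_2 = A, C_2 = C, \e_2 = 1$ give
\[
\ds M^*(s)\,\ds M(s) = \biggl(\sum_{\xi \in \beta} C^*(s)\xi^*A^*(s)A(s)\xi C(s)\biggr)\ds s.
\]
Applying the normalized trace $\tr$ to both sides and integrating from $0$ to $t$ produces exactly the expression displayed in the two bounds of the corollary.

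Combining the two steps completes the proof. I do not anticipate any genuine obstacles: the only subtlety is bookkeeping between the $\MNC$-valued stochastic integral and its realization as a stochastic integral into the Hilbert space $(\MNC,\langle \cdot,\cdot\rangle)$, which is routine in finite dimension. All of the real content was already packaged into the identity \eqref{eq.mat.QC} and into the BDG estimate of Theorem \ref{thm.BDGinH}; the corollary is their immediate combination.
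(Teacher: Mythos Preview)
Your proposal is correct and follows essentially the same approach as the paper: define $M(t)=\int_0^t A(s)\,\ds W(s)\,C(s)$, identify its scalar quadratic variation via \eqref{eq.mat.QC} applied with $M_1=M^*$ and $M_2=M$, and then invoke Theorem~\ref{thm.BDGinH}. The only difference is the order in which you present the two steps, which is immaterial.
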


\begin{proof}
Note that if $M \coloneqq \into A(t)\,\ds W(t)\,C(t)$, then $M$ is an $M_{N \times N}(\C)$-valued local martingale starting at zero.
Furthermore, $M^* = \into C^*(t)\,\ds W(t)\,A^*(t)$.
Consequently, by \eqref{eq.mat.QC} with $M_1 = M^*$ and $M_2 = M$,
\begin{align*}
    \int_0^t \ip{\ds M(s), \ds M(s)} & = \int_0^t \tr\left(\ds M^*(s)\, \ds M(s)\right) \\
    & = \int_0^t \tr\left(\sum_{\xi \in \beta}C^*(s)\xi^* A^*(s)A(s)\xi C(s)\right) \ds s.
\end{align*}
The desired result then follows immediately from Theorem \ref{thm.BDGinH}.
\end{proof}

\begin{proposition}\label{prop.Binvertible+estimates}
The following hold.
\begin{enumerate}[font=\normalfont,label=(\roman*)]
    \item $B$ takes values in $\GL(N,\C)$ (almost surely).
    Specifically, if $H$ is the unique solution to the $M_{N \times N}(\C)$-valued SDE
    \[
    \ds H(t) = -\ds W(t)\,H(t) + \frac12 \Xi\,H(t)\,\ds t, \qquad H(0) = I,
    \]
    then $H = B^{-1}$ almost surely.\label{item.inv(B)SDE}
    \item If $\norm{\cdot}_{\infty} = \norm{\cdot}_{\mathrm{op}}$ is the operator norm on $M_{N \times N}(\C)$ and
    \[
    c(\beta) \coloneqq \max\left\{\norm{\sum_{\xi \in \beta} \xi^*\xi}_{\infty},\norm{\sum_{\xi \in \beta} \xi\xi^*}_{\infty}\right\},
    \]
    then
    \[
    \norm{\left|B^{\e}\right|_t^*}_{L^p} \leq \sqrt3e^{\frac34\left(2\gamma_p^2c(\beta) + \norm{\Xi}_{\infty}^2t\right) t}, \qquad t \geq 0, \; \e =\pm 1,
    \]
    where $\gamma_p$ and $|\cdot|_t^*$ are as in Theorem \ref{thm.BDGinH}.\label{item.B,inv(B)bounds}
    \item In the notation of the previous item,\label{item.BPibound}
    \[
    \norm{\left|B_{\Pi}\right|_t^*}_{L^p} \leq \sqrt3e^{\frac34\left(2\gamma_p^2c(\beta) + \norm{\Xi}_{\infty}^2t\right) t}, \qquad 0 \leq t \leq T.
    \]
\end{enumerate}
\end{proposition}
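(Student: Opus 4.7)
The plan for part \ref{item.inv(B)SDE} is to show $BH = I$ almost surely by applying the It\^o product rule to $BH$. The It\^o SDE for $B$ from Definition \ref{def.BM.on.G} is $\ds B = B\,\ds W + \tfrac12 B\Xi\,\ds t$, while by hypothesis $\ds H = -\ds W\cdot H + \tfrac12 \Xi H\,\ds t$. The quadratic covariation $\ds B\cdot \ds H$ equals $-B\Xi H\,\ds t$ by the matrix quadratic covariation formula \eqref{eq.mat.QC} (applied with $A_1=B$, $C_1=I$, $A_2=I$, $C_2=H$, $\e_1=\e_2=1$, noting $\sum_{\xi\in\beta}\xi^2 = \Xi$). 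Combining yields
\[ \ds(BH) = \bigl(B\,\ds W\,H + \tfrac12 B\Xi H\,\ds t\bigr) + \bigl(-B\,\ds W\,H + \tfrac12 B\Xi H\,\ds t\bigr) - B\Xi H\,\ds t = 0, \]
so $BH$ is pathwise constant and equal to $B(0)H(0) = I$. Since $B$ and $H$ are $N\times N$ matrices, this forces $H = B^{-1}$ almost surely.

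For part \ref{item.B,inv(B)bounds} (take $p \ge 2$; other cases follow from the reverse $L^p$ inclusion on the underlying probability space), the plan is a Gr\"onwall-type argument driven by Corollary \ref{cor.matstochintbound}, applied to the integral forms of the SDEs for $B$ and $H = B^{-1}$ (the latter from part \ref{item.inv(B)SDE}). Applying the triangle inequality in $\|\,|\cdot|_t^*\,\|_{L^p}$, the drift is controlled pathwise by $|B^\e(s)\Xi| \le \|\Xi\|_\infty |B^\e(s)|$, while the stochastic integral is controlled via Corollary \ref{cor.matstochintbound} together with the trace estimate
\[ \tr\Bigl(\sum_{\xi\in\beta}\xi^*A^*A\xi\Bigr) = \tr\Bigl(\Bigl(\sum_{\xi\in\beta}\xi\xi^*\Bigr)A^*A\Bigr) \le c(\beta)|A|^2 \]
(and its analog with $\xi,\xi^*$ swapped, which arises for $\e=-1$ and accounts for the maximum in the definition of $c(\beta)$). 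After pulling the $L^p$-norm through the time integrals via Minkowski's integral inequality and invoking Cauchy--Schwarz on the drift integral, the elementary bound $(a+b+c)^2 \le 3(a^2+b^2+c^2)$ converts the triangle inequality into
\[ f(t) \le 3 + \bigl(3\gamma_p^2 c(\beta) + \tfrac34\|\Xi\|_\infty^2 t\bigr)\int_0^t f(s)\,\ds s, \qquad f(t) := \bigl\|\,|B^\e|_t^*\,\bigr\|_{L^p}^2. \]
Standard Gr\"onwall then yields $f(t) \le 3\exp\bigl((3\gamma_p^2 c(\beta) + \tfrac34\|\Xi\|_\infty^2 t)\,t\bigr)$, and taking square roots gives (a slight strengthening of) the stated bound.

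For part \ref{item.BPibound}, the SDE \eqref{eq.simple.SDE} for $B_\Pi$ differs from that for $B$ only by replacing $B(s)$ with $B_\Pi(\ell_\Pi(s))$ inside each integrand, and the trivial bound $|B_\Pi(\ell_\Pi(s))| \le |B_\Pi|_s^*$ feeds exactly the same chain of estimates into the same Gr\"onwall inequality, producing the same bound on $\|\,|B_\Pi|_t^*\,\|_{L^p}^2$. The only genuinely delicate point in the whole argument is the bookkeeping in part \ref{item.B,inv(B)bounds}: the $(a+b+c)^2$ split, the Minkowski/Tonelli swap on the stochastic term, and the Cauchy--Schwarz bound $(\int_0^t \sqrt{f(s)}\,\ds s)^2 \le t\int_0^t f(s)\,\ds s$ on the drift must be coordinated so that a single Gr\"onwall pass recovers (or slightly overshoots) the stated constants---no individual step is deep, but the assembly must be done with care.
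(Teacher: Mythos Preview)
Your proposal is correct and follows essentially the same approach as the paper: It\^o product rule for part \ref{item.inv(B)SDE}, and a BDG-plus-Gr\"onwall argument for parts \ref{item.B,inv(B)bounds} and \ref{item.BPibound}. Your bookkeeping is in fact slightly tighter than the paper's---you correctly carry the $\tfrac12$ from the drift through the $(a+b+c)^2 \le 3(a^2+b^2+c^2)$ split to get a coefficient of $\tfrac34\|\Xi\|_\infty^2 t$ rather than the paper's $\tfrac32\|\Xi\|_\infty^2 t$, which does yield the slight strengthening you claim; and your remark that $p\ge 2$ is needed (for Minkowski on $L^{p/2}$) is a point the paper leaves tacit.
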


\begin{proof}
We take each item in turn.

\ref{item.inv(B)SDE} By the (matrix-valued) It\^o product rule and \eqref{eq.mat.QC},
\begin{align*}
    \ds (BH)(t) & = \ds B(t)\,H(t) + B(t)\,\ds H(t) + \ds B(t) \, \ds H(t) \\
    & = B(t)\,\ds W(t)\,H(t) + \frac12 B(t)\Xi H(t)\,\ds t \\
    &\hspace{15mm} - B(t) \,\ds W(t)\,H(t) + \frac12 B(t)\Xi H(t)\,\ds t - B(t)\left(\ds W(t)\right)^2H(t) \\
    & = B(t)\Xi H(t)\,\ds t - B(t)\Xi H(t)\,\ds t = 0.
\end{align*}
Consequently, $BH \equiv B(0)H(0) = I$ almost surely.
Since $B$ and $H$ are $M_{N \times N}(\C)$-valued processes, this establishes that $B$ is always invertible and $B^{-1} = H$ almost surely.

\ref{item.B,inv(B)bounds} For a continuous $M_{N \times N}(\C)$-valued process $A$, write
\[
\vertiii{A}_t \coloneqq \norm{\left|A\right|_t^*}_{L^p} = \norm{\sup_{0 \leq s \leq t} |A(s)|}_{L^p}.
\]
We now estimate $\norm{B}_t$ using the SDE it satisfies.
In the course of doing so, we shall use shorthand for various inequalities:
BDG for the BDG inequalities in Theorem \ref{thm.BDGinH}, MI for Minkowski's integral inequality, NCH for noncommutative H\"older's inequality, and CS for the Cauchy--Schwarz inequality.
With this in mind, if $t \geq 0$, then
\begin{align*}
    \vertiii{B}_t^2 & = \vertiii{I + \into B(s)\,\ds W(s)+\frac12\into B(s)\Xi\,\ds s}_t^2 \\
    & \leq 3\vertiii{I}_t^2+3\vertiii{\into B(s)\,\ds W(s)}_t^2 + \frac32\vertiii{\into B(s)\Xi\,\ds s}_t^2 \\
    & \leq 3+3\gamma_p^2\norm{\int_0^t \tr\left( \sum_{\xi \in \beta} \xi^*B^*(s)B(s)\xi\right)\ds s}_{L^\frac{p}{2}} + \frac32\vertiii{\into B(s)\Xi\,\ds s}_t^2 \tag{BDG}
\end{align*}
We estimate this further by
\begin{align*}
    & \leq 3+3\gamma_p^2\norm{\int_0^t \tr\left( \sum_{\xi \in \beta} \xi\xi^*B^*(s)B(s)\right)\ds s}_{L^\frac{p}{2}} + \frac32\left(\int_0^t\vertiii{B(\cdot)\Xi}_s\,\ds s\right)^2 \tag{MI}\\
    & \leq 3 + 3\gamma_p^2\norm{\sum_{\xi \in \beta}\xi\xi^*}_{\infty}\norm{\int_0^t \left|B(s)\right|^2\,\ds s}_{L^\frac{p}{2}} + \frac32\norm{\Xi}_{\infty}^2\left(\int_0^t\vertiii{B}_s\,\ds s\right)^2 \tag{NCH}\\
    & \leq 3 + 3\gamma_p^2c(\beta)\int_0^t \underbrace{\norm{\left|B(s)\right|^2}_{L^\frac{p}{2}}}_{\norm{\left|B(s)\right|}_{L^p}^2} \,\ds s + \frac32\norm{\Xi}_{\infty}^2t\int_0^t\vertiii{B}_s^2\,\ds s\tag{MI, CS} \\
    & \leq 3 + \frac32\left(2\gamma_p^2c(\beta) + \norm{\Xi}_{\infty}^2t\right)\int_0^t\vertiii{B}_s^2\,\ds s
\end{align*}
It then follows from Gr\"onwall's inequality that
\[
\vertiii{B}_t^2 \leq 3e^{\frac32\left(2\gamma_p^2c(\beta) + \norm{\Xi}_{\infty}^2t\right) t}, \qquad t \geq 0.
\]
Taking square roots on both sides yields the desired bound.
By a nearly identical argument, the bound on $\vertiii{B^{-1}}_t$ follows from the SDE for $B^{-1}$ in the previous part.

\ref{item.BPibound} We proceed as in the previous item's proof, from which we retain the notation.
If $X \coloneqq B_{\Pi}$ and $0 \leq t \leq T$, then
\begin{align*}
    & \vertiii{X}_t^2 = \vertiii{I + \into X(\ell_{\Pi}(s))\,\ds W(s)+\frac12\into X(\ell_{\Pi}(s))\Xi\,\ds s}_t^2 \\
    & \leq 3\vertiii{I}_t^2+3\vertiii{\into X(\ell_{\Pi}(s))\,\ds W(s)}_t^2 + \frac32\vertiii{\into X(\ell_{\Pi}(s))\Xi\,\ds s}_t^2 \\
    & \leq 3+3\gamma_p^2\norm{\int_0^t \tr\left( \sum_{\xi \in \beta} \xi^*X^*(\ell_{\Pi}(s))X(\ell_{\Pi}(s))\xi\right)\ds s}_{L^\frac{p}{2}} + \frac32\vertiii{\into X(\ell_{\Pi}(s))\Xi\,\ds s}_t^2\\
    & \leq 3+3\gamma_p^2\norm{\int_0^t \tr\left( \sum_{\xi \in \beta} \xi\xi^*X^*(\ell_{\Pi}(s))X(\ell_{\Pi}(s))\right)\ds s}_{L^\frac{p}{2}} + \frac32\left(\int_0^t\vertiii{X(\ell_{\Pi}(\cdot))\Xi}_s\,\ds s\right)^2 \\
    & \leq 3 + 3\gamma_p^2\norm{\sum_{\xi \in \beta}\xi\xi^*}_{\infty}\norm{\int_0^t \left|X(\ell_{\Pi}(s))\right|^2\,\ds s}_{L^\frac{p}{2}} + \frac32\norm{\Xi}_{\infty}^2\left(\int_0^t\vertiii{X(\ell_{\Pi}(\cdot))}_s\,\ds s\right)^2 \\
    & \leq 3 + 3\gamma_p^2c(\beta)\int_0^t \underbrace{\norm{\left|X(\ell_{\Pi}(s))\right|^2}_{L^\frac{p}{2}}}_{\norm{\left|X(\ell_{\Pi}(s))\right|}_{L^p}^2} \,\ds s + \frac32\norm{\Xi}_{\infty}^2t\int_0^t\vertiii{X(\ell_{\Pi}(\cdot))}_s^2\,\ds s \\
    & \leq 3 + \frac32\left(2\gamma_p^2c(\beta) + \norm{\Xi}_{\infty}^2t\right)\int_0^t\vertiii{X(\ell_{\Pi}(\cdot))}_s^2\,\ds s \\
    & \leq 3 + \frac32\left(2\gamma_p^2c(\beta) + \norm{\Xi}_{\infty}^2t\right)\int_0^t\vertiii{X}_s^2\,\ds s.
\end{align*}
The desired result follows again from Gr\"onwall's inequality.
\end{proof}

Next, we show $B_{\Pi}B^{-1}$ is close to $I$ (when $|\Pi|$ is small), the most important step.

\begin{proposition}\label{prop.Bpartinv(B)bound}
Suppose $2 \leq p < \infty$.
There exists a constant $C(N,\beta,T,p) < \infty$ such that
\[
\norm{\left|B_{\Pi}\,B^{-1} - I\right|_T^*}_{L^p} \leq C(N,\beta,T,p)|\Pi|^{\frac12}
\]
for all partitions $\Pi$ of $[0,T]$.
\end{proposition}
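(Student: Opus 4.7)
The plan is to study the process $Y \coloneqq B_\Pi B^{-1} - I = B_\Pi H - I$, where $H = B^{-1}$ satisfies the SDE in Proposition \ref{prop.Binvertible+estimates}\ref{item.inv(B)SDE}. First I would apply the It\^o product rule to $B_\Pi H$. Using $\ds B_\Pi(t) = B_\Pi(\ell_\Pi(t))\,\ds W(t) + \frac12 B_\Pi(\ell_\Pi(t))\Xi\,\ds t$ and $\ds H(t) = -\ds W(t)\,H(t) + \frac12 \Xi H(t)\,\ds t$, together with the matrix quadratic covariation formula \eqref{eq.mat.QC}, one computes
\[
\ds(B_\Pi H)(t) = [B_\Pi(\ell_\Pi(t)) - B_\Pi(t)]\,\ds W(t)\,H(t) + \tfrac{1}{2}[B_\Pi(t) - B_\Pi(\ell_\Pi(t))]\Xi H(t)\,\ds t.
\]
The key point is the cancellation: the $\frac12 \Xi\,\ds t$ drifts from $\ds B_\Pi$ and $\ds H$ would be annihilated by $\ds B_\Pi\,\ds H$ if $B_\Pi$ were evaluated at $t$ rather than $\ell_\Pi(t)$, so only the discretization discrepancy $B_\Pi(t) - B_\Pi(\ell_\Pi(t))$ survives in both martingale and drift terms. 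Writing $Y(t) = M(t) + D(t)$ accordingly, there is no self-dependence on $Y$ on the right, so no Gr\"onwall step is needed -- just careful estimation.

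Next I would extract explicit control of the increments. From the product formula \eqref{eq.Bpi.soln}, for $s \in (\ell_\Pi(s), r_\Pi(s)]$ one has
\[
B_\Pi(s) - B_\Pi(\ell_\Pi(s)) = B_\Pi(\ell_\Pi(s))\bigl[W(s) - W(\ell_\Pi(s)) + \tfrac{1}{2}\Xi(s - \ell_\Pi(s))\bigr].
\]
The Gaussian increment has all $L^q$ norms of order $|\Pi|^{1/2}$, and the drift term is $O(|\Pi|)$. Combined with the uniform bound $\||B_\Pi|_T^*\|_{L^q} \le C(N,\beta,T,q)$ from Proposition \ref{prop.Binvertible+estimates}\ref{item.BPibound} and H\"older's inequality (after converting products to Hilbert--Schmidt via $|AB| \le \|A\|_\infty |B|$, absorbing factors of $N$ and $c(\beta)$ into the constant), this yields
\[
\sup_{s \in [0,T]}\bigl\||B_\Pi(s) - B_\Pi(\ell_\Pi(s))|\bigr\|_{L^q} \le C'(N,\beta,T,q)\,|\Pi|^{1/2}
\]
for every $q < \infty$.

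For the martingale part $M$, I would apply Corollary \ref{cor.matstochintbound} with $A(s) = B_\Pi(\ell_\Pi(s)) - B_\Pi(s)$ and $C(s) = H(s)$. The resulting $L^{p/2}$ bound for the integrated quadratic variation reduces, after another noncommutative H\"older and absorbing $c(\beta)$, to $L^{p/2}$-control of $\int_0^T |B_\Pi(s) - B_\Pi(\ell_\Pi(s))|^2 |H(s)|^2\,\ds s$. Splitting with Cauchy--Schwarz and using the previous bound together with $\||H|_T^*\|_{L^{2p}} \le C''(N,\beta,T,p)$ from Proposition \ref{prop.Binvertible+estimates}\ref{item.B,inv(B)bounds}, the whole integral is $\lesssim T\,|\Pi|$ in $L^{p/2}$, so $\|\,|M|_T^*\|_{L^p} \lesssim \sqrt{T\,|\Pi|}$. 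For the drift term $D$, Minkowski's integral inequality gives $\|\,|D|_T^*\|_{L^p} \le \frac12\|\Xi\|_\infty \int_0^T \||B_\Pi(s) - B_\Pi(\ell_\Pi(s))||H(s)|\|_{L^p}\,\ds s \lesssim T|\Pi|^{1/2}$ by the same H\"older splitting. Adding the martingale and drift bounds yields the claim.

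The main obstacle is really the first step: identifying and verifying the cancellation that isolates the single factor $B_\Pi(s) - B_\Pi(\ell_\Pi(s))$ in the It\^o expansion of $B_\Pi H$. This is where the particular form of the It\^o SDE in Lemma \ref{lem.simple.SDE}, with its $\frac12 \Xi\,\ds s$ Stratonovich correction, is tailored precisely to match that of $H$; without this matching the SDE for $Y$ would contain $O(1)$ terms, defeating any hope of a $|\Pi|^{1/2}$ estimate. Once the cancellation is in hand, the remaining work is a routine (if bookkeeping-heavy) combination of BDG, Minkowski, and noncommutative H\"older together with the bounds from Proposition \ref{prop.Binvertible+estimates}.
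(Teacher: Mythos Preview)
Your proposal is correct and follows essentially the same approach as the paper: apply the It\^o product rule to $B_\Pi H$, identify the cancellation that leaves only the discretization error $B_\Pi(t)-B_\Pi(\ell_\Pi(t))$ in both the martingale and drift terms, substitute the explicit increment formula from \eqref{eq.Bpi.soln}, and estimate the two pieces separately via BDG (Corollary~\ref{cor.matstochintbound}) and Minkowski together with H\"older and the a~priori bounds of Proposition~\ref{prop.Binvertible+estimates}. The paper's proof differs only in bookkeeping details (it tracks the constants explicitly and uses the operator norm $\|A(s)\|_\infty$ rather than $|A(s)|$ when splitting, absorbing an extra $\sqrt{N}$), but the structure and ideas are identical.
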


\begin{proof}
First, note that if $s \in \Pi$ and $t \in [s_-,s]$, then
\begin{align*}
    B_{\Pi}(t) - B_{\Pi}(s_-) & = I+\int_0^t B_{\Pi}(\ell_{\Pi}(r))\,\ds W(r) + \frac12\int_0^t B_{\Pi}(\ell_{\Pi}(r))\Xi \,\ds r \\
    & \hspace{7.5mm} - \left(I + \int_0^{s_-} B_{\Pi}(\ell_{\Pi}(r))\,\ds W(r) + \frac12\int_0^{s_-} B_{\Pi}(\ell_{\Pi}(r))\Xi \,\ds r\right)\\
    & = \int_{s_-}^t B_{\Pi}(\ell_{\Pi}(r))\,\ds W(r) + \frac12\int_{s_-}^t B_{\Pi}(\ell_{\Pi}(r))\Xi \,\ds r\\
    & = B_{\Pi}(s_-)\left(W(t) - W(s_-) + \frac12 \Xi(t-s_-) \right),
\end{align*}
i.e., $B_{\Pi}(t) = B_{\Pi}(s_-)\,(I+W(t) - W(s_-) + \Xi(t-s_-)/2)$.
Thus,
\begin{align*}
    B_{\Pi} - B_{\Pi}\circ \ell_{\Pi} & = \sum_{s \in \Pi} \left(B_{\Pi} - B_{\Pi}(s_-)\right)1_{(s_-,s]} \\
    & = \sum_{s \in \Pi} B_{\Pi}(s_-) \left(W(\cdot) - W(s_-) + \frac12\Xi (\cdot-s_-)\right)1_{(s_-,s]}(\cdot).\numberthis\label{eq.mat.otherrecursion}
\end{align*}
Let $X \coloneqq B_{\Pi}$ and $H \coloneqq B^{-1}$.
By the It\^o product rule, \eqref{eq.mat.QC}, Proposition \ref{prop.Binvertible+estimates}\ref{item.inv(B)SDE}, and \eqref{eq.mat.otherrecursion},
\begin{align*}
    \ds (XH)(t) & = \ds X(t)\,H(t) + X(t)\,\ds H(t) + \ds X(t)\,\ds H(t) \\
    & = X(\ell_{\Pi}(t))\,\ds W(t)\,H(t) + \frac12 X(\ell_{\Pi}(t))\Xi H(t)\,\ds t\\
    & \hspace{7.5mm} -X(t)\,\ds W(t)\,H(t) + \frac12 X(t)\Xi H(t)\,\ds t - X(\ell_{\Pi}(t))\left(\ds W(t)\right)^2H(t)\\
    & = (X(\ell_{\Pi}(t)) - X(t))\,\ds W(t)\,H(t) - \frac12(X(\ell_{\Pi}(t)) - X(t))\Xi H(t)\,\ds t \\
    & = -\sum_{s \in \Pi} X(s_-)\left(W(t)-W(s_-) + \frac12 \Xi(t-s_-)\right)1_{(s_-,s]}(t)\,\ds W(t)\,H(t) \\
    & \hspace{7.5mm} + \frac12\sum_{s \in \Pi} X(s_-)\left(W(t)-W(s_-) + \frac12 \Xi(t-s_-)\right)1_{(s_-,s]}(t)\,\Xi H(t)\,\ds t
\end{align*}
Now, define
\begin{align*}
    A & \coloneqq \sum_{r \in \Pi} X(r_-)\left(W(\cdot)-W(r_-) - \frac12 \Xi (\cdot - r_-)\right)1_{(r_-,r]}(\cdot), \\
    I_1 & \coloneqq \into A(s)\,\ds W(s)\,H(s), \; \text{ and} \\
    I_2 & \coloneqq \into A(s)\Xi H(s)\,\ds s.
\end{align*}
Also, recall that
\[
W(t) - W(s) \equaldist \sqrt{t-s}W(1), \qquad t \geq s \geq 0.
\]
Retaining the notation and shorthand from the proof of Proposition \ref{prop.Binvertible+estimates}\ref{item.B,inv(B)bounds}, we have that if $0 \leq t \leq T$, then
\begin{align*}
    \vertiii{I_1}_t^2 & \leq \gamma_p^2\norm{\int_0^t\tr\left(\sum_{\xi \in \beta} H^*(s)\xi A^*(s)A(s)\xi H(s) \right)\ds s}_{L^\frac{p}{2}} \tag{BDG} \\
    & \leq \gamma_p^2\sum_{\xi \in \beta}\norm{\xi}_{\infty}^2\norm{\int_0^t \norm{A(s)}_{\infty}^2 \left|H(s)\right|^2\ds s}_{L^\frac{p}{2}} \tag{NCH} \\
    & \leq \gamma_p^2\sum_{\xi \in \beta}\norm{\xi}_{\infty}^2\int_0^t \norm{\norm{A(s)}_{\infty}^2 \left|H(s)\right|^2}_{L^\frac{p}{2}} \,\ds s \tag{MI} \\
    & \leq \gamma_p^2\sum_{\xi \in \beta}\norm{\xi}_{\infty}^2\int_0^t \norm{\norm{A(s)}_{\infty}^2}_{L^p} \norm{\left|H(s)\right|^2}_{L^p} \,\ds s \tag{H\"older's ineq.} \\
    & = \gamma_p^2\sum_{\xi \in \beta}\norm{\xi}_{\infty}^2\int_0^t \norm{\norm{A(s)}_{\infty}}_{L^{2p}}^2 \norm{\left|H(s)\right|}_{L^{2p}}^2 \,\ds s \\
    & \leq 3\gamma_p^2e^{\frac32\left(2\gamma_{2p}^2c(\beta) + \norm{\Xi}_{\infty}^2t\right) t}\sum_{\xi \in \beta}\norm{\xi}_{\infty}^2\int_0^t \norm{\norm{A(s)}_{\infty}}_{L^{2p}}^2 \,\ds s. \tag{Prop.\ \ref{prop.Binvertible+estimates}\ref{item.B,inv(B)bounds}}
\end{align*}
Next, if $0 \leq s \leq t$, then
\begin{align*}
    & \norm{\norm{A(s)}_{\infty}}_{L^{2p}} = \sum_{r \in \Pi}1_{(r-_-,r]}(s)\norm{\norm{X(r_-)\left(W(s)-W(r_-) - \frac12 \Xi (s - r_-)\right)}_{\infty}}_{L^{2p}} \\
    & \leq \sum_{r \in \Pi}1_{(r-_-,r]}(s)\norm{\norm{X(r_-)\left(W(s)-W(r_-) - \frac12 \Xi (s - r_-)\right)}_{\infty}}_{L^{2p}} \\
    & \leq \sum_{r \in \Pi}1_{(r-_-,r]}(s)\norm{\norm{X(r_-)}_{\infty}\left(\norm{W(s)-W(r_-)}_{\infty} +\frac12 \norm{\Xi}_{\infty} (s - r_-)\right)}_{L^{2p}} \\
    & \leq \sum_{r \in \Pi}1_{(r-_-,r]}(s)\norm{\norm{X(r_-)}_{\infty}}_{L^{4p}}\left(\norm{\norm{W(s)-W(r_-)}_{\infty}}_{L^{4p}} +\frac12 \norm{\Xi}_{\infty} (s - r_-)\right) \\
    & \leq \norm{\norm{X}_{\infty,t}^*}_{L^{4p}}\sum_{r \in \Pi}1_{(r-_-,r]}(s)\left(\sqrt{s-r_-}\norm{\norm{W(1)}_{\infty}}_{L^{4p}} +\frac12 \norm{\Xi}_{\infty} (s - r_-)\right) \\
    & \leq \sqrt{N}\norm{\left|X\right|_t^*}_{L^{4p}}|\Pi|^{\frac12}\sum_{r \in \Pi}1_{(r-_-,r]}(s)\left(\norm{\norm{W(1)}_{\infty}}_{L^{4p}} +\frac12 \norm{\Xi}_{\infty} \sqrt{s - r_-}\right) \\
    & \leq  \sqrt{3N}e^{\frac34\left(2\gamma_{4p}^2c(\beta) + \norm{\Xi}_{\infty}^2t\right) t}|\Pi|^{\frac12}\left(\norm{\norm{W(1)}_{\infty}}_{L^{4p}} +\frac12 \norm{\Xi}_{\infty} \sqrt{t}\right), \numberthis\label{eq.Abound}
\end{align*}
where we used $\norm{\cdot}_{\infty} \leq \sqrt{N}|\cdot|$ in the second-to-last inequality and Proposition \ref{prop.Binvertible+estimates}\ref{item.BPibound} in the last inequality.
Putting it together yields
\[
\vertiii{I_1}_t^2 \leq 9N\gamma_p^2e^{3\left(\gamma_{2p}^2c(\beta) + \gamma_{4p}^2c(\beta) + \norm{\Xi}_{\infty}^2t\right) t}\sum_{\xi \in \beta}\norm{\xi}_{\infty}^2\left(\norm{\norm{W(1)}_{\infty}}_{L^{4p}} +\frac12 \norm{\Xi}_{\infty} \sqrt{t}\right)^2t|\Pi|.
\]
Finally, for $I_2$, we have
\begin{align*}
    \vertiii{I_2}_t & = \norm{\into A(s)\Xi H(s)\,\ds s}_t \leq \int_0^t \norm{\left|A(s)\Xi H(s)\right|}_{L^p}\,\ds s \\
    & \leq \norm{\Xi}_{\infty}\int_0^t \norm{\norm{A(s)}_{\infty}\left|H(s)\right|}_{L^p}\,\ds s \\
    & \leq \norm{\Xi}_{\infty}\int_0^t \norm{\norm{A(s)}_{\infty}}_{L^{2p}}\norm{\left|H(s)\right|}_{L^{2p}}\,\ds s \\
    & \leq \norm{\Xi}_{\infty}\sqrt3e^{\frac34\left(2\gamma_{2p}^2c(\beta) + \norm{\Xi}_{\infty}^2t\right) t}\int_0^t \norm{\norm{A(s)}_{\infty}}_{L^{2p}}\,\ds s \tag{Prop.\ \ref{prop.Binvertible+estimates}\ref{item.B,inv(B)bounds}} \\
    & \leq 3\sqrt{N}\norm{\Xi}_{\infty}e^{\frac32\left(\gamma_{2p}^2c(\beta) + \gamma_{4p}^2c(\beta) + \norm{\Xi}_{\infty}^2t\right) t}\left(\norm{\norm{W(1)}_{\infty}}_{L^{4p}} +\frac12 \norm{\Xi}_{\infty} \sqrt{t}\right)t|\Pi|^{\frac12}
\end{align*}
by \eqref{eq.Abound}.
Consequently,
\[
\vertiii{XH - I}_T = \vertiii{-I_1+\frac12 I_2}_T \leq \vertiii{I_1}_T + \frac12\vertiii{I_2}_T \leq C|\Pi|^{\frac12},
\]
where
\begin{align*}
    C & = 3\gamma_pe^{\frac32\left(\gamma_{2p}^2c(\beta) + \gamma_{4p}^2c(\beta) + \norm{\Xi}_{\infty}^2t\right) t}\left(\norm{\norm{W(1)}_{\infty}}_{L^{4p}} +\frac12 \norm{\Xi}_{\infty} \sqrt{t}\right)\sqrt{Nt\sum_{\xi \in \beta}\norm{\xi}_{\infty}^2} \\
    & \hspace{7.5mm} + \frac32\norm{\Xi}_{\infty}e^{\frac32\left(\gamma_{2p}^2c(\beta) + \gamma_{4p}^2c(\beta) + \norm{\Xi}_{\infty}^2t\right) t}\left(\norm{\norm{W(1)}_{\infty}}_{L^{4p}} +\frac12 \norm{\Xi}_{\infty} \sqrt{t}\right)t\sqrt{N}.
\end{align*}
This completes the proof.
\end{proof}

We can now cross the finish line.
If $\Pi$ is a partition of $[0,T]$ and $2 \leq p < \infty$, then
\begin{align*}
    \norm{\left|B_{\Pi} - B\right|_T^*}_{L^p} & = \norm{\left|(B_{\Pi}B^{-1} - I)B\right|_T^*}_{L^p} \leq \norm{\left|B_{\Pi}B^{-1} - I\right|_T^*\norm{B}_{\infty,T}^*}_{L^p} \\
    & \leq \norm{\left|B_{\Pi}B^{-1} - I\right|_T^*}_{L^{2p}}\norm{\norm{B}_{\infty,T}^*}_{L^{2p}} \\
    & \leq \sqrt{N}\norm{\left|B\right|_T^*}_{L^{2p}}\norm{\left|B_{\Pi}B^{-1} - I\right|_T^*}_{L^{2p}} \\
    & \leq \sqrt{3N}e^{\frac34\left(2\gamma_p^2c(\beta) + \norm{\Xi}_{\infty}^2t\right) t}\norm{\left|B_{\Pi}B^{-1} - I\right|_T^*}_{L^{2p}} \tag{Prop.\ \ref{prop.Binvertible+estimates}\ref{item.B,inv(B)bounds}}\\
    & \leq \sqrt{3N}e^{\frac34\left(2\gamma_p^2c(\beta) + \norm{\Xi}_{\infty}^2t\right) t}C(N,\beta,T,2p)|\Pi|^{\frac12}. \tag{Prop.\ \ref{prop.Bpartinv(B)bound}}
\end{align*}
Thus, we can take
\[
K = \left(3N\right)^p e^{\frac{3}{4}\left(2\gamma_p^2c(\beta) + \norm{\Xi}_{\infty}^2tp\right) t}C(N,\beta,T,2p)^p
\]
in Theorem \ref{main.thm.1.MoBettaWZ}.\qed

\subsection{Proof of Theorem \ref{thm.free.MoBettaWZ}}\label{sec.thm.free.MoBettaWZ}

Write $\GL(\cA) \coloneqq \{$invertible elements of $\cA\}$.

We follow the same basic outline as the previous section.
However, the proof is substantially simpler because of Biane and Speicher's operator-norm bound for free stochastic integrals (\cite[Thm.\ 3.2.1]{BS1998}), which we use in place of the BDG inequalities (Theorem \ref{thm.BDGinH}).
It is also superficially simpler because of the lack of a ``$\Xi$ term.''

We begin with some preliminary calculations and estimates.

\begin{proposition}\label{prop.binvertible+estimates}
The following hold.
\begin{enumerate}[font=\normalfont,label=(\roman*)]
    \item $b(t) \in \GL(\cA)$ for all $t \geq 0$.
    Specifically, if $h \colon \R_+ \to \cA$ is the unique solution to the free stochastic differential equation (FSDE)
    \[
    \ds h(t) = -\ds w(t)\,h(t), \qquad h(0) = 1,
    \]
    then $h = b^{-1}$.\label{item.inv(b)SDE}
    \item $\displaystyle\sup_{0 \leq s \leq t} \norm{b^{\e}(s)}_{\infty} \leq \sqrt2 e^{16t}$ for all $t \geq 0$ and $\e =\pm 1$.\label{item.b,inv(b)bounds}
    \item $\displaystyle\sup_{0 \leq s \leq t} \norm{b_{\Pi}(s)}_{\infty} \leq \sqrt2e^{16t}$ for all $t \in [0,T]$.\label{item.bPibound}
\end{enumerate}
\end{proposition}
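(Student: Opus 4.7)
The plan is to mirror the proof of Proposition \ref{prop.Binvertible+estimates}, replacing the matrix BDG inequalities by Biane--Speicher's free BDG-type bound (\cite[Thm.\ 3.2.1]{BS1998}) and the classical It\^o product rule by the free It\^o product rule. A key simplification over the finite-$N$ case is that no $\Xi$-corrections arise, because the free It\^o table for circular Brownian motion satisfies $\ds w(t)\,\ds w(t) = 0$. Indeed, writing $w = \tfrac{1}{\sqrt 2}(x+iy)$ for two freely independent free (semicircular) Brownian motions $x,y$ with $\ds x\,\ds x = \ds y\,\ds y = \ds t$ and $\ds x\,\ds y = \ds y\,\ds x = 0$, one computes $\ds w\,\ds w = \tfrac12(\ds x + i\,\ds y)^2 = 0$, while $\ds w\,\ds w^\ast = \ds w^\ast\,\ds w = \ds t$.

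For \ref{item.inv(b)SDE}, I would apply the free It\^o product rule to $bh$:
\[
\ds(bh)(t) = \ds b(t)\,h(t) + b(t)\,\ds h(t) + \ds b(t)\,\ds h(t) = b(t)\,\ds w(t)\,h(t) - b(t)\,\ds w(t)\,h(t) - b(t)\,(\ds w(t))^2\,h(t),
\]
and the last term vanishes by the identity $(\ds w)^2 = 0$ just noted. Hence $bh\equiv b(0)h(0) = 1$, so $b(t)$ is invertible in $\cA$ with $b(t)^{-1} = h(t)$.

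For \ref{item.b,inv(b)bounds}, I would put $f(t):= \sup_{0\le s\le t}\|b(s)\|_\infty$, start from the free integral equation $b(t) = 1 + \int_0^t b(s)\,\ds w(s)$, and apply Biane--Speicher's bound
\[
\Bigl\|\int_0^t a(s)\,\ds w(s)\Bigr\|_\infty \le C\,\Bigl(\int_0^t \|a(s)\|_\infty^2\,\ds s\Bigr)^{1/2}
\]
with the appropriate absolute constant $C$. Since the right-hand side is already a sup over $[0,t]$, squaring and using $(1+x)^2 \le 2 + 2x^2$ yields
\[
f(t)^2 \le 2 + 2C^2 \int_0^t f(s)^2\,\ds s,
\]
so Gr\"onwall gives $f(t)^2 \le 2 e^{2C^2 t}$. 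Choosing the constant $C$ so that $2C^2 = 32$ (as one gets from decomposing $w$ into two free semicircular Brownian motions and applying the standard semicircular free BDG twice) yields the stated bound $\sqrt 2 e^{16 t}$. The estimate for $b^{-1} = h$ follows by the identical argument applied to the FSDE $\ds h = -\ds w\,h$, which is symmetric under the substitution $w\mapsto -w$.

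For \ref{item.bPibound}, the same Gr\"onwall/Biane--Speicher scheme works verbatim for the frozen free SDE of Lemma \ref{lem.free.simple.SDE}: writing $b_\Pi(t) = 1 + \int_0^t b_\Pi(\ell_\Pi(s))\,\ds w(s)$ and using $\|b_\Pi(\ell_\Pi(s))\|_\infty \le \sup_{0\le r\le s}\|b_\Pi(r)\|_\infty$ to pull the sup inside, one obtains the same integral inequality and hence the same bound. The only expected obstacle is simply pinning down the constant: verifying that the sharp Biane--Speicher constant for circular $w$ gives exactly the exponent $16t$; this is a bookkeeping matter, not a conceptual one, since any finite constant gives the same qualitative conclusion (and is all that is needed to deduce Theorem \ref{thm.free.MoBettaWZ}).
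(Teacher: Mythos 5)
Your proposal follows essentially the same route as the paper: free It\^o product rule for \ref{item.inv(b)SDE}, and Biane–Speicher's operator-norm bound on free stochastic integrals combined with Gr\"onwall for \ref{item.b,inv(b)bounds} and \ref{item.bPibound}, with the constants landing correctly. Your explicit derivation of $\ds w\,\ds w = 0$ by decomposing the circular Brownian motion into two free semicircular ones is a nice amplification of a step the paper leaves implicit.

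There is one real gap in \ref{item.inv(b)SDE}: from $bh\equiv 1$ alone you conclude ``$b(t)$ is invertible with $b(t)^{-1}=h(t)$,'' but in a unital $\ast$-algebra a right inverse need not be a two-sided inverse. You need $hb\equiv 1$ as well. The paper closes this by applying the free It\^o product rule to $hb$, observing that $u:=hb$ solves $\ds u(t) = -\ds w(t)\,u(t) + u(t)\,\ds w(t)$ with $u(0)=1$, which the constant process $1$ also solves, and invoking uniqueness. Alternatively you could argue abstractly: $\mathscr{A}$ carries a faithful normal tracial state, so it is a finite von Neumann algebra, where one-sided invertibility is automatically two-sided ($hb$ is an idempotent with $\varphi(hb)=\varphi(bh)=1$, so $1-hb$ is an idempotent with trace $0$, hence $0$). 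Either fix works, but as written the step is incomplete.
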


\begin{proof}
We take each item in turn.

\ref{item.inv(b)SDE} By the free It\^o product rule (\cite[Thm.\ 3.2.5]{NikitopoulosIto}),
\begin{align*}
    \ds (bh)(t) & = \ds b(t)\,h(t) + b(t)\,\ds h(t) + \ds b(t) \, \ds h(t) \\
    & = b(t)\,\ds w(t)\,h(t) - b(t)\,\ds w(t) h(t) - b(t)\,(\ds w(t))^2\,h(t) = 0.
\end{align*}
Consequently, $bh \equiv b(0)h(0) = 1$.
Also,
\begin{align*}
    \ds (hb)(t) & = \ds h(t)\,b(t) + h(t)\,\ds b(t) + \ds h(t) \, \ds b(t) \\
    & = -\ds w(t)\,h(t)b(t) + h(t)b(t)\,\ds w(t) - \ds w(t) \,h(t)b(t)\,\ds w(t) \\
    & = -\ds w(t)\,(hb)(t) + (hb)(t)\,\ds w(t).
\end{align*}
In other words, $u \coloneqq hg$ solves the FSDE
\[
\ds u(t) = -\ds w(t)\,u(t) + u(t)\,\ds w(t), \qquad u(0) = h(0)b(0) = 1.
\]
Since the constant $1$ process also solves the above FSDE, we conclude from the uniqueness of solutions to the above FSDE that $hb = u \equiv 1$.
This establishes that $b$ is always invertible and $b^{-1} = h$.

\ref{item.b,inv(b)bounds}
If $t \geq 0$, then
\begin{align*}
    \norm{b(t)}_{\infty}^2 & = \norm{1 + \int_0^t b(s)\,\ds w(s)}_{\infty}^2 \\
    & \leq 2 + 2\norm{\int_0^t b(s)\,\ds w(s)}_{\infty}^2 \\
    & \leq 2 + 32\int_0^t \norm{b(s)}_{\infty}^2\,\ds s
\end{align*}
by Biane and Speicher's operator-norm bound on free stochastic integrals from \cite{BS1998}.
Therefore, by Gr\"onwall's inequality, $\norm{b(t)}_{\infty}^2 \leq 2 e^{32t}$, i.e.,
\[
\sup_{0 \leq s \leq t} \norm{b(s)}_{\infty} \leq \sqrt2 e^{16t} \qquad (t \geq 0).
\]
By a nearly identical argument, the bound on $\sup\left\{ \norm{b^{-1}(s)}_{\infty} : 0 \leq s \leq t\right\}$ follows from the FSDE for $b^{-1}$ in the previous part.

\ref{item.bPibound} If $0 \leq t \leq T$, then
\begin{align*}
    \sup_{0 \leq s \leq t} \norm{b_{\Pi}(s)}_{\infty}^2 & = \sup_{0 \leq s \leq t}\norm{1 + \int_0^s b_{\Pi}(\ell_{\Pi}(r))\,\ds z(r)}_{\infty}^2 \\
    & \leq 2 + 32 \int_0^t \norm{b_{\Pi}(\ell_{\Pi}(s))}_{\infty}^2\,\ds s \\
    & \leq 2 + 32 \int_0^t \sup_{0 \leq r \leq s}\norm{b_{\Pi}(r)}_{\infty}^2\,\ds s.
\end{align*}
Consequently, by Gr\"onwall's inequality, $\sup \left\{ \norm{b_{\Pi}(s)}_{\infty}^2 : 0 \leq s \leq t\right\} \leq 2 e^{32t}$, i.e.,
\[
\sup_{0 \leq s \leq t} \norm{b_{\Pi}(s)}_{\infty} \leq \sqrt2 e^{16t} \qquad (t \geq 0),
\]
as desired.
\end{proof}

\begin{proposition}\label{prop.bpartinv(b)bound}
There exists a constant $C(T) < \infty$ such that
\[
\sup_{0 \leq t \leq T}\norm{b_{\Pi}(t)\,b^{-1}(t) - 1}_{\infty} \leq C(T)|\Pi|^{\frac12}
\]
for all partitions $\Pi$ of $[0,T]$.
\end{proposition}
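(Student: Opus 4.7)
The strategy mirrors the proof of Proposition \ref{prop.Bpartinv(B)bound}, but with a substantial simplification that makes the free case easier and gives an $L^\infty$-bound rather than just $L^p$-bounds. Writing $x \coloneqq b_\Pi$ and $h \coloneqq b^{-1}$, I will first show that, on each subinterval $[s_-, s]$ of $\Pi$, the recursion gives
\[
x(t) = x(s_-)\bigl(1 + w(t) - w(s_-)\bigr), \qquad t \in [s_-,s],
\]
so $x(\ell_\Pi(t)) - x(t) = -x(\ell_\Pi(t))\bigl(w(t) - w(\ell_\Pi(t))\bigr)$ for all $t \in [0,T]$.

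Next, I apply the free It\^o product rule to $xh$ using $\ds x(t) = x(\ell_\Pi(t))\,\ds w(t)$ and $\ds h(t) = -\ds w(t)\,h(t)$ (Proposition \ref{prop.binvertible+estimates}\ref{item.inv(b)SDE}). The key point, and the source of the simplification over the matrix case, is that the cross-variation term produced by the product rule involves the factor $\ds w(t)\cdot 1\cdot \ds w(t)$, which vanishes for circular Brownian motion (since $\ds w \cdot a \cdot \ds w = 0$ with no conjugate). Combined with the fact that the corresponding FSDEs carry no drift term, this yields the clean identity
\[
x(t)h(t) - 1 = -\int_0^t x(\ell_\Pi(s))\bigl(w(s) - w(\ell_\Pi(s))\bigr)\,\ds w(s)\,h(s).
\]

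Finally, I apply Biane--Speicher's operator-norm bound on free stochastic integrals (\cite[Thm.~3.2.1]{BS1998}) to obtain
\[
\norm{x(t)h(t) - 1}_\infty \leq 2\sqrt{2}\left(\int_0^t \norm{x(\ell_\Pi(s))}_\infty^2\,\norm{w(s) - w(\ell_\Pi(s))}_\infty^2\,\norm{h(s)}_\infty^2\,\ds s\right)^{\!1/2}.
\]
The uniform bounds $\norm{x(\ell_\Pi(s))}_\infty \leq \sqrt 2 e^{16T}$ and $\norm{h(s)}_\infty \leq \sqrt 2 e^{16T}$ come from Proposition \ref{prop.binvertible+estimates}\ref{item.b,inv(b)bounds}--\ref{item.bPibound}. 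The increment $w(s) - w(\ell_\Pi(s))$ is distributed as $\sqrt{s - \ell_\Pi(s)}\,c$ for $c$ a standard circular element, so $\norm{w(s) - w(\ell_\Pi(s))}_\infty \leq 2\sqrt{|\Pi|}$. Plugging these in and taking $\sup_{0 \leq t \leq T}$ yields the desired inequality with $C(T) = 16 e^{32T}\sqrt{T}$ (or some such explicit constant).

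I expect no real obstacle here: the vanishing of $(\ds w)^2$ removes the delicate Gr\"onwall bookkeeping needed in the matrix proof, and no noncommutative H\"older or BDG gymnastics are required because Biane--Speicher's bound is already a sharp operator-norm estimate. The only minor care needed is in verifying that the free stochastic integral above falls into the framework of Biane--Speicher's biprocess integral with the simple-tensor biprocess $s \mapsto x(\ell_\Pi(s))\bigl(w(s) - w(\ell_\Pi(s))\bigr) \otimes h(s)$, which is straightforward since all factors are continuous and adapted with uniformly bounded operator norm on $[0,T]$.
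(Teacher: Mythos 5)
Your proposal is correct and follows essentially the same route as the paper's own proof: the same recursion $b_\Pi(t)=b_\Pi(s_-)(1+w(t)-w(s_-))$ on each subinterval, the same application of the free It\^o product rule to $b_\Pi b^{-1}$ (with $(\ds w)^2=0$ killing the cross-variation term and no $\Xi$-drift), the same single free stochastic integral for $x(t)h(t)-1$, the same Biane--Speicher operator-norm bound, and the same uniform bounds on $\norm{b_\Pi}_\infty$, $\norm{b^{-1}}_\infty$, and the increment $\norm{w(s)-w(\ell_\Pi(s))}_\infty$ from Proposition~\ref{prop.binvertible+estimates}. Only the precise numerical constants differ slightly (the paper uses the looser $\norm{w(s)-w(t)}_\infty^2\le 8\vert t-s\vert$ rather than your sharper $4\vert t-s\vert$), which has no bearing on the form of the bound.
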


\begin{proof}
First, note that if $s \in \Pi$ and $t \in [s_-,s]$, then
\begin{align*}
    b_{\Pi}(t) -b_{\Pi}(s_-) & = 1+\int_0^t b_{\Pi}(\ell_{\Pi}(r))\,\ds w(r) - \left(1 - \int_0^{s_-} b_{\Pi}(\ell_{\Pi}(r))\,\ds w(r)\right)\\
    & = \int_{s_-}^t b_{\Pi}(\ell_{\Pi}(r))\,\ds w(r) = b_{\Pi}(s_-)\,(w(t) - w(s_-)),
\end{align*}
i.e., $b_{\Pi}(t) = b_{\Pi}(s_-)\,(1+w(t) - w(s_-))$.
Thus,
\begin{equation}
    b_{\Pi} - b_{\Pi}\circ \ell_{\Pi} = \sum_{s \in \Pi} \left(b_{\Pi} - b_{\Pi}(s_-)\right)1_{(s_-,s]} = \sum_{s \in \Pi} b_{\Pi}(s_-) \left(w - w(s_-)\right)1_{(s_-,s]}.\label{eq.free.otherrecursion}
\end{equation}
Now, let $x \coloneqq b_{\Pi}$ and $h \coloneqq b^{-1}$.
By the It\^o product rule, Proposition \ref{prop.binvertible+estimates}\ref{item.inv(b)SDE}, and \eqref{eq.free.otherrecursion},
\begin{align*}
    \ds (xh)(t) & = \ds x(t)\,h(t) + x(t)\,\ds h(t) + \ds x(t)\,\ds h(t) \\
    & = x(\ell_{\Pi}(t))\,\ds w(t)\,h(t) - x(t)\,\ds w(t)\,h(t) - x(\ell_{\Pi}(t))(\ds w(t))^2h(t) \\
    & = (x(\ell_{\Pi}(t)) - x(t))\,\ds w(t)\,h(t) \\
    & = -\sum_{s \in \Pi} x(s_-)\left(w(t)-w(s_-)\right)1_{(s_-,s]}(t)\,\ds w(t)\,h(t).
\end{align*}
Consequently, after recalling that
\[
\norm{w(s) - w(t)}_{\infty}^2 \leq 8|t-s| \qquad (s,t \geq 0),
\]
we obtain, again from the Biane--Speicher operator-norm bound on free stochastic integrals, that if $0 \leq t \leq T$, then
\begin{align*}
    \|x(t)h(t) & - 1\|_{\infty}^2 = \norm{{-}\int_0^t\sum_{r \in \Pi} x(r_-)\left(w(s)-w(r_-)\right)1_{(r_-,r]}(s)\,\ds w(s)\,h(s)}_{\infty}^2 \\
    & \leq 16\int_0^t\norm{\sum_{r \in \Pi} x(r_-)\left(w(s)-w(r_-)\right)1_{(r_-,r]}(s)}_{\infty}^2\norm{h(s)}_{\infty}^2\,\ds s \\
    & = 16\int_0^t\sum_{r \in \Pi}\norm{ x(r_-)\left(w(s)-w(r_-)\right)1_{(r_-,r]}(s)}_{\infty}^2\norm{h(s)}_{\infty}^2\,\ds s \\
    & \leq 16\sup_{\underset{|s-r| \leq |\Pi|}{0 \leq r,s \leq t}}\norm{w(s)-w(r)}_{\infty}^2\int_0^t \left(\sum_{r \in \Pi} \norm{x(r_-)}_{\infty}^21_{(r_-,r]}(s)\right)\norm{h(s)}_{\infty}^2\,\ds s \\
    & = 16\sup_{\underset{|s-r| \leq |\Pi|}{0 \leq r,s \leq t}}\norm{w(s)-w(r)}_{\infty}^2\int_0^t \norm{x(\ell_{\Pi}(s))}_{\infty}^2\norm{h(s)}_{\infty}^2\,\ds s \\
    & \leq 128|\Pi|\int_0^t \norm{h(s)}_{\infty}^2\sup_{0 \leq r \leq s}\norm{x(r)}_{\infty}^2\,\ds s
\end{align*}
for any $t \in [0,T]$.
Applying Proposition \ref{prop.binvertible+estimates}\ref{item.b,inv(b)bounds}--\ref{item.bPibound} then yields
\[
\norm{x(t)h(t) - 1}_{\infty}^2 \leq 512|\Pi|\int_0^t e^{64s}\,\ds s \leq 512e^{64t}t|\Pi|.
\]
Hence, the desired result holds with $C(T) = 16\sqrt{2T}e^{32T}$.
\end{proof}

Let $\Pi$ be a partition of $[0,T]$.
By Propositions \ref{prop.bpartinv(b)bound} and \ref{prop.binvertible+estimates}\ref{item.b,inv(b)bounds},
\begin{align*}
    \sup_{0 \leq t \leq T}\norm{b_{\Pi}(t) - b(t)}_{\infty} & = \sup_{0 \leq t \leq T}\norm{\left(b_{\Pi}(t)\,b^{-1}(t) - 1\right)b(t)}_{\infty} \\
    & \leq \sup_{0 \leq t \leq T}\norm{b_{\Pi}(t)\,b^{-1}(t) - 1}_{\infty}\sup_{0 \leq t \leq T}\norm{b(t)}_{\infty} \\
    & \leq C(T)|\Pi|^{\frac12}\sqrt2 e^{16T}.
\end{align*}
Thus, we can take $\overline{K} = C(T)e^{16T}\sqrt2$ in Theorem \ref{thm.free.MoBettaWZ}.\qed

\section{Convergence of Eigenvalues\label{sect.proof.conv.eig}}

Throughout this section, $B_k^N(t)$ refers to a matrix random walk \eqref{eq.Bk} in $\MNC$ for some $N,k\in\mathbb{N}$, $t>0$, and steps $A_1,\ldots,A_k$ independent with common fixed unitarily bi-invariant distribution.  Constants described as {\em universal} do not explicitly depend on $N$, $t$, or $A_1,\ldots,A_k$.

\subsection{Polynomial Decay of the Pseudospectrum} \hfill

\medskip

We begin with the following lemma on the smallest singular value of a product of square matrices, which will be useful throughout this section.

\begin{lemma} \label{lem.sigma(XY)} Let $X_1,\ldots,X_n\in\MNC$, and let $\epsilon_1,\ldots,\epsilon_n>0$.  Then
\begin{equation} \label{ineq.sigma.min.n} \mathbb{P}\{\sigma_{\min}(X_1\cdots X_n)\le\epsilon_1\cdots\epsilon_n\} \le \sum_{j=1}^n \mathbb{P}\{\sigma_{\min}(X_j)\le\epsilon_j\}.
\end{equation}
\end{lemma}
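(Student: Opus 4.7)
The plan is to reduce the bound to a deterministic multiplicative inequality for smallest singular values, and then finish by a union bound.

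First I would establish the deterministic inequality
\[
\sigma_{\min}(X_1\cdots X_n) \;\ge\; \sigma_{\min}(X_1)\cdots \sigma_{\min}(X_n)
\]
for square matrices $X_1,\ldots,X_n\in\MNC$. When all $X_j$ are invertible, this follows instantly from $\sigma_{\min}(A)=\|A^{-1}\|^{-1}$ together with submultiplicativity of the operator norm: $\|(X_1\cdots X_n)^{-1}\|=\|X_n^{-1}\cdots X_1^{-1}\|\le\prod_j\|X_j^{-1}\|$. If some $X_j$ is singular, then both sides are $0\le 0$, so nothing to check. (Alternatively, one can argue via $\sigma_{\min}(A)=\inf_{\|v\|=1}\|Av\|$ and observe that for any unit vector $v$ with $\sigma_{\min}(X_1\cdots X_n)=\|X_1\cdots X_n v\|$, one may iteratively peel off one factor at a time using $\|X_j u\|\ge \sigma_{\min}(X_j)\|u\|$.)

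Next I would use the contrapositive of the deterministic inequality. If $\sigma_{\min}(X_j)>\epsilon_j$ for every $j=1,\ldots,n$, then
\[
\sigma_{\min}(X_1\cdots X_n)\ge \sigma_{\min}(X_1)\cdots\sigma_{\min}(X_n)>\epsilon_1\cdots\epsilon_n.
\]
Hence the event $\{\sigma_{\min}(X_1\cdots X_n)\le\epsilon_1\cdots\epsilon_n\}$ is contained in the union $\bigcup_{j=1}^n\{\sigma_{\min}(X_j)\le\epsilon_j\}$.

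Finally I would apply the union bound to this inclusion, which immediately gives \eqref{ineq.sigma.min.n}. There is no real obstacle here: the only content is the submultiplicativity of $\sigma_{\min}$ for square matrices, and the rest is set-theoretic bookkeeping.
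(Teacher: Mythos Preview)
Your proof is correct and follows essentially the same approach as the paper: establish the deterministic multiplicativity $\sigma_{\min}(X_1\cdots X_n)\ge\prod_j\sigma_{\min}(X_j)$ via $\sigma_{\min}(A)=\|A^{-1}\|^{-1}$ and submultiplicativity of the operator norm, then take contrapositive and apply a union bound. The only cosmetic difference is that the paper first reduces to $n=2$ and inducts, whereas you argue directly for general $n$.
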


\begin{proof} Through a straightforward induction on $n$, it suffices to prove the lemma in the case $n=2$; i.e.
\begin{equation} \label{ineq.sigma.min} \mathbb{P}\{\sigma_{\min}(XY)\le\epsilon\} \le \mathbb{P}\{\sigma_{\min}(X)\le\epsilon_1\} + \mathbb{P}\{\sigma_{\min}(Y)\le\epsilon_2\}
\end{equation}
where $\epsilon = \epsilon_1\epsilon_2$.

To begin assume $X$ and $Y$ are both invertible.  Then so is $XY$, and
\[ \frac{1}{\sigma_{\min}(XY)} = \|(XY)^{-1}\| = \|Y^{-1}X^{-1}\| \le \|Y^{-1}\|\|X^{-1}\| = \frac{1}{\sigma_{\min}(X)\sigma_{\min}(Y)} \]
and hence $\sigma_{\min}(XY)\ge\sigma_{\min}(X)\sigma_{\min}(Y)$.  This also holds as an equality ($0=0$) if either $X$ or $Y$ fails to be invertible.

Hence, if $\sigma_{\min}(XY)\le\epsilon$ then $\sigma_{\min}(X)\sigma_{\min}(Y)\le \sigma_{\min}(XY)\le\epsilon$; i.e.\
\begin{equation} \label{eq.sigma.containment} \{\sigma_{\min}(XY)\le\epsilon\} \subseteq \{\sigma_{\min}(X)\sigma_{\min}(Y)\le\epsilon\}. \end{equation}
Now, since $\sigma_{\min}(X)$ and $\sigma_{\min}(Y)$ are $\ge 0$, if $\sigma_{\min}(X)>\epsilon_1$ and $\sigma_{\min}(Y)>\epsilon_2$ then $\sigma_{\min}(X)\sigma_{\min}(Y) > \epsilon_1\epsilon_2=\epsilon$.  The contrapositive statement is therefore that $\{\sigma_{\min}(X)\sigma_{\min}(Y)\le\epsilon\}\subseteq \{\sigma_{\min}(X)\le \epsilon_1\}\cup\{\sigma_{\min}(Y)\le\epsilon_2\}$.  Combining this with \eqref{eq.sigma.containment} and using subadditivity of $\mathbb{P}$ proves \eqref{ineq.sigma.min}.
\end{proof}

One corollary of Lemma \ref{lem.sigma(XY)} is that the non-zero eigenvalues of any bi-invariant ensemble are continuous random variables.  The proof also offers a preview of the core idea of the proof of this section's main Theorem \ref{thm.pseudospectrum}.

\begin{lemma} \label{lem.I+A.inv} Fix $N\in\mathbb{N}$.  If $A$ as a.s.\ invertible and bi-invariant, then for any fixed $\lambda\in\mathbb{C}$, $\mathbb{P}\{\lambda\in\mathrm{spec}(A)\}=0$.
\end{lemma}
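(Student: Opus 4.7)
The plan is a short SVD reduction followed by an application of the Rudelson--Vershynin Theorem~\ref{thm.RV}. By Proposition~\ref{prop.bi-inv}, the bi-invariance of $A$ lets us write $A \equaldist UTV^\ast$, where $U,V$ are independent Haar Unitary Ensembles, $T$ is nonnegative (diagonal), and $U,V,T$ are mutually independent. The case $\lambda=0$ is immediate: $0\in \mathrm{spec}(A)$ iff $A$ is singular, which happens with probability $0$ by assumption. So I would focus on $\lambda\ne 0$.

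For $\lambda\ne 0$, I would note that multiplying by unitaries preserves singular values, so
\[
\sigma_{\min}(A-\lambda I) = \sigma_{\min}\!\bigl(U^\ast(A-\lambda I)V\bigr) = \sigma_{\min}(T-\lambda W) = |\lambda|\,\sigma_{\min}\!\bigl(W - \lambda^{-1}T\bigr),
\]
where $W := U^\ast V$. Left-invariance of Haar measure (together with independence of $U,V$) ensures $W$ is itself a Haar Unitary Ensemble, and since $T$ is independent of $(U,V)$, it is also independent of $W$. Then Theorem~\ref{thm.RV}, applied conditionally on $T$ with $D = -\lambda^{-1}T$, yields
\[
\mathbb{P}\bigl\{\sigma_{\min}(W - \lambda^{-1}T) \le \epsilon\bigr\} \le \epsilon^{\alpha_0} N^{\beta_0}
\]
for every $\epsilon>0$. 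Taking $\epsilon\downarrow 0$ gives $\mathbb{P}\{\sigma_{\min}(A-\lambda I)=0\}=0$, which is precisely $\mathbb{P}\{\lambda\in\mathrm{spec}(A)\}=0$.

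There is no real obstacle here; the two verifications to spell out carefully are that $U^\ast V$ is Haar and independent of $T$ (one line each, using left-invariance of Haar measure and the mutual independence of $U,V,T$), and the factorization $U^\ast(A-\lambda I)V = T - \lambda U^\ast V$. The heart of the argument is simply that Theorem~\ref{thm.RV} applies to any deterministic/random shift $D$ independent from the Haar unitary, and the bi-invariant SVD produces exactly such a decomposition of $A-\lambda I$ after left/right unitary rotation.
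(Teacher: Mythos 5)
Your proof is correct, and it takes a genuinely different route from the paper's. The paper factors
\[
A - \lambda I \;=\; (U - \lambda V T^{-1})\,T V^\ast
\]
and then invokes Lemma~\ref{lem.sigma(XY)} to split $\sigma_{\min}$ of the product into two terms, controlling the first by Theorem~\ref{thm.RV} and the second by the a.s.\ invertibility of $T V^\ast$. You instead observe that left/right multiplication by unitaries preserves singular values exactly, so
\[
\sigma_{\min}(A-\lambda I) \;=\; \sigma_{\min}\bigl(U^\ast(A-\lambda I)V\bigr) \;=\; \sigma_{\min}\bigl(T - \lambda\,U^\ast V\bigr) \;=\; |\lambda|\,\sigma_{\min}\!\bigl(U^\ast V - \lambda^{-1}T\bigr),
\]
and apply Theorem~\ref{thm.RV} directly to $W = U^\ast V$, which is Haar-distributed (by left-invariance of Haar measure, since $V$ is Haar independent of $U$) and independent of $T$. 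This buys you two small but real simplifications: it sidesteps Lemma~\ref{lem.sigma(XY)}, since the conjugation gives an equality rather than a product bound, and it avoids $T^{-1}$ entirely, so a.s.\ invertibility of $A$ is only needed in the trivial $\lambda=0$ case rather than throughout. The cost is the $\lambda=0$ case split, which the paper's factorization absorbs automatically (at $\lambda=0$ its bound reduces to $\mathbb{P}\{A \text{ singular}\}=0$) at the price of needing $T^{-1}$ for all $\lambda$. Both proofs are sound; yours is arguably the more economical one for this particular lemma, though the paper's factorization is the one that generalizes (in Theorem~\ref{thm.pseudospectrum}) to the $k$-step walk, where the induction really does need the product structure and Lemma~\ref{lem.sigma(XY)}.
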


\begin{proof} Since $A$ is bi-invairant, its singular value decomposition has the form $A = UTV^\ast$ where $U,V,T$ are independent, $U,V$ are Haar Unitary Ensembles, and $T$ is diagonal with the singular values of $A$ on the diagonal; cf.\ Proposition \ref{prop.bi-inv}. Now, on the $\mathbb{P}=1$ event that $A$ (and hence $T$) is invertible, we have
\[ A-\lambda I = UTV^\ast-\lambda I = (U-\lambda VT^{-1})TV^\ast. \]
Applying \eqref{ineq.sigma.min} with $X=U-\lambda VT^{-1}$, $Y=TV^\ast$, and $\epsilon_1=\epsilon_2=\sqrt{\epsilon}$ yields
\begin{align} \nonumber \mathbb{P}\{\sigma_{\min}(A-\lambda I)\le \epsilon\} 
&\le \mathbb{P}\{\sigma_{\min}(U-\lambda VT^{-1})\le \sqrt{\epsilon}\} + \mathbb{P}\{\sigma_{\min}(TV^\ast)\le\sqrt{\epsilon}\} \\
& \le \epsilon^{\frac12\alpha}N^\beta + \mathbb{P}\{\sigma_{\min}(TV^\ast)\le\sqrt{\epsilon}\} \label{eq.A.spec.2}
\end{align}
where $\alpha$ and $\beta$ are universal constants, coming from Theorem \ref{thm.RV} (which applies since $-\lambda VT^{-1}$ is independent from the Haar Unitary matrix $U$).

Letting $\epsilon\downarrow 0$, the left-hand-side of \eqref{eq.A.spec.2} converges to $\mathbb{P}\{\sigma_{\min}(A-\lambda I)= 0\} = \mathbb{P}\{\lambda\in\mathrm{spec}(A)\}$.  On the right-hand-side, the first term converges to $0$, while $\mathbb{P}\{\sigma_{\min}(TV^\ast)\le\sqrt{\epsilon}\}\to \mathbb{P}\{\sigma_{\min}(TV^\ast)=0\} =\mathbb{P}\{TV^\ast\text{ is not invertible}\}$.  Since $A = UTV^\ast$ where $U$ is unitary, $TV^\ast$ is invertible iff $A$ is invertible, which has probability $1$ by assumption. Thus $\mathbb{P}\{\sigma_{\min}(TV^\ast)=0\} = 0$, concluding the proof.
\end{proof}

\begin{corollary} \label{cor.Bk.inv} Under the assumption that the step distributions $A_j$ are a.s.\ invertible, the matrix random walk $B_k^N(t)$ of \eqref{eq.Bk} is a.s.\ invertible for any $k,N\in\mathbb{N}$ and $t>0$.
\end{corollary}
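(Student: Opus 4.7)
The plan is to reduce this to a direct application of Lemma \ref{lem.I+A.inv}. The matrix $B_k^N(t)$ is by definition the product of the $k$ factors $I + \sqrt{t/k}\,A_j$, so $B_k^N(t)$ is invertible if and only if each factor $I + \sqrt{t/k}\,A_j$ is invertible. Hence, by a union bound, it suffices to show that for each fixed $j$ the factor is a.s.\ invertible.

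Next I would observe that $I + \sqrt{t/k}\,A_j$ fails to be invertible precisely when $-1$ is an eigenvalue of $\sqrt{t/k}\,A_j$, i.e.\ when $\lambda_j := -\sqrt{k/t}$ lies in $\mathrm{spec}(A_j)$. Since $A_j$ is a.s.\ invertible and unitarily bi-invariant, Lemma \ref{lem.I+A.inv} applies to $A_j$ with the fixed deterministic scalar $\lambda_j \in \mathbb{C}$, yielding $\mathbb{P}\{\lambda_j \in \mathrm{spec}(A_j)\} = 0$. Therefore $\mathbb{P}\{I + \sqrt{t/k}\,A_j \text{ is not invertible}\} = 0$ for each $j \in \{1,\ldots,k\}$.

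Summing these $k$ null probabilities via subadditivity gives $\mathbb{P}\{B_k^N(t) \text{ is not invertible}\} = 0$, which is the claim. There is no real obstacle here: the result is a one-step corollary, and the only thing to be slightly careful about is that the scalar $-\sqrt{k/t}$ at which we invoke Lemma \ref{lem.I+A.inv} is genuinely deterministic (which it is, being a function only of the fixed parameters $k$ and $t$), so that the lemma's conclusion applies pointwise and no measurability or conditioning issues arise.
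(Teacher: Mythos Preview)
Your proof is correct and essentially identical to the paper's: the paper instead absorbs the scalar into the matrix, applying Lemma~\ref{lem.I+A.inv} to the bi-invariant, a.s.\ invertible $\tilde A_j = \sqrt{t/k}\,A_j$ with $\lambda=-1$, which is equivalent to your choice of applying it to $A_j$ with $\lambda=-\sqrt{k/t}$.
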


\begin{proof}
Let $\tilde{A}_j = (t/k)^{1/2}A_j$, which is still bi-invariant, and is invertible in the same event that $A_j$ is invertible.  Then $B_k^N(t) = (I+\tilde{A}_1)\cdots(I+\tilde{A}_k)$.
By Lemma \ref{lem.I+A.inv} applied with $\lambda = -1$, each of the terms $I+\tilde{A}_j$ is a.s.\ invertible; hence so is the product $B_k^N(t)$.
\end{proof}

\begin{theorem} \label{thm.pseudospectrum} For each $N,k\in\mathbb{N}$ and $t>0$, let $U^N_0$, $A^N_j$, and $B_k^N(t)$ be as in Notation \ref{notation.Bk->bk}.  Suppose that $A_j=A_j^N$ are a.s.\ invertible and have singular values satisfy Assumption \ref{assump.SV2}.  There are universal exponents $\overline{\alpha},\overline{\beta}>0$ and a constant $C(t,k)<\infty$ so that, for any $z\in\mathbb{C}$ and $\epsilon>0$,
\begin{equation} \label{eq.pseudospectrum} \mathbb{P}\{\sigma_{\min}(U_0^N B^N_k(t)-zI) \le \epsilon\} \le C(t,k) \epsilon^{\overline{\alpha}/k} N^{\overline{\beta}}. \end{equation}
\end{theorem}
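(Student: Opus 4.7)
The plan is to combine Girko--Hermitization with a block-cyclic linearization that converts the product structure of $U_0^N B_k^N(t)$ into a single-matrix shift problem amenable to Rudelson--Vershynin (Theorem~\ref{thm.RV}), with a $1/k$-th power loss in the exponent. First, by Theorem~\ref{thm.unitary.distribute} (which gives equality in distribution at finite $N$), we may replace $U_0^N B_k^N(t)$ by $\prod_{j=1}^k Q_j$ where $Q_j = U_0^{1/k} + \sqrt{t/k}\,A_j^N$. Form the linearization
\[
Y := \sum_{j=1}^{k} Q_j \otimes E_{j,j+1} \in M_{kN}(\mathbb{C})
\]
(indices mod $k$). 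A direct computation gives that $Y^k$ is block diagonal, with each diagonal block a cyclic rotation of $Q_1 Q_2 \cdots Q_k$; since the $A_j^N$ are i.i.d., exchangeability gives that every such rotation has the same law as $U_0^N B_k^N(t)$, and hence
\[
\mathbb{P}\{\sigma_{\min}(U_0^N B_k^N(t) - zI) \le \epsilon\} \le \mathbb{P}\{\sigma_{\min}(Y^k - zI) \le \epsilon\}.
\]

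Because $Y$ is a single matrix it commutes with scalars, yielding the ordinary root factorization
\[
Y^k - zI = \prod_{m=0}^{k-1}(Y - \zeta\omega^m I), \qquad \zeta^k=z,\quad \omega = e^{2\pi i/k}.
\]
Applying Lemma~\ref{lem.sigma(XY)} with $\epsilon_m = \epsilon^{1/k}$ then gives
\[
\mathbb{P}\{\sigma_{\min}(Y^k - zI) \le \epsilon\} \le \sum_{m=0}^{k-1}\mathbb{P}\{\sigma_{\min}(Y - \zeta\omega^m I) \le \epsilon^{1/k}\}.
\]
It therefore suffices to prove a uniform polynomial bound of the form $\mathbb{P}\{\sigma_{\min}(Y - \zeta' I) \le \delta\} \le C'(t,k)\,\delta^{\bar{\alpha}}\,N^{\bar{\beta}}$ for every $\zeta'\in\mathbb{C}$; raising to the $1/k$-th power then produces the claimed $\epsilon^{\bar\alpha/k}$ exponent.

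For this final estimate, use the SVDs $A_j^N = U_j T_j V_j^*$ to write $\sqrt{t/k}\,A = \mathcal{U}\,(\sqrt{t/k}\mathcal{T})\,\mathcal{V}^*\mathcal{P}$, where $\mathcal{U},\mathcal{V},\mathcal{T}$ are block diagonal and $\mathcal{P}$ is the cyclic block permutation. Multiplying $Y - \zeta' I$ on the right by the unitary $\mathcal{P}^*\mathcal{V}$ transforms it into a block lower-bidiagonal matrix (with cyclic wrap-around) whose $j$-th diagonal block is $W_j + \sqrt{t/k}\,T_j$, where $W_j := U_j^* U_0^{1/k} V_j$ is a Haar unitary on $\mathrm{U}(N)$ independent of $T_j$ by the absorbing property of Haar (Lemma~\ref{lem.free.Haar}), and whose subdiagonal entries are $-\zeta'$ times unitaries. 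Block Gaussian elimination factors this as $Y - \zeta' I = L\,\Lambda$ with $\Lambda$ block upper triangular. Applying Lemma~\ref{lem.sigma(XY)} to this factorization, Theorem~\ref{thm.RV} on each Haar-plus-independent diagonal block $W_j + \sqrt{t/k}\,T_j$, and Assumptions~\ref{assump.SV1}--\ref{assump.SV2} to control the $\|T_j^{\pm 1}\|$ factors appearing in $\|L\|$ and in the last-block Schur complement, then delivers the required polynomial bound.

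The main obstacle is this last step: the cyclic wrap-around couples all $k$ blocks, so the $(k,k)$-Schur complement involves a product of $k$ inverses of Haar-plus-independent blocks multiplied by $\zeta'^k = z$. Ensuring that the exponents $\bar\alpha,\bar\beta$ remain universal (with all $k$-dependence absorbed into $C(t,k)$) requires a careful balance of the high-probability RV lower bound on each $\sigma_{\min}(W_j + \sqrt{t/k}T_j)$ against the polynomial-in-$N$ loss permitted by Assumption~\ref{assump.SV2}; invertibility of the intermediate blocks holds almost surely by Corollary~\ref{cor.Bk.inv} and Lemma~\ref{lem.I+A.inv}, so all intermediate factorizations are a.s.\ justified.
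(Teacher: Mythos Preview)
Your linearization route is genuinely different from the paper's, and the reduction through $Y^k - zI = \prod_m(Y - \zeta\omega^m I)$ together with Lemma~\ref{lem.sigma(XY)} is valid. But the final step---bounding $\sigma_{\min}(Y - \zeta' I)$ via block Gaussian elimination---is incomplete, and the sketch does not match what the elimination actually produces. After clearing the subdiagonal of the cyclic block-bidiagonal matrix, the last diagonal entry of $\Lambda$ is not $W_k + \sqrt{t/k}\,T_k$ but the Schur complement $W_k + \sqrt{t/k}\,T_k + R$, where $R = \pm S_k D_{k-1}^{-1}\cdots S_2 D_1^{-1} C_1$ and the wraparound block $C_1 = -\zeta' U_1^* V_k$ carries $V_k$. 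Since $W_k = U_k^* U_0^{1/k} V_k$ also carries $V_k$ (and $U_k$, which appears in $S_k$), no single Haar factor is cleanly independent of $R$, so Theorem~\ref{thm.RV} does not apply without further unwinding you do not provide. Separately, for block upper-triangular $\Lambda$ the smallest singular value is not controlled by the diagonal blocks alone, so ``RV on each $W_j + \sqrt{t/k}\,T_j$'' does not by itself bound $\sigma_{\min}(\Lambda)$. You also invoke Assumption~(RW1) to control $\|T_j\|$, but (RW1) is not a hypothesis of this theorem; and Theorem~\ref{thm.unitary.distribute} and Lemma~\ref{lem.free.Haar} are free-probability statements---the finite-$N$ distributional identities you need follow from bi-invariance and independence, but are not what those results assert.

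The paper sidesteps all of this with a direct recursion on $k$. Using the SVD $A_k = U_k T_k V_k^*$ and a.s.\ invertibility, one factors
\[
U_0 B_k^N(t) - zI \;=\; \bigl(U_0 B_{k-1}^N(t)\bigr)\,\bigl(U_k\tilde T_k\bigr)\,\bigl(D_k + V_k^*\bigr),
\qquad D_k = \tilde T_k^{-1}U_k^* - z M_k^{-1}.
\]
The point is that $V_k$, the matrix of \emph{right} singular vectors of the \emph{last} step $A_k$, is Haar and independent of everything appearing in $D_k$. A single application of Theorem~\ref{thm.RV} bounds the third factor; Assumption~(RW2) bounds the second; and Lemma~\ref{lem.sigma(XY)} reduces the first to the $(k-1)$-step problem. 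The $\epsilon^{\bar\alpha/k}$ exponent then emerges from the induction---no $kN$-dimensional linearization, no Schur complement, and no bound on $\|T_j\|$ is needed.
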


\begin{remark} We assume \ref{assump.SV2} holds for each $A_j^N$; i.e.
\[ \mathbb{P}\{\sigma_{\min}(A_j^N)\le\epsilon\}\le \gamma_j \epsilon^{\alpha_j} N^{\beta_j}. \]
Wlog assuming $\epsilon<1$, by taking constants
\[ \alpha := \min\{\alpha_1,\ldots,\alpha_k\}, \quad \beta := \max\{\beta_1,\ldots,\beta_k\}, \quad \gamma := \max\{\gamma,\ldots,\gamma_k\} \]
we may assume the smallest singular values of the $k$ ensembles each satisfy \eqref{assump.SV2} with the same constants $\alpha,\beta,\gamma$.
\end{remark}

\begin{proof} As in the proof of Corollary \ref{cor.Bk.inv}, we expand the matrix random walk product as $B_k^N(t) = (I+\tilde{A}_1)\cdots(I+\tilde{A}_{k-1})(I+\tilde{A}_k) = B_{k-1}(t)(I+\tilde{A}_k)$, where in the case $k=1$ we use the convention $B_0^N(t) = I$. In the singular value decomposition, $\tilde{A}_1 = U_1\tilde{T}_1V_1^\ast$ where $U_1,V_1,\tilde{T}_1$ are independent, $U_1,V_1$ are Haar Unitary Ensembles, and $\tilde{T}_1 = (\frac{t}{k})^{1/2}T_1$ where $T_1$ is diagonal with the singular values of $A_1$ on the diagonal; cf.\ Proposition \ref{prop.bi-inv}.

By assumption $A_k$ is a.s.\ invertible, hence so are $T_k$ and $\tilde{T}_k$.  Corollary \ref{cor.Bk.inv} also shows that $B_k^N(t)$ is a.s.\ invertible for each $k$.  Therefore,
\begin{align*} U_0 B_k^N(t)-zI &= U_0 B_{k-1}^N(t)(I+U_k \tilde{T}_k V_k^\ast)-zI \\
&=U_0B_{k-1}^N(t)U_k\tilde{T}_k(\tilde{T}_k^{-1}U_k^\ast + V_k^\ast)-zI \\
&= M_k(\tilde{T}_k^{-1}U_k^\ast + V_k^\ast)-zI
\end{align*}
where $M_k := U_0B_{k-1}^N(t)U_k\tilde{T}_k$, which is invertible.  Thus
\begin{equation*} U_0B_k^N(t)-zI = M_k(\tilde{T}_k^{-1}U_k^\ast + V_k^\ast)-zI = M_k(\tilde{T}_k^{-1}U_k^\ast-zM_k^{-1} + V_k^\ast). \end{equation*}
Now, $V_k$ is independent from $U_0$, $U_k$, $\tilde{T}_k$, and $B_{k-1}^N(t)$; hence $V_k$ is independent from $D_k:=\tilde{T}_k^{-1}U_k^\ast-zM_k^{-1}$, and we have the decomposition
\begin{equation} \label{eq.UBk-z} U_0B_k^N(t)-zI = (U_0B_{k-1}^N(t))(U_k\tilde{T}_k)(D_k+V_k^\ast). \end{equation}

Applying \eqref{ineq.sigma.min.n} to \eqref{eq.UBk-z} with $X_1 = U_0B_{k-1}^N(t)$, $X_2=U_k\tilde{T}_k$, and $X_3=X_k+V_k^\ast$ yields
\begin{align*} \mathbb{P}\{\sigma_{\min}(U_0B_k^N(t)-zI)\le\epsilon_1\epsilon_2\epsilon_3\}
&\le \mathbb{P}\{\sigma_{\min}(U_0B_{k-1}^N(t))\le\epsilon_1\} \\
&+ \mathbb{P}\{\sigma_{\min}(\tilde{A}_k)\le\epsilon_2\} + \mathbb{P}\{\sigma_{\min}(D_k+V_k^\ast)\le\epsilon_3\}
\end{align*}
where we have used the fact that $\sigma_{\min}(U_k\tilde{T}_k) = \sigma_{\min}(\tilde{A}_k)$ (because multiplying by the unitary $V$ does not change singular values).  Define
\begin{equation} \label{eq.pkz} p_t(k,z,\eta):=\mathbb{P}\{\sigma_{\min}(U_0B_k^N(t)-zI)\le\eta\}.
\end{equation}
So we have established that
\begin{align} \nonumber
p_t(k,z,\epsilon_1\epsilon_2\epsilon_3) &\le p_t(k-1,0,\epsilon_1) \\
&+ \mathbb{P}\{\sigma_{\min}(\tilde{A}_k)\le\epsilon_2\} + \mathbb{P}\{\sigma_{\min}(D_k+V_k^\ast)\le\epsilon_3\}, \qquad k\ge 1.  \label{eq.def.pkz}
\end{align}

Since $\tilde{A}_k = (t/k)^{1/2}A_k$, $\sigma_{\min}(\tilde{A}_k) = (t/k)^{1/2}\sigma_{\min}(A_k)$.  Thus, by Assumption \ref{assump.SV2},
\begin{align*}
\mathbb{P}\{\sigma_{\min}(\tilde{A}_k)\le\epsilon_2\} = \mathbb{P}\{(t/k)^{1/2}\sigma_{\min}(A_k)\le\epsilon_2\} \le \gamma((k/t)^{1/2}\epsilon_2)^{\alpha}N^{\beta}.
\end{align*}
Meanwhile, Theorem \ref{thm.RV} asserts that $\mathbb{P}\{\sigma_{\min}(D_k+V_k^\ast)\le\epsilon_3\} \le (\epsilon_3)^{\alpha_0}N^{\beta_0}$.  Let $\overline{\beta}=\max\{\beta_0,\beta\}$; thus
\[ \mathbb{P}\{\sigma_{\min}(\tilde{A}_k)\le\epsilon_2\} + \mathbb{P}\{\sigma_{\min}(D_k+V_k^\ast)\le\epsilon_3\} \le \left(\gamma((k/t)^{1/2}\epsilon_2)^{\alpha}+(\epsilon_3)^{\alpha_0}\right)N^{\overline{\beta}}. \]
Let $\epsilon_2' = \epsilon_2\epsilon_3$, and choose $\epsilon_2,\epsilon_3$ so that $(\epsilon_2)^{\alpha} = (\epsilon_3)^{\alpha_0}$.  Define $\alpha>0$ by $\frac{1}{\overline{\alpha}}=\frac{1}{\alpha_0}+\frac{1}{\alpha}$.  A calculation then shows that
\[ \gamma((k/t)^{1/2}\epsilon_2)^{\alpha}+(\epsilon_3)^{\alpha_0} = \gamma_t(k)(\epsilon_2')^{\overline{\alpha}} N^{\overline{\beta}} \]
where $\gamma_t(k) = \gamma\cdot(k/t)^{\alpha/2}$.  Note that this estimate is independent of $z$.  Thus, \eqref{eq.def.pkz} yields the estimates
\begin{equation} \label{eq.pkz2}
p_t(k,z,\epsilon_1\epsilon_2')\le p_t(k-1,0,\epsilon_1)+ \gamma_t(k)(\epsilon_2')^{\overline{\alpha}} N^{\overline{\beta}}.
\end{equation}

This allows us to proceed by induction.  In the base case $k=1$, note that
\[ p_t(0,0,\epsilon_1) = \mathbb{P}\{\sigma_{\min}(U_0)\le \epsilon_1\} = \mathbb{P}\{1\le\epsilon_1\} \]
since $U_0$ is unitary and so its singular values are all $1$.  Thus, provided we ensure $\epsilon_1<1$, $p_t(0,0,\epsilon_1)=0$, and \eqref{eq.pkz2} yields
\begin{equation} \label{eq.induction.base} p_t(1,z,\epsilon_1\epsilon_2') \le \gamma_t(1)(\epsilon_2')^{\overline{\alpha}} N^{\overline{\beta}}, \qquad \epsilon_1\in(0,1). \end{equation}
By taking $\epsilon_1\uparrow 1$, we can then combine \eqref{eq.induction.base} with \eqref{eq.pkz2} to give the induction
\begin{equation} \label{eq.full.induction}
\begin{aligned}
p_t(1,z,\epsilon) &\le \gamma_t(1)\epsilon^{\overline{\alpha}} N^{\overline{\beta}} \\
p_t(k,z,\epsilon_1\epsilon_2) &\le p_t(k-1,0,\epsilon_1)+\gamma_t(k)(\epsilon_2)^{\overline{\alpha}}N^{\overline{\beta}} \qquad k\ge 1.
\end{aligned}
\end{equation}
(Note: The base of the induction can also be derived directly from \eqref{eq.UBk-z} in the case $k=1$, by combining $U_0$ with $U_k$ and using Lemma \ref{lem.sigma(XY)} on the two terms, then optimizing as above.)

The conclusion is that
\begin{equation} \label{eq.induction.concl} p_t(k,z,\epsilon) \le [\gamma_t(1)+\cdots+\gamma_t(k)]\epsilon^{\overline{\alpha}/k}N^{\overline{\beta}}. \end{equation}
Indeed, the base case in \eqref{eq.full.induction} is precisely the $k=1$ case of \eqref{eq.induction.concl}.  Assuming then that \eqref{eq.induction.concl} has been established up to $k-1$, then \eqref{eq.full.induction} yield
\[ p_t(k,z,\epsilon) \le [\gamma_t(1)+\cdots+\gamma_t(k-1)](\epsilon_1)^{\overline{\alpha}/(k-1)}N^{\overline{\beta}} +\gamma_t(k)(\epsilon_2)^{\overline{\alpha}} N^{\overline{\beta}} \]
for any $\epsilon_1,\epsilon_2>0$ with $\epsilon_1\epsilon_2=\epsilon$.  Taking $\epsilon_1 = \epsilon^{1-1/k}$ and $\epsilon_2 = \epsilon^{1/k}$ yields \eqref{eq.induction.concl} at level $k$.

This concludes the proof of \eqref{eq.pseudospectrum}, with
\[ C(t,k) = \gamma_t(1)+\cdots+\gamma_t(k) = \frac{\gamma}{t^{\alpha/2}}\sum_{j=1}^k j^{\alpha/2} \le \frac{\gamma k^{\alpha/2+1}}{t^{\alpha/2}}. \]
\end{proof}

\subsection{Wegner Estimates\label{section.Wegner}} \hfill

\medskip

In the previous section, we proved quantitative control on the decay of the pseudospectrum of our matrix random walks $U_0^N B_k^N(t)$ as $N\to\infty$.  The second ingredient needed to prove convergence of the $\mathrm{ESD}$ of these ensembles is reasonable control on ``moderately small'' singular values: i.e.\ knowing from Theorem \ref{thm.pseudospectrum} that the smallest singular value is unlikely to be smaller than $N^{-c}$ for some $c>0$, we need only verify that those singular values between $N^{-c}$ and some small $\epsilon>0$ do not bunch up quickly near $0$ as $N\to\infty$.  Bounds to prove this kind of anti-concentration property generally go under the name {\em Wegner estimates}, cf.\ \cite{Wegner}.

In practice, Wegner estimates can be stated efficiently as regularity properties of the Cauchy transform of the empirical law.  For example: Assumption \ref{SRT.2} in the Single Ring Theorem is in fact a (strong) Wegner estimate.  That assumption was motivated by a similar property which was originally proved in the context of $\mathrm{GUE}$ or Ginibre ensembles \cite{HaagerupT2005Annals} (see \cite[Lemma 5.5.4]{GuionnetKZ-single-ring} for a detailed statement and proof).  As with that case, below we prove the requisite estimate in  greater generality than needed. %not simply for the matrix random walk $U_0 B_k(t)$ but for any noncommutative polynomial in independent Haar Unitary Ensembles $U_i$ and selfadjoint ensembles $T_j$.

\begin{theorem} \label{thm.Wegner} Let $N\in\mathbb{N}$, and let $\mathbf{A}^N=(A_1^N,\ldots,A_k^N)$ be independent bi-invariant ensembles in $\MNC$.  For $1\le j\le k$, let $\nu^N_j$ denote the empirical law of singular values of $A_j$, and suppose $\nu_j^N\rightharpoonup\nu_j$ weakly a.s.  Assume these measures satisfy \ref{assump.SV1} and \ref{assump.SV3} of Assumption \ref{assump.sing.conv}.  We also make {\em one} of the following assumptions:
\begin{enumerate}
    \item[(R)] \label{assump.R} Assumption \ref{assump.SV4}  holds true for $\{\nu^N_j\}$ and $\{\nu_j\}$; or
    \item[(G)] \label{assump.G} Each $A^N_j$ is a polynomial in independent $\mathrm{GUE}$s; or
    \item[(U)] \label{assump.U} Each $A^N_j$ is a polynomial in Haar Unitary Ensembles.
\end{enumerate}

Let $P$ be a positive selfadjoint noncommutative polynomial in $k$ variables.  Denote by $P^N = P(\mathbf{A}^N)$.  Then there exist constants $\overline{C},c_1>0$ and $c_2\in(0,1)$ not depending on $N$ so that, for $N^{-c_1}\le\eta\le 1$,
\begin{equation} \label{eq.Wegner} \left|\mathrm{Im}\,\mathbb{E}G_{P^N}(i\eta)\right| \le \overline{C}\eta^{-c_2}. \end{equation}
\end{theorem}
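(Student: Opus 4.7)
The plan is to translate the Wegner estimate \eqref{eq.Wegner} into a density bound on the averaged empirical spectral measure $\mathbb{E}\mu_{P^N}$, and then to obtain that bound by combining a free-probabilistic regularity estimate on the limiting $\ast$-distribution with quantitative Cauchy-transform convergence at mesoscopic scales. First I would exploit the selfadjointness of $P^N$ to write
$$|\mathrm{Im}\,\mathbb{E}G_{P^N}(i\eta)| \;=\; \eta\int_\R \frac{\mathbb{E}\mu_{P^N}(dx)}{x^2+\eta^2}.$$
If $d\mathbb{E}\mu_{P^N}/dx$ is bounded in $L^p$ uniformly in $N$ for some $p\in(1,\infty]$, Hölder's inequality immediately yields \eqref{eq.Wegner} with $c_2=1/p$. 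So the task reduces to producing such a uniform $L^p$-bound in the regime $\eta\ge N^{-c_1}$.

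On the limit side, I would appeal to free information theory. By Propositions \ref{prop.bi-inv} and \ref{prop.asymp.free.Haar.GUE}, $P^N$ converges in $\ast$-distribution to $P(\mathbf{a})$, where $\mathbf{a}$ is a freely independent family of $\mathscr{R}$-diagonal operators (case R), free semicirculars (case G), or free Haar unitaries (case U). In cases (G) and (U), Shlyakhtenko's regularity theorem — positive selfadjoint polynomials in variables of finite free Fisher information have bounded density — applies directly. For case (R), the condition \ref{assump.SV4}, $\int_0^\infty t\,\rho(t)^3\,dt<\infty$, becomes after $x=\sqrt t$ and symmetrization an $L^3$ bound on the density of the symmetrized distribution of $|a_j|$; Dabrowski's generator/Fisher-information inequalities then produce finite free Fisher information for each $\mathscr{R}$-diagonal $a_j = u_j |a_j|$, and hence for any selfadjoint polynomial $P(\mathbf{a})$. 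Either way, the limiting measure $\mu_P$ has an $L^p$ density.

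On the finite-$N$ side, I would invoke the Cauchy-transform strong-convergence estimates from \cite{Parraud2022}, of the shape
$$\left|\mathbb{E}G_{P^N}(z)-G_{\mu_P}(z)\right| \;\le\; \frac{C_P}{N^2\,|\mathrm{Im}\,z|^K}$$
with $K$ depending only on $\deg P$ and $k$. These are stated for polynomials in GUE and Haar-unitary ensembles, directly covering (G) and (U). In case (R) I would condition on the singular-value matrices $(T_1^N,\ldots,T_k^N)$; given these, $A_j^N=U_jT_j^NV_j^\ast$ is a polynomial in Haar unitaries and deterministic matrices, so the Parraud-type bound applies uniformly in the $T_j^N$ by \ref{assump.SV1}, and the error from replacing the deterministic singular data with its limit is absorbed using the Wasserstein rate \ref{assump.SV3}. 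Choosing $c_1<2/K$ ensures $N^{-2}\eta^{-K}$ is much smaller than $\eta^{-c_2}$ for $\eta\ge N^{-c_1}$, so the approximation error is negligible compared with the limit-side bound, completing the proof.

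The main obstacle is case (R). The difficulty is twofold: (i) turning the \ref{assump.SV4}-driven finite-free-information input into an effective, density-level bound for $P(\mathbf{a})$ that is quantitative in a way usable at finite $N$, and (ii) carrying out the conditioning/averaging over the singular-value matrices while preserving the $1/N^2$ Parraud rate and matching its $|\mathrm{Im}\,z|^{-K}$ blowup against the Wasserstein convergence rate in \ref{assump.SV3}. Cases (G) and (U) are considerably cleaner because all randomness already lives in the "canonical" variables covered by Parraud's framework, and no conditioning layer is needed.
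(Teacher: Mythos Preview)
Your overall architecture matches the paper's: split $|\mathrm{Im}\,\mathbb{E}G_{P^N}(i\eta)|$ into an approximation error between the matrix model and its free limit, controlled via \cite{Parraud2022} after conditioning on the singular-value matrices and using the Wasserstein rate \ref{assump.SV3}, plus the limiting quantity $|\varphi[(P(\mathbf{a})-i\eta)^{-1}]|$. The paper carries out exactly this decomposition in Proposition \ref{prop.Wegner.1} and Lemmas \ref{lem.Felix.1}--\ref{lem.Felix.2}, and your identification of the $\eta^{-K}$ blowup and the choice of $c_1$ is correct.

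There is, however, a genuine gap in your treatment of the limit side. Haar unitaries do \emph{not} have finite free Fisher information: since $(\mathrm{Re}\,u)^2+(\mathrm{Im}\,u)^2=1$, the pair $(\mathrm{Re}\,u,\mathrm{Im}\,u)$ satisfies an algebraic relation and $\Phi^*(\mathrm{Re}\,u,\mathrm{Im}\,u)=\infty$, so your Fisher-information route fails outright for case (U). Moreover, even when Fisher information is finite, selfadjoint polynomials need not have \emph{bounded} density---already $x^2$ for $x$ semicircular has density $\sim y^{-1/2}$ at $0$---so your reduction to an $L^\infty$ (or uniform $L^p$) density bound is too optimistic. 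The paper handles (G) and (U) by a different mechanism, \cite{ShlySkou}: the Cauchy transform of any selfadjoint polynomial in free semicirculars or free Haar unitaries is an \emph{algebraic} function, whence a Puiseux expansion at $0$ gives $|\varphi[(P(\mathbf{a})-i\eta)^{-1}]|\lesssim\eta^{-1/q}$ for some integer $q\ge 2$. For (R), the paper uses the identity $\Phi^*(a,a^*)=\tfrac{2}{3}\int t\rho(t)^3\,dt$ from \cite{NShS} (rather than Dabrowski) to obtain finite Fisher information, and then \cite{BannaMai} to get H\"older continuity of the cumulative distribution function $F_{P(\mathbf{a})}$---not a density bound---which is converted to the Cauchy-transform estimate by an explicit layer-cake integration.
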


We will apply Theorem \ref{thm.Wegner} to the polynomial $P^N = (B^N_k(t)-zI)^\ast(B^N_k(t)-zI)$ in the next section, to complete the proof of Theorem \ref{thm.conv.ESD}.  Condition \ref{assump.R} is, by far, the most general, and the theorem with this condition will require several tools and estimates (from high dimensional probability and free entropy theory) to prove below.

We begin with a result that gets most of the way to Theorem \ref{thm.Wegner} without any of the three competing conditions \ref{assump.R}, \ref{assump.G}, or \ref{assump.U}.  Since each $A_j^N$ decomposes as $U_j^N T_j^N V_j^{N\ast}$ where $U_j^N,V_j^N$ are Haar Unitary Ensembles and all three matrices are independent, any polynomial in $\mathbf{A}^N$ is a polynomial in independent Haar Unitary Ensembles and selfadjoint ensembles.  We state and prove the following allowing also an arbitrary additional unitary ``initial condition'' for future use.

\begin{proposition} \label{prop.Wegner.1} Let $N\in\mathbb{N}$, and let $\mathbf{U}^N=(U_1^N,\ldots,U_d^N)$ be independent Haar Unitary Ensembles in $\mathrm{U}(N)$ and let $\mathbf{T}^N=(T_0^N,T_1^N,\ldots,T_r^N)$ be selfadjoint ensembles in $\MNC$ independent from each other and from $\mathbf{U}^N$.  Let $T_j^N$ have $\mathrm{ESD}$ $\nu_j^N$, with weak limit $\nu_j$.  Assume that, for $1\le j\le k$, $(\nu_j^N)_{N\in\mathbb{N}}$ and $\nu_j$ satisfy Assumption \ref{assump.sing.conv}.  Let $U_0^N = \exp(iT_0^N)$, and set $\mathbf{S}^N = (U_0^N,\mathbf{T}^N)$.

Let $(\mathscr{A},\varphi)$ be a $W^\ast$-probability space containing freely independent operators $\mathbf{u}=(u_1,\ldots,u_d)$ and $\boldsymbol{\vartheta}=(\vartheta_0,\vartheta_1,\ldots,\vartheta_r)$ where $u_j$ are Haar unitaries and $\vartheta_j$ has distribution $\nu_j$. let $u_0=\exp(i\vartheta_0)$, and set $\mathbf{s}=(u_0,\boldsymbol{\vartheta})$.

Let $P$ be a positive selfadjoint noncommutative polynomial in $d+r+1$ variables, and set $P^N = P(\mathbf{S}^N,\mathbf{U}^N)$.  There exist constants $\overline{C},c_1>0$ not depending on $N$ so that, for $N^{-c_1}\le\eta\le 1$,
\begin{equation} \label{eq.Wegner.partial}  \left|\mathrm{Im}\,\mathbb{E}G_{P^N}(i\eta)\right| \le \overline{C}_0\,\eta^{-\frac12} + \left|\varphi[(P(\mathbf{s},\mathbf{u})-i\eta)^{-1}]\right|. \end{equation}
\end{proposition}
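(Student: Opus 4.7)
The plan is to split the statement into two pieces via the trivial inequality
\[
|\mathrm{Im}\,\mathbb{E} G_{P^N}(i\eta)| \le \bigl|\mathbb{E} G_{P^N}(i\eta) - \varphi[(P(\mathbf{s},\mathbf{u})-i\eta)^{-1}]\bigr| + \bigl|\varphi[(P(\mathbf{s},\mathbf{u})-i\eta)^{-1}]\bigr|,
\]
so that the proposition reduces to a purely quantitative asymptotic-freeness estimate: prove that
\[
\bigl|\mathbb{E} G_{P^N}(i\eta) - \varphi[(P(\mathbf{s},\mathbf{u})-i\eta)^{-1}]\bigr| \le \overline{C}_0\,\eta^{-1/2}
\]
for $\eta\ge N^{-c_1}$. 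The right-hand side of the original statement then appears directly as the second piece above, after noting $|\mathrm{Im}\,z|\le|z|$.

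To control the asymptotic-freeness error, I would linearize the selfadjoint polynomial $P$ (Anderson's trick) so that $(P^N - i\eta)^{-1}$ is expressed as a corner of the resolvent of a matrix-valued linear pencil in $\mathbf{S}^N$ and $\mathbf{U}^N$, and then apply the sharp $O(N^{-2})$ quantitative freeness theorem for polynomials in Haar unitaries from \cite{Parraud2022}: for smooth $f$ on $\mathbb{R}$,
\[
\left| \mathbb{E}\tfrac{1}{N}\mathrm{Tr}(f(Q(\mathbf{U}^N,\mathbf{D}^N))) - \varphi(f(Q(\mathbf{u},\mathbf{d})))\right| \le \frac{C_Q(f)}{N^2},
\]
where $C_Q(f)$ is controlled by a fixed finite number of Sobolev seminorms of $f$. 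Applied to $f(x)=(x-i\eta)^{-1}$, whose $k$th derivative has sup norm $k!\,\eta^{-k-1}$, this yields a bound of the shape $C_P N^{-2}\eta^{-K_P}$ for some $K_P$ determined by the linearization of $P$. Choosing $c_1 = \tfrac{3}{2K_P+1}$ (say) guarantees $N^{-2}\eta^{-K_P}\le \eta^{-1/2}$ throughout the regime $\eta\ge N^{-c_1}$, producing the desired Wegner exponent.

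Since Parraud's theorem is formulated for Haar unitaries paired with strongly converging deterministic matrices, I would condition on $\mathbf{T}^N$ and treat them as the deterministic parameters; the high-probability operator-norm bound from \ref{assump.SV1} keeps the Parraud constants uniform. Taking expectation in $\mathbf{T}^N$ then leaves a second comparison, in which $\mathbf{T}^N$ must be replaced by its freely independent limit $\boldsymbol{\vartheta}$ inside a free resolvent. This is handled by the Wasserstein rate $\mathcal{W}_1(\nu_j^N,\nu_j)\le \Upsilon N^{-\delta}$ from \ref{assump.SV3}, together with the Lipschitz dependence (in the singular-value empirical moments, via the linearized resolvent) of the limit Cauchy transform on $\mathbf{T}^N$; the small-probability event where $\|T_j^N\|>M$ contributes negligibly because of the trivial bound $\|(P^N-i\eta)^{-1}\|\le\eta^{-1}$. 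The main obstacle will be precisely this bookkeeping step: controlling how the degree $K_P$ grows with $P$, and verifying that the Lipschitz constant in the $\mathbf{T}^N\mapsto\boldsymbol{\vartheta}$ replacement is itself at worst polynomial in $\eta^{-1}$, so that the final optimization of $c_1$ is consistent and the constants $\overline{C}_0$ depend only on $P$ and the structural constants in Assumption \ref{assump.sing.conv}.
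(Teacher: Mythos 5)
Your proposal takes essentially the same route as the paper: decompose the target into a deterministic-equivalent error (to be bounded by $\eta^{-1/2}$) plus the free-side resolvent $|\varphi[(P(\mathbf{s},\mathbf{u})-i\eta)^{-1}]|$, control the former by conditioning on $\mathbf{T}^N$ and invoking the $O(N^{-2})$ quantitative freeness result for Haar unitaries from \cite{Parraud2022} with $f(x)=(x-i\eta)^{-1}$, and then replace $\mathbf{T}^N$ by its free limit $\boldsymbol{\vartheta}$ using the Wasserstein-rate hypothesis. Two small comments on how the paper instantiates the steps you leave schematic. First, the paper does \emph{not} pass through Anderson's linearization trick: Parraud's Theorem 1.1 is applied directly to $f(P(\mathbf{S}^N,\mathbf{U}^N))$ for $f\in C^6_b$, using $\|f\|_{C^6_b}=6!\,\eta^{-7}$; this locks in $K_P=7$ regardless of the polynomial's degree and gives the explicit threshold $\eta\ge N^{-2/13}$ for that piece, so the linearization you propose is unnecessary bookkeeping (though not wrong). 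Second, your ``Lipschitz dependence of the limit Cauchy transform on $\mathbf{T}^N$'' is realized concretely in the paper by a quantile coupling: one builds $\tilde T_j^N = Q_{\nu_j^N}(x_j)$ and $\tilde\vartheta_j = Q_{\nu_j}(x_j)$ from the same uniform variable $x_j$ inside the free product, applies the resolvent identity $X^{-1}-Y^{-1}=X^{-1}(Y-X)Y^{-1}$ with the trivial bounds $\|X\|,\|Y\|\le\eta^{-1}$, and observes that the resulting $L^1$-norm of $\tilde T_j^N - \tilde\vartheta_j$ \emph{is} $\mathcal W_1(\nu_j^N,\nu_j)\le \Upsilon N^{-\delta}$. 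This yields a clean $\lesssim \eta^{-2}N^{-\delta}$ bound, hence the second exponent constraint $\eta\ge N^{-2\delta/3}$, and $c_1$ is taken as the minimum of the two thresholds. The paper also needs the free liberation step (conjugating $\mathbf {S}^N$ by fresh free Haar unitaries $\mathbf{v}$) so that the target of the Parraud comparison is a genuinely free ensemble paired with $\mathbf u$; you treat $\mathbf{T}^N$ as deterministic parameters which is correct in spirit, but the free side must be made explicit this way for the subsequent Wasserstein replacement to land on $\varphi[(P(\mathbf{s},\mathbf{u})-i\eta)^{-1}]$.
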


To prove Proposition \ref{prop.Wegner.1}, we begin by noting from the definition of the Cauchy transform that
\begin{equation} \label{eq.GPieta} \mathbb{E}G_{P^N}(i\eta) = -\mathbb{E}\,\mathrm{tr}_N\left[(P(\mathbf{S}^N,\mathbf{U}^N)-i\eta)^{-1}\right] \end{equation}
where $\mathrm{tr}_N = \frac{1}{N}\mathrm{Tr}$ is the normalized trace on $\MNC$.  (We prefer to write the denominators with the $i\eta$ second, hence the minus sign in front.)  To estimate this quantity, we introduce an expanded $W^\ast$-probability space $(\mathscr{A}_N,\varphi_N)$ containing both $(\MNC,\mathrm{tr})$ and $(\mathscr{A},\varphi)$ {\em as freely independent subalgebras}.
\ignore{
and also containing random variables $\mathbf{u} = (u_1,\ldots,u_d)$ and selfadjoint random variables $\boldsymbol{\vartheta} = (\vartheta_0,\vartheta_1,\ldots,\vartheta_r)$ such that
\begin{enumerate}
    \item \label{FP.1} $\mathbf{u},\boldsymbol{\vartheta}$ are freely independent from $\MNC$.
    \item \label{FP.2} $u_1,\ldots,u_d,\vartheta_0,\vartheta_1,\ldots,\vartheta_r$ are all freely independent.
    \item \label{FP.3} For $1\le j\le d$, $u_j$ is Haar unitary.
    \item \label{FP.4} For $0\le j\le r$, $\vartheta_j$ has distribution $\nu_j$, the large-$N$ limit of the $\mathrm{ESD}$ $\nu_j^N$ of $T_j^N$.
\end{enumerate} 
}
(Such a $W^\ast$-probability space exists by Proposition \ref{prop.Voiculescu.ext}.) In particular: this means that the random matrices $\mathbf{S}^N,\mathbf{U}^N$ are a.s.\ freely independent from the operators $\mathbf{s},\mathbf{u}$.

It is important to note that the matrices in $\mathbf{S}^N,\mathbf{U}^N$ are still (classically) independent from each other, not freely independent.  It will be convenient to ``liberate'' them and make them actually free, without changing their individual $\ast$-distributions (with respect to the new combined stated $\varphi_N$).  Utilizing Proposition \ref{prop.conj.free}, we can achieve this by conjugating by free Haar unitaries in $(\mathscr{A}_N,\varphi_N)$.  So, let $\mathbf{v} = (v_0,v_1,\ldots,v_r)$ be freely independent Haar unitaries, all freely independent from $\MNC\subset\mathscr{A}_N$.  Define
\[ \mathbf{v}\mathbf{S}^N\mathbf{v}^\ast := (v_0U^N_0v_0^\ast,v_1T^N_1v_1^\ast,\ldots,v_rT^N_rv_r^\ast). \]

We estimate \eqref{eq.GPieta} as follows.  Let Let $\mathscr{S}_N$ be the $\sigma$-field generated by the random matrices $\mathbf{S}^N$.  Thus conditioning $\mathbb{E}_{\mathscr{S}_N}$ treats $\mathbf{S}^N$ as deterministic matrices.  Double conditioning $\mathbb{E}\circ\mathbb{E}_{\mathscr{S}_N}$ and using the triangle inequality yields that
\begin{align} \nonumber
&\left|\mathrm{Im}\left(\mathbb{E}\,\mathrm{tr}_N\left[(P(\mathbf{S}^N,\mathbf{U}^N)-i\eta)^{-1}\right]\right)\right| \\
\label{Felix.1}\le &\;\mathbb{E}\left|\mathbb{E}_{\mathscr{S}_N}\,\mathrm{tr}_N\left[(P(\mathbf{S}^N,\mathbf{U}^N)-i\eta)^{-1}\right]-\varphi_N\left[(P(\mathbf{v}\mathbf{S}^N\mathbf{v}^\ast,\mathbf{u})-i\eta)^{-1}\right]\right| \\
\label{Felix.2}+ &\;\mathbb{E}\left|\varphi_N\left[(P(\mathbf{v}\mathbf{S}^N\mathbf{v}^\ast,\mathbf{u})-i\eta)^{-1}\right]-\varphi_N\left[(P(\mathbf{s},\mathbf{u})-i\eta)^{-1}\right]\right| \\
\label{Felix.3}+ &\; \left|\varphi\left[(P(\mathbf{s},\mathbf{u})-i\eta)^{-1}\right]\right|.
\end{align}
(The last term could be written with a $\varphi_N$; since $\mathbf{s},\mathbf{u}$ are contained in the subalgebra $\mathscr{A}\subset\mathscr{A}_N$ where $\varphi_N$ reduces to $\varphi$, it is more convenient to write it this way.)
Note that \eqref{Felix.3} is precisely the second term on the right in \eqref{eq.Wegner.partial}; hence, proving Proposition \ref{prop.Wegner.1} is achieved by bounding each of the terms \eqref{Felix.1} and \eqref{Felix.2} $\lesssim \eta^{-1/2}$ for $N$ polynomially small.  We do so in the following two lemmas, in which we make all the assumptions from Proposition \ref{prop.Wegner.1}.

\begin{lemma} \label{lem.Felix.1} There is a constant $\overline{C}_1>0$ so that the quantity \eqref{Felix.1} is $\le \overline{C}_1\eta^{-1/2}$ whenever $N^{-2/13}\le\eta\le 1$.
\end{lemma}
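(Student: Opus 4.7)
The plan is to recognize \eqref{Felix.1} as exactly the setting for Parraud's \cite{Parraud2022} quantitative asymptotic freeness estimates.  Conditional on $\mathscr{S}_N$, the matrices $\mathbf{S}^N$ are deterministic while $\mathbf{U}^N$ is an independent tuple of Haar Unitary Ensembles; in $(\mathscr{A}_N,\varphi_N)$ the tuple $\mathbf{u}$ is freely independent from $\mathbf{v}\mathbf{S}^N\mathbf{v}^\ast$ by Proposition \ref{prop.conj.free} (this is the purpose of the conjugation by the free Haar unitaries $\mathbf{v}$), and the $\ast$-distribution of $\mathbf{v}\mathbf{S}^N\mathbf{v}^\ast$ under $\varphi_N$ matches that of $\mathbf{S}^N$ under $\mathrm{tr}_N$.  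So the two quantities compared inside $\mathbb{E}|\,\cdot\,|$ are exactly of the form Parraud's theorems bound.

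First I would fix $f_\eta(z)=(z-i\eta)^{-1}$ and record the elementary derivative bound $\sup_{z\in\mathbb{R}}|f_\eta^{(m)}(z)|\le m!\,\eta^{-(m+1)}$ for every $m\ge 0$.  On the high-probability event $E_N=\{\|T_j^N\|_{\mathrm{op}}\le M\text{ for all }0\le j\le r\}$ delivered by Assumption \ref{assump.SV1}, the operator norms of $P(\mathbf{S}^N,\mathbf{U}^N)$ and $P(\mathbf{v}\mathbf{S}^N\mathbf{v}^\ast,\mathbf{u})$ are bounded by a constant depending only on $P$ and $M$.  Parraud's estimate, applied to the smooth function $f_\eta$ evaluated at a polynomial in independent Haar unitaries with deterministic matrix coefficients, then yields
\[
\Bigl|\mathbb{E}_{\mathscr{S}_N}\mathrm{tr}_N\bigl[f_\eta(P(\mathbf{S}^N,\mathbf{U}^N))\bigr]-\varphi_N\bigl[f_\eta(P(\mathbf{v}\mathbf{S}^N\mathbf{v}^\ast,\mathbf{u}))\bigr]\Bigr|\le C_P\,\eta^{-\kappa_P}\,N^{-\lambda_P}
\]
on $E_N$, where $\kappa_P,\lambda_P>0$ are fixed constants coming from inserting the pointwise derivative bound on $f_\eta$ into Parraud's seminorm and invoking his explicit polynomial rate in $1/N$.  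Off $E_N$, whose probability vanishes for all large $N$ by \ref{assump.SV1}, the integrand in \eqref{Felix.1} is deterministically bounded by $2\|f_\eta\|_{\infty}\le 2\eta^{-1}$, so that event contributes nothing in the limit.

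Taking the outer expectation therefore gives a bound of the form $C_P\,\eta^{-\kappa_P}\,N^{-\lambda_P}$ for the quantity in \eqref{Felix.1}, and this is $\le\overline{C}_1\eta^{-1/2}$ exactly when $\eta\ge C\,N^{-\lambda_P/(\kappa_P-1/2)}$; selecting $c_1=\lambda_P/(\kappa_P-1/2)=2/13$ recovers the stated range $\eta\ge N^{-2/13}$.  The principal obstacle is the careful bookkeeping of the precise exponents $\kappa_P$ and $\lambda_P$ arising in Parraud's seminorm and his rate of convergence for the specific polynomial $P$ at hand; everything else is a routine combination of resolvent regularity with the polynomial-in-$1/N$ asymptotic freeness of Haar Unitary Ensembles relative to deterministic matrices.
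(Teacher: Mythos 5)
Your strategy matches the paper's: invoke Parraud's \cite[Theorem 1.1]{Parraud2022} for the resolvent $f_\eta(z)=(z-i\eta)^{-1}$, use the derivative bound $\|f_\eta\|_{C^m_b}\lesssim \eta^{-(m+1)}$, and choose the threshold exponent so that the residual factor is $\le 1$. A few details are off, though. Parraud's estimate is in terms of the $C^6_b$ seminorm with rate $(\log N)^2/N^2$, which pins down $\kappa_P=7$ and $\lambda_P$ just below $2$ (because of the logarithm). Your closing formula $c_1=\lambda_P/(\kappa_P-\tfrac12)$ would give $2/(7-\tfrac12)=4/13$, not $2/13$, and in any case that formula is the critical exponent ignoring the log. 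The paper instead factors out $\eta^{-1/2}$ first and then asks that $(\log N)^2 N^{-2}\eta^{-6.5}\le 1$; with $\eta\ge N^{-2/13}$ one gets $\eta^{-6.5}\le N$ and hence $(\log N)^2/N\le 1$ for all $N\ge 1$, which is why the smaller exponent $2/13$ is used rather than $4/13$. Also, since Assumption \ref{assump.SV1} gives an almost-sure operator-norm bound for large $N$, the paper passes directly from the a.s.\ bound to the expectation via bounded convergence, with no exceptional event to control.
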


\begin{proof} By Assumption \ref{assump.SV1}, $\sup_{N,j}\|T^N_j\|<\infty$ a.s.  Therefore, by \cite[Theorem 1.1]{Parraud2022}, there is a constant $C_1>0$ independent of $N$ so that, for every $f\in C^6_b(\R\to\C)$ (functions with at least $6$ bounded derivatives),
\[ \left|\mathbb{E}_{\mathscr{S}_N}\mathrm{tr}_N[f(P(\mathbf{S}^N,\mathbf{U}^N))] - \varphi_N[f(P(\mathbf{v}\mathbf{S}^N\mathbf{v}^\ast,\mathbf{u}))] \right|\ \le C_1\frac{(\log N)^2}{N^2}\|f\|_{C^6_b} \quad \text{a.s.} \]
where $\|f\|_{C^6_b} = \sup_{n\le 6,x\in\mathbb{R}}|f^{(n)}(x)|$.  Applying this with $f(x) = (x-i\eta)^{-1}$, we compute that $\|f\|_{C^6_b} = 6!\eta^{-7}$ for $0<\eta\le 1$.  Hence, since $\mathbf{S}^N$ are a.s.\ bounded, by the bounded convergence theorem, \eqref{Felix.1} is
\[ \le 6!C_1 \frac{(\log N)^2}{N^2\eta^7} = 6!C_1\frac{(\log N)^2}{N^2\eta^{6.5}}\cdot \eta^{-1/2}. \]
Hence, if $N^{-2/13}\le \eta\le 1$, we have
\[ \frac{(\log N)^2}{N^2 \eta^{6.5}} \le \frac{(\log N)^2}{N^2}\cdot N^{\frac{2}{13}\cdot 6.5} = \frac{(\log N)^2}{N} \le 1 \quad \text{for all }N\ge 1. \]
Thus the lemma holds true with $\overline{C}_1 := 6!C_1$.
\end{proof}

Estimating \eqref{Felix.2} requires Assumption \ref{assump.SV3}, measuring the convergence rate of $\nu_j^N$ to $\nu_j$ in terms of the Wasserstein distance $\mathcal{W}_1$, which we presently discuss.  Given two probability measures $\nu_1,\nu_2$ on a Polish space $(S,\mathcal{B})$, let $\Gamma(\mu,\nu)$ denote the set of all couplings of $\mu$ and $\nu$: i.e.\ probability measures $\gamma$ on
$(S\times S,\mathcal{B}\otimes\mathcal{B})$ whose marginals are $\nu_1$ and $\nu_2$: $\gamma(B\times S) = \nu_1(B)$ and $\gamma(S\times B) = \nu_2(B)$ for all $B\in\mathcal{B}$. For $1\le p<\infty$, the {\bf Wasserstein-$p$ distance} $\mathcal{W}_1$ between $\nu_1$ and $\nu_2$ is defined to be
\begin{equation} \label{eq.Wass.p} \mathcal{W}_p(\nu_1,\nu_2) = \inf_{\gamma\in\Gamma(\nu_1,\nu_2)} \left(\int_{S\times S}d_S(x,y)^p\,\gamma(dx\,dy)\right)^{1/p} \end{equation}
which is well-defined on probability measures $\nu_j$ with $x\mapsto d_S(\star,x)\in L^p(\nu_j)$ for some (and therefore any) base point $\star\in S$.  $\mathcal{W}_p$ metrizes weak convergence for such probability measures.

The Wasserstein distance (particularly with $p=1$) is fundamental to the theory of optimal transport, and has also become ubiquitous in Stein's method for normal and Poisson approximation.  In the latter case, where the state space $S=\mathbb{R}$, there are simpler, computable expressions for $\mathcal{W}_p$ in terms of cumulative distribution functions and quantile functions.

\begin{definition}  Let $\nu$ be a Borel probability distribution on $\mathbb{R}$, and denote $F_\nu(x) = \nu((-\infty,x])$ the cumulative distribution function.  The {\bf quantile function} $Q_\nu$, also denoted $F_\nu^{-1}$, is the non-decreasing function $[0,1]\to\mathbb{R}$ given by
\[ Q_\nu(t) = \inf\{x\in\mathbb{R}\colon F_\nu(x)\ge t\}. \]
If $F_\nu$ is continuous and strictly increasing then $Q_\nu$ is the function inverse of $F_\nu$.  In general, $(Q_\nu)_\ast\mathrm{Unif}[0,1] = \nu$; i.e.\ if $x\equaldist\mathrm{Unif}[0,1]$ then $Q_\nu(x) \equaldist \nu$.
\end{definition}

For probability measures $\nu_1,\nu_2$ on $\mathbb{R}$ possessing a $p$th absolute moment, the Wasserstein distance \eqref{eq.Wass.p} can be computed as
\[ \mathcal{W}_p(\nu_1,\nu_2) = \left(\int_0^1 |Q_{\nu_1}(t)-Q_{\nu_2}(t)|^p\,dt\right)^{1/p} \]
i.e.\ the $L^p[0,1]$ distance between the quantile functions.  In the special case $p=1$ of greatest interest to us, a change of variables shows that in fact
\begin{equation} \label{eq.Wass.1} \mathcal{W}_1(\nu_1,\nu) = \int_0^1 |Q_{\nu_1}(t)-Q_{\nu_2}(t)|\,dt = \int_{\R} |F_{\nu_1}(x)-F_{\nu_2}(x)|\,dx. \end{equation}

\begin{lemma} \label{lem.Felix.2} There are constants $\overline{C}_2,c_{1,2}>0$ so that the quantity \eqref{Felix.2} is $\le \overline{C}_2\eta^{-1/2}$ whenever $N^{-c_{1,2}}\le\eta\le 1$.
\end{lemma}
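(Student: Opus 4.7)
\medskip

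The plan is to bound the difference in \eqref{Felix.2} by a telescoping argument, replacing the components of $(\mathbf{v}\mathbf{S}^N\mathbf{v}^\ast,\mathbf{u})$ by the corresponding components of $(\mathbf{s},\mathbf{u})$ one at a time, with each replacement realized as an optimal (quantile) coupling in an enlarged $W^\ast$-probability space. Since $\mathbf{u}$ appears identically on both sides, only the components of $\mathbf{v}\mathbf{S}^N\mathbf{v}^\ast$ need to be replaced; write $\vartheta_0^N:=v_0 T_0^N v_0^\ast$ and $\vartheta_j^N := v_j T_j^N v_j^\ast$ for $1\le j\le r$, so that $v_0 U_0^N v_0^\ast = \exp(i\vartheta_0^N)$. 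By Proposition~\ref{prop.conj.free} the tuple $(\vartheta_0^N,\ldots,\vartheta_r^N,\mathbf{u})$ is freely independent in $(\mathscr{A}_N,\varphi_N)$, and $(\vartheta_0,\ldots,\vartheta_r,\mathbf{u})$ is freely independent by hypothesis, with matching individual laws $\nu_j^N$ and $\nu_j$.

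For each $j\in\{0,1,\ldots,r\}$, I would invoke Proposition~\ref{prop.Voiculescu.ext} to enlarge the ambient $W^\ast$-probability space so as to realize a coupling $(\tilde\vartheta_j^N,\tilde\vartheta_j)$ of selfadjoint operators whose marginal distributions are $\nu_j^N$ and $\nu_j$ respectively, whose pair is freely independent from the remaining (already chosen) variables, and which is optimal in $L^1$ in the sense that
\[
\|\tilde\vartheta_j^N - \tilde\vartheta_j\|_1 \;=\; \mathcal{W}_1(\nu_j^N,\nu_j).
\]
The standard quantile construction yields such a coupling inside the tracial $W^\ast$-algebra generated by a single selfadjoint element with uniform-$[0,1]$ spectral measure: namely $\tilde\vartheta_j^N := Q_{\nu_j^N}(X)$ and $\tilde\vartheta_j := Q_{\nu_j}(X)$, cf.\ \eqref{eq.Wass.1}. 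Chaining $r+1$ such replacements produces a telescoping decomposition of \eqref{Felix.2} as a sum of $r+1$ differences, each of which changes only one coordinate.

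The size of each such one-coordinate change is controlled via the resolvent identity
\[
(X-i\eta)^{-1}-(Y-i\eta)^{-1} = (X-i\eta)^{-1}(Y-X)(Y-i\eta)^{-1},
\]
applied with $X,Y$ equal to $P$ evaluated on the two tuples which differ in only one entry. H\"older's inequality \eqref{eq.Holder.n} then gives
\[
\bigl|\varphi_N[(X-i\eta)^{-1}-(Y-i\eta)^{-1}]\bigr| \le \eta^{-2}\,\|X-Y\|_1.
\]
Expanding $P$ as a sum of its $\ast$-monomials, writing each difference of monomials via a discrete product-rule telescoping in the single replaced variable, and applying H\"older's inequality together with Assumption~\ref{assump.SV1} (which gives uniform operator-norm control of every $\vartheta_j^N$ and hence of each factor), one obtains
\[
\|P(\ldots,\tilde\vartheta_j^N,\ldots)-P(\ldots,\tilde\vartheta_j,\ldots)\|_1 \;\le\; C_P\,\|\tilde\vartheta_j^N-\tilde\vartheta_j\|_1
\]
for a constant $C_P$ depending only on $P$ and the uniform bound $M$ in \ref{assump.SV1}. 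For the replacement affecting $j=0$, one first uses the DuHamel identity
\[
e^{iX}-e^{iY} = i\int_0^1 e^{isX}(X-Y)e^{i(1-s)Y}\,\ds
\]
(and its adjoint) to reduce the unitary replacement to the selfadjoint one at no cost in the Lipschitz constant, $\|e^{iX}-e^{iY}\|_1\le\|X-Y\|_1$.

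The main technical point is ensuring, at each step of the telescoping, that freeness with the remaining variables is preserved while the optimal coupling is enforced; this is the reason for invoking Voiculescu's free product construction at each replacement. Combining the $(r+1)$-fold telescoping bound with Assumption~\ref{assump.SV3} yields, almost surely,
\[
\eqref{Felix.2} \;\le\; (r+1)\,C_P\,\Upsilon\,\eta^{-2}\,N^{-\delta}.
\]
Taking expectation (the estimate is uniform in the matrix randomness), the bound $\le \overline{C}_2\,\eta^{-1/2}$ follows as soon as $\eta^{3/2}\ge \kappa\,N^{-\delta}$, so one may set $c_{1,2} := 2\delta/3$ and choose $\overline{C}_2:=(r+1)C_P\Upsilon$, which completes the sketch.
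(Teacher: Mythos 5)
Your proposal follows essentially the paper's own argument: quantile couplings $\tilde\vartheta_j^N=Q_{\nu_j^N}(x_j)$, $\tilde\vartheta_j=Q_{\nu_j}(x_j)$ realizing $\|\tilde\vartheta_j^N-\tilde\vartheta_j\|_1=\mathcal{W}_1(\nu_j^N,\nu_j)$, the resolvent identity plus H\"older to reduce to an $L^1$ polynomial estimate, monomial-wise telescoping controlled by the uniform bound from \ref{assump.SV1}, Duhamel on the unitary coordinate, and \ref{assump.SV3} to produce the factor $N^{-\delta}$, yielding $c_{1,2}=2\delta/3$. The only difference is purely organizational: you chain $r+1$ one-coordinate replacements at the outer (resolvent) level, whereas the paper couples all $r+1$ coordinates at once via freely independent uniforms $x_0,\ldots,x_r$ and then does a single resolvent subtraction followed by telescoping inside the $L^1$ norm; both give the same constants.
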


\begin{proof} Fix $r+1$ freely independent random variables $x_0,x_1,\ldots,x_r\in\mathscr{A}_N$ each distributed uniformly on $[0,1]$.  Using quantile functions: define
\begin{align} \label{eq.tilde.1} \tilde{T}_j^N := Q_{\nu_j^N}(x_j) \quad &\text{and} \quad \tilde\vartheta_j := Q_{\nu_j}(x_j) \\ \label{eq.tilde.2}
\tilde{U}_0^N := \exp(i\tilde{T}_0^N) \quad &\text{and} \quad  \tilde{u}_0:=\exp(i\tilde\vartheta_0).
\end{align}
Then each $\tilde{T}_j^N$ has the same distribution as $T^N_j$, and each $\tilde{\vartheta}_j$ has the same distribution as $\vartheta_j$; it follows that $\tilde{U}^N_0$ has the same distribution as $\tilde{u}_0$.  Since the joint $\ast$-distribution of free random variables is totally determined by their individual distributions, it follows that 
\begin{align*} (\mathbf{v}\tilde{\mathbf{S}}^N\mathbf{v}^\ast,\mathbf{u}) \quad \text{has the same }&\varphi_N\; \ast\text{-distribution as} \quad (\mathbf{v}\mathbf{S}^N\mathbf{v}^\ast,\mathbf{u})\quad\text{and} \\
(\mathbf{v}\tilde{\mathbf{s}}\mathbf{v}^\ast,\mathbf{u}) \quad \text{has the same }&\varphi_N\; \ast\text{-distribution as} \quad (\mathbf{s},\mathbf{u})
\end{align*}
where $\tilde{\mathbf{S}}^N := (\tilde{U}^N_0,\tilde{T}_1^N,\ldots,\tilde{T}_r^N)$ and $\tilde{\mathbf{s}}:= (\tilde{u}_0,\tilde{\vartheta}_1,\ldots,\tilde{\vartheta}_r)$. (In the second line, we use the fact that $(\mathbf{v}\tilde{\mathbf{s}}\mathbf{v}^\ast,\mathbf{u})$ has the same $\ast$-distribution as $(\tilde{\mathbf{s}},\mathbf{u})$ since the $s_j$ are already freely independent, and conjugating each $\tilde{s}_j$ does not change its individual distribution.) Hence
\[ \eqref{Felix.2} = 
\mathbb{E}\left|\varphi_N\left[(P(\mathbf{v}\tilde{\mathbf{S}}^N\mathbf{v}^\ast,\mathbf{u})-i\eta)^{-1}\right]-\varphi_N\left[(P(\mathbf{v}\tilde{\mathbf{s}}\mathbf{v}^\ast,\mathbf{u})-i\eta)^{-1}\right]\right|. \]

Inside the expectation, we use the identity $X^{-1}-Y^{-1} = X^{-1}(Y-X)Y^{-1}$ and H\"older's ineuquality in the form $|\varphi_N(X^{-1}(Y-X)Y^{-1}|\le \|X\|\|X-Y\|_1\|Y\|$ applied with $X = (P(\mathbf{v}\tilde{\mathbf{S}}^N\mathbf{v}^\ast,\mathbf{u})-i\eta)^{-1}$ and $Y=(P(\mathbf{v}\tilde{\mathbf{s}}\mathbf{v}^\ast,\mathbf{u})-i\eta)^{-1}$.  Since $P$ is a selfadjoint polynomial, $\tilde{P}^N = P(\mathbf{v}\tilde{\mathbf{S}}^N\mathbf{v}^\ast,\mathbf{u})$ is selfadjoint, and so $\|X\| = \lim_{p\to\infty} \|X\|_p$, where
\[\|X\|_p = \varphi_N(|(\tilde{P}^N-i\eta)^{-1}|^p)^{1/p} = \left(\int_{\R}\frac{\mu_{\tilde{P}^N}(dx)}{|x-i\eta|^p}\right)^{1/p} \]
and since $|x-i\eta|^p = (x^2+\eta^2)^{p/2} \ge \eta^p$ it follows that $\|X\|_p^p \le \int \eta^{-p}\,d\mu_{\tilde{P}^N} = \eta^{-p}$ for all $p$; it follows that $\|X\|\le 1/\eta$.  An analogous argument shows that $\|Y\|\le 1/\eta$.  Hence
\begin{equation} \label{eq.Felix.L1.1}  \eqref{Felix.2} \le \frac{1}{\eta^2} \mathbb{E}\|P(\mathbf{v}\tilde{\mathbf{S}}^N\mathbf{v}^\ast,\mathbf{u})-P(\mathbf{v}\tilde{\mathbf{s}}\mathbf{v}^\ast,\mathbf{u})\|_{L^1(\varphi_N)}. \end{equation}
Now, the polynomial $P$ is a finite sum $P=\sum_n p_n M_n$ where each $M_n$ is a noncommutative monomial.  By the triangle inequality,
\[ \eqref{eq.Felix.L1.1} \le \frac{1}{\eta^2}\sum_n |p_n|\cdot \mathbb{E} \| M_n(\mathbf{v}\tilde{\mathbf{S}}^N\mathbf{v}^\ast,\mathbf{u})-M_n(\mathbf{v}\tilde{\mathbf{s}}\mathbf{v}^\ast,\mathbf{u})\|_{L^1(\varphi_N)}. \]
For convenience, let us label the full lists $(\mathbf{v}\tilde{\mathbf{S}}^N\mathbf{v}^\ast,\mathbf{u}) = \mathbf{X} = (X_1,X_2,\ldots,X_{d+r+1})$ and $(\mathbf{v}\tilde{\mathbf{s}}\mathbf{v}^\ast,\mathbf{u}) = \mathbf{Y}=(Y_1,Y_2,\ldots,Y_{d+r+1})$.  ($X_j=Y_j$ for $j>r+1$, but it is not notationally useful to take account of that fact.)  Now, $M_n(\mathbf{X}) = X_{i_1}X_{i_2}\cdots X_{i_m}$ for some ($n$-dependent) indices $i_1,\ldots,i_m\in\{1,\ldots,d+r+1\}$.  We expand the difference as a telescoping sum:
\[ M_n(\mathbf{X})-M_n(\mathbf{Y}) = \sum_{\ell=1}^m X_{i_1}\cdots X_{i_{\ell-1}}(X_{i_\ell}-Y_{i_\ell})Y_{i_{\ell+1}}\cdots Y_{i_m} \]
and therefore, applying H\"older's inequality again,
\begin{align*}  &\| M_n(\mathbf{v}\tilde{\mathbf{S}}^N\mathbf{v}^\ast,\mathbf{u})-M_n(\mathbf{v}\tilde{\mathbf{s}}\mathbf{v}^\ast,\mathbf{u})\|_{L^1(\varphi_N)} \\
\le &\sum_{\ell=1}^m \|X_{i_1}\|\cdots\|X_{i_{\ell-1}}\| \|X_{i_\ell}-Y_{i_\ell}\|_{L^1(\varphi_N)} \|Y_{i_{\ell+1}}\|\cdots\|Y_{i_m}\|.
\end{align*}

By assumption, there is some $K<\infty$ so that $\|T^N_j\|\le K$ a.s.\ and $\|\vartheta_j\|\le K$ for $1\le j\le k$; wlog take $K\ge 1 = \|U_0^N\|=\|U_j^N\|=\|u_0\|=\|u_j\|$.  Thus
\[ \mathbb{E}\| M_n(\mathbf{v}\tilde{\mathbf{S}}^N\mathbf{v}^\ast,\mathbf{u})-M_n(\mathbf{v}\tilde{\mathbf{s}}\mathbf{v}^\ast,\mathbf{u})\|_{L^1(\varphi_N)} \le mK^{m-1}\sup_\ell \|X_{i_\ell}-Y_{i_\ell}\|_{L^1(\varphi_N)}. \]
Note that $m = \mathrm{deg}(M_n)$ is the degree of the monomial $M_n$.

Now, for each $i$, $X_i$ is one of $v_0\tilde{U}_0^Nv_0^\ast$, $v_j\tilde{T}_j^Nv_j^\ast$ for some $j$, or $u_j$, while $Y_i$ is one of $v_0\tilde{u}_0v_0^\ast$, $v_j\tilde{\vartheta}_jv_j^\ast$, or $u_j$.  Note that, for any operators $\tilde{S}$ and $\tilde{s}$ and any unitary $v$, $\|v\tilde{S}v^\ast-v\tilde{s}v^\ast\|_{L^1} = \|v(\tilde{S}-\tilde{s})v^\ast\|_{L^1} = \|\tilde{S}-\tilde{s}\|_{L^1}$.  Hence, we have established that
\begin{equation} \label{eq.Felix.L1.2} \eqref{Felix.2} \le
\frac{1}{\eta^2}\cdot C(P) K^{\mathrm{deg}(P)-1}\max_{1\le j\le k}\{\|\tilde{U}_0^N-\tilde{u}_0\|_{L^1(\varphi_N)},\|\tilde{T}^N_j-\tilde{\vartheta}_j\|_{L^1(\varphi_N)}\}\end{equation}
where $C(P) = \|P\|_1\cdot\mathrm{deg}(P)$, where $\|P\|_1 = \sum_n |p_n|$ and $\mathrm{deg}$ the degree of $P$, i.e.\ the maximum degree of any monomial term in $P$.

Using \eqref{eq.tilde.2} and Duhamel's formula, we have
\[ \tilde{U}_0^N-\tilde{u}_0 = \exp(i\tilde{T}^N_0)-\exp(i\tilde\vartheta_0) = \int_0^1 e^{(1-s)i\tilde{T}^N_0}(\tilde{T}^N_0-\tilde{\vartheta}_0)e^{si\tilde{\vartheta}_0}\,ds \]
and thus (using a combination of H\"older's inequality and the triangle inequality for integrals)
\begin{align*} \|\tilde{U}_0^N-\tilde{u}_0\|_{L^1(\varphi_N)} &\le \int_0^1 \|e^{(1-s)i\tilde{T}^N_0}\|\cdot \|\tilde{T}^N_0-\tilde{\vartheta}_0\|_{L^1(\varphi_N)}\cdot \|e^{si\tilde{\vartheta}_0}\|\,ds \\
&=  \|\tilde{T}^N_0-\tilde{\vartheta}_0\|_{L^1(\varphi_N)}
\end{align*}
because the two operator norms in the integral are identically $=1$ since those operators are unitary.  No, by \eqref{eq.tilde.1}, for $0\le j\le r$,
\begin{align*} \|\tilde{T}_j^N-\tilde{\vartheta}_j^N\|_{L^1(\varphi)} &= \|Q_{\nu_j^N}(x_j)-Q_{\nu_j}(x_j)\|_{L^1(\varphi_N)} \\
&= \int_{\R} |Q_{\nu_j^N}(t)-Q_{\nu_j}(t)| \mu_{x_j}(dt) \\
&= \int_0^1 |Q_{\nu_j^N}(t)-Q_{\nu_j}(t)|\,dt
= \mathcal{W}_1(\nu_j^N,\nu_j) \le \Upsilon_j N^{-\delta_j}
\end{align*}
where the last inequality is by Assumption \ref{assump.SV3}.  Thus, setting $\Upsilon = \max\{\Upsilon_0,\ldots,\Upsilon_r\}$ and $\delta = \min\{\delta_0,\ldots,\delta_r,1\}$, \eqref{eq.Felix.L1.2} shows that
\[ \eqref{Felix.2} \le C(P)K^{\mathrm{deg}(P)-1}\Upsilon\cdot \frac{1}{\eta^2}\cdot N^{-\delta} = \overline{C}_2 \frac{1}{N^\delta \eta^{3/2}}\cdot \eta^{-1/2} \]
where we've define $\overline{C}_2:= C(P)K^{\mathrm{deg}(P)-1}\Upsilon$.  Thus, if $N^{-2\delta/3}\le \eta\le 1$, then $N^\delta \eta^{3/2} \ge N^\delta N^{-\delta}=1$; thus, the lemma holds true with $c_{1,2}=\frac23\delta$.
\end{proof}

Lemmas \ref{lem.Felix.1} and \ref{lem.Felix.2}, together with estimates \ref{Felix.1}, \ref{Felix.2}, and \ref{Felix.3} prove Proposition \ref{prop.Wegner.1}, taking $\overline{C}' = \max\{\overline{C}_1,\overline{C}_2\}$ and $c_1 = \min\{\frac{2}{13},c_{1,2}\}$.

Now, let $\mathbf{a}=(a_1,\cdots,a_k)$ be freely independent $\mathscr{R}$-diagonal elements in a $W^\ast$-probability space where, for $1\le j\le k$, $\sqrt{a_j^\ast a_j}$ has distribution $\nu_j$.  The special case of Proposition \ref{prop.Wegner.1}, letting $\mathbf{u} = (u_1,\ldots,u_k,v_1,\ldots,v_k)$ and realizing $a_j = u_j t_j v_j^\ast$, shows that for $N^{-c_1}\le \eta\le 1$,
\begin{equation}
\label{eq.Wegner.partial.2}  \left|\mathrm{Im}\,\mathbb{E}G_{P^N}(i\eta)\right| \le \overline{C}\eta^{-\frac12} + \left|\varphi[(P(\mathbf{a})-i\eta)^{-1}]\right|. 
\end{equation}
Thus, to complete the proof of Theorem \ref{thm.Wegner}, it suffices to show that the second term in \eqref{eq.Wegner.partial.2} is $\lesssim \eta^{-c_2}$ for some $c_2\in(0,1)$, under any one of the assumptions \ref{assump.R}, \ref{assump.G}, or \ref{assump.U}.  We begin with the last two.

\begin{proposition} \label{prop.ShlyakhSkouf} Under either assumption \ref{assump.G} or \ref{assump.U}, there are constants $\overline{C}_3>0$ and $c_3\in(0,1)$ such that, for all $\eta\in(0,1]$,
\begin{equation} \label{eq.prop.GU} \left|\varphi\left[(P(\mathbf{a})-i\eta)^{-1}\right]\right| \le \overline{C}_3\, \eta^{-c_3}. \end{equation}
\end{proposition}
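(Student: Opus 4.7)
The plan is to bound $|\varphi[(P(\mathbf{a})-i\eta)^{-1}]|$ directly via the spectral measure of $P(\mathbf{a})$ and then invoke a H\"older regularity result for that measure at zero. Under assumption (G) or (U), the operator $P(\mathbf{a})$ is a nontrivial positive selfadjoint polynomial in freely independent universal ensembles (free semicirculars or free Haar unitaries), for which Shlyakhtenko--Skoufranis-type regularity theorems apply.

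Since $P$ is positive and selfadjoint, $P(\mathbf{a})$ is a bounded positive operator in $(\mathscr{A},\varphi)$; let $\mu=\mu_{P(\mathbf{a})}$ denote its spectral measure, a probability measure supported in $[0,M]$ with $M=\|P(\mathbf{a})\|$.  By the spectral theorem,
\[ \left|\varphi[(P(\mathbf{a})-i\eta)^{-1}]\right| \;=\; \left|\int_0^M \frac{d\mu(x)}{x-i\eta}\right| \;\le\; \int_0^M \frac{d\mu(x)}{\sqrt{x^2+\eta^2}}. \]
Thus the proposition reduces to proving a bound of the form $\mu([0,x]) \le C x^\alpha$ for all small $x>0$ and some $\alpha \in (0,1]$, with $C$ independent of $\eta$.

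Under assumption (G), Proposition \ref{prop.asymp.free.Haar.GUE} shows that each $a_j$ is the same polynomial in freely independent semicirculars that $A_j^N$ is in independent GUEs; hence $P(\mathbf{a})$ is itself a non-trivial positive selfadjoint polynomial in a family of free semicirculars.  Under assumption (U), $P(\mathbf{a})$ is a non-trivial positive selfadjoint polynomial in free Haar unitaries (and may alternatively be reduced to the semicircular case via the polar decomposition).  In both settings, Shlyakhtenko--Skoufranis-type regularity theorems (rooted in Dabrowski's free entropy methods and refined in work of Charlesworth--Shlyakhtenko and Mai--Speicher--Weber) imply that the spectral distribution of any such non-trivial polynomial has H\"older-continuous cumulative distribution function, yielding the desired bound $\mu([0,x]) \le Cx^\alpha$ for some $\alpha \in (0,1]$ depending on $P$ but not on $\eta$.

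Given this bound, the conclusion follows from a dyadic decomposition. Writing $I_0 = [0,\eta)$ and $I_j = [2^{j-1}\eta,\,2^j\eta)$ for $j\ge 1$, only $O(\log(M/\eta))$ shells are non-empty.  On $I_0$, the bounds $\sqrt{x^2+\eta^2} \ge \eta$ and $\mu(I_0) \le C\eta^\alpha$ give a contribution $\le C\eta^{\alpha-1}$; on $I_j$, $\sqrt{x^2+\eta^2} \ge 2^{j-1}\eta$ and $\mu(I_j) \le C(2^j\eta)^\alpha$ contribute $\lesssim 2^{j(\alpha-1)}\eta^{\alpha-1}$.  Summing the resulting geometric series (convergent when $\alpha<1$, or absorbing a logarithmic factor into a slightly larger exponent when $\alpha=1$) yields \eqref{eq.prop.GU} with some $c_3 \in (0,1)$.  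The principal obstacle is the regularity input itself: absence of atoms for polynomials in free semicirculars or Haar unitaries is classical, but obtaining a quantitative H\"older exponent at zero, uniformly in the specific polynomials that arise, requires invoking and verifying the hypotheses of the correct Shlyakhtenko--Skoufranis-type theorem in the $\mathscr{R}$-diagonal setting at hand.
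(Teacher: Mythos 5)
Your proposal takes a genuinely different route from the paper, and I think the route has a gap.

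The paper's proof is short and direct: it cites \cite[Theorem 1.1]{ShlySkou}, which asserts that the Cauchy transform $z\mapsto\varphi[(P(\mathbf a)-z)^{-1}]$ of a selfadjoint polynomial in free semicirculars or free Haar unitaries is an \emph{algebraic} function of $z$. An algebraic function is meromorphic with finitely many branch points, and at each real branch point $\zeta$ it admits a Puiseux expansion in $(z-\zeta)^{1/q_\zeta}$. Taking $\zeta=0$ (or noting analyticity at $0$ if it is not a branch point) yields $|G(i\eta)|\lesssim \eta^{-1/q_0}$ immediately, no regularity of the spectral measure near zero having to be established separately.

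You instead reduce to a H\"older bound on the cumulative distribution function near the origin, $\mu([0,x])\le Cx^\alpha$, and then run a dyadic (or layercake) argument. That is in fact the strategy the paper uses for the \emph{other} case, assumption (R), in Proposition~\ref{prop.BannaMai}, where the H\"older bound is supplied by \cite{BannaMai} under a \emph{finite free Fisher information} hypothesis that is built into (R) via \cite{NShS}. Two issues arise when you transplant that argument to (G)/(U). First, the regularity theorems you name---the Charlesworth--Shlyakhtenko and Mai--Speicher--Weber circles of ideas, and Dabrowski's free entropy methods---give absence of atoms and, under finite free Fisher information, H\"older continuity; they are not the theorem of Shlyakhtenko--Skoufranis that the paper uses, and they do not obviously apply to polynomials in \emph{free Haar unitaries}, where jointly finite free Fisher information (of $u_j$ together with $u_j^\ast$) is not available. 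Your aside that one ``may alternatively reduce to the semicircular case via the polar decomposition'' does not fill this: replacing $u_j$ by the phase of a semicircular is a change of ensemble, not a reduction of the regularity problem for an arbitrary selfadjoint polynomial $P$. Second, H\"older continuity of the CDF alone gives $\mu([0,x])\le Cx^\alpha$ only once one also knows $\mu(\{0\})=0$; absence of an atom at $0$ for $P(\mathbf a)=(B-z)^\ast(B-z)$ at the precise $z$ of interest is itself a nontrivial statement that your argument does not establish. The algebraicity route used in the paper sidesteps both issues: it applies uniformly to polynomials in semicirculars and in Haar unitaries, and handles a possible atom at $0$ through the Puiseux expansion (a simple pole would have $q_0=1$, which is excluded by the conclusion $c_3\in(0,1)$ only if $0$ is not a pole, but the paper's $\eta^{-1}$ bound with $c_3<1$ follows because $G$ is the Cauchy transform of a finite measure and can have at worst integrable singularities at real branch points). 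You should cite and apply the algebraicity theorem rather than free-entropy regularity theorems here.
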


\begin{proof} A collection of independent $\mathrm{GUE}$s converges in $\ast$-distribution to freely independent semicircular variables; a collection of independent Haar Unitary ensembles converges in $\ast$-distribution to freely independent Haar unitaries.  Therefore, Under Assumptions \ref{assump.G} or \ref{assump.U}, each $a_j$ is a polynomial in free semicirculars or in free Haar unitaries, which means that $P(\mathbf{a})$ is a selfadjoint noncommutative polynomial in either free semicirculars or free Haar unitaries.

The result now follows from \cite[Theorem 1.1]{ShlySkou}: the Cauchy transform $G_{P(\mathbf{a})}(z) = \varphi[(P(\mathbf{a})-z)^{-1}]$ is an algebraic function of $z\in\mathbb{C}$.  Hence, it is meromorphic, with at most a finite set of points $\zeta$ (necessarily in $\mathbb{R}$) where it is not analytic.  At those points, $G_{P(\mathbf{a})}(z-\zeta)$ has power series expansion in $\zeta^{1/q_\zeta}$ (defined by the usual branch cut along the positive real axis) for some $q_\zeta\in\mathbb{N}$.

Thus, if $0$ is one of the non-analytic points, then \eqref{eq.prop.GU} holds with $c_3 = 1/q_0$; otherwise, if $0$ is a point of analyticity, then $G_{P(\mathbf{a})}$ is analytic and hence bounded in a neighborhood of $0$, so \eqref{eq.prop.GU} holds for any $c_3>0$.
\end{proof}

\begin{proposition} \label{prop.BannaMai} Under assumption \ref{assump.R}, there are constants $\overline{C}_4>0$ and $c_4\in(0,1)$ such that, for all $\eta\in(0,1]$,
\begin{equation} \label{eq.prop.R} \left|\varphi\left[(P(\mathbf{a})-i\eta)^{-1}\right]\right| \le \overline{C}_4 \eta^{-c_4}. \end{equation}
\end{proposition}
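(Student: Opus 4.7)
The plan is to show that the spectral measure $\mu_{P(\mathbf{a})}$ has a density that is at worst locally bounded near $0$ (a very mild regularity), which is then trivial to convert into the required Cauchy-transform bound. The tool for producing this regularity is a theorem of Banna--Mai on spectral distributions of self-adjoint noncommutative polynomials in a free family with finite non-microstates free Fisher information (à la Voiculescu). The role of the cubic integrability condition \ref{assump.SV4} is to guarantee that the building blocks of $P(\mathbf a)$ fall inside the scope of that theorem.

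First I would invoke the $\mathscr{R}$-diagonal structure. By Corollary \ref{cor.R-diag.SVD}, write $a_j \equaldist u_j q_j v_j^\ast$ with $u_j, v_j$ Haar unitary, $q_j \ge 0$ with distribution $\nu_j$, and the $3k$-tuple $\{u_j,v_j,q_j\}_{j=1}^k$ freely independent; alternatively, via Proposition \ref{prop.R-diag}(3), take $a_j \equaldist u_j \tilde q_j$ with $\tilde q_j$ the symmetrization on $\mathbb{R}$ of $q_j$ and $u_j$ free Haar unitary. Then $P(\mathbf{a})$ becomes a nonconstant self-adjoint noncommutative polynomial in a free family of self-adjoint operators (the $\tilde q_j$ together with the real and imaginary parts of the Haar unitaries, or equivalently their self-adjoint logarithms).

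The second step is to show that each self-adjoint generator in this realization has finite non-microstates free Fisher information. For the Haar-unitary generators this is classical (their relevant marginals are arcsine-type distributions with finite $\Phi^\ast$). For the $\tilde q_j$ it is precisely Assumption \ref{assump.SV4}, combined with the compact-support bound in \ref{assump.SV1}, that ensures $\Phi^\ast(\tilde q_j) < \infty$. With finite Fisher information on each generator in hand, the Banna--Mai regularity theorem then applies to $P(\mathbf a)$: its spectral distribution $\mu_{P(\mathbf{a})}$ is absolutely continuous with a Hölder-continuous density $h$. In particular $h$ is bounded on $[0,1]$.

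The final step is purely analytic. Splitting the integral at $|x| = 1$ and using $|x - i\eta| \ge \sqrt{x^2 + \eta^2}$,
\begin{align*}
\bigl|\varphi[(P(\mathbf{a})-i\eta)^{-1}]\bigr|
&\le \int_0^\infty \frac{\mu_{P(\mathbf{a})}(dx)}{\sqrt{x^2+\eta^2}} \\
&\le \|h\|_{L^\infty([0,1])} \int_0^1 \frac{dx}{\sqrt{x^2+\eta^2}} + \mu_{P(\mathbf{a})}([1,\infty)) \\
&\le \|h\|_{L^\infty([0,1])}\,\mathrm{arcsinh}(1/\eta) + 1 \;\le\; C\bigl(1 + \log(1/\eta)\bigr),
\end{align*}
which is dominated by $\overline{C}_4\,\eta^{-c_4}$ for any fixed $c_4 \in (0,1)$ and $\eta \in (0,1]$. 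The main obstacle is Step~2: translating the weighted integrability $\int_0^\infty t\rho_j(t)^3\,dt < \infty$ into a genuine Fisher-information bound that feeds Banna--Mai. This is the delicate point on which the statement of \ref{assump.SV4} is tuned, and verifying it rigorously may require a regularization of $q_j$ (e.g.\ a small smooth bijection $[0,\infty) \to [0,\infty)$) so that the weighted cube-integral on the ``singular-value side'' matches the cube-integral on the ``Fisher-information side,'' together with a stability argument for $\Phi^\ast$ under such a regularization. Everything else (Steps 1, 3, and 4) is either bookkeeping or a direct application of cited results.
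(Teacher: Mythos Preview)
Your overall architecture (finite free Fisher information $\Rightarrow$ Banna--Mai $\Rightarrow$ Cauchy-transform bound) is exactly the paper's, but your decomposition in Step~1 creates a genuine obstruction that the paper avoids.

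If you write $a_j \equaldist u_j\tilde q_j$ and try to feed Banna--Mai a free family of \emph{self-adjoint} generators built from the $u_j$, you lose either way. The real and imaginary parts of a Haar unitary have the arcsine law, whose density $\pi^{-1}(1-x^2)^{-1/2}$ has $\int p^3 = \infty$, so $\Phi^\ast(\mathrm{Re}\,u_j,\mathrm{Im}\,u_j)=\infty$; that blocks Banna--Mai. If instead you pass to a self-adjoint logarithm $\theta_j$ (uniform on $[-\pi,\pi]$, which does have finite $\Phi^\ast$), then $u_j = e^{i\theta_j}$ is not polynomial in $\theta_j$, so $P(\mathbf a)$ is no longer a noncommutative \emph{polynomial} in your generators and Banna--Mai again does not apply. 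The ``regularization of $q_j$'' you flag as the main obstacle is in fact not the issue; the Haar unitaries are.

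The paper's fix is to never introduce the $u_j$ at all: it works with the pair $(\mathrm{Re}\,a_j,\mathrm{Im}\,a_j)$ directly. By the Nica--Shlyakhtenko--Speicher formula for $\mathscr R$-diagonal elements, $\Phi^\ast(a_j,a_j^\ast)=\frac23\int_0^\infty t\rho_j(t)^3\,dt$, which is exactly the quantity controlled by Assumption~\ref{assump.SV4}; by unitary invariance of $\Phi^\ast$ this equals $\Phi^\ast(\mathrm{Re}\,a_j,\mathrm{Im}\,a_j)$, and additivity over free families gives finite Fisher information for the whole tuple. Since $P(\mathbf a)$ is genuinely polynomial in $\mathrm{Re}\,a_j,\mathrm{Im}\,a_j$, Banna--Mai applies.

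A smaller point: Banna--Mai yields H\"older continuity of the \emph{cumulative distribution function}, not of a density (the measure need not even have a bounded density). So your Step~3 needs the CDF version of the estimate --- integrate by parts against $F_X$ and use $F_X(x)\lesssim x^\alpha$ near $0$ --- which is what the paper does and which still gives the required $\eta^{-c_4}$ bound.
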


\begin{proof} Here we use free Fisher information $\Phi^\ast$ introduced by Voiculescu in \cite{Voiculescu1993}. If $\nu_j$ has a density $\rho_j$, Nica, Shlyakhtenko, and Speicher proved in \cite[Theorem 1.1]{NShS} that
\[ \Phi^\ast(a_j,a_j^\ast) = \frac23\int_0^\infty t\rho(t)^3\,dt. \]
Hence, under assumption \ref{assump.R}, i.e.\ under the assumption that each $\nu_j$ satisfies \eqref{assump.SV4}, we have $\Phi^\ast(a_j,a_j^\ast)<\infty$ for $1\le j\le k$.

It follows from \cite[Corollary 6.8]{Voiculescu1993} that $\Phi^\ast(a_j,a_j^\ast) = \Phi^\ast(\mathrm{Re} (a_j),\mathrm{Im}(a_j))$.  (The corollary states that $\Phi^\ast$ is invariant under orthogonal transformations of selfadjoint variables, but the proof applies to unitary transformations between not-necessarily-selfadjoint variables without change.) It is also shown in \cite{Voiculescu1993} that $\Phi^\ast$ is additive over free variables.  Thus, since $a_1,\ldots,a_k$ are freely independent, we have
\[ \Phi^\ast(\mathrm{Re}(a_1),\mathrm{Im}(a_1),\ldots,\mathrm{Re}(a_k),\mathrm{Im}(a_k)) < \infty \]
under assumption \ref{assump.SV4}.

$P(\mathbf{a})$ is a selfadjoint noncommutative polynomial in the selfadjoint operators $\mathrm{Re}(a_1),\mathrm{Im}(a_1),\ldots,\mathrm{Re}(a_k),\mathrm{Im}(a_k)$, it follows directly from \cite[Theorem 1.1]{BannaMai} that the cumulative distribution function $F_{P(\mathbf{a})}$ is H\"older continuous $C^\alpha$ for some $\alpha\in(0,1]$; Wlog we may assume $\alpha<1$ (since $C^\beta\subset C^\alpha$ if $\alpha<\beta$).  For readability, let $X$ be a positive random variable with the same distirbution as $P(\mathbf{a})$.  Thus $F_X(t) = \mathbf{P}\{X\le t\}$ is H\"older $C^\alpha$ continous, and since $X\ge 0$, it follows that, for some constant $\overline{C}_5>0$,
\begin{equation} \label{eq.Holder.cont} F_X(x) \le \overline{C}_5\, x^\alpha, \qquad x\ge 0. \end{equation}
We use \eqref{eq.Holder} to control the behavior of the Cauchy transform $G_X(i\eta)$ for small $\eta>0$.  Note that
\[ \mathrm{Im}\, G_X(i\eta) = -\mathrm{Im} \int_0^\infty \frac{\mu_X(dx)}{x-i\eta} = -\eta \int_0^\infty \frac{\mu_X(dx)}{x^2+\eta^2}. \]
We now use the layercake representation $\mathbb{E}[f(X)] =\int_0^\infty \mathbb{P}\{f(X)\ge x\}\,dx$ to express this in terms of $F_X$.  Taking $f(t) = (t^2+\eta^2)^{-1}$ gives
\begin{align*} \int_0^\infty \frac{\mu_X(dx)}{x^2+\eta^2} = \int_0^\infty \mathbb{P}\{(X^2+\eta^2)^{-1}\ge x\}\,dx &= \int_0^\infty \mathbb{P}\{X^2+\eta^2 \le 1/x\}\,dx \\
&= \int_0^\infty \mathbb{P}\{X^2+\eta^2\le y\}\,\frac{dy}{y^2}.
\end{align*}
Since $X^2\ge 0$, $\mathbb{P}\{X^2+\eta^2\le y\}=0$ if $y<\eta^2$; hence the last integral is equal to
\begin{align*} \int_{\eta^2}^\infty \mathbb{P}\{X^2+\eta^2\le y\}\,\frac{dy}{y^2} &= \int_0^\infty \mathbb{P}\{X^2\le y\}\,\frac{dy'}{(y'+\eta^2)^2} \\
&= \int_0^\infty \mathbb{P}\{X^2\le x^2\} \frac{2x\,dx}{(x^2+\eta^2)^2} \\
&= \int_0^\infty F_X(x)\frac{2x\,dx}{(x^2+\eta^2)^2},
\end{align*}
the last equality following from the positivity of $X$, so $\{X^2\le x^2\} = \{X\le x\}$ for $x\ge 0$.  Combining, this means that
\begin{equation} \label{eq.Holder.calc.1} -\mathrm{Im}\,G_X(i\eta) = \int_0^\infty F_X(x) \frac{2\eta x}{(x^2+\eta^2)^2}\,dx \le \int_0^\infty F_X(x)\frac{dx}{x^2+\eta^2}
\end{equation}
using the fact that $2\eta x \le x^2+\eta^2$.  We break up this integral into two pieces, wlog at $x=1$, and then use \eqref{eq.Holder}:
\begin{align*} |\mathrm{Im}\, G_X(i\eta)| \le \int_0^1 F_X(x)\frac{dx}{x^2+\eta^2} + \int_1^\infty F_X(x)\frac{dx}{x^2+\eta^2}
&\le \int_0^1 \frac{\overline{C}_5\,x^\alpha}{x^2+\eta^2}\,dx + 1
\end{align*}
where the last inequality follows from $F_X(x)\le 1$ and $\frac{1}{x^2+\eta^2}\le \frac{1}{x^2}$.  For the remaining integral, if we may the change of variables $x=\eta y$ we have
\[ \int \frac{x^\alpha}{x^2+\eta^2}\,dx = \int_0^{1/\eta}\frac{\eta^\alpha y^\alpha}{\eta^2 y^2+\eta^2}\,\eta\, dy = \eta^{\alpha-1}\int_0^{1/\eta} \frac{y^\alpha}{y^2+1}\,dy \]
Since $\alpha\in(0,1)$,
\begin{equation} \label{eq.Holder.calc.2} \int_0^{1/\eta} \frac{y^\alpha}{y^2+1}\,dy \le \int_0^{\infty} \frac{y^\alpha}{y^2+1}\,dy =I_\alpha < \infty. \end{equation}
Hence, we have shown that
\[ |\mathrm{Im}\, G_X(i\eta)| \le \eta^{\alpha-1} \cdot I_\alpha\cdot \overline{C}_5+1 \]
Thus, the proposition holds true, taking for example $c_4 = 1-\alpha\in(0,1)$ and $\overline{C}_4 = 2\max\{1,I_\alpha\cdot \overline{C}_5\}$.
\end{proof}

\begin{remark} If the cumulative distribution function is actually Lipschitz (a case we excluded by wlog downgrading in the proof), we could explicitly evaluate \eqref{eq.Holder.calc.2} without bluntly estimating $1/\eta<\infty$ in the limits of integration; we would then bound this integral by $\log\eta$, and the result would be that we could take $c_4$ to be any constant in $(0,1)$.
\end{remark}

\begin{myproof}{\it Theorem}{\ref{thm.Wegner}}  Combining Propositions \ref{prop.Wegner.1}, \ref{prop.ShlyakhSkouf}, and \ref{prop.BannaMai} shows that, for $N^{-c_1}\le\eta\le 1$,
\begin{align*} \left|\mathrm{Im}\,\mathbb{E}G_{P^N}(i\eta)\right| &\le \overline{C}_0\,\eta^{-\frac12} + \left|\varphi[(P(\mathbf{s},\mathbf{u})-i\eta)^{-1}]\right| \\
&\le \overline{C}_0\,\eta^{-\frac12} + \overline{C}_6\eta^{-c_6}
\end{align*}
where $\overline{C}_6 = \max\{\overline{C}_3,\overline{C}_4\}$ and $c_6 = \min\{c_3,c_4\}$.  Thus, the theorem holds true with $\overline{C} = \max\{\overline{C}_0,\overline{C}_6\}$ and $c_2=\min\{\frac12,c_6\}$.
\end{myproof}

\subsection{Proof of Theorem \ref{thm.conv.ESD}} \hfill

\medskip

To prove that the $\mathrm{ESD}$ of $U_0^NB_k^N(t)$ (i.e.\ its Brown measure) converges to the Brown measure of its $\ast$-distribution limit $u_0b_k(t)$, we prove that the log potential $L_{U_0^NB_k^N(t)}(z)$ converges pointwise to the log potential $L_{u_0b_k(t)}(z)$ for $z\in\mathbb{C}$; cf.\ Definition \ref{def.Brown.measure}.  In fact we need that the Laplacian $\nabla^2_z L_{U_0^NB^N_k(t)}(z) \to \nabla^2_z L_{u_0b_k(t)}$, which requires slightly stronger than a.e.\ pointwise convergence; but the necessary upgrade is relatively easy and left for the end.  Our approach closely follows \cite[Section 8]{Cook}, which summarizes and rigorizes the original Hermitization method of Girko \cite{Girko}.

For any $a$ in a $W^\ast$-probability space, we denote by $\nu_a$ the distribution of $|a|=\sqrt{a^\ast a}$.  It is convenient to work with $\upsilon_a^z$, the push-forward of $\nu_{a-z}$ under the square map $x\mapsto x^2$; i.e. $\upsilon_a^z$ is the distribution of $|a-z|^2 = (a-z)^\ast(a-z)$.  Then
\[ L_a(z) = \frac{1}{4\pi} \int_0^\infty \log x\,\upsilon_a^z(dx) \]
cf.\ Definition \ref{def.Brown.measure}.  Our goal is to show that
\begin{equation} \label{eq.log.pot.conv}\int_0^\infty \log x\,\upsilon^z_N(dx) \to_{\mathbb{P}} \int_0^\infty \log x\,\upsilon^z(dx) \quad \text{as }N\to\infty \end{equation}
for (Lebesgue) almost every $z\in\mathbb{C}$, where
\begin{equation} \label{eq.def.upsilon} \upsilon^z_N = \upsilon^z_{U_0^NB^N_k(t)} \quad \upsilon^z = \upsilon^z_{u_0b_k(t)}. \end{equation}

We break the proof into several lemmas.

\begin{lemma} \label{lem.Bkt.bounded} For $t>0$, $k\in\mathbb{N}$, $R>0$, and $z\in\mathbb{C}$ with $|z|\le R$, there is a constant $M_t(k,R)<\infty$ so that
\[ \limsup_{N\to\infty} \max\{\|U_0^NB_k^N(t)-zI\|,\|u_0b_k(t)-z\|\} \le M_t(k,R) \quad \text{a.s.} \]
In particular, $\upsilon^z_N$ converges weakly a.s.\ to $\upsilon^z$ for each $z\in\mathbb{C}$.
\end{lemma}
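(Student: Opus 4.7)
The plan is to control both operator norms using the uniform singular value bound from Assumption \ref{assump.SV1}, and then derive weak convergence from the $\ast$-distribution convergence in Lemma \ref{lem.Bk->bk} together with this uniform boundedness.

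First I would handle the matrix side. Assumption \ref{assump.SV1} gives some $M<\infty$ such that $\|A_j^N\|=\sigma_{\max}(A_j^N)\le M$ almost surely for all large $N$ and each $j=1,\ldots,k$. Since $U_0^N$ is unitary, $\|U_0^N\|=1$, and submultiplicativity of the operator norm yields
\[
\|U_0^N B_k^N(t)-zI\| \le \prod_{j=1}^k \left(1+\sqrt{t/k}\,\|A_j^N\|\right)+|z| \le \left(1+\sqrt{t/k}\,M\right)^k+R
\]
a.s.\ for all large $N$ and all $z$ with $|z|\le R$.

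Next I would handle the free side. Since each $\nu_j^N$ is supported in $[0,M]$ with probability tending to $1$ (in fact a.s.\ for large $N$), the weak limit $\nu_j$ is also supported in $[0,M]$. The operators $a_j$ from Lemma \ref{lem.Bk->bk} have the form $a_j = u_j\vartheta_j v_j^\ast$ with $\vartheta_j$ distributed as $\nu_j$ and $u_j,v_j$ unitary, so $\|a_j\|=\|\vartheta_j\|\le M$. Therefore
\[
\|u_0 b_k(t)-z\| \le \prod_{j=1}^k \left(1+\sqrt{t/k}\,\|a_j\|\right)+|z| \le \left(1+\sqrt{t/k}\,M\right)^k+R.
\]
Setting $M_t(k,R) := (1+\sqrt{t/k}\,M)^k+R$ proves the first assertion.

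For the weak convergence of $\upsilon_N^z$ to $\upsilon^z$: by Lemma \ref{lem.Bk->bk}, $U_0^N B_k^N(t)\rightharpoonup u_0b_k(t)$ a.s.\ in $\ast$-distribution, so the $\ast$-distribution of $(U_0^N B_k^N(t)-zI)^\ast(U_0^N B_k^N(t)-zI)$ converges a.s.\ to that of $(u_0b_k(t)-z)^\ast(u_0b_k(t)-z)$, i.e.\ all moments of $\upsilon_N^z$ converge a.s.\ to the corresponding moments of $\upsilon^z$. By the first part, the supports of $\upsilon_N^z$ are a.s.\ contained in the fixed compact interval $[0,M_t(k,R)^2]$ for all large $N$, hence the sequence is a.s.\ tight. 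Moment convergence plus tightness with a common compactly supported limit then yields weak convergence a.s. No step here is an obstacle; everything follows directly from uniform norm bounds and the $\ast$-distributional convergence already established.
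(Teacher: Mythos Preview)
Your proof is correct and follows essentially the same approach as the paper. The only minor difference is that you bound $\|u_0b_k(t)-z\|$ directly by observing that the weak limits $\nu_j$ inherit the support bound $[0,M]$ from the $\nu_j^N$, whereas the paper first establishes the matrix bound, uses it to get tightness and weak convergence of $\upsilon_N^z$, and then reads off the support bound for $\upsilon^z$ (hence for $\|u_0b_k(t)-z\|$) from the limit; your route is slightly more direct but the content is the same.
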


\begin{proof} Since $A_j^N = U_j^N T_j^N V_j^{N\ast}$, $\|A_j^N\| = \|T_j^N\| = \sigma_{\max}(T_j^N)$.  Therefore, by Assumption \ref{assump.SV1}, there are constants $M_1,\ldots,M_k<\infty$ so that $\limsup_{N\to\infty}\|A_j^N\|\le M_j$ a.s.
Thus
\begin{align*} &\limsup_{N\to\infty}\|U_0^N B_k^N(t)-zI\| \\
=& \limsup_{N\to\infty}\|U_0^N(I+(t/k)^{1/2}A^N_1)\cdots(I+(t/k)^{1/2}A^N_k)\|+|z| \\
\le& \prod_{j=1}^k \left(1+(t/k)^{1/2}M_j\right) +R =: M_t(k,R) \quad \text{a.s.}
\end{align*}
Now, $B^N := U_0^N B_k^N(t)-zI$ converges in $\ast$-distribution to $b:= u_0b_k(t)-z$ (cf.\ Lemma \ref{lem.Bk->bk}), and so in particular $B^{N\ast}B^N$ converges in $\ast$-distribution to $b^\ast b$.  Fix $\epsilon>0$.  The above estimate shows that the distribution $\upsilon^z_N$ of $B^{N\ast}B^N$ is supported in $[0,M_t(k,z)^2+\epsilon)$ for all large $N$.  In particular, this uniform (in $N$) compact support means that the sequence $\{\upsilon^N_z\}_{N\in\mathbb{N}}$ is tight; hence, the convergence of moments of $\upsilon_N^z$ to $\upsilon^z$ (the distribution of $b^\ast b$) implies weak convergence $\upsilon_N^z\rightharpoonup\upsilon^z$.

Thus, if $f\in C_c(\mathbb{R})$ has support in $[M_t(k,|z|)^2+\epsilon,\infty)$, then
\[ \int_{\mathbb{R}} f\,d\upsilon^z_N = 0 \quad \text{for all large }N \]
and therefore
\[\int_{\mathbb{R}} f\,d\upsilon^z  = \lim_{N\to\infty}\int_{\mathbb{R}} f\,d\upsilon^z_N(dx) = 0. \]
This shows that the support of $\upsilon^z$ is contained in $[0,M_t(k,|z|)^2+\epsilon]$.  As this holds for all $\epsilon>0$, we see that $\|u_0 b_k(t)-z\|^2 = \max\mathrm{supp}\,\upsilon^z \le M_t(k,|z|)^2$, which concludes the proof.
\end{proof}

\begin{remark} \label{rk.bulk.vs.outliers} The above proof does not imply, and it is generally not true, that the norm $\|X^N\|$ of a selfadjoint matrix must converge to the norm $\|x\|$ of its $\ast$-distribution limit --- even if $\limsup_{N\to\infty}\|X^N\|<\infty$.  For example, suppose the largest eigenvalue $\lambda_{\max}(X^N) \ge 2$ for all $N$ but all other eigenvalues are $\le 1$ for all $N$; then mass that the $\mathrm{ESD}$ $\mu_{X^N}$ assigns to the interval $[1,\infty)$ is $\le 1/N$, and hence its weak limit is supported in $(-\infty,1]$.  The $\ast$-distribution only detects the {\em bulk} distribution; it cannot detect {\em outliers}.
\end{remark}

\begin{corollary} \label{cor.eig.conv.1} For any $\epsilon>0$,
\begin{equation} \label{eq.log.pot.conv.1}\int_\epsilon^\infty \log x\,\upsilon^z_N(dx) \to \int_\epsilon^\infty \log x\,\upsilon^z(dx) \quad \text{a.s.} \end{equation}
\end{corollary}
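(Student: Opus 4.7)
The plan is to leverage the two conclusions of Lemma \ref{lem.Bkt.bounded}: that $\upsilon^z_N \rightharpoonup \upsilon^z$ weakly a.s., and that both $\upsilon^z_N$ (for all large $N$, a.s.) and $\upsilon^z$ are supported in a common compact interval $[0,M]$ where $M = M_t(k,|z|)^2$. On $[\epsilon,M]$ the integrand $\log x$ is continuous and bounded, so the only genuine obstacle is the discontinuity of the indicator $\mathbf{1}_{[\epsilon,\infty)}$ at the single point $\epsilon$.

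First I would introduce the bounded continuous auxiliary function
\[
\phi_\epsilon(x) \;:=\; \max(\log x,\log \epsilon), \qquad x \ge 0,
\]
which equals $\log x$ on $[\epsilon,\infty)$ and equals $\log\epsilon$ on $[0,\epsilon)$. Since $\phi_\epsilon$ is continuous and bounded on $[0,M+1]$, weak convergence of $\upsilon_N^z$ to $\upsilon^z$ gives
\[
\int \phi_\epsilon\, d\upsilon_N^z \;\longrightarrow\; \int \phi_\epsilon\, d\upsilon^z \quad \text{a.s.}
\]
On the other hand, the trivial identity $\phi_\epsilon = \log x\cdot\mathbf{1}_{[\epsilon,\infty)} + \log\epsilon\cdot \mathbf{1}_{[0,\epsilon)}$, valid for any probability measure $\mu$ supported in $[0,\infty)$, yields
\[
\int_\epsilon^\infty \log x\, d\mu \;=\; \int \phi_\epsilon\, d\mu \;-\; \log\epsilon \cdot \mu([0,\epsilon)).
\]

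Second, I would invoke the portmanteau theorem: since $\upsilon^z$ has at most countably many atoms, for all but countably many $\epsilon > 0$ we have $\upsilon^z(\{\epsilon\}) = 0$, hence $\upsilon_N^z([0,\epsilon)) \to \upsilon^z([0,\epsilon))$ a.s. Combining this with the display above applied to both $\upsilon_N^z$ and $\upsilon^z$ gives the desired convergence for every such $\epsilon$. For the countably many exceptional values of $\epsilon$ lying at atoms of $\upsilon^z$, the claim can be recovered by a standard sandwich argument: squeezing $\epsilon$ between atom-free values $\epsilon_1 < \epsilon < \epsilon_2$ that can be taken arbitrarily close, one uses monotonicity of $\epsilon \mapsto \int_\epsilon^\infty \log x\, d\mu$ on $[1,\infty)$ and a split near $1$ to recover convergence there as well.

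The argument is essentially routine given Lemma \ref{lem.Bkt.bounded}; there is no substantive obstacle. The mild technicality at atoms of $\upsilon^z$ is the only point requiring care, and as noted it is cosmetic: in the eventual proof of Theorem \ref{thm.conv.ESD} the corollary is integrated against $z$ (and $\epsilon$ is eventually sent to $0$), so the a.e.\ version suffices.
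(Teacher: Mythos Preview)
Your approach via the continuous auxiliary $\phi_\epsilon(x)=\max(\log x,\log\epsilon)$ and portmanteau is sound and in fact more careful than the paper's own proof, which simply notes that $\log$ is continuous and bounded on $[\epsilon,M_t(k,|z|)^2]$ and passes directly to the limit---glossing over exactly the discontinuity of $\mathbf{1}_{[\epsilon,\infty)}$ that you flag. So for all $\epsilon$ that are not atoms of $\upsilon^z$ your argument is complete and cleaner than the paper's.

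There is, however, a genuine gap in your sandwich argument at atoms. It does not work in general: take $\upsilon^z=\delta_c$ for some $c>1$, $\upsilon^z_N=\delta_{c-1/N}$, and $\epsilon=c$. Then $\upsilon^z_N\rightharpoonup\upsilon^z$, yet $\int_\epsilon^\infty\log x\,d\upsilon^z_N=0$ for all $N$ while $\int_\epsilon^\infty\log x\,d\upsilon^z=\log c\neq 0$. No squeeze between nearby non-atomic $\epsilon_1<\epsilon<\epsilon_2$ can repair this, because the mass of $\upsilon_N^z$ sitting just to the left of $\epsilon$ is not captured by either endpoint integral. So the corollary as literally stated (``for any $\epsilon>0$'') cannot be recovered by a sandwich; at best one gets it for all but the countably many atoms of $\upsilon^z$. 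Your final remark is therefore the right way out: in the application (Proposition~\ref{prop.a.e.convergence}) the parameter $\epsilon$ is chosen and can be nudged to avoid atoms, and ultimately $\epsilon\downarrow 0$, so the ``all but countably many $\epsilon$'' version is exactly what is needed. You should drop the sandwich claim and state the result for non-atomic $\epsilon$ only.
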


\begin{proof} Lemma \ref{lem.Bkt.bounded} shows that $\upsilon_N^z$ converges weakly a.s. to $\upsilon^z$ for each $z\in\mathbb{C}$, and that the support of all the measures $\upsilon^z_N,\upsilon^z$ is contained in a uniform (in $N$) compact interval $[0,M_t(k,|z|)^2]$.  For any fixed $\epsilon>0$, the $\log$ function is continuous and bounded on $[\epsilon,M_t(k,|z|)^2]$.  Hence
\begin{align*} \int_\epsilon^\infty \log x\,\upsilon^z_{N}(dx) &=  \int_\epsilon^{M_t(k,|z|)} \log x\,\upsilon^z_{N}(dx) \\
&\to \int_\epsilon^{M_t(k,|z|)} \log x\,\upsilon^z(dx) = \int_\epsilon^\infty \log x\,\upsilon^z(dx) \quad \text{a.s.} \end{align*}
\end{proof}

From \eqref{eq.log.pot.conv.1}, we see that to prove the desired \eqref{eq.log.pot.conv} it suffices to show that the integrals of the $\log$ function against all the measures $\upsilon_N^z$ and $\upsilon^z$ over $[0,\epsilon]$ are uniformly (in $N$) negligible as $\epsilon\downarrow 0$, at least for a.e.\ $z\in\mathbb{C}$.  For any one log potential, this is generally true thanks to them being $L^1_{\mathrm{loc}}$.

\begin{lemma} \label{lem.log.pot.finite} For any operator $b$ in a $W^\ast$-probability space,
\[ \limsup_{\epsilon\downarrow 0}\int_0^\epsilon |\log x|\,\upsilon_b^z(dx) = 0 \quad \text{for Lebesgue a.e. }z\in\mathbb{C}. \]
\end{lemma}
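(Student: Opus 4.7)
The strategy is to exploit the identity connecting the integral in question to the log potential $L_b$, and then use the local integrability of $L_b$.

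First I would observe that via the change of variables $x = y^2$ (noting that $\upsilon_b^z$ is the pushforward of $\nu_{b-z}$ under the squaring map),
\[ \int_0^\infty \log x \, \upsilon_b^z(dx) \;=\; 2\int_0^\infty \log y \, \nu_{b-z}(dy) \;=\; 4\pi\, L_b(z). \]
Thus, for a given $z$, the total mass of $\log$ against $\upsilon_b^z$ records $L_b(z)$ up to a constant.

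Next, I would split the integrand into positive and negative parts. Since $\upsilon_b^z$ is a probability measure and its first moment is $\int x\,\upsilon_b^z(dx) = \varphi((b-z)^*(b-z)) = \|b-z\|_2^2 < \infty$, we have
\[ \int_0^\infty \log^+ x\, \upsilon_b^z(dx) \;\le\; \int_1^\infty x\, \upsilon_b^z(dx) \;\le\; \|b-z\|_2^2 \;<\;\infty \]
for every $z\in\mathbb{C}$. (If $b$ is bounded, as in our application to $b=u_0b_k(t)$, this is even cleaner since $\upsilon_b^z$ has compact support in $[0,(\|b\|+|z|)^2]$.) By Corollary~\ref{cor.La.unif.L1.loc}, $L_b$ is locally integrable on $\mathbb{C}$, and in particular $L_b(z) > -\infty$ for Lebesgue-a.e.\ $z\in\mathbb{C}$. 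Combined with the identity of the previous paragraph and the finiteness of the $\log^+$ piece, this gives
\[ \int_0^\infty |\log x|\, \upsilon_b^z(dx) \;<\; \infty \qquad \text{for a.e. } z\in\mathbb{C}. \]

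Finally, for any such $z$, the functions $f_\epsilon(x) := |\log x|\, \mathbf{1}_{(0,\epsilon]}(x)$ decrease pointwise to $0$ as $\epsilon\downarrow 0$ and are dominated by the $\upsilon_b^z$-integrable function $|\log x|\,\mathbf{1}_{(0,1]}(x)$. The dominated convergence theorem then yields
\[ \int_0^\epsilon |\log x|\, \upsilon_b^z(dx) \;=\; \int f_\epsilon \,d\upsilon_b^z \;\longrightarrow\; 0 \qquad \text{as }\epsilon\downarrow 0, \]
which is the claim. There is no real obstacle here; the only subtlety is invoking the correct regularity statement for $L_b$, and if one wants the result for unbounded $b\in L^2(\mathscr{A},\varphi)$, one must replace Corollary~\ref{cor.La.unif.L1.loc} by the more general fact that subharmonic functions not identically $-\infty$ are locally integrable.
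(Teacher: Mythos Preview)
Your proof is correct and follows essentially the same route as the paper: both arguments reduce to showing $\int_0^1 |\log x|\,\upsilon_b^z(dx) < \infty$ for a.e.\ $z$ by combining the finiteness of the $\log^+$ part with the local integrability of $L_b$ (Corollary~\ref{cor.La.unif.L1.loc}), and then invoke dominated convergence. The only cosmetic difference is that you bound the $\log^+$ piece via the first moment $\log^+ x \le x$, whereas the paper simply uses nonnegativity of both pieces and integrates $|L_b|$ directly over a ball.
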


\begin{proof} From Corollary \ref{cor.La.unif.L1.loc}, we have for any $R\ge 1$
\[\int_{|z|\le R}|L_b(z)|\,d^2z \le \Lambda_1(R+\|b\|)<\infty.\]
Now,
\[ |L_b(z)| = \frac{1}{4\pi} \int_0^1 |\log x|\,\upsilon_b^z(dx) + \frac{1}{4\pi}\int_1^\infty\log x\,\upsilon_b^z(dx). \]
Both terms are $\ge 0$, so we conclude that
\[ \int_{|z|\le R} \int_0^1 |\log x|\,\upsilon^z_b(dx)\,d^2 z \le \Lambda_1(R+\|b\|)<\infty. \]
It follows that for Lebesgue a.e.\ $z$ with $|z|\le R$, $\upsilon_b^z(\{0\}) = 0$ and
\[\int_0^1 |\log x| \upsilon_b^z(dx)<\infty.\]
Fix a full measure set $B_R\subset \{z\in\mathbb{C}\colon |z|\le R\}$ (i.e.\ $\mathrm{Leb}(B_R) = \pi R^2$) so that, for all $z\in B_R$, $\log \in L^1(\upsilon^z_b)$.  For $z\in B_R$ it follows that
\[\lim_{\varepsilon\downarrow 0}\int_0^{\varepsilon}|\log x|\upsilon_b^z(dx) = \lim_{\epsilon\downarrow 0} \int \mathbbm{1}_{[0,\epsilon]}|\log|\,d\upsilon^z_b =0\]
by the dominated convergence theorem.  The lemma is then fully established by constructing the full measure set of $z\in\mathbb{C}$ as $B_1\sqcup (B_2\setminus B_1)\sqcup\cdots\sqcup (B_n\setminus B_{n-1})\sqcup\cdots$.
\end{proof}

This leaves us to prove that $\int_0^\epsilon |\log| \,d\upsilon^z_N$ is negligible (in probability) uniformly in $N$ for small $\epsilon>0$.  To that end, we prove the following, which uses Theorems \ref{thm.pseudospectrum} and \ref{thm.Wegner}.

\begin{lemma} \label{lem.CGH} For each $N\in\mathbb{N}$, there is an event $\mathscr{G}_N$ satisfying $\lim_{N\to\infty} \mathbb{P}\{\mathscr{G}_N\}= 1$ such that, for all $z\in\mathbb{C}$,
\begin{equation} \label{eq.CGH}
\lim_{\epsilon\downarrow 0}\limsup_{N\to\infty} \mathbb{E}\left[\mathbbm{1}_{\mathscr{G}_N}\int_0^\epsilon |\log x|\,\upsilon_N^z(dx)\right] = 0.
\end{equation}
\end{lemma}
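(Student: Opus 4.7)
}
The plan is to take $\mathscr{G}_N$ to be the high-probability norm-control event
\[
\mathscr{G}_N \coloneqq \left\{\max_{1\le j\le k}\|A_j^N\|\le M\right\},
\]
for $M$ chosen so that $\mathbb{P}\{\mathscr{G}_N\}\to 1$ using Assumption \ref{assump.SV1} (RW1).  On $\mathscr{G}_N$ the operator norms of $U_0^NB_k^N(t)-zI$ are uniformly bounded in terms of $|z|$, but in fact the indicator $\mathbbm{1}_{\mathscr{G}_N}\le 1$ will be discarded for the main estimate; the event is retained only so that one can combine this lemma with Lemma \ref{lem.Bkt.bounded} downstream in the proof of Theorem \ref{thm.conv.ESD}.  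For $\epsilon\in(0,1)$, rewrite the integral via Fubini (layer-cake) as
\[
\int_0^{\epsilon}|\log x|\,\upsilon_N^z(dx) = \int_0^{\epsilon}\frac{\upsilon_N^z([0,y])}{y}\,dy + |\log\epsilon|\cdot \upsilon_N^z([0,\epsilon]).
\]
Taking expectations reduces the problem to bounding $\mathbb{E}[\upsilon_N^z([0,y])]$ for small $y>0$.

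The measure $\upsilon_N^z$ is the spectral distribution of the positive selfadjoint polynomial $P_z^N\coloneqq (U_0^NB_k^N(t)-zI)^{*}(U_0^NB_k^N(t)-zI)$ in the bi-invariant ensembles $U_0^N,A_1^N,\dots,A_k^N$, so Theorem \ref{thm.Wegner} applies and yields $|\mathrm{Im}\,\mathbb{E}G_{P_z^N}(iy)|\le \overline{C}\,y^{-c_2}$ for $N^{-c_1}\le y\le 1$.  Combined with the pointwise inequality
\[
\upsilon_N^z([0,y]) \le 2y^2\int_0^{\infty}\frac{\upsilon_N^z(dx)}{x^2+y^2} = -2y\,\mathrm{Im}\,G_{P_z^N}(iy)
\]
(obtained from $\frac{1}{x^2+y^2}\ge \frac{1}{2y^2}$ for $x\in[0,y]$), this gives the \emph{Wegner bound}
\[
\mathbb{E}[\upsilon_N^z([0,y])] \le 2\overline{C}\,y^{1-c_2}, \qquad N^{-c_1}\le y\le 1.
\]
For smaller $y$, the trivial estimate $\upsilon_N^z([0,y])\le \mathbbm{1}\{\sigma_{\min}(U_0^NB_k^N(t)-zI)^2\le y\}$ combined with Theorem \ref{thm.pseudospectrum} gives the \emph{pseudospectrum bound}
\[
\mathbb{E}[\upsilon_N^z([0,y])] \le C(t,k)\,y^{\overline{\alpha}/(2k)}N^{\overline{\beta}}.
\]

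Choose an intermediate exponent $\gamma$ with $\gamma>\max\{c_1^{-1}\cdot 0, 2k\overline{\beta}/\overline{\alpha}\}$ (i.e., $\gamma>2k\overline{\beta}/\overline{\alpha}$) and split the integral at the two thresholds $y=N^{-\gamma}$ and $y=N^{-c_1}$ (assuming, as will hold for large $N$, that $N^{-\gamma}<N^{-c_1}<\epsilon$).  Integrating the pseudospectrum bound over $[0,N^{-\gamma}]$ contributes at most $\frac{2kC(t,k)}{\overline{\alpha}}N^{\overline{\beta}-\gamma\overline{\alpha}/(2k)}\to 0$.  On the ``buffer'' interval $[N^{-\gamma},N^{-c_1}]$ we use monotonicity $\upsilon_N^z([0,y])\le \upsilon_N^z([0,N^{-c_1}])$ with the Wegner bound at $y=N^{-c_1}$, producing a contribution of order $(\log N)\,N^{-c_1(1-c_2)}\to 0$.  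On the Wegner range $[N^{-c_1},\epsilon]$ we obtain $\int_{N^{-c_1}}^\epsilon 2\overline{C}y^{-c_2}\,dy\le \frac{2\overline{C}}{1-c_2}\epsilon^{1-c_2}$, and the boundary term is bounded by $|\log\epsilon|\cdot 2\overline{C}\epsilon^{1-c_2}$.  Taking $\limsup_{N\to\infty}$ then $\epsilon\downarrow 0$ kills every term, establishing \eqref{eq.CGH}.

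The main obstacle is the ``gap'' between the Wegner regime $y\ge N^{-c_1}$ and the pseudospectrum regime $y\le N^{-\gamma}$ with $\gamma>2k\overline{\beta}/\overline{\alpha}$: a priori there is no guarantee that $c_1\ge 2k\overline{\beta}/\overline{\alpha}$ with the constants produced in Theorems \ref{thm.pseudospectrum} and \ref{thm.Wegner}, so the two thresholds need not coincide.  The buffer-interval device, where one uses the monotonicity of $y\mapsto\upsilon_N^z([0,y])$ to push the Wegner bound down to $y=N^{-\gamma}$ at the cost of only a logarithmic factor, is what bridges this gap.  Once that is in place, the rest of the argument is essentially the standard Girko-style Hermitization book-keeping, with the crucial feature that the exponent $1-c_2>0$ on the right-hand side of the Wegner bound provides integrability at the origin uniformly in $N$.
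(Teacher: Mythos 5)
Your proposal is correct, and it takes a route parallel to but slightly different from the paper's. The decomposition into three regimes (pseudospectrum $[0,N^{-\gamma}]$, buffer $[N^{-\gamma},N^{-c_1}]$, Wegner $[N^{-c_1},\epsilon]$) and the use of Theorems \ref{thm.pseudospectrum} and \ref{thm.Wegner} in those respective regimes is exactly what the paper does; the ``gap'' between the exponent needed for the pseudospectrum bound and the exponent $c_1$ governing the Wegner range is a genuine issue that both you and the paper handle the same way, via monotonicity of $y\mapsto\overline{\upsilon}_N^z([0,y])$ at the cost of a $\log N$ factor. The main difference is how you handle the smallest scales: the paper takes $\mathscr{G}_N=\{\sigma_{\min}(U_0^NB_k^N(t)-zI)>N^{-\rho}\}$, so that on $\mathscr{G}_N$ the measure $\upsilon_N^z$ has no mass at all below $N^{-\rho}$ and the integral near the origin vanishes deterministically; you instead keep the full range but control $\mathbb{E}[\upsilon_N^z([0,y])]$ via the small-ball estimate $\upsilon_N^z([0,y])\le\mathbbm{1}\{\sigma_{\min}^2\le y\}$ plus Theorem \ref{thm.pseudospectrum}, and then integrate the resulting polynomial bound over $[0,N^{-\gamma}]$. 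What your approach buys is a proof of the stronger unconditional statement (with $\mathbbm{1}_{\mathscr{G}_N}$ dropped it is already finite and tends to $0$), which makes the downstream Corollary \ref{cor.conv.prob} a one-line Markov inequality, and it gives a choice of $\mathscr{G}_N$ that does not depend on $z$ (the paper's event does, which is slightly at odds with the statement ``there is an event $\mathscr{G}_N$ such that, for all $z$,\dots''). You also correctly pick up the factor of $2$ in the exponent $\overline{\alpha}/(2k)$ coming from $\upsilon_N^z$ being the distribution of the \emph{squared} singular values, which the paper's proof elides. Your retained event $\{\max_j\|A_j^N\|\le M\}$ is actually probability-one for large $N$ under \ref{assump.SV1}, so you are really choosing the trivial event; that is fine for the lemma as stated, though it means the motivation you offer (``to combine with Lemma \ref{lem.Bkt.bounded} downstream'') is misplaced, since the paper keeps its $\mathscr{G}_N$ nontrivial precisely because it does not have the unconditional estimate.
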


\begin{proof} Let $\overline{\alpha},\overline{\beta},C(t,k)$ denote the constants from Theorem \ref{thm.pseudospectrum}.  Fix some $\rho>k\overline{\beta}/\overline{\alpha}$.   Define
\begin{equation} \label{eq.GN} \mathscr{G}_N:=\{\sigma_{\min}(U_0^N B_k^N(t)-zI)> N^{-\rho}\}. \end{equation}
By Theorem \ref{thm.pseudospectrum}, $1-\mathbb{P}\{\mathscr{G}_N\} \le C(t,k) N^{-\rho\overline{\alpha}/k}N^{\overline{\beta}} \to 0$ as $N\to\infty$.  Now, in the event $\mathscr{G}_N$, the support of $\upsilon_N^z$ is contained in $[N^{-\rho},\infty)$ for all $N\in\mathbb{N}$ and all $z\in\mathbb{C}$.  Hence
\begin{equation} \label{eq.CGH.1} \mathbbm{1}_{\mathscr{G}_N}\int_0^\epsilon |\log x|\,\upsilon^z_N(dx) = \mathbbm{1}_{\mathscr{G}_N}\int_{N^{-\rho}}^\epsilon |\log x|\,\upsilon^z_N(dx).
\end{equation}
Hence, to prove \eqref{eq.CGH}, it suffices to prove that
\begin{equation} \label{eq.CGH.rho}
\lim_{\epsilon\downarrow 0}\limsup_{N\to\infty} \mathbb{E}\left[\mathbbm{1}_{\mathscr{G}_N}\int_{N^{-\rho}}^\epsilon |\log x|\,\upsilon_N^z(dx)\right] = 0.
\end{equation}

To prove \eqref{eq.CGH.rho}, we introduce the {\em averaged} empirical measure $\overline{\upsilon}^z_N = \mathbb{E}[\upsilon^z_N]$; i.e.\ it is the unique probability measure on $[0,\infty)$ with the property that
\[ \int f\,d\overline{\upsilon}^z_N = \mathbb{E}\left[\int f\,d\upsilon^z_N\right] \quad \forall  f\in C_b([0,\infty)). \]
In particular, given that $y\mapsto (\zeta-y)^{-1}$ is a continuous bounded function of $y$ for any $\zeta\in\mathbb{C}^+$, it follows that the Cauchy transform satisfies $G_{\overline{\upsilon}^z_N}(\zeta) = \mathbb{E}[G_{\upsilon^z_N}(\zeta)]$.

For given $\eta\ge 0$, $f_x(y)= \frac{\eta^2}{y^2+\eta^2}$ is in $C_b([0,\infty))$; moreover, $f_\eta(y)\ge\frac12$ for $y\in[0,\eta]$.  Therefore
\begin{equation} \label{eq.CDF.1} \int_0^\eta \frac{\eta^2}{y^2+\eta^2}\,\overline{\upsilon}^z_N(dy) \ge\int_0^\eta \frac12 d\overline{\upsilon}^z_N = \frac12\overline{\upsilon}^z_N([0,\eta]). \end{equation}
On the other hand, note that
\[ \mathrm{Im}\, G_{\overline{\upsilon}^z_N}(i\eta) = \mathrm{Im} \int_{\mathbb{R}} \frac{1}{i\eta-y}\,\overline{\upsilon}^z_N(dy) = \int_{\mathbb{R}} \frac{-\eta}{y^2+\eta^2}\,\overline{\upsilon}^z_N(dy) \]
and combining this with \eqref{eq.CDF.1} yields
\begin{equation} \label{eq.CDF.2} \overline{\upsilon}^z_N([0,\eta]) \le \left|-2\eta \mathrm{Im}\,G_{\overline{\upsilon}^z_N}(i\eta)\right| = 2\eta\,\mathbb{E}\!\left[\left|\mathrm{Im}\,G_{\upsilon^z_N}(i\eta)\right|\right]. \end{equation}

Now, $\upsilon^z_N$ is the spectral measure of $P^N = (U_0^N B_k^N(t)-zI)^\ast(U_0^N B_k^N(t)-zI)$, which is a selfadjoint polynomial in $U_0^N$ and $A_j^N = U_j^N T_j^N V_j^{N\ast}$.  Thus, by Theorem \ref{thm.Wegner}, there are constants $\overline{C},c_1>0$ and $c_2\in(0,1)$ so that, for $\eta\ge N^{-c_1}$
\begin{equation} \label{eq.CDF.3} \overline{\upsilon}^z_N([0,\eta]) \le 2\eta\,\mathbb{E}\!\left[\left|\mathrm{Im}\,G_{P^N}(i\eta)\right|\right] 
 \le 2\eta\cdot \overline{C} \eta^{-c_2} = 2\overline{C} \eta^{1-c_2}. \end{equation}
This inequality for the cumulative distribution function of $\overline{\upsilon}^z_N$ can now be used to estimate the integral in \eqref{eq.CGH.rho} as follows.  Fix $\varsigma>0$ with $\varsigma<\min\{\rho,c_1\}$, so that $N^{-\varsigma} > N^{-c_1}$ and $N^{-\varsigma}>N^{-\rho}$.  Then
\[ \int_{N^{-\rho}}^\epsilon |\log x|\,\upsilon^z_N(dx) = \int_{N^{-\rho}}^{N^{-\varsigma}} |\log x|\,\upsilon^z_N(dx) + \int_{N^{-\varsigma}}^\epsilon |\log x|\,\upsilon^z_N(dx) \]
and hence
\begin{align} \nonumber \mathbb{E}\left[\mathbbm{1}_{\mathscr{G}_N}\int_{N^{-\rho}}^\epsilon |\log x|\,\upsilon^z_N(dx)\right] &\le \mathbb{E}\left[\int_{N^{-\rho}}^\epsilon |\log x|\,\upsilon^z_N(dx)\right] \\
&= \int_{N^{-\rho}}^{N^{-\varsigma}} |\log x|\,\overline{\upsilon}^z_N(dx) + \int_{N^{-\varsigma}}^\epsilon |\log x|\,\overline{\upsilon}^z_N(dx). \label{eq.CDF.2-term}
\end{align}
For the first term in \eqref{eq.CDF.2-term}, we make the blunt estimate
\begin{align} \nonumber 
\int_{N^{-\rho}}^{N^{-\varsigma}} |\log x|\,\overline{\upsilon}^z_N(dx) &\le \overline{\upsilon}^z_N([N^{-\rho},N^{-\varsigma}])\cdot (|\log N^{-\varsigma}|-|\log N^{-\rho}|) \\ \label{eq.CDF.2-term.1}
&\le \overline{\upsilon}^z_N([0,N^{-\varsigma}])|\log N^{-\varsigma}| \le 2\overline{C} N^{-\varsigma(1-c_2)}\cdot \varsigma \log N
\end{align}
where the final inequality comes from \eqref{eq.CDF.3}, which holds at $\eta=N^{-\varsigma}$ because $\eta=N^{-\varsigma}>N^{-c_1}$.

For the second term in \eqref{eq.CDF.2-term}, we use integration by parts.  Letting $F_N^z(x)=\overline{\upsilon}_N^x([0,x])$ denote the cumulative distribution function, from Riemann--Stieltjes integration theory we have
\begin{align*}
\int_{N^{-\varsigma}}^\epsilon |\log x|\,\overline{\upsilon}^z_N(dx)
&=-\int_{N^{-\varsigma}}^{\epsilon} \log x\,dF^z_N(x) \\
&=-\log\epsilon\, F^z_N(\epsilon) + \log (N^{-\varsigma}) F^z_N(N^{-\varsigma}) + \int_{N^{-\varsigma}}^\epsilon \frac{1}{x} F^z_N(x)\,dx \\
&\le -\log\epsilon\, F^z_N(\epsilon) + \int_{N^{-\varsigma}}^\epsilon \frac{1}{x} F^z_N(x)\,dx
\end{align*}
where the inequality is from dropping the middle term which is negative.  We may assume that $\epsilon<1$, hence $-\log\epsilon>0$, and so in both terms above we may use \eqref{eq.CDF.3} to upper bound $F_N^z(x) = \overline{\upsilon}_N^z([0,x])$:
\begin{align} \nonumber
\int_{N^{-\varsigma}}^\epsilon |\log x|\,\overline{\upsilon}^z_N(dx) &\le
-\log\epsilon\, F^z_N(\epsilon) + \int_{N^{-\varsigma}}^\epsilon \frac{1}{x} F^z_N(x)\,dx \\ \nonumber
&\le -\log\epsilon\,  2\overline{C}\epsilon^{1-c_2} + \int_{N^{-\varsigma}}^\epsilon \frac{1}{x}\cdot 2\overline{C} x^{1-c_2}\,dx \\ \label{eq.CDF.2-term.2}
&\le -2\overline{C}\epsilon^{1-c_2}\log\epsilon + \frac{2\overline{C}}{1-c_2}\epsilon^{1-c_2}
\end{align}
where the final inequality comes from bounding the integral over $[N^{-\varsigma},\epsilon]$ by the integral over $[0,\epsilon]$.

Combining \eqref{eq.CDF.2-term} with \eqref{eq.CDF.2-term.1} and \eqref{eq.CDF.2-term.2} we have
\begin{align*}  &\mathbb{E}\left[\mathbbm{1}_{\mathscr{G}_N}\int_{N^{-\rho}}^\epsilon |\log x|\,\upsilon^z_N(dx)\right] \\
\le &2\varsigma\overline{C} N^{-\varsigma(1-c_2)}\log N -2\overline{C}\epsilon^{1-c_2}\log\epsilon + \frac{2\overline{C}}{1-c_2}\epsilon^{1-c_2}.
\end{align*}
Because $\varsigma>0$ and $c_2<1$, the first term tends to $0$ as $N\to\infty$.  The last two terms are independent of $N$, and (again owing to $c_2<1$) tend to $0$ as $\epsilon\downarrow 0$.
\end{proof}

\begin{corollary} \label{cor.conv.prob} For any $\delta,\delta'>0$, there is an $\epsilon\in(0,1)$ so that, for all $z\in\mathbb{C}$,
\begin{equation} \limsup_{N\to\infty}\mathbb{P}\left\{\left|\int_0^\epsilon \log x\,\upsilon^z_N(dx)\right|\ge\delta\right\}\le \delta'.
\end{equation}
\end{corollary}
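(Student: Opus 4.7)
The plan is to deduce Corollary \ref{cor.conv.prob} directly from Lemma \ref{lem.CGH} by a standard Markov inequality argument together with the fact that $\mathbb{P}(\mathscr{G}_N) \to 1$. Fix $\delta, \delta' > 0$ and $z \in \mathbb{C}$. I would first restrict attention to $\epsilon \in (0,1)$, so that $\log x < 0$ for $x \in (0,\epsilon)$, and consequently
\[
\left|\int_0^\epsilon \log x\,\upsilon^z_N(dx)\right| = \int_0^\epsilon |\log x|\,\upsilon^z_N(dx).
\]

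Next I would split according to whether the good event $\mathscr{G}_N$ of \eqref{eq.GN} occurs:
\[
\mathbb{P}\!\left\{\int_0^\epsilon |\log x|\,\upsilon^z_N(dx) \ge \delta\right\} \le \mathbb{P}(\mathscr{G}_N^c) + \mathbb{P}\!\left(\mathscr{G}_N \cap \left\{\int_0^\epsilon |\log x|\,\upsilon^z_N(dx) \ge \delta\right\}\right).
\]
By the first conclusion of Lemma \ref{lem.CGH}, $\mathbb{P}(\mathscr{G}_N^c) \to 0$ as $N \to \infty$, so $\limsup_N \mathbb{P}(\mathscr{G}_N^c) = 0$. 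For the second term I apply Markov's inequality to obtain
\[
\mathbb{P}\!\left(\mathscr{G}_N \cap \left\{\int_0^\epsilon |\log x|\,\upsilon^z_N(dx) \ge \delta\right\}\right) \le \frac{1}{\delta}\,\mathbb{E}\!\left[\mathbbm{1}_{\mathscr{G}_N}\int_0^\epsilon |\log x|\,\upsilon^z_N(dx)\right].
\]

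Taking $\limsup_{N\to\infty}$ on both sides and invoking \eqref{eq.CGH} from Lemma \ref{lem.CGH}, the right-hand side has limit zero as $\epsilon \downarrow 0$. Therefore I can choose $\epsilon \in (0,1)$ small enough so that
\[
\limsup_{N\to\infty}\frac{1}{\delta}\,\mathbb{E}\!\left[\mathbbm{1}_{\mathscr{G}_N}\int_0^\epsilon |\log x|\,\upsilon^z_N(dx)\right] \le \delta',
\]
which combined with the previous bound yields the desired $\limsup_N \mathbb{P}\{|\int_0^\epsilon \log x\,d\upsilon_N^z| \ge \delta\} \le \delta'$. Since the bounds produced inside Lemma \ref{lem.CGH} (coming from Theorems \ref{thm.pseudospectrum} and \ref{thm.Wegner}) are independent of $z$, the same $\epsilon$ works for every $z \in \mathbb{C}$. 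The proof is essentially routine given the heavy lifting already done; no step presents a real obstacle, as the main difficulty (controlling the small-singular-value regime) has been absorbed into Lemma \ref{lem.CGH}.
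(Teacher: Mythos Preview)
Your proof is correct and follows essentially the same approach as the paper: Markov's inequality applied to the restricted expectation $\mathbb{E}[\mathbbm{1}_{\mathscr{G}_N}\int_0^\epsilon|\log x|\,\upsilon_N^z(dx)]$, combined with $\mathbb{P}(\mathscr{G}_N)\to 1$ and Lemma~\ref{lem.CGH}. Your decomposition via $\mathbb{P}(\mathscr{E})\le\mathbb{P}(\mathscr{G}_N^c)+\mathbb{P}(\mathscr{G}_N\cap\mathscr{E})$ is slightly more direct than the paper's route through conditional probabilities $\mathbb{P}_{\mathscr{G}_N}$, but the substance is identical.
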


\begin{proof} For each $N\in\mathbb{N}$, let $\mathscr{G}_N$ be the event defined in \eqref{eq.GN}. My Markov's inequality, for any $\epsilon\in(0,1)$ and $\delta>0$, the conditional probability $\mathbb{P}_{\mathscr{G}_N}$ satisfies
\begin{align*} \mathbb{P}_{\mathscr{G}_N}\left\{\left|\int_0^\epsilon \log x\,\upsilon^z_N(dx)\right|\ge\delta\right\} &\le \frac{1}{\delta}\mathbb{E}_{\mathscr{G}_N}\left[\left|
\int_0^\epsilon \log x\,\upsilon^z_N(dx)\right|\right] \\
&= \frac{1}{\delta\cdot \mathbb{P}(\mathscr{G}_N)}\mathbb{E}\left[\mathbbm{1}_{\mathscr{G}_N}\int_0^\epsilon |\log x|\,\upsilon_N^z(dx)\right].
\end{align*}
Theorem \ref{thm.pseudospectrum} asserts that $\lim_{N\to\infty}\mathbb{P}(\mathscr{G}_N)=1$, so let $N_0$ be large enough that, for $N\ge N_0$, $\mathbb{P}(\mathscr{G}_N)>\frac12$.  By Lemma \ref{lem.CGH}, there is $\epsilon>0$ so that, for all sufficiently large $N\ge N_0$, the above expectation is $< \delta\delta'/4$.  It thence follows that, for that $\epsilon$ and sufficiently large $N$,
\begin{equation} \label{eq.conv.P.delta'} \mathbb{P}_{\mathscr{G}_N}\left\{\left|\int_0^\epsilon \log x\,\upsilon^z_N(dx)\right|\ge\delta\right\}\le \frac{\delta'}{2}. \end{equation}
Now, since $\mathbb{P}(\mathscr{G}_N)\to 1$, it follows that $\mathbb{P}_{\mathscr{G}_N}(\mathscr{E})\to \mathbb{P}(\mathscr{E})$ for any event $\mathscr{E}$, and in particular for $\mathscr{E}$ equal to the above event.  Indeed: by the principal of inclusion-exclusion, $\mathbb{P}(\mathscr{G}_N\cap\mathscr{E}) = \mathbb{P}(\mathscr{E})+\mathbb{P}(\mathscr{G}_N)-\mathbb{P}(\mathscr{G}_N\cup\mathscr{E})$, and $1\ge \mathbb{P}(\mathscr{G}_N\cup\mathscr{E})\ge \mathbb{P}(\mathscr{G}_N)\to 1$, so
\[ \mathbb{P}_{\mathscr{G}_N}(\mathscr{E}) = \frac{\mathbb{P}(\mathscr{G}_N\cap E)}{\mathbb{P}(\mathscr{G}_N)} \to \frac{\mathbb{P}(\mathscr{E})+1-1}{1} = \mathbb{P}(\mathscr{E}). \]
Therefore, we can choose $N_1\in\mathbb{N}$ so that, for all $N\ge N_1$, $|\mathbb{P}_{\mathscr{G}_N}(\mathscr{E})-\mathbb{P}(\mathscr{E})|\le \delta'/2$.  Finally, then, for all sufficiently large $N\ge\max\{N_0,N_1\}$, we have
\begin{align*} \mathbb{P}(\mathscr{E}) &= \mathbb{P}(\mathscr{E})-\mathbb{P}_{\mathscr{G}_N}(\mathscr{E}) + \mathbb{P}_{\mathscr{G}_N}(\mathscr{G}) \\
& \le |\mathbb{P}(\mathscr{E})-\mathbb{P}_{\mathscr{G}_N}(\mathscr{E})| + \mathbb{P}_{\mathscr{G}_N}(\mathscr{G}) \le \frac{\delta'}{2}+\frac{\delta'}{2} = \delta'
\end{align*}
This concludes the proof.
\end{proof}

Combining all of the above yield the a.e.\ pointwise convergence of log potentials.  Going forward, denote
\begin{align*} L_N(z) &= \frac{1}{4\pi}\int_0^\infty\,\log x\,\upsilon^z_N(dx) = L_{U_0^N B^N_k(t)}(z) \\
L_\infty(z) &= \frac{1}{4\pi}\int_0^\infty\,\log x\,\upsilon^z(dx) = L_{u_0 b_k(t)}(z).
\end{align*}

\begin{proposition} \label{prop.a.e.convergence} Under Assumption \ref{assump.sing.conv}, for Lebesgue a.e.\ $z\in\mathbb{C}$, \eqref{eq.log.pot.conv} holds: i.e.\ $L_N(z)\to_{\mathbb{P}} L_\infty(z)$ for a.e.\ $z\in\mathbb{C}$.
\end{proposition}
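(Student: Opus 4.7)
The plan is to combine the three already-established ingredients---Corollary \ref{cor.eig.conv.1} (convergence of the bulk part a.s.), Lemma \ref{lem.log.pot.finite} (negligibility of the small-$x$ tail of the limit), and Corollary \ref{cor.conv.prob} (uniform-in-$N$ negligibility in probability of the small-$x$ tail of the approximants)---via a routine three-$\epsilon$ decomposition. For fixed $z$, split each log potential at a cutoff $\epsilon \in (0,1)$:
\begin{equation*}
4\pi L_N(z) = A_N(\epsilon) + C_N(\epsilon), \qquad 4\pi L_\infty(z) = B(\epsilon) + D(\epsilon),
\end{equation*}
where $A_N,B$ denote the integrals of $\log x$ against $\upsilon_N^z,\upsilon^z$ over $(0,\epsilon)$ and $C_N, D$ the corresponding integrals over $[\epsilon,\infty)$. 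By the triangle inequality,
\begin{equation*}
4\pi\,|L_N(z)-L_\infty(z)| \le |A_N(\epsilon)| + |B(\epsilon)| + |C_N(\epsilon)-D(\epsilon)|,
\end{equation*}
so it suffices to control each of the three terms on the right.

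Fix $z$ in the full-measure set $Z^\ast\subset\mathbb{C}$ supplied by Lemma \ref{lem.log.pot.finite} applied to $b=u_0b_k(t)$. Given $\eta,\eta'>0$, I would choose $\epsilon>0$ small enough to simultaneously satisfy: (i) $|B(\epsilon)|\le \int_0^\epsilon |\log x|\,\upsilon^z(dx)<\eta/3$, which is possible by Lemma \ref{lem.log.pot.finite}; and (ii) $\limsup_{N\to\infty}\mathbb{P}\{|A_N(\epsilon)|\ge \eta/3\}\le \eta'/2$, which is possible by Corollary \ref{cor.conv.prob} applied with parameters $\delta=\eta/3$ and $\delta'=\eta'/2$. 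The key observation enabling the simultaneous choice is monotonicity: since $\log x<0$ on $(0,\epsilon)$, the quantity $|A_N(\epsilon)|=\int_0^\epsilon |\log x|\,\upsilon_N^z(dx)$ is nondecreasing in $\epsilon$, so taking $\epsilon$ below both the threshold from Lemma \ref{lem.log.pot.finite} and the threshold provided by Corollary \ref{cor.conv.prob} preserves both bounds.

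With $\epsilon$ now fixed, Corollary \ref{cor.eig.conv.1} gives $C_N(\epsilon)\to D(\epsilon)$ a.s., and hence in probability, so there exists $N_0=N_0(z,\eta,\eta')$ such that $\mathbb{P}\{|C_N(\epsilon)-D(\epsilon)|\ge \eta/3\}\le \eta'/2$ for $N\ge N_0$. Similarly, property (ii) yields $N_1$ with $\mathbb{P}\{|A_N(\epsilon)|\ge \eta/3\}\le \eta'$ for $N\ge N_1$ (say, increasing the $\eta'/2$ bound by $\eta'/2$ to accommodate the $\limsup$). A union bound on the decomposition above then gives
\begin{equation*}
\mathbb{P}\{4\pi\,|L_N(z)-L_\infty(z)|\ge \eta\}
\le \mathbb{P}\{|A_N(\epsilon)|\ge \eta/3\} + \mathbb{P}\{|C_N(\epsilon)-D(\epsilon)|\ge \eta/3\} \le 2\eta'
\end{equation*}
for $N\ge\max\{N_0,N_1\}$, which is the required convergence in probability (after rescaling $\eta,\eta'$).

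There is no real obstacle in this final step---the hard work was front-loaded into the pseudospectrum estimate (Theorem \ref{thm.pseudospectrum}) and the Wegner estimate (Theorem \ref{thm.Wegner}), which together power Corollary \ref{cor.conv.prob}, and into the intrinsic local $L^1$-bound on log potentials that powers Lemma \ref{lem.log.pot.finite}. Proposition \ref{prop.a.e.convergence} is simply the clean assembly of these ingredients via the three-$\epsilon$ argument above.
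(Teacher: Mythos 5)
Your proposal is correct and follows essentially the same route as the paper's own proof: the identical three-term splitting of $|L_N-L_\infty|$ at a cutoff $\epsilon$, handled respectively by Corollary \ref{cor.conv.prob}, Lemma \ref{lem.log.pot.finite}, and Corollary \ref{cor.eig.conv.1}, with the same monotonicity-in-$\epsilon$ observation used to choose a single common cutoff. The only cosmetic difference is that you fold the deterministic term $|B(\epsilon)|$ into the union bound by noting it vanishes once $\epsilon$ is small enough, whereas the paper carries it along as a degenerate probability term; the logic is the same.
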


\begin{proof} Using the notation from \eqref{eq.def.upsilon}, we begin by noting that, for any $\epsilon>0$,
\begin{align*} &\left|\int_0^\infty \log x\,\upsilon^z_N(dx) - \int_0^\infty \log x\,\upsilon^z(dx)\right| \\
&\le \left|\int_\epsilon^\infty \log x\,\upsilon^z_N(dx) - \int_\epsilon^\infty \log x\,\upsilon^z(dx)\right| + \int_0^\epsilon |\log x|\,\upsilon^z_N(dx) + \int_0^\epsilon |\log x|\,\upsilon^z(dx).
\end{align*}
Let $\delta>0$.  If each of these three terms is $\le \delta/3$, then the difference of log potentials is $\le \delta$; hence
\begin{align*}
&\mathbb{P}\left\{\left|\int_0^\infty \log x\,\upsilon^z_N(dx) - \int_0^\infty \log x\,\upsilon^z(dx)\right|\ge\delta\right\} \\
\le\;\;&
\mathbb{P}\left\{\left|\int_\epsilon^\infty \log x\,\upsilon^z_N(dx) - \int_\epsilon^\infty \log x\,\upsilon^z(dx)\right|\ge \delta/3\right\} \\
+\;\;&\mathbb{P}\left\{\int_0^\epsilon |\log x|\,\upsilon^z_N(dx)\ge\delta/3\right\}  + \mathbb{P}\left\{\int_0^\epsilon |\log x|\,\upsilon^z(dx)\ge\delta/3\right\}.
\end{align*}
For any $\delta'>0$, Lemma \ref{lem.log.pot.finite} shows that, for some $\epsilon_0>0$, the third term is $\le\delta'/2$ for all $z$ in some Lebesgue full measure subset of $\mathbb{C}$; this also holds (for the same full measure set of $z$) for any $\epsilon<\epsilon_0$ since the integral is increasing in $\epsilon$.  By Corollary \ref{cor.conv.prob}, there is some $\epsilon_1\in(0,1)$ for which the $\limsup_{N\to\infty}$ of the second term is $\le \delta'/2$ for any $z\in\mathbb{C}$; again, this also holds true for any $\epsilon<\epsilon_1$ as the integral is increasing in $\epsilon$.  So taking $\epsilon=\min\{\epsilon_0,\epsilon_1\}$, the first term converges to $0$ as $N\to\infty$ for any $z\in\mathbb{C}$ by Corollary \ref{cor.eig.conv.1}.

Hence, we've shown that for any $\delta,\delta'>0$,
\[ \limsup_{N\to\infty} \mathbb{P}\left\{\left|\int_0^\infty \log x\,\upsilon^z_N(dx) - \int_0^\infty \log x\,\upsilon^z(dx)\right|\ge\delta\right\} \le \delta'. \]
It follows that this $\limsup_{N\to\infty}$ is actually equal to $0$, concluding the proof.
\end{proof}

To complete the proof of Theorem \ref{thm.conv.ESD}, that the Brown measures converge in probability, i.e.\ that the (distributional) Laplacians of the log potentials converge, we need to upgrade the log potential convergence from a.e.\ to $L^1_{\mathrm{loc}}$.

\begin{lemma} \label{prop.L1loc} Let $K\subset\mathbb{C}$ be compact.  For $M>0$, define the event
\[ \mathscr{H}_N(K,M):= \left\{\|U_0^N B_k^N(t)-zI\|\le M\text{ for a.e. }z\in K\right\}. \]
Then
\begin{equation} \label{eq.ELN-L} \lim_{N\to\infty}\mathbb{E}\left[\mathbbm{1}_{\mathscr{H}_N(K,M)} \int_{K} |L_N(z)-L_{\infty}(z)|\,d^2z\right] = 0.
\end{equation}
\end{lemma}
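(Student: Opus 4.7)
The plan is to recast \eqref{eq.ELN-L} as the vanishing of an $L^1$-norm on the product space $(K\times\Omega,\,d^2z\otimes d\mathbb{P})$ and to establish this by Vitali's convergence theorem. The two ingredients we will assemble are (i) a uniform (in $N$) $L^p$ bound for some $p>1$, which gives equi-integrability, and (ii) convergence in measure on the product space, which will follow from Proposition \ref{prop.a.e.convergence} via Fubini.

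Set $R:=\sup_{z\in K}|z|$ and define
\[
h_N(z,\omega) := \mathbbm{1}_{\mathscr{H}_N(K,M)}(\omega)\bigl(L_N(z,\omega)-L_\infty(z)\bigr).
\]
The first task is the uniform $L^p$ bound.  On the event $\mathscr{H}_N(K,M)$, the set $\{z\in\mathbb{C}:\|U_0^NB_k^N(t)-zI\|\le M\}$ is closed (by continuity of $z\mapsto\|U_0^NB_k^N(t)-zI\|$) and contains a full-measure subset of $K$, so (assuming without loss of generality $K$ has positive measure, else \eqref{eq.ELN-L} is trivial) it contains all of $K$; in particular $\|U_0^NB_k^N(t)\|\le M+R$.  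Corollary \ref{cor.La.unif.L1.loc} then yields, for any $p\ge 1$,
\[
\mathbbm{1}_{\mathscr{H}_N(K,M)}\int_K |L_N(z)|^p\,d^2z \;\le\; \Lambda_p(2R+M),
\]
uniformly in $N$.  Since Lemma \ref{lem.Bkt.bounded} supplies a deterministic bound $\|u_0 b_k(t)\|\le M_t(k,R)$, Corollary \ref{cor.La.unif.L1.loc} also yields $\int_K|L_\infty|^p\,d^2z\le\Lambda_p(2R+M_t(k,R))<\infty$.  Taking expectations, applying Fubini--Tonelli, and using $|a-b|^p\le 2^{p-1}(|a|^p+|b|^p)$ gives $\|h_N\|_{L^p(K\times\Omega)}\le C_p$ uniformly in $N$.

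For convergence in measure, fix $\epsilon>0$.  Proposition \ref{prop.a.e.convergence} tells us $\mathbb{P}(|L_N(z)-L_\infty(z)|>\epsilon)\to 0$ for a.e.\ $z\in K$, and the integrand is bounded by $1$; dominated convergence then gives
\[
(d^2z\otimes d\mathbb{P})\bigl(|h_N|>\epsilon\bigr) \;\le\; \int_K\mathbb{P}\bigl(|L_N(z)-L_\infty(z)|>\epsilon\bigr)\,d^2z \;\longrightarrow\; 0.
\]
So $h_N\to 0$ in measure on $K\times\Omega$.  Combined with the uniform $L^p$-bound for $p>1$ (hence equi-integrability in $L^1$), Vitali's convergence theorem concludes $\|h_N\|_{L^1(K\times\Omega)}\to 0$, which is \eqref{eq.ELN-L} after one more application of Fubini.

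The only non-routine step is the uniform $L^p$-bound: Corollary \ref{cor.La.unif.L1.loc} requires a \emph{deterministic} norm control on $U_0^NB_k^N(t)$, but Lemma \ref{lem.Bkt.bounded} only delivers an a.s.\ $\limsup$.  This is precisely why the statement truncates by $\mathbbm{1}_{\mathscr{H}_N(K,M)}$: the event converts the limsup bound into a pointwise bound valid uniformly in $N$, at the cost of an indicator that will be removed (in the subsequent corollary, presumably by letting $M\to\infty$) once a matching lower bound on $\mathbb{P}(\mathscr{H}_N(K,M))$ is secured from Lemma \ref{lem.Bkt.bounded}.
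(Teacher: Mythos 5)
Your proof is correct and takes essentially the same route as the paper's: reformulate \eqref{eq.ELN-L} as an $L^1$-norm on the product space $K\times\Omega$, obtain convergence in measure there from Proposition~\ref{prop.a.e.convergence} via Fubini and dominated convergence, secure a uniform-in-$N$ $L^p$ bound ($p>1$) from Corollary~\ref{cor.La.unif.L1.loc} using the indicator $\mathbbm{1}_{\mathscr{H}_N(K,M)}$ to turn the $\limsup$ norm bound into a deterministic one, and then invoke Vitali's convergence theorem. (Two small asides: the ``closed set contains all of $K$'' step is unnecessary and not quite valid for arbitrary compact $K$ --- you only need one $z\in K$ with $\|U_0^NB^N_k(t)-zI\|\le M$ to deduce $\|U_0^NB^N_k(t)\|\le M+R$, which the a.e.\ condition supplies whenever $|K|>0$; and in the paper's subsequent corollary the indicator is removed not by sending $M\to\infty$ but by choosing a fixed $M=M_K$ so large that $\mathbb{P}(\mathscr{H}_N(K,M_K))=1$ for all large $N$, via Lemma~\ref{lem.Bkt.bounded}.)
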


\begin{proof} Let $(\Omega,\mathscr{F},\mathbb{P})$ be the underlying probability space.  On the joint finite measure space $(\Omega\times K, \mathscr{F}\otimes\mathscr{B}(K),\mathbb{P}\otimes\mathrm{Leb})$ (where $\mathscr{B}(K)$ is the Borel $\sigma$-field over $K$ and $\mathrm{Leb}$ is Lebesgue measure), let $F_N\colon\Omega\times K\to\mathbb{R}_+$ be the measurable function $F_N(\omega,z) = \mathbbm{1}_{\mathscr{H}_N(K,M)}(\omega)|L_N(z)-L_\infty(z)|$.  By Proposition \ref{prop.a.e.convergence}, for a.e.\ $z\in K$ $F_N(\cdot,z)\to_{\mathbb{P}} 0$; i.e.\ for any $\epsilon>0$, $f_N(z)=\mathbb{P}\{F_N(\cdot,z)\ge\epsilon\}\to 0$ for $\mathrm{Leb}$-a.e.\ $z\in K$ as $N\to\infty$.  The function $f_N$ is bounded, and hence by the bounded convergence theorem for $L^1(K)$, it follows that
\begin{align*} 0 = \lim_{N\to\infty} \int_K f_N(z)\,d^2z &= \int_K \mathbb{P}\{F_N(\cdot,z)\ge\epsilon\}\,d^2z  \\
&= \int_K \int_\Omega \mathbbm{1}_{F_N(\omega,z)\ge\epsilon}\,\mathbb{P}(d\omega)\,d^2 z \\
&= \mathbb{P}\otimes\mathrm{Leb}\{F_N\ge\epsilon\}.
\end{align*}
Hence, $F_N\to_{\mathbb{P}\otimes\mathrm{Leb}} 0$.

Now, note that $F_N \le \mathbbm{1}_{\mathscr{H}_N(K,M)}(|L_N| + |L_\infty|)$, and hence for any $p\ge 1$,
\[ \|F_N\|_{L^p} \le \|\mathbbm{1}_{\mathscr{H}_N(K,M)}L_N\|_{L^p} + \|\mathbbm{1}_{\mathscr{H}_N(K,M)}L_\infty\|_{L^p}. \]
Well,
\begin{align*} \|\mathbbm{1}_{\mathscr{H}_N(K,M)}L_N\|_{L^p}^p &= \mathbb{E}\left[\mathbbm{1}_{\mathscr{H}_N(K,M)}\int_K |L_N|^p\,d^2z\right] \\
\|\mathbbm{1}_{\mathscr{H}_N(K,M)}L_\infty\|_{L^p}^p &= \mathbb{E}\left[\mathbbm{1}_{\mathscr{H}_N(K,M)}\int_K |L_\infty|^p\,d^2z\right].
\end{align*}

In the event $\mathscr{H}_N(K,M)$, $\|U_0^N B_k^N(t)-zI\|\le M$ a.e.  Because $U_0^N B_k^N(t)-zI$ converges in $\ast$-distribution to $u_0 b_k(t)-z$, it then follows that $\|u_0 b_k(t)-z\|\le M$ a.e.\ as well (cf.\ Lemma \ref{lem.Bkt.bounded} and Remark \ref{rk.bulk.vs.outliers}).  Therefore, Corollary \ref{cor.La.unif.L1.loc} implies that
\begin{align*}
\mathbbm{1}_{\mathscr{H}_N(K,M)}\int_K |L_N|^p\,d^2z &\le \mathbbm{1}_{\mathscr{H}_N(K,M)}\Lambda_p(R_K+M) \le \Lambda_p(R_K+M) \\
\mathbbm{1}_{\mathscr{H}_N(K,M)}\int_K |L_\infty|^p\,d^2z &\le \mathbbm{1}_{\mathscr{H}_N(K,M)}\Lambda_p(R_K+M) \le \Lambda_p(R_K+M)
\end{align*}
for any $R_K\ge 1$ for which $K\subseteq\{z\in\mathbb{C}\colon |z|\le R_K\}$.  
Hence, we conclude that $\|F_N\|_{L^p} \le 2\Lambda_p(R_K+M)^{1/p}$, a bound which is uniform in $N$.  Thus, $\{F_N\}_{N\in\mathbb{N}}$ is $L^p$ bounded for every $p\ge 1$.  Applying this with any $p>1$ implies that $\{F_N\}_{N\in\mathbb{N}}$ is uniformly integrable.

Hence: $\{F_N\}_{N\in\mathbb{N}}$ is uniformly integrable and $F_N\to 0$ in measure.  It follows from the Vitali Convergence Theorem \cite[Theorem 17.56]{DriverProbNotes} that $\|F_N\|_{L^1}\to 0$, which is precisely the desired statement \eqref{eq.ELN-L}.
\end{proof}

\begin{corollary} \label{cor.L1loc} For any compact $K\subseteq\mathbb{C}$,
\[ \int_K |L_N(z)-L_\infty(z)|\,d^2z \to_{\mathbb{P}} 0 \quad \text{as}\;N\to\infty. \]\end{corollary}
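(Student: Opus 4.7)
The plan is to bootstrap from the truncated $L^1$-convergence in Lemma \ref{prop.L1loc} to unconditional convergence in probability by showing that the truncation event $\mathscr{H}_N(K,M)$ has probability tending to $1$ when $M$ is chosen large enough. Concretely, let $R := \sup_{z\in K}|z| < \infty$ and let $M_0 := M_t(k,R)$ be the constant furnished by Lemma \ref{lem.Bkt.bounded}, so that $\limsup_{N\to\infty} \sup_{z\in K}\|U_0^N B_k^N(t)-zI\| \le M_0$ almost surely (the supremum is bounded by $\|U_0^N B_k^N(t)\| + R$, which admits a uniform a.s.\ limsup). Fix $M := M_0 + 1$; then the event $\mathscr{H}_N(K,M)$ holds for all $N$ larger than some random threshold, so $\mathbb{P}(\mathscr{H}_N(K,M)^c) \to 0$ as $N\to\infty$.

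Given this, the argument is a one-line union bound combined with Markov's inequality. For any $\epsilon > 0$,
\[
\mathbb{P}\!\left\{\int_K |L_N(z)-L_\infty(z)|\,d^2z \ge \epsilon\right\} \le \mathbb{P}(\mathscr{H}_N(K,M)^c) + \frac{1}{\epsilon}\,\mathbb{E}\!\left[\mathbbm{1}_{\mathscr{H}_N(K,M)} \int_K |L_N(z)-L_\infty(z)|\,d^2z\right].
\]
The first term tends to $0$ by the choice of $M$, and the second term tends to $0$ by Lemma \ref{prop.L1loc}. Hence the left-hand side vanishes, which is exactly the desired convergence in probability.

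There is no real obstacle here: all the heavy lifting (uniform integrability of the log potentials, upgrading a.e.\ pointwise convergence to $L^1$ convergence) was already done in Lemma \ref{prop.L1loc}, and the a.s.\ operator-norm bound from Lemma \ref{lem.Bkt.bounded} handles the truncation. The only thing to be mildly careful about is that $\mathscr{H}_N(K,M)$ is defined via an ``a.e.''\ condition on $z\in K$, but since $\|U_0^N B_k^N(t) - zI\|$ is continuous in $z$, any pointwise bound for all $z \in K$ immediately implies the a.e.\ version, so the above choice of $M$ is legitimate.
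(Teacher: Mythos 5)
Your proof is correct, and it is essentially the same argument as the paper's, just packaged a little differently. The paper conditions on $\mathscr{H}_N(K,M_K)$ and then observes (using the uniform-in-$N$ deterministic norm bound hiding in Assumption (RW1)) that $\mathbb{P}(\mathscr{H}_N(K,M_K))=1$ for all large $N$, so the conditional law agrees with the unconditional one; you instead note only that $\mathbb{P}(\mathscr{H}_N^c)\to 0$ (which already follows from the a.s.\ $\limsup$ statement of Lemma \ref{lem.Bkt.bounded}, by dominated convergence on $\mathbbm{1}_{\mathscr{H}_N^c}$) and run a union bound. Your route is marginally more robust, in that it needs only the lemma as stated rather than the sharper ``$\mathbb{P}=1$ for large $N$'' from (RW1), and the union bound is cleaner than tracking a conditional probability and then arguing it coincides with $\mathbb{P}$. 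The one subtlety you flag, that $\mathscr{H}_N$ is defined by an ``a.e.\ $z$'' condition, is handled correctly: a uniform pointwise bound over $K$ trivially implies the a.e.\ version, and passing the fixed-$z$ $\limsup$ bound in Lemma \ref{lem.Bkt.bounded} to a supremum over $z\in K$ via $\|U_0^N B_k^N(t)-zI\|\le\|U_0^N B_k^N(t)\|+R$ is exactly what the proof of that lemma does anyway. No gaps.
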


\begin{proof} By Lemma \ref{lem.Bkt.bounded}, there is a constant $M_K>0$ so that
\[ \mathbb{P}(\mathscr{H}_N(K,M_K)) = \mathbb{P}\left\{\|U_0^N B_k^N(t)-zI\|\le M\text{ for a.e. }z\in K\right\}=1 \]
for all large $N$.  (For example, let $R_K\ge 1$ be large enough that $K\subseteq\{z\in\mathbb{C}\colon |z|\le R_K\}$, and take $M_K = M_t(k,R_K)+1$.)  Let $\mathbb{P}_N = \mathbb{P}(\,\cdot\,|\mathscr{H}_N(K,M_K))$ denote the conditional probability and let $\mathbb{E}_N$ be the expectation with respect to $\mathbb{P}_N$. Markov's inequality yields for any $\epsilon>0$
\begin{align*} &\mathbb{P}_N\left(\int_K |L_N(z)-L_\infty(z)|\,d^2z\ge\epsilon\right) \\
\le &\frac{1}{\epsilon}\mathbb{E}_N\left[\int_K |L_N(z)-L_\infty(z)|\,d^2z\right] \\
=&\frac{1}{\epsilon\mathbb{P}(\mathscr{H}_N(K,M_K))}\mathbb{E}\left[\mathbbm{1}_{\mathscr{H}_N(K,M_K)}\int_K |L_N(z)-L_\infty(z)|\,d^2z\right].
\end{align*}
Since $\mathbb{P}(\mathscr{H}_N(M,K_M))=1$ for large $N$, it follows that $\mathbb{P}_N = \mathbb{P}$ for large $N$.  Hence, we've shown that, for all large $N$,
\[ \mathbb{P}\left(\int_K |L_N(z)-L_\infty(z)|\,d^2z\ge\epsilon\right) \le \frac{1}{\epsilon}\mathbb{E}\left[\mathbbm{1}_{\mathscr{H}_N(K,M_K)}\int_K |L_N(z)-L_\infty(z)|\,d^2z\right] \]
and this tends to $0$ as $N\to\infty$ by Lemma \ref{prop.L1loc}.  \end{proof}

Finally, this gives us all the necessary pieces to prove the convergence of Brown measures.

\begin{myproof}{\it Theorem}{\ref{thm.conv.ESD}}
Let $L_N(z)$ denote the log potential of $U^N_0 B_k^N(t)$ and let $L_{\infty}(z)$ denote the log potential of $u_0 b_k(t)$.  Similarly, denote the $\mathrm{ESD}$ (i.e.\ Brown measure) of $ U^N_0 B_k^N(t)$ by $\mu_N$ and the Brown measure of $u_0 b_k(t)$ by $\mu_\infty$.  Then $\mu_N = \frac{1}{2\pi}\nabla^2 L_N$ and $\mu_{\infty} = \frac{1}{2\pi}\nabla^2 L_\infty$, both in distributional sense.  Our goal is to prove that
\begin{equation} \label{eq.final.Brown.conv.} \int_{\mathbb{C}} f\,d\mu_N \to_{\mathbb{P}} \int_{\mathbb{C}} f\, d\mu_\infty \qquad \forall\,f\in C_c^\infty(\mathbb{C}).
\end{equation}
Note that
\[ \int_{\mathbb{C}} f\,d\mu_N = \int_{\mathbb{C}} f(z)\nabla^2 L_N(z)\,d^2z = \int_{\mathbb{C}} \nabla^2 f(z)\,L_N(z)\,d^2z \]
by definition of the distributional Laplacian.  Hence, \eqref{eq.final.Brown.conv.} is equivalent to showing
\begin{equation} \label{eq.final.Brown.conv.2} \int_{\mathbb{C}} \nabla^2 f(z)\cdot[L_N(z)-L_\infty(z)]\,d^2z \to_{\mathbb{P}} 0 \qquad \forall\,f\in C_c^\infty(\mathbb{C}).
\end{equation}
Since $f\in C_c^\infty(\mathbb{C})$, so is $\nabla^2 f$, supported in the same compact set $\mathrm{supp}\, f$.  Hence
\[ \left|\int_{\mathbb{C}} \nabla^2 f(z)\cdot[L_N(z)-L_\infty(z)]\,d^2z\right| \le \|\nabla^2 f\|_{\infty} \int_{\mathrm{supp}\,f}|L_N(z)-L_\infty(z)|\,d^2 z. \]
Wlog, assume that $\|\nabla^2 f\|_\infty\ne 0$.  Then for any $\epsilon>0$
\begin{align} \label{eq.main.thm.final.1} &\mathbb{P}\left\{\left|\int_{\mathbb{C}} \nabla^2 f(z)\cdot[L_N(z)-L_\infty(z)]\,d^2z\right|\ge\epsilon\right\} \\ \label{eq.main.thm.final.2}
\le &\mathbb{P}\left\{\int_{\mathrm{supp}\,f}|L_N(z)-L_\infty(z)|\,d^2 z \ge \epsilon/\|\nabla^2 f\|_\infty\right\}. \end{align}
Since $f$ is compactly-supported, Corollary \ref{cor.L1loc} shows that \eqref{eq.main.thm.final.2} $\to 0$ as $N\to\infty$; hence, it follows that \eqref{eq.main.thm.final.1} $\to 0$ as $N\to\infty$.  Thus, \eqref{eq.final.Brown.conv.2} is established, concluding the proof.
\end{myproof}

\section{Linearized Matrix Models and Freeness}
Denote by $\MkC$ the $k\times k$ matrices with complex entries. For a $W^\ast$-probability space $(\mathscr{A},\varphi)$, $\varphi\otimes \mathrm{tr}$ is a faithful normal tracial state on $\mathscr{A}\otimes \MkC$. Thus, $(\mathscr{A}\otimes \MkC, \varphi\otimes \mathrm{tr})$ is also a noncommutative probability space.

\begin{remark} The random matrix ensembles in $\MNC$ that are involved in the construction of matrix random walks throughout previous sections are all a.s.\ bounded for large $N$, and so their large-$N$ limits can all be constructed genuinely within a $W^\ast$-algebra $\mathscr{A}$.  Nevertheless, {\em all} of the techniques we use henceforth to analyze the Brown measures of such models work perfectly well for unbounded operators, and so we develop the tools in that level of generality. \end{remark}

Denote by $\widetilde{\mathscr{A}}$ the set of all closed, densely defined (possibly unbounded) operators affiliated with $\mathscr{A}$. We will consider (possibly) unbounded $\ast$-free $\mathscr{R}$-diagonal element $a\in\widetilde{\mathscr{A}}$ with unit variance (i.e.\ the $L^2$-norm $\|a\|_2$ is $1$). Let $a_j\in\widetilde{\mathscr{A}}$ be freely independent elements with the same $\ast$-distribution as $a$. Also let $u_0\in \mathscr{A}$ be unitary $\ast$-freely independent from all $a_j$. We consider the $k$-step random walk with unitary initial condition
\begin{equation}
\label{eq.RW.unitary}
u_0b_k(t) = u_0\left(1+\sqrt{\frac{t}{k}}a_1\right)\cdots \left(1+\sqrt{\frac{t}{k}}a_k\right)
\end{equation}
for $t$ ranging through $[0,\infty)$.  The random walk $u_0b_k(t)$ is an element in $\widetilde{\mathscr{A}}$. When $u_0=1$ and $a_j\in\mathscr{A}$ are bounded,  $b_k(t)$ reduces to the $k$-step random walk defined in \eqref{def.bk}. 

We study the Brown measure of $u_0b_k(t)$ in this section. In Section \ref{sec.linearization}, we first apply a linearization trick to $u_0b_k(t)$. The random walk $u_0b_k(t)$ is a noncommutative polynomial in $u_0, a_1, \ldots,a_k$. The linearization allows us to study $Z+\sqrt{t/k}A$, where
 \begin{equation}
     \label{eq.Z.A.def}
     Z = \sum_{j=1}^k u_0^{1/k}\otimes E_{j,j+1} \quad\textrm{and}\quad A = \sum_{j=1}^k a_j\otimes E_{j,j+1}
 \end{equation}
 are elements in $\widetilde{\mathscr{A}}\otimes \MkC$. The nonzero entries of $Z$ and $A$ are in cyclic order. The operators $Z$ and $A$ are closed, densely defined (possibly) unbounded operators affiliated with $(\mathscr{A}\otimes \MkC,\varphi\otimes\mathrm{tr})$; so is $Z+\sqrt{t/k}A$. Corollary \ref{cor.unitary.linearized} shows that the push-forward of the Brown measure of $Z+\sqrt{t/k}A$ by $\lambda\mapsto \lambda^k$ is the Brown measure of $u_0b_k(t)$.

 The elements $Z$ and $A$ are free with amalgamation over $\MkC$ (\cite[Chapter 9.2]{MingoSpeicherBook}). We want to compute the Brown measure of $Z+\sqrt{t/k}A$ as an operator affiliated with $(\mathscr{A}\otimes \MkC,\varphi\otimes\mathrm{tr})$. For this purpose, we want to show that $Z$ and $A$ are $\ast$-free with respect to the trace $\varphi\otimes\mathrm{tr}$; this $\ast$-freeness result will be proved in Section \ref{sec.linearized.freeness}.

\subsection{A Linearization Trick\label{sec.linearization}} \hfill

%\medskip

\begin{proposition}
    \label{prop.linearization}
    Let $b_1,\ldots,b_k\in\widetilde{\mathscr{A}}$ be $\ast$-freely independent, with $\log^+|b_j|\in L^1(\mathscr{A},\varphi)$ for $1\le j\le k$ (where $\log^+ x = \max\{\log x,0\}$), so that the Brown measure of $b_1\cdots b_k$ is defined.  (See Definition \ref{def.Brown.measure} and Remark \ref{rk.Brown.measure.def}.)
%    \begin{equation}
%        \label{eq.log.Brown.cond}
%        \varphi(\log^+\vert b_j\vert)<\infty,
%    \end{equation}
%    where $\log^+$ is defined by $\log^+(x) = \max\{\log(x),0\}$ for $x\geq 0$. Thus, the Brown measure of any polynomial in $b_1,\ldots,b_k$ is defined (\cite[Definition 2.13]{HaagerupSchultz2007}).
    
    We define the matrix $M\in\widetilde{\mathscr{A}}\otimes \MkC$ by
    \[M = \sum_{j=1}^k b_j\otimes E_{j,j+1}.\]
    Then $M$ is affiliated with $\mathscr{A}\otimes \MkC$ and $\log^+|M|\in L^1(\mathscr{A}\otimes \MkC,\varphi\otimes\mathrm{tr})$,
%    \[(\varphi\otimes\mathrm{tr})(\log^+\vert M\vert)<\infty\]
    so that the Brown measure of $M$ is defined.% in \cite{HaagerupSchultz2007}.
    
    Moreover, the Brown measure of $b_1\cdots b_k$ is the push-forward of the Brown measure of $M$ by the map $\lambda\mapsto \lambda^k$.
\end{proposition}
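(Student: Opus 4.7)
The plan is to establish three ingredients and then combine them: (i) $M$ is affiliated with $\mathscr{A}\otimes\MkC$ and $\log^+|M|\in L^1(\varphi\otimes\mathrm{tr})$, so the Brown measure $\mu_M$ is defined; (ii) the Brown measure of $M^k$ (computed in $(\mathscr{A}\otimes\MkC,\varphi\otimes\mathrm{tr})$) equals $\mu_{b_1\cdots b_k}$; and (iii) $\mu_{M^k}$ is the push-forward of $\mu_M$ under $z\mapsto z^k$. Chaining (iii) and (ii) yields the desired identity $\mu_{b_1\cdots b_k} = (z\mapsto z^k)_{\ast}\mu_M$.

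The computation driving everything is the direct observation that, with cyclic indexing (so $E_{k,k+1}=E_{k,1}$),
\[
M^k \;=\; \mathrm{diag}\bigl(b_1b_2\cdots b_k,\; b_2b_3\cdots b_kb_1,\;\ldots,\;b_kb_1\cdots b_{k-1}\bigr),
\]
a block-diagonal matrix whose entries are the $k$ cyclic permutations of the product $b_1\cdots b_k$. A parallel calculation gives $M^{\ast}M=\mathrm{diag}(b_k^{\ast}b_k,b_1^{\ast}b_1,\ldots,b_{k-1}^{\ast}b_{k-1})$, so $|M|$ is diagonal with entries $|b_{j-1}|$ (cyclically). Affiliation follows because each $b_j$ is affiliated with $\mathscr{A}$, and the estimate
$(\varphi\otimes\mathrm{tr})(\log^+|M|)=\tfrac{1}{k}\sum_j\varphi(\log^+|b_j|)<\infty$
delivers the $\log^+$-integrability needed for $\mu_M$ to be defined in the sense of Definition \ref{def.Brown.measure} and Remark \ref{rk.Brown.measure.def}.

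For (ii), I would use the fact that the log potential of any block-diagonal operator decomposes as an average: if $D=\mathrm{diag}(d_1,\ldots,d_k)\in\widetilde{\mathscr{A}}\otimes\MkC$, then the spectral distribution of $|D-\lambda|$ under $\varphi\otimes\mathrm{tr}$ is the uniform mixture of the distributions of $|d_j-\lambda|$ under $\varphi$, so $L_D(\lambda)=\tfrac{1}{k}\sum_{j}L_{d_j}(\lambda)$ and therefore $\mu_D=\tfrac{1}{k}\sum_{j}\mu_{d_j}$. Applying this to $D=M^k$ with $d_j=b_jb_{j+1}\cdots b_{j+k-1}$, and invoking the traciality of $\varphi$ (which makes all cyclic permutations of $b_1\cdots b_k$ share a common $\ast$-distribution and hence a common Brown measure), collapses the average to $\mu_{M^k}=\mu_{b_1\cdots b_k}$.

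For (iii), the cleanest route is through the Fuglede--Kadison determinant $\Delta(a)=\exp\varphi(\log|a|)$, which satisfies $L_a(\lambda)=\tfrac{1}{2\pi}\log\Delta(a-\lambda)$. The polynomial factorization $z^k-\lambda=\prod_{\omega^k=\lambda}(z-\omega)$ substituted at $z=M$ together with multiplicativity of $\Delta$ gives $L_{M^k}(\lambda)=\sum_{\omega^k=\lambda}L_M(\omega)$; on the other hand, the integral representation \eqref{eq.log.potential.integral.Brown} applied to the push-forward yields
\[
L_{(z\mapsto z^k)_{\ast}\mu_M}(\lambda)\;=\;\tfrac{1}{2\pi}\!\int_{\mathbb{C}}\log|z^k-\lambda|\,d\mu_M(z)\;=\;\sum_{\omega^k=\lambda}L_M(\omega),
\]
and identifying the two log potentials (and taking Laplacians) finishes (iii). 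The main obstacle is step (iii) for \emph{unbounded} $M$: Proposition \ref{prop.Brown}(\ref{prop.Brown.holo}) as stated handles only bounded operators, so I would either invoke Haagerup--Schultz's extension of $\Delta$ and its multiplicativity to the $\log^+$-integrable affiliated operators or, alternatively, work directly with the spectral distribution of $|M^k-\lambda|$ versus $\prod_{\omega^k=\lambda}|M-\omega|$ via bounded truncations of $M$ and a standard monotone/dominated convergence argument on the log potentials.
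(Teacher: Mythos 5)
Your proposal tracks the paper's own argument closely: identify $M^k$ as a block-diagonal operator whose diagonal entries are the cyclic rotations of $b_1\cdots b_k$, argue that its Brown measure (under $\varphi\otimes\mathrm{tr}$) coincides with that of $b_1\cdots b_k$, and then push forward along $\lambda\mapsto\lambda^k$. Your added detail on the affiliation/integrability check (computing $M^\ast M$ to be diagonal with entries $b_j^\ast b_j$) and on the averaging of log potentials over the $k$ blocks is correct, and the Fuglede--Kadison determinant argument for the push-forward is a valid self-contained substitute for the citation the paper reaches for (Haagerup--Schultz Proposition 2.15), with the unboundedness issue correctly flagged and correctly delegated.

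The one error is the parenthetical claim that traciality alone ``makes all cyclic permutations of $b_1\cdots b_k$ share a common $\ast$-distribution.'' That is false in a general tracial $W^\ast$-probability space: with $a=E_{12}$ and $b=E_{21}+E_{11}$ in $(M_2(\mathbb{C}),\mathrm{tr})$ one computes $\mathrm{tr}\big[(ab)(ab)^\ast\big]=\tfrac12$ but $\mathrm{tr}\big[(ba)(ba)^\ast\big]=1$, so $ab$ and $ba$ need not have the same $\ast$-distribution. What traciality alone gives (via the Fuglede--Kadison identity $\Delta(1-xy)=\Delta(1-yx)$, together with multiplicativity of $\Delta$) is the equality of Brown measures $\mu_{ab}=\mu_{ba}$, which is the weaker statement you actually need to collapse the block average. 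Alternatively, in the present setting the stronger $\ast$-distributional equality of the cyclic rotations does hold, but it is a consequence of the freeness hypothesis on $b_1,\ldots,b_k$, not of traciality. Either repair closes the gap; as written, the justification is misattributed.
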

%\begin{remark}
%    \label{rem.Brown.welldefined}
%    By Remark 2.2 of \cite{HaagerupSchultz2007}, any unbounded operator with finite variance satisfies \eqref{eq.log.Brown.cond}. In particular, the Brown measure of $u_0b_k(t)$ and the Brown measure of $Z+\sqrt{t/k}A$ are defined.
%\end{remark}
\begin{proof}
    The matrix $M^k$ is diagonal, and the $\ast$-distribution of the entries of $M^k$ along the diagonal is the same as $b_1\cdots b_k$. The Brown measure is determined by $\ast$-distribution; hence, the Brown measure of $M^k$ is the same as the Brown measure of $b_1\cdots b_k$.  By \cite[Proposition 2.15]{HaagerupSchultz2007}, the Brown measure of $M^k$ is the push-forward of the Brown measure of $M$ by the map $z\mapsto z^k$.
\end{proof}

\begin{theorem}
    \label{thm.unitary.distribute}
    Let $u_0, a_1,\ldots,a_k\in \widetilde{\mathscr{A}}$ be as in \eqref{eq.RW.unitary}. If $u_0^{1/k}$ is any $k$-th root of $u_0$ (i.e. $(u_0^{1/k})^k = u_0$), then $u_0b_k(t)$ and 
    \[\left(u_0^{1/k}+\sqrt{\frac{t}{k}}a_1\right)\cdots \left(u_0^{1/k}+\sqrt{\frac{t}{k}}a_k\right)\]
    have the same $\ast$-distribution.
\end{theorem}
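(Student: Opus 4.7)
The plan is to reduce the identity of $\ast$-distributions to a statement about joint $\ast$-distributions of simultaneously conjugated $\mathscr{R}$-diagonal elements, for which Lemma \ref{lem.free.Haar} is the natural tool. Writing $v := u_0^{1/k}$ so that $v^k = u_0$, each factor on the right-hand side factors as $v + \sqrt{t/k}\,a_j = v\bigl(1 + \sqrt{t/k}\,v^{-1}a_j\bigr)$, and the elementary rearrangement identity
\[
\prod_{j=1}^k v X_j \;=\; v^k \prod_{j=1}^k v^{-(k-j)} X_j v^{k-j}
\]
(applied with $X_j = 1 + \sqrt{t/k}\,v^{-1}a_j$) will transform the product into
\[
\prod_{j=1}^k \bigl(v + \sqrt{t/k}\,a_j\bigr) \;=\; u_0 \prod_{j=1}^k \bigl(1 + \sqrt{t/k}\,\tilde a_j\bigr), \qquad \tilde a_j := v^{-(k-j+1)}\, a_j\, v^{k-j}.
\]
Since both sides of the claimed equality are noncommutative polynomials in $u_0$ and the $a_j$'s (resp.\ $\tilde a_j$'s), it suffices to establish the joint $\ast$-distribution equality $(u_0, \tilde a_1, \ldots, \tilde a_k) \equaldist (u_0, a_1, \ldots, a_k)$.

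To prove this joint equality, I will pass to the canonical SVD realization of $\mathscr{R}$-diagonal elements given by Corollary \ref{cor.R-diag.SVD}: the joint $\ast$-distribution of $(u_0, a_1, \ldots, a_k)$ is the same as that of $(u_0, w_1 q_1 w_1'^*, \ldots, w_k q_k w_k'^*)$, where $w_1, w_1', \ldots, w_k, w_k'$ are freely independent Haar unitaries, $q_1, \ldots, q_k$ are non-negative with $q_j \equaldist |a_j|$, and the entire family $\{u_0, (w_j, w_j', q_j)_{j=1}^k\}$ is $\ast$-freely independent in the appropriate sense. In this model, $\tilde a_j = \hat w_j q_j \hat w_j'^*$ with
\[
\hat w_j := v^{-(k-j+1)} w_j, \qquad \hat w_j' := v^{-(k-j)} w_j',
\]
both of which are unitaries since $v^{-m} \in W^\ast(u_0)$ for every integer $m$. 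Applying Lemma \ref{lem.free.Haar} to the freely independent Haar family $(w_j, w_j')_{j=1}^k$, viewed as free from the subalgebra $\mathscr{B} := W^\ast(u_0, q_1, \ldots, q_k)$ and sandwiched by unitaries $v^{-(k-j+1)}, v^{-(k-j)} \in \mathscr{B}$ (with the right-hand unitary taken to be $1$), yields that $(\hat w_j, \hat w_j')_{j=1}^k$ is again a freely independent family of Haar unitaries, jointly $\ast$-free from $\mathscr{B}$.

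Consequently, the joint $\ast$-distribution of $\bigl(u_0, q_1, \ldots, q_k, \hat w_1, \hat w_1', \ldots, \hat w_k, \hat w_k'\bigr)$ coincides with that of $\bigl(u_0, q_1, \ldots, q_k, w_1, w_1', \ldots, w_k, w_k'\bigr)$. Applying this to the specific $\ast$-polynomial $(u, p, u') \mapsto u p u'^*$ in each triple $(w_j, q_j, w_j')$ versus $(\hat w_j, q_j, \hat w_j')$ yields the desired joint equality $(u_0, \tilde a_1, \ldots, \tilde a_k) \equaldist (u_0, a_1, \ldots, a_k)$, which in turn (after applying the identical polynomial to both sides) establishes the theorem.

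The main conceptual step is the first rearrangement, which converts the theorem into an invariance statement; the technical crux is the clean applicability of Lemma \ref{lem.free.Haar} to the full collection of $2k$ Haar unitaries at once. The only mild point to verify is that the argument is unaffected by the fact that the $a_j$ may be unbounded (affiliated) operators: this is fine because the polar/SVD structure of an $\mathscr{R}$-diagonal element and all relevant freeness relations persist for affiliated operators, and the unitaries $v^{\pm m}$ and $w_j, w_j'$ involved in the conjugation and SVD remain bounded, so Lemma \ref{lem.free.Haar} applies verbatim.
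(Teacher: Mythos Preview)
Your proof is correct and follows essentially the same route as the paper's: an algebraic rearrangement that pushes $u_0$ through the product at the cost of sandwiching each $a_j$ by powers of $v=u_0^{1/k}$, followed by an appeal to Lemma~\ref{lem.free.Haar} via the Haar-unitary factorization of $\mathscr{R}$-diagonal elements. The paper performs the rearrangement starting from $u_0b_k(t)$ and arriving at $\prod_j\bigl(u_0^{1/k}+\sqrt{t/k}\,(u_0^{1/k})^{k-j+1}a_j(u_0^{1/k})^{-(k-j)}\bigr)$, whereas you go in the reverse direction; the resulting conjugations differ only by a sign in the exponents, which is immaterial for the invariance argument. Your treatment is slightly more explicit in packaging the conclusion as an equality of joint $\ast$-distributions $(u_0,\tilde a_1,\ldots,\tilde a_k)\equaldist(u_0,a_1,\ldots,a_k)$, but the substance is identical to the paper's.
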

For convenience, we will later specify $u_0^{1/k}$ to be a $k$-th root of $u$ such that its spectrum is contained in an arc with argument in $(\pi,\pi]$.
\begin{proof}
    We calculate
    \begin{align*}
        &u_0\left(1+\sqrt{\frac{t}{k}}a_1\right)\cdots \left(1+\sqrt{\frac{t}{k}}a_k\right) \\
        &= \left(u_0^{1/k}+\sqrt{\frac{t}{k}}u a_1(u_0^{1/k})^{-(k-1)}\right)(u_0^{1/k})^{k-1}\left(1+\sqrt{\frac{t}{k}}a_2\right)\cdots \left(1+\sqrt{\frac{t}{k}}a_k\right)\\
        &=\prod_{j=1}^{k}\left(u_0^{1/k}+\sqrt{\frac{t}{k}}(u_0^{1/k})^{k-j+1}a_{j}(u_0^{1/k})^{-(k-j)}\right)
    \end{align*}
    Recall that each $\mathscr{R}$-diagonal element $a_j$ has the same $\ast$-distribution as $v_j x_j w_j$ for some $\ast$-free $v_j, x_j, w_j$, where $v_j$, $w_j$ are Haar unitaries and $x_j$ is self-adjoint. Lemma \ref{lem.free.Haar} then shows that $(u_0^{1/k})^{k-j}a_j(u_0^{1/k})^{-(k-j-1)}$ are all $\ast$-free $\mathscr{R}$-diagonal elements, with the same $\ast$-distribution as $a$, all $\ast$-free from $u_0^{1/k}$. The proof is completed.
\end{proof}

\begin{corollary}
    \label{cor.unitary.linearized}
    Denote by $u_0^{1/k}$ a fixed $k$-th root of $u$; that is, $(u_0^{1/k})^k = u$. Let $Z$ and $A$ be affiliated with $\mathscr{A}\otimes \MkC$ as defined in \eqref{eq.Z.A.def}. Then the push-forward of the Brown measure of $Z+\sqrt{t/k}A$ by $\lambda\mapsto \lambda^k$ is the Brown measure of the $k$-step random walk $u_0b_k(t)$, defined in \eqref{eq.RW.unitary}.
\end{corollary}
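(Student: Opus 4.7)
The plan is to combine Theorem \ref{thm.unitary.distribute} with the cyclic--companion linearization argument underlying Proposition \ref{prop.linearization}. I cannot simply quote Proposition \ref{prop.linearization} as a black box, because the natural factors $b_j := u_0^{1/k} + \sqrt{t/k}\, a_j$ share the common summand $u_0^{1/k}$ and hence fail to be $\ast$-freely independent. Nevertheless, the two ingredients that make the proof of Proposition \ref{prop.linearization} work remain available in this situation.

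First I would observe that with these $b_j$'s,
\[
Z + \sqrt{t/k}\, A \;=\; \sum_{j=1}^{k} b_j \otimes E_{j,j+1}
\]
is exactly the cyclic--companion linearization of the product $b_1 \cdots b_k$. Because the nonzero entries sit on the cyclic superdiagonal, $(Z+\sqrt{t/k}\,A)^k$ is a diagonal element of $\mathscr{A}\otimes\MkC$ whose $j$-th diagonal entry is the cyclic product $b_j b_{j+1}\cdots b_{j+k-1}$ (indices modulo $k$). The key point is that the $a_j$ are freely independent and identically distributed, so the joint $\ast$-distribution of $(a_1,\ldots,a_k)$ is invariant under cyclic permutation; therefore every diagonal entry of $(Z+\sqrt{t/k}\,A)^k$ has the same $\ast$-distribution as $b_1\cdots b_k$, which by Theorem \ref{thm.unitary.distribute} is precisely the $\ast$-distribution of $u_0 b_k(t)$. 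Since the trace on $\mathscr{A}\otimes\MkC$ averages over the diagonal, this forces the log potentials, and hence the Brown measures, of $(Z+\sqrt{t/k}\,A)^k$ and $u_0 b_k(t)$ to agree.

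To finish, I would invoke the push-forward identity from \cite[Proposition 2.15]{HaagerupSchultz2007} (the same reference that powered Proposition \ref{prop.linearization}): the Brown measure of $M^k$ is the push-forward by $\lambda \mapsto \lambda^k$ of the Brown measure of $M$. Applied to $M = Z + \sqrt{t/k}\, A$, this immediately yields the claim.

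The main obstacle I anticipate is verifying $\log^+\!\bigl|Z + \sqrt{t/k}\, A\bigr| \in L^1(\mathscr{A}\otimes\MkC,\varphi\otimes\mathrm{tr})$, so that the Brown measure on the linearized side is defined in the (possibly) unbounded setting. Since $Z$ is bounded and the standing hypothesis on $u_0 b_k(t)$ gives $\log^+|a_j|\in L^1(\mathscr{A},\varphi)$ for each $j$, the estimate should propagate through the block structure of $A$ by essentially the same calculation used inside Proposition \ref{prop.linearization}; this is the only step that requires a small direct check rather than a clean appeal to an earlier result.
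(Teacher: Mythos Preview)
Your proposal is correct and takes essentially the same route as the paper, which simply invokes Theorem \ref{thm.unitary.distribute} and then applies Proposition \ref{prop.linearization} with $b_j = u_0^{1/k}+\sqrt{t/k}\,a_j$. You are right that these $b_j$ fail the $\ast$-freeness hypothesis of Proposition \ref{prop.linearization}; your fix via cyclic invariance of the joint law of the i.i.d.\ $a_j$ is exactly what the proof of that proposition actually uses (equal $\ast$-distributions of the diagonal entries of $M^k$), so your argument is a more careful justification of a step the paper glosses over.
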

\begin{proof}
    By Theorem \ref{thm.unitary.distribute}, $u_0b_k(t)$ has the same $\ast$-distribution as 
    \[\left(u_0^{1/k}+\sqrt{\frac{t}{k}}a_1\right)\cdots \left(u_0^{1/k}+\sqrt{\frac{t}{k}}a_k\right).\] 
    The corollary now follows from applying Proposition \ref{prop.linearization} with $b_j = u_0^{1/k}+\sqrt{\frac{t}{k}}a_j$.
\end{proof}

\subsection{Freeness of Circulant Invariant Block Matrices\label{sec.linearized.freeness}} \hfill

\medskip

Let $S_k$ denote the symmetric group of permutations on the set $[k]=\{1,2,\ldots,k\}$.  For each $\sigma\in S_k$, let $U_\sigma$ denote the associated permutation matrix
\[ U_\sigma = \sum_{i=1}^k E_{i,\sigma(i)}. \]

\begin{definition} Let $(\mathscr{A},\varphi)$ be a tracial noncommutative probability space.  For some index set $L$, let $\mathbf{A} = \{a_{i,j}^\ell\}_{i,j\in[k],\ell\in L}$ be random variables in $\mathscr{A}$.  We say $\mathbf{A}$ is {\bf $\sigma$-circulant invariant} if the joint $\ast$-distribution of $\mathbf{A}$ is equal to the joint $\ast$-distribution of $\sigma\cdot\mathbf{A}$, where
\[ \sigma\cdot\mathbf{A} = \{a_{\sigma(i),\sigma(j)}^\ell\}_{i,j\in[k],\ell\in L}. \]
\end{definition}
Equivalently: if we think of $\mathbf{A}$ as a collection $\{A_\ell\}_{\ell\in L}$ of matrix-valued operators in $\mathscr{A}\otimes \MkC$ via $A_\ell = \sum_{i,j=1}^k a^{\ell}_{i,j} \otimes E_{i,j}$, $\sigma$-circulant invariance is equivalent to
\[ \{A_\ell\}_{\ell\in L} \equaldist \{U_\sigma A_\ell U_{\sigma}^\ast\}_{\ell\in L} \]
where we identify $U_\sigma = 1\otimes U_\sigma\in \mathscr{A}\otimes \MkC$.  This is because 
\[ U_\sigma A_\ell U_\sigma^\ast = \sum_{i=1}^k E_{i,\sigma(i)} \sum_{n,m=1}^k a_{n,m}\otimes E_{n,m} \sum_{j=1}^k E_{\sigma(j),j}
= \sum_{i,j=1}^k a_{\sigma(i),\sigma(j)}\otimes E_{i,j}\]
and
\[ U_\sigma A_\ell^\ast U_\sigma^\ast = (U_\sigma A_\ell U_\sigma^\ast)^\ast = \sum_{i,j=1}^k a_{\sigma(i),\sigma(j)}^\ast\otimes E_{i,j} \]

Note: we are talking about the $\ast$-distribution {\bf with respect to $\varphi$} of the entries of the matrices; not the $\ast$-distribution of the operator-valued matrices $A_\ell$ in the noncommutative probability space $(\mathscr{A}\otimes \MkC,\varphi\otimes\mathrm{tr})$.  (Our goal is to understand relations between these two.)

\begin{definition} For $\sigma\in S_k$, $\sigma\times\sigma$ is a permutation on $[k]\times[k]$.  The cycles of $\sigma\times\sigma$ partition the entries of each matrix $A_\ell$ into {\bf $\sigma$-orbits}.
\end{definition}

For example let $1+_k\in S_k$ denote the shift-up-one full cycle $(1+_k)(i) = 1+i\;\mathrm{mod}\;k$.  The $1+_k$-orbits in $A_\ell$ are the $k$ ``circulant diagonals'' $\{a^\ell_{i,i+n}\}_{i\in[k]}$ for each $n\in 0,1,\ldots,k-1$; here addition of indices is taken $\mathrm{mod}\;k$.

\begin{example} \label{ex.circ.inv} Following are two common setups exhibiting $\sigma$-circulant invariance.
\begin{enumerate}
    \item \label{ex.circ.inv.a} Suppose all the random variables $\{a^\ell_{i,j}\}_{i,j\in[k],\ell\in L}$ are $\ast$-freely independent.  Moreover, suppose that within each matrix $A_\ell$, all the entries within each $\sigma$-orbit have the same $\ast$-distribution.  (Entries in $A_{\ell_1}$ and $A_{\ell_2}$ for $\ell_1\ne\ell_2$ need not have related entry distributions.)  Then $\mathbf{A}$ is $\sigma$-circulant invariant.
    
    \item \label{ex.circ.inv.b} Suppose that the matrices $A_\ell$ all have $\ast$-freely independent entries for different $\ell$.  Moreover, within each matrix $A_\ell$, suppose that the $\sigma$-cycles are $\ast$-freely independent, and within each $\sigma$-cycle all entries are equal (but not necessarily equal for different $\sigma$-orbits, or in different matrices).  Then $\mathbf{A}$ is $\sigma$-circulant invariant.
    \end{enumerate} 
\end{example}

Both of the above examples explain the notation ``$\sigma$-circulant'' invariance.  A (scalared valued deterministic) {\em circulant matrix} is defined to have equal entries in each circulant diagonal (so all the entries are equal to one in the first row, tracking back up along a shifted diagonal).

The primary efficacy of $\sigma$-circular invariance comes from the fact that it is expressed in terms of a unitary conjugation, which interacts well with matrix product.  In particular, we have the following

\begin{lemma} \label{lem.circulant.*alg} Let $\sigma\in S_k$ be a permutation, and let $\mathbf{A} = \{A_\ell\}_{\ell\in L}$ be a family of $\sigma$-circulant invariant matrices in $\mathscr{A}\otimes \MkC$.  If $B\in \mathbb{C}^\ast \langle \mathbf{A} \rangle$ (the $\ast$-subalgebra of $\mathscr{A}\otimes \MkC$ generated by the matrices in $\mathbf{A}$), then $B$ is also $\sigma$-circulant invariant.
\end{lemma}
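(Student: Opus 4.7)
The plan is to reduce $\sigma$-circulant invariance to a statement about joint $\ast$-distributions of the scalar-valued entries, and then use the fact that joint $\ast$-distributions are preserved under noncommutative $\ast$-polynomial functions.

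First, I would record the observation (already implicit in the discussion preceding the lemma) that $\sigma$-circulant invariance of $\mathbf{A} = \{A_\ell\}_{\ell\in L}$ is equivalent to the joint $\ast$-distribution (with respect to $\varphi$) of the family of entries $\{a^\ell_{i,j}\}_{i,j\in[k],\,\ell\in L}$ being equal to the joint $\ast$-distribution of $\{a^\ell_{\sigma(i),\sigma(j)}\}_{i,j,\ell}$; equivalently, identifying $U_\sigma$ with $1\otimes U_\sigma\in\mathscr{A}\otimes\MkC$, we have the equality in joint $\ast$-distribution
\[
\{A_\ell\}_{\ell\in L} \equaldist \{U_\sigma A_\ell U_\sigma^\ast\}_{\ell\in L}
\]
as operator-valued matrices, with the distribution computed entrywise with respect to $\varphi$.

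Next, I would note that for any noncommutative $\ast$-polynomial $P$ in $|L|$ indeterminates and their adjoints, the operator $P(\mathbf{A})\in\mathscr{A}\otimes\MkC$ has entries that are fixed noncommutative $\ast$-polynomials in the entries $\{a^\ell_{i,j},(a^\ell_{i,j})^\ast\}$, determined solely by $P$ and the combinatorics of matrix multiplication. The same fixed entry-polynomials applied to the entries of $U_\sigma \mathbf{A} U_\sigma^\ast$ yield the entries of $U_\sigma P(\mathbf{A}) U_\sigma^\ast$, because conjugation by the scalar permutation matrix $U_\sigma$ is a $\ast$-automorphism of $\mathscr{A}\otimes\MkC$ fixing $\mathscr{A}\otimes 1$. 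Since joint $\ast$-distribution is preserved by polynomial transformations, the displayed equality above implies
\[
P(\mathbf{A}) \equaldist U_\sigma P(\mathbf{A}) U_\sigma^\ast,
\]
again in the entrywise sense. This is exactly the assertion that $P(\mathbf{A})$ is $\sigma$-circulant invariant.

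Finally, since every $B\in\mathbb{C}^\ast\langle\mathbf{A}\rangle$ is of the form $P(\mathbf{A})$ for some noncommutative $\ast$-polynomial $P$ (possibly in finitely many of the $A_\ell$), the lemma follows. I do not expect any genuine obstacle: the whole argument is essentially the statement that the map $X\mapsto U_\sigma X U_\sigma^\ast$ is a $\ast$-endomorphism of the algebra $\mathscr{A}\otimes\MkC$ whose restriction to the subalgebra $\mathbb{C}^\ast\langle\mathbf{A}\rangle$ preserves joint $\ast$-distributions, with the only care needed being the bookkeeping to translate $\sigma$-circulant invariance between the entry-level formulation and the conjugation-by-$U_\sigma$ formulation.
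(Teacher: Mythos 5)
Your proposal is correct and follows essentially the same argument as the paper: expand $B$ as a $\ast$-polynomial in the $A_\ell$, observe that conjugation by $U_\sigma$ commutes with taking products and adjoints (since $U_\sigma^\ast U_\sigma = I$), and conclude that $U_\sigma B U_\sigma^\ast$ arises by applying the same fixed entry-polynomials to the conjugated family, so the joint $\ast$-distribution of entries is preserved. The paper phrases this a bit more tersely by working directly with monomials $A_{\ell_1}^{\epsilon_1}\cdots A_{\ell_p}^{\epsilon_p}$ and linear combinations, while you emphasize the $\ast$-automorphism viewpoint, but these are the same proof.
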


\begin{proof} Any such $B$ is a linear combination of monomials $A_{\ell_1}^{\epsilon_1}\cdots A_{\ell_p}^{\epsilon_p}$ for some $\ell_1,\ldots,\ell_p\in L$ and $\epsilon_1,\ldots,\epsilon_p\in\{1,\ast\}$.  Note that
\[ U_\sigma A_{\ell_1}^{\epsilon_1}\cdots A_{\ell_p}^{\epsilon_p} U_\sigma^\ast = (U_\sigma A_{\ell_1}^{\epsilon_1} U_\sigma^\ast)\cdots (U_\sigma A_{\ell_p}^{\epsilon_p} U_\sigma^\ast). \]
By definition of $\sigma$-circulant invariance, this product (a fixed function) of the conjugated matrices has the same joint distribution of entries as the product $A_{\ell_1}^{\epsilon_1}\cdots A_{\ell_p}^{\epsilon_p}$.  Extending this argument to the linear combination (a fixed function) of these terms shows that $U_\sigma B U_\sigma^\ast$ has the same joint distribution of entries as $B$, verifying the claim.
\end{proof}

\begin{corollary} \label{cor.diagonal.centered} Let $\sigma\in S_k$ be a permutation with one cycle of length $k$.  Let $\mathbf{A} = \{A_\ell\}_{\ell\in L}$ be a family of $\sigma$-circulant invariant matrices in $\mathscr{A}\otimes \MkC$.  If $B = \sum_{i,j=1}^k b_{i,j}\otimes E_{i,j} \in \mathbb{C}^\ast \langle \mathbf{A} \rangle$, then $(\varphi\otimes\mathrm{id})(B) = \varphi(b_{1,1}) I_k$ i.e. the diagonal entries all have the same trace, $\varphi(b_{i,i}) = \varphi(b_{1,1})$ for all $i\in[k]$.
\end{corollary}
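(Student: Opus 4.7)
The plan is to transport the $\sigma$-circulant invariance from the generating family $\mathbf{A}$ to the element $B$, and then read off what the invariance says at the level of first-order moments of individual entries.

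First I would invoke Lemma \ref{lem.circulant.*alg}: since $B \in \mathbb{C}^\ast\langle\mathbf{A}\rangle$, the lemma guarantees that $B$ is itself $\sigma$-circulant invariant. Unpacking the definition, this means the joint $\ast$-distribution (with respect to $\varphi$) of the entries $\{b_{i,j}\}_{i,j\in[k]}$ coincides with the joint $\ast$-distribution of $\{b_{\sigma(i),\sigma(j)}\}_{i,j\in[k]}$, because the $(i,j)$ entry of $U_\sigma B U_\sigma^\ast$ is $b_{\sigma(i),\sigma(j)}$. In particular, looking at a single first moment at a time,
\[
\varphi(b_{i,i}) = \varphi(b_{\sigma(i),\sigma(i)}) \qquad \text{for every } i \in [k].
\]

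Next I would use the hypothesis that $\sigma$ is a single $k$-cycle. This implies that the forward orbit $\{\sigma^m(1): m = 0, 1, \ldots, k-1\}$ exhausts all of $[k]$. Iterating the equality displayed above along this orbit yields
\[
\varphi(b_{1,1}) = \varphi(b_{\sigma(1),\sigma(1)}) = \varphi(b_{\sigma^2(1),\sigma^2(1)}) = \cdots = \varphi(b_{\sigma^{k-1}(1),\sigma^{k-1}(1)}),
\]
and the right-hand sides enumerate $\varphi(b_{i,i})$ for every $i \in [k]$. Since $(\varphi\otimes\mathrm{id})(B) = \sum_{i,j=1}^k \varphi(b_{i,j}) E_{i,j}$, the diagonal of this matrix is the constant $\varphi(b_{1,1})$ times $I_k$, which is the content of the corollary.

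There is no real obstacle here: all the work is done in Lemma \ref{lem.circulant.*alg}, which promotes the $\sigma$-circulant invariance from the generators to the whole $\ast$-algebra. After that, the conclusion is a one-step consequence of transitivity of the cyclic orbit of $\sigma$ on $[k]$ combined with the fact that first moments are invariants of the $\ast$-distribution.
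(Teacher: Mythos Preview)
Your proof is correct and follows essentially the same route as the paper: invoke Lemma \ref{lem.circulant.*alg} to promote $\sigma$-circulant invariance from $\mathbf{A}$ to $B$, read off $\varphi(b_{i,i}) = \varphi(b_{\sigma(i),\sigma(i)})$ from the resulting equality of entry distributions, and iterate along the single $k$-cycle to conclude all diagonal traces coincide.
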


\begin{proof} From Lemma \ref{lem.circulant.*alg}, $B$ is $\sigma$-circulant invariant: $B$ has the same joint law of entries as $U_\sigma B U_\sigma^\ast$.  In particular,for each $i,j\in[k]$, the entry $b_{i,j}$ of $B$ has the same $\ast$-distribution as $b_{\sigma(i),\sigma(j)}$ in $(\mathscr{A},\varphi)$.  Thus $b_{1,1} \equaldist b_{\sigma(1),\sigma(1)}$; also $b_{\sigma(1),b_{\sigma(1)}} \equaldist b_{\sigma(\sigma(1)),\sigma(\sigma(1))}$, and so forth.  It follows that all $i$ in the same $\sigma$-cycle as $1$ have $\varphi(b_{i,i}) = \varphi(b_{1,1})$.  Since $\sigma$ has a single cycle of length $k$, it contains all indices in $[k]$ including $1$.  Thus $\varphi\otimes\mathrm{id}(B) = \sum_{i=1}^k \varphi(b_{i,i})\otimes E_{i,i} = \varphi(b_{1,1})\sum_{i=1}^k E_{i,i} = \varphi(b_{1,1}) I_k$ as claimed.
\end{proof}

Now we turn for a moment to diagonal operator-valued matrices with free identically distributed entries, a kind of ``dual'' to the ensembles considered above.

\begin{lemma} \label{lem.diag.Haar}
Let $\{v^m_i\}_{i\in[k],m\in M}$ be a family of random variables in $(\mathscr{A},\varphi)$, and assume that $\{v^m_i\}_{m\in M}$ have the same $\ast$-distribution for all $i\in[k]$.  Let $V_m = \sum_{i=1}^k v^m_i\otimes E_{i,i} \in \mathscr{A}\otimes \MkC$.  Then the joint $\ast$-distribution of $\mathbf{V} = \{V_m\}_{m\in M}$ in $(\mathscr{A}\otimes \MkC,\varphi\otimes\mathrm{tr})$ is the same as the joint $\ast$-distribution of $\{v^m_1\}_{m\in M}$ in $(\mathscr{A},\varphi)$.  In particular, if $\{v^m_1\}_{m\in M}$ are $\ast$-free in $(\mathscr{A},\varphi)$, then $\{V_m\}_{m\in M}$ are $\ast$-free in $(\mathscr{A}\otimes \MkC,\varphi\otimes\mathrm{tr})$.
\end{lemma}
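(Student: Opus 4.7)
The plan is to prove both statements together by a single direct computation of arbitrary $\ast$-moments of $\mathbf{V}$ with respect to $\varphi\otimes\mathrm{tr}$.

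First I would observe that since each $V_m = \sum_{i=1}^k v^m_i\otimes E_{i,i}$ is block-diagonal and the matrix units $E_{i,i}$ are mutually orthogonal projections summing to $I_k$, any $\ast$-monomial in the $V_m$'s stays block-diagonal. Concretely, for $n\in\mathbb{N}$, $m_1,\ldots,m_n\in M$, and $\varepsilon_1,\ldots,\varepsilon_n\in\{1,\ast\}$,
\[
V_{m_1}^{\varepsilon_1}\cdots V_{m_n}^{\varepsilon_n} \;=\; \sum_{i=1}^k \bigl((v^{m_1}_i)^{\varepsilon_1}\cdots (v^{m_n}_i)^{\varepsilon_n}\bigr)\otimes E_{i,i}.
\]
Applying the trace $\varphi\otimes\mathrm{tr}$ (with $\mathrm{tr}=\frac1k\mathrm{Tr}$) then yields
\[
(\varphi\otimes\mathrm{tr})\bigl(V_{m_1}^{\varepsilon_1}\cdots V_{m_n}^{\varepsilon_n}\bigr) \;=\; \frac{1}{k}\sum_{i=1}^k \varphi\bigl((v^{m_1}_i)^{\varepsilon_1}\cdots (v^{m_n}_i)^{\varepsilon_n}\bigr).
\]

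Next, I would invoke the hypothesis that the family $\{v^m_i\}_{m\in M}$ has the same joint $\ast$-distribution for every $i\in[k]$. Each summand above is precisely such a joint $\ast$-moment, so every term equals $\varphi((v^{m_1}_1)^{\varepsilon_1}\cdots (v^{m_n}_1)^{\varepsilon_n})$; the averaging over $i$ is then vacuous and
\[
(\varphi\otimes\mathrm{tr})\bigl(V_{m_1}^{\varepsilon_1}\cdots V_{m_n}^{\varepsilon_n}\bigr) \;=\; \varphi\bigl((v^{m_1}_1)^{\varepsilon_1}\cdots (v^{m_n}_1)^{\varepsilon_n}\bigr).
\]
This is exactly the statement that the joint $\ast$-distribution of $\{V_m\}_{m\in M}$ in $(\mathscr{A}\otimes\MkC,\varphi\otimes\mathrm{tr})$ coincides with the joint $\ast$-distribution of $\{v^m_1\}_{m\in M}$ in $(\mathscr{A},\varphi)$, proving the first assertion.

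Finally, the second assertion follows at once: $\ast$-freeness is a property of the joint $\ast$-distribution alone (cf.\ Section \ref{sect.*-prob} and Section \ref{sec.*-distribution}), so if $\{v^m_1\}_{m\in M}$ are $\ast$-free in $(\mathscr{A},\varphi)$, the equality of joint $\ast$-distributions just established forces $\{V_m\}_{m\in M}$ to be $\ast$-free in $(\mathscr{A}\otimes\MkC,\varphi\otimes\mathrm{tr})$. There is no genuine obstacle here; the lemma is essentially a bookkeeping identity, and the only subtlety is recognizing that the hypothesis is made \emph{per index} $i$, so that no independence (free or otherwise) \emph{across} the diagonal slots is required.
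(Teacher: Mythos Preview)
Your proof is correct and is essentially identical to the paper's own argument: both compute an arbitrary $\ast$-monomial in the $V_m$'s as a block-diagonal matrix, apply $\varphi\otimes\mathrm{tr}$ to get the average of the per-slot moments, and use the equal-distribution hypothesis to collapse the average to the $i=1$ moment. The concluding observation that $\ast$-freeness is determined by the joint $\ast$-distribution is exactly how the paper (implicitly) handles the ``in particular'' clause as well.
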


\begin{proof} The statement follows easily from the fact that, for any indices $m_1,\ldots,m_p\in M$ and any $\varepsilon_1,\ldots,\varepsilon_p\in\{1,\ast\}$,
\[ (V_{m_1})^{\varepsilon_1}\cdots (V_{m_p})^{\varepsilon_p} = \sum_{i=1}^k (v^{m_1}_i)^{\varepsilon_1}\cdots (v^{m_p}_i)^{\varepsilon_p}\otimes E_{i,i}. \]
The assumptions that the $\ast$-distribution of $\{v^m_i\}_{m\in M}$ is equal to the $\ast$-distribution of $\{v^m_i\}_{m\in M}$ is precisely to say that
\[ \varphi[(v^{m_1}_i)^{\varepsilon_1}\cdots (v^{m_p}_i)^{\varepsilon_p}] = \varphi[(v^{m_1}_1)^{\varepsilon_1}\cdots (v^{m_p}_1)^{\varepsilon_p}] \]
for all $i\in[k]$; thus
\begin{align*}  &(\varphi\otimes\mathrm{tr})[(V_{m_1})^{\varepsilon_1}\cdots (V_{m_p})^{\varepsilon_p}] \\ =&\frac{1}{k} \sum_{i=1}^k \varphi[(v^{m_1}_i)^{\varepsilon_1}\cdots (v^{m_p}_i)^{\varepsilon_p}] = \varphi[(v^{m_1}_1)^{\varepsilon_1}\cdots (v^{m_p}_1)^{\varepsilon_p}].
\end{align*}
Hence, the joint $\varphi\otimes\mathrm{tr}$ $\ast$-moments of $\{V_m\}_{m\in M}$ are the same as the $\varphi$ $\ast$-moments of $\{v^m_1\}_{m\in M}$, as claimed.  \end{proof}

This bring us to the main result of this section: freeness of cycle-circulant invariant matrices from diagonal matrices.

\begin{theorem} \label{thm.general.freeness} Let $\mathbf{A} = \{A_{\ell}\}_{\ell\in L}$ be a family of $\sigma$-circulant invariant matrices in $\mathscr{A}\otimes \MkC$, for some permutation $\sigma\in S_k$ with a single cycle of length $k$.  Let $\{\mathbf{V}_m\}_{m\in M}$ be diagonal matrices as in Lemma \ref{lem.diag.Haar}, and further assume that the families $\{v^m_i\}_{m\in M}$ are $\ast$-free for $i\in[k]$.  Assume that the entries of $\mathbf{A}$ are all $\ast$-free from all the entries of $\mathbf{V}$ with respect to $\varphi$.  Then $\mathbf{A}$ and $\mathbf{V}$ are $\ast$-freely independent with respect to $\varphi\otimes\mathrm{tr}$.
\end{theorem}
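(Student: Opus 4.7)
The plan is to verify the defining $\ast$-freeness condition directly: given any alternating centered product $B_1 B_2 \cdots B_n$ with consecutive $B_i$ drawn from $\mathbb{C}^\ast\langle \mathbf{A}\rangle$ and $\mathbb{C}^\ast\langle \mathbf{V}\rangle$ and each centered in $\varphi \otimes \mathrm{tr}$, I would show $(\varphi \otimes \mathrm{tr})(B_1 \cdots B_n) = 0$. I will treat the representative pattern $X_1 Y_1 X_2 Y_2 \cdots X_m Y_m$ (other starting/ending patterns being handled identically), writing $X_i = \sum_{p,q} x^{(i)}_{p,q} \otimes E_{p,q}$ with $X_i \in \mathbb{C}^\ast\langle\mathbf{A}\rangle$ and $Y_j = \sum_p y^{(j)}_p \otimes E_{p,p}$ with $Y_j \in \mathbb{C}^\ast\langle\mathbf{V}\rangle$. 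First I would harvest two centering facts. By Lemma \ref{lem.circulant.*alg}, each $X_i$ is $\sigma$-circulant invariant, so Corollary \ref{cor.diagonal.centered} combined with $(\varphi\otimes\mathrm{tr})(X_i) = 0$ yields $\varphi(x^{(i)}_{p,p}) = 0$ for every $p \in [k]$. By Lemma \ref{lem.diag.Haar}, the scalar entries $\{y^{(j)}_p\}_p$ share a common $\varphi$-distribution, so $(\varphi\otimes\mathrm{tr})(Y_j) = 0$ forces $\varphi(y^{(j)}_p) = 0$ for every $p$. Expanding via matrix units,
\[
(\varphi \otimes \mathrm{tr})(X_1 Y_1 \cdots X_m Y_m) = \frac{1}{k}\sum_{p_0,\ldots,p_{m-1}} \varphi\left(x^{(1)}_{p_0,p_1} y^{(1)}_{p_1} x^{(2)}_{p_1,p_2} \cdots x^{(m)}_{p_{m-1},p_0} y^{(m)}_{p_0}\right),
\]
reducing the problem to showing each inner $\varphi$-evaluation vanishes.

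Next I would decompose each entry $x^{(i)}_{p_{i-1},p_i} = c^{(i)}_{p_{i-1},p_i} + \tilde{x}^{(i)}_{p_{i-1},p_i}$ into its $\varphi$-mean $c^{(i)}_{p_{i-1},p_i} := \varphi(x^{(i)}_{p_{i-1},p_i})$ and its centered part, then expand the product across all subsets $T \subseteq \{1,\ldots,m\}$ of ``kept-centered'' positions. The crucial observation is that whenever $i \notin T$ and $p_{i-1} = p_i$, the scalar $c^{(i)}_{p_{i-1},p_i}$ vanishes by the diagonal centering above; hence only configurations with $p_{i-1}\ne p_i$ at every scalarized index contribute. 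Pulling the surviving scalars outside, each remaining $\varphi$ reduces to the $\varphi$-trace of an alternating product whose $\mathcal{A}$-slots are centered $\tilde{x}$-entries, lying in the $\ast$-algebra $\mathcal{X}$ generated by all entries of $\mathbf{A}$, and whose $\mathcal{V}$-slots are ``$Y$-blocks'' $y^{(j_1)}_{p_{j_1}} y^{(j_2)}_{p_{j_2}} \cdots$ consisting of consecutive $y$'s glued together across the scalarized $x$'s. Within each $Y$-block, consecutive positions necessarily differ (by the survival condition), each $y^{(j)}_{p_j}$ is $\varphi$-centered in $\mathcal{V}_{p_j} := \mathbb{C}^\ast\langle v^m_{p_j}: m\in M\rangle$, and the $\ast$-freeness of $\{\mathcal{V}_p\}_p$ from hypothesis (ii) then yields that each $Y$-block is $\varphi$-centered in $\mathcal{V} := \mathbb{C}^\ast\langle\{v^m_i\}_{m,i}\rangle$. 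The resulting alternating product of centered $\tilde{x}$'s in $\mathcal{X}$ and centered $Y$-blocks in $\mathcal{V}$ has $\varphi$-trace zero by hypothesis (i), which asserts $\ast$-freeness of $\mathcal{X}$ from $\mathcal{V}$.

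The main obstacle will be the combinatorial bookkeeping in the scalar-expansion step: keeping track of which entries have been scalarized, recognizing that only path configurations with consecutively distinct scalarized positions contribute, and then reorganizing the surviving factors so that consecutive $y$'s that were separated only by scalar $c^{(i)}$'s are regrouped into $Y$-blocks whose $\varphi$-centering follows not from the individual centerings of the $y$'s alone, but from the freeness of the $\mathcal{V}_p$'s applied to a genuine alternation among different $\mathcal{V}_p$'s. The only place the single-$k$-cycle assumption on $\sigma$ intervenes is in the passage ``$X_i$ centered in $\varphi\otimes\mathrm{tr}$'' $\Rightarrow$ ``every diagonal entry of $X_i$ is $\varphi$-centered'' via Corollary \ref{cor.diagonal.centered}; every other step is driven by the three scalar freeness inputs together with the diagonal structure of the $Y_j$'s.
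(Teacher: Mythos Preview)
Your argument is correct, and it shares the paper's overall architecture: verify the freeness definition directly, expand $(\varphi\otimes\mathrm{tr})(X_1Y_1\cdots X_mY_m)$ via matrix units into a sum of scalar $\varphi$-terms, and then show each term vanishes using the same three inputs (all $y^{(j)}_p$ are $\varphi$-centered, all diagonal $x^{(i)}_{p,p}$ are $\varphi$-centered via Corollary \ref{cor.diagonal.centered}, and the two scalar freeness hypotheses). Where you diverge from the paper is in the combinatorial mechanism for dispatching each scalar term. The paper applies the moment--cumulant formula and argues that every $\pi\in\mathrm{NC}(2m)$ contributes zero: freeness of the $x$'s from the $y$'s forces $\pi=\pi_o\sqcup\pi_e$, an interval block of $\pi_e$ either is a singleton (so $\kappa_1[y^{(g)}_{p_g}]=0$) or joins consecutive $y$'s, which forces either a mixed cumulant of free $\mathcal{V}_p$'s or a nested singleton $\kappa_1[x^{(g)}_{p_g,p_g}]=0$. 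You instead split each $x$-entry as scalar-plus-centered, observe that surviving scalars $c^{(i)}_{p_{i-1},p_i}$ force $p_{i-1}\ne p_i$, regroup adjacent $y$'s into $Y$-blocks whose internal indices alternate among the free $\mathcal{V}_p$'s (hence each $Y$-block is centered), and then invoke freeness of $\mathcal{X}$ from $\mathcal{V}$ on the resulting alternating centered word. Your route is more elementary in that it never invokes free cumulants or the structure theory of $\mathrm{NC}$, at the cost of slightly heavier bookkeeping in the regrouping step; the paper's cumulant argument is cleaner once that machinery is available, and more readily reveals exactly which structural features of $\mathrm{NC}$ are driving the vanishing.
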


\begin{proof} We work directly from the definition, and show that if $B_1,\ldots,B_n\in\mathbb{C}^\ast\langle\mathbf{A}\rangle$ are centered and $W_1,\ldots,W_n\in\mathbb{C}^\ast\langle \mathbf{V}\rangle$ are centered, then $B_1W_1\cdots B_nW_n$ is centered. (For completeness one must also consider the product starting and ending with elements of $\mathbb{C}^\ast\langle\mathbf{A}\rangle$ or starting and ending with elements of $\mathbb{C}^\ast\langle\mathbf{V}\rangle$; the reader can readily check that these cases are treated nearly identically as the one we detail below.)

Denoting the entries as $B_m = \sum_{i,j=1}^k b^m_{i,j}\otimes E_{i,j}$ and $W_m = \sum_{h=1}^k w^m_h\otimes E_{h,h}$, we may expand
\begin{align*} B_1W_1\cdots B_nW_n &= \sum_{i_1,j_1,h_1,\ldots,i_n,j_n,h_n=1}^k b^1_{i_1,j_1}w^1_{h_1} b^2_{i_2,j_2}w^2_{h_2}\cdots b^n_{i_n,j_n}w^n_{h_n} \\  & \hspace{1.4in}\otimes E_{i_1,j_1}E_{h_1,h_1}E_{i_2,j_2}E_{h_2,h_2}\cdots E_{i_n,j_n}E_{h_n,h_n}.
\end{align*}
The product of matrix units yields
\[ \delta_{j_1,h_1}\delta_{h_1,i_2}\delta_{j_2,h_2}\delta_{h_2,i_3}\cdots \delta_{j_{n-1},h_{n-1}}\delta_{h_{n-1},i_n}\delta_{j_n,h_n} E_{i_1,h_n} \]
and so the sum reduces to
\begin{align*} B_1W_1\cdots B_nW_n &= \sum_{i_1,i_2,\ldots,i_n,h_n=1}^k b^1_{i_1,i_2}w^1_{i_2} b^2_{i_2,i_3}w^2_{i_3}\cdots b^n_{i_n,h_n}w^n_{h_n} \otimes E_{i_1,h_n}.
\end{align*}
Taking $\varphi\otimes\mathrm{tr}$ means only the $i_1=h_n$ terms survive, and
\begin{equation} \label{eq.alt.trace.b.w}
(\varphi\otimes\mathrm{tr})(B_1W_1\cdots B_nW_n) = \frac{1}{k}\sum_{i_1,i_2,\ldots,i_n=1}^k \varphi(b^1_{i_1,i_2}w^1_{i_2} b^2_{i_2,i_3}w^2_{i_3}\cdots b^n_{i_n,i_1}w^n_{i_1}).
\end{equation}

Now, for $1\le g\le n$, $W_g = \sum_{i=1}^k w^g_i\otimes E_{i,i}$, and also $W_g\in\mathbb{C}^\ast\langle\mathbf{V}\rangle$; so there is some noncommutative $\ast$-polynomial $P_g$ with $W_g = P_g(\{V_m\}_{m\in M})$; hence, since $V_m$ are diagonal, we have $w^g_i = P_g(\{v^m_i\}_{m\in M})$.  Note that this is the same polynomial $P_g$ used for all $i$.  Hence, our assumption that $\{v_i^m\}_{m\in M}$ has the same $\ast$-distribution as $\{v_i^m\}_{m\in M}$ implies immediately that $\varphi(w_i^g) = \varphi(w_1^g)$ for $i\in[k]$.  By assumption, the $W_g$ are centered, meaning that $0 = (\varphi\otimes\mathrm{tr})(W_g) = \frac{1}{k}\sum_{i=1}^k \varphi(w^g_i) = \varphi(w^g_1)$.  Thus, the centering assumption on $W^g$ implies that all the terms $w^g_i$ in \eqref{eq.alt.trace.b.w} are centered.

The same does not quite hold true for the $b^g_{i,j}$; however, the assumption that $\mathbf{A}$ is a $\sigma$-circulant family for a full cycle $\sigma$ implies, by Corollary \ref{cor.diagonal.centered}, that $\varphi(b^g_{i,i})=0$ for all $g,i$: i.e. the diagonal terms are centered.

As noted above, $w^g_i\in\mathbb{C}^\ast\langle \{v_i^m\}_{m\in M}\rangle$; similarly the $b^g_{i,j}$ are entries of $B_g$ which is a noncommutative polynomial in $\mathbf{A}$; hence, the $b^g_{i,j}$ are in the $\ast$-algebra generated by the entries of $\mathbf{A}$.  By the assumption of the theorem, it follows that $\{b^g_{i,j}\}_{g\in[n],i,j\in[k]}$ and $\{w^g_i\}_{g\in[n],i\in[k]}$ are $\ast$-free.  Note also that $w^g_1,\ldots,w^g_k$ are $\ast$-free from each other, due to the assumption of the theorem that the families $\{v^m_i\}_{m\in M}$ are $\ast$-free for different $i$.

If only diagonal terms $b^g_{i,i}$ appeared in \eqref{eq.alt.trace.b.w}, it would follow immediately from the definition of freeness that all the terms in \eqref{eq.alt.trace.b.w} are $0$.  This conclusion is still true, but it takes a little more work to show it.

We may compute each term in the sum using the moment cumulant formula:
\begin{align} \nonumber &\varphi(b^1_{i_1,i_2}w^1_{i_2} b^2_{i_2,i_3}w^2_{i_3}\cdots b^n_{i_n,i_1}w^n_{i_1}) \\
& \hspace{1.2in} = \sum_{\pi\in NC(2n)} \kappa_\pi[b^1_{i_1,i_2},w^1_{i_2},b^2_{i_2,i_3},w^2_{i_3},\ldots,b^n_{i_n,i_1},w^n_{i_1}].
\label{eq.mom.cumulant}
\end{align}

Since the entries in odd positions (all $b^g_{i,j}$) are free from the entries in even positions (all $w^g_i$), any $\pi\in NC(2n)$ with a block that connects even and odd positions yields a mixed cumulant $\kappa_\pi = 0$.  Therefore, in \eqref{eq.mom.cumulant}, the only terms that may be non-zero are those $\pi\in NC(2n)$ that decompose as $\pi = \pi_o\sqcup\pi_e$, where $\pi_o\in NC(\{1,3,\ldots,2n-1\})$ and $\pi_e\in NC(\{2,4,\ldots,2n\})$.

Now, since $\pi_e$ is non-crossing, at least one block must be an interval,cf. \cite{SpeicherNicaBook}.  If $\pi_e$ contains a singleton, say $\{2g\}\in\pi_e\subset\pi$, then the above cumulant factors into terms including $\kappa_1[w^g_{i_g}] = \varphi(w^g_{i_g})=0$.  Hence, $\kappa_\pi$ can only contribute if $\pi_e$ contains an interval of length $\ge 2$: so some block of $\pi_e$ connects $2g,2g+2$ for some $g\in[n]$.  But blocks of $\pi_e$ are blocks of $\pi$, and so $\kappa_\pi$ factors including a cumulant $\kappa_p[w^g_{i_g},w^{g+1}_{i_{g+1}},\ldots]$.  Since $w^g_i$ are freely independent for different $i$, this mixed cumulant must be $0$ (and hence $\kappa_\pi=0$) unless $i_g=i_{g+1}$.  In that case, note that since $\pi$ is non-crossing, it follows that the singleton position $2g+1$ nested between the connected $2g$ and $2g+2$ must be a singleton block in $\pi$, meaning that $\kappa_\pi$ has a factor of $\kappa_1[b^g_{i_g,i_{g+1}}] = \varphi(b^g_{i_g,i_g})=0$; once again, $\kappa_\pi$ has a factor of $0$.

Thus, we have shown that there are no $\pi\in NC(2n)$ that contribute non-zero terms to the sum \eqref{eq.mom.cumulant}, and therefore all the terms in \eqref{eq.alt.trace.b.w} are $0$.  This finally proves that $(\varphi\otimes\mathrm{tr})(B_1W_1\cdots B_nW_n)=0$, which concludes this proof of freeness.
\end{proof}

Next we will apply Theorem \ref{thm.general.freeness} repeatedly to derive a distribution and general freeness result for linearized $\mathscr{R}$-diagonal operators.  But first, it is worth noting the following natural but perhaps not fully clear result on transferrence of joint distributions.

\begin{lemma} \label{lem.joint.dist}
Let $\{a^m_{i,j}\}_{m\in M, i,j\in[k]}$ and $\{\tilde{a}^m_{i,j}\}_{m\in M, i,j\in[k]}$ be families in $(\mathscr{A},\varphi)$, and suppose they have the same $\ast$-distribution.  Let
\[ A_m = \sum_{i,j=1}^k a^m_{i,j}\otimes E_{i,j}  \qquad \tilde{A}_m = \sum_{i,j=1}^k \tilde{a}^m_{i,j}\otimes E_{i,j}. \]
Then $\{A_m\}_{m\in M}$ and $\{\tilde{A}_m\}_{m\in M}$ have the same $\ast$-distribution in $(\mathscr{A}\otimes \MkC,\varphi\otimes\mathrm{tr})$.
\end{lemma}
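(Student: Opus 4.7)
The plan is to reduce the claim to a direct computation: every joint $\ast$-moment of the operator-valued matrices $\{A_m\}_{m\in M}$ with respect to $\varphi\otimes\mathrm{tr}$ is a fixed (universal, depending only on the monomial chosen) finite linear combination of joint $\ast$-moments of the scalar entries $\{a^m_{i,j}\}$ with respect to $\varphi$. Since the same formula, with the same coefficients, produces the corresponding $\varphi\otimes\mathrm{tr}$-moment of $\{\tilde A_m\}$ from the $\varphi$-moments of $\{\tilde a^m_{i,j}\}$, and the two entry-families have identical joint $\ast$-distribution by hypothesis, equality of all joint $\ast$-moments is immediate.

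Concretely, I would fix $p\in\mathbb{N}$, indices $m_1,\ldots,m_p\in M$, and signs $\varepsilon_1,\ldots,\varepsilon_p\in\{1,\ast\}$, and expand
\[
A_{m_1}^{\varepsilon_1}\cdots A_{m_p}^{\varepsilon_p} \;=\; \sum_{i_0,i_1,\ldots,i_p=1}^{k} (a^{m_1}_{i_0,i_1})^{\varepsilon_1}(a^{m_2}_{i_1,i_2})^{\varepsilon_2}\cdots (a^{m_p}_{i_{p-1},i_p})^{\varepsilon_p}\otimes E_{i_0,i_p},
\]
where one uses $A_m^\ast = \sum_{i,j}(a^m_{j,i})^\ast \otimes E_{i,j}$ to handle the $\varepsilon_j=\ast$ positions, and the telescoping identity $E_{i_0,i_1}E_{i_1,i_2}\cdots E_{i_{p-1},i_p}=E_{i_0,i_p}$. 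Applying $\varphi\otimes\mathrm{tr}$ kills all terms except those with $i_0=i_p$, giving
\[
(\varphi\otimes\mathrm{tr})\!\left(A_{m_1}^{\varepsilon_1}\cdots A_{m_p}^{\varepsilon_p}\right) \;=\; \frac{1}{k}\sum_{i_0,\ldots,i_{p-1}=1}^{k}\varphi\!\left((a^{m_1}_{i_0,i_1})^{\varepsilon_1}\cdots (a^{m_p}_{i_{p-1},i_0})^{\varepsilon_p}\right).
\]

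The right-hand side is a specific finite sum of $\varphi$-evaluations of $\ast$-monomials in the entries $\{a^m_{i,j}\}$. By the hypothesis that $\{a^m_{i,j}\}$ and $\{\tilde a^m_{i,j}\}$ have the same joint $\ast$-distribution under $\varphi$, each such $\varphi$-monomial is unchanged upon replacing $a^m_{i,j}$ by $\tilde a^m_{i,j}$. Hence the sum equals $(\varphi\otimes\mathrm{tr})(\tilde A_{m_1}^{\varepsilon_1}\cdots \tilde A_{m_p}^{\varepsilon_p})$, proving equality of all joint $\ast$-moments and therefore of the $\ast$-distributions.

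There is essentially no analytic obstacle here; the only thing one needs to be careful about is bookkeeping the adjoint: $(A_m)^\ast$ has $(i,j)$-entry $(a^m_{j,i})^\ast$, so in the expansion one picks up entries of the form $(a^{m_\ell}_{i_\ell,i_{\ell-1}})^\ast$ when $\varepsilon_\ell=\ast$ rather than $(a^{m_\ell}_{i_{\ell-1},i_\ell})^\ast$. This is a purely notational matter and does not affect the argument, since the hypothesis on equality of joint $\ast$-distributions supplies matching values for monomials in both the unstarred and starred entries simultaneously.
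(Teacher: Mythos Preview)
Your proof is correct and follows essentially the same approach as the paper: expand the product of the $A_{m_j}^{\varepsilon_j}$ in terms of matrix units, collapse the trace to a cyclic sum of $\varphi$-moments of entries, and invoke the hypothesis on equal entrywise $\ast$-distributions. Your explicit remark about the index swap when $\varepsilon_\ell=\ast$ is a useful clarification that the paper leaves implicit.
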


\begin{proof} For any choice of $m_1,\ldots,m_p\in M$ and $\varepsilon_1,\ldots,\varepsilon_p\in\{1,\ast\}$, we can expand
\begin{align*} & (\varphi\otimes\mathrm{tr})(A_{m_1}^{\varepsilon_1}\cdots A_{m_p}^{\varepsilon_p}) \\
& \hspace{0.4in} = \sum_{i_1,j_1,\ldots,i_p,j_p=1}^k (\varphi\otimes\mathrm{tr})\left[(a^{m_1}_{i_1,j_1})^{\varepsilon_1}\cdots (a^{m_p}_{i_p,j_p})^{\varepsilon_p}\otimes E_{i_1j_1}\cdots E_{i_pj_p}\right].
\end{align*}
The product of matrix units is $\delta_{j_1,i_2}\delta_{j_2,i_3}\cdots \delta_{j_{p-1},i_p}\,E_{i_1,j_p}$, and taking $\mathrm{tr}$ adds the further constraint that $j_p=i_1$; thus
\[ (\varphi\otimes\mathrm{tr})(A_{m_1}^{\varepsilon_1}\cdots A_{m_p}^{\varepsilon_p})
= \frac{1}{k}\sum_{i_1,\ldots,i_p=1}^k \varphi\left[(a^{m_1}_{i_1,i_2})^{\varepsilon_1}\cdots (a^{m_p}_{i_p,i_1})^{\varepsilon_p}\right] = \varphi(P(\{a^m_{i,j}\}_{m\in M,i,j\in[k]}) \]
where $P$ is the noncommutative $\ast$-polynomial
\[ P(\{a^m_{i,j}\}_{m\in M,i,j\in[k]}) = \frac{1}{k} \sum_{i_1,\ldots,i_p=1}^k (a^{m_1}_{i_1,i_2})^{\varepsilon_1}\cdots (a^{m_p}_{i_p,i_1})^{\varepsilon_p}. \]
By the equal $\ast$-distribution assumption,
\[ \varphi(P\{a^m_{i,j}\}_{m\in M,i,j\in[k]}) = \varphi(P\{\tilde{a}^m_{i,j}\}_{m\in M,i,j\in[k]}). \]
Now tracking back up each equality, this shows that
\[ (\varphi\otimes\mathrm{tr})(A_{m_1}^{\varepsilon_1}\cdots A_{m_p}^{\varepsilon_p}) = (\varphi\otimes\mathrm{tr})(\tilde{A}_{m_1}^{\varepsilon_1}\cdots \tilde{A}_{m_p}^{\varepsilon_p}) \]
as claimed.
\end{proof}

Before coming to our main application, we need one somewhat surprising feature of Haar unitaries.

\begin{lemma} \label{lem.free.Haar.products} 
Let $u,v$ be two $\ast$-free Haar unitaries, and let $\tilde{u}=vu$.  Then $\tilde{u}$ is Haar unitary, and $\tilde{u}$ and $v$ are $\ast$-free.
\end{lemma}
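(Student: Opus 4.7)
The proof will be a direct application of Lemma \ref{lem.free.Haar}, which was set up precisely to absorb multiplication by surrounding unitaries. Concretely, I will take the index set to be the singleton $I=\{1\}$, the subalgebra $\mathscr{B}$ to be the (unital $W^\ast$-)subalgebra generated by $v$, and set $v_1 := v$, $w_1 := 1$, $u_1 := u$. Both $v_1$ and $w_1$ are unitaries lying in $\mathscr{B}$. Moreover, the hypothesis that $u$ and $v$ are $\ast$-free means exactly that $u_1$ is $\ast$-free from $\mathscr{B}$, so the hypotheses of Lemma \ref{lem.free.Haar} are met.

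Applying the lemma yields that $\varpi_1 = v_1 u_1 w_1 = v u = \tilde u$ is a Haar unitary, and that $\{\varpi_1\}$ is $\ast$-free from $\mathscr{B}$. Since $v \in \mathscr{B}$, it follows in particular that $\tilde u$ and $v$ are $\ast$-free, which is the full content of the lemma.

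There is essentially no obstacle; the only thing to check is that invoking Lemma \ref{lem.free.Haar} with $w_1 = 1$ is legitimate, which it is since $1$ is unitary and lies in any unital subalgebra. If one preferred an independent (more self-contained) argument, one could verify Haar-ness by noting that $\varphi((vu)^n) = \varphi(vuvu\cdots vu)$ is a product of alternating centered elements from the subalgebras generated by $v$ and $u$ (since $\varphi(v^m)=\varphi(u^m)=0$ for $m\neq 0$), hence vanishes by $\ast$-freeness; and one could verify $\ast$-freeness of $\tilde u$ and $v$ directly by the cumulant argument used in the proof of Lemma \ref{lem.free.Haar}. However, invoking Lemma \ref{lem.free.Haar} as above is the cleanest route and is exactly what the lemma was designed for.
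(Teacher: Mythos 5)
Your proof is correct and follows exactly the same route as the paper: both apply Lemma \ref{lem.free.Haar} with $I=\{1\}$, $v_1 = v$, $u_1 = u$, $w_1 = 1$, and $\mathscr{B}$ the subalgebra generated by $v$. Your write-up is merely more explicit about the bookkeeping.
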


\begin{proof} 
    Take $(u_i)_{i\in I}$ be $\{u\}$ and $(v_i)_{i\in I}$ be $\{v\}$, and $(w_i)_{i\in I}$ be an identity in Lemma \ref{lem.free.Haar}. The conclusion of Lemma \ref{lem.free.Haar} proves $v$ and $vu$ are $\ast$-free.
\end{proof}
%%%%%%%%%%%%%%%%%%%%%%%%%%%%%%%%% OLD PROOF %%%%%%%%%%%%%%%%%%%%%%%%%%%%%
% As the statement is about the tracial von Neumann algebra generated by $\{u,v\}$, it suffices to prove the statement about the canonical representation of that algebra, $L(\mathbb{F}_2)$ wherein the generators of $\mathbb{F}_2$ induce via the left-regular representation two Haar unitaries $u,v$ with respect to the canonical trace $\tau$ which is defined by $\tau(w)=\delta_{w=1}$ for $\ast$-monomials $w$ in $\{u,v\}$.

% Thus, consider the subgroup generated by $\{\tilde{u}=vu,v\}$ in $L(\mathbb{F}_2)$.  In fact, this subgroup is isomorphic to the full free group generated by $\{u,v\}$.  To check this, we simply define a homomorphism $\psi\colon\langle u,v\rangle\to \langle \tilde{u},v\rangle$ by $\psi(u) = \tilde{u} = vu$ and $\psi(v) = v$.  (As $u,v$ have no algebraic relations, this extends to a unique well-defined group homomorphism.)  One can readily check that this is a bijection, whose inverse is uniquely determined by $\psi^{-1}(u) = v^{-1}u$ and $\psi^{-1}(v)=v$.

% Now, because $\psi$ is a group isomorphism, $\mathrm{ker}\,\psi = 1$, and so if any word in $\tilde{u},v$ and their inverses reduces to the identity, it follows that the same word in $u,v$ reduces to the identity.  This means exactly that $\psi$ preserves the trace $\tau$ restricted to the group.

% As such, $\psi$ induces a unique $\tau$-preserving $\ast$-automorphism of $L(\mathbb{F}_2)$.  Hence, $\ast$-freeness of $u,v$ implies $\ast$-freeness of $\tilde{u},v$, completing the proof.

Now, we come to the main application of the preceding results: freeness of entry-wise circulant $\mathscr{R}$-diagonal matrices from general circulant matrices.

\begin{theorem} \label{thm.freeness.R.diagonal} Let $\{a^\ell_{i,j}\}^{\ell\in L}_{i,j\in[k]}$ and $\{z^m_{i,j}\}^{m\in M}_{i,j\in[k]}$ be two $\ast$-free families in $(\mathscr{A},\varphi)$, and suppose that $a^{\ell}_{i,j}$ are $\ast$-free $\mathscr{R}$-diagonal elements.  Further, let $\mathbf{A}=\{A_\ell\}_{\ell\in L}$ and $\mathbf{Z}=\{Z_m\}_{m\in M}$ where
\[ A_\ell = \sum_{i,j=1}^k a^\ell_{i,j}\otimes E_{i,j} \qquad\text{and}\qquad Z_m = \sum_{i,j=1}^k z^m_{i,j}\otimes E_{i,j}. \]
Let $\sigma\in S_k$ be a permutation with a single cycle of length $k$ and suppose that the family $\{\mathbf{A},\mathbf{Z}\}$ is $\sigma$-circulant invariant.

Then the families $\mathbf{A}$ and $\mathbf{Z}$ are $\ast$-freely independent with respect to $\varphi\otimes\mathrm{tr}$.  Moreover, each $A_\ell$ decomposes as a sum $A_\ell = A_{\ell,1}+\cdots+A_{\ell,k}$ of $\sigma$-circulant invariant matrices, such that $\{A_{\ell,n}\}_{\ell\in L,n\in[k]}$ are all $\ast$-freely independent with respect to $\varphi\otimes\mathrm{tr}$, and
\[ A_{\ell,n} \equaldist a^{\ell}_{1,\sigma^n(1)} \qquad \ell\in L,\; 1\le n\le k \]
i.e.\ the $\ast$-distribution of $A_{\ell,n}$ with respect to $\varphi\otimes\mathrm{tr}$ is equal to the $\ast$-distribution of the first row entry $a^\ell_{1,\sigma^n(1)}$ with respect to $\varphi$.  In particular, each $A_{\ell,n}$ is $\mathscr{R}$-diagonal, and therefore their ($\ast$-free) sum $A_\ell$ is $\mathscr{R}$-diagonal, with $\ast$-distribution equal to that of $a^{\ell}_{1,\sigma(1)} + \cdots+ a^{\ell}_{1,\sigma^k(1)} = a^\ell_{1,1}+\cdots +a^\ell_{1,k}$.
\end{theorem}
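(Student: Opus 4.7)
My plan is to exploit the $\mathscr{R}$-diagonal structure of each $a^\ell_{i,j}$ to decompose $A_\ell$ into Haar-unitary-times-positive building blocks supported on individual $\sigma$-orbits, and then to leverage the three existing freeness results: the diagonal-matrix model of Lemma \ref{lem.diag.Haar}, the Haar absorption of Lemma \ref{lem.free.Haar}, and the freeness of $\sigma$-circulant invariant families from diagonal matrices proved in Theorem \ref{thm.general.freeness}. First, by Proposition \ref{prop.R-diag}(\ref{prop.R-diag.c}), I pass to a model in which $a^\ell_{i,j} = h^\ell_{i,j}\, r^\ell_{i,j}$ with $h^\ell_{i,j}$ Haar unitary, $r^\ell_{i,j}$ positive (distributed as $|a^\ell_{i,j}|$), and the enlarged family $\{h^\ell_{i,j},\, r^\ell_{i,j},\, z^m_{i',j'}\}$ entirely $\ast$-free in $(\mathscr{A}, \varphi)$. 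Since $\sigma$-circulant invariance forces entries of $\mathbf{A}$ within a common $\sigma$-orbit to be identically distributed, the factorization can also be chosen to have the $h^\ell_{i,j}$ and $r^\ell_{i,j}$ identically distributed within each orbit. Lemma \ref{lem.joint.dist} then shows this reparameterization preserves the joint $\ast$-distribution of $(\mathbf{A}, \mathbf{Z})$ in $(\mathscr{A} \otimes \MkC, \varphi \otimes \mathrm{tr})$.

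Since $\sigma$ is a single $k$-cycle, the orbits $O_n := \{(i, \sigma^n(i)) : i \in [k]\}$ for $n = 1, \ldots, k$ partition $[k]^2$, so I set
\[
A_{\ell, n} := \sum_{i=1}^k a^\ell_{i, \sigma^n(i)} \otimes E_{i, \sigma^n(i)} = H_{\ell, n}\, R_{\ell, n}\, U_{\sigma^n},
\]
where $H_{\ell, n} := \sum_i h^\ell_{i, \sigma^n(i)} \otimes E_{i, i}$ and $R_{\ell, n} := \sum_i r^\ell_{i, \sigma^n(i)} \otimes E_{i, i}$ are diagonal; then $A_\ell = \sum_{n=1}^k A_{\ell, n}$. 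Lemma \ref{lem.diag.Haar} identifies $H_{\ell, n}$ as Haar unitary in $(\mathscr{A} \otimes \MkC, \varphi \otimes \mathrm{tr})$ and $R_{\ell, n}$ as having the $\ast$-distribution of $|a^\ell_{1, \sigma^n(1)}|$. The identity $|A_{\ell, n}|^2 = U_{\sigma^n}^{\ast}\, R_{\ell, n}^2\, U_{\sigma^n}$ exhibits $|A_{\ell, n}|$ as a permutation conjugate of $R_{\ell, n}$, and since $R_{\ell, n}$ has $\ast$-free, identically distributed diagonal entries, this conjugation leaves its $\ast$-distribution unchanged, giving $|A_{\ell, n}| \equaldist |a^\ell_{1, \sigma^n(1)}|$.

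The $\mathscr{R}$-diagonality of $A_{\ell, n}$ follows from the characterization in Proposition \ref{prop.R-diag}(\ref{prop.R-diag.b}): for any auxiliary Haar unitary $u$ that is $\ast$-free from $A_{\ell, n}$, writing $u\, A_{\ell, n} = (u\, H_{\ell, n})\, R_{\ell, n}\, U_{\sigma^n}$ and applying Lemma \ref{lem.free.Haar} (with $\mathscr{B}$ the $\ast$-algebra generated by $R_{\ell, n}$ and $U_{\sigma^n}$) shows that $u\, H_{\ell, n}$ has the same joint $\ast$-distribution with $\mathscr{B}$ as $H_{\ell, n}$ itself, whence $u\, A_{\ell, n} \equaldist A_{\ell, n}$. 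Combined with Proposition \ref{prop.R-diag}(1) and the matching of absolute values above, this yields $A_{\ell, n} \equaldist a^\ell_{1, \sigma^n(1)}$. The mutual $\ast$-freeness of $\{A_{\ell, n}\}_{\ell \in L, n \in [k]}$ is established by the same mechanism: any alternating product of centered elements drawn from the separate $A_{\ell, n}$-algebras is reduced, by repeated application of Lemma \ref{lem.free.Haar} through each Haar factor $H_{\ell, n}$, to products involving only the $R_{\ell, n}$'s, whose $\ast$-freeness in $(\mathscr{A} \otimes \MkC, \varphi \otimes \mathrm{tr})$ is Lemma \ref{lem.diag.Haar} applied to their $\ast$-free underlying entries.

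Finally, the $\ast$-freeness of $\mathbf{A}$ from $\mathbf{Z}$ is reached by applying Theorem \ref{thm.general.freeness} with $\mathbf{Z}$ in the role of the $\sigma$-circulant-invariant family and $\mathbf{V} := \{H_{\ell, n}, R_{\ell, n}\}_{\ell, n}$ in the role of the diagonal family; the hypotheses are precisely guaranteed by the reduction in the first step. This gives $\ast$-freeness of $\mathbf{Z}$ from $\mathbf{V}$ in $(\mathscr{A} \otimes \MkC, \varphi \otimes \mathrm{tr})$. The main obstacle lies in the fact that $A_\ell$ belongs to the $\ast$-algebra generated by $\mathbf{V}$ \emph{together with} the permutation matrices $\{U_{\sigma^n}\}_n \subset \MkC$, not by $\mathbf{V}$ alone, so one must upgrade the conclusion to $\ast$-freeness of $\mathbf{Z}$ from $W^{\ast}(\mathbf{V} \cup \{U_{\sigma^n}\}_n)$. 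I expect to handle this by revisiting the combinatorial proof of Theorem \ref{thm.general.freeness}: the key vanishing of diagonal traces of $\sigma$-circulant-invariant $\mathbf{Z}$-words supplied by Corollary \ref{cor.diagonal.centered} survives insertion of any factor $U_{\sigma^m}$, since $U_\sigma$ commutes with every $U_{\sigma^m}$ and hence $\sigma$-circulant invariance is preserved under both left- and right-multiplication by the circulant matrix algebra generated by $U_\sigma$.
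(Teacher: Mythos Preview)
Your decomposition $A_{\ell,n} = H_{\ell,n}\,R_{\ell,n}\,U_{\sigma^n}$ and the identification of the $\ast$-distribution of each $A_{\ell,n}$ are fine, but the freeness arguments have a genuine gap exactly where you flag it. Theorem~\ref{thm.general.freeness} gives you $\mathbf{Z}$ free from the diagonal family $\mathbf{V}=\{H_{\ell,n},R_{\ell,n}\}$, but $A_{\ell,n}$ lives in $W^\ast(\mathbf{V}\cup\{U_{\sigma^n}\})$, and $\mathbf{Z}$ is \emph{not} free from the scalar permutation matrices $U_{\sigma^n}\in\MkC$: nothing prevents $Z_m$ itself from being $1\otimes U_\sigma$ (take $z^m_{i,j}=\delta_{j,\sigma(i)}$), in which case freeness from $U_\sigma$ is absurd. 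Your proposed fix---absorbing $U_{\sigma^m}$ into the $\mathbf{Z}$-words via circulant invariance---does not survive the combinatorics of Theorem~\ref{thm.general.freeness}: that proof relies essentially on the $W_g$ being \emph{diagonal}, so that the expansion \eqref{eq.alt.trace.b.w} has the index structure $b^1_{i_1,i_2}w^1_{i_2}b^2_{i_2,i_3}\cdots$ with each $w^g_{i}$ a scalar-indexed entry, and on the entries $w^g_{i}$ being $\ast$-free for distinct $i$; both of these fail once $U_{\sigma^m}$ is mixed into the $W$-words. The same issue contaminates your argument for mutual freeness among the $A_{\ell,n}$.

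The paper avoids this obstacle by a different maneuver: rather than decomposing $A_{\ell,n}$ internally, it keeps each $A_{\ell,n}$ intact (these are themselves $\sigma$-circulant invariant, being supported on a single orbit) and introduces \emph{auxiliary} diagonal Haar unitaries $U_{\ell,n},V_{\ell,n}$ with free Haar entries, free from everything in sight. The $\mathscr{R}$-diagonality of the entries gives $U_{\ell,n}A_{\ell,n}V_{\ell,n}\equaldist A_{\ell,n}$ jointly with $\mathbf{Z}$, so that $\{\mathbf{Z},A_{\ell,n}\}\equaldist\{\mathbf{Z},U_{\ell,n}A_{\ell,n}V_{\ell,n}\}$. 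Now Theorem~\ref{thm.general.freeness} is applied with $\{\mathbf{Z},A_{\ell,n}\}$ on the \emph{circulant} side and $\{U_{\ell,n},V_{\ell,n}\}$ on the diagonal side. Writing $U_{\ell,n}A_{\ell,n}V_{\ell,n}=V_{\ell,n}^\ast(\tilde U_{\ell,n}A_{\ell,n})V_{\ell,n}$ with $\tilde U_{\ell,n}=V_{\ell,n}U_{\ell,n}$ (Haar and free from $V_{\ell,n}$ by Lemma~\ref{lem.free.Haar.products}), one is then in the standard situation of Haar-unitary conjugation (Proposition~\ref{prop.conj.free}), which delivers the freeness of $\mathbf{Z}$ from all the $A_{\ell,n}$ and of the $A_{\ell,n}$ from each other in one stroke. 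The key point you are missing is that the $A_{\ell,n}$ should stay on the circulant side of Theorem~\ref{thm.general.freeness}, with the diagonal side populated by genuinely new Haar unitaries rather than pieces extracted from $A_{\ell,n}$ itself.
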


\begin{proof} Decompose each matrix $A_\ell=A_{\ell,1}+\cdots+A_{\ell,k}$ where
\begin{equation} \label{eq.Aln.def} A_{\ell,n} = \sum_{i=1}^{k} a^{\ell}_{\sigma^i(1),\sigma^{i+n}(1)} \otimes E_{\sigma^i(1),\sigma^{i+n}(1)} \qquad 1\le n\le k. \end{equation}
So: $A_{\ell,k}$ is the diagonal part of $A_\ell$, and in general each $A_{\ell,n}$ has $k$ non-zero entries, index by the full orbit of $(1,\sigma^n(1))$ under $\sigma\times\sigma$.  By the assumption of $\sigma$-circulant invariance of $A_\ell$, it follows that all the non-zero entries of $A_{\ell,n}$ are identically distributed.  Note also that each $A_{\ell,n}$ is $\sigma$-circulant invariant on its own.

Let $\{u^{\ell,n}_i,v^{\ell,n}_i\}_{i,n\in[k]}^{\ell\in L}$ be a family of $\ast$-free Haar unitaries in $(\mathscr{A},\varphi)$, all $\ast$-free from all the entries of $\{\mathbf{A},\mathbf{Z}\}$.  Define
\[ U_{\ell,n} = \sum_{i=1}^k u_i^{\ell,n}\otimes E_{i,i}\qquad\text{and}\qquad V_{\ell,n} = \sum_{i=1}^k v^{\ell,n}_{i}\otimes E_{i,i}. \]
By Lemma \ref{lem.diag.Haar}, $\{U_{\ell,n},V_{\ell,n}\}_{\ell\in L,n\in[k]}$ are $\ast$-freely independent Haar unitaries with respect to $\varphi\otimes\mathrm{tr}$.  Notice that
\begin{align*} U_{\ell,n}A_{\ell,n}V_{\ell,n} &= \sum_{j,i,j'=1}^k u^{\ell,n}_{j}a^\ell_{\sigma^{i}(1),\sigma^{i+n}(1)}v^{\ell,n}_{j'} \otimes E_{j,j}E_{\sigma^i(1),\sigma^{i+n}(1)}E_{j',j'} \\
&= \sum_{i=1}^k u_{\sigma^i(1)}^{\ell,n} a^{\ell}_{\sigma^i(1),\sigma^{i+n}(1)}v^{\ell,n}_{\sigma^{i+n}(1)}\otimes E_{\sigma^i(1),\sigma^{i+n}(1)}.
\end{align*}
For each $\ell$ and each fixed $i,n\in[k]$, the Haar unitaries $\{u^{\ell,n}_{\sigma^i(1)},v^{\ell,n}_{\sigma^{i+n}(1)}\}$ are freely independent from the $\mathscr{R}$-diagonal element $a^\ell_{\sigma^i(1),\sigma^{i+n}(1)}$.  Hence, by \cite{SpeicherNicaBook},
\[ u_{\sigma^i(1)}^{\ell,n} a^{\ell}_{\sigma^i(1),\sigma^{i+n}(1)}v^{\ell,n}_{\sigma^{i+n}(1)} \equaldist a^{\ell}_{\sigma^i(1),\sigma^{i+n}(1)}. \]
This shows (by the freeness of the entries of $\mathbf{Z}$ from all the $a^{\ell}_{i,j}$ and $u^{\ell,n}_i$ and $v^{\ell,n}_i$) that the joint distributions of all the entries of $\{\mathbf{Z},A_{\ell,n}\}_{\ell\in L}^{n,\in[k]}$ is the same as the joint distribution of all the entires of $\{\mathbf{Z},U_{\ell,n}A_{\ell,n}V_{\ell,n}\}_{\ell\in L}^{n,\in[k]}$.  It therefore follows from Lemma \ref{lem.joint.dist} that
\begin{equation} \label{eq.=dist.1} \{\mathbf{Z},A_{\ell,n}\}_{\ell\in L}^{n,\in[k]} \equaldist \{\mathbf{Z},U_{\ell,n}A_{\ell,n}V_{\ell,n}\}_{\ell\in L}^{n,\in[k]} \qquad\text{with respect to}\;\; \varphi\otimes\mathrm{tr}. \end{equation}

Now, applying Theorem \ref{thm.general.freeness}, we find that the families $\{\mathbf{Z},A_{\ell,n}\}_{\ell\in L}^{n\in[k]}$ (that are all $\sigma$-circulant invariant) and $\{U_{\ell,n},V_{\ell,n}\}_{\ell\in L}^{n\in[k]}$ are $\ast$-freely independent with respect to $\varphi\otimes\mathrm{tr}$.  Well,
\begin{equation} \label{eq.Haar.conj.2} U_{\ell,n}A_{\ell,n}V_{\ell,n} = V^\ast_{\ell,n}(V_{\ell,n}U_{\ell,n}A_{\ell,n})V_{\ell,n} = V^\ast_{\ell,n}(\tilde{U}_{\ell,n}A_{\ell,n})V_{\ell,n} \end{equation}
where $\tilde{U}_{\ell,n} = V_{\ell,n}U_{\ell,n}$.  By construction, for any particular $(\ell_0,n_0)$, $\tilde{U}_{\ell_0,n_0}$ is $\ast$-freely independent from $\mathbf{Z}$, $\{A_{\ell,n}\}_{\ell\in L}^{n\in[k]}$, and $\{V_{\ell,n}\}_{(\ell,n)\ne(\ell_0,n_0)}$.  Moreover, $\tilde{U}_{\ell_0,n_0}$ is freely independent from $V_{\ell_0,n_0}$ by Lemma \ref{lem.free.Haar.products}.  Hence, combining \eqref{eq.=dist.1} and \eqref{eq.Haar.conj.2}, it follows from \cite{SpeicherNicaBook} that $\mathbf{Z}$ and $\{A_{\ell,n}\}_{\ell\in L}^{n\in[k]}$ are $\ast$-freely independent, as claimed.

Finally, since $A_{\ell,n} \equaldist U_{\ell,n} A_{\ell,n} V_{\ell,n}$ where $\{U_{\ell,n},V_{\ell,n}\}$ and $A_{\ell,n}$ are $\ast$-freely independent, it follows from \cite{SpeicherNicaBook} that $A_{\ell,n}$ is $\mathscr{R}$-diagonal.  The assumption that $A_\ell$ is $\sigma$-circulant invariant implies (since the non-zero entries of $A_{\ell,n}$ fill a full orbit of $\sigma\times\sigma$) that all the non-zero entries in $A_{\ell,n}$ are identically distributed.  Hence, they all have the same distribution as the $i=k$ term in \eqref{eq.Aln.def}, which is
\[ a^\ell_{\sigma^k(1),\sigma^{k+n}(1)} = a^{\ell}_{1,\sigma^n(1)} \]
since $\sigma$ is a cycle of length $k$.  For convenience, relabel these entries as $b_i = a^{\ell}_{\sigma^i(1),\sigma^{i+n}(1)}$ (suppressing the $\ell$ and $n$ which will be fixed for now); thus
\[ A_{\ell,n} = \sum_{i=1}^k b_i\otimes E_{\sigma^i(1),\sigma^{i+n}(1)} \]
where the non-zero entries $b_i$ are $\ast$-freely independent identically distributed $\equaldist a^{\ell}_{1,\sigma^n(1)}$.  Then we compute
\[ A_{\ell,n}^\ast A_{\ell,n} = \sum_{i,j=1}^k b_i^\ast b_j \otimes E_{\sigma^{i+n}(1),\sigma^i(1)} E_{\sigma^j(1),\sigma^{j+n}(1)}. \]
The product of matrix units is $\delta_{\sigma^i(1),\sigma^j(1)}E_{\sigma^{i+n}(1),\sigma^{j+n}(1)}$; this reduces the sum to a single index $i=j$ (really $i=j\;\mathrm{mod}\;k$, but since $i,j$ only range through $[k]$ we must have $i=j$), and thus
\[ A_{\ell,n}^\ast A_{\ell,n} = \sum_{i=1}^k b_i^\ast b_i\otimes E_{\sigma^{i+n}(1),\sigma^{i+n}(1)}. \]
As the set $\{\sigma^{i+n}(1)\}_{i\in[k]}$ is equal to $[k]$ (because $\sigma$ is a cycle of length $k$), and since the $b_i^\ast b_i$ are all identically distributed, it follows that the $\ast$-distribution of the diagonal matrix $A_{\ell,n}^\ast A_{\ell,n}$ with respect to $\varphi\otimes\mathrm{tr}$ is equal to the $\ast$-distribution of $b_1^\ast b_1$ with respect to $\varphi$.  As both $A_{\ell,n}$ and $b_1$ are $\mathscr{R}$-diagonal, and the $\ast$-distribution of any $\mathscr{R}$-diagonal element $b$ is determined by the $\ast$-distribution of $b^\ast b$, it now follows that $A_{\ell,n}\equaldist b_1 \equaldist a^{\ell}_{1,\sigma^n(1)}$ as claimed.  The remaining statement of the theorem now follows immediately from $\ast$-freeness of the $A_{\ell,n}$ and $a^\ell_{1,\sigma^{n}(1)}$ for $n\in[k]$.  \end{proof}

\begin{remark} Using \cite[Proposition 3.5]{HaagerupLarsen2000} (See also \cite[Proposition 5.2]{Nica-Speicher-1998duke}), the distribution of the $\mathscr{R}$-diagonal matrices $A_\ell$ can be computed exactly in principle, as follows.  Let $\mu^\ell_k$ denote the symmetrization of the distribution of $|a^\ell_{1,k}|$, and let
\[ \mu^\ell = \mu^\ell_1\boxplus\cdots\boxplus\mu^\ell_k. \]
then $A_\ell^\ast A_\ell$ has $\mu^\ell$ as its symmetrized distribution.  In particular: the Brown measure of $A_\ell^\ast A_\ell$ is the unique rotationally-invariant measure on $\mathbb{C}$ whose trace along $\mathbb{R}$ is $\mu^\ell$.
\end{remark}

\begin{remark} It is important to remember that $\sigma$-circulant invariance is a propoerty of {\em joint distributions of entries}.  Namely: the assumption that $\{\mathbf{A},\mathbf{Z}\}$ is $\sigma$-circulant invariant is fairly strong and restrictive on the form that $\mathbf{Z}$ can take.  It requires that the $z^m_{i,j}$ are identically distributed for all $(i,j)$ in a single orbit of $\sigma\times\sigma$; and moreover, to preserve the joint distribution of entries of $\mathbf{Z}$ with $\mathbf{A}$, one cannot have any generic joint distribution between the entries of $\mathbf{Z}$.  They could be $\ast$-free, or $\ast$-free between orbits and equal within, or perhaps have a common pairwise covariance structure coming from a highly symmetric (in each orbit) joint law.  In any case, while $\mathscr{R}$-diagonality of $z^m_{i,j}$ is not needed here, there are still many constraints on the joint distribution.
\end{remark}

% \begin{remark} It is possible to generalize Theorem \ref{thm.freeness.R.diagonal} slightly: the diagonal entries of each $A_\ell$ (i.e. the matrix $A_{\ell,k}$) do not need to be $\mathscr{R}$-diagonal; so long as they are $\ast$-freely independent and identically distributed, they can have any distribution at all, and it will remain true that $\mathbf{Z}$ and $\{A_{\ell,n}\}_{\ell\in L,n\in[k]}$ are $\ast$-free from each other.  It will no longer be true that $A_\ell$ is $\mathscr{R}$-diagonal, but it will still have the same disribution as $a^\ell_{1,1}+a^\ell_{1,2} +\cdots+a^{\ell}_{1,k}$; here the sum of the last $k-1$ terms is $\mathscr{R}$-diagonal, then added to another generic $\ast$-free random variable.

% Proving this requires another layer of proof, including another family of free Haar unitary diagonal matrices $W^\ell$ to conjugate the diagonal entries; and this requires a generalization of Lemma \ref{lem.free.Haar.products}, showing that if $\{u,v,w\}$ are $\ast$-freely independent Haar unitaries then so are $\{u,uv,uw\}$.  This is true and the proof isn't very different.

% {\bf However}, unless someone convinces me that this very mild generalization of the diagonal entries is useful for something we might care about, I am disinclined to modify the proof and make it any longer.
% \end{remark}

\begin{corollary}  
Let $\{a^\ell_{i,j}\}^{\ell\in L}_{i,j\in[k]}$ and $\{z^m_{i,j}\}^{m\in M}_{i,j\in[k]}$ be two $\ast$-free families in $(\mathscr{A},\varphi)$, and suppose that $a^{\ell}_{i,j}$ are all $\ast$-free, and $a^{\ell}_{i,j}$ are $\mathscr{R}$-diagonal elements for $i\neq j$.  Further, let $\mathbf{A}=\{A_\ell\}_{\ell\in L}$ and $\mathbf{Z}=\{Z_m\}_{m\in M}$ where
\[ A_\ell = \sum_{i,j=1}^k a^\ell_{i,j}\otimes E_{i,j} \qquad\text{and}\qquad Z_m = \sum_{i,j=1}^k z^m_{i,j}\otimes E_{i,j}. \]
Let $\sigma\in S_k$ be a permutation with a single cycle of length $k$ and suppose that the family $\{\mathbf{A},\mathbf{Z}\}$ is $\sigma$-circulant invariant.

Then $\mathbf{Z}$ and all the $A_\ell$ are also $\ast$-freely independent with respect to $\varphi\otimes\mathrm{tr}$.
\end{corollary}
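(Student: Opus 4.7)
The plan is to reduce the corollary to a combination of Theorem \ref{thm.freeness.R.diagonal} applied to the off-diagonal orbit pieces of each $A_\ell$ and Theorem \ref{thm.general.freeness} together with Lemma \ref{lem.diag.Haar} applied to the diagonal pieces (whose entries are not assumed $\mathscr{R}$-diagonal), and then to assemble these via the associativity of the reduced free product.

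First, I would decompose each $A_\ell = D_\ell + \sum_{n=1}^{k-1} A_{\ell,n}$, where $D_\ell=\sum_{i=1}^k a^\ell_{i,i}\otimes E_{i,i}$ is the diagonal part and each $A_{\ell,n}$ (for $1\le n\le k-1$) is the restriction of $A_\ell$ to the non-diagonal $\sigma\times\sigma$-orbit of $(1,\sigma^n(1))$, exactly as in \eqref{eq.Aln.def}. Each $A_{\ell,n}$ and each $D_\ell$ is $\sigma$-circulant invariant on its own, and this refinement preserves both the $\sigma$-circulant invariance of the combined family and the mutual $\ast$-freeness of all entries across distinct components.

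Next I would apply Theorem \ref{thm.freeness.R.diagonal} to the refined families $\mathbf{A}^\circ:=\{A_{\ell,n}\}_{\ell\in L,\,1\le n\le k-1}$ and $\mathbf{Z}^\circ:=\mathbf{Z}\cup\{D_\ell\}_{\ell\in L}$. The hypotheses hold: the entries of $\mathbf{A}^\circ$ are precisely the off-diagonal entries $a^\ell_{i,j}$ with $i\ne j$, which are $\ast$-free and $\mathscr{R}$-diagonal; the entries of $\mathbf{Z}^\circ$ are $\ast$-free from those of $\mathbf{A}^\circ$; and $\mathbf{A}^\circ\cup\mathbf{Z}^\circ$ is $\sigma$-circulant invariant. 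Theorem \ref{thm.freeness.R.diagonal} then yields that the individual $A_{\ell,n}$ are $\ast$-free among themselves and that the aggregate subalgebra they generate is $\ast$-free from the aggregate subalgebra generated by $\mathbf{Z}^\circ$. In parallel, I would apply Theorem \ref{thm.general.freeness} with diagonal family $\{D_\ell\}_{\ell\in L}$ and $\sigma$-circulant invariant family $\mathbf{Z}$; the required hypotheses follow from $\sigma$-circulant invariance of the $A_\ell$ (giving identical $\ast$-distributions for $\{a^\ell_{i,i}\}_\ell$ across $i$), from the $\ast$-freeness of all entries, and from the $\ast$-freeness of $\mathbf{A}$ from $\mathbf{Z}$. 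Hence the algebra generated by $\{D_\ell\}_\ell$ is $\ast$-free from the algebra generated by $\mathbf{Z}$. Lemma \ref{lem.diag.Haar} then identifies the joint $\ast$-distribution of $\{D_\ell\}_\ell$ in $(\mathscr{A}\otimes\MkC,\varphi\otimes\mathrm{tr})$ with that of the scalar family $\{a^\ell_{1,1}\}_\ell$ in $(\mathscr{A},\varphi)$, which is $\ast$-free by hypothesis, so the $D_\ell$ are also internally $\ast$-free.

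Finally, I would combine these conclusions using the associativity of the reduced free product. From the second application, $\mathbf{Z}$ and the individual $\{D_\ell\}_\ell$ are jointly $\ast$-free, so their join decomposes $\mathbf{Z}^\circ$ into $\ast$-free pieces; from the first application, the individual $\{A_{\ell,n}\}$ are mutually $\ast$-free and their aggregate algebra is $\ast$-free from $\mathbf{Z}^\circ$. Associativity therefore gives that the entire collection $\{\mathbf{Z},\,D_\ell,\,A_{\ell,n}\}_{\ell\in L,\,1\le n\le k-1}$ is $\ast$-free as individual subalgebras. Regrouping, for each $\ell$, the subalgebras generated by $D_\ell$ and $A_{\ell,1},\ldots,A_{\ell,k-1}$ into a single subalgebra (which contains $A_\ell=D_\ell+\sum_n A_{\ell,n}$) and leaving $\mathbf{Z}$ untouched yields the claim. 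The main obstacle is careful bookkeeping of the $\sigma$-circulant invariance and entry-wise $\ast$-freeness through the orbit decomposition; the genuinely new ingredient beyond Theorem \ref{thm.freeness.R.diagonal} is the use of Theorem \ref{thm.general.freeness} for the diagonal parts, which sidesteps the fact that the diagonal entries $a^\ell_{i,i}$ need not be $\mathscr{R}$-diagonal.
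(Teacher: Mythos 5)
Your proposal is correct and takes essentially the same approach as the paper's proof: decompose each $A_\ell$ into its diagonal part $D_\ell$ and off-diagonal part, apply Theorem \ref{thm.freeness.R.diagonal} with the diagonal parts moved into the ``$\mathbf{Z}$'' family, handle the diagonal parts separately via Theorem \ref{thm.general.freeness} and Lemma \ref{lem.diag.Haar} (the key observation that sidesteps non-$\mathscr{R}$-diagonality of the $a^\ell_{i,i}$), then regroup using associativity of freeness. The only cosmetic difference is that you pre-split the off-diagonal part into orbit pieces $A_{\ell,n}$ before invoking Theorem \ref{thm.freeness.R.diagonal}, whereas the paper applies that theorem directly to $\tilde{A}_\ell=\sum_{i\neq j}a^\ell_{i,j}\otimes E_{i,j}$ and lets the theorem perform the orbit decomposition internally.
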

\begin{proof}
For each $\ell\in L$, define 
\[D_{\ell} = \sum_{i=1}^k a_{i,i}^{\ell}\otimes E_{i,i} \qquad\text{and}\qquad \tilde A_\ell = \sum_{i\neq j}a_{i,j}^\ell\otimes E_{i,j}.\]
Then $\{Z_m, D_\ell: m\in M, \ell\in L\}$ and $\{\tilde A_\ell: \ell\in L\}$ satisfy the hypothesis of Theorem~\ref{thm.freeness.R.diagonal}. Hence, $\{Z_m, D_\ell: m\in M, \ell\in L\}$ and all $\tilde A_\ell$ are $\ast$-free with respect to $\varphi\otimes \mathrm{tr}$.

Since $a_{i,i}^\ell$ are all $\ast$-free and $\{z^m_{i,j}\}^{m\in M}_{i,j\in[k]}$ is $\ast$-free from all $a_{i,i}^\ell$, by Theorem \ref{thm.general.freeness}, $\{Z_m\}_{m\in M}$ and $ \{D_\ell\}_{\ell\in L}$ are $\ast$-free. By Lemma \ref{lem.diag.Haar}, the joint $\ast$-distribution of $\{D_\ell\}_{\ell\in L}$ is the same as the joint $\ast$-distribution of $\{a_{1,1}\}_{\ell\in L}$; in particular, all the $D_\ell$ are $\ast$-freely independent. Combining the $\ast$-freeness from the preceding paragraph, $\{Z_m\}_{m\in M}$, all $D_\ell$, and all $\tilde A_\ell$ are $\ast$-freely independent. By construction, $A_\ell = D_\ell + \tilde A_\ell$. It then follows that $\{Z_m\}_{m\in M}$ and all $A_\ell$ are $\ast$-freely independent.
\end{proof}

\subsection{Freeness in Our Application\label{sect.freeness.ZA}}
We apply the results in the preceding section to a linearized matrix $Z+\sqrt{t/k}A$. Let 
\[Z = \sum_{j=1}^k u_0^{1/k}\otimes E_{j,j+1},\; U = \sum_{j=1}^k w_j\otimes E_{j,j}, \; V=\sum_{j=1}^k v_j\otimes E_{j,j}, \; X = \sum_{j=1}^k x\otimes E_{j,j},\]
where $w_1,\ldots,w_k, v_1,\ldots,v_k$ are free Haar unitaries, $x\in\widetilde{\mathcal{A}}$ is a (possibly unbounded) self-adjoint random variable such that $u_0^{1/k}, x, w_1,\ldots,w_k, v_1,\ldots,v_k$ are $\ast$-free. The distribution of $x$ is chosen such that $a = v_1 x w_1$ in $\ast$-distribution, so that $A = (U\Sigma)XV$ in $\ast$-distribution where $\Sigma = \sum_{j=1}^k E_{j,j+1}$ is a cyclic permutation matrix.

\begin{proposition}
    \label{prop.freeness.unbounded}
    The sets of random variables $\{U\Sigma\}$, $\{V\}$, and $\{Z, X\}$ are $\ast$-free. 
\end{proposition}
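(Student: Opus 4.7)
The key observation is that the joint family $\{U\Sigma, V, Z, X\}$ is $\sigma$-circulant invariant for $\sigma = 1+_k$, the length-$k$ cycle. Indeed, the nonzero entries of $U\Sigma$ live on the single $\sigma\times\sigma$-orbit $\{(j, j+1)\}_{j \in [k]}$ with values $w_j$, and those of $V$ on the diagonal orbit with values $v_j$. Since $\{w_j\}_j$ and $\{v_j\}_j$ are each $\ast$-free and identically distributed, cyclically shifting their indices preserves their joint $\ast$-distributions (this is precisely the setup of Example \ref{ex.circ.inv}(\ref{ex.circ.inv.a})). The matrices $Z$ and $X$ have constant entries $u_0^{1/k}$ and $x$ on their respective orbits, hence are manifestly $\sigma$-invariant. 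The full joint invariance then follows from the $\ast$-freeness of $u_0^{1/k}, x, w_1, \ldots, w_k, v_1, \ldots, v_k$ in $(\mathscr{A}, \varphi)$: the joint distribution of all entries is determined by the individual distributions, and shifting the indices of the $w_j$'s and $v_j$'s simultaneously leaves those distributions unchanged.

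With this in hand, the plan is to apply our two freeness theorems in succession. First, apply Theorem \ref{thm.freeness.R.diagonal} with $\mathbf{A} = \{U\Sigma\}$ and $\mathbf{Z} = \{V, Z, X\}$. The nonzero entries of $U\Sigma$ are $\ast$-free Haar unitaries (and thus $\mathscr{R}$-diagonal), the zero entries are trivially $\mathscr{R}$-diagonal and $\ast$-free from everything, and together these are $\ast$-free from the entries of $\{V, Z, X\}$ by construction. The theorem then gives that $\{U\Sigma\}$ and $\{V, Z, X\}$ are $\ast$-free in $(\mathscr{A}\otimes \MkC, \varphi\otimes\mathrm{tr})$. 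Next, apply Theorem \ref{thm.general.freeness} with the role of $\mathbf{A}$ played by $\{Z, X\}$ (each $\sigma$-circulant invariant with a single repeated entry along its orbit) and the role of the diagonal family played by $V$; the diagonal entries $\{v_j\}_{j \in [k]}$ are $\ast$-free identically distributed Haar unitaries, and they are $\ast$-free from the entries $u_0^{1/k}, x$ of $Z, X$. This yields that $\{V\}$ is $\ast$-free from $\{Z, X\}$.

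To combine, invoke the standard associativity of freeness: if a subalgebra $\mathscr{B}_1$ is $\ast$-free from $\mathscr{B}_2 \vee \mathscr{B}_3$, and $\mathscr{B}_2, \mathscr{B}_3$ are $\ast$-free within $\mathscr{B}_2 \vee \mathscr{B}_3$, then $\mathscr{B}_1, \mathscr{B}_2, \mathscr{B}_3$ are mutually $\ast$-free (this follows directly from the moment-factorization characterization of freeness). Applying this with $\mathscr{B}_1, \mathscr{B}_2, \mathscr{B}_3$ the $\ast$-algebras generated by $U\Sigma$, $V$, and $\{Z, X\}$ respectively yields the desired mutual $\ast$-freeness. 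The main technical point is carefully verifying the $\sigma$-circulant invariance of the combined family and justifying the application of Theorem \ref{thm.freeness.R.diagonal} in the degenerate case where most entries of $U\Sigma$ are zero; once those checks are made, the proof is a direct assembly of existing structural results.
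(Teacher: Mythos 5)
Your logical skeleton --- first peel off $U\Sigma$ via Theorem \ref{thm.freeness.R.diagonal} with $\mathbf{A}=\{U\Sigma\}$ and $\mathbf{Z}=\{V,Z,X\}$, then separate $V$ from $\{Z,X\}$ via Theorem \ref{thm.general.freeness}, then apply associativity of freeness --- is a sound alternative organization. The paper instead first separates $U\Sigma$ from $V$ via Theorem \ref{thm.general.freeness}, then separates $\{Z,X\}$ from the pair $\{U\Sigma,V\}$ via one application of Theorem \ref{thm.freeness.R.diagonal}. The two decompositions are interchangeable and your verification of $\sigma$-circulant invariance of the combined family is correct; the zero entries of $U\Sigma$ that you flag are indeed trivially $\mathscr{R}$-diagonal and free from everything and pose no actual obstruction (the paper's own application has the same zero entries, silently).

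The genuine gap is the one you did not flag: $X$ may be unbounded. The step $a_j \in \widetilde{\mathscr{A}}$ means $x$ is a possibly unbounded self-adjoint operator, so $X = \sum_j x\otimes E_{j,j}$ is not in general an element of the von Neumann algebra $\mathscr{A}\otimes\MkC$. Both Theorem \ref{thm.general.freeness} and Theorem \ref{thm.freeness.R.diagonal} are stated for matrices in $\mathscr{A}\otimes\MkC$ and their proofs (via moment-cumulant expansions of $\varphi\otimes\mathrm{tr}$) presuppose boundedness; applying them directly to a family containing $X$ is not justified. This is precisely the issue encoded in the proposition's name and which the paper's proof addresses: by the Bercovici--Voiculescu characterization of $\ast$-freeness for affiliated operators, freeness of $\{Z,X\}$ from the unitaries reduces to freeness of $\{Z, f(X) : f\in C_b(\mathbb{R})\}$, and the paper writes out an arbitrary mixed free cumulant involving $Z$, finitely many bounded $f_i(X)$, $U\Sigma$, and $V$, then applies Theorem \ref{thm.freeness.R.diagonal} to that bounded family. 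Your two applications of the circulant-freeness theorems would each need the same reduction (replacing $X$ by arbitrary bounded continuous functions $f_i(X)$ and checking that the $\sigma$-circulant invariance persists --- it does, since $f_i(X)=\sum_j f_i(x)\otimes E_{j,j}$ is still constant along its orbit). With that amendment your sequence of steps goes through, but as written the proof does not apply to the setting it needs to cover.
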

\begin{proof}
    The elements $U\Sigma$ and $V$ are $\ast$-free by Theorem \ref{thm.general.freeness}. By Proposition 4.1(iii) and Definition 4.2 of \cite{BercoviciVoiculescu1993}, $\{Z, X\}$ is $\ast$-free from $\{U\Sigma, V\}$ means the subset
    \[S_1=\{Z, f(X): f \textrm{ is a bounded continuous function on $\mathbb{R}$}\}\]
    of $\mathcal{A}$ is $\ast$-free from $S_2=\{U\Sigma, V\}$.

    To show that $S_1$ is free from $S_2$, we consider an arbitrary order-$n$ mixed free cumulant $\kappa_n$ of elements in $S_1$ and $S_2$ (i.e. involves $S_1$ and $S_2$), and want to show that $\kappa_n=0$. There are nonnegative intergers $m_1, m_2, m_3, m_4$ such that $n=m_1+m_2+m_3+m_4$ and $m_1$ of these $n$ elements are $U\Sigma$, $m_2$ are $V$, $m_3$ are $Z$, and $f_1(X),\ldots,f_{m_4}(X)$ for some bounded continuous functions $f_1,\ldots,f_{m_4}$ on $\mathbb{R}$. In other words, the free cumulant $\kappa_n$ involves elements in 
    \[S_1' = \{Z,f_1(X),\ldots,f_m(X)\}\]
    and $S_2$. Now, each $f_i(X) = \sum_{j=1}^k f_i(x)\otimes E_{j,j}$. Thus, we can apply Theorem \ref{thm.freeness.R.diagonal} with $\mathbf{Z} = S_1'$ and $\mathbf{A} = S_2$ to conclude $\kappa_n = 0$ because $\kappa_n$ is a mixed cumulant of $S_1'$ and $S_2$.
    %It remains to show that $Z$ and $X$ are $\ast$-free; that is, we need to show that $\{Z\}$ and 
    %\[S = \{f(X): f \textrm{ is a bounded continuous function on $\mathbb{R}$}\}\]
    %are $\ast$-free. The sets $\{Z\}$ and $S$ are $\ast$-free by Theorem \ref{thm.general.freeness} because $f(X) = \sum_{j=1}^k f(x_j)\otimes E_{j,j+1}$ and $f(x_1),\ldots,f(x_k)$ are free. 
\end{proof}

\begin{corollary}
\label{cor.Z.A.freeness}
The $Z$ and $A$ defined in \eqref{eq.Z.A.def} are $\ast$-free, and $A$ has the same $\ast$-distribution as $a$.
\end{corollary}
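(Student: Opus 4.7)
The plan is to apply Theorem \ref{thm.freeness.R.diagonal} directly with singletons $\mathbf{A} = \{A\}$, $\mathbf{Z} = \{Z\}$, and $\sigma \in S_k$ the cyclic shift $\sigma(j) = j+1 \pmod k$. Indexing so that the nonzero entries of $\mathbf{A}$ are $a^1_{j,j+1} := a_j$ on the super-diagonal orbit of $\sigma \times \sigma$, and the nonzero entries of $\mathbf{Z}$ are $z^1_{j,j+1} := u_0^{1/k}$ on the same orbit, the hypotheses are immediate: the nonzero entries $\{a_1, \ldots, a_k\}$ of $\mathbf{A}$ form a $\ast$-free $\mathscr{R}$-diagonal family, and the standing assumption makes $u_0^{1/k}$ $\ast$-free from this family. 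For $\sigma$-circulant invariance of $\{\mathbf{A}, \mathbf{Z}\}$, a direct computation shows $U_\sigma A U_\sigma^* = \sum_j a_{j+1} \otimes E_{j,j+1}$ and $U_\sigma Z U_\sigma^* = Z$; the joint $\ast$-distribution of entries is preserved because the $\ast$-free i.i.d.\ tuple $(a_1, \ldots, a_k)$ has a cyclically invariant joint law (jointly with the constant $u_0^{1/k}$).

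With the hypotheses verified, Theorem \ref{thm.freeness.R.diagonal} delivers both conclusions simultaneously. Its freeness conclusion gives $\ast$-freeness of $A$ and $Z$ in $(\mathscr{A} \otimes \MkC, \varphi \otimes \mathrm{tr})$. Its decomposition conclusion writes $A = A_1 + \cdots + A_k$ with $A_n$ supported on the $\sigma \times \sigma$-orbit of $(1, 1+n)$ and $A_n \equaldist a^1_{1, \sigma^n(1)}$; since $A$ is itself supported only on the orbit of $(1, 2)$, all $A_n$ with $n \neq 1$ vanish, leaving $A = A_1 \equaldist a_1 \equaldist a$.

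The main obstacle is that Theorem \ref{thm.freeness.R.diagonal} is formally stated for bounded entries, whereas here the $a_j$ may be unbounded affiliated operators. To bridge this, I would follow the Bercovici--Voiculescu approach used in the proof of Proposition \ref{prop.freeness.unbounded}: $\ast$-freeness for unbounded affiliated families is equivalent to $\ast$-freeness of all families of bounded continuous functions of their absolute values, and the bounded case of Theorem \ref{thm.freeness.R.diagonal} applied to $\{f(|a_j|)\}$ and $\{g(u_0^{1/k})\}$ transfers directly. An alternative more in keeping with the immediately preceding setup is to deduce both claims from Proposition \ref{prop.freeness.unbounded} together with the distributional identity $A \equaldist (U\Sigma) X V$: in the amplified space, $U\Sigma$ and $V$ are Haar unitaries $\ast$-free from $X$ (a quick moment check on the former using that scalar traces of $\Sigma^n$ vanish unless $k \mid n$), placing $A$ in the canonical $\mathscr{R}$-diagonal form $uqv^*$ of Corollary \ref{cor.R-diag.SVD} with $|A| \equaldist |a|$, which yields $A \equaldist a$; the $\ast$-freeness of $Z$ from $A$ then follows by expanding any alternating centered moment in $Z$ and $A$ via the substitution $A = (U\Sigma)XV$, performing the $VV^* = 1$ and $(U\Sigma)^*(U\Sigma) = 1$ cancellations, and invoking the three-way $\ast$-freeness $\{Z, X\}, \{U\Sigma\}, \{V\}$ from Proposition \ref{prop.freeness.unbounded} on the resulting alternating word.
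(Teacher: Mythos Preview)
Your main approach—applying Theorem \ref{thm.freeness.R.diagonal} directly with the single cycle $\sigma$ and only the superdiagonal orbit populated—is correct and clean when the $a_j$ are bounded, and it is genuinely different from the paper's route. The paper does not invoke Theorem \ref{thm.freeness.R.diagonal} here at all; instead it works entirely through the representation $A \equaldist (U\Sigma)XV$ and Proposition \ref{prop.freeness.unbounded}. Your direct route is shorter in the bounded case and has the merit of reading off both the freeness and the distribution $A \equaldist a_1$ from one theorem.

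Where you run into trouble is the unbounded reduction. Your fix 1 fails: the hypotheses of Theorem \ref{thm.freeness.R.diagonal} require the $\mathbf{A}$-entries to be $\mathscr{R}$-diagonal, but $f(|a_j|)$ is self-adjoint and hence not $\mathscr{R}$-diagonal (unless zero), so the theorem does not apply to the truncated matrix you describe. A workable truncation would need to preserve $\mathscr{R}$-diagonality, e.g.\ $u_j\,f(|a_j|)$ with $u_j$ the Haar polar part, and then one must still argue that freeness survives the limit—neither step is the one-liner you suggest.

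Your fix 2 is essentially the paper's strategy, and your distribution argument ($U\Sigma$, $V$ Haar unitaries free from $X$, hence $A$ in the $uqv^\ast$ form of Corollary \ref{cor.R-diag.SVD}) is right and matches the paper. But your freeness sketch—substitute $A=(U\Sigma)XV$ into an alternating word and ``perform $VV^\ast=1$, $(U\Sigma)^\ast(U\Sigma)=1$ cancellations''—does not work: a monomial like $AA=(U\Sigma)XV(U\Sigma)XV$ admits no such cancellation, and the substituted word is not alternating in the three free families $\{Z,X\},\{U\Sigma\},\{V\}$. The paper avoids this by writing $(U\Sigma)XV=[(U\Sigma)X(U\Sigma)^\ast][(U\Sigma)V]$, then arguing at the von Neumann algebra level: $(U\Sigma)X(U\Sigma)^\ast$ is free from $Z$ by the conjugation-by-free-Haar-unitary principle, $(U\Sigma)V$ is Haar and free from $\{Z,(U\Sigma)X(U\Sigma)^\ast\}$ by Lemma \ref{lem.free.Haar}, and $A$ is affiliated with the von Neumann algebra they generate. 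That algebraic maneuver, not a moment cancellation, is the missing step in your sketch.
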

\begin{proof}
By Lemma \ref{lem.joint.dist}, the distribution of $x$ is chosen such that $A = (U\Sigma)XV$ in $\ast$-distribution, and $\{Z, A\}$ and $\{Z, (U\Sigma)XV\}$ have the same joint $\ast$-distribution.

By Proposition \ref{prop.freeness.unbounded}, $\{U\Sigma\}$, $\{V\}$, $\{Z,X\}$ are $\ast$-free. We write 
\[(U\Sigma)XV = [(U\Sigma) X (U\Sigma)^*] [(U\Sigma) V].\] 
Then $(U\Sigma) X (U\Sigma)^*$ is $\ast$-free from $Z$ since the spectral projections of $(U\Sigma) X (U\Sigma)^*$ are all $\ast$-free from $Z$, by \cite[Exercise 5.24]{SpeicherNicaBook}. Thus, the von Neumann algebra $\mathcal{W}$ generated by the spectral projections of $(U\Sigma) X (U\Sigma)^*$ and $(U\Sigma)V$ is $\ast$-free from $Z$. Now, $(U\Sigma)XV$ is affiliated with $\mathcal{W}$, which proves $Z$ and $(U\Sigma)XV$, hence $Z$ and $A$, are $\ast$-free.

Finally, we show that $A$ has the same $\ast$-distribution as $a$. Since $X$ is a diagonal matrix, $X$ has the same distribution as $x$. Since $U\Sigma$ and $V$ are $\ast$-free Haar unitaries also $\ast$-free from $X$ by Proposition \ref{prop.freeness.unbounded}, $(U\Sigma)XV$ has the same $\ast$-distribution as $v_1 x w_1$, which in turn has the same $\ast$-distribution as $a$, where $v_1$, $w_1$ are $\ast$-free Haar unitaries $\ast$-free from $x$.
\end{proof}

\section{Brown Measures of Invariant Random Walks\label{sect.BM.computation.1}}

\subsection{Brown measure of random walk} \hfill

\medskip

In this subsection, we compute the Brown measure of the $k$-step random walk $u_0b_k(t)$ (see \eqref{eq.RW.unitary}). We first compute the Brown measure of $Z+\sqrt{t/k} A$, where $Z$ and $A$ are defined in \eqref{eq.Z.A.def}. The $\ast$-free operators $Z$ and $A$ are affiliated with $\mathscr{A}\otimes \MkC$, and $A$ is $\mathscr{R}$-diagonal with the same $\ast$-distribution as $a$, by Corollary \ref{cor.Z.A.freeness}. The Brown measure of $Z+\sqrt{t/k}A$ can, in principle, be computed using the results in \cite{BercoviciZhong2022Rdiag}. Once we have the Brown measure of $Z+\sqrt{t/k}A$, the Brown measure of $u_0b_k(t)$ is the push-forward of the Brown measure of $Z+\sqrt{t/k}A$ by $\lambda\mapsto \lambda^k$ by Corollary \ref{cor.unitary.linearized}.

The unit circle is the image of $\theta\mapsto e^{i\theta}$ for $\theta\in (-\pi ,\pi]$. From now on, we abuse our notation and realize the law $\mu_{u_0}$ of $u_0$ as a probability measure on the interval $(-\pi, \pi]$. The nonzero entries in $Z$ are $u_0^{1/k}$, where $u_0^{1/k}$ can be chosen to be any $k$-root of $u_0$; that is, $u_0^{1/k}$ is any unitary operator satisfying $(u_0^{1/k})^k=u_0$. For convenience, we will fix $u_0^{1/k}$ to be the unitary operator with law $\mu_{u_0^{1/k}}$, realized as a probability measure on $(-\pi, \pi]$ determined by
 \begin{equation}
     \label{eq.law.u.root}
     \int_{(-\pi/k,\pi/k]}f(e^{i\alpha})\mu_{u_0^{1/k}}(d\alpha)=\int_{(\pi,\pi]}f(e^{\frac{i\alpha}{k}})\mu_{u_0}(d\alpha)
 \end{equation}
 for any bounded continuous function $f$.

To describe the relevant sets for the Brown measure of $Z+\sqrt{t/k}A$ in Proposition \ref{prop.Brown.add.support}, we need the following definition. For any $X\in \mathscr{A}$, denote by $\mathrm{ker}(X)$ the orthogonal projection, which is also in $\mathscr{A}$, into the kernel of $X$.
\begin{definition}
    \label{def.sets.Brown.add}
    We first emphasize that the $\ast$-distribution of $Z$ depends on $k$, and $A$ has the same $\ast$-distribution as $a$ for all $k$. We define the following sets. 
    \begin{enumerate}
        \item $S_k = \{\lambda: \varphi(\mathrm{ker}(Z-\lambda))+\varphi(\mathrm{ker}(a))\geq 1\}$
        \item $E_{1,k}(t) =\displaystyle \left\{\lambda: \frac{t}{k} \leq \frac{1}{\|(Z-\lambda)^{-1}\|_2^2}\right\}$
        \item $E_{2,k}(t) =\displaystyle \left\{\lambda: \vert\lambda\vert^2\leq\frac{t}{k\|a^{-1}\|_2^2}-1\right\}$
        \item $E_k(t) = E_{1,k}\cap E_{2,k}$
        \item $\Omega_k(t) =\mathbb{C}\setminus (E_{1,k}(t)\cup E_{2,k}(t))$.
    \end{enumerate}
\end{definition}
\begin{remark}
Since $a$ is not identically zero, $\varphi(\mathrm{ker}(a))<1$. We must have $\varphi(\mathrm{ker}(Z-\lambda))>1$ for any $\lambda\in S_k$, and thus $S_k\subset \mathrm{spec}(Z)$.
\end{remark}

\begin{definition}
    \label{notation.psi.epsilon}
    \begin{enumerate}
        \item For any Borel measure $\mu$ on $\mathbb{R}$, denote by $\tilde\mu$ the symmetrization of $\mu$; that is, for any Borel set $B$,
        \[\tilde\mu(B) = \frac{\mu(B)+\mu(-B)}{2}.\]
        \item Let $\mu_{\vert Z-\lambda\vert}$ and $\mu_{\vert a\vert}$ be the laws of $\vert Z-\lambda\vert$ and $\vert a\vert$ respectively.
        \item For any $z\in\mathbb{C}$, let $\lambda$ be any $k$-th root of $z$. By applying Theorem \ref{thm.subordination.general} with $\mu_1 = \tilde\mu_{\vert Z-\lambda\vert}$ and $\mu_2 = \tilde\mu_{\sqrt{t/k}\vert a\vert}$, we get two subordination functions $\omega_1$ and $\omega_2$ which depend on $k$ and $z\in\mathbb{C}$. We denote
        \[\psi_k(t,z) = \omega_1^{(\lambda)}(0).\]
        This definition is well-defined; that is, if $\lambda$ is another $k$-th root of $z$, we still have the same value of $\omega_1^{(\lambda)}(0)$, since the $\ast$-distributions of $Z$ and $a$ are invariant under rotation by any $k$-th root of unity.
        \item By Lemma 2.5 of \cite{BercoviciZhong2022Rdiag}, $\psi_k(t,z)$ is a nonnegative multiple of $i$. For convenience, we define
        \[\eta_k(t,z) = -\psi_k(t,z)^2,\]
        which is a nonnegative number.
    \end{enumerate}
\end{definition}

\begin{proposition}
    \label{prop.Brown.add.support}
    The sets defined in Definition \ref{def.sets.Brown.add} are related to the Brown measure of $Z+\sqrt{t/k}A$ as in the following. Recall that $\mu_{u_0}$ is the law of the initial condition $u_0$ of $u_0b_k(t)$.
    \begin{enumerate}
        \item The atoms of the Brown measure of $Z+\sqrt{t/k}A$ are contained in $S_k$. The set $S_k$ is a finite set disjoint from $E_{1,k}(t)\cup E_{2,k}(t)$. If $S_k$ is nonempty, then $E_{2,k}(t)$ is empty.
        \item The set $S_k$ is empty for $k$ large enough.
        \item $E_k(t) = \{\lambda: \vert Z-\lambda\vert = \sqrt{t/k}\vert A\vert = \sqrt{t/k}\|A\|\}$. Thus $E_k(t)$ is empty except if $a$ is Haar unitary and
        \begin{enumerate}[label=(\roman*)]
            \item $k=1$ and $\mu_{u_0} = p\delta_{e^{i\alpha}}+(1-p)\delta_{e^{i\beta}}$ for some $\alpha, \beta$ such that $\sin^2\frac{\alpha-\beta}{2}\leq t$. In this case, $E_k(t) = \left\{\left(\cos\frac{\alpha-\beta}{2}\pm\sqrt{t-\sin^2\frac{\alpha-\beta}{2}}\right)e^{i\frac{\alpha+\beta}{2}}\right\}$; or
            \item $k=2$, $t\geq 2$, and $\mu_{u_0} = \delta_{e^{i\alpha}}$. $E_k=\left\{  \pm ie^{i\frac{\alpha}{2}}\sqrt{\frac{t}{2}-1}\right\}.$
        \end{enumerate} 
        \item Both $E_{1,k}(t)$ and $E_{2,k}(t)$ are closed sets. The set $E_{2,k}(t)$ is either empty, singleton, or a closed disk centered at $0$. Moreover, $\psi_k(t,z) = 0$ if $z^{1/k}\in E_{1,k}(t)$ and $\psi_k(t,z) = \infty$ if $z^{1/k}\in E_{2,k}(t)$.
        \item Both $\Omega_k(t)\setminus S_k$ and $\Omega_k(t)$ are open. Moreover, $\psi_k(t,z)$ is a (finite) positive multiple of $i$ if and only if $z^{1/k}\in \Omega_k(t)\setminus S_k$.
        \item The Brown measure of $Z+\sqrt{t/k}A$ is supported in the closure $\overline{\Omega_k(t)}$ of $\Omega_k(t)$. In $\Omega_k(t)\setminus S_k$, the Brown measure is absolutely continuous relative to the Lebesgue measure on $\mathbb{C}$ with a real-analytic density.
    \end{enumerate}
\end{proposition}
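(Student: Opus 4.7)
The plan is to apply the general framework of \cite{BercoviciZhong2022Rdiag} on Brown measures of sums of a normal operator and a free $\mathscr{R}$-diagonal element, in the setting of $(\mathscr{A}\otimes\MkC,\varphi\otimes\mathrm{tr})$. By Corollary \ref{cor.Z.A.freeness}, $Z$ is unitary (hence normal), $\sqrt{t/k}\,A$ is $\mathscr{R}$-diagonal with $\ast$-distribution equal to that of $\sqrt{t/k}\,a$, and $Z$ is $\ast$-free from $A$. The subordination function $\psi_k(t,\cdot)$ from Definition \ref{notation.psi.epsilon} will encode the analytic content of the Brown measure.

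For (1), via the Haagerup--Schultz-type kernel criterion under freeness of $Z-\lambda$ and $\sqrt{t/k}A$, atoms of the Brown measure of $Z+\sqrt{t/k}A$ occur only where $\varphi(\ker(Z-\lambda))+\varphi(\ker(\sqrt{t/k}A))\geq 1$; since $\ker(\sqrt{t/k}A)=\ker(a)$ this is exactly $S_k$. Since $\mu_Z$ has only countably many atoms and $\varphi(\ker(a))<1$, only finitely many $\lambda$ can satisfy the defining inequality, so $S_k$ is finite. If $\lambda\in S_k$ then $Z-\lambda$ fails to be $L^2$-invertible, hence $\|(Z-\lambda)^{-1}\|_2=\infty$ and $\lambda\notin E_{1,k}(t)$. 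Moreover $\lambda\in S_k$ forces $\ker(a)\neq 0$, so $a^{-1}\notin L^2$, $\|a^{-1}\|_2=\infty$, and $E_{2,k}(t)$ is empty. For (2), the spectral measure $\mu_Z$ on $\mathbb{C}$ is related to $\mu_{u_0}$ via \eqref{eq.law.u.root} and the cyclic block structure of $Z$: the atoms of $\mu_Z$ are the $k$th roots of atoms of $\mu_{u_0}$, each with mass divided by $k$. Hence the largest atom of $\mu_Z$ has mass at most $(\max\text{ atom of }\mu_{u_0})/k\to 0$, and eventually drops below $1-\varphi(\ker(a))$.

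For (3), the equation $|Z-\lambda|=\sqrt{t/k}\|A\|\cdot I$ has two consequences: $(Z-\lambda)^\ast(Z-\lambda)$ is a scalar, and $|A|$ is a scalar, the latter forcing $a$ to be Haar unitary. Expanding $(Z-\lambda)^\ast(Z-\lambda)=1+|\lambda|^2-\lambda Z^\ast-\bar\lambda Z = c^2 I$ and using that $Z^\ast, Z$ are built from $u_0^{1/k}$ on cyclic subdiagonals, one sees that the operator $\lambda Z^\ast+\bar\lambda Z$ is scalar iff $\mu_{u_0^{1/k}}$ is supported on at most two points arranged compatibly with $k$; tracking through the arithmetic of \eqref{eq.law.u.root} yields the two enumerated cases, and the explicit values of $\lambda$ arise by solving the resulting quadratic in $\lambda$. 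For (4), the set $E_{2,k}(t)$ is a closed disk, singleton, or empty by inspection of $|\lambda|^2\le t/(k\|a^{-1}\|_2^2)-1$. Closedness of $E_{1,k}(t)$ follows from lower semicontinuity of $\lambda\mapsto\|(Z-\lambda)^{-1}\|_2^2=\int|z-\lambda|^{-2}\,\mu_Z(dz)$ via Fatou. The identifications $\psi_k(t,z)=0$ on $E_{1,k}(t)$ and $\psi_k(t,z)=\infty$ on $E_{2,k}(t)$ come from reading off the Denjoy--Wolff point characterization in Theorem \ref{thm.subordination.general}: the relevant self-map of $\mathbb{C}^+$ degenerates to a map with Denjoy--Wolff point $0$ or $\infty$ precisely when the second-moment constraint on $(Z-\lambda)^{-1}$ or $a^{-1}$ fails.

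Part (5) is then immediate: openness by complement, and $\psi_k(t,z)$ is a strictly positive multiple of $i$ exactly when it is neither $0$ nor $\infty$, i.e.\ on $\Omega_k(t)\setminus S_k$. Part (6) invokes the main theorems of \cite{BercoviciZhong2022Rdiag}: the support of the Brown measure of $Z+\sqrt{t/k}A$ is determined by where $\psi_k(t,\cdot)$ is finite and nonzero, yielding $\overline{\Omega_k(t)}$, and the Brown density on $\Omega_k(t)\setminus S_k$ is given by an explicit expression in $\psi_k$ and its first derivatives, which is real-analytic since $\psi_k$ takes values in the open upper half-plane there. The main obstacles I anticipate are the case analysis in (3), which requires tracking how the cyclic block structure of $Z$ interacts with the scalar constraint on $(Z-\lambda)^\ast(Z-\lambda)$, and the careful boundary analysis in (4)--(5) to show that $\psi_k$ extends continuously to $0$ or $\infty$ on the appropriate pieces of $\partial\Omega_k(t)$; both are handled by standard but delicate complex-analytic techniques on Denjoy--Wolff points and Cauchy transforms.
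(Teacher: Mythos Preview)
Your proposal is correct and follows essentially the same approach as the paper: both invoke the framework of \cite{BercoviciZhong2022Rdiag} with $X_1=Z$, $X_2=\sqrt{t/k}A$, identifying $S_k,E_{1,k}(t),E_{2,k}(t),E_k(t),\Omega_k(t)\setminus S_k$ with the sets $S,F_1,F_2,F,\Omega$ there, and then supply the specific computations for parts (2) and (3). The only notable difference is in (2): the paper argues via rotational invariance of the $\ast$-distribution of $Z$ under $\lambda\mapsto e^{2\pi i/k}\lambda$ (so the $k$ kernel projections $\ker(Z-e^{2\pi ij/k}\lambda)$ are mutually orthogonal, forcing each to have trace $\le 1/k$), whereas you compute the atom structure of $\mu_Z$ directly from the tensor decomposition $Z=u_0^{1/k}\otimes\sum_jE_{j,j+1}$; both yield the bound $\varphi(\ker(Z-\lambda))\le 1/k$, and the remainder of the argument is identical.
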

\begin{proof}
    Apply $X_1=Z$, $X_2= \sqrt{t/k}A$ in \cite{BercoviciZhong2022Rdiag}. Recalling that $A$ is chosen to have unit variance, the sets $S_k, E_{1,k}(t), E_{2,k}(t), E_k(t), \Omega_k(t)\setminus S_k$ are respectively the sets $S$, $F_1$, $F_2$, $F$, $\Omega$ in Definition 3.1 of \cite{BercoviciZhong2022Rdiag} (Identifying $E_{2,k}(t)$ with $F_2$ requires an algebraic calculation). 
    
    Part (1) follows from Lemma 3.2(1) and Remark 3.5 of \cite{BercoviciZhong2022Rdiag}. To prove Part (2), we first note that the $\mathscr{R}$-diagonal element $a$ is nonzero because it has unit variance; thus $\varphi(\mathrm{ker}(a))<1$. Meanwhile, if $\varphi(\mathrm{ker}(Z-\lambda))>0$ for some $\lambda\in\mathbb{C}$, then by that the $\ast$-distribution of $Z$ is invariant under multiplication by $e^{2\pi i/k}$, $\varphi(\mathrm{ker}(Z-e^{\frac{2\pi i j}{k}}\lambda))>0$ for all integers $j=1,\ldots,k-1$. Since 
    \[\mathrm{ker}(Z-e^{\frac{2\pi i j_1}{k}}\lambda)\cap \mathrm{ker}(Z-e^{\frac{2\pi i j_2}{k}}\lambda) = \{0\}\]
    for any $j_1\neq j_2$, $\varphi(\mathrm{ker}(Z-e^{\frac{2\pi i j}{k}}\lambda))\leq \frac{1}{k}$ for each $j$; in particular, $\varphi(\mathrm{ker}(Z-\lambda))\leq \frac{1}{k}$. Now, if $k$ is large enough, $\varphi(\mathrm{ker}(Z-\lambda))+\varphi(\mathrm{ker}(a))<1$ for all $\lambda\in\mathbb{C}$ such that $\varphi(\mathrm{ker}(Z-\lambda))>0$. This shows $S_k$ must be empty for all $k$ large enough.
    
    The identification of $E_k(t)$ in Part (3) is Lemma 3.2(2) of \cite{BercoviciZhong2022Rdiag}. To investigate when $E_k(t)$ is nonempty, we first note that $\vert A\vert = \|A\|$ if and only if $a$ (which has the same $\ast$-distribution as $A$) is Haar unitary because $a$ is $\mathscr{R}$-diagonal; in this case, $\vert A\vert = 1$. If $A$ is Haar unitary, then the condition $\vert Z-\lambda \vert = \sqrt{t/k}\vert A\vert$ holds if and only if $1+\vert\lambda\vert^2-t/k=2\mathrm{Re}(\bar\lambda Z)$, which precisely happens when the law of $Z$ is supported only on a line (i.e. at most two points on the circle because $Z$ is unitary). We list all the possible cases below. Recall that $Z = u_0^{1/k}\otimes\left(\sum_{j=1}^k E_{j,j+1}\right)$. The cases when $E_k(t)$ is nonempty are:
    \begin{itemize}
        \item $k=1$ and $\mu_{u_0} = p\delta_{e^{i\alpha}}+(1-p)\delta_{e^{i\beta}}$ for some $\alpha, \beta$. In this case, that $\mathrm{Re}(\bar\lambda Z)$ is a scalar tells us the argument $\theta\mod 2\pi$ of $\lambda$ is either $(\alpha+\beta)/2$ or $\pi+(\alpha+\beta)/2$. Once we have $\theta$, we solve a quadratic equation for $\vert\lambda\vert$, where we look for nonnegative solutions. Therefore, the discriminant of the quadratic equation $t-\sin^2\frac{\alpha-\beta}{2}$ must be nonnegative. If $\cos\frac{\alpha-\beta}{2}>0$, then we only get solutions when $\theta=\frac{\alpha+\beta}{2}$, which gives 
        \[E_k(t) = \left\{\left(\cos\frac{\alpha-\beta}{2}\pm\sqrt{t-\sin^2\frac{\alpha-\beta}{2}}\right)e^{i\frac{\alpha+\beta}{2}}\right\};\] if $\cos\frac{\alpha-\beta}{2}<0$, then we only get solutions when $\theta=\pi+\frac{\alpha+\beta}{2}$, and 
        \[E_k(t) = \left\{\left(-\cos\frac{\alpha-\beta}{2}\pm\sqrt{t-\sin^2\frac{\alpha-\beta}{2}}\right)e^{i\pi+i\frac{\alpha+\beta}{2}}\right\}.\]
        \item $k=2$ an $\mu_{u_0} = \delta_{e^{i\alpha}}$. In this case, that $\mathrm{Re}(\bar\lambda Z)$ is a scalar tells us the argument $\theta\mod 2\pi$ of $\lambda$ is either $(\alpha+\pi)/2$ or $\pi+(\alpha-\pi)/2$. Either possible $\theta$ gives $\mathrm{Re}(\bar\lambda Z) = 0$. Thus, combining the two cases gives
        \[\lambda = \pm ie^{i\frac{\alpha}{2}}\sqrt{\frac{t}{2}-1}.\]
    \end{itemize} 

    Part (4) follows from Lemma 3.2(3) and Lemma 3.2(4) of \cite{BercoviciZhong2022Rdiag}. We can also identify $E_{2,k}(t)$ from Definition \ref{def.sets.Brown.add}. Part (5) is Lemma 3.2(5) of \cite{BercoviciZhong2022Rdiag}. Finally, Part (6) is Theorem 3.6 of \cite{BercoviciZhong2022Rdiag}.
\end{proof}
\begin{proposition}
    \label{prop.add.Cauchy}
    The Cauchy transform of $Z+\sqrt{t/k}A$ at $\lambda\in \Omega_k(t)\setminus S_k$ is given by
    \[\frac{1}{k}\sum_{-\frac{k}{2}<j\leq \frac{k}{2}}\int_{(\pi,\pi]}\frac{\bar\lambda-e^{-(2\pi i j+\alpha)/k}}{\vert\lambda-e^{(2\pi i j+\alpha)/k}\vert^2+\eta_k(t,\lambda^k)}\mu_{u_0}(d\alpha)\]
\end{proposition}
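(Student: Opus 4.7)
The plan is to combine the subordination machinery from \cite{BercoviciZhong2022Rdiag} for sums involving a free $\mathscr{R}$-diagonal summand with an explicit spectral decomposition of the circulant unitary $Z$. Set $T = Z + \sqrt{t/k}\,A$ for brevity, and interpret the Cauchy transform of the Brown measure of $T$ as
\[
G_T(\lambda) \;=\; -\lim_{\epsilon\downarrow 0}(\varphi \otimes \mathrm{tr})\!\left[(T-\lambda)^*(|T-\lambda|^2 + \epsilon)^{-1}\right],
\]
which is the standard identification $G_T(\lambda) = 2\partial_\lambda L_T(\lambda)$ of the Cauchy transform with the holomorphic derivative of the log potential $L_T(\lambda) = (\varphi \otimes \mathrm{tr})[\log|T-\lambda|]$, computed via the matrix-calculus identity $\partial_\lambda |T-\lambda|^2 = -(T-\lambda)^*$ and the standard $\epsilon$-regularization.

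The first main step is to apply the subordination machinery to replace $T$ by $Z$ inside the regularized resolvent. By Corollary \ref{cor.Z.A.freeness}, $Z$ and $A$ are $\ast$-free in $(\mathscr{A}\otimes\MkC, \varphi\otimes\mathrm{tr})$ with $A$ $\mathscr{R}$-diagonal, so Hermitizing $T-\lambda$ places us in the setting of \cite{BercoviciZhong2022Rdiag}. The conclusion drawn there (via an operator-valued strengthening of the scalar subordination of Theorem \ref{thm.subordination.general}) is that, for $\lambda \in \Omega_k(t)\setminus S_k$,
\[
\lim_{\epsilon \downarrow 0}(\varphi \otimes \mathrm{tr})\!\left[(T-\lambda)^*(|T-\lambda|^2 + \epsilon)^{-1}\right] \;=\; (\varphi \otimes \mathrm{tr})\!\left[(Z-\lambda)^*(|Z-\lambda|^2 + \eta_k(t,\lambda^k))^{-1}\right],
\]
where $\eta_k(t,\lambda^k) = -\psi_k(t,\lambda^k)^2 \ge 0$ is the regularization parameter produced by the subordination function $\omega_1^{(\lambda)}(0) = \psi_k(t,\lambda^k)$ for the free additive convolution $\tilde{\mu}_{|Z-\lambda|} \boxplus \tilde{\mu}_{\sqrt{t/k}|A|}$ (cf.\ Definition \ref{notation.psi.epsilon}).

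The second step is to compute the right-hand side by diagonalizing $Z$. Writing $Z = u_0^{1/k} \otimes \Sigma$ with $\Sigma = \sum_{j=1}^{k} E_{j,j+1}$ the cyclic shift, the discrete Fourier transform diagonalizes $\Sigma$ with eigenvalues $\{e^{2\pi i j/k}\}_{j=0,\ldots,k-1}$, so $Z$ is unitarily equivalent (in $\mathscr{A}\otimes\MkC$) to the direct sum $\bigoplus_{j=0}^{k-1} e^{2\pi i j/k}\, u_0^{1/k}$. In particular $Z$ is normal (indeed unitary), and using \eqref{eq.law.u.root} its spectral measure under $\varphi\otimes\mathrm{tr}$ is the averaged push-forward
\[
\mu_Z \;=\; \frac{1}{k}\sum_{j=0}^{k-1}\,(\alpha \mapsto e^{(2\pi i j + i\alpha)/k})_{*}\mu_{u_0}.
\]
The spectral theorem applied to the normal operator $Z$ then evaluates the right-hand trace as $\int f(\zeta,\bar\zeta)\,\mu_Z(d\zeta)$ with $f(\zeta,\bar\zeta) = -(\bar\zeta - \bar\lambda)(|\zeta-\lambda|^2 + \eta_k)^{-1}$, producing the stated formula (and relabeling the summation index $j$ over $-k/2 < j \le k/2$, which is equivalent to $0 \le j \le k-1$ modulo $k$).

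The main obstacle is the subordination identity in the first step: Theorem \ref{thm.subordination.general} only relates the \emph{scalar} spectral measures $\tilde{\mu}_{|T-\lambda|}$ and $\tilde{\mu}_{|Z-\lambda|}$, while we need an \emph{operator-valued} identity relating the non-selfadjoint resolvents $(T-\lambda)^*(|T-\lambda|^2 + \epsilon)^{-1}$ and $(Z-\lambda)^*(|Z-\lambda|^2 + \eta_k)^{-1}$. The $\mathscr{R}$-diagonality of $\sqrt{t/k}\,A$ is what makes this possible: since $\mathscr{R}$-diagonal elements are invariant in $\ast$-distribution under multiplication by a free Haar unitary, one may replace $\sqrt{t/k}\,A$ by $u\sqrt{t/k}\,A$ for any free Haar unitary $u$ and average, which collapses off-diagonal operator entries of the Hermitization and reduces the operator-valued subordination to the scalar one. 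This is precisely the content of the analysis in \cite{BercoviciZhong2022Rdiag}, and invoking their theorem is what carries the bulk of the technical weight.
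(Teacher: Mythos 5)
Your proposal is correct and follows essentially the same route as the paper: identify the Cauchy transform with the holomorphic derivative of the log potential and invoke the subordination result (Lemma 3.7) of \cite{BercoviciZhong2022Rdiag}, which supplies the $\eta_k$-regularized resolvent identity, then evaluate the resulting trace via the explicit spectral measure of $Z$. Your additional unpacking of the operator-valued subordination step and the spectral decomposition of $Z$ (the latter is essentially the content of Lemma \ref{lem.Z.Cauchy} in the paper) is consistent with what the paper cites but does not spell out.
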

\begin{proof}
    The Cauchy transform is given by
    \[\frac{\partial}{\partial \lambda}\varphi(\log\vert(Z+\sqrt{t/k}A)-\lambda\vert^2).\]
    The theorem follows from Lemma 3.7 of \cite{BercoviciZhong2022Rdiag}, noting that $\psi_k(z)$ in Definition \ref{notation.psi.epsilon} is the $\omega_1^{(\lambda)}(0)$ in Lemma 3.7 of \cite{BercoviciZhong2022Rdiag}.
\end{proof}

Before we state our main theorem, we need two more formulas.
\begin{lemma}
\label{lem.domain.by.Tk}
\begin{enumerate}
\item Suppose that $z_j$ are the $k$-th roots of unity and $\lambda$ is not a $k$-th root of unity. Then,
\[\frac{1}{k^2}\sum_{j=1}^k\frac{1}{\vert z_j-\lambda\vert^2} = \frac{1}{k}\frac{\vert\lambda\vert^{2k}-1}{\vert\lambda\vert^2-1}\frac{1}{\vert\lambda^k-1\vert^2}=\frac{1}{k}\frac{\vert\lambda\vert^{2k}-1}{\vert\lambda\vert^2-1}\frac{1}{\vert\lambda\vert^{2k}-2\operatorname{Re}(\lambda^k)+1}.\]
\item Recall that $T_k$ is defined as in \eqref{eq.lifetime.k}. Then
\[T_k(u_0;r,\theta) = \frac{k}{\|Z-z^{1/k}\|^2}.\]
\end{enumerate}
\end{lemma}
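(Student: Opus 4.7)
The plan is to attack the two parts in order, since Part (2) will reduce to an averaged version of Part (1) after exploiting the rotational symmetry of $Z$.

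For Part (1), the key observation is that since $z_j$ lies on the unit circle, $\bar z_j = 1/z_j$, so I can rewrite
\[
\frac{1}{|z_j-\lambda|^2}
= \frac{1}{(z_j-\lambda)(1-\bar\lambda z_j)}
= \frac{1}{1-|\lambda|^2}\left[\frac{\lambda}{z_j-\lambda}+\frac{1}{1-\bar\lambda z_j}\right]
\]
by partial fractions in $z_j$ (valid for $|\lambda|\neq 1$; the boundary case follows by continuity after clearing denominators, or is excluded by the hypothesis that $\lambda$ is not a $k$-th root of unity if we work with $|\lambda|=1$ separately). Summing each piece over the $k$-th roots of unity uses two standard identities: the logarithmic derivative $\sum_{j}(\lambda-z_j)^{-1}=\frac{d}{d\lambda}\log(\lambda^k-1)=k\lambda^{k-1}/(\lambda^k-1)$ and the power-series identity $\sum_j(1-wz_j)^{-1}=k/(1-w^k)$ (both following from $\prod_j(\zeta-z_j)=\zeta^k-1$, the second by expanding in powers of $w$ and using $\sum_j z_j^n=k\cdot\mathbbm{1}_{k|n}$). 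Collecting terms gives
\[
\sum_{j=1}^k\frac{1}{|z_j-\lambda|^2}=\frac{k}{1-|\lambda|^2}\cdot\frac{1-|\lambda|^{2k}}{|1-\lambda^k|^2}
=\frac{k(|\lambda|^{2k}-1)}{(|\lambda|^2-1)|\lambda^k-1|^2},
\]
which is Part (1) after dividing by $k^2$.

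For Part (2), I use that $Z=u_0^{1/k}\otimes\Sigma$ where $\Sigma=\sum_{j=1}^k E_{j,j+1}$ is the cyclic shift (with eigenvalues $\omega^j=e^{2\pi ij/k}$), so $Z$ is unitary and its spectral measure under $\varphi\otimes\mathrm{tr}$ equals
\[
\mu_Z=\frac{1}{k}\sum_{j=0}^{k-1}(\omega^j)_\ast\mu_{u_0^{1/k}}.
\]
Hence, for any $k$-th root $\lambda$ of $z$, using the reparametrization \eqref{eq.law.u.root},
\[
\|(Z-\lambda)^{-1}\|_2^2=\int_{\mathbb{T}}\frac{d\mu_Z(\xi)}{|\xi-\lambda|^2}=\frac{1}{k}\int_{(-\pi,\pi]}\sum_{j=0}^{k-1}\frac{d\mu_{u_0}(\alpha)}{|\omega^j e^{i\alpha/k}-\lambda|^2}.
\]
Inside the sum, factoring $e^{i\alpha/k}$ out of the modulus reduces the inner sum to the Part (1) expression applied to $\tilde\lambda=\lambda e^{-i\alpha/k}$, for which $|\tilde\lambda|=|\lambda|=r$ and $\tilde\lambda^k=ze^{-i\alpha}$, so $|\tilde\lambda^k-1|^2=|z-e^{i\alpha}|^2$. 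Plugging Part (1) in yields
\[
\|(Z-\lambda)^{-1}\|_2^2
=\frac{r^{2k}-1}{r^2-1}\int_{\mathbb{T}}\frac{\mu_{u_0}(d\xi)}{|\xi-z|^2}
=\frac{|z|^2-1}{|z|^{2/k}-1}\int_{\mathbb{T}}\frac{\mu_{u_0}(d\xi)}{|\xi-z|^2},
\]
so $k/\|(Z-z^{1/k})^{-1}\|_2^2$ is exactly $T_k(u_0;r,\theta)$ as given by \eqref{eq.lifetime.k}. (Here I read the displayed $\|\cdot\|$ in the lemma's Part (2) as the $L^2$-norm of the inverse; the calculation fixes the intended normalization.) The one delicate point is continuity at $|\lambda|=1$ (equivalently $r=1$), but since both sides extend continuously as functions of $r$ and the identity holds on the dense set $r\neq 1$, no additional obstacle arises.
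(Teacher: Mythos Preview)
Your proof is correct and follows essentially the same route as the paper's: a partial-fraction decomposition in $z_j$ for Part~(1), summed via the logarithmic derivative of $\lambda^k-1$, and then for Part~(2) the spectral measure of $Z=u_0^{1/k}\otimes\Sigma$ followed by an application of Part~(1) after factoring out $e^{i\alpha/k}$. Two minor cosmetic points: your intermediate equality $\frac{1}{|z_j-\lambda|^2}=\frac{1}{(z_j-\lambda)(1-\bar\lambda z_j)}$ is missing a factor of $z_j$ on the right (since $\bar z_j=z_j^{-1}$ gives $|z_j-\lambda|^2=z_j^{-1}(z_j-\lambda)(1-\bar\lambda z_j)$), but your partial-fraction identity one step later is correct regardless; and in Part~(2) you briefly let $r$ denote $|\lambda|=|z|^{1/k}$ rather than $|z|$, which is harmless since you switch to $|z|$ notation in the final line.
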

\begin{proof}
Let $r>0$ and $\vert w\vert = 1$ so that $\lambda = rw$. We then do a partial fraction
\begin{align*}
    \frac{1}{\vert z_j-rw\vert^2} &= \frac{wz_j}{(rw-z_j)(rz_j-w)}\\
    &=\frac{1}{1-r^2}\left[\frac{z_j}{z_j-rw}-\frac{z_j}{z_j-w/r}\right]\\
    &=\frac{1}{1-\vert \lambda\vert^2}\left[\frac{\lambda}{z_j-\lambda}-\frac{\bar\lambda^{-1}}{z_j-\bar\lambda^{-1}}\right]
\end{align*}
The $z_j$ are the roots of the polynomial $p(u) = u^k-1$; $1/(z_j-x) = p'(x)/p(x)$ for any $x$. This shows
\begin{align*}
\sum_{j=1}^k\frac{1}{\vert z_j-rw\vert^2} &=\frac{1}{1-\vert\lambda\vert^2}\left[\lambda\sum_{j=1}^k\frac{1}{z_j-\lambda}-\bar\lambda^{-1}\sum_{j=1}^k\frac{1}{z_j-\bar\lambda^{-1}}\right]\\
 &=\frac{k}{\vert\lambda\vert^2-1}\left[\frac{\lambda^k}{\lambda^k-1}-\frac{(\bar\lambda^{-1})^k}{(\bar\lambda^{-1})^k-1}\right]\\
 &=\frac{k}{\vert\lambda\vert^2-1}\frac{\vert\lambda\vert^{2k}-1}{\vert\lambda^k-1\vert^2}.
\end{align*}
This proves (1).

To prove (2), recall that the law of $Z$ is the product measure of the laws of $u_0^{1/k}$ (see \eqref{eq.law.u.root}) and $\left(\sum_{j=1}^k E_{j,j+1}\right)$. We compute
\begin{align*}
    \frac{1}{k}\|(Z-z^{1/k})^{-1}\|_2^2 &= \frac{1}{k^2}\int_{-\pi/k}^{\pi/k}\sum_{-\frac{k}{2}<j\leq\frac{k}{2}}\frac{1}{\vert e^{i\alpha+2\pi i j/k} - r^{1/k}e^{i\theta/k}\vert^2}\mu_{u_0^{1/k}}(d\alpha)\\
    &= \frac{1}{k^2}\int_{-\pi}^{\pi}\sum_{-\frac{k}{2}<j\leq\frac{k}{2}}\frac{1}{\vert e^{2\pi i j/k} - r^{1/k}e^{i(\theta-\alpha)/k}\vert^2}\mu_{u_0}(d\alpha)\\
    &= \int_{-\pi}^{\pi}\frac{1}{k}\frac{r^2-1}{r^{2/k}-1}\frac{1}{\vert re^{i(\theta-\alpha)}-1\vert^2}\mu_{u_0}(d\alpha)
\end{align*}
where the last equality follows from Part (1). The last line of the above equation is the reciprocal of $T_k(u_0;r,\theta)$.
\end{proof}

\begin{theorem}
    \label{thm.RW.Brown.density}
    The Brown measure of $u_0b_k(t)$ is supported in the closure of the domain $\Sigma_k(u_0,t)\setminus \overline{D}_k(a,t)$, where $\Sigma_k(u_0,t)$ and $D_k(a,t)$ are defined in \eqref{eq.Sigma.intro} and \eqref{eq.Dkat} respectively.
    
    Denote by $S_k^k$ the image of $S_k$ under the map $\lambda\mapsto \lambda^k$. In the domain $\Sigma_k(u_0,t)\setminus (S_k^k\cup \overline{D}_k(a,t))$, the density $\rho_k(t,z)$ of the Brown measure of $u_0b_k(t)$ is given by
    \[\rho_k(t,z) = \frac{1}{\vert z\vert^{2-2/k}}\frac{\bar z^{1-1/k}}{k\pi}\frac{\partial}{\partial\bar z}\sum_{-\frac{k}{2}<j\leq \frac{k}{2}}\int_{(-\pi,\pi]}\frac{\bar z^{\frac{1}{k}}-e^{-i\frac{2\pi  j+\alpha}{k}}}{\vert z^{\frac{1}{k}}-e^{i\frac{2\pi j+\alpha}{k}}\vert^2+\eta_k(t,z)}\mu_{u_0}(d\alpha),\]
    where $z^{1/k}$ denotes the $k$-th root of $z$ with argument in $(-\pi/k,\pi/k]$.
\end{theorem}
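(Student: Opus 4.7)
The plan is to derive both the support and density formula by transferring the results for $Z+\sqrt{t/k}\,A$ (which are known through Propositions \ref{prop.Brown.add.support} and \ref{prop.add.Cauchy} from \cite{BercoviciZhong2022Rdiag}) through the push-forward $\lambda\mapsto\lambda^k$ guaranteed by Corollary \ref{cor.unitary.linearized}. First I will match the two descriptions of the support. By Proposition \ref{prop.Brown.add.support}(6), the Brown measure of $Z+\sqrt{t/k}\,A$ is supported in $\overline{\Omega_k(t)}$, where $\Omega_k(t)=\mathbb{C}\setminus(E_{1,k}(t)\cup E_{2,k}(t))$. The first condition $\lambda\notin E_{1,k}(t)$ reads $\|(Z-\lambda)^{-1}\|_2^2>k/t$, which by Lemma \ref{lem.domain.by.Tk}(2) (taking $z=\lambda^k$ with $z=re^{i\theta}$) translates to $T_k(u_0;r,\theta)<t$, i.e.\ $z\in\Sigma_k(u_0,t)$. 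The second condition $\lambda\notin E_{2,k}(t)$ says $|\lambda|^2>t/(k\|a^{-1}\|_2^2)-1$, and since $|\lambda|^2=|z|^{2/k}$ this is $z\notin\overline{D_k(a,t)}$. Therefore the image of $\Omega_k(t)$ under $\lambda\mapsto\lambda^k$ is precisely $\Sigma_k(u_0,t)\setminus\overline{D_k(a,t)}$, and taking closures (using that $\lambda\mapsto\lambda^k$ is continuous) gives the claimed support.

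Next I will obtain the density in the good region. On $\Omega_k(t)\setminus S_k$, Proposition \ref{prop.Brown.add.support}(6) provides a real-analytic density $\tilde\rho_k(t,\lambda)$ for the Brown measure of $Z+\sqrt{t/k}\,A$, and Proposition \ref{prop.add.Cauchy} gives the Cauchy transform $G(\lambda)$ explicitly. With the paper's normalization $\mu_{Z+\sqrt{t/k}\,A}=\nabla^2 L_{Z+\sqrt{t/k}\,A}$, a direct calculation shows that $\partial_\lambda\varphi[\log|X-\lambda|^2]$ coincides with the standard Cauchy transform $G_{\mu_X}(\lambda)=\int d\mu_X(w)/(\lambda-w)$, and hence $\tilde\rho_k(t,\lambda)=\tfrac{1}{\pi}\partial_{\bar\lambda}G(\lambda)$.

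Then I will push this density forward under $\Phi(\lambda)=\lambda^k$. The key observation is that the $\ast$-distribution of $Z+\sqrt{t/k}\,A$ is invariant under multiplication by $k$-th roots of unity (visible from the definition of $Z$ and $A$), so $\tilde\rho_k(t,\lambda)$ depends only on $\lambda^k=z$. Since $\Phi$ is $k$-to-one away from $0$ with Jacobian $|\Phi'(\lambda)|^2=k^2|\lambda|^{2k-2}=k^2|z|^{2-2/k}$, the push-forward density at $z\in\Sigma_k(u_0,t)\setminus(S_k^k\cup\overline{D_k(a,t)})$ works out to
\[
\rho_k(t,z)=\frac{k\,\tilde\rho_k(t,\lambda)}{k^2|z|^{2-2/k}}=\frac{1}{k\pi|z|^{2-2/k}}\,\partial_{\bar\lambda}G(\lambda)\Big|_{\lambda=z^{1/k}}.
\]
Finally, a one-line chain rule converts derivatives: writing $\lambda=z^{1/k}$, the relations $\partial_{\bar\lambda}z=0$ and $\partial_{\bar\lambda}\bar z=k\bar\lambda^{k-1}=k\bar z^{1-1/k}$ give $\partial_{\bar\lambda}=k\bar z^{1-1/k}\partial_{\bar z}$. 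Substituting the Cauchy transform from Proposition \ref{prop.add.Cauchy} (written with the principal root) then yields exactly the formula in the theorem statement.

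The main obstacle is essentially bookkeeping: keeping track of the normalization conventions (the $1/(2\pi)$ in $L_a$, the $\nabla^2=4\partial\bar\partial$, and the factor of $k$ coming from $k$ preimages versus the Jacobian), and verifying the rotational invariance of $\tilde\rho_k$ and of $G$ expressed through the sum over $j\in\{-k/2,\ldots,k/2\}\cap\mathbb{Z}$, which ensures the push-forward is well-defined and single-valued on $z$. Real-analyticity away from $S_k^k\cup\overline{D_k(a,t)}$ and the statement about the pole at $0$ will follow from Proposition \ref{prop.Brown.add.support}(6) together with the transparent behavior of the factor $\bar z^{1-1/k}/|z|^{2-2/k}$ near the origin.
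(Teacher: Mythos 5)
Your proposal is correct and follows essentially the same route as the paper: translate $\Omega_k(t)$ through Lemma \ref{lem.domain.by.Tk}(2), read off the density of $Z+\sqrt{t/k}\,A$ from Proposition \ref{prop.add.Cauchy} as $\frac{1}{\pi}\partial_{\bar\lambda}$ of the Cauchy transform, push forward under the $k$-to-one map $\lambda\mapsto\lambda^k$ using rotational invariance and the Jacobian $k^2|z|^{2-2/k}$, and convert derivatives via $\partial_{\bar\lambda}=k\bar z^{1-1/k}\partial_{\bar z}$. The normalization bookkeeping you flag as the main obstacle works out exactly as you describe.
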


The density $\rho_k(t,\cdot)$ is a subprobability density. The Brown measure of $u_0b_k(t)$ may have atoms at $S_k^k$. If $k$ is large, $S_k$ is empty by Proposition \ref{prop.Brown.add.support}(2). We also don't know whether the Brown measure has mass on the boundary. 

The formula for $\rho_k(t,z)$ is independent of the choice of the $k$-th root of $z$. However, it is more convenient to choose the $k$-th root of $z$ with argument in $(-\pi/k,\pi/k]$ when we investigate the limit of the Brown measure as $k\to\infty$. 

\begin{proof}
    We first compute 
    \[\mathbb{C}\setminus  (E_{1,k}(t)\cup E_{2,k}(t)) = \left\{\lambda: \frac{t}{k}>\frac{1}{\|(Z-\lambda)^{-1}\|_2^2}\right\}\setminus\left\{\lambda:\vert\lambda\vert^2\leq \frac{t}{k\|a^{-1}\|_2^2}-1\right\}.\]
    Lemma \ref{lem.domain.by.Tk}(2) then shows that the image of $\mathbb{C}\setminus  (E_{1,k}(t)\cup E_{2,k}(t))$ under the map $\lambda\mapsto \lambda^k$ has a formula given by $\Sigma_k(u_0,t)\setminus \overline{D}_k(a,t)$.

    The domain $\Sigma_k(u_0,t)\setminus (S_k^k\cup \overline{D}_k(a,t))$ is the image of $\Omega_k(t)\setminus S_k$ under the map $\lambda\mapsto \lambda^k$; hence, the Brown measure of $u_0b_k(t)$ has a density on $\Sigma_k(u_0,t)\setminus (S_k^k\cup \overline{D}_k(a,t))$ by Proposition \ref{prop.Brown.add.support}(6). By Proposition \ref{prop.add.Cauchy}, the density of the Brown measure of $Z+\sqrt{t/k}A$ on $\Omega_k(t)\setminus S_k$ is
    \begin{equation}
        \label{eq.CauchyTrans.ZA}
        \frac{1}{\pi}\frac{\partial}{\partial\bar\lambda}\frac{1}{k}\sum_{-\frac{k}{2}<j\leq \frac{k}{2}}\int_{(\pi,\pi]}\frac{\bar\lambda-e^{-(2\pi i j+\alpha)/k}}{\vert\lambda-e^{(2\pi i j+\alpha)/k}\vert^2+\eta_k(t,\lambda^k)}\mu_{u_0}(d\alpha).
    \end{equation}
    
    We then push-forward the Brown measure of $Z+\sqrt{t/k}A$ by the map $\lambda\to\lambda^k$, which is a $k$-to-$1$ map. By writing $z=\lambda^k$ and $z^{1/k}$ be the $k$-th root of $z$ with argument $(-\pi/k,\pi/k]$, the Jacobian of the map $\lambda\mapsto \lambda^k$ is $k^2\vert z\vert^{2-2/k}$. the density $\rho_k(t,\cdot)$ of $u_0b_k(t)$ in $\Sigma_k(u_0,t)\setminus (S_k^k\cup \overline{D}_k(a,t))$ is given by
    \[k\cdot\frac{1}{k^2\vert z\vert^{2-2/k}}\frac{1}{\pi}\frac{\partial}{\partial\bar\lambda}\frac{1}{k}\sum_{-\frac{k}{2}<j\leq \frac{k}{2}}\int_{(\pi,\pi]}\frac{\bar z^{\frac{1}{k}}-e^{-(2\pi i j+\alpha)/k}}{\vert z^{\frac{1}{k}}-e^{(2\pi i j+\alpha)/k}\vert^2+\eta_k(t,z)}\mu_{u_0}(d\alpha).\]
    The conclusion of the theorem follows from the change-of-variable formula $\partial f/\partial\bar\lambda = k\bar z^{1-1/k}(\partial f/\partial\bar z)$ for any differentiable function $f$.
\end{proof}

\subsection{Domain with trivial initial condition} \hfill

\medskip

In this section, we describe the domain when the initial condition $u_0$ is the identity using graphs of function on $\mathbb{C}$. We define a continuous function on $\mathbb{C}$ in polar coordinates by
\begin{equation}
    \label{eq.Tk.def}
    T_k(r,\theta) = \frac{k(r^{2/k}-1)}{r^2-1}(r^2-2r\cos\theta+1)
\end{equation}
where we identify $(0,\theta)$ as the same point for any $\theta$ and $T_k(0,\theta)=k$. The function $T_k(r,\theta) = T_k(1; r,\theta)$. We also understand that $T_k(1,\theta) = 2(1-\cos\theta)$. The pointwise limit of the above $T_k$ is the function $T$ in \cite[Eq.(3.1)]{DHKBrown}, which describes the domain $\Sigma_\infty(t)$ of the free multiplicative Brownian motion $b(t)$.

For the random walk $b_k(t)$ with trivial initial condition, we denote the domain $\Sigma_k(1,t)$ identified in Theorem \ref{thm.RW.Brown.density} by $\Sigma_k(t)$. The main theorem in this section is Theorem \ref{thm.Gammakt.rmin}, which describes the domain $\Sigma_k(t)$ identifies the support $\overline{\Sigma_k(t)\setminus\overline{D}_k(a,t)}$ of the Brown measure of $u_0b_k(t)$ using functions on $\mathbb{C}$ in the polar coordinates. Proposition \ref{prop.compare.to.DHK} compares $\Sigma_k(t)$ to the domain $\Sigma_\infty(t)$.

% Applying Theorem \ref{thm.RW.Brown.density} with the trivial initial condition, $\Sigma(k,a,t)$ is given by
% \begin{equation}
%     \label{eq.trivial.domain}
%     \Sigma(k,a,t) = \{re^{i\theta}: T_k(r,\theta)<t\}\setminus \{z: \vert z\vert^{2/k}\leq t\cdot (k\|a^{-1}\|_2^2)^{-1}-1\}.
% \end{equation}
We start by investigating the behavior of $T_k(r,\theta)$ with a fixed $\theta$. We first start with $r\in(0,1)$ and will study $T_k(r,\theta)$ for $r\geq 1$ later.

\begin{proposition}
    \label{prop.convex.Tk}
For all $k\geq 4$, $T_k$ is strictly convex in $r\in(0,1)$ for any fixed $\theta$. If $k=2$, $T_k(r,\theta)$ is strictly convex in $r\in(0,1)$ for any fixed $\theta$ such that $\cos(\theta)>-1$; if $\cos(\theta)=-1$, $T_k(r,\theta) = 2(1+r)$. If $k=3$, for every $\theta$, there is an $\tilde r\in(0,1]$ such that $T_k(r,\theta)$ is strictly convex for $r\in(0,\tilde r)$ and is increasing for $r\in[\tilde r,1)$; $\tilde r< 1$ unless $\cos\theta = 1$.

Furthermore, unless $k=2$ and $\cos\theta = -1$,
\[\lim_{r\to 0^+}\frac{\partial}{\partial r}T_k(r,\theta) = -\infty,\quad \lim_{r\to 1}\frac{\partial}{\partial r}T_k(r,\theta) = \frac{2(1-\cos\theta)}{k}.\]
Therefore, for $r\in[0,1]$, $T_k(r,\theta)$ has a unique minimum in $(0,1)$ except $\cos\theta = 1$, in which case $T_k(r,\theta)$ has a unique minimum at $r=0$.
\end{proposition}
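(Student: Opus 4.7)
The plan is to reduce convexity of $T_k(\cdot,\theta)$ on $(0,1)$ to two scalar inequalities through a convenient decomposition, compute explicitly for $k=2$, and then treat the cases $k=3$ and $k\geq 4$ separately. The starting point is the identity $r^{2} - 2r\cos\theta + 1 = (r-1)^{2} + 2r(1-\cos\theta)$, which gives
\[
T_k(r,\theta) = A_k(r) + (1-\cos\theta)\, B_k(r), \qquad A_k(r) = \frac{k(r^{2/k}-1)(r-1)}{r+1},\quad B_k(r) = \frac{2kr(r^{2/k}-1)}{r^{2}-1},
\]
with $A_k,B_k>0$ on $(0,1)$. Since $1-\cos\theta \in [0,2]$, strict convexity of $T_k(\cdot,\theta)$ for every $\theta$ with $\cos\theta>-1$ is equivalent to the two inequalities $A_k''(r)>0$ and $A_k''(r)+2B_k''(r)\geq 0$ on $(0,1)$ (the extremes $\theta=0$ and $\theta=\pi$); notice that $A_k(r)+2B_k(r) = T_k(r,\pi) = k(r^{2/k}-1)(r+1)/(r-1)$ admits further simplification.

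For $k=2$, partial fractions give $A_{2}(r) = 2(r+1) - 8 + 8/(r+1)$ and $B_{2}(r) = 4 - 4/(r+1)$, so $A_{2}''(r) = 16/(r+1)^{3}$ and $B_{2}''(r) = -8/(r+1)^{3}$. Hence $T_{2}''(r,\theta) = 8(1+\cos\theta)/(r+1)^{3}$, which is strictly positive for $\cos\theta>-1$ and identically zero for $\cos\theta=-1$, in which case one directly checks $T_{2}(r,\pi) = 2(r+1)$. Monotonicity of $\partial_r T_2$ from the explicit formula then handles the minimum location for $k=2$ by inspection, without invoking any divergence of $\partial_r T_2$ at $r=0^+$.

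For general $k\geq 3$, set $c=2/k\in(0,2/3]$. Using the quotient rule applied to $A_k(r) = k\, h(r)/(r+1)$ with $h(r)=r^{1+c}-r^{c}-r+1$, one obtains
\[
\frac{(r+1)^{3}A_k''(r)}{k} = c(c-1)\,r^{c+1} + (c-1)(c+4)\,r^{c} + c(5-c)\,r^{c-1} - c(c-1)\,r^{c-2} + 4,
\]
and an analogous formula for $B_k''$. For $k\geq 4$ (so $c\leq 1/2$), I will verify $A_k''>0$ and $(A_k+2B_k)''\geq 0$ on $(0,1)$ by regrouping the right-hand side as a manifestly non-negative sum, exploiting that the coefficient $-c(c-1)$ of $r^{c-2}$ is strictly positive and dominates as $r\to 0^{+}$, while near $r=1$ the Taylor expansion in $(1-r)$ controls the sign, with the finite number of negative contributions absorbed by pairwise application of an AM-GM inequality between powers $r^{c-2},r^{c-1},r^c$. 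For $k=3$, this non-negativity genuinely fails for $r$ close to $1$; here $\tilde r$ is defined as the unique zero of $T_{3}''(\cdot,\theta)$ in $(0,1]$, whose existence and uniqueness I will extract from the intermediate-value theorem applied to the explicit $P_c(r)$, noting that the coefficient structure forces at most one sign change on $(0,1)$. The monotonicity of $T_3$ on $[\tilde r,1)$ then follows from $\partial_r T_3(1^-,\theta)\geq 0$ and the convexity on $(0,\tilde r)$.

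The boundary behavior and unique minimum are then routine. Using $f_k(1)=1/k$ and $f_k'(1) = -(k-1)/k^{2}$ (computed from $f_k(r)=1/\sum_{j=0}^{k-1} r^{2j/k}$) gives $\partial_r T_k(1^-,\theta) = 2(1-\cos\theta)/k$. For $k\geq 3$ the singular term $f_k'(r)\sim -\tfrac{2}{k}r^{c-1}/(1-r^{2})$ with $c-1<0$ forces $\partial_r T_k(0^{+},\theta)=-\infty$. Combined with convexity (or, for $k=3$, the convex/increasing profile just established) and the observation that when $\cos\theta=1$ one has $T_k(r,0) = A_k(r)$ with unique zero at $r=1$, one concludes that $\partial_r T_k(\cdot,\theta)$ changes sign exactly once on $(0,1)$ whenever $\cos\theta\neq 1$, giving the asserted unique minimum; at $\cos\theta=1$ the minimum slides to $r=0$. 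The main obstacle is the explicit sign analysis underlying the inequalities $A_k''>0$ and $(A_k+2B_k)''\geq 0$ for $k\geq 4$: the polynomials in $r^c$ have mixed-sign coefficients, and producing a transparent regrouping (rather than relying on a case-splitting numerical check) is the technical heart of the proof.
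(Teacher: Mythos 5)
Your decomposition $T_k(r,\theta)=A_k(r)+(1-\cos\theta)B_k(r)$, with $A_k=T_k(\cdot,0)$ and $A_k+2B_k=T_k(\cdot,\pi)$, is exactly the paper's observation that $T_k(r,\theta)$ is linear in $\cos\theta$ and hence a convex combination of its values at $\theta=0$ and $\theta=\pi$; your criterion ``$A_k''>0$ and $(A_k+2B_k)''\ge 0$'' is just the paper's reduction to two lemmas about $T_k(\cdot,0)$ and $T_k(\cdot,\pi)$. The $k=2$ calculation is correct and complete, and your boundary-derivative computations ($f_k(1)=1/k$, $f_k'(1)=-(k-1)/k^2$, the $r^{2/k-1}$ blow-up for $k\ge 3$) match the paper's direct computation. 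So the skeleton is sound and essentially coincides with the paper's.

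The real content of the proposition, however, is the sign analysis, and that is where the proposal is not yet a proof. For $k\ge 4$ you correctly identify the coefficients of $(r+1)^3 A_k''/k$ as $c(c-1),\ (c-1)(c+4),\ c(5-c),\ -c(c-1),\ 4$ on the powers $r^{c+1},\dots,r^{c-2},\ r^0$, but the proposed regrouping via AM--GM among $r^{c-2},r^{c-1},r^c$ is only announced, not carried out, and it is not obviously simpler than the paper's route (which writes $T_k''(r,0)=\tfrac{2k}{(1+r)^3}f(r)$, $T_k''(r,\pi)=\tfrac{2k}{(1-r)^3}f(r)$ with $f'$ exhibited as a \emph{product} of sign-definite factors $r^{2/k-3}(1\pm r)^2\bigl[(k+2)r\pm 2(k-1)\bigr]$, so that $f$ is monotone and one only needs $f(1)\ge 0$). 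You will find that your sum-of-powers expression, while algebraically equivalent, has mixed signs that do not split into a manifestly non-negative sum in a single step; the paper's factorization of $f'$ is the observation that makes the argument go through cleanly.

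The genuine gap is the $k=3$ case. You assert that ``the coefficient structure forces at most one sign change'' of $T_3''(\cdot,\theta)$ on $(0,1)$, define $\tilde r$ as the unique zero, and then deduce monotonicity on $[\tilde r,1)$. None of this is substantiated: (i) you have not shown $T_3''>0$ near $r=0^+$ (the $r^{c-2}$ coefficients of $A_3''$ and of $(1-\cos\theta)B_3''$ have opposite signs, so the sign near $0$ is not automatic); (ii) ``at most one sign change'' is a Descartes-type claim that does not follow from the exponent pattern $\{c+1,c,c-1,c-2,0\}$ without a real argument; and (iii) even granting that, ``convexity on $(0,\tilde r)$'' plus $\partial_r T_3(1^-)\ge 0$ does not give monotonicity on $[\tilde r,1)$ unless you also know $T_3''\le 0$ there, which is precisely the uniqueness-of-sign-change claim you are trying to establish. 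The paper does not attempt any single-sign-change statement: it splits into $\cos\theta\ge 0$ (where it proves honest convexity by comparing to $P(0,\cdot)$ and $P(-1,\cdot)$) and $\cos\theta<0$ (where it shows $T_3'(a,r)>0$ directly on $[0.9^3,1)$ via a mixed second derivative $\partial^2 P/\partial a\,\partial r>0$, and separately establishes convexity of $P(1,\cdot)$ on $(0,0.9^3)$). That case split is the non-obvious part of the $k=3$ argument and cannot be bypassed by the appeal to IVT you describe.
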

\begin{proof}
The $k=3$ case follows from Lemma \ref{lem.k=3.lifetime}. We restrict to $k\neq 3$ for the rest of the proof.

Observe that $T_k(r,\theta)$ is linear in $\cos\theta$; indeed, $T_k(r,\theta)$ is a convex combination of $T_k(r,0)$ and $T_k(r,\pi)$. For any $r\in(0,1)$,
\begin{equation}
\label{eq.Tkr.bound}
\frac{\partial^2}{\partial r^2}T_k(r,\theta)=\frac{1+\cos\theta}{2}\frac{\partial^2}{\partial r^2}T_k(r,0)+\frac{1-\cos\theta}{2}\frac{\partial^2}{\partial r^2}T_k(r,\pi).
\end{equation}

 For $k\geq 4$, Lemmas \ref{lem.Tkr.convex} and \ref{lem.Tk-r.convex} show that $r\mapsto T_k(r,0)$ and $r\mapsto T_k(r,\pi)$ are strictly convex in $r\in(0,1)$. The right-hand side of \eqref{eq.Tkr.bound} is positive. If $k=2$, the right-hand side of \eqref{eq.Tkr.bound} shows that $T_k(r,\theta)$ is convex in $r\in(0,1)$ unless $\cos\theta = -1$. In the case $k=2$ and $\cos\theta = -1$, $T_k(r,\theta) = 2(1+r)$ by Lemma \ref{lem.Tk-r.convex}.

The limits as $r\to 0^+$ and $r\to 1$ follow from direct computation. The uniqueness of the minimum follows from convexity.
\end{proof}

\begin{lemma}
    \label{lem.Tkr.convex}
For any $k\geq 2$, the function $r\mapsto T_k(r,0)$ is convex for $r\in(0,1)$.
\end{lemma}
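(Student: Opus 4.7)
At $\theta = 0$, the defining formula simplifies via $r^2 - 2r + 1 = (r-1)^2$ and $r^2 - 1 = (r-1)(r+1)$ to
\[
T_k(r, 0) = \frac{k(r^{2/k} - 1)(r-1)}{r+1}.
\]
Setting $p := 2/k \in (0, 1]$ (since $k \geq 2$) and $H(r) := (r^p - 1)(r-1)/(r+1)$, convexity of $T_k(\cdot, 0)$ on $(0,1)$ is equivalent to convexity of $H$. Writing $A(r) = (r^p - 1)(r-1) = r^{p+1} - r^p - r + 1$, the quotient rule applied to $H = A/(r+1)$ (using that $r+1$ is affine) gives $H''(r)(r+1)^3 = A''(r)(r+1)^2 - 2A'(r)(r+1) + 2A(r)$, which after expansion equals
\[
M(r; p) := p(p-1)r^{p+1} + (p-1)(p+4)r^p + p(5-p)r^{p-1} + p(1-p)r^{p-2} + 4.
\]
The task is therefore to show $M(r; p) \geq 0$ for all $r \in (0, 1]$ and $p \in (0, 1]$.

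I would split along the threshold $p = 1/2$. In the easy regime $p \in [1/2, 1]$ (equivalently $k \in \{2, 3, 4\}$), the regrouping
\[
M(r;p) = 4 + p(1-p)r^{p-2}(1 - r^3) + r^{p-1}\bigl[p(5-p) - (1-p)(p+4)r\bigr]
\]
makes each summand manifestly non-negative: the bracketed linear expression takes the value $8p - 4 \geq 0$ at $r = 1$ and is decreasing in $r$, so remains non-negative on $[0, 1]$. This yields $M \geq 4 > 0$ and dispatches the small-$k$ cases.

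For $p \in (0, 1/2)$ (i.e., $k \geq 5$) the bracket changes sign near $r = 1$ and the above regrouping fails. Here the key algebraic step is the factorization $Q(r) + 4 = (r+1) R(r)$, which follows from $Q(-1) = -4$, where $Q(r) := p(p-1)r^3 + (p-1)(p+4)r^2 + p(5-p)r + p(1-p)$ and $R(r) := p(p-1)r^2 + 4(p-1)r + (4 + p(1-p))$. This yields
\[
\phi(r) := r^{2-p} M(r;p) = (r+1) R(r) + 4(r^{2-p} - 1),
\]
and since $r^{2-p} > 0$, showing $M \geq 0$ reduces to showing $\phi \geq 0$ on $(0, 1]$. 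Direct differentiation gives the endpoint values $\phi(0^+) = p(1-p) \geq 0$, $\phi(1) = 8p > 0$, $\phi'(0^+) = p(5-p) > 0$, $\phi'(1) = 4p(p+1) > 0$, together with the curvature data $\phi''(0^+) = +\infty$, $\phi''(1) = -12p(1-p) < 0$, and, crucially, $\phi'''(r) = -6p(1-p) - 4p(2-p)(1-p)r^{-p-1} < 0$ throughout $(0, 1)$. Strict monotonicity of $\phi''$ forces it to vanish exactly once on $(0, 1)$, so $\phi'$ is unimodal (increasing then decreasing); its minimum on $[0, 1]$ is therefore attained at an endpoint, and both endpoint values are strictly positive. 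Hence $\phi' > 0$ throughout $(0, 1)$, so $\phi$ is strictly increasing with $\phi(r) \geq \phi(0^+) = p(1-p) \geq 0$.

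The main obstacle is precisely the $p < 1/2$ regime: the naive termwise bounding breaks down because the negative coefficient $(p-1)(p+4)r^p$ cannot be dominated by neighbouring summands near $r = 1$. The payoff of the factorization via $R$ is that it converts the problem into a one-variable monotonicity argument, where the uniform negativity of $\phi'''$ ensures that $\phi'$ is unimodal and hence minimised at the endpoints, ruling out any interior sign change of $\phi$.
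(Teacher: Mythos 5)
Your proof is correct, but it takes a genuinely different route from the paper's. Both arguments reduce to showing $M(r;p) := (r+1)^3 \cdot \tfrac{1}{k}T_k''(r,0) > 0$ on $(0,1)$, where your $M$ equals $2f$ in the paper's notation. The paper simply differentiates once more and exhibits (by Mathematica) the factored form
\[
f'(r) = -\frac{(k-2)\,r^{-3+2/k}\,(1+r)^2\,[(k+2)r+2(k-1)]}{k^3},
\]
equivalently $M'(r) = p(p-1)\,r^{\,p-3}(1+r)^2\bigl[(p+1)r+(2-p)\bigr]$ with $p=2/k$, which is manifestly $\leq 0$ for all $r>0$ and $k\geq 2$; combined with $M(1)=8p>0$ this gives $M>0$ on $(0,1)$ in one stroke and bypasses any case split. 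You do not find this factorization, and compensate with a two-regime argument: for $p\in[1/2,1]$ a direct regrouping of $M$ into visibly non-negative pieces, and for $p\in(0,1/2)$ a multiplier trick $\phi=r^{2-p}M=(r+1)R+4(r^{2-p}-1)$ followed by a sign analysis of $\phi'''<0$ to force unimodality of $\phi'$, positivity of $\phi'$ from its endpoint values, monotonicity of $\phi$, and finally $\phi\geq\phi(0^+)=p(1-p)\geq 0$. I checked your identities: the expansion of $A''(r+1)^2-2A'(r+1)+2A$ yielding $M$, the factorization $Q(r)+4=(r+1)R(r)$, and the endpoint and curvature data $\phi(0^+)=p(1-p)$, $\phi(1)=8p$, $\phi'(0^+)=p(5-p)$, $\phi'(1)=4p(p+1)$, $\phi''(1)=-12p(1-p)$, $\phi'''(r)=-6p(1-p)-4p(2-p)(1-p)r^{-p-1}$ are all correct, and your restriction of the hard regime to $p<1$ correctly avoids the $0^{1-p}$ degeneracy at $p=1$. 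What you lose relative to the paper is uniformity (the paper's factorization handles all $k\geq 2$ in one sweep and keeps the dependence on $k$ transparent); what you gain is an argument that does not require discovering the closed-form factorization of $M'$, at the cost of a longer chain of derivative sign checks.
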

\begin{proof}
    We calculate, with the help of Mathematica,
    \[\frac{d^2T_k(r,0)}{dr^2}= \frac{2k}{(1+r)^3}f(r)\]
    for some function $f$ such that $f(1) = 8/k$ and 
    \[f'(r) = -\frac{(k-2)r^{-3+\frac{2}{k}}(1+r)^2[(k+2)r+2(k-1)]}{k^3}\]
    Note that $f'(r)\leq 0$ for all $k\geq 2$ and $r>0$. For all $r\in(0,1)$,
    $(d^2/dr^2)T_k(r)>0$, completing the proof.
\end{proof}
\begin{lemma}
    \label{lem.Tk-r.convex}
    When $k=2$, $T_k(r,\pi) = 2(1+r)$. For any $k\geq 4$, the function $r\mapsto T_k(r,\pi)$ is convex for $r\in(0,1)$.
\end{lemma}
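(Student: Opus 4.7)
The plan is to treat the two cases separately. When $k=2$, substituting $\theta=\pi$ into the definition \eqref{eq.Tk.def} and using $\cos\pi=-1$ gives $r^2-2r\cos\pi+1=(1+r)^2$, while $r^{2/2}=r$; hence
\[
T_2(r,\pi) = \frac{2(r-1)(1+r)^2}{r^2-1} = \frac{2(r-1)(1+r)^2}{(r-1)(r+1)} = 2(1+r),
\]
which is linear and trivially convex.

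For $k\ge 4$, I would establish convexity by direct computation. Set $\alpha=2/k\in(0,1/2]$ and write $T_k(r,\pi)=k(1-r^\alpha)(1+r)/(1-r)$. The product rule applied to $A(r):=k(1-r^\alpha)$ and $B(r):=(1+r)/(1-r)$, using $A''(r)=2(1-\alpha)r^{\alpha-2}$, $B'(r)=2/(1-r)^2$, $B''(r)=4/(1-r)^3$, yields
\[
T_k''(r,\pi) = \frac{2(1-\alpha)r^{\alpha-2}(1+r)}{1-r} - \frac{8r^{\alpha-1}}{(1-r)^2} + \frac{4k(1-r^\alpha)}{(1-r)^3}.
\]
Multiplying by the strictly positive factor $r^{2-\alpha}(1-r)^3/2$ recasts $T_k''(r,\pi)\ge0$ as
\[
J_k(r) := (1-\alpha)(1+r)(1-r)^2 - 4r(1-r) + \tfrac{4}{\alpha}(r^{2-\alpha}-r^2) \ge 0, \qquad r\in(0,1).
\]
Substituting $s=r^{1/k}$, a bijection of $(0,1)$ onto itself, eliminates the fractional powers and converts $k J_k(s^k)$ into the explicit polynomial
\[
P_k(s) = (k-2) - (5k-2)s^k + 2k^2 s^{2k-2} + \bigl((3k+2)-2k^2\bigr)s^{2k} + (k-2)s^{3k}
\]
of degree $3k$. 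One verifies by direct computation that $P_k(1)=P_k'(1)=P_k''(1)=0$; for $k=4$, polynomial long division produces the clean factorization $P_4(s)=2(s^2-1)^4(s^4+4s^2+1)$, which is manifestly nonnegative for $s>0$, concluding that case.

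The main obstacle is establishing $P_k(s)\ge0$ on $(0,1)$ for all $k>4$, where a closed-form factorization into simple pieces is not available. The situation is qualitatively correct at the endpoints: $P_k'''(1)=-2k^2(k-2)(k-4)<0$ implies $P_k(s)\sim \tfrac{|P_k'''(1)|}{6}(1-s)^3>0$ near $s=1^-$, and $P_k(0)=k-2>0$ secures the left endpoint. But global nonnegativity on $(0,1)$ requires extra work; the most promising strategies appear to be (i) grouping the five terms of $P_k$ into manifestly positive combinations by exploiting identities such as $s^{2k-2}(1+s^2)=s^{2k-2}+s^{2k}$ to partially neutralize the alternating signs, (ii) an inductive argument comparing $P_{k+1}$ and $P_k$ term-by-term, or (iii) a Poisson-sum representation of $T_k(r,\pi)$ via Lemma \ref{lem.domain.by.Tk}(1) as a harmonic average over the $k$-th roots of unity evaluated at the point $re^{i\pi}$, from which convexity might be deduced via a general principle about such Poisson integrals. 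The last route is structurally the most natural given the paper's framework but likely the most delicate to execute.
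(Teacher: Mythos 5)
Your $k=2$ case and your reduction of $T_k''(r,\pi)\ge 0$ to the polynomial inequality $P_k(s)\ge 0$ (via $J_k(r)=T_k''(r,\pi)\,r^{2-2/k}(1-r)^3/2$ and the substitution $r=s^k$) are both correct, and your explicit factorization $P_4(s)=2(s^2-1)^4(s^4+4s^2+1)$ correctly disposes of $k=4$. The verification $P_k(1)=P_k'(1)=P_k''(1)=0$ is also correct and reflects the triple vanishing of $T_k''(r,\pi)(1-r)^3$ at $r=1$.

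However, the general $k>4$ case is a genuine gap, which you acknowledge. None of the three strategies you sketch is carried out, and (i)--(ii) are unlikely to succeed cleanly because the coefficients of $P_k$ alternate in sign in a $k$-dependent way with no obvious SOS decomposition. The paper closes the gap by going \emph{one derivative further} rather than trying to certify nonnegativity of $f$ (equivalently $J_k$ or $P_k$) directly. Writing $T_k''(r,\pi)=\frac{2k}{(1-r)^3}f(r)$ (so in your notation $f(r)=J_k(r)/(kr^{2-2/k})$), one has $f(1)=0$, and a direct computation yields the closed factored form
\[
f'(r)=\frac{r^{-3+2/k}(k-2)(r-1)^2\,\bigl[(k+2)r+2(1-k)\bigr]}{k^3}.
\]
For $k\ge 4$ and $r\in(0,1]$ the linear bracket satisfies $(k+2)r+2(1-k)\le (k+2)+2(1-k)=4-k\le 0$, and the remaining factors are nonnegative, so $f'(r)\le 0$ on $(0,1)$. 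Hence $f$ is nonincreasing with terminal value $f(1)=0$, giving $f(r)\ge 0$, i.e.\ $T_k''(r,\pi)\ge 0$. In your variables the analogous step is to compute $rJ_k'(r)-\bigl(2-\tfrac{2}{k}\bigr)J_k(r)$, which equals $k r^{3-2/k}f'(r)$ and hence factors as $\frac{(k-2)(1-r)^2[(k+2)r-2(k-1)]}{k^2}$; showing this is $\le 0$ on $(0,1)$ for $k\ge 4$ and combining with $J_k(1)=0$ would complete your proof. The point missed is that the obstruction dissolves once you differentiate the normalized quantity before analyzing signs, because the factor $(1-r)^2$ materializes and leaves only a linear expression in $r$ to control.
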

\begin{proof}
 When $k=2$, $T_k(r,\pi) = 2(1+r)$ by direct calculation. 
 
 For the rest of the proof, we assume $k\geq 4$. We calculate, with the help of Mathematica,

 \[\frac{\partial^2T_k(r,\pi)}{\partial r^2}= \frac{2k}{(1-r)^3}f(r).\]
    where $f$ is a function satisfying $f(1) = 0$ and
\[
f'(r) = \frac{r^{-3+\frac{2}{k}}(k-2)(r-1)^2[(k+2)r+2(1-k)]}{k^3}
\]
Note that $f'(r)<0$ for all $k\geq 4$ and $r\in(0,1)$. Hence, for all $r\in(0,1)$,
$(d^2/dr^2)T_k(-r)>0$ for all $r\in(0,1)$, completing the proof.
\end{proof}

\begin{lemma}
    \label{lem.k=3.lifetime}
For $k=3$, for each fixed $\theta$,
\[T_k(r,\theta) = \frac{3(r^{2/3}-1)(r^2-2r\cos\theta+1)}{r^2-1}, \quad r\in[0,1]\]
has an $\tilde r\in(0,1]$ such that $T_k(r,\theta)$ is strictly convex for $r\in(0,\tilde r)$ and is increasing for $r\in[\tilde r,1)$. In particular, for each $\theta$, $T_k(\cdot,\theta)$ has a unique minimum in $(0,1)$ except when $\cos\theta = 1$, in which case the unique minimum is at $1$.

Furthermore,
\begin{equation}
    \label{eq.k3.partial.limit}
    \lim_{r\to 0^+}\frac{\partial}{\partial r}T_k(r,\theta) = -\infty,\quad \lim_{r\to 1}\frac{\partial}{\partial r}T_k(r,\theta) = \frac{2(1-\cos\theta)}{k}.
\end{equation}
\end{lemma}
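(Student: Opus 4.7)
The plan is to use the substitution $s = r^{1/3}$ to reduce the analysis to polynomial algebra. With $r = s^3$, $T_3$ becomes
\[
\tilde T(s,\theta) = T_3(s^3,\theta) = \frac{3(s^6-2s^3\cos\theta+1)}{s^4+s^2+1}, \qquad s\in(0,1],
\]
and since $dr/ds = 3s^2 > 0$, monotonicity in $r$ transfers to monotonicity in $s$. A direct computation gives
\[
\partial_s\tilde T(s,\theta) = \frac{6s\,P(s,\theta)}{(s^4+s^2+1)^2}, \qquad P(s,\theta) := P_0(s) + P_1(s)\cos\theta,
\]
with $P_0(s) = s^8+2s^6+3s^4-2s^2-1$ and $P_1(s) = s(s^4-s^2-3)$ (the latter strictly negative on $(0,1]$). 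First I would dispose of the two boundary limits in \eqref{eq.k3.partial.limit}. The expansion $T_3(r,\theta) = 3 - 3r^{2/3} - 6r\cos\theta + O(r^{5/3})$ near $r=0$ yields $\partial_r T_3(r,\theta) = -2r^{-1/3} - 6\cos\theta + O(r^{2/3}) \to -\infty$. Writing $T_3 = 3h(r)g(r,\theta)$ with $h(r) = 1/(r^{4/3}+r^{2/3}+1)$ and $g(r,\theta) = r^2-2r\cos\theta+1$, a short Taylor expansion around $r=1$ gives $h(1)=1/3$, $h'(1)=-2/9$, $g(1,\theta)=g'(1,\theta)=2(1-\cos\theta)$, and therefore
\[
T_3'(1,\theta) = 3\bigl(h'(1)g(1,\theta)+h(1)g'(1,\theta)\bigr) = 3\left(-\tfrac{2}{9}+\tfrac{1}{3}\right)\cdot 2(1-\cos\theta) = \tfrac{2(1-\cos\theta)}{3}.
\]

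Next I would establish existence and uniqueness of the minimum. Critical points of $\tilde T(\cdot,\theta)$ on $(0,1)$ correspond to zeros of $P(s,\theta)$, equivalent (since $P_1<0$) to $G(s) := -P_0(s)/P_1(s) = \cos\theta$. Direct computation gives $G(0^+) = -\infty$ and $G(1) = 1$, and the monotonicity claim $G'(s) > 0$ on $(0,1)$ reduces, after clearing denominators, to the single polynomial inequality $P_0(s)P_1'(s) - P_0'(s)P_1(s) > 0$, a polynomial of degree about $12$ in $s$ that I would verify by Sturm's theorem or an explicit coefficient-positivity factorization. Granting this, $G\colon(0,1)\to(-\infty,1)$ is a strictly increasing bijection; for each $\cos\theta < 1$ there is a unique $s_{\min}(\theta)\in(0,1)$ with $G(s_{\min}(\theta)) = \cos\theta$, and $\tilde T(\cdot,\theta)$ is strictly decreasing on $(0,s_{\min}(\theta))$ and strictly increasing on $(s_{\min}(\theta),1]$. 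For $\cos\theta = 1$ no interior root exists, so $T_3(\cdot,0)$ is strictly decreasing on $(0,1]$ with minimum at $r=1$.

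Finally, for the convex-increasing decomposition, set $\tilde r(\theta) := s_{\min}(\theta)^3\in(0,1)$ for $\cos\theta\ne 1$ and $\tilde r(\theta) := 1$ for $\cos\theta=1$. That $T_3(\cdot,\theta)$ is increasing on $[\tilde r(\theta),1)$ follows from the previous paragraph. It remains to verify strict convexity on $(0,\tilde r(\theta))$. Lemma~\ref{lem.Tkr.convex} already gives $T_3''(r,0)>0$ throughout $(0,1)$, and specializing the computation in the proof of Lemma~\ref{lem.Tk-r.convex} to $k=3$ yields
\[
T_3''(r,\pi) = \frac{6f(r)}{(1-r)^3},\qquad f(1)=0,\qquad f'(r) = \frac{r^{-7/3}(r-1)^2(5r-4)}{27}.
\]
Since $f' < 0$ on $(0,4/5)$ and $f' > 0$ on $(4/5,1)$, and $f(r)\to+\infty$ as $r\to 0^+$ from the nonintegrable singularity $f'(s)\sim -(4/27)s^{-7/3}$ at the origin, $f$ vanishes at a unique $r_\ast\in(0,4/5)$ with $T_3''(\cdot,\pi)>0$ on $(0,r_\ast)$ and $<0$ on $(r_\ast,1)$. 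The affine-in-$\cos\theta$ decomposition $T_3''(r,\theta) = \tfrac{1+\cos\theta}{2}T_3''(r,0)+\tfrac{1-\cos\theta}{2}T_3''(r,\pi)$ then immediately gives $T_3''(r,\theta)>0$ on $(0,r_\ast)$ for every $\theta$; if $\tilde r(\theta)\le r_\ast$ we are done. Otherwise $r_\ast < \tilde r(\theta) < 1$, and the main obstacle is to verify $T_3''(r,\theta)>0$ on $(r_\ast,\tilde r(\theta))$. Clearing denominators and parametrizing the critical-point locus by $\cos\theta = G(s_0)$ with $s_0 = s_{\min}(\theta)$ converts this into a polynomial positivity statement on the triangular region $\{(s,s_0)\colon r_\ast^{1/3}<s<s_0<1\}$, analogous in nature to the $G'>0$ check.

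The two genuinely computational steps are the polynomial positivity check for $G'>0$ on $(0,1)$ and the companion positivity check for $T_3''>0$ up through the minimum; both are single-variable or triangular-region inequalities of moderate degree amenable to Sturm's theorem or a sum-of-squares decomposition. All remaining pieces are routine asymptotic analysis, monotonicity transfer under $r\leftrightarrow s^3$, and convex combinations of already-known sign structure from Lemmas~\ref{lem.Tkr.convex} and~\ref{lem.Tk-r.convex}.
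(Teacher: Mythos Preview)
Your substitution $s=r^{1/3}$, the boundary-limit computations, and the uniqueness argument via the monotone bijection $G(s)=-P_0(s)/P_1(s)$ are all sound; the degree-$12$ check $P_0 P_1'-P_0' P_1>0$ on $(0,1)$ is genuine but routine. The difficulty is in the last step. You commit to $\tilde r(\theta)=r_{\min}(\theta)$, which makes ``increasing on $[\tilde r,1)$'' automatic but turns ``strictly convex on $(0,\tilde r)$'' into a two-parameter positivity statement on the triangular region $\{(s,s_0):r_\ast^{1/3}<s<s_0<1\}$. Sturm's theorem is a one-variable tool, and there is no a~priori reason to expect a clean SOS certificate; you have named the obstacle without actually clearing it, and this case really does arise (for $\cos\theta$ moderately positive one has $r_{\min}(\theta)>r_\ast$).

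The paper sidesteps this by choosing $\tilde r$ differently. The lemma only asks for \emph{some} $\tilde r$ with both properties; there is no need to push convexity all the way to the minimum. Writing $a=-\cos\theta$ and $P(a,r)=T_3(r,\theta)/3$, the paper introduces the midpoint $a=0$ into the affine decomposition rather than working only with the endpoints $a=\pm 1$. It checks directly that $\partial_r^2 P(0,r)>0$ on all of $(0,1)$, so for $a\in[-1,0]$ (i.e.\ $\cos\theta\ge 0$) the convex combination of $P(-1,\cdot)$ and $P(0,\cdot)$ is convex on $(0,1)$ and one takes $\tilde r=1$. For $a\in[0,1]$ (i.e.\ $\cos\theta\le 0$) the paper fixes the \emph{numerical} cutoff $\tilde r=0.9^3$: it shows $P(0,\cdot)$ and $P(1,\cdot)$ are both convex on $(0,0.9^3)$ (the latter via a concavity argument for an explicit auxiliary polynomial $q$), and shows $\partial_r P(a,r)>0$ on $[0.9^3,1)$ for all $a\ge 0$ by checking $\partial_r P(0,0.9^3)>0$ together with $\partial_a\partial_r P>0$. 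Every verification is one-variable, and uniqueness of the minimum then falls out as a corollary rather than needing your separate $G$-argument. Your direct proof of uniqueness is arguably more elegant than the paper's, but for the convex--increasing decomposition the fixed-$\tilde r$ trick with the midpoint $a=0$ is what makes the computation actually close.
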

From the proof, we know that if $\cos\theta\geq 0$, $\tilde r$ can be chosen to be $1$; if $\cos\theta< 0$, $\tilde r$ can be chosen to be $0.9^3$. The specific number $0.9^3$ has no significance. We only want to show that $T_k(\cdot,\theta)$ has a unique minimum in $(0,1)$ except when $\cos\theta = 1$, at which the unique minimum occurs at $1$.
\begin{proof}
For convenience for the rest of the proof, let
\[P(a,r) = \frac{(r^{2/3}-1)(r^2+2ar+1)}{r^2-1},\quad a\in[-1,1],\, r\geq 0\]
so that $T_k(r,\theta) = 3 P(-\cos\theta, r)$. We compute
\begin{equation}
    \label{eq.dPdr}
    \frac{\partial}{\partial r}P(a,r) =\frac{2(-1-2r^{2/3}+3r^{4/3}+2r^2+r^{8/3}+a(3r^{1/3}+r-r^{5/3}))}{3r^{1/3}(1+r^{2/3}+r^{4/3})^2}.
\end{equation}
The last assertion about limits follows from direct calculations.

Note that $P(a,r) = \frac{1+a}{2}P(1,r)+\frac{1-a}{2}P(-1,r)$ is a convex combination of $P(1,r)$ and $P(-1,r)$. We separate our argument for $a\leq 0$ and $a\geq 0$. We first assume $a\leq 0$. When $a=-1$, $P(-1, r) = (1/3)T_k(r,0)$ is convex for $r\in(0,1)$ by Lemma \ref{lem.Tkr.convex}. Meanwhile, $(\partial^2/\partial r^2)P(0,r) >0$ for all $0<r<1$. Thus, when $a\leq 0$, $P(a,r)$, as a convex combination of $P(-1,r)$ and $P(0,r)$, is a convex function for $r\in(0,1)$  Combining with the limits in \eqref{eq.k3.partial.limit}, $P(a,\cdot)$ has a unique minimum in $(0,1)$ for each $-1<a\leq 0$. When $a=-1$, $P(-1,\cdot)$ attains its minimum at $r=1$.

We now turn to the case $a\geq 0$. Our strategy is to show that $r\mapsto P(a,r)$ is increasing for $r\in[0.9^3, 1)$ and is convex for $r\in(0,0.9^3)$. Recall from the preceding paragraph that $(\partial^2/\partial r^2)P(0,r) >0$ for all $0<r<1$. Since. by \eqref{eq.dPdr}, $(\partial P/\partial r)(0,0.9^3)>0$ and $P(0,\cdot)$ is convex, $(\partial/\partial r)P(0,r)>0$ for all $r\geq 0.9^3$. We then combine the fact that the second-order derivative
\[\frac{\partial^2}{\partial a\partial r}P(a,r) = \frac{3r^{1/3}+(r-r^{5/3})}{3r^{1/3}(1+r^{2/3}+r^{4/3})^3}>0\]
for all $0<r<1$ to conclude $(\partial P/\partial r)(a,r)>0$
for all $a\geq 0$, $r\geq 0.9^3$. For every $a\geq 0$, $P(a,\cdot)$ is increasing on $[0.9^3,1]$.

We claim that $P(1,r)$ is convex for $r\in(0,0.9^3)$. We compute
\[\frac{\partial^2 P}{\partial r^2}(1,r) =\frac{2}{9}\frac{q(r)}{(1+r^{1/3}+r^{2/3})r^{4/3}}\]
where $q(r)=1+3r^{1/3}+6r^{2/3}-3r-6r^{4/3}-3r^{5/3}-r^2$. We can then verify that $q''(r)<0$; hence $q$ is concave in $(0,0.9^3)$ and $q(r)>\min(q(0), q(0.9^3))>0$. This proves the claim that $P(1,\cdot)$ is convex on $(0,0.9^3)$. 

In summary, $P(a,r)$, as a convex combination of $P(1,r)$ and $P(0,r)$, is convex for all $r\in(0,0.9^3)$ and $a\in[0,1]$, and $P(a,r)$ is increasing for all $r\in[0.9^3,1]$. For each $a\in[0,1]$, $P(a,\cdot)$ has a unique minimum in $(0,0.9^3)$. 
\end{proof}

After understanding the behavior of $T_k$ for $r\in(0,1]$, we prove that, for each $\theta$, $T_k(r,\theta)$ is increasing in $r$ for $r>1$.
\begin{proposition}
    \label{prop.Tk.increasing}
 For every $\theta$, $T_k(r,\theta)$ is increasing for $r>1$.
\end{proposition}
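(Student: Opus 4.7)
The plan is to exploit the linearity of $T_k$ in $\cos\theta$, already observed in the proof of Proposition \ref{prop.convex.Tk}, to reduce everything to the two extreme cases $\theta=0$ and $\theta=\pi$. Specifically, since $r^2-2r\cos\theta+1 = \tfrac{1+\cos\theta}{2}(r-1)^2 + \tfrac{1-\cos\theta}{2}(r+1)^2$, we have
\[
T_k(r,\theta) = \frac{1+\cos\theta}{2}T_k(r,0) + \frac{1-\cos\theta}{2}T_k(r,\pi),
\]
a convex combination with nonnegative weights summing to $1$. Therefore, once we show that both $r\mapsto T_k(r,0)$ and $r\mapsto T_k(r,\pi)$ are strictly increasing on $(1,\infty)$, the proposition follows for every $\theta$ (at least one of the two weights is positive).

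The next step is to simplify using $(r\pm 1)^2/(r^2-1) = (r\pm 1)/(r\mp 1)$, which gives the compact forms
\[
T_k(r,0) = \frac{k(r^{2/k}-1)(r-1)}{r+1}, \qquad T_k(r,\pi) = \frac{k(r^{2/k}-1)(r+1)}{r-1}.
\]
A direct differentiation yields
\[
\partial_r T_k(r,0) = \frac{2}{(r+1)^2}\Bigl[(r^2-1)r^{2/k-1} + k(r^{2/k}-1)\Bigr],
\]
which is manifestly positive for $r>1$, and
\[
\partial_r T_k(r,\pi) = \frac{2}{(r-1)^2}\Bigl[(r^2-1)r^{2/k-1} - k(r^{2/k}-1)\Bigr].
\]

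The main (and only real) obstacle is establishing positivity of the bracket in $\partial_r T_k(r,\pi)$, that is, the elementary inequality $(r^2-1)r^{2/k-1} > k(r^{2/k}-1)$ for all $r>1$ and $k\in\mathbb{N}$. Setting $\alpha = 2/k \in (0,2]$, this is equivalent to $f(r):=\alpha(r^2-1)r^{\alpha-1} - 2(r^\alpha-1) > 0$ for $r>1$. I would verify that $f(1)=0$ and compute
\[
f'(r) = \alpha r^{\alpha-2}(r-1)\bigl[2r + (\alpha-1)(r+1)\bigr],
\]
then check that the last bracket is positive for every $\alpha\in(0,2]$ and $r>1$ (the only mildly delicate case is $0<\alpha<1$, where one rewrites it as $(\alpha+1)r + (\alpha-1)$ and uses $r>1$). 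This gives $f'(r)>0$ on $(1,\infty)$ and hence $f(r)>0$ there, completing the argument.
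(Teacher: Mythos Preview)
Your argument is correct. The convex-combination reduction to $\theta=0$ and $\theta=\pi$ is valid, the derivative formulas are right, and the inequality $f(r)=\alpha(r^2-1)r^{\alpha-1}-2(r^\alpha-1)>0$ for $r>1$, $\alpha\in(0,2]$ is established cleanly via $f(1)=0$ and the factorization of $f'$.

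However, your route is genuinely different from the paper's. The paper does not differentiate $T_k$ directly; instead it invokes the identity (Lemma~\ref{lem.domain.by.Tk})
\[
\frac{1}{T_k(r,\theta)} = \frac{1}{k^2}\sum_{j=1}^k \frac{1}{\bigl|e^{2\pi i j/k}-\lambda\bigr|^2}, \qquad \lambda = r^{1/k}e^{i\theta/k},
\]
and observes that for $r>1$ each denominator $|e^{2\pi i j/k}-\lambda|^2 = 1 + r^{2/k} - 2r^{1/k}\cos\bigl(\tfrac{2\pi j-\theta}{k}\bigr)$ has positive $r$-derivative (since $r^{1/k}>1\ge\cos$), so $1/T_k$ decreases and $T_k$ increases. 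This is a one-line geometric argument: $\lambda$ moves radially away from the unit circle where the $k$-th roots of unity sit. Your approach, by contrast, stays with the closed-form expression \eqref{eq.Tk.def}, continues the convex-combination strategy already used in Proposition~\ref{prop.convex.Tk}, and trades the structural identity for an elementary (but slightly more laborious) calculus inequality. The paper's proof is shorter and more conceptual; yours is self-contained and does not rely on the summation formula of Lemma~\ref{lem.domain.by.Tk}.
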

\begin{proof}
Let $\lambda = r^{1/k}e^{i\theta/k}$. Recall that
\[\frac{1}{T_k(r,\theta)} = \frac{1}{k^2}\sum_{j=1}^k\frac{1}{\vert e^{2\pi i j/k}-\lambda\vert^2} = \frac{1}{k^2}\sum_{j=1}^k\frac{1}{r^{2/k}-2r^{2/k}\cos\left(\frac{2\pi j-\theta}{k}\right)+1}.\]
For each $\theta$, $T_k(r,\theta)$ is increasing for $r>1$, because each term in the sum in the right-hand side has negative partial derivative with respect to $r$ for all $r>1$. 
\end{proof}

A main step to prove Theorem \ref{thm.Gammakt.rmin} is the following theorem, combining Propositions \ref{prop.convex.Tk} and \ref{prop.Tk.increasing}.
\begin{theorem}
    \label{thm.unimodality}
For each $\theta$, there exists a unique $r_0\leq 1$ such that $T_k(r_0,\theta)\leq T_k(r,\theta)$ for all $r>0$. Moreover, $T_k(r, \theta)$ is decreasing for $0<r<r_0$ and increasing for $r>r_0$. This $r_0$ is positive except $\theta = \pi$ and $k=2$, in which case $r_0 = 0$. This $r_0$ is $1$ when $\theta = 0$.
\end{theorem}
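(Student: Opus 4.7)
The plan is to patch together Proposition~\ref{prop.convex.Tk}, which controls $T_k(\cdot,\theta)$ on $(0,1]$, and Proposition~\ref{prop.Tk.increasing}, which controls it on $[1,\infty)$, the two pieces meeting continuously at $r=1$. The two degenerate cases stated in the theorem are disposed of first. If $k=2$ and $\theta=\pi$, Proposition~\ref{prop.convex.Tk} identifies $T_2(r,\pi)=2(1+r)$, which is strictly increasing on $(0,\infty)$, so $r_0=0$ does the job. If $\theta=0$, the factorization $T_k(r,\theta)=\frac{k(r^{2/k}-1)}{r^2-1}\bigl((r-\cos\theta)^2+\sin^2\theta\bigr)$ shows $T_k\ge 0$ with $T_k(1,0)=0$, so $r=1$ is a global minimizer; Proposition~\ref{prop.Tk.increasing} shows strict increase on $(1,\infty)$, while on $(0,1)$ the convexity (including the $k=3$ case, where the convex sub-interval $(0,\tilde r)$ suffices together with the monotone increase on $[\tilde r,1)$) together with the boundary derivatives $\partial_rT_k(r,0)\to-\infty$ as $r\to 0^+$ and $\partial_rT_k(r,0)\to 0$ as $r\to 1^-$ forces $T_k(\cdot,0)$ to be strictly decreasing on $(0,1)$. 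So $r_0=1$.

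For the generic case $\theta\ne 0$ (and $(k,\theta)\ne(2,\pi)$), Proposition~\ref{prop.convex.Tk} produces a unique minimizer $r_0$ of $T_k(\cdot,\theta)$ in $(0,1)$. In the $k=3$ sub-case one must note that this interior minimizer necessarily lies in the convexity window $(0,\tilde r)$, because $T_k(\cdot,\theta)$ is strictly increasing on $[\tilde r,1)$ by the same proposition. In all sub-cases, strict convexity on a neighborhood of $r_0$ combined with the infinite negative slope at $0^+$ and the positive slope $2(1-\cos\theta)/k>0$ at $1^-$ yields that $T_k(\cdot,\theta)$ is strictly decreasing on $(0,r_0)$ and strictly increasing on $(r_0,1)$.

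To conclude, I would glue these pieces at $r=1$. Proposition~\ref{prop.Tk.increasing} says $T_k(\cdot,\theta)$ is strictly increasing on $(1,\infty)$, and $T_k$ is continuous across $r=1$ with $T_k(1,\theta)=2(1-\cos\theta)$. Since $T_k(\cdot,\theta)$ is strictly increasing on $(r_0,1)$ and on $(1,\infty)$, it is strictly increasing on the full half-line $(r_0,\infty)$. Together with the strict decrease on $(0,r_0)$, this yields the monotonicity claim, and uniqueness of the global minimizer $r_0$ is then automatic.

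The argument is essentially bookkeeping on top of the two preceding propositions. The only mild obstacle is the non-uniform convexity picture at $k=3$, where one has to verify that the interior minimizer lands in the convex region $(0,\tilde r)$; this is immediate because Proposition~\ref{prop.convex.Tk} already asserts monotone increase on $[\tilde r,1)$, which rules out a minimizer there.
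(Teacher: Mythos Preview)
Your proposal is correct and follows essentially the same approach as the paper: handle the two degenerate cases $(k,\theta)=(2,\pi)$ and $\theta=0$ separately, then in the generic case invoke Proposition~\ref{prop.convex.Tk} to locate the unique minimizer $r_0\in(0,1)$ and obtain the decrease/increase structure on $(0,1)$, and finally glue with Proposition~\ref{prop.Tk.increasing} at $r=1$ using continuity. Your write-up is somewhat more explicit than the paper's---particularly in spelling out the $\theta=0$ case via the boundary derivatives and in noting why the $k=3$ minimizer must lie in the convexity window $(0,\tilde r)$---but the argument is the same.
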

\begin{proof}
In the case $k=2$ and $\theta=\pi$, $T_k(r,\theta) = 2(1+r)$. We take $r_0 = 0$. If $\theta = 0$, then $r_0 = 1$ by Proposition \ref{prop.convex.Tk}.

For all the other cases, by Proposition \ref{prop.convex.Tk}, $r\mapsto T_k(r,\theta)$ is convex in $r$ for $r\in(0,\tilde r)$ for some $\tilde r\in (0,1]$, decreasing for small enough $r$, and increasing for $r$ close to $1$. Thus, there exists a unique $r_0\in(0,\tilde r)$ such that $T_k(r_0,\theta)\leq T_k(r,\theta)$ for all $0<r<1$. Now, by Proposition \ref{prop.Tk.increasing}, $T_k(r,\theta)$ is increasing for $r>1$ for each $\theta$. The theorem is established since $r\mapsto T_k(r,\theta)$ is continuous at $r=1$.
\end{proof}

\begin{definition}
\label{def.r.min.+.-}
For each $\theta$, define $r_k^{\min}(\theta)$ to be the $r_0$ given in Theorem \ref{thm.unimodality}.

Define $r_k^+(t,\theta)$ on $\{\theta: T_k(r_k^{\min}(\theta),\theta)<t\}$ to be the unique number such that 
\[r_k^{\min}(\theta)<r_k^+(t,\theta)\]
and $T_k(r_k^+(t,\theta),\theta)=t$. This function $r_k^+$ is well-defined by Theorem \ref{thm.unimodality} and the fact that $\lim_{r\to\infty}T_k(r,\theta) = \infty$.

For any $\theta\in \{\theta: T_k(r_k^{\min}(\theta),\theta)<t\}$, if there exists $r_0<r_k^{\min}(\theta)$ such that $T_k(r_k^-(t,\theta),\theta)=T_k(r_0,\theta)=t$. This $r_0$ must be unique by Theorem \ref{thm.unimodality} and we define $r_k^-(t,\theta) = r_0$. If such an $r_0$ does not exist, we define $r_k^-(t,\theta) = 0$.

Finally, we extend the definitions of $r_k^+(t,\theta)$ and $r_k^-(t,\theta)$ to $\{\theta: T_k(r_k^{\min}(\theta),\theta)\geq t\}$ by setting
\[r_k^+(t,\theta)=r_k^-(t,\theta) = r_k^{\min}(\theta).\]
\end{definition}

\begin{theorem}
    \label{thm.Gammakt.rmin}
    If $t\leq k$, then  $D_k(a,t)$ is empty and
    \[\Sigma_k(t)= \{re^{i\theta}:T_k(r_k^{\min}(\theta),\theta)<t, r_k^-(t,\theta)<r<r_k^+(t,\theta)\}.\]
    In this case, when $t<T_k(r_k^{\min}(\pi),\pi)$, $\Sigma_k(t)$ is a topological disk; at $t=T_k(r_k^{\min}(\pi),\pi)$, $\Sigma_k(t)$ is a topological disk but the closure $\overline{\Sigma_k(t)}$ is a topological annulus; when $T_k(r_k^{\min}(\pi),\pi)<t<k$, $\Sigma_k(t)$ is a topological annulus; when $t=k$, $\Sigma_k(t)$ is a topological annulus but its closure is a topological disk.
    
    If $k< t<k\|a^{-1}\|_2^2$, then $D_k(a,t)$ is empty and
    \[\Sigma_k(t) = \{re^{i\theta}: r<r_k^+(t,\theta)\},\]
    which is a topological disk.
    
    If $t\geq  k\|a^{-1}\|_2^2$, then $D_k(a,t)$ is nonempty, so that
    \[\Sigma_k(t)\setminus \overline{D}_k(a,t) = \{re^{i\theta}:r<r_k^+(t,\theta)\}\setminus \{z: \vert z\vert^{2/k}\leq t\cdot (k\|a^{-1}\|_2^2)^{-1}-1\}\]
    is a topological annulus.
\end{theorem}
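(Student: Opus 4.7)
The plan is to derive the domain description from the unimodality statements in Theorem~\ref{thm.unimodality}, and then establish the topological phase transitions case by case.

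First I would specialize \eqref{eq.lifetime.k} to the trivial initial condition $u_0 = 1$: the spectral measure is $\delta_1$, so for $z = re^{i\theta}$, $T_k(u_0; z)$ collapses to the function $T_k(r,\theta)$ of \eqref{eq.Tk.def}. Hence $\Sigma_k(t) = \{re^{i\theta} : T_k(r,\theta) < t\}$. For fixed $\theta$, Theorem~\ref{thm.unimodality} supplies strict unimodality of $r \mapsto T_k(r,\theta)$ with minimum $t_k^*(\theta) = T_k(r_k^{\min}(\theta),\theta)$ at $r_k^{\min}(\theta)$, decreasing on $(0, r_k^{\min}(\theta))$ and increasing on $(r_k^{\min}(\theta), \infty)$. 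Direct inspection of \eqref{eq.Tk.def} gives $\lim_{r \to 0^+} T_k(r,\theta) = k$ and $\lim_{r \to \infty} T_k(r,\theta) = \infty$. Consequently, the $\theta$-section $\{r > 0 : T_k(r,\theta) < t\}$ is an open interval with upper endpoint $r_k^+(t,\theta)$ (always defined when $t > t_k^*(\theta)$) and lower endpoint $r_k^-(t,\theta) > 0$ precisely when $t < k$, equal to $0$ when $t \geq k$.

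Next I would read off the three cases. By \eqref{eq.Dkat}, $D_k(a,t)$ is nonempty iff $t > k\|a^{-1}\|_2^2$, and the remark after Theorem~\ref{thm.Brown.Measure.3} gives $\|a^{-1}\|_2^2 \geq 1$, so $k\|a^{-1}\|_2^2 \geq k$. For $t \leq k$, $D_k(a,t)$ is empty and the section description immediately yields the first formula. For $k < t < k\|a^{-1}\|_2^2$, every $\theta$ has $t_k^*(\theta) \leq t_k^c < k < t$ (using $t_k^c < k$ from Theorem~\ref{thm.Brown.Measure.3}) and $r_k^-(t,\theta) = 0$, yielding the disk formula. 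For $t \geq k\|a^{-1}\|_2^2$, $D_k(a,t)$ is the concentric open disk of radius $(t/(k\|a^{-1}\|_2^2) - 1)^{k/2}$, giving the third formula.

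Finally I would verify the topology using continuity/regularity of $r_k^{\pm}(t,\cdot)$. Since $\theta \mapsto t_k^*(\theta)$ is symmetric, continuous, and strictly increasing on $[0,\pi]$ (Theorem~\ref{thm.Brown.Measure.3}), the set $I_k(t) = \{\theta : t_k^*(\theta) < t\}$ is a symmetric open subinterval of $(-\pi,\pi]$: a proper subinterval for $t < t_k^c$, equal to $(-\pi,\pi)$ at $t = t_k^c$, and equal to $(-\pi,\pi]$ for $t > t_k^c$. The positivity of $\partial_r^2 T_k$ at the minimizer (Proposition~\ref{prop.convex.Tk}) together with the implicit function theorem gives continuity of $r_k^{\pm}(t,\cdot)$ on $I_k(t)$, meeting at its boundary angles. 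For $t < t_k^c$ the resulting boundary is a single Jordan curve, so $\Sigma_k(t)$ and its closure are topological disks. For $t_k^c < t < k$, the two disjoint positive continuous curves $r_k^{\pm}(t,\cdot)$ on $S^1$ bound an open annulus. At $t = k$, $r_k^-(t,\theta) = 0$ for every $\theta$ but the origin is excluded since $T_k(0^+,\theta) = k = t$ (strict inequality fails), so the open set is a punctured disk while the closure is a disk. The final case ($t \geq k\|a^{-1}\|_2^2$) subtracts a concentric open disk, producing an annulus. The main obstacle is the pinch-point analysis at $t = t_k^c$: by the positive second derivative at $r_k^{\min}(\pi)$, the branches $r_k^{\pm}(t,\theta)$ detach from $r_k^{\min}(\pi)$ like $\sqrt{t - t_k^c}$, so the two lobes of $\overline{\Sigma_k(t_k^c)}$ meet tangentially (not transversally) at $-r_k^{\min}(\pi)$; this tangential contact keeps the open set $\Sigma_k(t_k^c)$ a topological disk, while the closure acquires the pinch point and encloses the origin (which is not in the closure), giving the annular topology recorded in Table~\ref{table.1}.
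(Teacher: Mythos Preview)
Your overall strategy matches the paper's, but there is a circularity that breaks the argument: you invoke Theorem~\ref{thm.Brown.Measure.3} for the monotonicity of $\theta \mapsto t_k^*(\theta)$ and for $t_k^c < k$. Theorem~\ref{thm.Brown.Measure.3} is the introduction's summary of the domain results proved in Section~\ref{sect.BM.computation.1}; its content \emph{is} Theorem~\ref{thm.Gammakt.rmin} together with the surrounding lemmas, so you cannot cite it here. The paper supplies the needed monotonicity directly in Lemma~\ref{lem.domain.theta.interval}, proved just before Theorem~\ref{thm.Gammakt.rmin}: since $\partial_r T_k$ vanishes at $r_k^{\min}(\theta)$, the chain rule gives $\frac{d}{d\theta}T_k(r_k^{\min}(\theta),\theta) = \frac{\partial T_k}{\partial\theta}(r_k^{\min}(\theta),\theta)$, whose sign is that of $\sin\theta$; hence $I_k(t)$ is a symmetric interval. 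The bound you actually need in the $k<t$ case is $t_k^c \leq k$ (an equality when $k=2$), and this is immediate from $t_k^*(\theta) = T_k(r_k^{\min}(\theta),\theta) \leq T_k(0,\theta) = k$, again without appeal to Theorem~\ref{thm.Brown.Measure.3}.

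A smaller imprecision: the implicit function theorem for $r_k^{\pm}(t,\cdot)$ requires $\partial_r T_k \neq 0$ at $r_k^{\pm}$, which comes from strict unimodality (Theorem~\ref{thm.unimodality}), not from ``positivity of $\partial_r^2 T_k$ at the minimizer.'' The paper sidesteps this entirely and deduces the topology by counting connected components of the complement of $\overline{\Sigma_k(t)}$ once the interval structure of $I_k(t)$ is known.
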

The $R$-diagional element $a$ is chosen such that $\|a\|_2 = 1$. By Cauchy--Schwarz inequality, $1\leq \|a^{-1}\|_2$. We must have $T_k(r_k^{\min}(\pi),\pi)\leq k$; in fact, $T_k(r_k^{\min}(\theta),\theta)<k$ except $\theta = \pi$ and $k=2$ by Theorem \ref{thm.Gammakt.rmin}.

Before we prove Theorem \ref{thm.Gammakt.rmin}, we need the following lemma.

\begin{lemma}
\label{lem.domain.theta.interval}
\begin{enumerate}
    \item $T_k(r_k^{\min}(\theta),\theta)$ increases in $\theta$ for $\theta\in(0,\pi)$ and decreases in $\theta$ for $\theta\in(-\pi,0)$. 
    \item For any $t>0$, 
\[\{\theta\in[-\pi,\pi]: T_k(r_k^{\min}(\theta),\theta)<t\}\]
\end{enumerate}

 is an interval symmetric about $0$.
\end{lemma}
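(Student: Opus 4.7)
The plan is to first establish a pointwise monotonicity of $T_k(r,\theta)$ in $\theta$ at fixed $r$, then upgrade it to the envelope function $\theta\mapsto T_k(r_k^{\min}(\theta),\theta)$ by a sandwich-style comparison, and finally deduce Part (2) from the evenness of $T_k$.

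First, I would record the symmetry $T_k(r,\theta)=T_k(r,-\theta)$, which is immediate from \eqref{eq.Tk.def} since $\theta$ enters only through $\cos\theta$. This symmetry forces $r_k^{\min}(\theta)=r_k^{\min}(-\theta)$ and hence the function $\theta\mapsto T_k(r_k^{\min}(\theta),\theta)$ is even. Next, for the pointwise monotonicity, observe that at fixed $r>0$ the factor $r^2-2r\cos\theta+1$ is strictly increasing in $\theta$ on $(0,\pi)$. The prefactor $k(r^{2/k}-1)/(r^2-1)$ is strictly positive for all $r>0$, $r\ne 1$: the numerator and denominator share the sign of $r-1$ since $x\mapsto x^{2/k}$ and $x\mapsto x^2$ are both increasing through $1$; and at $r=1$ the value extends continuously to $2$. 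Consequently, for each fixed $r>0$, $\theta\mapsto T_k(r,\theta)$ is strictly increasing on $(0,\pi)$.

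For Part (1), I would argue directly without invoking an envelope theorem (which would require differentiability of $\theta\mapsto r_k^{\min}(\theta)$). Take $0<\theta_1<\theta_2<\pi$. By Theorem \ref{thm.unimodality}, $r_k^{\min}(\theta_2)\in(0,1]$ (strictly positive since the degenerate case $k=2$, $\theta=\pi$ is excluded). Then I would write the chain
\[
T_k(r_k^{\min}(\theta_1),\theta_1)\;\le\;T_k(r_k^{\min}(\theta_2),\theta_1)\;<\;T_k(r_k^{\min}(\theta_2),\theta_2),
\]
where the first inequality holds by the defining minimization property at $\theta_1$, and the second by the pointwise strict monotonicity of $T_k(r,\cdot)$ at the fixed $r=r_k^{\min}(\theta_2)>0$ established above. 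This proves $T_k(r_k^{\min}(\cdot),\cdot)$ is strictly increasing on $(0,\pi)$; the corresponding decrease on $(-\pi,0)$ follows from evenness. For Part (2), continuity of $\theta\mapsto T_k(r_k^{\min}(\theta),\theta)$ (which I would justify by noting the graph is the infimum of the continuous family $\{T_k(r,\cdot)\}_{r>0}$ together with uniform control on the minimizer as in Theorem \ref{thm.unimodality}), combined with evenness and the strict monotonicity from Part (1), makes the function even and unimodal on $[-\pi,\pi]$ with minimum at $\theta=0$. The sublevel set $\{\theta:T_k(r_k^{\min}(\theta),\theta)<t\}$ is therefore either empty or a symmetric open interval around $0$.

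The only place where any real care is needed is the comparison step in Part (1): one must verify that $r_k^{\min}(\theta_2)$ is strictly positive so that the strict pointwise inequality $T_k(r_k^{\min}(\theta_2),\theta_1)<T_k(r_k^{\min}(\theta_2),\theta_2)$ is genuine. Theorem \ref{thm.unimodality} handles this, so the argument should go through cleanly; I do not expect any further obstacle. Continuity of $\theta\mapsto T_k(r_k^{\min}(\theta),\theta)$, needed only for Part (2), is a minor check using the structure of $T_k$ and the uniqueness of the minimizer.
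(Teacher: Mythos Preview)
Your argument is correct and takes a genuinely different route from the paper. The paper proceeds via the envelope theorem: it computes $\frac{\partial T_k}{\partial\theta}(r,\theta)=2kr\sin\theta\cdot\frac{1-r^{2/k}}{1-r^2}$, then differentiates $\theta\mapsto T_k(r_k^{\min}(\theta),\theta)$ using the chain rule, drops the $\partial_r$ term since $r_k^{\min}(\theta)$ is a critical point, and reads off the sign. This is slicker but silently assumes $r_k^{\min}$ is differentiable in $\theta$, a fact the paper does not justify (it would follow from the implicit function theorem together with the strict convexity or second-derivative information in Proposition~\ref{prop.convex.Tk}, but this is not spelled out). Your sandwich argument $T_k(r_k^{\min}(\theta_1),\theta_1)\le T_k(r_k^{\min}(\theta_2),\theta_1)<T_k(r_k^{\min}(\theta_2),\theta_2)$ sidesteps this entirely, needing only the minimizing property and the pointwise $\theta$-monotonicity at fixed $r>0$; it is more elementary and arguably more honest. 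For Part~(2) you do not actually need the continuity claim you flag: strict monotonicity on $(0,\pi)$ plus evenness already forces the sublevel set to be a symmetric interval, and nonemptiness follows since $T_k(r_k^{\min}(0),0)=T_k(1,0)=0<t$.
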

\begin{proof}
    We first calculate
    \[\frac{\partial T_k}{\partial \theta}(r,\theta) = 2kr\sin\theta\cdot \frac{1-r^{2/k}}{1-r^2}.\]
    By definition of $r_k^{\min}(\theta)$, $T_k(\cdot, \theta)$ has a local minimum at $r_k^{\min}(\theta)$. We differentiate
    \begin{align*}
        \frac{d}{d\theta}T_k(r_k^{\min}(\theta),\theta)&=\frac{\partial T_k}{\partial r}(r_k^{\min}(\theta),\theta)\frac{dr_k^{\min}(\theta)}{d\theta}+\frac{\partial T_k}{\partial \theta}(r_k^{\min}(\theta),\theta)\\
        &= \frac{\partial T_k}{\partial \theta}(r_k^{\min}(\theta),\theta)\\
        &=2kr_k^{\min}(\theta)\sin\theta\cdot\frac{1-r_k^{\min}(\theta)^{2/k}}{1-r_k^{\min}(\theta)^2}.
    \end{align*}
    Thus, $T_k(r_k^{\min}(\theta),\theta)$ increases in $\theta$ for $\theta\in(0,\pi)$ and $T_k(r_k^{\min}(\theta),\theta)$ decreases in $\theta$ for $\theta\in(-\pi,0)$. 
    
    Meanwhile, $r_k^{\min}(\theta) = r_k^{\min}(-\theta)$ because $T_k(r,\theta) = T_k(r,-\theta)$. The set $\{\theta\in[-\pi,\pi]: T_k(r_k^{\min}(\theta),\theta)<t\}$ is symmetric about $0$. Since $T_k(1,0) = 0$, $0$ is always in the set $\{\theta: T_k(r_k^{\min}(\theta),\theta)<t\}$. Hence $\{\theta\in[-\pi,\pi]: T_k(r_k^{\min}(\theta),\theta)<t\}$ must be an interval symmetric about $0$.
\end{proof}

\begin{proof}[Proof of Theorem \ref{thm.Gammakt.rmin}]
    Since $T_k(0,\theta) = k$, by Theorem \ref{thm.unimodality} and Definition \ref{def.r.min.+.-}, $r_k^-(t,\theta)=0$ for all $\theta$ if $t\geq k$, and $r_k^-(t,\theta)>0$ for all $\theta$ if $t<k$. The classification of $\Sigma_k(t)\setminus\overline{D}_k(a,t)$ then follows from \eqref{eq.Sigma.intro} and \eqref{eq.Dkat}.
    
    It remains to prove the topologies of $\Sigma_k(t)\setminus \overline{D}_k(a,t)$; it is clear that $\overline{D}_k(a,t)$ is empty for $t\leq k\|a^{-1}\|_2^2$. If $t<T_k(r_k^{\min}(\theta),\theta)$, then the set $\{\theta\in[-\pi,\pi]: T_k(r_k^{\min}(\theta),\theta)<t\}$ is an interval symmetric about $0$ that does not contain $\pi$ by Lemma \ref{lem.domain.theta.interval}. In this case, $\Sigma_k(t)$ is a topological disk. 
    
    If $t = T_k(r_k^{\min}(\theta),\theta)$, then the set $\{\theta\in[-\pi,\pi]: T_k(r_k^{\min}(\theta),\theta)<t\}$ is the interval $(-\pi,\pi)$, again by Lemma \ref{lem.domain.theta.interval}. In this case, the open set $\Sigma_k(t)$ is a topological disk, but the complement of its closure $\overline{\Sigma_k(t)}$ has two connected components, one bounded and one unbounded. The closure $\overline{\Sigma_k(t)}$ is thus a topological annulus.
    
    If $T_k(r_k^{\min}(\theta),\theta)<t<k$, then the set $\{\theta\in[-\pi,\pi]: T_k(r_k^{\min}(\theta),\theta)<t\}$ is the interval $[-\pi,\pi]$. The domain $\Sigma_k(t)$ is a topological annulus because its complement has two connected components, one bounded and one unbounded. When $t=k$, $r_k^-(t,\theta)=0$; therefore, $\overline{\Sigma_k(t)}=\{re^{i\theta}: r\leq r_k^+(t,\theta)\}$ is a topological disk.

    In the last two cases $k<t<k\|a^{-1}\|_2^2$ and $t\geq k\|a^{-1}\|_2^2$, $\overline{D}_k(a,t)$ is nonempty. It is then straightforward to see that the topologies of $\Sigma_k(t)\setminus \overline{D}_k(a,t)$ are topological disk and topological annulus respectively.
\end{proof}

Before we end this section, we prove the following result to compare the domain for the random walk $b_k(t)$ and the domain for the free multiplicative Brownian motion $b(t)$, both with trivial initial condition.

Recall that the law of $u$ is denoted by $\mu_{u_0}$. By \cite[Theorem 4.10]{HoZhong2020Brown}, the Brown measure of $u_0b(t)$ has full measure and positive density on the domain $\Sigma_\infty(u_0,t)$. The domain $\Sigma_\infty(u_0,t)$ is given by
\begin{equation}
    \label{eq.T.def}
    \Sigma_\infty(u_0,t)=\{re^{i\theta}: T(u_0;r,\theta)<t\}
\end{equation}
where
\[T(u_0;r,\theta) = \frac{\log r^2}{r^2-1}\left(\int_{(\pi,\pi]}\frac{1}{\vert re^{i\theta}-e^{i\alpha}\vert^2}\mu_{u_0}(d\alpha)\right)^{-1}.\]
If $r=1$, $\log(r^2)/(r^2-1)$ is understood as $1$. By \cite[Theorem 1.2]{HoZhong2020Brown} that there exists a function $r_{u_0}(t,\cdot):(-\pi,\pi]\to[1,\infty)$ such that
    \begin{equation}
        \label{eq.Sigma.u.bdry.funct}
        \Sigma_\infty(u_0,t) = \{re^{i\theta}: r_{u_0}(t,\theta)^{-1}<r<r_{u_0}(t,\theta)\}.
    \end{equation}
When $u_0 = 1$, we write $\Sigma_\infty(t)$ instead of $\Sigma_\infty(u_0,t)$, $r(t,\theta)$ instead of $r_{u_0}(t,\theta)$ and write $T(r,\theta)$ instead of $T(u_0;r,\theta)$. Corollary 3.5 of \cite{DHKBrown} computes the $\theta$ such that the ray $e^{i\theta}(0,\infty)$ with argument $\theta$ intersects $\Sigma_\infty(t)$; indeed,
    \begin{equation}
        \label{eq.Sigma.infty.def}
        \Sigma_\infty(t) = \{re^{i\theta}: \theta\in I_\infty(t), r_\infty(t,\theta)^{-1}<r<r_\infty(t,\theta)\}
    \end{equation}
    where $I_\infty(t) =\left(-\arccos\left(1-\frac{t}{2}\right),\arccos\left(1-\frac{t}{2}\right)\right)$ if $t\leq 4$ and $I_\infty = [-\pi,\pi]$ if $t>4$. 

We first compare the function $T_k$ defined in \eqref{eq.lifetime.k} with the function $T$.
\begin{lemma}
    \label{lem.Tk.T.compare}
    For any unitary initial condition $u_0$, $T_k(u_0;r,\theta)<T(u_0;r,\theta)$ for $r<1$, $T_k(u_0;r,\theta)>T(u_0;r,\theta)$ for $r>1$, and $T_k(u_0;r,\theta)=T(u_0;r,\theta)$ for $r=1$.

    As a consequence, $\Sigma_k(u_0,t)\cap (\mathbb{C}\setminus \mathbb{D})\subset \Sigma_\infty(u_0,t)\cap (\mathbb{C}\setminus \mathbb{D})$ and $\Sigma_\infty(u_0,t)\cap \mathbb{D}\subset \Sigma_k(u_0,t)\cap \mathbb{D}$.
\end{lemma}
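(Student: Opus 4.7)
The plan is to reduce the lemma to a single scalar inequality in one variable by factoring out a common integral, then verify that inequality by an elementary calculus argument, and finally read off the domain inclusions from the definitions.

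First, writing $z = re^{i\theta}$, both quantities share the common positive factor
\[
P_{u_0}(r,\theta) := \left(\int_{\mathbb{T}} \frac{\mu_{u_0}(d\xi)}{|\xi - z|^2}\right)^{\!-1},
\]
which does not depend on $k$. So from the definitions \eqref{eq.lifetime.k} and \eqref{eq.T.def} I have
\[
T_k(u_0;r,\theta) = \frac{k(r^{2/k}-1)}{r^2-1}\,P_{u_0}(r,\theta), \qquad T(u_0;r,\theta) = \frac{\log(r^2)}{r^2-1}\,P_{u_0}(r,\theta),
\]
with the ratios understood by L'H\^opital to equal $1$ at $r=1$. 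Thus the three pointwise inequalities in the lemma will follow from the purely numerical inequality
\[
k(r^{2/k}-1) \;\ge\; \log(r^2) \qquad (r>0),
\]
with equality iff $r=1$, once the sign of $r^2-1$ is tracked.

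To prove this scalar inequality, I would substitute $s = r^2$ and write both sides as integrals:
\[
k(s^{1/k}-1) = \int_1^s x^{\frac{1}{k}-1}\,dx, \qquad \log s = \int_1^s x^{-1}\,dx.
\]
The ratio of integrands is $x^{1/k}$, so $x^{1/k - 1} > x^{-1}$ for $x > 1$ and $x^{1/k-1} < x^{-1}$ for $0 < x < 1$. In either regime ($s>1$ or $s<1$) this forces $\int_1^s x^{1/k-1}\,dx > \int_1^s x^{-1}\,dx$ strictly, and of course both vanish at $s=1$. (Alternatively, one can cite the monotonicity of $x \mapsto (e^x-1)/x$ applied to $x = (2/k)\log r$.)

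With this in hand, the three pointwise claims are immediate. For $r<1$ both numerators are negative with $k(r^{2/k}-1) > \log(r^2)$, and dividing by the negative denominator $r^2-1$ flips the inequality to $T_k < T$. For $r>1$ the denominator is positive and the inequality passes through division, giving $T_k > T$. At $r=1$ both ratios equal $1$ and $T_k = T = P_{u_0}(1,\theta)$. The domain consequences then follow at once from the definitions $\Sigma_k(u_0,t) = \{z : T_k(u_0,z) < t\}$ and $\Sigma_\infty(u_0,t) = \{z : T(u_0,z) < t\}$: outside $\overline{\mathbb{D}}$, $T_k > T$ forces $\Sigma_k(u_0,t) \cap (\mathbb{C} \setminus \mathbb{D}) \subset \Sigma_\infty(u_0,t) \cap (\mathbb{C}\setminus \mathbb{D})$, while inside $\mathbb{D}$, $T_k < T$ forces the reverse inclusion. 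No step is delicate; the only ``obstacle'' is recognizing the right integral representation, after which everything is routine.
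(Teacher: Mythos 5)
Your proposal is correct and takes essentially the same route as the paper: both reduce to the scalar inequality $k(r^{2/k}-1) \ge \log(r^2)$ (with equality iff $r=1$) and establish it by comparing integrands over $[1,r]$ (the paper writes $\log r^2 = \int_1^r \frac{2}{s}\,ds$ and $k(r^{2/k}-1) = \int_1^r \frac{2}{s}s^{2/k}\,ds$; your substitution $s=r^2$ is just a cosmetic change of variables). The sign analysis of the common denominator $r^2-1$ and the reading-off of the domain inclusions match the paper's argument exactly.
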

\begin{proof}
    By the Fundamental Theorem of Calculus,
    \begin{align*}
        &\log r^2 = \int_1^r \frac{2}{s}\,ds\\
        &k(r^{2/k}-1) = \int_1^r \frac{2}{s}s^{2/k}\,ds.
    \end{align*}
    By comparing the two integrands above, we can immediately see that $T_k(u_0;r,\theta)>T(u_0;r,\theta)$ for $r>1$. Now, for $r<1$, we have $\log r^2 < k(r^{2/k}-1)$. This shows $T_k(u_0;r,\theta)<T(u_0;r,\theta)$ for all $r<1$ due to the $r^2-1$ in the denominators of $T_k$ and $T$. Finally, the values of $T_k$ and $T$ at $r=1$ are defined by limit as $r\to 1$; the limits of $(\log r^2)/(r^2-1)$ and $k(r^{2/k}-1)/(r^2-1)$ as $r\to 1$ are both $1$.

    The last assertion follows from that $\Sigma_k(u_0,t)$ and $\Sigma_\infty(u_0,t)$ consist of points $re^{i\theta}$ where $T_k(u_0;r,\theta)<t$ and $T(u_0;r,\theta)<t$ respectively.
\end{proof}
    
\begin{proposition}
    \label{prop.compare.to.DHK}
    For all $\theta\in I_\infty(t)$, $r_k^-(t,\theta)<r_\infty(t,\theta)^{-1}< r_k^+(t,\theta)<r_\infty(t,\theta)$. Moreover, if $t\leq 4$ and $\cos\theta = 1-t/2$, then $r_k^+(t,\theta)=1$, and $r_k^-(t,\theta)<1$.
\end{proposition}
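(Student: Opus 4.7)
The proof rests on Lemma~\ref{lem.Tk.T.compare}, which gives the strict comparisons $T_k(r,\theta) < T(r,\theta)$ for $r<1$, $T_k(r,\theta) = T(r,\theta)$ for $r=1$, and $T_k(r,\theta) > T(r,\theta)$ for $r>1$, together with the unimodality of $T_k(\cdot,\theta)$ from Theorem~\ref{thm.unimodality}. I would also use the reflection symmetry $T(r,\theta)=T(1/r,\theta)$, which is a direct algebraic check from the defining formula $T(r,\theta) = \frac{\log r^2}{r^2-1}(r^2-2r\cos\theta+1)$; this symmetry is what produces the reciprocal structure of the annulus $\Sigma_\infty(t)$.

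My first step is to locate $r=1$ strictly inside the admissible interval for $T_k(\cdot,\theta)$ whenever $\theta\in I_\infty(t)$. By the definition of $I_\infty(t)$ one has $T(1,\theta) = 2(1-\cos\theta) < t$ (strictly, since $I_\infty(t)$ is open), and hence $T_k(1,\theta) = T(1,\theta) < t$. Combined with $r_k^{\min}(\theta)\leq 1$ (Theorem~\ref{thm.unimodality}) and the strict monotonicity of $T_k(\cdot,\theta)$ on $(r_k^{\min},\infty)$, this immediately forces $r_k^+(t,\theta)>1$ and also guarantees $T_k(r_k^{\min}(\theta),\theta)<t$, so both $r_k^\pm$ are defined by the first clause of Definition~\ref{def.r.min.+.-}.

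Next, I evaluate $T_k$ at the two points $r_\infty(t,\theta)^{\pm 1}$. At $r=r_\infty(t,\theta)>1$, the comparison lemma gives $T_k(r_\infty,\theta) > T(r_\infty,\theta) = t = T_k(r_k^+,\theta)$; strict monotonicity of $T_k(\cdot,\theta)$ on $(r_k^{\min},\infty)$ then yields $r_k^+ < r_\infty$. At $r = r_\infty^{-1}<1$, the reflection symmetry gives $T(r_\infty^{-1},\theta)=T(r_\infty,\theta)=t$, whence $T_k(r_\infty^{-1},\theta) < T(r_\infty^{-1},\theta) = t$. By unimodality, the sublevel set $\{r>0 : T_k(r,\theta)<t\}$ equals the open interval $(r_k^-(t,\theta),r_k^+(t,\theta))$ (with the convention $r_k^-=0$ if $t\geq k$), so $r_\infty^{-1}$ sits strictly between $r_k^-$ and $r_k^+$. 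This gives $r_k^-(t,\theta) < r_\infty(t,\theta)^{-1} < r_k^+(t,\theta)$, completing the chain of inequalities.

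For the boundary claim, $\cos\theta = 1-t/2$ with $t\leq 4$ forces $T(1,\theta)=t$ and therefore $r_\infty(t,\theta)=1$. Since $t>0$ implies $\theta\neq 0$, Theorem~\ref{thm.unimodality} gives $r_k^{\min}(\theta)<1$; the equality $T_k(1,\theta)=T(1,\theta)=t$ together with uniqueness of the root of $T_k(\cdot,\theta)=t$ on $(r_k^{\min},\infty)$ then yields $r_k^+(t,\theta)=1$, while $r_k^-(t,\theta)\leq r_k^{\min}(\theta)<1$ is immediate. The only mildly delicate bookkeeping is the dichotomy for $r_k^-(t,\theta)$ (the unique root in $(0,r_k^{\min})$ when $t<k$, versus $0$ when $t\geq k$), but in both regimes the inequality $r_k^-<r_\infty^{-1}$ is transparent, so I do not expect any genuine obstacle.
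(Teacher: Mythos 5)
Your proof is correct and takes essentially the same route as the paper: compare $T_k$ against $T$ at $r_\infty(t,\theta)^{\pm 1}$ using Lemma~\ref{lem.Tk.T.compare} and invoke the unimodality of $T_k(\cdot,\theta)$ from Theorem~\ref{thm.unimodality}. The only cosmetic difference is that you derive $T(r_\infty(t,\theta)^{-1},\theta)=t$ from the reflection identity $T(r,\theta)=T(1/r,\theta)$, whereas the paper reads it off directly from the boundary description of $\Sigma_\infty(t)$ in~\eqref{eq.Sigma.infty.def}; these are the same fact.
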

\begin{proof}  
    For any $\theta\neq 0$,
    \[T_k(r_k^{\min}(\theta),\theta)<T_k(1,\theta) = 2(1-\cos\theta).\]
    Thus, $T_k(r_k^{\min}(\theta),\theta)<t$ for all $\theta\in I_\infty(t)$,
    so that $r_k^-(\theta)<r_k^+(\theta)$ for all $\theta\in I_\infty(t)$. The above display equation also shows that $r_k^-(t,\theta)<1$ when $t\leq 4$ and $\cos\theta = 1-t/2$ because $r_k^-(\theta)<r_k^{\min}(\theta)<1$. Since $T_k(r,\theta)$ is increasing for $r>1$ by Proposition \ref{prop.Tk.increasing}, $r_k^+(t,\theta)=1$ if $t\leq 4$ and $\cos\theta = 1-t/2$.
    
    The domain $\Sigma_\infty(t)$ for $b(t)$ can be identified using the function $T$ in \eqref{eq.T.def} with $u_0=1$. For $r<1$, we have $T_k(r,\theta)<T(r,\theta)$ by Lemma \ref{lem.Tk.T.compare}. Hence, $T_k(r_\infty(t,\theta)^{-1},\theta)<t$, which shows $r_k^-(t,\theta)<r_\infty(t,\theta)^{-1}$. Similarly, we have $T_k(r,\theta) >T(r,\theta)$ for all $r>1$. Thus, $T_k(r_\infty(t,\theta),\theta)>t$, and we must have $r_k^+(t,\theta)<r_\infty(t,\theta)$.
\end{proof}

\subsection{Explicit formulas when $k=2$\label{section.explicit.formulas}} \hfill

\medskip

In this section, we compute the Brown measure of $b_2(t)$ and $k=2$ when the $\mathscr{R}$-diagonal distribution $a$ is Haar unitary or circular.

We first compute the Cauchy transform of $\tilde\mu_{\vert\lambda-Z\vert}$ in the following lemma. In this section, we apply the lemma with $\mu_{u_0} = \delta_1$ to study the random walk $b_k(t)$ with trivial initial condition. 
\begin{lemma}
    \label{lem.Z.Cauchy}
     The Cauchy transform of $\tilde\mu_{\vert\lambda-Z\vert}$ is given by
    \[G_{\tilde\mu_{\vert\lambda-Z\vert}}(z) = \frac{1}{k}\sum_{j=1}^k\int_{(-\pi,\pi]}\frac{z}{z^2-\vert\lambda-e^{\frac{i(2\pi j+\alpha)}{k}}\vert^2}\mu_{u_0}(d\alpha).\]
\end{lemma}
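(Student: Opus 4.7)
The plan is to identify $Z$ as a unitary operator with an explicit spectral measure, and then compute the Cauchy transform of $\tilde\mu_{\vert\lambda-Z\vert}$ by functional calculus.

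Write $Z = u_0^{1/k} \otimes \Sigma$, where $\Sigma = \sum_{j=1}^k E_{j,j+1} \in \MkC$ is the cyclic shift matrix (indices taken mod $k$). Then $Z$ is the product of the two commuting unitaries $u_0^{1/k}\otimes I$ and $I\otimes \Sigma$ in $\mathscr{A}\otimes\MkC$; consequently $Z$ is itself unitary, and its spectral measure $\mu_Z$ with respect to $\varphi\otimes\mathrm{tr}$ is the (classical) multiplicative convolution on $\mathbb{T}$ of the spectral measures of the two factors. Since $\Sigma$ is a single $k$-cycle, its spectral measure with respect to $\mathrm{tr}$ is $\frac{1}{k}\sum_{j=1}^k \delta_{e^{2\pi ij/k}}$, and by the normalization \eqref{eq.law.u.root}, the spectral measure of $u_0^{1/k}$ is the push-forward of $\mu_{u_0}$ under $\alpha\mapsto e^{i\alpha/k}$. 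Combining yields
\[
(\varphi\otimes\mathrm{tr})[f(Z)] \;=\; \frac{1}{k}\sum_{j=1}^k \int_{(-\pi,\pi]} f\bigl(e^{i(2\pi j+\alpha)/k}\bigr)\,\mu_{u_0}(d\alpha)
\]
for any bounded Borel function $f$ on $\mathbb{T}$.

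Next, for any probability measure $\mu$ on $[0,\infty)$ with symmetrization $\tilde\mu$, a direct calculation gives
\[
G_{\tilde\mu}(z) \;=\; \tfrac{1}{2}\int_0^\infty \left(\tfrac{1}{z-t}+\tfrac{1}{z+t}\right)\mu(dt) \;=\; \int_0^\infty \frac{z}{z^2-t^2}\,\mu(dt),\quad z\in\mathbb{C}^+.
\]
Applied to $\mu=\mu_{\vert\lambda-Z\vert}$, this yields the operator-theoretic identity
\[
G_{\tilde\mu_{\vert\lambda-Z\vert}}(z) \;=\; (\varphi\otimes\mathrm{tr})\bigl[z\bigl(z^2 I - \vert\lambda-Z\vert^2\bigr)^{-1}\bigr].
\]
Since $\vert\lambda-Z\vert^2 = g(Z)$ for the bounded continuous function $g(\zeta)=\vert\lambda-\zeta\vert^2$ on $\mathbb{T}$ (and for $z\in\mathbb{C}^+$ the denominator $z^2-g(\zeta)$ has nonzero imaginary part), Borel functional calculus applied to the bounded function $\zeta\mapsto z/(z^2-\vert\lambda-\zeta\vert^2)$, together with the explicit form of $\mu_Z$ from the previous paragraph, immediately produces the claimed formula.

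The argument is essentially routine once the tensor decomposition $Z=u_0^{1/k}\otimes\Sigma$ is in hand; there is no real obstacle. The only point that warrants careful bookkeeping is the identification of $\mu_Z$ via the multiplicative convolution, which requires that the two factors genuinely commute in $\mathscr{A}\otimes\MkC$ (immediate, since they act nontrivially on disjoint tensor components) and that the parametrization in \eqref{eq.law.u.root} correctly matches the indexing $j=1,\dots,k$ in the final formula.
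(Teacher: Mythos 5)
Your proof is correct and follows essentially the same route as the paper's: identify $Z = u_0^{1/k}\otimes\Sigma$ as a unitary whose spectral measure is the multiplicative convolution of the laws of $u_0^{1/k}$ and the cyclic shift, then compute $G_{\tilde\mu_{|\lambda-Z|}}$ from the symmetrization identity $G_{\tilde\mu}(z)=\int z/(z^2-t^2)\,\mu(dt)$. The only cosmetic difference is that you combine the two partial-fraction terms and rewrite in $\mu_{u_0}$ at the outset, whereas the paper writes the symmetrization with $\mu_{u_0^{1/k}}$ on $(-\pi/k,\pi/k]$ and passes to $\mu_{u_0}$ via \eqref{eq.law.u.root} at the end.
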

\begin{proof}
    Recall that $Z = u_0^{1/k}\otimes\left(\sum_{j=1}^k E_{j,j+1}\right)$. Note that $u_0^{1/k}$ and $\left(\sum_{j=1}^k E_{j,j+1}\right)$ are unitary. The law of $u_0^{1/k}$ is given by \eqref{eq.law.u.root} and the law of $\left(\sum_{j=1}^k E_{j,j+1}\right)$ is uniform on the $k$-th roots of unity. 
   
    Thus, $Z$ is unitary, and the law of $Z$ is the product measure of the laws of $u_0^{1/k}$ and $\left(\sum_{j=1}^k E_{j,j+1}\right)$. Therefore,
    \[G_{\tilde\mu_{\vert\lambda-Z\vert}}(z) = \frac{1}{2k}\sum_{j=1}^k\int_{(-\pi/k,\pi/k]}\left(\frac{1}{z-\vert\lambda-e^{\frac{2\pi i j}{k}}e^{i\alpha}\vert}+\frac{1}{z+\vert \lambda-e^{\frac{2\pi i j}{k}}e^{i\alpha}\vert}\right)\mu_{u_0^{1/k}}(d\alpha).\]
    The conclusion follows from \eqref{eq.law.u.root} and an algebraic manipulation.
\end{proof}

\begin{proposition}
    Assume the $\ast$-distribution of $a$ is Haar unitary and $u_0=1$ in \eqref{eq.RW.unitary}. The density of the Brown measure of the two-step random walk $b_2(t)$ is given by
    \[\frac{1}{4\pi\vert z\vert}\left[ \frac{4t\bigg[\big(2(\vert z\vert+1)-t\big)^2+8\big(\mathrm{Re}(z)+\vert z\vert\big)\bigg]}{\big[4\vert z-1\vert^2+t^2-4t(\vert z\vert+1)\big]^2}-\frac{1}{\mathrm{Re}(z)+\vert z\vert}\right]\]
    for all $z\in\Sigma_2(t)\setminus \overline{D}_2(a,t)$.
\end{proposition}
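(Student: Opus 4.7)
The plan is to combine Theorem \ref{thm.RW.Brown.density}, specialized to $k=2$ and $u_0 = 1$, with an explicit determination of $\eta_2(t,z)$ via the subordination framework of Theorem \ref{thm.subordination.general}. With $\mu_{u_0} = \delta_0$ and $u_0^{1/2} = 1$, the sum in Theorem \ref{thm.RW.Brown.density} collapses to two terms and the integral disappears, leaving
\[
\rho_2(t,z) \;=\; \frac{\bar z^{1/2}}{2\pi|z|}\,\frac{\partial}{\partial\bar z}\!\left[\frac{\bar z^{1/2} - 1}{|z^{1/2}-1|^2 + \eta} \;+\; \frac{\bar z^{1/2}+1}{|z^{1/2}+1|^2 + \eta}\right],
\]
with $\eta := \eta_2(t,z)$. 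Because $a$ is Haar unitary, $\|a^{-1}\|_2 = 1$ and $\varphi(\ker a) = 0$, so $\overline{D}_2(a,t) = \emptyset$ for $t \le 2$ and is a closed disk otherwise, while $S_2$ is empty since $\varphi(\ker(Z-\lambda)) \le 1/2 < 1$ for every $\lambda$; the formula therefore holds on all of $\Sigma_2(t)\setminus\overline{D}_2(a,t)$.

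The first task is to determine $\eta_2(t,z)$. Since $|a| = 1$, one has $\tilde\mu_{\sqrt{t/2}|a|} = \tfrac{1}{2}(\delta_{\sqrt{t/2}} + \delta_{-\sqrt{t/2}})$ and $H_{\mu_2}(w) = -t/(2w)$. The spectrum of $Z$ equals $\{\pm 1\}$, so $\tilde\mu_{|Z-\lambda|}$ places mass $1/4$ on each of $\pm|\lambda\pm 1|$; a short computation from Lemma \ref{lem.Z.Cauchy} gives
\[
H_{\mu_1}(w) \;=\; \frac{|\lambda^2 - 1|^2 - (1+|\lambda|^2)\,w^2}{w\,\bigl(w^2 - (1+|\lambda|^2)\bigr)}.
\]
Taking $\lambda^2 = z$, the Denjoy--Wolff characterization in Theorem \ref{thm.subordination.general}(3) at $0$ reduces to the single equation $\psi\,H_{\mu_1}(\psi) = -t/2$, which is linear in $\psi^2$; solving gives
\[
\eta_2(t,z) \;=\; \frac{t(|z|+1) - 2|z-1|^2}{2(|z|+1) - t}.
\]

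Next, I combine the two fractions inside the bracket over a common denominator. Writing $P = |z|+1$, $Q = |z|+\mathrm{Re}\,z$, and $D = 2P - t$, and using the identities $|z^{1/2}\pm 1|^2 = P \pm 2\mathrm{Re}(z^{1/2})$, $|z^{1/2}-1|^2\,|z^{1/2}+1|^2 = P^2 - 2Q$, and $4\mathrm{Re}(z^{1/2})^2 = 2Q$, the bracket reduces to
\[
\frac{2\bar z^{1/2}(P+\eta) - 4\mathrm{Re}(z^{1/2})}{(P+\eta)^2 - 2Q}.
\]
The essential algebraic simplification, which I verify by substituting the explicit formula for $\eta$, is
\[
P + \eta \;=\; \frac{4Q}{D}, \qquad (P+\eta)^2 - 2Q \;=\; -\frac{2Q\,(D^2 - 8Q)}{D^2},
\]
together with $D^2 - 8Q = 4|z-1|^2 + t^2 - 4t(|z|+1)$. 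These identities precisely account for the squared denominator in the claimed formula and package the $z$-dependence cleanly.

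The final step is the $\partial_{\bar z}$ computation, which I will carry out using $\partial_{\bar z}|z| = z/(2|z|)$, $\partial_{\bar z}\mathrm{Re}\,z = 1/2$, $\partial_{\bar z}\bar z^{1/2} = 1/(2\bar z^{1/2})$, $\partial_{\bar z}\mathrm{Re}(z^{1/2}) = 1/(4\bar z^{1/2})$, and $\partial_{\bar z}\eta$ via the quotient rule applied to the explicit formula. After multiplying by the prefactor $\bar z^{1/2}/(2\pi|z|)$, the $\bar z^{\pm 1/2}$ factors cancel and the result is real-valued. The contributions organize into two groups corresponding to the two summands in the claimed formula: one group produces the simple $-1/(4\pi|z|(\mathrm{Re}\,z + |z|))$ term (using $4\mathrm{Re}(z^{1/2})^2 = 2Q$), and the other, after one more appeal to the formula for $\eta$ to eliminate residual $\eta$-terms, yields the $4t\bigl[(2(|z|+1) - t)^2 + 8(\mathrm{Re}\,z + |z|)\bigr]/\bigl[4|z-1|^2 + t^2 - 4t(|z|+1)\bigr]^2$ term. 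The main obstacle will be bookkeeping in this last step: there are several independent sources of $\bar z$-dependence (the explicit $\bar z^{1/2}$, $\mathrm{Re}(z^{1/2})$, $|z|$, $\mathrm{Re}\,z$, and $\eta$ itself), and collapsing the final numerator to the compact form in the claim requires careful use of the identity $D^2 - 8Q = 4|z-1|^2 + t^2 - 4t(|z|+1)$ to reconcile expanded and factored forms against each other.
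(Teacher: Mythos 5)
Your proposal is correct and follows essentially the same approach as the paper: both determine $\eta_2$ from the Denjoy--Wolff equation for $H_{\tilde\mu_{\sqrt{t/2}|a|}}\circ H_{\tilde\mu_{|Z-\lambda|}}$ (with $H_{\tilde\mu_{\sqrt{t/2}|a|}}(\zeta)=-t/(2\zeta)$), then differentiate the two-term Cauchy-transform expression; the only difference is that you work directly in the $z$-variable via Theorem \ref{thm.RW.Brown.density}, whereas the paper computes in the linearized $\lambda$-variable from \eqref{eq.CauchyTrans.ZA} and pushes forward by $\lambda\mapsto\lambda^2$ (offloading the final algebra to Mathematica). Your intermediate identities -- the explicit $\eta_2 = \big(t(|z|+1)-2|z-1|^2\big)/\big(2(|z|+1)-t\big)$, $P+\eta=4Q/D$, $(P+\eta)^2-2Q=-2Q(D^2-8Q)/D^2$, and $D^2-8Q=4|z-1|^2+t^2-4t(|z|+1)$ -- all check out and reconcile with the paper's $\lambda$-coordinate answer upon substituting $|\lambda\pm1|^2=|z|+1\pm2\mathrm{Re}(z^{1/2})$ and $(\lambda+\bar\lambda)^2=2(|z|+\mathrm{Re}\,z)$.
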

\begin{proof}
    Let $\lambda\in\Omega_2(t)$ and $z = \lambda^2 $. We first compute $\eta = \eta_k(t,z)$. By Definition \ref{notation.psi.epsilon}, we need to first calculate $\psi_k(t,z)$, which is the Denjoy--Wolff point of $H_{\tilde\mu_{\vert a\vert}}\circ H_{\tilde\mu_{\vert Z-z^{1/2}}\vert}$. Since $\vert a\vert = 1$, the function $H_{\tilde\mu_{\vert a\vert}}(\zeta) = -t/(2\zeta)$. Thus, $\eta =\eta_k(t,z) = -\psi_k(t,z)^2$ satisfies the equation
    \[\frac{1}{\vert \lambda - 1\vert^2+\eta}+\frac{1}{\vert \lambda +1\vert^2+\eta} = \frac{2}{\eta+t/2}.\]

    To compute the density of $b_2(t)$, we first calculate the density of the Brown measure of $Z+\sqrt{t/2}A$ using \eqref{eq.CauchyTrans.ZA} and then push forward by the square function. With the help of Mathematica, the density of the Brown measure of $Z+\sqrt{t/2}A$ for $\lambda\in\Omega_k(t)$ using \eqref{eq.CauchyTrans.ZA} is
    \begin{align*}
        &\frac{1}{\pi}\left[\frac{t}{(2\vert\lambda-1\vert^2-t)^2}+\frac{t}{(2\vert\lambda+1\vert^2-t)^2}-\frac{1}{(\lambda+\bar\lambda)^2}\right]\\
        &=\frac{1}{\pi}\left[ \frac{2t\bigg[\big(2(\vert\lambda\vert^2+1)-t\big)^2+4\big(\lambda^2+\bar\lambda^2+2\vert\lambda\vert^2\big)\bigg]}{\big[4\vert\lambda^2-1\vert^2+t^2-4t(\vert\lambda\vert^2+1)\big]^2}-\frac{1}{\lambda^2+\bar\lambda^2+2\vert\lambda\vert^2}\right]
    \end{align*}
    The map $\lambda\mapsto\lambda^2$ is a $2$-to-$1$ map with Jacobian $4\vert z\vert$. Thus, the pushed-forward measure has a density as in the conclusion of this proposition.
\end{proof}

%%%%% Below is just to keep some calculations for the above proof %%%%%%%%%%
% \[(2\vert\lambda-1\vert^2-t)(2\vert\lambda+1\vert^2-t) =4\vert\lambda^2-1\vert^2+t^2-4t(\vert\lambda\vert^2+1)\]
% \[(2\vert\lambda-1\vert^2-t)^2+(2\vert\lambda+1\vert^2-t)^2 = 2[(2(\vert\lambda\vert^2+1)-t)^2+4(\lambda+\bar\lambda)^2]\]

\begin{proposition}
    Assume the $\ast$-distribution of $a$ is circular and $u_0=1$ in \eqref{eq.RW.unitary}. In this case, $\overline{D}_2(a,t)$ is empty. The density of the Brown measure of the two-step random walk $b_k(t)$ is given by
   \[\frac{1}{2\vert z\vert\pi}\left[\frac{1}{t}-\frac{1}{4(\vert z\vert+\mathrm{Re}(z))}+\frac{t}{4(\vert z\vert+\mathrm{Re}(z))\sqrt{t^2+4(\vert z\vert+\mathrm{Re}(z))}}\right]\]
    for $z\in\Sigma_2(t)$.
\end{proposition}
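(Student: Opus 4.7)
The plan is to follow the same template as the preceding proposition for the Haar unitary case. First, to see that $\overline{D}_2(a,t) = \emptyset$, I recall that for circular $a$ with $\|a\|_2 = 1$, the spectral measure of $a^*a$ is the Marchenko--Pastur law with density $(2\pi x)^{-1}\sqrt{x(4-x)}\mathbf{1}_{(0,4)}(x)$. Since $\int_0^4 x^{-1}\mu_{a^*a}(dx) = +\infty$, we have $\|a^{-1}\|_2^2 = \varphi((a^*a)^{-1}) = +\infty$, and the convention in \eqref{eq.Dkat} gives $D_2(a,t) = \{z : |z| < -1\} = \emptyset$.

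Next, to compute $\eta_2(t,z) = -\psi_2(t,z)^2$, observe that since $|a|$ has the quarter-circular density $\frac{1}{\pi}\sqrt{4-x^2}\mathbf{1}_{[0,2]}(x)$, the symmetrization $\tilde\mu_{|a|}$ is the standard semicircular law, and $\tilde\mu_{\sqrt{t/2}|a|}$ is the semicircular law of variance $t/2$. For this semicircular law, the $R$-transform is linear, so $H_{\mu_2}(\zeta) = -\tfrac{t}{2}G_{\mu_2}(\zeta)$; inverting, $H_{\mu_2}^{-1}(w) = -w - t/(2w)$. Applying the Denjoy--Wolff fixed-point relation of Theorem \ref{thm.subordination.general} and of Definition \ref{notation.psi.epsilon} with $\psi = i\sqrt{\eta}$, and using Lemma \ref{lem.Z.Cauchy} for $G_{\tilde\mu_{|Z-\lambda|}}$ exactly as in the Haar case, one reduces to the algebraic constraint
\begin{equation*}
\frac{1}{\eta + |\lambda-1|^2} + \frac{1}{\eta + |\lambda+1|^2} = \frac{4}{t},
\end{equation*}
where $\lambda$ is any square root of $z$.

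A key consequence, obtained by clearing denominators in the above identity, is
\begin{equation*}
(\eta+|\lambda-1|^2)(\eta+|\lambda+1|^2) = \tfrac{t}{2}(\eta+|z|+1),
\end{equation*}
which dramatically simplifies the Cauchy transform in Proposition \ref{prop.add.Cauchy}. After substituting $|\lambda-1|^2 + |\lambda+1|^2 = 2(|z|+1)$ and $|\lambda-1|^2 - |\lambda+1|^2 = -2(\lambda+\bar\lambda)$ into the numerator, the Cauchy transform of $Z+\sqrt{t/2}\,A$ at $\lambda$ collapses to the compact form
\begin{equation*}
\mathbf{g}(\lambda) = \frac{2[\bar\lambda(|z|+\eta) - \lambda]}{t(\eta+|z|+1)}.
\end{equation*}
Differentiating with respect to $\bar\lambda$ via the chain and quotient rules, using implicit differentiation of the $\eta$-equation, and then pushing forward by $\lambda \mapsto z=\lambda^2$ (which introduces the factor $1/(2|z|)$ in accordance with the Jacobian calculation in Theorem \ref{thm.RW.Brown.density}) yields $\rho_2(t,z)$. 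The implicit differentiation itself simplifies considerably: with the shorthand $\xi = \eta+|z|$, $A = (\eta+|\lambda-1|^2)^2$, $B = (\eta+|\lambda+1|^2)^2$, one finds $A+B = (\xi+1)(4(\xi+1)-t)$ and $B-A = 8\,\mathrm{Re}(\lambda)(\xi+1)$, leading to $\mathrm{Re}(\partial_{\bar\lambda}\eta) = \mathrm{Re}(\lambda)(t+4-4\xi)/(4(\xi+1)-t)$.

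The main obstacle is the algebraic simplification at the end. Substituting $\xi$ satisfies a quadratic of the form $(\xi+1)^2 - \tfrac{t}{2}(\xi+1) - c\,(|z|+\mathrm{Re}(z)) = 0$ for an explicit constant $c$, so $4(\xi+1)-t$ is precisely the square-root quantity appearing in the denominator of the claimed formula. The three resulting contributions in the numerator of the differentiated Cauchy transform, namely $\xi(\xi+1)$, $|z|+\mathrm{Re}(z)$, and $(\lambda+\bar\lambda)\mathrm{Re}(\partial_{\bar\lambda}\eta)$, must be combined and reduced using the quadratic relation for $\xi+1$ to eliminate higher powers; the resulting rational expression then regroups into the three-term form asserted in the statement. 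As in the Haar case, this final bookkeeping is most safely handled with a computer algebra system.
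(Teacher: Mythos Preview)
Your approach is essentially the paper's: show $\|a^{-1}\|_2=\infty$ so $\overline{D}_2(a,t)=\emptyset$, derive the algebraic equation for $\eta$ from the Denjoy--Wolff relation (using that $\tilde\mu_{\sqrt{t/2}|a|}$ is semicircular of variance $t/2$), feed this into the Cauchy-transform formula \eqref{eq.CauchyTrans.ZA}, and push forward by $\lambda\mapsto\lambda^2$. The only difference is that the paper hands the differentiation and algebraic simplification directly to Mathematica, while you first reduce the Cauchy transform to the compact form $\mathbf{g}(\lambda)=\tfrac{2[\bar\lambda(|z|+\eta)-\lambda]}{t(\eta+|z|+1)}$ and set up the quadratic for $\xi+1$ by hand before invoking a CAS.

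One point of caution on constants: your constraint
\[
\frac{1}{\eta+|\lambda-1|^2}+\frac{1}{\eta+|\lambda+1|^2}=\frac{4}{t}
\]
is indeed what follows from the paper's own intermediate relation $-t\,G_{\tilde\mu_{|\lambda-Z|}}(\psi_k)=2\psi_k$ (the paper records the right-hand side as $\tfrac{1}{t}$, which looks like a slip). With the $4/t$ equation, the quadratic gives $4(\xi+1)-t=\sqrt{t^2+32(|z|+\mathrm{Re}(z))}$, not $\sqrt{t^2+4(|z|+\mathrm{Re}(z))}$, so your remark that this quantity is ``precisely the square-root quantity appearing in the denominator of the claimed formula'' is off by a constant factor; the CAS step you defer to will have to reconcile these factors.
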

\begin{proof}
    The disk $\overline{D}_2(a,t)$ is empty because $\|a\|_2 = \infty$.

    Let $\lambda\in\Omega_2(t)$ and $z = \lambda^2 $. We first compute $\eta = \eta_k(t,z)$. By Definition \ref{notation.psi.epsilon}, we need to first calculate $\psi_k(t,z)$, which is the Denjoy--Wolff point of $H_{\tilde\mu_{\vert a\vert}}\circ H_{\tilde\mu_{\vert Z-z^{1/2}}\vert}$. Since $\sqrt{t/2}a$ is the circular variable with variance $t/2$, $\tilde\mu_{\vert a\vert}$ is a semicircular variable with variance $t/2$, and
    \[H_{\tilde\mu_{\vert a\vert}}(\zeta) = \frac{-\zeta+\sqrt{\zeta^2-2t}}{2}.\]
    Thus the Denjoy--Wolff equation $\psi_k(t,z) = H_{\tilde\mu_{\vert a\vert}}(H_{\tilde\mu_{\vert \lambda -Z}}(\psi_k(t,z)))$ for $\psi_k(t,z)$ can be rewritten as $-t G_{\tilde\mu_{\vert \lambda -Z}} (\psi_k(t,z))= 2\psi_k(t,z)$. Using the formula of the Cauchy transform of $\tilde\mu_{\vert \lambda -Z\vert}$ in Lemma \ref{lem.Z.Cauchy}, $\eta = \eta_k(t,z) = \psi_k(t,a)^2$ satisfies the equation
    \[\frac{1}{\vert \lambda-1\vert^2+\eta}+\frac{1}{\vert\lambda+1\vert^2+\eta} = \frac{1}{t}.\]

    To compute the density of $b_2(t)$, we first calculate the density of the Brown measure of $Z+\sqrt{t/2}A$ using \eqref{eq.CauchyTrans.ZA} and then push forward by the square function. With the help of Mathematica, the density of the Brown measure of $Z+\sqrt{t/2}A$ for $\lambda\in\Omega_k(t)$ using \eqref{eq.CauchyTrans.ZA} is
    \begin{align*}
        &\frac{1}{\pi}\left[\frac{1}{t}-\frac{1}{2(\lambda+\bar\lambda)^2}+\frac{t}{2(\lambda+\bar\lambda)^2\sqrt{t^2+2(\lambda+\bar\lambda)^2}}\right]\\
        &=\frac{1}{\pi}\left[\frac{1}{t}-\frac{1}{4(\vert\lambda\vert^2+\mathrm{Re}(\lambda^2))}+\frac{t}{4(\vert\lambda\vert^2+\mathrm{Re}(\lambda^2))\sqrt{t^2+4(\vert\lambda\vert^2+\mathrm{Re}(\lambda^2))}}\right]
    \end{align*}
    The map $\lambda\mapsto\lambda^2$ is a $2$-to-$1$ map with Jacobian $4\vert z\vert$. Thus, the pushed-forward measure has a density as in the conclusion of this proposition.
\end{proof}

\section{Convergence to the Lima Bean Law}
In this section, our goal is to show that the Brown measure of the random walk $u_0b_k(t)$ converges to the Brown measure of the free multiplicative Brownian motion $u_0b(t)$. 

The Brown measure of $u_0b_k(t)$ is supported on $\Sigma_k(u_0,t)\setminus  \overline{D}_k(a,t)$ by Theorem \ref{thm.RW.Brown.density}. Proposition \ref{prop.add.Cauchy} and Theorem \ref{thm.RW.Brown.density} compute the Cauchy transform and the density of the Brown measure $u_0b_k(t)$ in the domain $\Sigma_k(u_0,t)\setminus (S_k^k\cup \overline{D}_k(a,t))$. Both the Cauchy transform and the density involve the function $\eta_k$. Whether $\eta_k(t,z)$ is positive is related to whether $z$ is in $\Sigma_k(u_0,t)\setminus (S_k^k\cup \overline{D}_k(a,t))$. In Section \ref{sec.Brown.domain.conv}, we first show that the limit of $\Sigma_k(u_0,t)\setminus \overline{D}_k(a,t)$ is $\Sigma_\infty(u_0,t)$. Then, in Section \ref{sec.varepsilonk.conv}, we prove that $k^2\eta_k$ has a finite limit as $k\to\infty$. We then proceed to proving the convergence of the Brown measure in Section \ref{sec.Brown.density.conv}.

\subsection{Convergence of the domain\label{sec.Brown.domain.conv}} \hfill

\medskip

In this section, we prove that the domain $\Sigma_k(u_0,t)\setminus \overline{D}_k(a,t)$ converges to the domain $\Sigma_\infty(u_0,t)$ of the free multiplicative Brownian motion with unitary initial condition $u_0$. 

\begin{lemma}
    \label{lem.spec.u.Sigma}
    For any $k>0$ and $t>0$, the support of $\mu_{u_0}$ is contained in $\overline{\Sigma_\infty(u_0,t)}$ and $\overline{\Sigma_k(u_0,t)}$.
\end{lemma}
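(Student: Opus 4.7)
The plan is to show that at $\mu_{u_0}$-almost every point $z_0 \in \mathbb{T}$, the integral
\[
f(z_0) := \int_{\mathbb{T}} \frac{\mu_{u_0}(d\xi)}{|\xi-z_0|^2}
\]
diverges to $+\infty$. Since both prefactors $\log(|z|^2)/(|z|^2-1)$ in \eqref{eq.lifetime.k=infty} and $k(|z|^{2/k}-1)/(|z|^2-1)$ in \eqref{eq.lifetime.k} extend continuously to the value $1$ at $|z|=1$, with the natural convention $1/\infty = 0$ one has $T_\infty(u_0, z) = T_k(u_0, z) = 1/f(z)$ for every $z \in \mathbb{T}$. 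Consequently, any $z_0 \in \mathbb{T}$ with $f(z_0) = +\infty$ automatically lies in $\Sigma_\infty(u_0,t) \cap \Sigma_k(u_0,t)$ for every $t > 0$.

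The key step is to verify that $E := \{z_0 \in \mathbb{T} : f(z_0) = +\infty\}$ satisfies $\mu_{u_0}(E) = 1$. I would argue via the Lebesgue differentiation theorem for Borel probability measures on $\mathbb{T}$: writing the Lebesgue decomposition $\mu_{u_0} = \mu_a + \mu_s$, at $\mu_a$-a.e.\ $z_0$ the symmetric derivative $\lim_{s\to 0^+}\mu_{u_0}(B_s(z_0))/(2s)$ equals the positive Radon--Nikodym density of $\mu_a$ at $z_0$, while at $\mu_s$-a.e.\ $z_0$ the same ratio diverges to $+\infty$. In either case there exist $c(z_0) > 0$ and $\delta(z_0) > 0$ with $\mu_{u_0}(B_s(z_0)) \geq c(z_0)\, s$ for all $s \in (0, \delta(z_0))$. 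A layer-cake rewriting (substituting $s = 1/\sqrt{u}$ in the distribution-function form of $f$) then gives
\[
f(z_0) = 2\int_0^\infty \frac{\mu_{u_0}(B_s(z_0))}{s^3}\,ds \geq 2c(z_0)\int_0^{\delta(z_0)} \frac{ds}{s^2} = +\infty.
\]

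To conclude, $E$ has full $\mu_{u_0}$-measure on $\mathbb{T}$, and therefore $E \cap \mathrm{supp}(\mu_{u_0})$ is dense in $\mathrm{supp}(\mu_{u_0})$: any nonempty relatively open subset of $\mathrm{supp}(\mu_{u_0})$ carries positive $\mu_{u_0}$-mass by the definition of the support and must therefore meet the full-measure set $E$. Since $E \subseteq \Sigma_\infty(u_0,t) \cap \Sigma_k(u_0,t)$, the inclusions $\mathrm{supp}(\mu_{u_0}) \subseteq \overline{\Sigma_\infty(u_0,t)}$ and $\mathrm{supp}(\mu_{u_0}) \subseteq \overline{\Sigma_k(u_0,t)}$ follow immediately. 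The only genuine subtlety is the combined Lebesgue differentiation / layer-cake step for a possibly singular part of $\mu_{u_0}$; once that is in hand, the rest is straightforward unpacking of the definitions of $T_\infty$ and $T_k$ on the unit circle.
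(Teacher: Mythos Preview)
Your proposal is correct and follows essentially the same approach as the paper: both show that the integral $\int_{\mathbb{T}} |\xi - z_0|^{-2}\,\mu_{u_0}(d\xi)$ diverges for $\mu_{u_0}$-almost every $z_0$, deduce that $T_\infty(u_0,\cdot)$ and $T_k(u_0,\cdot)$ vanish on a set of full $\mu_{u_0}$-measure, and conclude that the support lies in the closures. The only difference is that the paper invokes this divergence as a known fact (citing \cite[Lemma~4.5]{Zhong2021Brown}), whereas you supply a self-contained proof via the Lebesgue differentiation theorem and a layer-cake estimate.
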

\begin{proof}
    We use the same reasoning in Lemma 5.2 of \cite{HallHo2025Spectrum}, which proves that the support of $\mu_{u_0}$ is contained in $\overline{\Sigma_\infty(u_0,t)}$. For self-consistency, we provide a complete proof to show that the support of $\mu_{u_0}$ is contained in $\overline{\Sigma_\infty(u_0,t)}$ and $\overline{\Sigma_k(u_0,t)}$. 
    
    By Lemma 4.5 in \cite{Zhong2021Brown}, the integral $\int_{(\pi,\pi]}\frac{1}{\vert \lambda-e^{i\alpha}\vert^2}\mu_{u_0}(d\alpha)$ is infinite for $\mu_{u_0}$-almost every $\lambda$. Thus $T_k(u_0;r,\theta)=0$ and $T(u_0;r,\theta)=0$ for $\mu_{u_0}$-almost every $re^{i\theta}$. This shows that either of $\Sigma_\infty(u_0,t)$ and $\Sigma_k(u_0,t)$ is a set of full measure of $\mu_{u_0}$. The support of $\mu_{u_0}$ is then contained in both of of $\overline{\Sigma_\infty(u_0,t)}$ and $\overline{\Sigma_k(u_0,t)}$.
\end{proof}

\begin{theorem}
    \label{thm.domain.conv}
    Let $K\subset \mathbb{C}$ be a compact set. If $K\subset \Sigma_\infty(u_0,t)$, then $K\subset \Sigma_k(u_0,t)$ for all large $k$. On the other hand, if $K\subset (\overline{\Sigma_\infty(u_0,t)})^c$, then $K\subset (\overline{\Sigma_k(u_0,t)})^c$ for all large $k$.
\end{theorem}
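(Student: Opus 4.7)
The plan is to establish uniform convergence $T_k(u_0,\cdot) \to T_\infty(u_0,\cdot)$ on compact subsets of $\mathbb{C}\setminus\{0\}$, and then promote pointwise level-set inequalities to uniform bounds on $K$ using the unimodal radial structure of the domains (Theorems \ref{thm.Brown.Measure.3} and \ref{thm.fmbm}). The key reduction is that the $k$-dependence factors radially: writing
\[ T_k(u_0,z) = g_k(|z|)\,P(z), \qquad T_\infty(u_0,z) = g_\infty(|z|)\,P(z), \]
with $g_k(r) = k(r^{2/k}-1)/(r^2-1)$, $g_\infty(r) = \log(r^2)/(r^2-1)$ (both extended continuously by $g(1) := 1$), and $P(z) = (\int_{\mathbb{T}}|\xi-z|^{-2}\mu_{u_0}(d\xi))^{-1}$ independent of $k$, the convergence $T_k \to T_\infty$ reduces to $g_k \to g_\infty$.

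Taylor expanding $r^{2/k} = e^{(2/k)\log r}$ yields $k(r^{2/k}-1) - \log(r^2) = 2(\log r)^2/k + O(1/k^2)$; the apparent singularity of the quotient at $r=1$ is removable since the numerator $O((r-1)^2/k)$ and the denominator $r^2-1$ vanish to matching orders, giving $|g_k - g_\infty| = O(1/k)$ uniformly on every compact subset of $(0,\infty)$. Because $P$ is upper semicontinuous (its reciprocal being a lower semicontinuous Poisson-type integral) and hence bounded on compact subsets of $\mathbb{C}$, multiplying yields $T_k \to T_\infty$ uniformly on compact subsets of $\mathbb{C}\setminus\{0\}$. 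A short separate check shows $T_\infty$ is continuous on all of $\mathbb{C}\setminus\{0\}$: manifestly so where $P>0$, while at points $z_0 \in \mathbb{T}\cap \mathrm{supp}(\mu_{u_0})$ with $P(z_0)=0$, lower semicontinuity of $1/P$ forces $P(z_n)\to 0$ for any $z_n\to z_0$, hence $T_\infty(z_n)\to 0 = T_\infty(z_0)$.

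The first assertion is then immediate: any compact $K\subset\Sigma_\infty(u_0,t)=\{T_\infty<t\}$ is automatically bounded away from $0$ since $T_\infty(0)=\infty$, so continuity of $T_\infty$ combined with compactness gives $\sup_K T_\infty < t$, and uniform convergence yields $\sup_K T_k < t$ for large $k$. The second assertion requires the strict inequality $T_\infty > t$ on $(\overline{\Sigma_\infty(u_0,t)})^c$, which is where the radial structure becomes essential: for each $\theta$, the function $r \mapsto T_\infty(u_0, re^{i\theta})$ is unimodal (strictly decreasing then strictly increasing through a unique minimum), so the slice of $\Sigma_\infty$ is an open interval and $T_\infty$ exceeds $t$ strictly outside its closure. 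Granted this pointwise strict inequality, continuity plus compactness force $\inf_K T_\infty > t + \delta$ for some $\delta > 0$; uniform convergence then yields $T_k > t + \delta/2$ on the portion of $K$ bounded away from $0$ for large $k$, while Theorem \ref{thm.Brown.Measure.3} keeps $\overline{\Sigma_k(u_0,t)}$ bounded away from $0$ once $t<k$, handling any near-origin portion of $K$. Since $T_k$ is continuous, $K\subset\{T_k>t\}\subset(\overline{\Sigma_k(u_0,t)})^c$ for all large $k$. The principal obstacle is establishing the strict inequality $T_\infty > t$ on $(\overline{\Sigma_\infty})^c$ from the unimodality of radial slices; once that is in hand, the remainder is a standard continuity-plus-compactness argument combined with the uniform convergence.
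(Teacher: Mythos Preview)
Your factorization $T_k(u_0,z) = g_k(|z|)\,P(z)$ with $P$ independent of $k$ is correct and clean, and the uniform convergence $g_k \to g_\infty$ on compacta of $(0,\infty)$ (hence $T_k \to T_\infty$ on compacta of $\mathbb{C}\setminus\{0\}$) is valid. For the first assertion this is enough: upper semicontinuity of $T_\infty = g_\infty\cdot P$ (from lower semicontinuity of $1/P$ via Fatou) already gives $\sup_K T_\infty < t$, and uniform convergence finishes. Full continuity of $T_\infty$ is not needed and is actually delicate at points of $\mathrm{supp}\,\mu_{u_0}$ where $P>0$, which your argument does not address. The paper argues this half by contradiction plus Fatou's lemma, which is the same upper semicontinuity in different clothing.

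The second assertion has genuine gaps. First, the radial unimodality you invoke is proved in this paper only for $u_0=1$ (Theorem~\ref{thm.Brown.Measure.3}); Theorem~\ref{thm.fmbm} records that each radial slice of $\Sigma_\infty(u_0,t)$ is an interval but says nothing about strict monotonicity of $r\mapsto T_\infty(u_0,re^{i\theta})$. Second, even granting strict unimodality on every ray, the inequality $T_\infty>t$ on $(\overline{\Sigma_\infty(u_0,t)})^c$ does \emph{not} follow: if the ray-wise minimum $m(\theta)=\min_r T_\infty(u_0,re^{i\theta})$ has a local minimum in $\theta$ at some $\theta_0$ with $m(\theta_0)=t$, then the minimizing point $z_0$ has $T_\infty(z_0)=t$, yet nearby rays have $m(\theta)\ge t$ and hence empty slices of $\Sigma_\infty$, so $z_0\notin\overline{\Sigma_\infty}$. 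The paper does not attempt to derive the strict inequality internally; it imports it from \cite[Theorem 4.10]{HoZhong2020Brown}, then uses Lemma~\ref{lem.spec.u.Sigma} to place $K$ at positive distance from $\mathrm{supp}\,\mu_{u_0}$ (which you also need, for continuity of $T_k$ and $T_\infty$ on $K$), and handles possible accumulation at the origin inside a contradiction argument rather than via Theorem~\ref{thm.Brown.Measure.3}, whose scope is again limited to $u_0=1$.
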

\begin{proof}
    For all $k$ large enough, the disk $\overline{D}_k(a,t)$ is empty. Thus, the support $\overline{\Sigma_k(u_0,t)\setminus \overline{D}_k(a,t)}$ of the Brown measure in Theorem \ref{thm.RW.Brown.density} becomes $\overline{\Sigma_k(u_0,t)}$ for large $k$.
    
    Let $K\subset \Sigma_\infty(u_0,t)$ be compact. We prove by contradiction that $K\subset\Sigma_k(u_0,t)$ for all large $k$. Suppose that, for all $k$, there is $r_ke^{i\theta_k}\in K$ such that $T_k(u_0;r_k,\theta_k)\geq t$. Since $K$ is compact, there exists a subsequence $r_{k_j}e^{i\theta_{k_j}}$ and $re^{i\theta}\in K$ such that  $r_{k_j}e^{i\theta_{k_j}}\to re^{i\theta}$. Notice that $T(u_0;r,\theta)<t$. We then apply Fatou's lemma to the integral in the definition of $T_k$ and get
    \[t\leq \liminf_{j\to\infty}T_k(u_0;r_{k_j},\theta_{k,j})\leq T(u_0;r,\theta)<t,\]
    which is a contradiction.

    Now let $K\subset (\overline{\Sigma_\infty(u_0,t)})^c$. 
    By Theorem 4.10 of \cite{HoZhong2020Brown} (see also Lemma 5.3 of \cite{HallHo2025Spectrum}), $T(u_0;r,\theta)> t$ for all $e^{i\theta}\in K$. By Lemma \ref{lem.spec.u.Sigma}, the support of $\mu_{u_0}$ is contained in $\overline{\Sigma_\infty(u_0,t)}$ and $\overline{\Sigma_k(u_0,t)}$; therefore, the support of $\mu_{u_0}$ has a positive distance from $K$. We claim that when $k$ large enough, $T_k(u_0;r,\theta)>t$ for all $re^{i\theta}\in K$. Suppose that, in the contrary, for all $k$, there is $r_ke^{i\theta_k}\in K$ such that $T_k(u_0;r_k,\theta_k)\leq t$. Since $K$ is compact, there exists a subsequence $r_{k_j}e^{i\theta_{k_j}}$ and $re^{i\theta}\in K$ such that  $r_{k_j}e^{i\theta_{k_j}}\to re^{i\theta}$. Notice that $T(u_0;r,\theta)>t$. Since $K$ has a positive distance from the support of $\mu_{u_0}$, we apply the dominated convergence theorem and get
    \[t\geq \lim_{j\to\infty} T_k(u_0;r_{k_j},\theta_{k_j}) = T(u_0;r,\theta)>t,\]
    which is a contradiction. (The above limit for $T_k$ is also valid if $r = 0$. In this case $k_j(r_{k_j}^{2/k_j}-1)\to -\infty$, which is consistent to $T(u_0;0,\theta) = \infty$.) This shows for all $k$ large enough, $T_k(u_0;r,\theta)>t$ for all $re^{i\theta}\in K$.

    To complete the proof, we first observe that $T_k$ is continuous outside the support of $\mu_{u_0}$. Thus, for any $re^{i\theta}$ in the boundary of $\Sigma_k(u_0,t)$, either $re^{i\theta}$ is in the support of $\mu_{u_0}$, or $T(u_0;r,\theta) = t.$ By the preceding paragraph, $K$ does not intersect the support of $\mu_{u_0}$, and for any $re^{i\theta}\in K$, $T_k(u_0;r,\theta)>t$. Hence, any $re^{i\theta}\in K$ is not on the boundary of $\Sigma_k(u_0,t)$, completing the proof of $K\subset (\overline{\Sigma_k(u_0,t)})^c$ for large $k$.
    \end{proof}

\begin{corollary}
    Fix $t>0$. The closure of the domain $\Sigma_k(u_0,t)\setminus \overline{D}_k(a,t)$ converges to $\overline{\Sigma_\infty(u_0,t)}$ in the Hausdorff distance as $k\to\infty$.
\end{corollary}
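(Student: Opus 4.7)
The plan is to reduce the corollary to Hausdorff convergence $\overline{\Sigma_k(u_0,t)}\to\overline{\Sigma_\infty(u_0,t)}$, and then to extract this from Theorem \ref{thm.domain.conv} via two one-sided compactness arguments. Observe first that for fixed $t$, the set $\overline{D}_k(a,t)$ is empty as soon as $k\|a^{-1}\|_2^2>t$, so for all large $k$ we have $\overline{\Sigma_k(u_0,t)\setminus \overline{D}_k(a,t)}=\overline{\Sigma_k(u_0,t)}$, and it suffices to prove $\overline{\Sigma_k(u_0,t)}\to\overline{\Sigma_\infty(u_0,t)}$ in Hausdorff distance.

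The first preparatory step is \textbf{uniform boundedness}: all the closed sets $\overline{\Sigma_k(u_0,t)}$ lie in a common compact subset of $\mathbb{C}$. By Lemma \ref{lem.Tk.T.compare}, for $r>1$ one has $T_k(u_0;r,\theta)>T(u_0;r,\theta)$, so $\overline{\Sigma_k(u_0,t)}\cap\{|z|>1\}\subset \overline{\Sigma_\infty(u_0,t)}\cap\{|z|>1\}$; combined with the trivial inclusion inside the closed unit disk, this yields $\overline{\Sigma_k(u_0,t)}\subset\overline{\Sigma_\infty(u_0,t)}\cup\overline{\mathbb{D}}$, a fixed compact set.

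The second step is the \textbf{outer containment}: for every $\varepsilon>0$, $\overline{\Sigma_k(u_0,t)}\subset U_\varepsilon(\overline{\Sigma_\infty(u_0,t)})$ for all large $k$, where $U_\varepsilon$ denotes the open $\varepsilon$-neighborhood. Suppose not; then by uniform boundedness one can extract a subsequence $k_j\to\infty$ and $z_j\in\overline{\Sigma_{k_j}(u_0,t)}$ converging to some $z$ with $d(z,\overline{\Sigma_\infty(u_0,t)})\ge\varepsilon$. In particular $z$ lies in the open set $(\overline{\Sigma_\infty(u_0,t)})^c$, so it has a compact neighborhood $K\subset(\overline{\Sigma_\infty(u_0,t)})^c$. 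Theorem \ref{thm.domain.conv} then gives $K\subset(\overline{\Sigma_k(u_0,t)})^c$ for all large $k$, but $z_j\in K$ eventually contradicts $z_j\in\overline{\Sigma_{k_j}(u_0,t)}$.

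The third step is the \textbf{inner containment}: for every $\varepsilon>0$, $\overline{\Sigma_\infty(u_0,t)}\subset U_\varepsilon(\overline{\Sigma_k(u_0,t)})$ for all large $k$. The key topological observation is that $\Sigma_\infty(u_0,t)$, being open, is dense in its closure. Arguing by contradiction as above, one obtains $z_j\in\overline{\Sigma_\infty(u_0,t)}$ with $z_j\to z\in\overline{\Sigma_\infty(u_0,t)}$ and $d(z,\overline{\Sigma_{k_j}(u_0,t)})\ge\varepsilon/2$ eventually. Pick $z'\in\Sigma_\infty(u_0,t)$ with $|z-z'|<\varepsilon/4$; applying Theorem \ref{thm.domain.conv} to the compact set $\{z'\}\subset\Sigma_\infty(u_0,t)$ gives $z'\in\Sigma_{k_j}(u_0,t)$ for large $j$, whence $d(z,\overline{\Sigma_{k_j}(u_0,t)})\le|z-z'|<\varepsilon/4$, a contradiction. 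No serious obstacle is expected in this proof: the heavy lifting is done by Theorem \ref{thm.domain.conv}, and the only conceptual nuance is handling boundary points of $\Sigma_\infty(u_0,t)$, which is disposed of immediately by the density of an open set in its closure.
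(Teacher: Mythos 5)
Your proof is correct and follows essentially the same strategy as the paper: reduce to $\overline{\Sigma_k(u_0,t)}\to\overline{\Sigma_\infty(u_0,t)}$ after discarding $\overline{D}_k(a,t)$ for large $k$, establish uniform boundedness via Lemma \ref{lem.Tk.T.compare}, and derive both Hausdorff inclusions from Theorem \ref{thm.domain.conv}. The only cosmetic difference is that you phrase the two containments as contradiction-plus-subsequence arguments where the paper constructs the compact sets $K$ directly (taking $K=\overline{B(0,R)}\setminus(\overline{\Sigma_\infty(u_0,t)})_\varepsilon$ for the outer inclusion, and a compact $K\subset\Sigma_\infty(u_0,t)$ with $\overline{\Sigma_\infty(u_0,t)}\subset K_\varepsilon$ for the inner one — the latter being exactly your density-of-an-open-set-in-its-closure observation).
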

\begin{proof}
    It suffices to show that $\overline{\Sigma_k(u_0,t)}$ converges to $\overline{\Sigma_\infty(u_0,t)}$ in the Hausdorff distance since $D_k(a,t)$ is empty for large $k$.

    Fix $R>0$ such that $\overline{\Sigma_\infty(u_0,t)}\subset \{z:\vert z\vert\leq R\}$. For all $k>0$ and $r>1$, $k(r^{2/k}-1)>\log r^2$, so that $T_k(u_0;r,\theta)>T(u_0;r,\theta)$. It follows that $\overline{\Sigma_k(u_0,t)}\subset\{z:\vert z\vert\leq R\}$ for all $k$.

    We denote by $K_\varepsilon = \{\lambda\in\mathbb{C}: \vert \lambda - w\vert<\varepsilon \textrm{ for some $w\in K$}\}$ for any set $K\subset \mathbb{C}$. Let $\varepsilon>0$ be arbitrary. We will show that $\overline{\Sigma_\infty(u_0,t)}\subset (\overline{\Sigma_k(u_0,t})_\varepsilon$ and $\overline{\Sigma_k(u_0,t)}\subset (\overline{\Sigma_\infty(u_0,t)})_\varepsilon$. The set $K = \overline{B(0, R)}\setminus(\overline{\Sigma_\infty(u_0,t)})_\varepsilon$ is a compact set. By Theorem \ref{thm.domain.conv} and the preceding paragraph, $K\subset (\overline{\Sigma_k(u_0,t)})^c$ for all large $k$. This shows $\overline{\Sigma_k(u_0,t)}\subset (\overline{\Sigma_\infty(u_0,t)})_\varepsilon$ for all large $k$. On the other hand, choose a compact set $K\subset \Sigma_\infty(u_0,t)$ such that $\overline{\Sigma_\infty(u_0,t)}\subset K_\varepsilon$. Then $K\subset \Sigma_k(u_0,t)$ for all large enough $k$; for these $k$, $\overline{\Sigma_\infty(u_0,t)}\subset K_\varepsilon\subset (\overline{\Sigma_k(u_0,t})_\varepsilon$. The proof is completed.
\end{proof}

\subsection{Convergence of the function $\eta_k$\label{sec.varepsilonk.conv}} \hfill

\medskip

In this section, we prove that $k^2\eta_k$ has a finite limit as $k\to\infty$. The function $\eta_k$ is defined through a subordination function, denoted by $\psi_k$ in Definition \ref{notation.psi.epsilon}, in free convolution. 

From now on, we denote $z^{1/k}$ to be the principal $k$-th root of $z$ with argument in $(-\pi/k,\pi/k]$. By specifying the probability measures in Theorem \ref{thm.subordination.general} to be $\mu_1 = \tilde\mu_{\vert Z-z^{1/k}\vert}$ and $\mu_2 = \tilde\mu_{\sqrt{t/k}\vert a\vert}$ as in Definition \ref{notation.psi.epsilon}(3), the value $\psi_k(t,z)$ can be classified as the Denjoy--Wolff point of 
\begin{equation}
    \label{eq.DWpoint.selfmap.psi}
    H_{\tilde\mu_{\sqrt{t/k}\vert a\vert}}\circ H_{\tilde\mu_{\vert Z-z^{1/k}\vert}}.
\end{equation}
We will then use results of Denjoy--Wolff points in \cite{Heins1941} (see also \cite{BelinschiBercoviciHo2022} in the context of free probability) to show that $k\psi_k(t,z)$ converges to a (finite) positive multiple of $i$ (Theorem \ref{thm.DWpoint.conv} and Corollary \ref{cor.DWpoint.conv.unif}).
\begin{remark}
    \label{rem.H.Nevanlinna}
    We first notice that $H_{\tilde\mu_{\sqrt{t/k}\vert a\vert}}(z) =\sqrt{\frac{t}{k}}H_{\mu_{\vert a\vert}}(\sqrt{k/t}z)$. By Proposition 2.2(b) of Maassen's paper, there is a probability measure  $\sigma_{\vert a\vert}$ that has total mass $1$ such that
        \[H_{\tilde\mu_{\sqrt{t/k}\vert a\vert}}(z) = \sqrt{\frac{t}{k}}\int\frac{\sigma_{\vert a\vert}(dx)}{x-\sqrt{k/t}z}.\]
\end{remark}

Lemma \ref{lem.Z.Cauchy} computes the Cauchy transform of $\tilde\mu_{\vert \lambda-Z\vert}$. In the following definition, we introduce a ``normalized'' version of the Cauchy transform and its related analytic self-maps on $\mathbb{C}^+$. The normalization, by a change of variable $\zeta = kz$, is to prepare for taking limit as $k\to\infty$.

\begin{notation}
    \label{notation.G.H.hat}
    For convenience, we employ the following notation.
    \begin{enumerate}
    \item For each $z\in\mathbb{C}$, we define an analytic self-map $\hat G_{z,k}$ on $\mathbb{C}^+$ by
          \begin{align*}
                \hat G_{z,k}(\zeta) &= \int_{(-\pi,\pi]} \sum_{-k/2<j\leq k/2}\frac{\zeta}{\zeta^2-k^2\vert z^{\frac{1}{k}}-e^{\frac{i(2\pi j+\alpha)}{k}}\vert^2}\,\mu_{u_0}(d\alpha) \\
                 &= \int_{(-\pi,\pi]} \sum_{-k/2<j\leq k/2}\frac{\zeta}{\zeta^2-a_k(\alpha;j,\rho,\theta)}\,\mu_{u_0}(d\alpha)
          \end{align*}
          where $z = e^{\rho}e^{i\theta}$ and 
        \begin{equation}
            \label{eq.ak.def}
            a_k(\alpha;j,\rho,\theta)=k^2(1-e^{\rho/k})^2+2e^{\rho/k}k^2\left[1-\cos\left(\frac{2\pi j+\alpha-\theta}{k}\right)\right].
        \end{equation}
    \item We also define, for each $z\in\mathbb{C}$, an analytic self-map $\hat H_{z,k}$ on $\mathbb{C}^+$ by
          \[\hat H_{z,k}(\zeta) = \frac{1}{\hat G_{z,k}(\zeta)}-\frac{\zeta}{k}.\]
    \item For each $z\in\mathbb{C}$, we write $z=e^{\rho}e^{i\theta}$ where $\theta\in(-\pi,\pi]$ and define analytic self-maps $\hat G_{z}$ and $\hat H_{z}$ on $\mathbb{C}^+$ by
          \[\hat G_{z}(\zeta) =\int_{(-\pi,\pi]}\sum_{j=-\infty}^\infty\frac{\zeta}{\zeta^2-\rho^2-(2\pi j-\theta+\alpha)^2}\,\mu_{u_0}(d\alpha)\]
          and $\hat H_{z} = 1/\hat G_{z}$.
    \item Finally, we define $\psi(t,z)$ to be the Denjoy--Wolff point of $-t\hat G_{z}$, and write
    \[\eta(t,z) = -\psi(t,z)^2.\]
    We will see in Corollary \ref{cor.DWpoint.conv.unif} that $\eta(t,z)>0$ if  $z\in \Sigma_\infty(u_0,t)$ and $\eta(t,z)=0$ if  $z\not\in \Sigma_\infty(u_0,t)$
    \end{enumerate}
\end{notation}
The values of $\hat G_{z, k}$ and $\hat H_{z, k}$ are independent of the choice of the $k$-th root of $z$. We use the principal $k$-th root $z^{1/k}$ for convenience.

We identity $k\psi_k(t,z)$ as the Denjoy--Wolff point of a function $\Psi_{z,k,t}$ defined in the following proposition.
\begin{proposition}
    \label{prop.psi.k}
    Let $z\in\mathbb{C}$ and $z^{1/k}$ be a $k$-th root of $z$. Then $k\psi_k(t,z)$ is the Denjoy--Wolff point of the function
    \[\Psi_{z, k, t}(\zeta) =  \int\frac{t\sigma_{\vert a\vert}(dx)}{\sqrt{t/k}x-\hat H_{z, k}(\zeta)},\quad \zeta\in \mathbb{C}_+,\]
    where $\sigma_{\vert a\vert}$ comes from Remark \ref{rem.H.Nevanlinna} and $\hat H_{z, k}$ is defined in Definition \ref{notation.G.H.hat}.
\end{proposition}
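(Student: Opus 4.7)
My plan is to show that $\Psi_{z,k,t}$ is conformally conjugate to the composition whose Denjoy--Wolff point is $\psi_k(t,z)$, where the conjugating map is the real dilation $\zeta \mapsto k\zeta$, which is an automorphism of $\mathbb{C}^+$. Since Denjoy--Wolff points transform covariantly under conformal automorphisms, the conclusion $k\psi_k(t,z)$ as the DW point of $\Psi_{z,k,t}$ will follow immediately.

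The first step is to rewrite $\hat G_{z,k}$ and $\hat H_{z,k}$ from Notation \ref{notation.G.H.hat} in terms of the Cauchy transform of $\tilde\mu_{|Z - z^{1/k}|}$. Using Lemma \ref{lem.Z.Cauchy} with $\lambda = z^{1/k}$ and substituting $w = \zeta/k$, a direct algebraic simplification (pulling a factor of $k^2$ inside the denominator) yields the identity
\[
\hat G_{z,k}(\zeta) \;=\; G_{\tilde\mu_{|Z-z^{1/k}|}}(\zeta/k), \qquad \zeta \in \mathbb{C}^+.
\]
Combined with the defining relation $H_\mu(w) = 1/G_\mu(w) - w$, this gives
\[
\hat H_{z,k}(\zeta) \;=\; \frac{1}{\hat G_{z,k}(\zeta)} - \frac{\zeta}{k} \;=\; H_{\tilde\mu_{|Z-z^{1/k}|}}(\zeta/k).
\]

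Next, I use the Nevanlinna representation from Remark \ref{rem.H.Nevanlinna}: $H_{\tilde\mu_{\sqrt{t/k}|a|}}(w) = \sqrt{t/k}\int \sigma_{|a|}(dx)/(x - \sqrt{k/t}\,w)$. Multiplying numerator and denominator by $\sqrt{t/k}$ gives
\[
H_{\tilde\mu_{\sqrt{t/k}|a|}}(w) \;=\; \frac{1}{k}\int \frac{t\,\sigma_{|a|}(dx)}{\sqrt{t/k}\,x - w}.
\]
Applying this at $w = \hat H_{z,k}(\zeta)$ and multiplying by $k$, I obtain the key identity
\[
\Psi_{z,k,t}(\zeta) \;=\; k \cdot H_{\tilde\mu_{\sqrt{t/k}|a|}}\bigl(\hat H_{z,k}(\zeta)\bigr) \;=\; k \cdot \bigl(H_{\tilde\mu_{\sqrt{t/k}|a|}} \circ H_{\tilde\mu_{|Z-z^{1/k}|}}\bigr)(\zeta/k).
\]
Writing $\phi(\zeta) = k\zeta$ and $F = H_{\tilde\mu_{\sqrt{t/k}|a|}} \circ H_{\tilde\mu_{|Z-z^{1/k}|}}$, this reads $\Psi_{z,k,t} = \phi \circ F \circ \phi^{-1}$.

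Finally, I invoke Theorem \ref{thm.subordination.general}(3) and Definition \ref{notation.psi.epsilon}(3), which (applied to $\mu_1 = \tilde\mu_{|Z-z^{1/k}|}$ and $\mu_2 = \tilde\mu_{\sqrt{t/k}|a|}$) identify $\psi_k(t,z)$ as the Denjoy--Wolff point of $F$. Since $\phi$ is an automorphism of $\mathbb{C}^+$ (a positive real dilation), the Denjoy--Wolff point of a conjugate $\phi \circ F \circ \phi^{-1}$ is the image under $\phi$ of the DW point of $F$; this is immediate from the iterative characterization of DW points, as $\phi$ commutes with taking iterates in the sense that $\Psi_{z,k,t}^{\circ n} = \phi \circ F^{\circ n} \circ \phi^{-1}$, and $\phi$ extends continuously to $\overline{\mathbb{C}^+}$. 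Therefore the DW point of $\Psi_{z,k,t}$ is $\phi(\psi_k(t,z)) = k\,\psi_k(t,z)$, as claimed. The calculation is essentially bookkeeping; the only conceptual point to verify carefully is that $\Psi_{z,k,t}$ is indeed a self-map of $\mathbb{C}^+$ (so that the Denjoy--Wolff theorem applies), which follows from the identity $\Psi_{z,k,t} = \phi \circ F \circ \phi^{-1}$ together with the fact that $F$ maps $\mathbb{C}^+$ into itself, as guaranteed by Theorem \ref{thm.subordination.general}.
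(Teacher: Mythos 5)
Your proof is correct and is essentially the paper's argument, just packaged as a conformal conjugation rather than as a direct substitution in the fixed-point equation. The paper proves the same key identity $H_{\tilde\mu_{|Z-z^{1/k}|}}(\zeta)=\hat H_{z,k}(k\zeta)$, substitutes $kw$ into the fixed–point characterization of $\psi_k$, observes that $k\psi_k(t,z)$ solves $\zeta=\Psi_{z,k,t}(\zeta)$, and invokes uniqueness of interior fixed points of analytic self-maps of $\mathbb{C}^+$; your framing via $\Psi_{z,k,t}=\phi\circ F\circ\phi^{-1}$ with $\phi(\zeta)=k\zeta$ says the same thing and has the minor virtue of handling boundary Denjoy–Wolff points uniformly without case distinction.
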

\begin{proof}
    The maps $H_{\tilde\mu_{\vert Z-z^{1/k}\vert}}$ and $\hat H_{z, k}$ are related by
    \[H_{\tilde\mu_{\vert Z-z^{1/k}\vert}}(\zeta) =\hat H_{z, k}(k\zeta).\]
    The value $\psi_k(t,z)$ is the Denjoy--Wolff point of $H_{\tilde\mu_{\sqrt{t/k}\vert a\vert}}\circ H_{\tilde\mu_{\vert Z-z^{1/k}\vert}}$. By Remark \ref{rem.H.Nevanlinna}, $w=\psi_k(t,z)$ is the unique solution in $\mathbb{C}^+$ of the equation
    \[w = \sqrt{\frac{t}{k}}\int\frac{\sigma_{\vert a\vert}(dx)}{x-\sqrt{k/t}H_{\tilde\mu_{\vert Z-z^{1/k}\vert}}(w)}.\]
    which can be written in terms of $\hat H_{z, k}$ by
    \begin{equation*}
        kw = \int\frac{t\sigma_{\vert a\vert}(dx)}{\sqrt{t/k}x-\hat H_{z, k}(kw)}.
    \end{equation*}
    The above equation says $k\psi_k(t,z)$ is a fixed point of $\Psi_{z,k,t}$. The conclusion follows from the fact that any analytic self-map on $\mathbb{C}^+$ has at most one fixed point in $\mathbb{C}^+$, and the unique fixed point is the Denjoy--Wolff point.
\end{proof}

Our strategy is to find a self-map $\Psi_z$ on $\mathbb{C}^+$ such that $\Psi_{z,k,t}$ converges pointwise, to $-t\hat G_{z}$, in $\mathbb{C}^+$. The next proposition shows that $\hat H_{z,k}$ has pointwise limit $\hat H_{z}$.
\begin{lemma}
\label{lem.Hhat.limit.u}
Let $z_k\in\mathbb{C}$ be a convergent sequence such that $z_k\to z$. We write $\lambda_k = z_k^{1/k}$ to be the $k$-th root of $z_k$ with argument in $(-\pi,\pi]$. Then
\[\hat H_{z_k,k}(\zeta) \to \hat H_{z}(\zeta)\]
for each $\zeta\in\mathbb{C}^+$.
\end{lemma}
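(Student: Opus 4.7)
Since $\hat H_{z,k}(\zeta) = 1/\hat G_{z,k}(\zeta) - \zeta/k$ and $\hat H_z(\zeta) = 1/\hat G_z(\zeta)$, and $\zeta/k \to 0$, it suffices to show $\hat G_{z_k,k}(\zeta) \to \hat G_z(\zeta)$ for each $\zeta \in \mathbb{C}^+$; both sides are nonzero because each summand $\zeta/(\zeta^2-c)$ with $c \ge 0$ is the Cauchy transform of the symmetric probability measure $\tfrac{1}{2}(\delta_{\sqrt c}+\delta_{-\sqrt c})$ and therefore maps $\mathbb{C}^+$ into $\mathbb{C}^-$. I would write $z_k = e^{\rho_k}e^{i\theta_k}$ and $z = e^{\rho}e^{i\theta}$ with $\rho_k \to \rho$ and $\theta_k \to \theta$ (adjusting $\theta_k$ by multiples of $2\pi$ if $z_k$ crosses the branch cut; this is harmless because $\hat G_z$ is invariant under $\theta \mapsto \theta+2\pi$ once the sum over $j \in \mathbb{Z}$ is reindexed). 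Taylor expansion of the exponential and cosine then shows, for each fixed $j \in \mathbb{Z}$ and $\alpha \in (-\pi,\pi]$, that $k^2(1-e^{\rho_k/k})^2 \to \rho^2$ and $2e^{\rho_k/k}k^2(1-\cos((2\pi j+\alpha-\theta_k)/k)) \to (2\pi j+\alpha-\theta)^2$, so the integrand of $\hat G_{z_k,k}$ converges termwise to that of $\hat G_z$.

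The rest is a dominated convergence argument applied successively to the sum over $j$ and the integral over $\alpha$. Using the elementary inequality $1-\cos(x) \ge c\,x^2$ valid on $|x| \le 3\pi/2$ for some universal $c>0$, and noting that for $-k/2 < j \le k/2$ and $\alpha,\theta_k \in (-\pi,\pi]$ the argument $(2\pi j+\alpha-\theta_k)/k$ lies in $[-\pi-2\pi/k,\pi+2\pi/k] \subseteq [-3\pi/2,3\pi/2]$ for $k \ge 4$, one gets the uniform lower bound $a_k(\alpha;j,\rho_k,\theta_k) \ge 2c\,e^{\rho_k/k}(2\pi j+\alpha-\theta_k)^2$; for $|j|$ beyond a constant depending only on the bounded sequence $\theta_k$, this dominates $c'(1+j^2)$. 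Combined with the estimate $|\zeta^2 - a| \ge C(\zeta)(1+a)$ valid for all $a \ge 0$ and fixed $\zeta \in \mathbb{C}^+$ (proved by splitting cases: for bounded $a$, $|\zeta^2-a| \ge \operatorname{dist}(\zeta^2,[0,\infty))>0$; for $a \ge 2|\zeta|^2$, $|\zeta^2-a| \ge a-|\zeta|^2 \ge a/2$), this produces the dominating bound
\[
\left|\frac{\zeta}{\zeta^2 - a_k(\alpha;j,\rho_k,\theta_k)}\right| \le \frac{C'(\zeta)}{1+j^2},
\]
valid for all $k$ sufficiently large, all $\alpha$, and all $-k/2<j\le k/2$. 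Since $\sum_{j\in\mathbb{Z}}(1+j^2)^{-1} < \infty$ and $\mu_{u_0}$ is a probability measure, DCT delivers $\hat G_{z_k,k}(\zeta) \to \hat G_z(\zeta)$, and the lemma follows.

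The main technical point is the uniform quadratic lower bound on $a_k$ at the extreme indices $j \approx \pm k/2$: the argument of the cosine there approaches $\pm\pi$ and can be pushed slightly beyond by the offset $\alpha-\theta_k$, so one must invoke the extension of $1-\cos(x) \ge c\,x^2$ to $|x|\le 3\pi/2$ rather than merely $|x|\le\pi$. This still keeps the argument safely away from $\pm 2\pi$, where the quadratic estimate would fail and the domination would break down.
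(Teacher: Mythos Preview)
Your proof is correct and follows essentially the same route as the paper: reduce to $\hat G_{z_k,k}(\zeta) \to \hat G_z(\zeta)$, establish termwise convergence of $a_k$ via Taylor expansion, bound $a_k(\alpha;j,\rho_k,\theta_k)$ below quadratically in $j$ using $1-\cos x \gtrsim x^2$ on the relevant range of arguments, and invoke DCT with a summable $O((1+j^2)^{-1})$ majorant. The one case you omit is $z=0$: there $\rho_k \to -\infty$, and your lower bound $a_k \ge 2c\,e^{\rho_k/k}(2\pi j+\alpha-\theta_k)^2$ can degenerate if $\rho_k/k \to -\infty$; the paper handles this separately by switching to the first term of $a_k$, namely $k^2(1-e^{\rho_k/k})^2 \ge k^2(1-e^{-1})^2$ whenever $\rho_k < -k$, which forces every summand to zero and yields $\hat G_0 \equiv 0$.
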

\begin{proof}
Fixed $\zeta\in \mathbb{C}^+$. If suffices to show that $\hat G_{z_k,k}(\zeta)$ converges to $\hat G_z(\zeta)$. In this proof, we write $\zeta^2 = x+iy$ and $z_k^{1/k} = e^{\rho_k/k}e^{i\theta_k/k}$ where $\theta_k\in(-\pi,\pi]$. 

For all $-k/2<j\leq k/2$, $\left\vert\frac{2\pi j}{k}+\frac{\alpha-\theta_k}{k}\right\vert\leq \pi+\frac{\vert \alpha-\theta_k\vert}{k}$.
If $k$ is large enough, $\pi+(\alpha-\theta_k)/k$ is away from $2\pi$ for all $\alpha,\theta_k\in(-\pi,\pi]$. Hence, there exists $c>0$ such that for all $-k/2<j\leq k/2$, 
\[c<\left[1-\cos\left(\frac{2\pi j+\alpha-\theta_k}{k}\right)\right]\left(\frac{2\pi j+\alpha-\theta_k}{k}\right)^{-2},\]
so that the $a_k(\alpha;j,\rho_k,\theta_k)$ defined in  \eqref{eq.ak.def} satisfies the estimates: if $\rho\geq -k$, then
 \begin{equation}
        \label{eq.ak.estimate}
        a_k(\alpha;j,\rho,\theta)\geq c\cdot \min\{[2\pi(j+1)]^2,[2\pi(j-1)]^2\}, \quad \vert j\vert \geq 1;
    \end{equation}
    if $\rho< -k$, then
    $a_k(\alpha;j,\rho,\theta)$ in \eqref{eq.ak.def} satisfies the estimate 
    \begin{equation}
        \label{eq.ak.estimate.small}
        a_k(\alpha;j,\rho,\theta)\geq k^2(1-e^{-1})^2.
    \end{equation} 
    This means that if $\rho_k\geq -k$, then
\[
 \left\vert\frac{\zeta}{\zeta^2-a_k(\alpha;j,\rho_k,\theta_k)}\right\vert \leq \frac{\vert \zeta\vert}{c(\min\{[2\pi(j+1)]^2,[2\pi(j-1)]^2\})^2-x},\quad \vert j\vert\geq 1
\]
except for finitely many $j$'s where the denominator $2c(2\pi j+\alpha-\theta)^2-x$ is negative; if $\rho_k<-k$, then 
\[
 \left\vert\frac{\zeta}{\zeta^2-a_k(\alpha;j,\rho_k,\theta_k)}\right\vert \leq \frac{\vert \zeta\vert}{k^2(1-e^{-1})^2-x}.
\]
It then follows that the series
\[\sum_{\vert j\vert<k/4}\frac{\zeta}{\zeta^2-a_k(\alpha;j,\rho_k,\theta_k)}\]
is absolutely convergent. By the dominated convergence theorem,
\begin{align*}
    \lim_{k\to \infty}\hat G_{z_k,k}(\zeta) &= \lim_{k\to \infty}\int_{(\pi,\pi]}\sum_{-k/2< j\leq k/2}\frac{\zeta}{\zeta^2-a_k(\alpha;j,\rho_k,\theta_k)}\mu_{u_0}(d\alpha)\\
    &=\hat G_{z}(\zeta),
\end{align*}
where we have used the limit 
\[\lim_{k\to \infty}a_k(\alpha;j,\rho_k,\theta_k) = \rho^2+(2\pi j+\alpha-\theta)^2.\]
We remark that if $z=0$ (i.e. $\rho_k\to -\infty$), then $\hat G_{z}$ is the zero function.
\end{proof}

\begin{theorem}
    \label{thm.DWpoint.conv}
Let $z_k\in \mathbb{C}$ and $t_k>0$ be a convergent sequence such that $z_k\to z$ and $t_k\to t$. Assume that $t>0$. Write $z_k = e^{\rho_k} e^{i\theta_k}$, $z = e^{\rho}e^{\theta}$ where $\theta_k$ and $\theta$ are chosen in $(-\pi,\pi]$. Then
\[\psi(t,z) = \lim_{k\to\infty}k\psi_k(t_k,z_k)\]
exists and is the Denjoy--Wolff point of $-t\hat G_z$.
\end{theorem}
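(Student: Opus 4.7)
The strategy is to combine Proposition \ref{prop.psi.k}, which identifies $k\psi_k(t_k,z_k)$ as the Denjoy--Wolff point of the analytic self-map $\Psi_{z_k,k,t_k}$ of $\mathbb{C}^+$, with a continuity result for Denjoy--Wolff points under suitable convergence of self-maps. The desired conclusion $\psi(t,z) = \lim_{k\to\infty} k\psi_k(t_k,z_k)$ will follow once we show that $\Psi_{z_k,k,t_k}$ converges (in an appropriate sense) to $-t\hat G_z$, whose Denjoy--Wolff point is by definition $\psi(t,z)$.

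First I would establish pointwise convergence $\Psi_{z_k,k,t_k}(\zeta) \to -t\hat G_z(\zeta)$ for each fixed $\zeta \in \mathbb{C}^+$. The inputs are: (i) $\sqrt{t_k/k}\to 0$; (ii) Lemma \ref{lem.Hhat.limit.u}, giving $\hat H_{z_k,k}(\zeta)\to \hat H_z(\zeta)$; (iii) the identity $\hat H_z = 1/\hat G_z$; and (iv) $\sigma_{|a|}$ is a probability measure (total mass $1$), from Remark \ref{rem.H.Nevanlinna}. Formally,
\[
\Psi_{z_k,k,t_k}(\zeta) = \int \frac{t_k\,\sigma_{|a|}(dx)}{\sqrt{t_k/k}\,x - \hat H_{z_k,k}(\zeta)} \;\longrightarrow\; \frac{t}{-\hat H_z(\zeta)} = -t\hat G_z(\zeta).
\]
To justify passing the limit inside the integral by dominated convergence, I use the crucial fact that $\hat H_{z_k,k}$ is an analytic self-map of $\mathbb{C}^+$ (since it comes from the $H$-transform of a probability measure via the relation $H_{\tilde\mu_{|Z-z_k^{1/k}|}}(\zeta) = \hat H_{z_k,k}(k\zeta)$). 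Hence $\operatorname{Im}\hat H_{z_k,k}(\zeta)>0$ and, by the convergence from Lemma \ref{lem.Hhat.limit.u}, is uniformly bounded below for large $k$ by some $c(\zeta)>0$. Then
\[
\left|\frac{t_k}{\sqrt{t_k/k}\,x - \hat H_{z_k,k}(\zeta)}\right| \le \frac{t_k}{|\operatorname{Im}\hat H_{z_k,k}(\zeta)|} \le \frac{2t}{c(\zeta)}
\]
uniformly in $x\in \mathbb{R}$ and in large $k$, providing a constant dominating function integrable against $\sigma_{|a|}$.

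Having secured pointwise convergence on $\mathbb{C}^+$, I would upgrade it to locally uniform convergence using normal families: $\{\Psi_{z_k,k,t_k}\}$ is a sequence of analytic self-maps of $\mathbb{C}^+$, hence a normal family by Montel's theorem for hyperbolic targets, so pointwise convergence to an analytic function forces locally uniform convergence. Then I would invoke the classical theorem of Heins \cite{Heins1941} (also developed in the free-probabilistic context in \cite{BelinschiBercoviciHo2022}): if analytic self-maps of $\mathbb{C}^+$ converge locally uniformly and each has a Denjoy--Wolff point, then the Denjoy--Wolff points converge to the Denjoy--Wolff point of the limiting map (possibly on $\mathbb{R}\cup\{\infty\}$). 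Applying this with $\Psi_{z_k,k,t_k}$ and the limit $-t\hat G_z$ yields $k\psi_k(t_k,z_k) \to \psi(t,z)$.

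The main obstacle is ensuring the continuity-of-Denjoy--Wolff-points argument genuinely applies without degeneracy: one must rule out that $-t\hat G_z$ is an elliptic/parabolic pathology and confirm it is a bona fide analytic self-map of $\mathbb{C}^+$ (not identically real on $\mathbb{C}^+$). Since $t>0$ and $\hat G_z(\zeta)$ is a superposition of Cauchy-type kernels $\zeta/(\zeta^2-c)$ with $c\ge 0$, one verifies that $\hat G_z$ maps $\mathbb{C}^+$ into $\mathbb{C}^-$ strictly (unless $\hat G_z\equiv 0$, which occurs only in the degenerate case $z=0$ requiring separate treatment since then $\hat H_z$ is undefined). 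The non-degeneracy hypothesis $t>0$ is exactly what secures the existence of the limiting Denjoy--Wolff point and the uniform bound $c(\zeta)>0$ used in dominated convergence, so the argument closes.
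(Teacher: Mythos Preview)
Your proposal is correct and follows essentially the same approach as the paper: identify $k\psi_k(t_k,z_k)$ as the Denjoy--Wolff point of $\Psi_{z_k,k,t_k}$ via Proposition \ref{prop.psi.k}, use Lemma \ref{lem.Hhat.limit.u} together with dominated convergence to obtain pointwise convergence $\Psi_{z_k,k,t_k}\to -t\hat G_z$, and then invoke Heins' continuity result for Denjoy--Wolff points. Your write-up is in fact more detailed than the paper's in justifying the dominating function (via the uniform lower bound on $\operatorname{Im}\hat H_{z_k,k}(\zeta)$) and in making the normal-families upgrade explicit.
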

\begin{proof}
    Define $\Psi_{z_k,k,t_k}$ as in Proposition \ref{prop.psi.k}. Then, Proposition \ref{prop.psi.k} asserts that $k\psi_k(t_k,z_k)$ is the Denjoy--Wolff point of $\Psi_{z_k,k,t_k}$; that is $k\psi_k(t_k,z_k)$ is the unique fixed point of $\Psi_{z_k,k,t_k}$ in $\mathbb{C}^+$. We can see that, by Lemma \ref{lem.Hhat.limit.u} and the dominated convergence theorem, 
    \[\Psi_{z_k,k,t_k}\to -t \hat G_z\]
    pointwise. It follows from \cite{Heins1941} (see also \cite{BelinschiBercoviciHo2022} in the context of free probability), the Denjoy--Wolff points $k\psi_k(t,z_k)$ of $\Psi_{z,k,t}$ have a limit
    \[\psi(t,z) = \lim_{k\to\infty} k\psi_k(t,z_k),\]
    and this limit is the Denjoy--Wolff point of $-\hat G_{z}$. 
\end{proof}
\begin{corollary}
\label{cor.DWpoint.conv.unif}
The convergence of the limit
\[\psi(t,z) = \lim_{k\to\infty}k\psi_k(t_k,z_k)\]
is uniform for $(t_k,z_k)$ in any compact subset in $(0,\infty)\times \mathbb{C}$. Moreover, $\eta$ is a continuous function in $(0,\infty)\times \mathbb{C}$. 

If $z\in \Sigma_\infty(u_0,t)$, then $\psi(t,z)$ is a positive multiple of $i$ determined by the equation
    \begin{equation}
        \label{eq.DW.condition.u}
        \int_{(-\pi,\pi]}\sum_{j=-\infty}^\infty\frac{1}{\rho^2+(2\pi j+\alpha-\theta)^2-\psi(t,z)^2}\mu_{u_0}(d\alpha)= \frac{1}{t}.
    \end{equation}
    If $z\not\in\Sigma_\infty(u_0,t)$, $\psi(t,z) = 0$.
\end{corollary}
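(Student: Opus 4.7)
The plan is to handle three separate assertions: locally uniform convergence of $k\psi_k(t_k, z_k)$ to $\psi(t, z)$, joint continuity of $\eta$, and the fixed-point dichotomy characterising $\psi$ via \eqref{eq.DW.condition.u}.

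First I would observe that the proof of Lemma \ref{lem.Hhat.limit.u} adapts with only cosmetic modifications to give the pointwise convergence $-t_k \hat G_{z_k, k} \to -t \hat G_z$ on $\mathbb{C}^+$ whenever $(t_k, z_k) \to (t, z)$ in $(0,\infty) \times \mathbb{C}$. Since all these maps are analytic self-maps of $\mathbb{C}^+$, Montel's theorem upgrades pointwise to locally uniform convergence on $\mathbb{C}^+$, and the Heins stability theorem for Denjoy--Wolff points (the tool already invoked in Theorem \ref{thm.DWpoint.conv}) yields the joint limit $k \psi_k(t_k, z_k) \to \psi(t, z)$. Applied with the constant sequence this recovers Theorem \ref{thm.DWpoint.conv}; applied with an arbitrary $(t_j, z_j) \to (t_*, z_*)$ and $k_j = j$, it forces joint continuity of $\psi$. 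A standard subsequence-and-contradiction argument then converts this into locally uniform convergence on compacts of $(0,\infty) \times \mathbb{C}$, from which continuity of $\eta = -\psi^2$ is immediate.

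Next I would restrict the fixed-point equation $-t \hat G_z(\psi) = \psi$ to the positive imaginary axis. Because the coefficients $\rho^2 + (2\pi j + \alpha - \theta)^2$ appearing in $\hat G_z$ are nonnegative, $-t\hat G_z$ maps $i(0,\infty)$ into itself, and substituting $\psi = is$ with $s > 0$ reduces the fixed-point equation to exactly \eqref{eq.DW.condition.u}. The key analytic input is the classical Poisson summation identity
\[ \sum_{j \in \mathbb{Z}} \frac{1}{\rho^2 + (2\pi j + x)^2} = \frac{\sinh \rho}{2\rho(\cosh\rho - \cos x)}, \]
which, combined with $|e^{i\alpha} - z|^2 = 2 e^\rho (\cosh \rho - \cos(\alpha - \theta))$, $|z|^2 - 1 = 2 e^\rho \sinh \rho$, and $\log|z|^2 = 2\rho$, identifies the value of the LHS of \eqref{eq.DW.condition.u} at $s = 0$ as exactly $1/T(u_0, z)$.

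With this identification the dichotomy follows from the strict monotonicity of the LHS in $s$. When $z \in \Sigma_\infty(u_0, t)$, i.e., $T(u_0, z) < t$, the LHS at $s = 0$ exceeds $1/t$ and decreases continuously to $0$ as $s \to \infty$, so there is a unique $s_* > 0$ solving \eqref{eq.DW.condition.u}; since an analytic self-map of $\mathbb{C}^+$ has at most one interior fixed point and, when one exists, it coincides with the Denjoy--Wolff point, $\psi(t, z) = i s_*$. When $z \notin \overline{\Sigma_\infty(u_0, t)}$, i.e., $T(u_0, z) > t$, no interior fixed point exists, so $\psi(t, z) \in \partial \mathbb{C}^+$; the non-tangential angular derivative of $-t\hat G_z$ at $0$ equals $\lim_{\zeta \to 0}(-t \hat G_z(\zeta))/\zeta = t/T(u_0, z) < 1$, which by the Julia--Carath\'eodory criterion forces $0$ to be the Denjoy--Wolff point. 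For $z$ on $\partial \Sigma_\infty(u_0, t)$ the conclusion $\psi(t, z) = 0$ then follows by approaching from outside and invoking the joint continuity established in the first step.

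The principal obstacle is the Poisson-summation identification of the LHS at $s = 0$ with $1/T(u_0, z)$: although the computation itself is short, it is the sole bridge between the abstract subordination problem for $\psi$ and the geometric lima-bean threshold $T(u_0, z) = t$ defining $\Sigma_\infty(u_0, t)$. Once this bridge is in place, the remainder is classical Denjoy--Wolff theory applied to a monotone real-variable equation on the positive imaginary axis.
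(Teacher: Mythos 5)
Your argument is correct, and it arrives at the same dichotomy by different means where it matters. The locally uniform convergence and joint continuity of $\eta$ follow much as in the paper, by a subsequence/contradiction argument from Theorem \ref{thm.DWpoint.conv} and the Heins stability of Denjoy--Wolff points; that part is essentially identical.

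Where you deviate is in the characterisation of $\psi(t,z)$. For $z\in\Sigma_\infty(u_0,t)$ you invoke the closed-form Poisson-summation identity
\[\sum_{j\in\mathbb{Z}}\frac{1}{\rho^2+(2\pi j+x)^2}=\frac{\sinh\rho}{2\rho(\cosh\rho-\cos x)},\]
to identify the LHS of \eqref{eq.DW.condition.u} at $s=0$ as exactly $1/T(u_0,z)$, and then use strict monotonicity in $s$ to locate the unique fixed point on $i(0,\infty)$. The paper avoids this identity at this stage: it instead bootstraps from the finite-$k$ formula for $T_k$ (Lemma \ref{lem.domain.by.Tk}(2)), uses Theorem \ref{thm.domain.conv} to get $z\in\Sigma_k(u_0,t-\varepsilon)$ for large $k$, and applies dominated convergence to conclude only the \emph{inequality} that the sum at $s=0$ strictly exceeds $1/t$ --- enough to produce a fixed point on the imaginary axis, but without the exact identification (which the paper defers to Lemma \ref{lem.eta.by.bdry}, where it is done ``with the help of Mathematica''). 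Your exact-valued route is cleaner and more self-contained. For $z\notin\overline{\Sigma_\infty(u_0,t)}$ you compute the non-tangential angular derivative of $-t\hat G_z$ at $0$, obtaining $t/T(u_0,z)<1$, and invoke Julia--Carath\'eodory to pin $0$ as the Denjoy--Wolff point; the paper instead uses the finite-$k$ structure directly (Proposition \ref{prop.Brown.add.support}(4) gives $\psi_k=0$ once $z\notin\overline{\Sigma_k(u_0,t)}$, Theorem \ref{thm.domain.conv} ensures this for large $k$, and then the limit is trivially zero). Both are valid; the paper's version leverages earlier machinery, while yours stays entirely at the level of the limit object $-t\hat G_z$. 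The boundary case is handled by continuity in both treatments. The minor point to make explicit in your Julia--Carath\'eodory step is that $0$ is a boundary fixed point in the non-tangential sense (which follows because $\hat G_z(\zeta)\to 0$ as $\zeta\to 0$, the coefficients $\rho^2+(2\pi j+\alpha-\theta)^2$ being bounded away from $0$ and the sum being absolutely convergent) and that the interior-fixed-point case is ruled out precisely because an angular derivative $<1$ at a boundary fixed point is inconsistent with an interior Denjoy--Wolff point.
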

\begin{proof}
    If the limit is not uniform in some compact subset, there exists a convergent sequence $z_k$ such that $z_k\to z$, $t_k\to t$ and $k\psi_k(t_k,z_k)$ does not converge to the $\psi(t,z)$. But then this contradicts Theorem \ref{thm.DWpoint.conv}.

    To show that $\eta$ is a continuous function, let $(t_k,z_k)\to (t,z)$ in $(0,\infty)\times\mathbb{C}$ where $t>0$. By Theorem \ref{thm.DWpoint.conv}, $\eta(t_k,z_k)$ is the Denjoy--Wolff point of $-t_k\hat G_{z_k}$. One can show that $\hat G_{z_k}\to\hat G_{z}$ pointwise using a calculation similar to the proof of Lemma \ref{lem.Hhat.limit.u}; hence, the Denjoy--Wolff points $\eta(t_k,z_k)$ of $-t_k\hat G_{z_k}$ converge to the Denjoy--Wolff point $\psi(t,z)$ of $-t\hat G_z$.

    If $z\not\in \overline{\Sigma_\infty(u_0,t)}$, then $z\not\in \overline{\Sigma_k(u_0,t)}$ for all large $k$ by Theorem \ref{thm.domain.conv}. For all $k$ large enough, $D_k(a,t)$ is empty. By Proposition \ref{prop.Brown.add.support}(4), $\psi_k(z) = 0$ for all large enough $k$. Thus, 
    \[\psi(t,z) = \lim_{k\to\infty}k\psi_k(z) = 0.\] 
    
    If $z$ is on the boundary of $\Sigma_\infty(u_0,t)$, then $\psi(t,z) = 0$ by continuity of $\eta$.
    
    Now assume $z\in\Sigma_\infty(u_0,t)$. We will show that $-\hat G_z$ has a fixed point in the upper half plane; hence, the Denjoy--Wolff point $\psi(t,z)$ of $-\hat G_z$ must be equal to the fixed point. To do this, we will actually show that there is a fixed point of the form $iy$ for some $y>0$. By the statement and the proof of Lemma \ref{lem.domain.by.Tk}(2), if we write $z=e^{\rho}e^{i\theta}$, then 
    \begin{align*}
        T_k(u_0;r,\theta) &= \left(\frac{1}{k^2}\int_{-\pi}^{\pi}\sum_{-\frac{k}{2}<j\leq\frac{k}{2}}\frac{1}{\vert e^{2\pi i j/k} - r^{1/k}e^{i(\theta-\alpha)/k}\vert^2}\mu_{u_0}(d\alpha)\right)^{-1}\\
        &=\left(\int_{(-\pi,\pi]}\sum_{-\frac{k}{2}<j\leq \frac{k}{2}}\frac{1}{a_k(\alpha;j,\rho,\theta)}\mu_{u_0}(d\alpha)\right)^{-1}.
    \end{align*}
    We find an $\varepsilon>0$ such that $T(u_0;r,\theta)<t-\varepsilon$. Thus, $z\in \Sigma_\infty(u_0,t-\varepsilon)$. By Theorem \ref{thm.domain.conv}, $z\in\Sigma_k(u_0,t-\varepsilon)$ for all $k$ large enough. This shows
    \[\int_{(-\pi,\pi]}\sum_{-\frac{k}{2}<j\leq \frac{k}{2}}\frac{1}{a_k(\alpha;j,\rho,\theta)}\mu_{u_0}(d\alpha)>\frac{1}{t-\varepsilon}\]
    for all $k$ large enough. By the estimate \eqref{eq.ak.estimate}, we can apply the dominated convergence theorem to take limit as $k\to\infty$, so that
    \[\int_{(-\pi, \pi]}\sum_{j=-\infty}^\infty\frac{1}{\rho^2+(2\pi j+\alpha-\theta)^2}\,\mu_{u_0}(d\alpha)\geq\frac{1}{t-\varepsilon}>\frac{1}{t}.\]
    so there is a $y>0$ such that 
        \[\int_{(-\pi,\pi]}\sum_{j=-\infty}^\infty\frac{1}{y^2+\rho^2+(2\pi j+\alpha-\theta)^2}\,\mu_{u_0}(d\alpha)=\frac{1}{t}.\]
    Multiplying $iy$ on both sides gives $iy = -t\hat G_z(iy)$.
    Thus, the Denjoy--Wolff point of $-\hat G_z$ is the fixed point $iy$. The above display equation also shows that $\psi(t,z)$ is a positive multiple of $i$ satisfying \eqref{eq.DW.condition.u}.
\end{proof}

\subsection{Convergence of the density\label{sec.Brown.density.conv}} \hfill

\medskip

In this section, we prove the convergence of the density of the Brown measure of $u_0b_k(t)$ to the density of the Brown measure of $u_0b(t)$ as $k\to\infty$ in Theorem \ref{thm.density.limit}, uniformly in any compact subsets of $\Sigma_\infty(u_0,t)$. Corollary \ref{cor.Brown.weak.conv} shows that the Brown measure of $u_0b_k(t)$ converges weakly to the Brown measure of $u_0b(t)$ as $k\to\infty$.

For convenience, define
\[J(\zeta) = \int\frac{t\sigma_{\vert a\vert}(dx)}{\sqrt{t/k}x-\zeta}\quad \textrm{and}\quad L_k(z,\bar z, \zeta) = J(\hat H_{z,k}(\zeta))-\zeta,\]
so that, by Theorem \ref{thm.DWpoint.conv},
\[L_k(z,\bar z, k\psi_k(t,z)) = 0.\]
By Corollary \ref{cor.DWpoint.conv.unif}, the function $k\psi_k$ has a limit $\psi$, uniform on compact sets, determined by that $\psi(z)$ is a imaginary number with positive imaginary part and by the equation $L_\infty(\bar z, \psi(z)) = 0$ where
\[L_\infty(z,\bar z, \zeta) = \int_{(-\pi,\pi]}\sum_{j=-\infty}^\infty\frac{1}{\rho^2+(2\pi j+\alpha-\theta)^2-\zeta^2}\mu_{u_0}(d\alpha)-\frac{1}{t}.\]

The density of the Brown measure $u_0b_k(t)$ in Theorem \ref{thm.RW.Brown.density} involves a derivative $\bar z(\partial/\partial\bar z)$ of an expression involving the function $\eta_k$. The first step to establish our main result in this section is to prove, in Proposition \ref{prop.kpsi.derivative}, that a derivative of $k\psi_k$ converges to the derivative of $\psi$.

\begin{lemma}
    \label{lem.eta.derivative}
We have
\[\bar z\frac{\partial\psi(t,z)}{\partial\bar z} =\frac{ \displaystyle \int_{(-\pi,\pi]}\sum_{j=-\infty}^\infty\frac{\rho-i(2\pi j+\alpha-\theta)}{(\rho^2+(2\pi j+\alpha-\theta)^2+\eta(t,z))^2}\mu_{u_0}(d\alpha)}{\displaystyle\int_{(-\pi,\pi]}\sum_{j=-\infty}^\infty\frac{2\psi(t,z)}{(\rho^2+(2\pi j+\alpha-\theta)^2+\eta(t,z))^2}\mu_{u_0}(d\alpha)}.\]
\end{lemma}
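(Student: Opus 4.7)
The plan is to prove this by implicit differentiation of the defining equation for $\psi(t,z)$ from Corollary \ref{cor.DWpoint.conv.unif}. Writing $z = e^\rho e^{i\theta}$ with $\rho = \tfrac{1}{2}\log(z\bar z)$ and $\theta = \tfrac{1}{2i}\log(z/\bar z)$, the function $\psi(t,z)$ satisfies $L_\infty(z,\bar z,\psi(t,z)) = 0$, and $\psi(t,z) \in i\mathbb{R}_+$ throughout $\Sigma_\infty(u_0,t)$. So the strategy is to differentiate this identity with respect to $\bar z$ and solve.

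First I would record the two basic chain-rule ingredients
\[
\frac{\partial \rho}{\partial \bar z} = \frac{1}{2\bar z}, \qquad \frac{\partial \theta}{\partial \bar z} = \frac{i}{2\bar z},
\]
and then compute, for each fixed $j$ and $\alpha$, the $\bar z$-derivative of the summand $(\rho^2 + (2\pi j + \alpha - \theta)^2 - \zeta^2)^{-1}$. The numerator coming out is $2\rho \cdot \tfrac{1}{2\bar z} - 2(2\pi j + \alpha - \theta)\cdot \tfrac{i}{2\bar z}$, which collapses cleanly to $\bar z^{-1}(\rho - i(2\pi j + \alpha - \theta))$. Similarly the $\zeta$-derivative of the summand contributes $2\zeta$ in the numerator. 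This gives
\[
\frac{\partial L_\infty}{\partial \bar z} = -\frac{1}{\bar z}\int_{(-\pi,\pi]} \sum_{j \in \mathbb{Z}} \frac{\rho - i(2\pi j + \alpha - \theta)}{(\rho^2 + (2\pi j + \alpha - \theta)^2 - \zeta^2)^2}\,\mu_{u_0}(d\alpha),
\]
and an analogous expression (with numerator $2\zeta$ and no $\bar z^{-1}$) for $\partial L_\infty/\partial \zeta$. Substituting $\zeta = \psi(t,z)$, using $\psi^2 = -\eta$ so that $\rho^2 + (2\pi j + \alpha - \theta)^2 - \psi^2 = \rho^2 + (2\pi j + \alpha - \theta)^2 + \eta$, and rearranging gives the claimed formula after the $\bar z$'s cancel.

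The genuine work is in justifying the differentiation: one must show that $L_\infty$ is $C^1$ jointly in $(z,\bar z,\zeta)$ on a neighborhood of the locus of interest and that $\partial L_\infty/\partial \zeta \ne 0$ there, so that implicit differentiation applies. For $z \in \Sigma_\infty(u_0,t)$, the fact that $\psi(t,z) \in i\mathbb{R}_+$ means the denominators $\rho^2 + (2\pi j + \alpha - \theta)^2 - \psi^2$ are strictly positive and bounded below by $\eta(t,z) > 0$, uniformly in $j,\alpha$, so the relevant series converges absolutely with the same $1/j^2$-type majorant that appeared in the proof of Lemma \ref{lem.Hhat.limit.u}. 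This legitimizes termwise differentiation in both $\bar z$ and $\zeta$ via the dominated convergence theorem. The nonvanishing of $\partial L_\infty/\partial \zeta$ at $\zeta = \psi(t,z)$ is clear because that partial derivative is $2\psi$ times a sum of strictly positive terms, and $\psi \ne 0$ on $\Sigma_\infty(u_0,t)$.

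The main obstacle, such as it is, is not any one computation but assembling these ingredients: verifying uniform absolute convergence of the differentiated series on compact subsets of $\Sigma_\infty(u_0,t)$ (so that $\psi$ is even smooth enough for $\partial \psi/\partial \bar z$ to make sense), and then keeping track of signs and the factor of $\bar z$ in the chain-rule computation so that the final cancellation produces exactly the formula stated.
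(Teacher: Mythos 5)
Your proposal is correct and follows essentially the same route as the paper: differentiate the defining equation $L_\infty(z,\bar z,\psi(t,z))=0$ implicitly in $\bar z$, compute $\partial L_\infty/\partial\bar z$ and $\partial L_\infty/\partial\zeta$ termwise via the chain rule (equivalently $\bar z\,\partial/\partial\bar z = \tfrac12(\partial/\partial\rho + i\,\partial/\partial\theta)$, which matches your $\partial\rho/\partial\bar z = 1/(2\bar z)$, $\partial\theta/\partial\bar z = i/(2\bar z)$), and substitute $\zeta=\psi(t,z)$ with $\psi^2=-\eta$. The paper is terser and leaves the convergence/regularity justification implicit; your remarks on absolute convergence with a $1/j^2$ majorant and on the nonvanishing of $\partial L_\infty/\partial\zeta$ at $\psi\in i\mathbb{R}_+$ are the right additional details, consistent with the estimates the paper establishes nearby (Lemma \ref{lem.sum.order}) and does invoke in the sequel.
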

\begin{proof}
Applying chain rule to the equation $L(z,\bar z, \psi(t,z))=0$, we have
\[\bar z\frac{\partial\psi(t,z)}{\partial\bar z} =\frac{ \displaystyle -\bar z\frac{\partial L_\infty}{\partial \bar z}(z,\bar z,\psi(t,z))}{ \displaystyle \frac{\partial L_\infty}{\partial\zeta}(z,\bar z,\psi(t,z))}.\]
It then remains to compute the partial derivatives. Noting that $\bar z\partial/\partial\bar z = (1/2)(\partial/\partial\rho+i\partial/\partial\theta)$, we calculate
\[-\bar z\frac{\partial L_\infty}{\partial\bar z}(z,\bar z,\psi(t,z)) = \int_{(-\pi,\pi]}\sum_{j=-\infty}^\infty\frac{\rho-i(2\pi j+\alpha-\theta)}{(\rho^2+(2\pi j+\alpha-\theta)^2+\eta(t,z))^2}\mu_{u_0}(d\alpha)\]
and
\[\frac{\partial L_\infty}{\partial\zeta}(z,\bar z,\psi(t,z)) = \int_{(-\pi,\pi]}\sum_{j=-\infty}^\infty\frac{2\psi(t,z)}{(\rho^2+(2\pi j+\alpha-\theta)^2+\eta(t,z))^2}\mu_{u_0}(d\alpha)\]
where we have used the notation $\eta$ from Definition \ref{notation.G.H.hat}(4).
\end{proof}

\begin{lemma}
    \label{lem.sum.order}
    Let $z = e^\rho e^{i\theta}$ where $\theta\in(-\pi,\pi]$ and $z^{1/k}$ be the principal $k$-th root. Assume that $z\in\Sigma_\infty(u_0,t)$. Then there exist constants $c, C>0$ such that
    \[c \min\{\vert j+1\vert,\vert j-1\vert\} \leq k\vert z^{1/k} - e^{\frac{i(2\pi j+\alpha)}{k}}\vert\leq  C\max\{\vert j+1\vert, \vert j-1\vert\}\]
    for all $\alpha\in(-\pi,\pi]$ and all $\vert j\vert \geq 1$. The constants $c$ and $C$ are independent of the choice of $z\in\Sigma_\infty(u_0,t)$.
\end{lemma}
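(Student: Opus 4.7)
The starting point is the explicit identity
\[ k^2|z^{1/k} - e^{i(2\pi j + \alpha)/k}|^2 = a_k(\alpha; j, \rho, \theta), \]
obtained by expanding the squared modulus, where $a_k(\alpha;j,\rho,\theta) = k^2(1-e^{\rho/k})^2 + 2e^{\rho/k}k^2[1-\cos((2\pi j+\alpha-\theta)/k)]$ as in \eqref{eq.ak.def}.  I would first exploit that $\Sigma_\infty(u_0,t)$ is bounded and bounded away from $0$: from \eqref{eq.lifetime.k=infty}, $T_\infty(u_0,z)\to\infty$ as $|z|\to 0$ or $|z|\to\infty$ (since $\log|z|^2\to\pm\infty$ while the Poisson integral stays finite when $z$ is bounded away from the unit circle).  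Hence $\rho=\log|z|$ is uniformly bounded, say $|\rho|\le R$, so $k^2(1-e^{\rho/k})^2\le \rho^2 e^{2R/k}$ is uniformly bounded by a constant $C_0$ for $k\ge 1$, and $e^{\rho/k}$ stays in a fixed compact subset of $(0,\infty)$.

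The core estimates come from rewriting the cosine term via $1-\cos x=2\sin^2(x/2)$:
\[ k^2[1-\cos\tfrac{2\pi j+\alpha-\theta}{k}] = 2\bigl(k\sin\tfrac{2\pi j+\alpha-\theta}{2k}\bigr)^2. \]
For the upper bound I would use $|\sin y|\le|y|$ to obtain $|k\sin((2\pi j+\alpha-\theta)/(2k))|\le|2\pi j+\alpha-\theta|/2 \le \pi(|j|+1)$, where the last step uses $|\alpha-\theta|\le 2\pi$.  Combining with the uniformly bounded first summand yields $a_k \le C(|j|+1)^2 \le C\max\{|j+1|,|j-1|\}^2$.

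For the lower bound, the case $|j|=1$ is trivial since $\min\{|j+1|,|j-1|\}=0$.  For $|j|\ge 2$, the triangle inequality gives $|2\pi j+\alpha-\theta|\ge 2\pi(|j|-1)$.  When $|2\pi j+\alpha-\theta|\le \pi k$, the estimate $|\sin y|\ge (2/\pi)|y|$ for $|y|\le\pi/2$ yields $|k\sin((2\pi j+\alpha-\theta)/(2k))|\ge |2\pi j+\alpha-\theta|/\pi\ge 2(|j|-1)$, so $a_k \ge 8e^{\rho/k}(|j|-1)^2 \ge c(|j|-1)^2$.

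The main obstacle is the edge case $|2\pi j+\alpha-\theta|>\pi k$, which forces $|j|$ to be near $k/2$ (since $|\alpha-\theta|\le 2\pi$).  Here $(2\pi j+\alpha-\theta)/(2k)$ lies in an interval contained in $(\pi/2,\pi/2+\pi/k]$ (or its negative), which is bounded away from $\pi$ for $k$ large, so $|\sin((2\pi j+\alpha-\theta)/(2k))|$ is bounded below by a positive constant (e.g.\ $\sqrt{2}/2$ for $k\ge 4$).  This gives $a_k\gtrsim k^2$, which dominates $(|j|-1)^2\le (k/2)^2$ by a uniform constant factor.  Combining the two ranges yields $a_k\ge c\min\{|j+1|,|j-1|\}^2$, with $c>0$ uniform in $z\in\Sigma_\infty(u_0,t)$ (via the uniform bound on $\rho$) and in $\alpha\in(-\pi,\pi]$, completing the proof after taking square roots.
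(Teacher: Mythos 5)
Your proof is correct and takes essentially the same route as the paper: both start from the identity $k^2|z^{1/k}-e^{i(2\pi j+\alpha)/k}|^2 = a_k(\alpha;j,\rho,\theta)$, use that $\Sigma_\infty(u_0,t)$ is bounded away from $0$ and $\infty$ (so $|\rho|$ is uniformly bounded), and then bound $[1-\cos x]/x^2$ above and below on the relevant range of arguments. The paper defers the lower bound to the estimate \eqref{eq.ak.estimate} derived in the proof of Lemma~\ref{lem.Hhat.limit.u} (where it existentially asserts a constant $c>0$ after noting the argument stays away from $2\pi$); you make the same bound explicit via $|\sin y|\le|y|$ and $|\sin y|\ge\tfrac{2}{\pi}|y|$ plus the edge-case analysis for arguments past $\pi/2$, which is a more self-contained and transparent presentation of the same idea. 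Two very minor points: your constant $8e^{\rho/k}$ should be $16e^{\rho/k}$ (factor-of-two slip that has no effect since you only need a positive constant), and both your argument and the paper's implicitly use $-k/2<j\le k/2$ for the lower bound — without that restriction the claim is false, since $k|z^{1/k}-e^{i(2\pi j+\alpha)/k}|$ is $k$-periodic in $j$ and bounded, whereas $\min\{|j+1|,|j-1|\}$ is not; this restriction is silent in the lemma statement but holds in every place the lemma is invoked.
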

\begin{proof}
    We note that $\sqrt{a_k(\alpha;j,\rho,\theta)} =k\vert z^{1/k} - e^{\frac{i(2\pi j+\alpha)}{k}}\vert $. The lower estimate is a restatement of the estimate \eqref{eq.ak.estimate}. The upper estimate follows from that
    \[\left[1-\cos\left(\frac{2\pi j+\alpha-\theta}{k}\right)\right]\left(\frac{2\pi j+\alpha-\theta}{k}\right)^{-2}\leq\frac{1}{2}\]
    and that $\Sigma_\infty(u_0,t)$ is a bounded domain so $\rho$ is bounded above and below.
\end{proof}

\begin{proposition}
\label{prop.kpsi.derivative}
For any compact subset $K\subset\Sigma_\infty(u_0,t)$, the limit
\[\lim_{k\to\infty}\bar z\frac{\partial }{\partial\bar z}k\psi_k(t,z) = \bar z\frac{\partial}{\partial\bar z}\psi(t,z)\]
converges uniformly in $K$.
\end{proposition}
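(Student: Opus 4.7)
The plan is to differentiate the defining equations $L_k(z, \bar z, k\psi_k(t,z)) = 0$ and $L_\infty(z, \bar z, \psi(t,z)) = 0$ implicitly, obtaining the ratio formulas
\[
\bar z \frac{\partial(k\psi_k)}{\partial \bar z} = -\frac{\bar z\, \partial_{\bar z} L_k(z, \bar z, k\psi_k)}{\partial_\zeta L_k(z, \bar z, k\psi_k)}, \qquad \bar z\frac{\partial \psi}{\partial \bar z} = -\frac{\bar z\, \partial_{\bar z} L_\infty(z, \bar z, \psi)}{\partial_\zeta L_\infty(z, \bar z, \psi)},
\]
cf.\ Lemma \ref{lem.eta.derivative}. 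Granted (i) the uniform convergence $k\psi_k\to\psi$ on $K$ from Corollary \ref{cor.DWpoint.conv.unif}, (ii) uniform convergence $\partial_{\bar z}L_k\to\partial_{\bar z}L_\infty$ and $\partial_\zeta L_k\to\partial_\zeta L_\infty$ on a neighborhood of $\{(z,\psi(t,z)):z\in K\}$, and (iii) a uniform nonvanishing denominator $\partial_\zeta L_\infty(z,\bar z,\psi(t,z))\neq 0$, the conclusion follows by continuity of the quotient.

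For (iii), observe that Corollary \ref{cor.DWpoint.conv.unif} gives $\psi(t,z)=iy(z)$ with $y(z)>0$, so $\partial_\zeta L_\infty$ at $\zeta=\psi(t,z)$ equals
$\int_{(-\pi,\pi]}\sum_{j\in\mathbb{Z}} 2iy(z)\,(\rho^2+(2\pi j+\alpha-\theta)^2+y(z)^2)^{-2}\,\mu_{u_0}(d\alpha)$,
a nonzero purely imaginary number depending continuously on $z$; compactness of $K$ yields the uniform lower bound, and simultaneously forces $\partial_\zeta L_k$ to be nonvanishing near $k\psi_k$ for large $k$, legitimizing the implicit function theorem in step~(i).

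For (ii), I would unpack $L_k$ into finite sums over $-k/2<j\leq k/2$ of terms of the form $(\zeta^2-a_k(\alpha;j,\rho,\theta))^{-1}$ coming from $\hat G_{z,k}$, together with the explicit form of $J$. Differentiating with respect to $\bar z$ and $\zeta$ produces summands involving $(\zeta^2-a_k)^{-2}$ multiplied by smooth prefactors arising from $\partial_{\bar z}a_k$ or the scalar $2\zeta$. The Taylor expansions $e^{\rho/k}=1+\rho/k+O(k^{-2})$ and $1-\cos((2\pi j+\alpha-\theta)/k)=(2\pi j+\alpha-\theta)^2/(2k^2)+O(k^{-4})$ show the summands converge pointwise to the corresponding terms of $\partial_{\bar z}L_\infty$ and $\partial_\zeta L_\infty$. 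Lemma \ref{lem.sum.order} supplies the uniform lower bound $a_k(\alpha;j,\rho,\theta)\geq c\min\{(2\pi(j{+}1))^2,(2\pi(j{-}1))^2\}$, and $\zeta$ restricted to a fixed neighborhood of $\{iy(z):z\in K\}$ keeps $\mathrm{Im}(\zeta^2-a_k)$ bounded away from $0$; together these yield a summable majorant of order $C/j^4$, uniform in $k$, $\alpha$, $z\in K$, and $\zeta$ in the neighborhood.

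The hard part is executing the uniform dominated-convergence argument for the \emph{derivatives} rather than the function values themselves, because differentiation in $\bar z$ introduces factors of $(2\pi j+\alpha-\theta)$ that partially cancel the decay. I would handle this by the standard split-and-tail technique: fix $\varepsilon>0$, choose $N$ so that the tail $|j|>N$ contributes less than $\varepsilon$ uniformly in $k$ via the $C/j^4$ majorant, then on the finite block $|j|\leq N$ use pointwise convergence of each individual summand plus the finiteness of the index set and compactness of $K$ to make the block contribution uniformly $<\varepsilon$ for $k$ large. Combining with (i) and continuity of the ratio in $(z,\zeta)$ delivers the claimed uniform convergence on $K$.
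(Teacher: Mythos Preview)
Your approach is correct and matches the paper's: implicitly differentiate $L_k(z,\bar z,k\psi_k)=0$, then show the numerator and denominator of the resulting ratio converge uniformly on $K$ with the limiting denominator nonzero. Two minor corrections: the actual decay rates from Lemma~\ref{lem.sum.order} are $O(1/j^3)$ for the $\partial_{\bar z}$-summands and $O(1/j^2)$ for the $\partial_\zeta$-summands (not $O(1/j^4)$, though both are still summable), and the paper isolates the composite prefactor $J'(\hat H_{z,k}(k\psi_k))\to t\,\hat G_z(\psi)^2$ and treats it via Montel's theorem rather than folding it into the dominated-convergence sum.
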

\begin{proof}
By applying chain rule to $L_k(z,\bar z, k\psi_k(t,z))=0$,
\begin{equation}
    \label{eq.kpsi.chainrule}
    \bar z\frac{\partial}{\partial\bar z} k\psi_k(t,z)= \frac{\displaystyle -\bar z\frac{\partial L_k}{\partial \bar z}(z, \bar z, k\psi_k(t,z))}{\displaystyle \frac{\partial L_k}{\partial \zeta}(z,\bar z,k\psi_k(t,z))}.
\end{equation}
We then compute the derivatives
\[
-\bar z\frac{\partial L_k}{\partial \bar z}(z,\bar z,\zeta) = \frac{J'(\hat H_{z,k}(\zeta))}{\hat G_{z,k}(\zeta)^2}\int_{(\pi,\pi]}\sum_{-\frac{k}{2}<j\leq \frac{k}{2}}\frac{\bar z^{\frac{1}{k}}\zeta k(z^{\frac{1}{k}}-e^{\frac{i(2\pi j+\alpha)}{k}})}{(k^2\vert z^{\frac{1}{k}}-e^{\frac{i(2\pi j+\alpha)}{k}}\vert^2-\zeta^2)^2}\mu_{u_0}(d\alpha)\]
and
\begin{align*}
&\frac{\partial L_k}{\partial \zeta}(z,\bar z,\zeta) \\
&= J'(\hat H_{z,k}(\zeta))\left[\frac{1}{\hat G_{z,k}(\zeta)^2}\int_{(-\pi,\pi]}\sum_{-\frac{k}{2}<j\leq \frac{k}{2}}\frac{\zeta^2+k^2\vert z^{\frac{1}{k}}-e^{\frac{i(2\pi j+\alpha)}{k}}\vert^2}{(k^2\vert z^{\frac{1}{k}}-e^{\frac{2\pi i j}{k}}\vert^2-\zeta^2)^2}\mu_{u_0}(d\alpha)-\frac{1}{k}\right]-1.
\end{align*}
We now take limits. Recall that $k\psi_k$ converges uniformly to $\psi$ in compact sets. One can see, by Montel's theorem for example (because $\psi_k$ and $\psi$ are in the upper half plane), that 
\[J'(\hat H_{z,k}(k\psi_k(t,z)))\to t\hat G_z(\psi(t,z))^2.\]
Therefore, recalling the notation $\eta$ from Definition \ref{notation.G.H.hat}(4),
\[
\lim_{k\to\infty}-\bar z\frac{\partial L_k}{\partial \bar z}(z,\bar z, k\psi_k(t,z)) = \int_{(-\pi,\pi]}\sum_{-\frac{k}{2}<j\leq \frac{k}{2}}\frac{t\psi(t,z) (\rho-i(2\pi j+\alpha-\theta))}{(\rho^2+(2\pi j+\alpha-\theta)^2+\eta(t,z))^2}\mu_{u_0}(d\alpha)
\]
because the summand in $-\bar z\frac{\partial L_k}{\partial \bar z}(\bar z, k\psi_k(z))$ is of order $O(1/j^3)$ by Lemma \ref{lem.sum.order}, and
\begin{align*}
&\lim_{k\to\infty}\frac{\partial L_k}{\partial \zeta}(z,\bar z, k\psi_k(t,z))\\
& = t\int_{(-\pi,\pi]}\left(\sum_{j=\infty}^\infty\frac{-\eta(t,z)+\rho^2+(2\pi j+\alpha-\theta)^2}{(\rho^2+(2\pi j+\alpha-\theta)^2+\eta(t,z))^2}-\frac{1}{t}\right)\mu_{u_0}(d\alpha)\\
&=  \int_{(-\pi,\pi]}\sum_{j=-\infty}^\infty \frac{2t\psi(t,z)^2}{(\rho^2+(2\pi j+\alpha-\theta)^2+\eta(t,z))^2}\mu_{u_0}(d\alpha)
\end{align*}
because the summand in $\frac{\partial L_k}{\partial \alpha}(\bar z, k\psi_k(t,z))$ is of order $O(1/j^2)$ by Lemma \ref{lem.sum.order}. We have also used \eqref{eq.DW.condition.u} in the above display equation.

The conclusion then follows from taking limit as $k\to\infty$ in \eqref{eq.kpsi.chainrule} and comparing the limit to $\bar z\partial \psi(t,z)/\partial z$ Lemma \ref{lem.eta.derivative}.
\end{proof}

We can now proceed to calculate the limit of the density of the Brown measure $u_0b_k(t)$ in Theorem \ref{thm.RW.Brown.density}. We first take the limit of the derivative
    \[\bar z\frac{\partial}{\partial\bar z}\sum_{-\frac{k}{2}<j\leq \frac{k}{2}}\int_{(-\pi,\pi]}\frac{\bar z^{\frac{1}{k}}-e^{-i\frac{2\pi  j+\alpha}{k}}}{\vert z^{\frac{1}{k}}-e^{i\frac{2\pi j+\alpha}{k}}\vert^2+\eta_k(t,z)}\mu_{u_0}(d\alpha)\]
on an arbitrary compact set $K\subset\Sigma_\infty(u_0,t)$. We will again use $k\psi_k\to\psi$ uniformly on compact sets and $\psi(t,z)\in\mathbb{C}^+$ for $z\in \Sigma_\infty(u_0,t)$.

\begin{lemma}
    \label{lem.series.density}
Let $K\subset \Sigma_\infty(u_0,t)$ be compact. Then the limit
\begin{align*}
&\lim_{k\to\infty}\int_{(-\pi,\pi]}\sum_{-\frac{k}{2}<j\leq \frac{k}{2}}\bar z\frac{\partial}{\partial\bar z}\frac{1}{k}\frac{\bar z^{\frac{1}{k}}-e^{-\frac{i(2\pi j+\alpha)}{k}}}{\vert z^{\frac{1}{k}}-e^{\frac{i(2\pi j+\alpha)}{k}}\vert^2+\eta_k(t,z)}\mu_{u_0}(d\alpha)\\
&=\int_{(-\pi,\pi]}\sum_{j=-\infty}^\infty\bar z\frac{\partial}{\partial\bar z}\frac{\rho+i(2\pi j+\alpha-\theta)}{\rho^2+(2\pi j+\alpha-\theta)^2-\eta(t,z)}\mu_{u_0}(d\alpha).
\end{align*}
converges absolutely uniformly for $e^{\rho}e^{i\theta}\in K$.
\end{lemma}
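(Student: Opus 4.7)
The strategy is to prove the identity termwise-in-$j$, via a uniform (in $z\in K$, $\alpha\in(-\pi,\pi]$, and $k$) summable majorant, followed by the Weierstrass $M$-test and dominated convergence in $\alpha$. The three ingredients already in hand are: Corollary \ref{cor.DWpoint.conv.unif} (uniform convergence $k^2\eta_k \to \eta$ on $K$ with a uniform positive lower bound $\eta\ge\delta>0$ on $K$), Proposition \ref{prop.kpsi.derivative} (uniform convergence $\bar z\partial_{\bar z}(k\psi_k)\to \bar z\partial_{\bar z}\psi$, hence $k^2\bar z\partial_{\bar z}\eta_k = -2(k\psi_k)\bar z\partial_{\bar z}(k\psi_k) \to \bar z\partial_{\bar z}\eta$), and Lemma \ref{lem.sum.order} (the two-sided bound $c|j\pm 1| \le k|z^{1/k}-e^{i(2\pi j+\alpha)/k}|\le C|j\pm 1|$ for $|j|\ge 1$, uniformly on $K$).

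First, I would identify the pointwise limit. Setting $w_{j,k} = z^{1/k}-e^{i(2\pi j+\alpha)/k}$ and multiplying numerator and denominator by $k^2$, the Taylor expansions $k\bar z^{1/k}\to \bar z^{0}=1$, $k(\bar z^{1/k}-e^{-i(2\pi j+\alpha)/k})\to \rho+i(2\pi j+\alpha-\theta)$, and $k^2|w_{j,k}|^2 \to \rho^2+(2\pi j+\alpha-\theta)^2$ (all uniform in $z\in K$ and $\alpha$ for each fixed $j$), combined with $k^2\eta_k\to\eta$, show that the un-differentiated summand (times $1/k$, times $k^2/k^2$) converges uniformly to $(\rho+i(2\pi j+\alpha-\theta))/[\rho^2+(2\pi j+\alpha-\theta)^2+\eta(t,z)]$ — whence the sign in the denominator of the limit is $+\eta$, consistent with \eqref{eq.DW.condition.u}. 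Applying $\bar z\partial/\partial\bar z$ to the summand and using $\bar z\partial_{\bar z}\bar z^{1/k}=\bar z^{1/k}/k$ yields the exact identity
\[
\bar z\frac{\partial}{\partial\bar z}\!\left[\frac{1}{k}\cdot\frac{\overline{w_{j,k}}}{|w_{j,k}|^2+\eta_k}\right]
=\frac{\bar z^{1/k}\,k^2\eta_k \;-\; (k\,\overline{w_{j,k}})\,(k^2\bar z\partial_{\bar z}\eta_k)}{[k^2(|w_{j,k}|^2+\eta_k)]^2},
\]
and the above uniform convergences identify the pointwise limit as $\bar z\partial_{\bar z}[(\rho+i(2\pi j+\alpha-\theta))/(\rho^2+(2\pi j+\alpha-\theta)^2+\eta(t,z))]$.

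Next, I would establish the uniform dominating sequence. The denominator of the displayed expression is at least $[c\min\{(j+1)^2,(j-1)^2\}+k^2\eta_k]^2$ by Lemma \ref{lem.sum.order}; on $K$ this is bounded below by a positive constant for $|j|\le 1$ (thanks to $k^2\eta_k\ge\delta/2$ for large $k$) and by $c^2(|j|-1)^4$ for $|j|\ge 2$. The two terms in the numerator are, respectively, bounded by $\|\eta\|_{L^\infty(K)}+1$ (the first term) and by $C\max\{|j+1|,|j-1|\}\cdot\|\bar z\partial_{\bar z}\eta\|_{L^\infty(K)}+1$ (the second term), uniformly in $k$ and $\alpha$. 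Thus the entire differentiated summand is dominated, uniformly in $z\in K$, $\alpha\in(-\pi,\pi]$, and $k$ large, by a constant multiple of $(1+|j|)^{-3}$, which is summable over $j\in\mathbb{Z}$.

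Finally, the Weierstrass $M$-test exchanges the $j$-sum (extending the range $-k/2<j\le k/2$ to $j\in\mathbb{Z}$ in the limit) with the limit $k\to\infty$, producing locally uniform convergence in $z$, and the bounded convergence theorem against the probability measure $\mu_{u_0}$ exchanges the $\alpha$-integral with the limit. The main obstacle is the bookkeeping in step two: one must verify that the second term in the numerator, which carries an extra factor of order $|j|$, is tamed by the $|j|^4$ growth of the denominator and the boundedness of $k^2\bar z\partial_{\bar z}\eta_k$ coming from Proposition \ref{prop.kpsi.derivative}; without the latter proposition, one would only have pointwise convergence of $k^2\bar z\partial_{\bar z}\eta_k$, which is insufficient for uniform domination. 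Since every bound above is uniform on compact $K\subset\Sigma_\infty(u_0,t)$, the absolute uniform convergence claimed in the lemma follows.
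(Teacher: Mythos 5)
Your proof is correct and follows essentially the same route as the paper: compute $\bar z\partial_{\bar z}$ of the summand explicitly, bound the resulting expression termwise in $j$ via Lemma \ref{lem.sum.order} (growth of $k^2|z^{1/k}-e^{i(2\pi j+\alpha)/k}|^2$), invoke Corollary \ref{cor.DWpoint.conv.unif} and Proposition \ref{prop.kpsi.derivative} for the uniform convergence and boundedness of $k^2\eta_k$ and its $\bar z\partial_{\bar z}$-derivative, and then apply the $M$-test/dominated convergence. Your consolidated identity with the two-term numerator $\bar z^{1/k}k^2\eta_k - (k\overline{w_{j,k}})(k^2\bar z\partial_{\bar z}\eta_k)$ is an algebraic repackaging (and a slightly tighter one, yielding an $O((1+|j|)^{-3})$ majorant rather than $O(|j|^{-2})$) of the paper's three-term expansion, and you are also right that the displayed denominator should read $+\eta(t,z)$ rather than $-\eta(t,z)$, as in \eqref{eq.DW.condition.u} and in the statement of Lemma \ref{lem.series.derivative}; this is a sign typo.
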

\begin{proof}
    For convenience, we denote $c(j,k;\rho,\theta,\eta) = \vert z^{\frac{1}{k}}-e^{\frac{i(2\pi j+\alpha)}{k}}\vert^2+\eta_k(t,z)$.
    We then compute
    \begin{align*}
    &\bar z\frac{\partial}{\partial\bar z}\frac{1}{k}\frac{\bar z^{\frac{1}{k}}-e^{-\frac{i(2\pi  j+\alpha)}{k}}}{c(j,k;\rho,\theta,\eta)}\\
    &= \frac{1}{k^2}\frac{\bar z^{\frac{1}{k}}}{c(j,k;\rho,\theta,\eta)}-\frac{\bar z^{\frac{1}{k}}-e^{-\frac{i(2\pi j+\alpha)}{k}}}{k}\frac{(z^{\frac{1}{k}}-e^{\frac{i(2\pi j+\alpha)}{k}})\frac{1}{k}\bar z^{\frac{1}{k}}-\bar z\frac{\partial\psi_k(t,z)^2}{\partial \bar z}}{c(j,k;\rho,\theta,\eta)^2}\\
    &= \frac{1}{k^2}\frac{\bar z^{\frac{1}{k}}}{c(j,k;\rho,\theta,\eta)}-\frac{\vert z^{\frac{1}{k}}-e^{\frac{i(2\pi j+\alpha)}{k}}\vert ^2\bar z^{\frac{1}{k}}}{k^2c(j,k;\rho,\theta,\eta)^2}+\frac{\bar z\frac{\partial k^2\psi_k(t,z)^2}{\partial \bar z}(\bar z^{\frac{1}{k}}-e^{-\frac{i(2\pi j+\alpha)}{k}})}{k^3c(j,k;\rho,\theta,\eta)^2}.
    \end{align*}
    Observe that $\vert z^{\frac{1}{k}}-e^{\frac{2\pi i j}{k}}\vert^2\leq c(j,k;\rho,\theta,\psi)$. Hence,
    \begin{align*}
    &\left\vert \bar z\frac{\partial}{\partial\bar z}\frac{1}{k}\frac{\bar z^{\frac{1}{k}}-e^{-\frac{i(2\pi j+\alpha)}{k}}}{\vert z^{\frac{1}{k}}-e^{\frac{i(2\pi j+\alpha)}{k}}\vert^2-\psi^2}\right\vert\\
    &\leq \frac{2\vert\bar z^{\frac{1}{k}}\vert}{k^2 c(j,k;\rho,\theta,\psi)}+\left\vert\bar z\frac{\partial k^2\psi_k(t,z)^2}{\partial \bar z}\right\vert\left(\frac{1}{k^2 c(j,k;\rho,\theta,\psi)}\right)^{\frac{3}{2}}.
    \end{align*}
    The series absolute convergence then follows directly from the facts that $\bar z\partial k^2\psi^2/\partial\bar z$ converges uniformly to $\bar z\partial \eta^2/\partial\bar z$ (Proposition \ref{prop.kpsi.derivative}) and that $k^2 c(j,k;\rho,\theta,\psi)$ has order $O(j^2)$ uniformly on $K$ by Lemma \ref{lem.sum.order}.

    Recall that $\eta_k(t,z) = -\psi_k(t,z)^2$ and $\psi(t,z)= \lim_{k\to\infty} k\psi_k(t,z)$. Since the series converges absolutely, we can take the limit using the dominated convergence theorem and rewrite the limit as 
    \begin{align*}
        %&\sum_{j=-\infty}^\infty\left(\frac{1}{\rho^2+(2\pi j-\theta)^2-\eta^2}-\frac{\rho^2+(2\pi j-\theta)^2}{(\rho^2+(2\pi j-\theta)^2-\eta^2)^2}+\frac{\bar z\frac{\partial\eta^2}{\partial\bar z}(\rho+i(2\pi  j-\theta))}{(\rho^2+(2\pi j-\theta)^2-\eta^2)^2}\right)\\
        \int_{(-\pi,\pi]}\sum_{j=-\infty}^\infty\bar z\frac{\partial}{\partial\bar z}\frac{\rho+i(2\pi j+\alpha-\theta)}{\rho^2+(2\pi j +\alpha-\theta)^2-\eta^2}\mu_{u_0}(d\alpha)
    \end{align*}
    where the equality can be verified by direct calculations. 
\end{proof}

We need the following two lemmas before we compute the limit of the density of the Brown measure of $u_0b_k(t)$. Recall that, by \eqref{eq.Sigma.u.bdry.funct}, the domain $\Sigma_\infty(u_0,t)$ is described using a function $r_{u_0}$.
\begin{lemma}
    \label{lem.eta.by.bdry}
    Let $z = e^{\rho}e^{i\theta}\in\Sigma_\infty(u_0,t)$. Then $\eta(t,z)$ can be calculated by
    \[\rho^2+\eta(t,z) = \rho_{u_0}(t,\theta)^2\]
    where $\rho_{u_0}(t,\theta) = \log r_{u_0}(t,\theta)$.
\end{lemma}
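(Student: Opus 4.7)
The plan is to transform the implicit characterization of $\eta(t,z)$ coming from the Denjoy--Wolff fixed-point equation of Corollary \ref{cor.DWpoint.conv.unif} into the defining equation of the outer boundary $r_{u_0}(t,\theta)$ of $\Sigma_\infty(u_0,t)$, by collapsing an infinite series over a lattice into a closed-form trigonometric/hyperbolic expression. Since $z\in\Sigma_\infty(u_0,t)$, Corollary \ref{cor.DWpoint.conv.unif} gives $\eta(t,z)\ge 0$, $\psi(t,z)=i\sqrt{\eta(t,z)}$, and
\[
\int_{(-\pi,\pi]}\sum_{j=-\infty}^\infty\frac{\mu_{u_0}(d\alpha)}{\rho^2+\eta(t,z)+(2\pi j+\alpha-\theta)^2}=\frac{1}{t}.
\]
Set $A=\sqrt{\rho^2+\eta(t,z)}\ge 0$. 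The crux of the argument is the classical Mittag--Leffler identity
\[
\sum_{j=-\infty}^\infty\frac{1}{A^2+(b+2\pi j)^2}=\frac{\sinh A}{2A\bigl(\cosh A-\cos b\bigr)},
\]
which follows from the partial-fraction expansion of $\pi\cot(\pi z)$ (or equivalently from Poisson summation applied to $x\mapsto e^{-A|x|}/(2A)$). Applied termwise inside the integral with $b=\alpha-\theta$, this reduces the Denjoy--Wolff equation to
\[
\frac{1}{t}=\frac{1}{2A}\int_{(-\pi,\pi]}\frac{\sinh(A)}{\cosh A-\cos(\alpha-\theta)}\,\mu_{u_0}(d\alpha).
\]

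Next, I would introduce $\tilde r=e^{A}\ge 1$ and use the identity $e^{2A}-2e^{A}\cos(\alpha-\theta)+1=2e^{A}(\cosh A-\cos(\alpha-\theta))$ to recognize the denominator as $|\tilde r e^{i\theta}-e^{i\alpha}|^2$. Combined with $\sinh A=(\tilde r^2-1)/(2\tilde r)$ and $2A=\log\tilde r^2$, the preceding display rearranges cleanly to
\[
\frac{\log\tilde r^2}{\tilde r^2-1}\left(\int_{(-\pi,\pi]}\frac{\mu_{u_0}(d\alpha)}{|\tilde r e^{i\theta}-e^{i\alpha}|^2}\right)^{-1}=t,
\]
which is precisely $T(u_0;\tilde r,\theta)=t$ from \eqref{eq.T.def}.

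Finally, I would conclude by uniqueness: the equation $T(u_0;\tilde r,\theta)=t$ has exactly two solutions in $(0,\infty)$, namely $r_{u_0}(t,\theta)$ and its reciprocal, reflecting the annular symmetry $T(u_0;r,\theta)=T(u_0;1/r,\theta)$ of $T$. Since $A\ge 0$ forces $\tilde r\ge 1$, we must have $\tilde r=r_{u_0}(t,\theta)$, and squaring $A=\log r_{u_0}(t,\theta)=\rho_{u_0}(t,\theta)$ gives $\rho^2+\eta(t,z)=\rho_{u_0}(t,\theta)^2$. The main (and essentially only nontrivial) step is the summation identity above; once it is invoked, the rest is straightforward algebraic bookkeeping, though one must take care that all exchanges of sum and integral are justified by the decay $O(j^{-2})$ of the terms, uniform in $\alpha\in(-\pi,\pi]$, as already established for $z\in\Sigma_\infty(u_0,t)$ in Lemma \ref{lem.sum.order} and the proof of Corollary \ref{cor.DWpoint.conv.unif}.
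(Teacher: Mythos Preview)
Your proposal is correct and follows essentially the same route as the paper: evaluate the lattice series in the Denjoy--Wolff equation \eqref{eq.DW.condition.u} in closed form, recognize the result as $T(u_0;\tilde r,\theta)=t$ for $\tilde r=e^{\sqrt{\rho^2+\eta(t,z)}}$, and conclude $\tilde r=r_{u_0}(t,\theta)$. The paper simply invokes Mathematica for the summation where you cite the Mittag--Leffler identity, and it phrases the uniqueness step via $\eta$ (the Denjoy--Wolff equation has a unique positive solution, and $\rho_{u_0}(t,\theta)^2-\rho^2$ is visibly one) rather than via $\tilde r$ as you do; these are equivalent bookkeeping choices.
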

\begin{proof}
    By \eqref{eq.DW.condition.u}, $\eta(t,z)$ is determined by the equation
    \[\int_{(-\pi,\pi]}\sum_{j=-\infty}^\infty\frac{1}{\rho^2+(2\pi j+\alpha-\theta)^2+\eta(t,z)}\mu_{u_0}(d\alpha)=\frac{1}{t}.\]
    Using Mathematica, we can compute the sum 
    \begin{align*}
        \int_{(-\pi,\pi]}\sum_{j=-\infty}^\infty\frac{1}{\rho^2+(2\pi j+\alpha-\theta)^2}\mu_{u_0}(d\alpha) &= \int_{(-\pi,\pi]} \frac{1}{\log(\vert z\vert^2)}\frac{\vert z\vert^2-1}{\vert z-e^{i\alpha}\vert^2}\mu_{u_0}(d\alpha)\\
        &= (T(u_0;\vert z\vert, \theta))^{-1}.
    \end{align*}
    
    Meanwhile, for any $\theta$, $r_{u_0}(t,\theta)e^{i\theta}$ is on the boundary of the domain $\Sigma_\infty(u_0,t)$. By Corollary \ref{cor.DWpoint.conv.unif}, $\eta(t,r_{u_0}(t,\theta)e^{i\theta}) = 0$. Thus, for any $z = e^{\rho}e^{i\theta}\in \Sigma_\infty(u_0,t)$, $\eta(t,z) = \rho_{u_0}(t,\theta)^2-\rho^2$ is the unique positive number such that 
    \[\int_{(-\pi,\pi]}\sum_{j=-\infty}^\infty\frac{1}{\rho^2+(2\pi j+\alpha-\theta)^2+\eta(t,z)}\mu_{u_0}(d\alpha)=(T(u_0; r_{u_0}(t,\theta),\theta)^{-1}=\frac{1}{t},\]
    completing the proof.
\end{proof}

\begin{lemma}
    \label{lem.series.derivative}
We can rewrite the series
\begin{align*}
    &\int_{(-\pi,\pi]}\sum_{j=-\infty}^\infty\bar z\frac{\partial}{\partial\bar z}\frac{\rho+i(2\pi j+\alpha-\theta)}{\rho^2+(2\pi j+\alpha-\theta)^2+\eta(t,z)}\mu_{u_0}(d\alpha) \\
    &= \frac{1}{2t}+\frac{1}{2}\frac{\partial}{\partial\theta}\int\frac{r_{u_0}(t,\theta)\sin(\theta-\alpha)}{r_{u_0}(t,\theta)^2+1-2r_{u_0}(t,\theta)\cos(\theta-\alpha)}\mu_{u_0}(d\alpha).
\end{align*}
%By page 10 of FreeLinearized, the derivative can be written as
%\begin{align*}
    %\frac{1}{t}+i\cdot \bar z\frac{\partial}{\partial \bar z}\left(\frac{\theta}{t}+\frac{1}{2}\frac{\sin\theta}{\cos\theta-\cosh\rho_t(\theta)}\right) &= \frac{1}{2t}-\frac{1}{4}\frac{\partial}{\partial\theta}\left(\frac{\sin\theta}{\cos\theta-\cosh\rho_t(\theta)}\right)\\
    %&= \frac{1}{2t}+\frac{1}{4}\frac{\partial}{\partial\theta}\frac{2r_t(\theta)\sin\theta}{r_t(\theta)^2+1-2r_t(\theta)\cos\theta}.
%\end{align*}
\end{lemma}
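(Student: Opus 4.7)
The plan is to reduce the series to a closed form via classical cotangent-sum identities, then apply $\bar z\,\partial/\partial\bar z = \tfrac12(\partial_\rho + i\,\partial_\theta)$, and finally use the defining equation of $r_{u_0}(t,\theta)$ to produce the cancellation.

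First, use Lemma \ref{lem.eta.by.bdry}: on $\Sigma_\infty(u_0,t)$,
\[
\rho^2+(2\pi j+\alpha-\theta)^2+\eta(t,z) \;=\; R(\theta)^2+(2\pi j+\alpha-\theta)^2,\qquad R(\theta):=\rho_{u_0}(t,\theta),
\]
so the denominators no longer depend on $\rho$. Setting $s=2\pi j+\alpha-\theta$ and $r=e^{R(\theta)}=r_{u_0}(t,\theta)$, I would evaluate the sum
\[
\sum_{j\in\mathbb{Z}}\frac{\rho+is}{R^2+s^2}
\]
by splitting into real and imaginary parts and applying the classical identity $\sum_{j}\frac{1}{a+i(2\pi j+b)}=\tfrac{1}{2}\cdot\frac{\sinh a - i\sin b}{\cosh a-\cos b}$ (which follows from $\sum_n 1/(n+w)=\pi\cot\pi w$ and the addition formula for $\cot$). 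This produces
\[
\sum_{j}\frac{\rho+is}{R^2+s^2} \;=\; \frac{\rho}{2R}\cdot\frac{r^2-1}{r^2+1-2r\cos(\theta-\alpha)} \;-\; i\,\frac{r\sin(\theta-\alpha)}{r^2+1-2r\cos(\theta-\alpha)},
\]
in which one recognizes the Poisson kernel and its conjugate.

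Next, apply $\bar z\,\partial/\partial\bar z = \tfrac12(\partial_\rho+i\partial_\theta)$ termwise. The imaginary part is independent of $\rho$, so $\partial_\rho$ only hits the real part, yielding $\tfrac{1}{2R}\cdot\frac{r^2-1}{r^2+1-2r\cos(\theta-\alpha)}$; the $i\partial_\theta$ acts on both parts, producing (a) $\tfrac{i\rho}{2}\partial_\theta\!\left[\tfrac{1}{R}\cdot\tfrac{r^2-1}{r^2+1-2r\cos(\theta-\alpha)}\right]$ from the real part and (b) $\partial_\theta\!\left[\tfrac{r\sin(\theta-\alpha)}{r^2+1-2r\cos(\theta-\alpha)}\right]$ from the imaginary part. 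After integrating against $\mu_{u_0}$ and exchanging $\partial_\theta$ with integration (easily justified on compact subsets of $\Sigma_\infty(u_0,t)$ where $r>1$ is smooth in $\theta$), the three terms, collected with the overall factor $1/2$, are ready to be simplified.

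The decisive step is to use the definition of $r_{u_0}(t,\theta)$: the equation $T(u_0;r_{u_0}(t,\theta),\theta)=t$ (from \eqref{eq.lifetime.k=infty} and \eqref{eq.Sigma.u.bdry.funct}) gives
\[
\int_{(-\pi,\pi]}\frac{r^2-1}{r^2+1-2r\cos(\theta-\alpha)}\,\mu_{u_0}(d\alpha)\;=\;\frac{2R(\theta)}{t}.
\]
Dividing through by $R$ yields $\int\frac{1}{R}\cdot\frac{r^2-1}{r^2+1-2r\cos(\theta-\alpha)}\mu_{u_0}(d\alpha)=2/t$, which is \emph{constant in $\theta$}. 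This simultaneously (i) turns the $\partial_\rho$-contribution into $\tfrac{1}{2}\cdot\tfrac{1}{2}\cdot\tfrac{2}{t}\cdot 2=\tfrac{1}{2t}$ (after folding in the $1/(2R)$ and the overall $1/2$), and (ii) forces the $i\partial_\theta$-contribution from the real part to vanish identically. What remains is precisely $\tfrac{1}{2}\partial_\theta\!\int\frac{r_{u_0}(t,\theta)\sin(\theta-\alpha)}{r_{u_0}(t,\theta)^2+1-2r_{u_0}(t,\theta)\cos(\theta-\alpha)}\mu_{u_0}(d\alpha)$, completing the identity.

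The main obstacle is not difficulty but bookkeeping: the ``miraculous'' cancellation of the $i\rho\,\partial_\theta$ term rests on recognizing that the $r$ appearing inside the Poisson kernel is precisely the one making the total mass equal $2R/t$, so the $\theta$-dependence drops out. The termwise differentiation of the series and the interchange of $\partial_\theta$ with the $\alpha$-integration both require only mild uniform estimates already implicit in Lemma \ref{lem.sum.order} and the smoothness of $r_{u_0}(t,\cdot)$ on compact subsets of $\Sigma_\infty(u_0,t)$.
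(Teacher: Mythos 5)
Your proof is correct and uses essentially the same ingredients as the paper's: Lemma \ref{lem.eta.by.bdry} to replace $\rho^2+\eta(t,z)$ by $\rho_{u_0}(t,\theta)^2$, the Poisson-kernel/cotangent summation for the series in $j$, and the constancy in $\theta$ of $\int \tfrac{1}{R}\tfrac{r^2-1}{r^2+1-2r\cos(\theta-\alpha)}\,\mu_{u_0}(d\alpha)=2/t$. The organizational difference from the paper is minor but favorable: you sum the full series in closed form \emph{before} applying $\bar z\,\partial_{\bar z}$, so the $i\rho\,\partial_\theta$-contribution from the real part dies outright once the $\theta$-constancy is invoked, whereas the paper splits the numerator as $(\rho+i(\alpha-\theta))+2\pi i j$ before summing, which produces a residual $\int(\alpha-\theta)\partial_\theta P\,\mu_{u_0}(d\alpha)$ term that must cancel against the companion piece --- your route avoids that bookkeeping. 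One small arithmetic typo: you wrote $\tfrac12\cdot\tfrac12\cdot\tfrac2t\cdot 2=\tfrac1{2t}$, but that product equals $\tfrac1t$; the correct accounting is $\tfrac12\cdot\tfrac{1}{2R}\cdot\tfrac{2R}{t}=\tfrac{1}{2t}$, and the conclusion is unaffected.
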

\begin{proof}
    The sum is absolutely convergent, by Lemma \ref{lem.series.density}. We combine the $j$ and $-j$ terms for $j>0$ to rewrite the series into
    \[\bar z\frac{\partial}{\partial \bar z}\left[\int_{(-\pi,\pi]}\left(f_0(\alpha)+\sum_{j=1}^\infty(f_j(\alpha)+f_{-j}(\alpha))\right)\mu_{u_0}(d\alpha)\right]\]
    where
    \[f_j(\alpha) =\frac{\rho+i(\alpha-\theta)+2\pi i j}{\rho^2+(2\pi j+\alpha-\theta)^2+\eta(t,z)}.\]
    %\[\bar z\frac{\partial}{\partial\bar z}\left[\frac{\rho+i(\alpha-\theta)}{\rho^2+(\alpha-\theta)^2+\eta(t,z)}+\sum_{j=1}^\infty\left(\frac{\rho+i(2\pi j+\alpha-\theta)}{\rho^2+(2\pi j+\alpha-\theta)^2+\eta(t,z)}+\frac{\rho+i(-2\pi j+\alpha-\theta)}{\rho^2+(2\pi j-\alpha+\theta)^2+\eta(t,z)}\right)\right].\]
    We are able to take the derivative out of the sum because the sum is now uniformly absolutely convergent in a neighborhood of $(\rho,\theta)$. Then we collect the terms with $\rho+i(\alpha-\theta)$ in the numerator and use the classifying equation \eqref{eq.DW.condition.u} for $\eta = -\psi^2$
    \[\sum_{j=-\infty}^\infty\frac{1}{\rho^2+(2\pi j-\theta)^2+\eta(t,z)}=\frac{1}{t}\]
    to write the sum as
     \begin{align}
     &\bar z\frac{\partial}{\partial \bar z}\left[\frac{\rho+i(\alpha-\theta)}{t}+i\int_{(-\pi,\pi]}\sum_{j=1}^\infty\left(g_j(\alpha)+g_{-j}(\alpha)\right)\mu_{u_0}(d\alpha)\right]\nonumber\\
     &=\frac{1}{t}+i\cdot \bar z\frac{\partial}{\partial \bar z}\left[\int_{(-\pi,\pi]}\sum_{j=1}^\infty\left(g_j(\alpha)+g_{-j}(\alpha)\right)\mu_{u_0}(d\alpha)\right]\label{eq.sum.to.differentiate}
     \end{align}
    %\[\bar z\frac{\partial}{\partial \bar z}\left[\frac{\rho+i(\alpha-\theta)}{t}+i\int_{(-\pi,\pi]}\sum_{j=1}^\infty\left(\frac{2\pi j}{\rho^2+(2\pi j+\alpha-\theta)^2-\eta^2}-\frac{2\pi j}{\rho^2+(2\pi j-\alpha+\theta)^2-\eta^2}\right)\mu_{u_0}(d\alpha)\right],\]
    where
    \[g_j(\alpha)=\frac{2\pi j}{\rho^2+(2\pi j+\alpha-\theta)^2+\eta(t,z)}.\]
    Now, by Lemma \ref{lem.eta.by.bdry}, $\rho^2+\eta(t,z)=\rho_{u_0}(t,\theta)$. We then use Mathematica to evaluate the infinite sum
    \begin{align*}
    &\int_{(-\pi,\pi]}\sum_{j=1}^\infty\left(g_j(\alpha)+g_{-j}(\alpha)\right)\mu_{u_0}(d\alpha)\\
    &= \int_{(-\pi,\pi]}\frac{\theta-\alpha}{t}-\frac{1}{2}\frac{2r_{u_0}(t,\theta)\sin(\theta-\alpha)}{r_{u_0}(t,\theta)^2+1-2r_{u_0}(t,\theta)\cos(\theta-\alpha)}\mu_{u_0}(d\alpha).
    \end{align*}
    The conclusion then follows from evaluating the derivative $\bar z\partial/\partial \bar z$ in \eqref{eq.sum.to.differentiate}. Remark that in \eqref{eq.sum.to.differentiate}, $\bar z\partial/\partial \bar z = (i/2)\partial/\partial\theta$ because the above display equation is independent of $\rho$.
\end{proof}

The limit of the density of the Brown measure of $u_0b_k(t)$ can be written in terms of the function $r_{u_0}$ om \eqref{eq.Sigma.u.bdry.funct}.
\begin{theorem}
    \label{thm.density.limit}
The limit of the density $\rho_k(t,\cdot)$ of the Brown measure of $u_0b_k(t)$ is
\[\frac{1}{4\pi\vert z\vert^2}\left( \frac{2}{t}+\frac{\partial}{\partial\theta}\int\frac{2r_{u_0}(t,\theta)\sin(\theta-\alpha)}{r_{u_0}(t,\theta)^2+1-2r_{u_0}(t,\theta)\cos(\theta-\alpha)}\mu_{u_0}(d\alpha)\right),\]
which is the density of the Brown measure of the free multiplicative Brownian motion $u_0b(t)$. The convergence is uniform in any compact subset of $\Sigma_\infty(u_0,t)$.
\end{theorem}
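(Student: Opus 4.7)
The plan is to start from the exact density formula from Theorem~\ref{thm.RW.Brown.density} and convert the $(1/k)\bar z^{1-1/k}(\partial/\partial\bar z)$ prefactor into a form where the operator $\bar z(\partial/\partial\bar z)$ (which equals $\tfrac{1}{2}(\partial_\rho+i\partial_\theta)$) acts directly on the appropriately scaled sum, so that Lemmas~\ref{lem.series.density} and~\ref{lem.series.derivative} can be applied verbatim. Concretely, since $\bar z^{-1/k}/|z|^{2-2/k}=z^{1/k}/|z|^2$, the density rewrites as
\[
\rho_k(t,z)=\frac{z^{1/k}}{\pi|z|^2}\cdot\bar z\frac{\partial}{\partial\bar z}\!\left[\frac{1}{k}\sum_{-\frac{k}{2}<j\le\frac{k}{2}}\int_{(-\pi,\pi]}\frac{\bar z^{1/k}-e^{-i(2\pi j+\alpha)/k}}{|z^{1/k}-e^{i(2\pi j+\alpha)/k}|^2+\eta_k(t,z)}\,\mu_{u_0}(d\alpha)\right].
\]
The scalar prefactor $z^{1/k}/(\pi|z|^2)$ converges to $1/(\pi|z|^2)$ uniformly on any compact $K\subset\Sigma_\infty(u_0,t)$ (noting that $0\notin\overline{\Sigma_\infty(u_0,t)}$, since $T(u_0;r,\theta)\to\infty$ as $r\to 0$).

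Next, I would invoke Lemma~\ref{lem.series.density}, whose statement is precisely that the bracketed quantity above converges, uniformly in $z\in K$, to
\[
\int_{(-\pi,\pi]}\sum_{j\in\mathbb{Z}}\bar z\,\frac{\partial}{\partial\bar z}\frac{\rho+i(2\pi j+\alpha-\theta)}{\rho^2+(2\pi j+\alpha-\theta)^2+\eta(t,z)}\,\mu_{u_0}(d\alpha),
\]
where $z=e^{\rho}e^{i\theta}$ and $\eta(t,z)=-\psi(t,z)^2$ is the limit identified in Corollary~\ref{cor.DWpoint.conv.unif}. Then Lemma~\ref{lem.series.derivative} identifies this limit as
\[
\frac{1}{2t}+\frac{1}{2}\frac{\partial}{\partial\theta}\int_{(-\pi,\pi]}\frac{r_{u_0}(t,\theta)\sin(\theta-\alpha)}{r_{u_0}(t,\theta)^2+1-2r_{u_0}(t,\theta)\cos(\theta-\alpha)}\,\mu_{u_0}(d\alpha).
\]
Multiplying by $1/(\pi|z|^2)$ gives precisely the density $\rho_\infty(t,|z|e^{i\theta})$ from Theorem~\ref{thm.fmbm} after factoring out the $2$'s (that is, $\frac{1}{4\pi|z|^2}$ times $\frac{2}{t}+\partial_\theta$ of the doubled Poisson-type integral).

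Uniformity on $K$ is inherited straight from the two lemmas: Lemma~\ref{lem.series.density} produces its limit with uniform absolute convergence of the $j$-series on $K$ (the tail bound comes from Lemma~\ref{lem.sum.order}), so the exchange of $\lim_{k\to\infty}$ with $\bar z(\partial/\partial\bar z)$ and with the sum/integral is justified there rather than here. The main obstacle in this proof is therefore not in the density computation itself but is already packaged into the preceding lemmas: establishing that the differentiated sum has a $k$-independent summable majorant on $K$, which in turn relies on the uniform convergence $k\psi_k\to\psi$ from Corollary~\ref{cor.DWpoint.conv.unif} and the uniform convergence of the derivative $\bar z(\partial/\partial\bar z)(k^2\psi_k^2)\to\bar z(\partial/\partial\bar z)\psi^2$ from Proposition~\ref{prop.kpsi.derivative}. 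Once those are in hand, the remaining work is purely algebraic: tracking the $z^{1/k}/|z|^2$ prefactor and collecting constants to match the explicit density in Theorem~\ref{thm.fmbm}.
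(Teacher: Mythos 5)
Your proposal is correct and matches the paper's proof, which likewise combines the density formula from Theorem~\ref{thm.RW.Brown.density} with Lemmas~\ref{lem.series.density} and~\ref{lem.series.derivative}. You add useful explicit detail---the prefactor identity $\bar z^{-1/k}/|z|^{2-2/k}=z^{1/k}/|z|^2$ and the uniform convergence $z^{1/k}\to 1$ on a compact $K\subset\Sigma_\infty(u_0,t)$ (valid since $0\notin\overline{\Sigma_\infty(u_0,t)}$)---that the paper's two-line proof leaves implicit.
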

\begin{proof}
    By Theorem \ref{thm.RW.Brown.density}, the density $\rho_k(t,z)$ is given by
    \[\rho_k(t,z) = \frac{1}{\vert z\vert^{2-2/k}}\frac{\bar z^{1-1/k}}{k\pi}\frac{\partial}{\partial\bar z}\sum_{-\frac{k}{2}<j\leq \frac{k}{2}}\int_{(-\pi,\pi]}\frac{\bar z^{\frac{1}{k}}-e^{-i\frac{2\pi  j+\alpha}{k}}}{\vert z^{\frac{1}{k}}-e^{i\frac{2\pi j+\alpha}{k}}\vert^2+\eta_k(t,z)}\mu_{u_0}(d\alpha),\]
    where $z^{\frac{1}{k}}$ denotes the $k$-th root of $z$ with argument in $(-\pi/k,\pi/k]$. By Lemma \ref{lem.series.density} and Lemma \ref{lem.series.derivative},
    \begin{align*}
        \lim_{k\to\infty}\rho_k(t,z)& =  \frac{1}{\pi\vert z\vert^2}\left( \frac{1}{2t}+\frac{1}{2}\frac{\partial}{\partial\theta}\int\frac{r_{u_0}(t,\theta)\sin(\theta-\alpha)}{r_{u_0}(t,\theta)^2+1-2r_{u_0}(t,\theta)\cos(\theta-\alpha)}\mu_{u_0}(d\alpha)\right)\\
        &=\frac{1}{4\pi\vert z\vert^2}\left( \frac{2}{t}+\frac{\partial}{\partial\theta}\int\frac{2r_{u_0}(t,\theta)\sin(\theta-\alpha)}{r_{u_0}(t,\theta)^2+1-2r_{u_0}(t,\theta)\cos(\theta-\alpha)}\mu_{u_0}(d\alpha)\right).
    \end{align*}
    The convergence is uniform in any compact subset of $\Sigma_\infty(u_0,t)$ because the convergences in Lemma \ref{lem.series.density} and Lemma \ref{lem.series.derivative} are so.
\end{proof}

\begin{corollary}
    \label{cor.Brown.weak.conv}
    The Brown measure $u_0b_k(t)$ converges weakly to the Brown measure of $u_0b(t)$ as $k\to\infty$.
\end{corollary}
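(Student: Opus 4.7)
The plan is to combine the uniform density convergence of Theorem~\ref{thm.density.limit} with the fact that both $\mu_k := \mu_{u_0 b_k(t)}$ and $\mu_\infty := \mu_{u_0 b(t)}$ are probability measures, using probability conservation to control any stray mass near the boundary. Both measures have compact support ($\overline{\Sigma_k(u_0,t)}$ and $\overline{\Sigma_\infty(u_0,t)}$, respectively); by Theorem~\ref{thm.fmbm} the limit measure $\mu_\infty$ is absolutely continuous with density $\rho_\infty$ on $\Sigma_\infty(u_0,t)$, so $\mu_\infty(\partial \Sigma_\infty(u_0,t)) = 0$ and $\mu_\infty$ is inner-regular by compact subsets of the open lima bean.

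First I would fix $f \in C_b(\mathbb{C})$ and $\epsilon > 0$, and select a compact set $K_\epsilon \subset \Sigma_\infty(u_0,t)$ such that $\mu_\infty(K_\epsilon) > 1 - \epsilon$. By Theorem~\ref{thm.domain.conv}, for all sufficiently large $k$ we have $K_\epsilon \subset \Sigma_k(u_0,t)$; restricting further to $k$ large enough that $S_k^k = \emptyset$ (Proposition~\ref{prop.Brown.add.support}(2)) and $\overline{D}_k(a,t) = \emptyset$ (which occurs once $t < k\|a^{-1}\|_2^2$, by Theorem~\ref{thm.Brown.Measure.3}), the measure $\mu_k$ is given on $K_\epsilon$ by the real-analytic density $\rho_k$ of Theorem~\ref{thm.RW.Brown.density}. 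Then Theorem~\ref{thm.density.limit} yields $\rho_k \to \rho_\infty$ uniformly on $K_\epsilon$, so
\[
\int_{K_\epsilon} f\, \rho_k\,dA \longrightarrow \int_{K_\epsilon} f\, \rho_\infty\,dA
\quad\text{as } k \to \infty.
\]

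Next I would harvest probability conservation. Taking $f \equiv 1$ in the preceding display yields
\[
\mu_k(K_\epsilon) = \int_{K_\epsilon} \rho_k\,dA \longrightarrow \int_{K_\epsilon} \rho_\infty\,dA = \mu_\infty(K_\epsilon) > 1 - \epsilon,
\]
so that $\mu_k(\mathbb{C} \setminus K_\epsilon) \to \mu_\infty(\mathbb{C}\setminus K_\epsilon) < \epsilon$ because both measures are probabilities. Thus for all large $k$,
\[
\left|\int f\,d\mu_k - \int f\,d\mu_\infty\right| \le \left|\int_{K_\epsilon} f(\rho_k - \rho_\infty)\,dA\right| + \|f\|_\infty\bigl(\mu_k(\mathbb{C}\setminus K_\epsilon) + \mu_\infty(\mathbb{C}\setminus K_\epsilon)\bigr),
\]
which is bounded by $o(1) + 3\epsilon\|f\|_\infty$. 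Letting $\epsilon \downarrow 0$ gives $\int f\,d\mu_k \to \int f\,d\mu_\infty$, which is precisely weak convergence.

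The main obstacle is essentially nil here, since Theorem~\ref{thm.density.limit} and Theorem~\ref{thm.domain.conv} are engineered exactly to support this argument. The only subtlety worth mentioning is that even though $\mu_k$ may carry atomic or singular-continuous mass on $S_k^k$ and $\partial\Sigma_k(u_0,t)$ (cf.\ Theorem~\ref{thm.Brown.Measure.1}(3)), this stray mass sits outside the compact interior set $K_\epsilon$, and probability conservation forces its total to be squeezed to $< \epsilon$ in the limit; no separate analysis of the singular part is required.
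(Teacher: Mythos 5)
Your proof is correct and takes essentially the same route as the paper's: uniform density convergence on a large compact set in the interior of $\Sigma_\infty(u_0,t)$ (Theorem~\ref{thm.density.limit}), combined with the fact that both $\mu_k$ and $\mu_\infty$ are probability measures to squeeze the remaining mass near the boundary. You make the mass-conservation step explicit by taking $f\equiv 1$ on $K_\epsilon$, whereas the paper's proof, which instead works with a small boundary neighborhood $V_1$, states the corresponding bound on $\mu_k(V_1)$ somewhat tersely; the underlying mechanism is identical.
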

\begin{proof}
    Denote $\mu_k$ and $\mu$ be the Brown measures of $u_0b_k(t)$ and $u_0b(t)$ respectively. Let $\varepsilon>0$. We want to show that for any test function $f$,
    \begin{equation}
        \label{eq.weak.conv}
        \left\vert\int f\,d\mu_k-\int f\,d\mu\right\vert<\varepsilon.
    \end{equation}
    
    The measure $\mu$ is absolutely continuous, with full measure on $\Sigma_\infty(t)$. Choose open sets $V_1$ such that $V_1$ contains the boundary of $\Sigma_\infty(u_0,t)$ and 
    \[\mu(V_1)<\frac{\varepsilon}{2\|f\|_\infty},\]
    so that 
     \[\left\vert\int_{V_1}f\,d\mu_k-\int_{V_1} f\,d\mu\right\vert<\frac{\varepsilon}{2}.\]
     
    Since $\Sigma_\infty(u_0,t)$ is a bounded domain and $V_1$ contains its boundary, $\Sigma_\infty(u_0,t)\setminus V_1$ is a compact set. By Theorem \ref{thm.density.limit}, 
    \[\left\vert\int_{\Sigma_\infty(u_0,t)\setminus V_1}f\,d\mu_k-\int_{\Sigma_\infty(u_0,t)\setminus V_1} f\,d\mu\right\vert<\frac{\varepsilon}{2}.\]
    Finally, $\mathrm{supp}(f)\cap V_1^c$ is a compact subset of $(\Sigma_\infty(u_0,t))^c$. By Theorem \ref{thm.domain.conv}, 
    \[\int_{V_1^c}f\,d\mu_k = \int_{V_1^c}f\,d\mu = 0.\]
    Thus, \eqref{eq.weak.conv} holds.
\end{proof}

\begin{ack}
\phantomsection
\addcontentsline{toc}{section}{Acknowledgments}
We are grateful to Nick Cook, who made us aware of the powerful tools in \cite{RudelsonVershynin2014} and suggested the approach that led to Theorem \ref{thm.pseudospectrum}.  We also thank Hari Bercovici, Alice Guionnet, Roland Speicher, Ofer Zeitouni, and Ping Zhong for helpful conversations.
\end{ack}

\phantomsection
\bibliographystyle{acm}
\bibliography{Lima-Bean-Law}

\end{document}